\title{Sub-elliptic boundary value problems in flag domains}
\author{Tuomas Orponen and Michele Villa}
\address{University of Helsinki, Department of Mathematics and Statistics}
\email{tuomas.t.orponen@jyu.fi \\ michele.villa@helsinki.fi}
\date{\today}
\subjclass[2010]{35R03 (Primary), 31E05, 35H20, 35S15, 42B20 (Secondary)}
\thanks{T.O. is supported by the Academy of Finland via the project \emph{Quantitative rectifiability in Euclidean and non-Euclidean spaces}, grant Nos. 309365, 314172. Both authors are supported by the Academy of Finland via the project \emph{Incidences on Fractals}, grant No. 321896.}
\keywords{Sub-elliptic partial differential equations, Kohn-Laplacian, Dirichlet problem, Neumann problem, Heisenberg group, Singular integrals}
\newcommand{\R}{\mathbb{R}}
\newcommand{\W}{\mathbb{W}}
\newcommand{\He}{\mathbb{H}}
\newcommand{\N}{\mathbb{N}}
\newcommand{\C}{\mathbb{C}}
\newcommand{\Z}{\mathbb{Z}}
\newcommand{\tn}{\mathbb{P}}
\newcommand{\calD}{\mathcal{D}}
\newcommand{\calH}{\mathcal{H}}
\newcommand{\calN}{\mathcal{N}}
\newcommand{\calB}{\mathcal{B}}
\newcommand{\calS}{\mathcal{S}}
\newcommand{\spt}{\operatorname{spt}}
\newcommand{\spa}{\operatorname{span}}
\newcommand{\diam}{\operatorname{diam}}
\newcommand{\dist}{\operatorname{dist}}
\newcommand{\sgn}{\operatorname{sgn}}
\newcommand{\pv}{\mathrm{p.v. }}
\newcommand{\Div}{\mathrm{div}_{\He}}
\newcommand{\V}{\mathbb{V}}
\newcommand{\Lap}{\bigtriangleup}
\newcommand{\ve}{\epsilon}
\newcommand{\wt}{\widetilde}
\newcommand{\calM}{\mathcal{M}}
\newcommand{\Le}{\mathbb{L}}
\newcommand{\calK}{\mathcal{K}}
\newcommand{\ch}{\mathds{1}}
\def\Barint_#1{\mathchoice
          {\mathop{\vrule width 6pt height 3 pt depth -2.5pt
                  \kern -8pt \intop}\nolimits_{#1}}%
          {\mathop{\vrule width 5pt height 3 pt depth -2.6pt
                  \kern -6pt \intop}\nolimits_{#1}}%
          {\mathop{\vrule width 5pt height 3 pt depth -2.6pt
                  \kern -6pt \intop}\nolimits_{#1}}%
          {\mathop{\vrule width 5pt height 3 pt depth -2.6pt
                  \kern -6pt \intop}\nolimits_{#1}}}
\numberwithin{equation}{section}
\theoremstyle{plain}
\newtheorem{thm}[equation]{Theorem}
\newtheorem{lemma}[equation]{Lemma}
\newtheorem{ex}[equation]{Example}
\newtheorem{cor}[equation]{Corollary}
\newtheorem{proposition}[equation]{Proposition}
\newtheorem{question}{Question}
\theoremstyle{definition}
\newtheorem{definition}[equation]{Definition}
\theoremstyle{remark}
\newtheorem{remark}[equation]{Remark}
\begin{document}

\begin{abstract} A \emph{flag domain in $\R^{3}$} is a subset of $\R^{3}$ of the form $\{(x,y,t) : y < A(x)\}$, where $A \colon \R \to \R$ is a Lipschitz function. We solve the Dirichlet and Neumann problems for the sub-elliptic Kohn-Laplacian $\bigtriangleup^{\flat} = X^{2} + Y^{2}$ in flag domains $\Omega \subset \R^{3}$, with $L^{2}$-boundary values. We also obtain improved regularity for solutions to the Dirichlet problem if the boundary values have first order $L^{2}$-Sobolev regularity. Our solutions are obtained as sub-elliptic single and double layer potentials, which are best viewed as integral operators on the first Heisenberg group. We develop the theory of these operators on flag domains, and their boundaries.

\end{abstract}

\maketitle

\tableofcontents

\section{Introduction}\label{s:intro} 

\subsection{Dirichlet and Neumann problems}  Let $X$ and $Y$ be the following vector fields in $\R^{3}$:
\begin{equation}\label{XY} X := \partial_{x} - \tfrac{y}{2}\partial_{t} \quad \text{and} \quad Y := \partial_{y} + \tfrac{x}{2}\partial_{t}, \end{equation}
and let $\bigtriangleup^{\flat} = X^{2} + Y^{2}$ be the \emph{Kohn Laplacian}. This paper studies boundary value problems for $\bigtriangleup^{\flat}$ in \emph{flag domains} domains $\Omega \subset \R^{3}$. These are, by definition, domains of the form
\begin{displaymath} \Omega := \{(x,y,t) \in \R^{3} : x < A(y)\}, \end{displaymath}
where $A \colon \R \to \R$ is a Lipschitz function. In brief, we will solve the Dirichlet and Neumann problems for $\bigtriangleup^{\flat}$ in flag domains with boundary values $g,h \in L^{2}(\partial \Omega)$. Formally, the Dirichlet problem is
\begin{equation}\label{dirichlet} \begin{cases} \bigtriangleup^{\flat} u = 0 & \text{in } \Omega, \\ u = g & \text{in } \partial \Omega, \end{cases} \end{equation}
and the Neumann problem is
\begin{equation}\label{neumann} \begin{cases} \bigtriangleup^{\flat} u = 0 & \text{in } \Omega, \\ \nabla_{\nu}u = h & \text{in } \partial \Omega. \end{cases} \end{equation}
Here $\nabla_{\nu}u$ stands for the normal derivative of $u$. Since we only assume that $g,h \in L^{2}(\partial \Omega)$, interpretation is needed to make sense of these equations, and we return to this point in a moment. If the reader is familiar with the $L^{2}$-theory for the Dirichlet and Neumann problems for the standard Laplacian $\bigtriangleup = \partial_{x}^{2} + \partial_{y}^{2} + \partial_{z}^{2}$, the paper can be summarised as follows: for flag domains $\Omega \subset \R^{3}$, the $L^{2}$-theory of $\bigtriangleup^{\flat}$ matches the $L^{2}$-theory of $\bigtriangleup$.

We will solve the problems \eqref{dirichlet}-\eqref{neumann} by means of \emph{single and double layer potentials} in the first Heisenberg group $\He = (\R^{3},\cdot)$. These are the operators 
\begin{displaymath} \mathcal{S}f(p) := \int_{\partial \Omega} G(q^{-1} \cdot p)f(q) \, d\sigma(q) \quad \text{and} \quad \mathcal{D}f(p) := \int_{\partial \Omega} \langle \nabla G(p^{-1} \cdot q),\nu(q) \rangle f(q) \, d\sigma(q), \end{displaymath}  
where $\sigma = \mathcal{H}^{2}|_{\partial \Omega}$, and $G$ is the fundamental solution of $-\bigtriangleup^{\flat}$, found by Folland \cite{MR0315267}. Throughout the paper, the notation "$\nabla$" will be reserved for the \emph{horizontal gradient} $\nabla g := (Xg,Yg) \in \R^{2}$. The symbol "$\nu$" refers to the \emph{inward horizontal normal} of $\Omega$, see Section \ref{s:normals} for details. The integrals defining $\mathcal{S}f(p)$ and $\mathcal{D}f(p)$ are absolutely convergent for all $p \in \R^{3} \, \setminus \, \partial \Omega$ whenever $f \in L^{2}(\sigma)$. Moreover, $\mathcal{S}f$ and $\mathcal{D}f$ define $\bigtriangleup^{\flat}$-harmonic functions in $\He \, \setminus \, \partial \Omega$, and hence provide candidates for solving \eqref{dirichlet}-\eqref{neumann}.

\begin{remark}\label{harmonicityRemark} It is not obvious that $\mathcal{D}f$ is $\bigtriangleup^{\flat}$-harmonic, since the vector fields $X$ and $Y$ do not commute with each other. However, they do commute with the vector fields
\begin{equation}\label{XRYR} X^{R} := \partial_{x} + \tfrac{y}{2}\partial_{t} \quad \text{and} \quad Y^{R} := \partial_{y} - \tfrac{x}{y}\partial_{t}, \end{equation}
and one can check that $ZG(p^{-1} \cdot q) = -Z^{R}G(q^{-1} \cdot p)$ for $Z \in \{X,Y\}$. Consequently, 
\begin{displaymath} \bigtriangleup^{\flat}_{p}ZG(p^{-1} \cdot q) = -\bigtriangleup^{\flat}_{p}Z^{R}G(q^{-1} \cdot p) = -Z^{R}\bigtriangleup^{\flat}_{p}G(q^{-1} \cdot p) = 0, \quad p \neq q. \end{displaymath}
This implies the $\bigtriangleup^{\flat}$-harmonicity of $\mathcal{D}f$. This definition of the double layer potential also appears in the paper \cite{MR3600064} of Ruzhansky and Suragan. The application of the single layer potential to the $\bigtriangleup^{\flat}$-Dirichlet problem \eqref{dirichlet} goes back to Jerison \cite{MR639800}. More generally, the technique of layer potentials has a long history of applications to boundary value problems, mainly for elliptic and parabolic equations, but also to sub-elliptic equations in lesser extent. We will scratch the surface of this area in Section \ref{s:previousWork}.  \end{remark}
Our solution to the Dirichlet problem \eqref{dirichlet} will have the form $u = \mathcal{D}f_{g}$ for a certain $f_{g} \in L^{2}(\sigma)$. Similarly, the solution the Neumann problem \eqref{neumann} will have the form $u = \mathcal{S}f_{h}$ for some $f_{h} \in L^{2}(\sigma)$. To understand how exactly $f_{g},f_{h} \in L^{2}(\sigma)$ should to be chosen, we need expressions for $(\mathcal{D}f)|_{\partial \Omega}$ and $\nabla_{\nu}\mathcal{S}f$, given by the following theorem:
\begin{thm}\label{main1} Let $\Omega \subset \He$ be a flag domain, and let $f \in L^{p}(\sigma)$ for some $1 < p < \infty$. Then, $\mathcal{D}f$ and $\nabla \mathcal{S}f$ have non-tangential limits $\sigma$ a.e. on $\partial \Omega$, denoted $(\mathcal{D} f)|_{\partial \Omega}$ and $(\nabla \mathcal{S}f)|_{\partial \Omega}$. Moreover,
\begin{displaymath} (\mathcal{D}f)|_{\partial \Omega} = (\tfrac{1}{2}I + D)f \quad \text{and} \quad \nabla_{\nu}\mathcal{S}f := \langle (\nabla \mathcal{S}f)|_{\partial \Omega},\nu \rangle = (-\tfrac{1}{2}I + D^{t})f. \end{displaymath} 
\end{thm}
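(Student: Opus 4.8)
The plan is to establish the two boundary-trace formulas by recognizing $\mathcal{D}$ and $\nabla\mathcal{S}$ as singular integral operators of Calder\'on–Zygmund type on $\partial\Omega$, and then invoking the classical jump-relation mechanism. First I would record the relevant structure of Folland's fundamental solution: $G(p)$ is smooth away from the origin, homogeneous of degree $-2$ with respect to the Heisenberg dilations, and $\nabla G$ (the horizontal gradient) is homogeneous of degree $-3$. Since $\partial\Omega$ is a flag domain boundary, it is an Ahlfors $3$-regular set (with respect to the Heisenberg-homogeneous dimension and the relevant surface measure $\sigma = \mathcal{H}^2|_{\partial\Omega}$), so the kernels $K_\mathcal{D}(p,q) = \langle \nabla G(p^{-1}\cdot q),\nu(q)\rangle$ and $K_\mathcal{S}(p,q) = \nabla G(q^{-1}\cdot p)$ are standard CZ kernels on $(\partial\Omega,\sigma)$, modulo verifying the extra cancellation for $\mathcal{D}$ coming from the geometry: the pairing with the normal $\nu(q)$ should improve the naive degree-$3$ singularity to something integrable along the surface, exactly as in the Euclidean double-layer case. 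I would therefore first cite or prove (presumably done in an earlier section of the paper) the $L^p(\sigma)$-boundedness of the associated principal-value operators $D$ and $D^t$.

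Next I would set up the non-tangential approach. Fix $\xi \in \partial\Omega$ a Lebesgue point of $f$ (and of the relevant surface data), and let $p \to \xi$ non-tangentially from inside $\Omega$. Write
\begin{displaymath}
\mathcal{D}f(p) = \int_{\partial\Omega} \langle \nabla G(p^{-1}\cdot q),\nu(q)\rangle\,[f(q)-f(\xi)]\,d\sigma(q) + f(\xi)\int_{\partial\Omega}\langle \nabla G(p^{-1}\cdot q),\nu(q)\rangle\,d\sigma(q).
\end{displaymath}
The first term I would handle by a standard truncation-and-tail estimate: split $\partial\Omega$ into the part near $\xi$ and the part away from $\xi$, use the Lebesgue-point property together with the CZ kernel bounds to show this term converges to $Df(\xi)$ (the principal value), uniformly over non-tangential approach, for a.e. $\xi$; this is the usual maximal-function/approximation-to-the-identity argument. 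The second term is the crucial \emph{geometric} piece: I would show that $\int_{\partial\Omega}\langle\nabla G(p^{-1}\cdot q),\nu(q)\rangle\,d\sigma(q)$ equals (up to sign) $1$ when $p\in\Omega$, $0$ when $p\notin\bar\Omega$, and has principal value $\tfrac12$ on $\partial\Omega$ — i.e. a Heisenberg analogue of the Gauss–Green / solid-angle identity. This should follow from an integration-by-parts (divergence theorem for the horizontal divergence on $\Omega$) applied to the $\bigtriangleup^\flat$-harmonic function $q \mapsto G(p^{-1}\cdot q)$ away from the singularity $q = p$, excising a small gauge ball around $p$ and computing the flux through its boundary using the homogeneity of $G$; the flux normalizes to $\pm 1$ by the defining property of the fundamental solution of $-\bigtriangleup^\flat$. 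Combining, $(\mathcal{D}f)|_{\partial\Omega} = \tfrac12 f + Df = (\tfrac12 I + D)f$. The sign is fixed by the convention that $\nu$ is the inward horizontal normal and the approach is from inside.

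For $\nabla_\nu\mathcal{S}f$ the argument is parallel but one must be slightly more careful because we differentiate $\mathcal{S}f$ before taking the trace. I would write, for $p\in\Omega$,
\begin{displaymath}
\nabla\mathcal{S}f(p) = \int_{\partial\Omega}\nabla_p G(q^{-1}\cdot p)\,f(q)\,d\sigma(q),
\end{displaymath}
and use the identity from Remark~\ref{harmonicityRemark} (or rather its single-layer analogue) relating $\nabla_p G(q^{-1}\cdot p)$ to a right-invariant gradient of $G(p^{-1}\cdot q)$ in the $q$ variable, so that the kernel here is essentially the \emph{transpose} of the double-layer kernel. Pairing with $\nu(\xi)$ and repeating the Lebesgue-point splitting yields a principal-value operator whose kernel is $\langle\nabla G(q^{-1}\cdot p),\nu(\xi)\rangle$ — its principal value on $\partial\Omega$ is $D^t$ (the formal transpose of $D$, after accounting for the fact that $D$ used $\nu(q)$ and this uses $\nu(\xi)$; the symmetry/antisymmetry of $\nabla G$ under inversion of its argument converts one into the other). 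The jump term now comes with the opposite sign: approaching from inside and pairing with the \emph{inward} normal, the same Gauss–Green flux computation gives $-\tfrac12$ rather than $+\tfrac12$, because the roles of the two variables are swapped relative to the double-layer case. Hence $\nabla_\nu\mathcal{S}f = (-\tfrac12 I + D^t)f$.

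The main obstacle I anticipate is the geometric flux identity in the Heisenberg setting: unlike the Euclidean case, the surface measure $\sigma$, the horizontal normal $\nu$, and the notion of horizontal divergence interact in a non-Riemannian way, and one must be careful that the divergence theorem being invoked is the correct intrinsic one (with the Heisenberg perimeter measure, which on a flag domain boundary is comparable but not equal to $\mathcal{H}^2$, and whose Radon–Nikodym density relative to $\sigma$ enters the normal). I would expect the cleanest route is to prove the flux identity first for the model half-space (where $A \equiv 0$, $\partial\Omega = \{x=0\}$ is a subgroup-like plane and everything can be computed explicitly using Folland's formula), and then transfer to general Lipschitz $A$ by the standard argument: the difference of the two double-layer kernels is a \emph{weakly} singular (integrable) kernel because of the extra decay from the normal pairing combined with the Lipschitz regularity of $A$, so the jump term is unchanged and only the bounded operator $D$ is modified. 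A secondary technical point is justifying the interchange of $\nabla_p$ and the integral and the convergence of the non-tangential limit \emph{uniformly} over non-tangential cones, which requires a quantitative Cotlar-type inequality controlling the truncated operators by the maximal operator plus the CZ operator; this is routine given the $L^p$-boundedness already assumed, but needs the Ahlfors-regularity of $\partial\Omega$ and a suitable non-tangential maximal function estimate, presumably established earlier in the paper.
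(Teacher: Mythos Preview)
Your overall architecture---prove the jump formulas first for smooth $f$ via the splitting $\mathcal{D}f(p)=\int K(p,q)[f(q)-f(\xi)]\,d\sigma + f(\xi)\int K(p,q)\,d\sigma$, establish the second integral by a divergence-theorem flux computation, then pass to $L^p$ via density and a non-tangential maximal estimate---is exactly the route the paper takes (Lemma~\ref{jump-K-smooth} and the proof of Theorem~\ref{t:rJumps}). But two of your supporting assertions are false in the Heisenberg setting, and this matters.

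First, you write that ``the pairing with the normal $\nu(q)$ should improve the naive degree-$3$ singularity to something integrable along the surface, exactly as in the Euclidean double-layer case.'' It does not. As Jerison already observed (and as the paper stresses in \S\ref{s:previousWork}), the boundary double layer $D$ is a \emph{genuine} Calder\'on--Zygmund singular integral on $\partial\Omega$ even when $\partial\Omega$ is a vertical hyperplane; there is no extra cancellation from the normal. Two consequences: (i) your proposed ``transfer from the half-space'' argument, based on the difference of the flag-domain kernel and the half-space kernel being \emph{weakly} singular, simply fails---that difference is still a genuine $3$-dimensional singular kernel; (ii) the ``Lebesgue-point'' handling of the first term in your splitting does not work directly for $f\in L^p$, because $\int|K(p,q)||f(q)-f(\xi)|\,d\sigma(q)$ need not converge. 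The paper instead runs the splitting only for $f\in C^\infty_c$ (so that $|f(q)-f(p)|\lesssim d(p,q)$ and dominated convergence applies), and then extends to $L^p$ by the density argument controlled by $\mathcal{N}_\theta\mathcal{R}^t$ (Corollary~\ref{K-MaxNT}).

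Second, your treatment of $\nabla_\nu\mathcal{S}f$ is too optimistic. The principal value $D^t f(p)=\mathrm{p.v.}\int\langle\nabla G(q^{-1}\cdot p),\nu(p)\rangle f(q)\,d\sigma(q)$ is \emph{not} obtained from $D$ by a simple transpose-plus-symmetry argument: the kernel $\nabla G(q^{-1}\cdot p)$ is not the horizontal divergence in $q$ of anything nice, so the Gauss--Green computation that gives the jump for $\mathcal{D}$ does not directly apply. The paper's device is Lemma~\ref{l:right-left}, which shows that on a flag domain one can swap $\nabla G(q^{-1}\cdot p)$ for $-\nabla G(p^{-1}\cdot q)$ inside integrals against $\nu$ (or $\tau$); this relies essentially on the fact that for flag domains $\nu(z,t)$ is independent of $t$, so that an odd-in-$t$ error term integrates to zero. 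Only after this reduction can the $\mathcal{D}$-side flux identity be recycled. Your sketch does invoke the right/left-invariant relation from Remark~\ref{harmonicityRemark}, but that alone is not enough---the additional $t$-independence of $\nu$ is the missing ingredient.

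A minor correction: for flag domains the horizontal perimeter measure \emph{equals} $\mathcal{H}^2_E|_{\partial\Omega}$ exactly (Proposition~\ref{prop3}), not merely comparably, so no Radon--Nikodym density enters.
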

To be accurate, the formulae above describe the non-tangential limits of $\mathcal{D}f$ and $\nabla \mathcal{S}f$ from within $\Omega$. Similar formulae, with appropriate sign changes, also hold for non-tangential limits from within $\R^{3} \, \setminus \, \overline{\Omega}$, see Theorem \ref{t:rJumps} and Corollary \ref{c:allJumps}. The symbols "$D$" and "$D^{t}$" refer to certain principal value operators, see Definition \ref{boundaryDoubleLayers}. Their existence is a non-trivial matter, and is discussed extensively in Section \ref{s:principal}.

From Theorem \ref{main1}, we see that, in order to solve the equations \eqref{dirichlet}-\eqref{neumann} as $u = \mathcal{D}f_{g}$ and $u = \mathcal{S}f_{h}$, respectively, the functions $g$ and $h$ need to satisfy
\begin{displaymath} (\tfrac{1}{2}I + D)f_{g} = g \quad \text{and} \quad (-\tfrac{1}{2}I + D^{t})f_{h} = h. \end{displaymath}
Since $g,h \in L^{2}(\sigma)$ are arbitrary, solving $f_{g},f_{h}$ from these equations is equivalent to proving that $\tfrac{1}{2}I + D$ and $-\tfrac{1}{2}I + D^{t}$ are invertible operators on $L^{2}(\sigma)$:
\begin{thm}\label{main2} Let $\Omega \subset \R^{3}$ be a flag domain. Then, the operators $\tfrac{1}{2}I \pm D$ and $\tfrac{1}{2}I \pm D^{t}$ are invertible on $L^{2}(\sigma)$. \end{thm}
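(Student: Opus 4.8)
The plan is to follow the classical Verchota-style argument for the $L^2$ Dirichlet and Neumann problems on Lipschitz domains, adapted to the sub-elliptic setting. The four operators $\tfrac{1}{2}I \pm D$ and $\tfrac{1}{2}I \pm D^{t}$ are bounded on $L^{2}(\sigma)$; since $D$ and $D^{t}$ are mutual adjoints with respect to the pairing on $L^{2}(\sigma)$, it suffices to prove invertibility of $\tfrac{1}{2}I + D$ and $-\tfrac{1}{2}I + D$ on $L^{2}(\sigma)$, and the corresponding statements for $D^{t}$ follow by duality. The heart of the matter is a \emph{Rellich-type inequality}: there is a constant $C = C(\|A'\|_{\infty}) \geq 1$ such that
\begin{equation}\label{rellichPlan} C^{-1}\|(\nabla \mathcal{S}f)|_{\partial \Omega}\|_{L^{2}(\sigma)} \leq \|\nabla_{\nu}\mathcal{S}f\|_{L^{2}(\sigma)} \leq C\|(\nabla \mathcal{S}f)|_{\partial \Omega}\|_{L^{2}(\sigma)} \end{equation}
holds for all $f \in L^{2}(\sigma)$, where on the left $(\nabla \mathcal{S}f)|_{\partial \Omega}$ denotes the full boundary trace of the horizontal gradient. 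Equivalently, the tangential and normal components of the boundary gradient of the single layer potential are comparable in $L^{2}$ norm. To prove \eqref{rellichPlan} I would test the divergence-form identity $\Div(|\nabla u|^{2} e - 2(\langle \nabla u, e\rangle) \nabla u) = 0$, valid for $\bigtriangleup^{\flat}$-harmonic $u$ and any \emph{constant horizontal vector field} $e$, against the domain $\Omega$ (or rather a bounded exhaustion of it), integrate by parts, and use the flag structure: because $\Omega = \{x < A(y)\}$ and the defining function does not involve $t$, there is a natural transversal horizontal vector field — roughly $X$ itself, since the first coordinate plays the role of the "height" — whose inner product with the horizontal normal $\nu$ is bounded below, uniformly, in terms of $\|A'\|_{\infty}$. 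This is the sub-elliptic analogue of the fact that in a Lipschitz graph domain the vertical direction $e_{n}$ is transversal to the boundary. The boundary integral produced by the integration by parts then yields, after rearrangement, that $\|\nabla_{\tan}u\|_{L^{2}(\sigma)}^{2}$ and $\|\nabla_{\nu}u\|_{L^{2}(\sigma)}^{2}$ are each controlled by a constant times the other plus lower-order terms, and the jump relations of Theorem \ref{main1} convert this into \eqref{rellichPlan}.

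Granting \eqref{rellichPlan}, invertibility is extracted as follows. By the jump relations,
\begin{displaymath} \nabla_{\nu}^{+}\mathcal{S}f - \nabla_{\nu}^{-}\mathcal{S}f = f \quad \text{and} \quad \nabla_{\nu}^{+}\mathcal{S}f + \nabla_{\nu}^{-}\mathcal{S}f = 2D^{t}f, \end{displaymath}
where the superscripts denote the two-sided traces, so that $(\pm \tfrac{1}{2}I + D^{t})f = \pm\nabla_{\nu}^{\pm}\mathcal{S}f$. On the exterior domain one has an additional decay/flux normalization to handle, but the upshot is the standard one: \eqref{rellichPlan} gives, for the interior trace, $\|f\|_{L^{2}(\sigma)} \lesssim \|\nabla_{\nu}^{+}\mathcal{S}f\|_{L^{2}(\sigma)} = \|(\tfrac{1}{2}I + D^{t})f\|_{L^{2}(\sigma)}$, which shows $\tfrac{1}{2}I + D^{t}$ is injective with closed range; a similar argument with the exterior trace controls $-\tfrac{1}{2}I + D^{t}$. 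To get surjectivity (equivalently dense range) I would use the method of continuity: the family $\tfrac{1}{2}I + D^{t}_{s}$, where $D^{t}_{s}$ is the operator associated to the scaled graph $A_{s} = sA$ (so $s = 0$ is the flat half-space $\{x < 0\}$, where invertibility is checked directly via Folland's fundamental solution and an explicit computation, essentially a sub-elliptic analogue of the half-space case), is a norm-continuous path of operators all satisfying a uniform lower bound of the type just proved; since the set of $s$ for which the operator is invertible is open (by the lower bound plus perturbation) and closed, and nonempty, it is all of $[0,1]$. Then $\tfrac{1}{2}I \pm D = (\tfrac{1}{2}I \pm D^{t})^{*}$ are invertible by duality.

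A few supporting points need to be in place. First, one must know the jump relations on the \emph{exterior} domain and the full (not merely normal) boundary trace of $\nabla \mathcal{S}f$, which are promised as Theorem \ref{t:rJumps} and Corollary \ref{c:allJumps} in the excerpt; I would invoke those. Second, the integration-by-parts / Rellich computation must be justified despite $\partial \Omega$ being only a Lipschitz (intrinsic) graph and $\nabla u$ having only a nontangential-maximal-function bound in $L^{2}(\sigma)$; this is handled the usual way, approximating $\Omega$ from inside by smooth domains $\Omega_{j} \uparrow \Omega$ whose boundaries converge appropriately, applying the identity on each $\Omega_{j}$, and passing to the limit using the nontangential maximal function estimate and dominated convergence — exactly as in Verchota's original argument. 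Third, one uses that $\mathcal{S}$ maps $L^{2}(\sigma)$ boundedly into functions with $L^{2}$ nontangential maximal gradient, which again should be among the layer-potential estimates established earlier in the paper.

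The main obstacle I expect is the Rellich inequality \eqref{rellichPlan}, and within it, the choice of the transversal vector field and the verification that its pairing with the horizontal normal $\nu$ stays bounded away from zero. In the Euclidean Lipschitz case one simply takes $e = e_{n}$ and $\langle e_{n}, \nu\rangle \geq (1 + \|A'\|_{\infty}^{2})^{-1/2}$ pointwise; here the horizontal normal is a genuinely geometric object (it can degenerate at \emph{characteristic points}, where the tangent plane of $\partial \Omega$ is spanned by $X,Y$), and one must exploit the specific flag structure $\Omega = \{x < A(y)\}$ to see that characteristic points do not occur, or are negligible, and that $|\langle \nu, \text{(transversal field)}\rangle|$ is controlled solely by $\|A'\|_{\infty}$ — this is presumably where the hypothesis that $A$ does not depend on the vertical variable $t$ is essential. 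Getting this uniform transversality, and correctly bookkeeping the "lower-order" commutator terms arising because $X$ and $Y$ do not commute (so the Rellich divergence identity carries an extra term involving $[X,Y] = \partial_{t}$ acting on $u$), is the crux; the rest of the scheme is a faithful translation of the classical argument.
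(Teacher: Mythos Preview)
Your overall architecture matches the paper: a Rellich-type estimate yields the lower bound $\|f\|_{L^{2}(\sigma)} \lesssim \|(\tfrac{1}{2}I \pm D^{t})f\|_{L^{2}(\sigma)}$ (injectivity and closed range for $\tfrac{1}{2}I \pm D^{t}$, hence surjectivity for $\tfrac{1}{2}I \pm D$ by duality), and the method of continuity along $A_{s} = sA$ from the flat case gives surjectivity of $\tfrac{1}{2}I \pm D^{t}$. The transversality observation $|\langle X,\nu\rangle| \geq (1 + A'(y)^{2})^{-1/2}$ is also exactly what the paper uses.

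The genuine gap is in your treatment of the commutator term. The sub-elliptic Rellich identity reads
\begin{displaymath}
\int |\nabla u|^{2}\langle X,\nu\rangle \, d\sigma = 2\int Xu\,\langle \nabla u,\nu\rangle \, d\sigma - 2\int_{\Omega} (Yu)(Tu) \, dp,
\end{displaymath}
and the solid integral $\int_{\Omega}(Yu)(Tu)$ is \emph{not} lower order in any useful sense: it carries the same scaling as the boundary terms and cannot be absorbed. Consequently your clean equivalence \eqref{rellichPlan} does not hold directly. What the paper proves instead is a chain of the form
\begin{displaymath}
\|\nabla_{\nu}^{+}u\| \lesssim \|\nabla_{\tau}u\| + \|f\|^{1/2}\|T^{1/2}u\|^{1/2}, \qquad \|T^{1/2}u\| \lesssim \|f\|^{1/2}\|\nabla_{\nu}^{-}u\|^{1/2},
\end{displaymath}
together with a companion bound for $\|\nabla_{\tau}u\|$, which after concatenation yields $\|(-\tfrac{1}{2}I + D^{t})f\| \lesssim \|(\tfrac{1}{2}I + D^{t})f\| + \|f\|^{3/4}\|(\tfrac{1}{2}I + D^{t})f\|^{1/4}$ and hence the desired lower bound by absorption. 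The half-order vertical derivative $T^{1/2}u$ is defined as a Fourier multiplier in the $t$-variable, and making sense of it on $\mathcal{S}f$ --- in particular commuting $T^{\alpha}$ past $X$ and $Y$ and showing $T^{\alpha}\mathcal{S}f$ remains $\bigtriangleup^{\flat}$-harmonic --- requires an entire preparatory section on vertically tempered distributions and an explicit Bessel-function formula for the vertical Fourier transform of $\mathcal{S}f$. This is precisely where the vertical foliation of the flag domain is essential, not merely for the transversality of $X$ to $\nu$. In short, describing the $[X,Y]=T$ contribution as ``lower-order commutator terms to bookkeep'' is where your plan fails; controlling that term is the main technical content of the injectivity proof, and the argument is much closer in spirit to the parabolic theory of Brown and Hofmann--Lewis than to Verchota's elliptic one.

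A secondary point: the norm-continuity $\|T_{s} - T_{r}\| \lesssim |s - r|$ needed for the method of continuity is itself nontrivial here --- differentiating the kernel in $s$ produces a new singular integral whose $L^{2}$-boundedness requires a separate result on horizontally odd kernels along intrinsic Lipschitz graphs, which the paper imports from \cite{2019arXiv191103223F}.
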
 
The proof of Theorem \ref{main2} occupies Sections \ref{s:injectivity} and \ref{s:surjectivity}. A combination of Theorems \ref{main1} and \ref{main2} allows us to solve the problems \eqref{dirichlet}-\eqref{neumann} in the following sense:
\begin{thm}\label{main3} Let $\Omega \subset \R^{3}$ be a flag domain, and $g,h \in L^{2}(\sigma)$. Then, there exist $\bigtriangleup^{\flat}$-harmonic functions $u,v \in C^{\infty}(\Omega)$ with the following properties. First, the non-tangential maximal functions of both $u$ and $\nabla v$ are in $L^{2}(\sigma)$, with
\begin{equation}\label{form176} \|\mathcal{N}_{\theta}u\|_{L^{2}(\sigma)} \lesssim \|g\|_{L^{2}(\sigma)} \quad \text{and} \quad \|\mathcal{N}_{\theta}(\nabla v)\|_{L^{2}(\sigma)} \lesssim \|h\|_{L^{2}(\sigma)}, \qquad \theta \in (0,1). \end{equation}
Second, $u$ and $\nabla v$ have interior non-tangential limits $\sigma$ a.e. on $\partial \Omega$, denoted $u|_{\partial \Omega}$ and $(\nabla v)|_{\partial \Omega}$, which satisfy
\begin{equation}\label{form196} g = u|_{\partial \Omega} \quad \text{and} \quad h = \nabla_{\nu}v := \langle (\nabla v)|_{\partial \Omega},\nu \rangle. \end{equation}
Moreover, $u$ and $v$ have the form $u = \mathcal{D}f_{g}$ and $v = \mathcal{S}f_{h}$, where $f_{g},f_{h} \in L^{2}(\sigma)$ satisfy
\begin{equation}\label{form195} (\tfrac{1}{2}I + D)f_{g} = g \quad \text{and} \quad (-\tfrac{1}{2}I + D^{t})f_{h} = h. \end{equation}
\end{thm}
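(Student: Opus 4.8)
The plan is to build $u$ and $v$ directly out of the inverse operators furnished by Theorem \ref{main2}, and then to harvest every claim in the statement from Theorems \ref{main1} and \ref{main2} together with the $L^{2}$ boundedness of the non-tangential maximal operators attached to $\mathcal{D}$ and $\nabla\mathcal{S}$. Concretely, the first step is: by Theorem \ref{main2} the operators $\tfrac{1}{2}I + D$ and $-\tfrac{1}{2}I + D^{t}$ are bounded and boundedly invertible on $L^{2}(\sigma)$, so I set $f_{g} := (\tfrac{1}{2}I + D)^{-1}g$ and $f_{h} := (-\tfrac{1}{2}I + D^{t})^{-1}h$; these lie in $L^{2}(\sigma)$, obey $\|f_{g}\|_{L^{2}(\sigma)} \lesssim \|g\|_{L^{2}(\sigma)}$ and $\|f_{h}\|_{L^{2}(\sigma)} \lesssim \|h\|_{L^{2}(\sigma)}$, and satisfy \eqref{form195} by construction. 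Then I put $u := \mathcal{D}f_{g}$ and $v := \mathcal{S}f_{h}$, viewed on $\Omega$.

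Next I would check that $u, v \in C^{\infty}(\Omega)$ and that both are $\bigtriangleup^{\flat}$-harmonic there. Harmonicity on $\He \setminus \partial\Omega$ is recorded in the introduction and in Remark \ref{harmonicityRemark}; smoothness then follows from hypoellipticity of $\bigtriangleup^{\flat}$ (Hörmander's bracket condition for $X,Y$), or, even more directly, by differentiating under the integral sign, the kernels $q \mapsto G(q^{-1}\cdot p)$ and $q \mapsto \langle \nabla G(p^{-1}\cdot q),\nu(q)\rangle$ being smooth in $p$ for $p \notin \partial\Omega$ with locally integrable $q$-dependence against $L^{2}(\sigma)$.

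The third step produces \eqref{form176}: I combine the non-tangential maximal function estimates $\|\mathcal{N}_{\theta}(\mathcal{D}f)\|_{L^{2}(\sigma)} \lesssim \|f\|_{L^{2}(\sigma)}$ and $\|\mathcal{N}_{\theta}(\nabla\mathcal{S}f)\|_{L^{2}(\sigma)} \lesssim \|f\|_{L^{2}(\sigma)}$, $\theta \in (0,1)$ — the quantitative bounds underlying the $\sigma$-a.e.\ existence of the limits in Theorem \ref{main1} — with the norm control on $f_{g}, f_{h}$ from the first step. The fourth step produces \eqref{form196}: Theorem \ref{main1}, applied to $f_{g}$ and $f_{h}$, says precisely that the interior non-tangential limit of $u = \mathcal{D}f_{g}$ equals $(\tfrac{1}{2}I + D)f_{g} = g$, and that $\nabla v = \nabla\mathcal{S}f_{h}$ has an interior non-tangential limit $\sigma$-a.e.\ with $\langle (\nabla\mathcal{S}f_{h})|_{\partial\Omega},\nu\rangle = (-\tfrac{1}{2}I + D^{t})f_{h} = h$; hence $u|_{\partial\Omega} = g$ and $\nabla_{\nu}v = h$, which finishes the proof.

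I do not expect any serious obstacle inside the proof of Theorem \ref{main3} itself: once Theorems \ref{main1} and \ref{main2} are available, this is an assembly argument, and the whole difficulty has been pushed into those two statements — the jump relations (a singular-integral computation on $\He$, resting on the $L^{2}$ bounds for $\mathcal{N}_{\theta}\mathcal{D}$ and $\mathcal{N}_{\theta}(\nabla\mathcal{S})$) and the invertibility of $\tfrac{1}{2}I \pm D$ and $\tfrac{1}{2}I \pm D^{t}$. The only points requiring a little care are bookkeeping ones: making sure the $L^{2}$ non-tangential maximal estimates are genuinely in hand (they are, being prerequisites for Theorem \ref{main1}), and keeping track that \emph{interior} non-tangential limits are used throughout, since Theorem \ref{main1} is stated with limits taken from within $\Omega$, which is exactly what \eqref{form196} demands.
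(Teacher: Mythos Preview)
Your proposal is correct and matches the paper's own treatment exactly: the paper explicitly states, immediately after Theorem \ref{main3}, that it is a corollary of Theorems \ref{main1} and \ref{main2}, with the regularity estimates \eqref{form176} coming from Corollary \ref{K-MaxNT} and the boundary relations \eqref{form196} coming from Theorem \ref{main1}. There is no additional argument in the paper beyond the assembly you describe.
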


For the definitions of the non-tangential maximal function $\mathcal{N}_{\theta}$, and non-tangential limits, see Section \ref{s:nt}. Note that once $f_{g},f_{h}$ have been defined by \eqref{form195}, the boundary behaviour of $u = \mathcal{D}f_{g}$ and $\nabla v = \nabla \mathcal{S}f_{h}$, stated in \eqref{form196}, is a direct consequence of Theorem \ref{main1}. Also the "regularity" estimates \eqref{form176} follow from the form of the solutions $u = \mathcal{D}f_{g}$ and $v = \mathcal{S}f_{h}$, see Corollary \ref{K-MaxNT}. So, Theorem \ref{main3} is a corollary of Theorems \ref{main1} and \ref{main2}.

\subsection{The regularity problem} The reason for using the double layer potential "$\mathcal{D}$" in solving the Dirichlet problem \eqref{dirichlet} is that the single layer potential $\mathcal{S}$ is a "smoothing" operator: the space of boundary values $\{(\mathcal{S}f)|_{\partial \Omega} : f \in L^{2}(\sigma)\}$ is neither contained in, nor contains, $L^{2}(\sigma)$, so the equation $(\mathcal{S}f)|_{\partial \Omega} = g$, with $g \in L^{2}(\sigma)$, does not always have a solution $f \in L^{2}(\sigma)$. In fact, the following result characterises $\{(\mathcal{S}f)|_{\partial \Omega} : f \in L^{2}(\sigma)\}$:
\begin{thm}\label{main4} Let $\Omega \subset \R^{3}$ be a flag domain, and $f \in L^{2}(\sigma)$. Then, the non-tangential limits of $\mathcal{S}f$ exist $\sigma$ a.e. on $\partial \Omega$ and equal
\begin{displaymath} Sf(p) := \int_{\partial \Omega} G(q^{-1} \cdot p)f(q) \, d\sigma(q). \end{displaymath}
Moreover, $S \colon L^{2}(\sigma) \to L^{2}_{1,1/2}(\sigma)$ is an invertible operator.   \end{thm}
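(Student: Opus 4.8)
The plan is to prove the three assertions of Theorem~\ref{main4} separately: (i) the $\sigma$-a.e.\ existence of the non-tangential limits of $\mathcal{S}f$ together with the identity $(\mathcal{S}f)|_{\partial\Omega}=Sf$; (ii) the boundedness of $S\colon L^2(\sigma)\to L^2_{1,1/2}(\sigma)$; and (iii) the invertibility of $S$. For (i) I would argue by density. When $f$ is bounded with compact support, the kernel $q\mapsto G(q^{-1}\cdot p)$ is uniformly locally integrable against $\sigma$ — this uses that $G$ is comparable to the $(-2)$-nd power of the Kor\'anyi gauge and that $\sigma$ is Ahlfors $3$-regular in the Heisenberg metric — so dominated convergence shows that $\mathcal{S}f$ extends continuously across $\partial\Omega$, from either of the two components of $\He\setminus\partial\Omega$, with boundary value $Sf$. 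For general $f\in L^2(\sigma)$ I would write $f=f_1+f_2$ with $f_1$ as above and $\|f_2\|_{L^2(\sigma)}$ small, and control the error by the pointwise estimate $\mathcal{N}_\theta(\mathcal{S}f_2)+|Sf_2|\lesssim I_1|f_2|$, where $I_1g(p):=\int_{\partial\Omega}G(q^{-1}\cdot p)|g(q)|\,d\sigma(q)$ is comparable to the order-one Riesz potential on the $3$-regular space $(\partial\Omega,\sigma)$. Since $I_1\colon L^2(\sigma)\to L^6(\sigma)$ is bounded by the Hardy--Littlewood--Sobolev inequality, the usual $\epsilon/3$ argument then yields $\sigma$-a.e.\ non-tangential convergence of $\mathcal{S}f$ to $Sf$.

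For (ii) I would not differentiate the potential by hand, but instead invoke Theorem~\ref{main1} and Corollary~\ref{K-MaxNT} applied to $v=\mathcal{S}f$: these give that $\nabla\mathcal{S}f$ has a non-tangential limit $\sigma$-a.e.\ on $\partial\Omega$ with $\|\mathcal{N}_\theta(\nabla\mathcal{S}f)\|_{L^2(\sigma)}\lesssim\|f\|_{L^2(\sigma)}$. The component of $(\nabla\mathcal{S}f)|_{\partial\Omega}$ along the one-dimensional horizontal tangent field $T_h$ of $\partial\Omega$ is, by part (i) and a routine integration-by-parts identity on $\partial\Omega$, the tangential derivative $T_h(Sf)$; hence $T_h(Sf)\in L^2(\sigma)$ with norm $\lesssim\|f\|_{L^2(\sigma)}$. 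By the anisotropic structure of $L^2_{1,1/2}(\sigma)$ — in which control of the single horizontal tangential derivative already forces the half-order vertical gain, through the bracket relation $\partial_t=[X,Y]$ and the fact that the $t$-variable carries weight $2$ in the Heisenberg dilations — this is exactly the claimed bound $\|Sf\|_{L^2_{1,1/2}(\sigma)}\lesssim\|f\|_{L^2(\sigma)}$.

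For (iii) I would first establish the lower bound $\|f\|_{L^2(\sigma)}\lesssim\|Sf\|_{L^2_{1,1/2}(\sigma)}$, which simultaneously gives the injectivity of $S$ and the closedness of its range. By the jump formula of Theorem~\ref{main1}, $\nabla_\nu\mathcal{S}f=(-\tfrac12 I+D^t)f$, and $-\tfrac12 I+D^t$ is invertible on $L^2(\sigma)$ by Theorem~\ref{main2}, so $\|f\|_{L^2(\sigma)}\approx\|\nabla_\nu\mathcal{S}f\|_{L^2(\sigma)}$; the sub-elliptic Rellich identity underlying the proof of Theorem~\ref{main2} then yields $\|\nabla_\nu\mathcal{S}f\|_{L^2(\sigma)}\lesssim\|T_h(Sf)\|_{L^2(\sigma)}\le\|Sf\|_{L^2_{1,1/2}(\sigma)}$ for the $\bigtriangleup^{\flat}$-harmonic function $\mathcal{S}f$ in $\Omega$. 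For the surjectivity I would solve the $L^2$-regularity problem directly: given $\phi\in L^2_{1,1/2}(\sigma)$, extend it to a horizontal Sobolev function on $\overline{\Omega}$ and solve $\bigtriangleup^{\flat}u=0$ by the Dirichlet principle, obtaining a $\bigtriangleup^{\flat}$-harmonic $u$ with boundary trace $\phi$ and — this is the crucial point — with $\mathcal{N}_\theta(\nabla u)\in L^2(\sigma)$ and $\nabla_\nu u\in L^2(\sigma)$. Setting $f:=(-\tfrac12 I+D^t)^{-1}(\nabla_\nu u)\in L^2(\sigma)$ makes $\mathcal{S}f$ and $u$ two $\bigtriangleup^{\flat}$-harmonic functions in $\Omega$ with the same Neumann data, so by uniqueness for the Neumann problem they differ by an additive constant; since both $\phi$ and $Sf$ lie in $L^2_{1,1/2}(\sigma)$, which carries no nonzero constants on the infinite-measure set $\partial\Omega$, the constant vanishes and $Sf=\phi$.

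The hardest step will be the surjectivity — specifically, upgrading the variational solution of the Dirichlet problem with $L^2_{1,1/2}$-data to the boundary estimates $\mathcal{N}_\theta(\nabla u)\in L^2(\sigma)$ and $\nabla_\nu u\in L^2(\sigma)$. This is genuinely the $L^2$-regularity estimate, and it will require the full Rellich and square-function apparatus rather than soft functional analysis. A secondary technical point is pinning down the correct definition of the anisotropic space $L^2_{1,1/2}(\sigma)$ and verifying that the single horizontal tangential derivative $T_h$ controls its norm; if instead one works with an intrinsic definition, part (ii) will need the extra verification that the half-order vertical regularity is a genuine consequence, and part (iii) may instead proceed by a duality argument using the self-adjointness of $S$ on $L^2(\sigma)$ and the dual mapping property $S\colon L^2_{-1,-1/2}(\sigma)\to L^2(\sigma)$.
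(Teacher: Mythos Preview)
Your approach diverges from the paper's at several points, and two of these are genuine gaps rather than alternative routes.

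\textbf{Part (ii).} The claim that control of the single horizontal tangential derivative $T_h(Sf)$ ``forces the half-order vertical gain through the bracket relation $\partial_t=[X,Y]$'' is incorrect. On $\partial\Omega$ there is only \emph{one} horizontal tangential direction $\tau$; the other horizontal direction $\nu$ is normal. You cannot form the commutator $[X,Y]$ using tangential fields alone, so no subelliptic gain is available intrinsically on $\partial\Omega$. The paper's definition of $L^2_{1,1/2}(\sigma)$ (Definition~\ref{def:L2112}) explicitly requires \emph{both} $\nabla_\tau Sf\in L^2(\sigma)$ \emph{and} $T^{1/2}Sf\in L^2(\sigma)$, and the second bound is proved by an independent argument (Theorem~\ref{t:bdd}) via the vertical Fourier transform and a reduction to one-dimensional singular integrals. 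Your hedge at the end acknowledges this possibility, but the main line of argument does not survive.

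\textbf{Part (iii), lower bound.} Relatedly, the Rellich identity in this setting (Proposition~\ref{p:rellich}, estimate~\eqref{form3}) does \emph{not} give $\|\nabla_\nu\mathcal{S}f\|_{L^2}\lesssim\|\nabla_\tau Sf\|_{L^2}$; there is an unavoidable error term $\|f\|_{L^2}^{1/2}\|T^{1/2}Sf\|_{L^2}^{1/2}$ coming from the solid integral $\int_\Omega Yu\cdot Tu$. So the lower bound genuinely needs the full $L^2_{1,1/2}$-norm on the right, consistent with the previous point. Once corrected, this part of your argument matches the paper's (Section~\ref{s:checklist2}).

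\textbf{Part (iii), surjectivity.} Your proposed route via a variational Dirichlet solution plus Neumann uniqueness has two problems. First, uniqueness for the Neumann problem is not available in this paper (only Dirichlet uniqueness is proved, Theorem~\ref{main6}); the paper explicitly leaves Neumann uniqueness open. Second, the step you flag as hardest --- upgrading a variational solution with $L^2_{1,1/2}$-data to $\mathcal{N}_\theta(\nabla u)\in L^2(\sigma)$ --- is precisely the $L^2$-regularity problem, which in this paper is \emph{derived from} Theorem~\ref{main4} (see Theorem~\ref{main5}), so the argument is circular. The paper instead proves surjectivity by the method of continuity: $S_0$ (the case $A\equiv 0$) is shown to be invertible by an explicit Fourier computation (Proposition~\ref{prop4}), and then $s\mapsto S_{A_s}$ is shown to be Lipschitz in operator norm $L^2\to L^2_{1,1/2}$ (Section~\ref{s:checklist}), using Theorem~\ref{t:bdd} and the singular-integral machinery of \cite{2019arXiv191103223F}.

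\textbf{Part (i).} Your density-plus-HLS argument would work, but the paper's is simpler: for $p$ in the non-tangential approach region $V_{\partial\Omega}(p_0,\theta)$ one has $d(p_0,q)\le(1+\theta^{-1})d(p,q)$, hence $G(q^{-1}\cdot p)\lesssim_\theta G(q^{-1}\cdot p_0)$, and dominated convergence applies directly at every $p_0$ where $Sf(p_0)$ converges absolutely (Proposition~\ref{prop11}).
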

The definition of the (homogeneous) Sobolev space $L^{2}_{1,1/2}(\sigma)$ is somewhat involved, so we postpone the full details to Section \ref{s:singleInvert}, where also the proof of Theorem \ref{main4} can be found. Informally, this space consists of those functions in $L^{2}(\sigma) + L^{\infty}(\sigma)$ with a "tangential horizontal derivative" in $L^{2}(\sigma)$, and which, additionally, have a $\tfrac{1}{2}$-order $t$-derivative in $L^{2}(\sigma)$. To make this rigorous, it is most convenient to use the existence of a canonical "graph map" $\Gamma \colon \R^{2} \to \partial \Omega$, defined below \eqref{curveGamma}. Then, one may define $L^{2}_{1,1/2}(\sigma)$ to consist of those functions $f \in L^{2}(\sigma) + L^{\infty}(\sigma)$ whose precomposition with $\Gamma$ lies in a certain non-isotropic homogeneous Sobolev space, see Definition \ref{def:L2112}.

As a corollary of Theorem \ref{main4}, we obtain the following regularity result for the Dirichlet problem \eqref{dirichlet} when the boundary values lie in $L^{2}_{1,1/2}(\sigma)$:
\begin{thm}\label{main5} Let $\Omega \subset \R^{3}$ be a flag domain, and $g \in L^{2}_{1,1/2}(\sigma)$. Then, there exists a $\bigtriangleup^{\flat}$-harmonic function $u \in C^{\infty}(\Omega)$ such that the non-tangential limits of $u$ on $\partial \Omega$ coincide $\sigma$ a.e. with $g$, and the non-tangential maximal function of $\nabla u$ is in $L^{2}(\sigma)$, with
\begin{equation}\label{form177} \|\mathcal{N}_{\theta}(\nabla u)\|_{L^{2}(\sigma)} \lesssim \|g\|_{L^{2}_{1,1/2}(\sigma)}, \qquad \theta \in (0,1). \end{equation}
Moreover, $u = \mathcal{S}f$ for some $f \in L^{2}(\sigma)$ with $\|f\|_{L^{2}(\sigma)} \lesssim \|g\|_{L^{2}_{1,1/2}(\sigma)}$. \end{thm}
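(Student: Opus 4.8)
The plan is to read off Theorem \ref{main5} from Theorem \ref{main4} together with the non-tangential maximal function bound for $\nabla \mathcal{S}$ recorded in Corollary \ref{K-MaxNT}; no genuinely new analysis should be needed. Given $g \in L^{2}_{1,1/2}(\sigma)$, the invertibility statement in Theorem \ref{main4} produces a unique $f := S^{-1}g \in L^{2}(\sigma)$, and the boundedness of $S^{-1}$ gives $\|f\|_{L^{2}(\sigma)} \lesssim \|g\|_{L^{2}_{1,1/2}(\sigma)}$. I would then simply set $u := \mathcal{S}f$. Since $f \in L^{2}(\sigma)$, the integral defining $\mathcal{S}f(p)$ is absolutely convergent for every $p \in \R^{3} \, \setminus \, \partial \Omega$, and $u$ is $\bigtriangleup^{\flat}$-harmonic in $\He \, \setminus \, \partial \Omega$, in particular in $\Omega$, as was already observed in the introduction (differentiation under the integral sign is legitimate away from $\partial\Omega$, and $\bigtriangleup^{\flat}_{p}G(q^{-1}\cdot p) = 0$ for $p \neq q$). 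The smoothness $u \in C^{\infty}(\Omega)$ is then automatic from the hypoellipticity of $\bigtriangleup^{\flat}$, since $X$, $Y$ together with $[X,Y]$ span $\R^{3}$ at every point.

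For the boundary behaviour, Theorem \ref{main4} asserts that the non-tangential limits of $\mathcal{S}f$ exist $\sigma$ a.e.\ on $\partial\Omega$ and equal $Sf$. Applying this to our $f = S^{-1}g$, the non-tangential limits of $u = \mathcal{S}f$ coincide $\sigma$ a.e.\ with $Sf = S(S^{-1}g) = g$, which is precisely the first assertion of Theorem \ref{main5}. For the quantitative regularity of $u$, I would invoke Corollary \ref{K-MaxNT}, which gives $\|\mathcal{N}_{\theta}(\nabla \mathcal{S}f)\|_{L^{2}(\sigma)} \lesssim \|f\|_{L^{2}(\sigma)}$ for every $\theta \in (0,1)$; combined with the bound $\|f\|_{L^{2}(\sigma)} \lesssim \|g\|_{L^{2}_{1,1/2}(\sigma)}$ from the previous step, this is exactly \eqref{form177}. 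The remaining claims, that $u = \mathcal{S}f$ with $\|f\|_{L^{2}(\sigma)} \lesssim \|g\|_{L^{2}_{1,1/2}(\sigma)}$, hold by construction.

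Thus there is no serious obstacle in the proof of Theorem \ref{main5} itself: the entire weight of the argument sits in Theorem \ref{main4} --- in particular in the surjectivity of $S \colon L^{2}(\sigma) \to L^{2}_{1,1/2}(\sigma)$, which forces one to understand precisely how $\nabla \mathcal{S}$ interacts with the ``tangential horizontal derivative'' and the $\tfrac{1}{2}$-order $t$-derivative appearing in the definition of $L^{2}_{1,1/2}(\sigma)$ --- and in Corollary \ref{K-MaxNT}. The only point requiring a little care when assembling the pieces is bookkeeping: one should check that the non-tangential limits and maximal functions in Theorem \ref{main4} and Corollary \ref{K-MaxNT} are the interior ones with aperture $\theta$ used in the statement above, and that $g \in L^{2}_{1,1/2}(\sigma)$ is exactly the hypothesis under which $S^{-1}g$ is defined; both are immediate from the definitions in Sections \ref{s:nt} and \ref{s:singleInvert}.
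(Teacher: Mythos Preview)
Your proposal is correct and matches the paper's own argument essentially verbatim: the paper explicitly states (just after Theorem~\ref{sInvert1}) that Theorem~\ref{main4} and Proposition~\ref{prop11} combined yield the non-tangential limits, and that the bound \eqref{form177} follows from Corollary~\ref{K-MaxNT}. There is nothing to add.
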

The main news compared to Theorem \ref{main2} is the regularity of $\nabla u$: the bound \eqref{form177} follows easily from the representation of $u$ as a single layer potential, see Corollary \ref{K-MaxNT}. Theorems \ref{main3} and \ref{main5} imply that if the boundary data "$g$" in the Dirichlet problem \eqref{dirichlet} is in $L^{2}(\sigma)$ or $L^{2}_{1,1/2}(\sigma)$, then the non-tangential maximal function of "$u$" or "$\nabla u$", respectively, lies in $L^{2}(\sigma)$. It is also true, but takes some extra work to show, that if 
\begin{displaymath} g \in \mathbf{L}^{2}_{1,1/2}(\sigma) := L^{2}(\sigma) \cap L^{2}_{1,1/2}(\sigma), \end{displaymath}
then the estimates for "$u$" and "$\nabla u$" hold simultaneously:
\begin{thm}\label{main7} Let $\Omega \subset \R^{3}$ be a flag domain, and $g \in \mathbf{L}^{2}_{1,1/2}(\sigma)$. Then, the double layer potential solution $u \in C^{\infty}(\Omega)$ to the Dirichlet problem \eqref{dirichlet}, whose existence is given by Theorem \ref{main3}, satisfies
\begin{displaymath} \max\{\|\mathcal{N}_{\theta}u\|_{L^{2}(\sigma)}, \|\mathcal{N}_{\theta}(\nabla u)\|_{L^{2}(\sigma)}\} \lesssim \|g\|_{\mathbf{L}^{2}_{1,1/2}(\sigma)} \qquad \theta \in (0,1). \end{displaymath} 
Here $\|g\|_{\mathbf{L}^{2}_{1,1/2}(\sigma)} := \|g\|_{L^{2}(\sigma)} + \|g\|_{L^{2}_{1,1/2}(\sigma)}$. \end{thm}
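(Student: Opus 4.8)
The plan is to identify the double layer potential solution $u = \mathcal{D}f_{g}$ furnished by Theorem \ref{main3} with the single layer potential solution of Theorem \ref{main5} on $\Omega$, and then to transfer the gradient estimate from the latter representation. Note first that the bound $\|\mathcal{N}_{\theta}u\|_{L^{2}(\sigma)} \lesssim \|g\|_{L^{2}(\sigma)} \leq \|g\|_{\mathbf{L}^{2}_{1,1/2}(\sigma)}$ is already contained in Theorem \ref{main3}, which only uses $g \in L^{2}(\sigma)$; so the whole remaining content is the estimate for $\mathcal{N}_{\theta}(\nabla u)$.

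Since $g \in \mathbf{L}^{2}_{1,1/2}(\sigma) \subset L^{2}_{1,1/2}(\sigma)$, Theorem \ref{main4} provides $\tilde{f} := S^{-1}g \in L^{2}(\sigma)$ with $\|\tilde{f}\|_{L^{2}(\sigma)} \lesssim \|g\|_{L^{2}_{1,1/2}(\sigma)}$, and then $v := \mathcal{S}\tilde{f}$ is the single layer potential solution of Theorem \ref{main5}: it is $\bigtriangleup^{\flat}$-harmonic in $\Omega$, has interior non-tangential limit $S\tilde{f} = g$ $\sigma$-a.e.\ on $\partial \Omega$ (by Theorem \ref{main4}), and satisfies $\|\mathcal{N}_{\theta}(\nabla v)\|_{L^{2}(\sigma)} \lesssim \|\tilde{f}\|_{L^{2}(\sigma)}$ by Corollary \ref{K-MaxNT}. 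Moreover, since the boundary trace of $v$ equals $g \in L^{2}(\sigma)$, the same circle of estimates (Corollary \ref{K-MaxNT}) also yields $\|\mathcal{N}_{\theta}v\|_{L^{2}(\sigma)} \lesssim \|\tilde{f}\|_{L^{2}(\sigma)} + \|g\|_{L^{2}(\sigma)} \lesssim \|g\|_{\mathbf{L}^{2}_{1,1/2}(\sigma)}$.

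The heart of the matter is the claim that $u = v$ on $\Omega$. The difference $w := u - v$ is $\bigtriangleup^{\flat}$-harmonic in $\Omega$, has $\mathcal{N}_{\theta}w \in L^{2}(\sigma)$ (by Theorem \ref{main3} and the preceding paragraph, using the sublinearity of $\mathcal{N}_{\theta}$), and has vanishing interior non-tangential limit $\sigma$-a.e.\ on $\partial \Omega$, since both $u$ and $v$ have non-tangential limit $g$ there (by Theorems \ref{main1}, \ref{main3} and \ref{main4}). So it suffices to establish uniqueness for the $L^{2}$-Dirichlet problem: if $w$ is $\bigtriangleup^{\flat}$-harmonic in a flag domain, $\mathcal{N}_{\theta}w \in L^{2}(\sigma)$, and $w$ has vanishing interior non-tangential limit $\sigma$-a.e., then $w \equiv 0$. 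I would prove this with a maximum-principle argument applied to $w^{2}$, which is $\bigtriangleup^{\flat}$-subharmonic since $\bigtriangleup^{\flat}(w^{2}) = 2|\nabla w|^{2} \geq 0$, has $\mathcal{N}_{\theta}(w^{2}) = (\mathcal{N}_{\theta}w)^{2} \in L^{1}(\sigma)$, and has vanishing non-tangential limit. Exhausting $\Omega$ by smooth interior subdomains, applying the classical maximum principle on each, and passing to the limit --- justified by the $L^{1}(\sigma)$-control of $\mathcal{N}_{\theta}(w^{2})$ together with the absolute continuity of $\bigtriangleup^{\flat}$-harmonic measure $\omega^{p}$ with respect to $\sigma$ on flag domains --- gives
\[ 0 \leq w(p)^{2} \leq \int_{\partial \Omega} \bigl(\text{non-tangential limit of }w^{2}\bigr)\, d\omega^{p} = 0, \qquad p \in \Omega, \]
so $w \equiv 0$, i.e.\ $u = v$.

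Once $u = v$ is known, $\nabla u = \nabla v$ on $\Omega$, so the second paragraph gives
\[ \|\mathcal{N}_{\theta}(\nabla u)\|_{L^{2}(\sigma)} = \|\mathcal{N}_{\theta}(\nabla v)\|_{L^{2}(\sigma)} \lesssim \|\tilde{f}\|_{L^{2}(\sigma)} \lesssim \|g\|_{L^{2}_{1,1/2}(\sigma)} \leq \|g\|_{\mathbf{L}^{2}_{1,1/2}(\sigma)}, \]
which together with the estimate on $\mathcal{N}_{\theta}u$ from the first paragraph yields the theorem. I expect the main obstacle to be the uniqueness step: making the maximum-principle comparison rigorous for $\bigtriangleup^{\flat}$-subharmonic functions controlled only through an $L^{2}(\sigma)$ non-tangential maximal function requires knowing that $\bigtriangleup^{\flat}$-harmonic measure on a flag domain assigns no mass to $\sigma$-null sets, which belongs to the same circle of ideas as the solvability results, Theorems \ref{main1}--\ref{main4}. (An alternative to the uniqueness step would be to show directly that $f_{g} = (\tfrac{1}{2}I + D)^{-1}g$ again lies in $\mathbf{L}^{2}_{1,1/2}(\sigma)$, i.e.\ that $\tfrac{1}{2}I + D$ is invertible there, and then to combine this with a gradient bound for double layer potentials with densities in $\mathbf{L}^{2}_{1,1/2}(\sigma)$; but this route requires controlling the commutator of $D$ with tangential differentiation.)
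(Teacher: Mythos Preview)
Your main line of argument has a genuine gap at the step where you claim $\|\mathcal{N}_{\theta}v\|_{L^{2}(\sigma)} \lesssim \|\tilde f\|_{L^{2}(\sigma)} + \|g\|_{L^{2}(\sigma)}$ for $v = \mathcal{S}\tilde f$. Corollary \ref{K-MaxNT} bounds $\mathcal{N}_{\theta}(\nabla\mathcal{S}f)$ and $\mathcal{N}_{\theta}(\mathcal{D}f)$, but says nothing about $\mathcal{N}_{\theta}(\mathcal{S}f)$: the kernel of $\mathcal{S}$ is $2$-dimensional on a $3$-regular surface, so there is no Cotlar-type pointwise bound of the form ``maximal function $+$ truncated singular integral'' available for it. The paper itself stresses (just above Theorem \ref{main4}) that $S$ is smoothing and $\{Sf : f\in L^2(\sigma)\}$ neither contains nor is contained in $L^2(\sigma)$; the extra hypothesis $S\tilde f = g\in L^2(\sigma)$ controls the boundary trace but not the non-tangential maximal function. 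Without $\mathcal{N}_{\theta}v\in L^2(\sigma)$ (or even $\mathcal{N}_{\mathrm{rad}}v\in L^2(\sigma)$) you cannot feed $w = u - v$ into any uniqueness statement, whether Theorem \ref{main6} or your own maximum-principle sketch. The latter has a second gap: it invokes absolute continuity of $\bigtriangleup^{\flat}$-harmonic measure with respect to $\sigma$ on a general flag domain, which is nowhere established in the paper (Theorem \ref{main6} is proved by an entirely different mechanism, via smooth approximating domains and the double layer representation there).

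The paper in fact takes precisely the ``alternative'' route you mention and then set aside. It proves (Theorem \ref{t:Invert}) that $\tfrac{1}{2}I + D$ is invertible on $\mathbf{L}^{2}_{1,1/2}(\sigma)$, so that the density $h := (\tfrac{1}{2}I+D)^{-1}g$ lies in $\mathbf{L}^{2}_{1,1/2}(\sigma)$ and can be written $h = Sf$ with $f\in L^2(\sigma)$. The passage from $u = \mathcal{D}h$ to a single layer potential is then achieved not by uniqueness but by the algebraic identity $\mathcal{D}(Sf) = -\mathcal{S}\bigl((\tfrac{1}{2}I + D^{t})f\bigr)$ in $\Omega$ (Proposition \ref{prop12}), obtained from the divergence theorem applied to $G_p\nabla\mathcal{S}f - \mathcal{S}f\,\nabla G_p$. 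This gives $u = -\mathcal{S}\bigl((\tfrac{1}{2}I + D^{t})f\bigr)$ directly, after which Corollary \ref{K-MaxNT} (now legitimately, for $\nabla\mathcal{S}$) yields $\|\mathcal{N}_\theta(\nabla u)\|_{L^2(\sigma)} \lesssim \|(\tfrac{1}{2}I+D^{t})f\|_{L^2(\sigma)} \lesssim \|g\|_{\mathbf{L}^2_{1,1/2}(\sigma)}$. So the step you flagged as ``requires controlling the commutator of $D$ with tangential differentiation'' is exactly what the paper carries out, and the identity of Proposition \ref{prop12} is what replaces your appeal to uniqueness.
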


The proof of Theorem \ref{main7} can be found in Section \ref{s:Invert3}. As part of the argument, we will verify that the operator $\tfrac{1}{2}I + D$ is invertible on $\mathbf{L}^{2}_{1,1/2}(\sigma)$, see Theorem \ref{t:Invert}.

\begin{ex}\label{ex:Green} Theorem \ref{main7} applies to the boundary values $g_{p}(q) := G(q^{-1} \cdot p)$, where $p \in \Omega$ is fixed. Checking that $g_{p} \in L^{2}(\sigma)$ is straightforward, using $g_{p}(q) \sim d(p,q)^{-2}$, and we will verify that $g_{p} \in L^{2}_{1,1/2}(\sigma)$ in Remark \ref{r:SLP}. Therefore, by Theorem \ref{main7}, there exists a $\bigtriangleup^{\flat}$-harmonic function $u_{p} = \mathcal{D}f_{p} \in C^{\infty}(\Omega)$, with $f_{p} \in \mathbf{L}^{2}_{1,1/2}(\sigma)$, whose non-tangential limits exist $\sigma$ a.e. and equal $g_{p}$. Moreover,
\begin{displaymath} \max\{\|\mathcal{N}_{\theta}u_{p}\|_{L^{2}(\sigma)}, \|\mathcal{N}_{\theta}(\nabla u_{p})\|_{L^{2}(\sigma)}\} \lesssim \|g_{p}\|_{\mathbf{L}^{2}_{1,1/2}(\sigma)} \qquad \theta \in (0,1). \end{displaymath} 
\end{ex}

\subsection{Uniqueness in the Dirichlet problem} Theorems \ref{main3}, \ref{main5}, and \ref{main7} are existence results. Since the solutions are not continuous up to $\partial \Omega$, and the domains we consider are unbounded, the questions of uniqueness are not trivial. We only have a satisfactory answer for the Dirichlet problem \eqref{dirichlet}, contained in the following theorem:
\begin{thm}\label{main6} Let $\Omega \subset \R^{3}$ be a flag domain, and let $u \in C^{\infty}(\Omega)$ be $\bigtriangleup^{\flat}$-harmonic with radial maximal function $\mathcal{N}_{\mathrm{rad}}u \in L^{2}(\sigma)$, and with vanishing radial limits $\sigma$ a.e. on $\partial \Omega$. Then $u \equiv 0$. \end{thm}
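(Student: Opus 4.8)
The plan is to prove uniqueness through the classical Green's function representation formula, the two non-classical features — the unboundedness of $\Omega$, and the fact that $u$ is controlled only through its radial maximal function and its radial limits — being handled by a careful exhaustion argument. \textbf{Step 1 (Green's function).} Fix $p \in \Omega$. By Example~\ref{ex:Green}, the function $g_{p}(q) := G(q^{-1} \cdot p)$ lies in $\mathbf{L}^{2}_{1,1/2}(\sigma)$, so Theorem~\ref{main7} produces a $\bigtriangleup^{\flat}$-harmonic $w_{p} = \mathcal{D}f_{p} \in C^{\infty}(\Omega)$ with $\mathcal{N}_{\theta}w_{p}, \mathcal{N}_{\theta}(\nabla w_{p}) \in L^{2}(\sigma)$ whose interior radial limits equal $g_{p}$ $\sigma$-a.e. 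Since Folland's $G$ is even and $\bigtriangleup^{\flat}$ is left-invariant, $q \mapsto G(q^{-1} \cdot p) = G(p^{-1} \cdot q)$ is $\bigtriangleup^{\flat}$-harmonic in $q$ away from $q = p$, so
\[ G_{\Omega}(p,q) := G(q^{-1} \cdot p) - w_{p}(q) \]
is $\bigtriangleup^{\flat}$-harmonic in $q \in \Omega \setminus \{p\}$, satisfies $\bigtriangleup^{\flat}_{q} G_{\Omega}(p,\cdot) = -\delta_{p}$, and has vanishing radial limits $\sigma$-a.e.\ on $\partial\Omega$. I would then record the two facts needed below: (i) $0 \le G_{\Omega}(p,q) \le G(q^{-1} \cdot p) \lesssim d(p,q)^{-2}$, and at infinity $|\nabla_{q} G_{\Omega}(p,q)| \lesssim_{p} d(0,q)^{-3}$ — the size bounds from $w_{p} \ge 0$, from interior gradient estimates, and from the kernel bounds for $\mathcal{D}f_{p}$, and the positivity $w_{p} \le G(\cdot^{-1} \cdot p)$ from a maximum principle for $\bigtriangleup^{\flat}$ on $\Omega$ (equivalently, by identifying $G_{\Omega}$ with the Perron Green's function); (ii) $\nabla_{\nu} G_{\Omega}(p,\cdot) \in L^{2}(\sigma)$: one writes $\nabla_{\nu} G_{\Omega}(p,\cdot) = \langle \nabla_{q}G(q^{-1} \cdot p)|_{\partial\Omega}, \nu \rangle - \langle (\nabla w_{p})|_{\partial\Omega}, \nu \rangle$, the second term being in $L^{2}(\sigma)$ by Theorem~\ref{main7} and the first being a smooth function of $q \in \partial\Omega$ (no singularity, since $p \notin \partial\Omega$) decaying like $d(p,q)^{-3}$, hence square-integrable against $\sigma$.

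\textbf{Step 2 (Green's identity on an exhaustion).} Since $\bigtriangleup^{\flat} = X^{2} + Y^{2}$ is formally self-adjoint, the horizontal divergence theorem gives, in the form adapted to $\bigtriangleup^{\flat}$, the surface measure $\sigma$, and the horizontal normal $\nu$, Green's second identity $\int_{U}(u\bigtriangleup^{\flat}v - v\bigtriangleup^{\flat}u)\,d\mathcal{L}^{3} = \int_{\partial U}(u\,\nabla_{\nu}v - v\,\nabla_{\nu}u)\,d\sigma$ for $u,v \in C^{2}(\overline{U})$ and $U$ bounded with Lipschitz boundary. I would apply this with $v = G_{\Omega}(p,\cdot)$ on domains $\Omega_{N,k} := \{x < A(y) - 1/k\} \cap B(0,N) \setminus \overline{B(p,\varepsilon)}$: excising $B(p,\varepsilon)$ removes the singularity (its contribution $\to u(p)$ as $\varepsilon \to 0$), and pushing the graph inward by $1/k$ makes $u$ and $v$ continuous up to $\partial\Omega_{N,k}$. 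Letting $k \to \infty$ with $N,\varepsilon$ fixed, dominated convergence — using $\mathcal{N}_{\mathrm{rad}}u, \mathcal{N}_{\theta}(\nabla w_{p}) \in L^{2}(\sigma)$ together with the kernel estimates of Step~1 — identifies the limit of the integrals over the inward-pushed graphs with integrals over $\partial\Omega \cap B(0,N)$, and there the term $\int_{\partial\Omega \cap B(0,N)} G_{\Omega}(p,\cdot)\,\nabla_{\nu}u\,d\sigma$ vanishes because $G_{\Omega}(p,\cdot)$ has radial limit $0$ $\sigma$-a.e.\ on $\partial\Omega$. After also letting $\varepsilon \to 0$ one arrives at
\[ u(p) = \int_{\partial\Omega \cap B(0,N)} u(q)\,\nabla_{\nu}G_{\Omega}(p,q)\,d\sigma(q) + \int_{\Omega \cap \partial B(0,N)} \big( u\,\nabla_{\nu}G_{\Omega}(p,\cdot) - G_{\Omega}(p,\cdot)\,\nabla_{\nu}u \big)\,d\mathcal{H}^{2}. \]

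\textbf{Step 3 (Letting $N \to \infty$).} In the first integral, $|u(q)\,\nabla_{\nu}G_{\Omega}(p,q)| \le \mathcal{N}_{\mathrm{rad}}u(q)\,|\nabla_{\nu}G_{\Omega}(p,q)| \in L^{1}(\sigma)$ by Cauchy--Schwarz and Step~1(ii), while $u \to 0$ radially $\sigma$-a.e., so dominated convergence forces $\int_{\partial\Omega \cap B(0,N)} u\,\nabla_{\nu}G_{\Omega}(p,\cdot)\,d\sigma \to 0$. For the spherical cap, Step~1(i) bounds $|G_{\Omega}(p,\cdot)|$ and $d(0,\cdot)\,|\nabla_{q}G_{\Omega}(p,\cdot)|$ there by $\lesssim_{p} N^{-2}$; combined with $\mathcal{N}_{\mathrm{rad}}u \in L^{2}(\sigma)$ (which controls $|u|$ along the radial segments foliating $\Omega$) and the interior estimate $|\nabla u(q)| \lesssim \dist(q,\partial\Omega)^{-1}\sup_{B}|u|$ — integrated over a band $B(0,2N) \setminus B(0,N)$ to select a good radius $N_{j} \in (N,2N)$ through which the flux of $|\nabla u|$ is small — the cap integral tends to $0$ along $N_{j} \to \infty$. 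Hence $u(p) = 0$, and since $p \in \Omega$ was arbitrary, $u \equiv 0$.

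\textbf{Expected main obstacle.} The difficulty lies not in any single identity but in the limiting procedure: there is no $L^{p}$ bound on $\nabla u$, so the term $\int G_{\Omega}(p,\cdot)\,\nabla_{\nu}u$ must be annihilated — on $\partial\Omega$ by the vanishing of $G_{\Omega}(p,\cdot)$ there, which forces the genuine double exhaustion $\Omega_{N,k}$ and a real use of the radial maximal function rather than of a.e.\ limits alone, and on the caps by the decay of $G_{\Omega}$ plus a mean-value-over-radii selection. The other delicate point is establishing $\nabla_{\nu}G_{\Omega}(p,\cdot) \in L^{2}(\sigma)$, i.e.\ that $\bigtriangleup^{\flat}$-harmonic measure of a flag domain has an $L^{2}(\sigma)$ density; this is precisely where the layer potential solution of Theorem~\ref{main7}, applied to the data of Example~\ref{ex:Green}, is indispensable, and the positivity and decay of $G_{\Omega}$ rely on a maximum principle for $\bigtriangleup^{\flat}$ valid on these non-smooth, unbounded domains.
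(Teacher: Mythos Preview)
Your Green's-function approach has a genuine gap in Step~2, exactly at the point you flag as the ``expected main obstacle'' --- and it is not resolved by the double exhaustion you propose. On the pushed-in surface $\partial\Omega_{k}\cap B(0,N)$ you must pass to the limit in
\[
\int_{\partial\Omega_{k}\cap B(0,N)} G_{\Omega}(p,\cdot)\,\nabla_{\nu}u\;d\sigma_{k}.
\]
You invoke dominated convergence, but there is no dominating function: $\nabla u$ has no $L^{p}$ control whatsoever near $\partial\Omega$, and the radial vanishing of $G_{\Omega}(p,\cdot)$ on $\partial\Omega$ is only an a.e.\ statement. To make the product go to zero one would need a quantitative rate such as $G_{\Omega}(p,q)\lesssim_{p}\dist(q,\partial\Omega)$, i.e.\ $\nabla_{q}G_{\Omega}(p,\cdot)\in L^{\infty}$ near $\partial\Omega$. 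But Theorem~\ref{main7} only yields $\mathcal{N}_{\theta}(\nabla w_{p})\in L^{2}(\sigma)$, not $L^{\infty}$, so this Lipschitz bound is unavailable on a merely Lipschitz flag domain. The same issue resurfaces on the spherical cap in Step~3: the interior estimate $|\nabla u(q)|\lesssim\dist(q,\partial\Omega)^{-1}\sup|u|$ blows up where the cap meets $\partial\Omega$, and a good-radius selection does not cure the boundary singularity.

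The paper's proof avoids this difficulty entirely and is organised quite differently. It never builds a Green's function for $\Omega$. Instead it mollifies $A$ to obtain \emph{smooth} flag domains $\Omega_{j}\Subset\Omega$, solves the Dirichlet problem in $\Omega_{j}$ with the \emph{continuous} data $u|_{\partial\Omega_{j}}$ via a double layer potential $u_{j}=\mathcal{D}f_{j}$, and --- crucially --- invokes Theorem~\ref{t:regularity} (which requires $A_{j}\in C^{\infty}$) to get $u_{j}\in C(\overline{\Omega_{j}})$. The identity $u\equiv u_{j}$ on $\Omega_{j}$ is then proved not by a flag-domain Green representation but by the Poisson formula on Kor\'anyi balls $B(p_{0},R)$, where the pointwise bound $|P_{R}(p,q)|\lesssim d(p,q)^{-3}$ of Capogna--Garofalo--Nhieu is available; a coarea/good-radius argument converts this into control by $\|\mathcal{N}_{j}(u-u_{j})\|_{L^{2}(\sigma_{j})}$. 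Finally $\|\mathcal{N}_{j}u\|_{L^{2}(\sigma_{j})}=\|\mathcal{N}_{j}u_{j}\|_{L^{2}(\sigma_{j})}\lesssim\|u\|_{L^{2}(\sigma_{j})}\to 0$. The detour through smooth $\Omega_{j}$ is precisely what supplies the pointwise Green/Poisson estimates that your direct approach lacks.
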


The proof can be found in Section \ref{s:uniqueness}.

\subsection{Previous work}\label{s:previousWork} First, an apology: we can only scratch the surface of the literature!

\subsubsection{Some history of elliptic and parabolic problems} The method of layer potentials was applied in the late 70s by Fabes, Jodeit, and Rivi\`ere \cite{MR501367} to solve the Dirichlet and Neumann problems for the (elliptic) Laplacian with $L^{p}$-boundary values, in bounded subdomains of $\R^{n}$ with $C^{1}$-boundaries. The technique was extended to the case of Lipschitz boundaries by Verchota \cite{MR769382} in the early 80s. These developments were enabled by nearly contemporary works of Calder\'on \cite{Calderon} and Coifman, Macintosh, and Meyer \cite{CMM}, which established the $L^{p}$-boundedness of the Riesz transform, and the double layer potential, on boundaries of Lipschitz domains. For more information on the case of the standard Laplacian, and more general elliptic problems, we refer to the book of Kenig \cite{MR1282720}.

The method of layer potentials was applied to the (parabolic) heat equation first by Fabes and Rivi\`ere \cite{MR545307} and Brown \cite{MR987761} in the cases of $C^{1}$ and Lipschitz cylinders in $\R^{n + 1}$. The cylinder setup is close to our flag domains, with the technical difference that Fabes, Rivi\`ere, and Brown considered cylinders with a compact base. The layer potential technique was generalised to the case of "time-varying domains" by Lewis and Murray \cite{MR1323804} and Hofmann and Lewis \cite{MR1418902} in mid-90s. The $L^{p}$-boundedness of the relevant singular integral operators in full generality was established in a separate paper \cite{MR1484857} by Hofmann. Rather informally, the "time-varying domains" mentioned here are subdomains of $\R^{n + 1}$ bounded by "parabolic Lipschitz graphs". For precise definitions, see \cite{MR1418902}. As a more recent reference where (more general) parabolic equations are solved by means of layer potentials in time-varying domains, see the paper \cite{2018arXiv180507270D} of Dindo{\v{s}}, Dyer, and Hwang.

\subsubsection{Some history of sub-elliptic problems} Layer potentials were applied by Jerison \cite{MR639800,MR633978} in the early 80s to study the boundary regularity of solutions to the Dirichlet problem \eqref{dirichlet} in bounded $C^{\infty}$-domains $\Omega \subset \R^{2n + 1} \cong \He^{n}$. We also mention the earlier papers of Derridj \cite{MR601055}, and Kohn and Nirenberg \cite{MR181815}, where same problem was studied with other techniques. Jerison's work mainly concentrated on a phenomenon which is characteristic to sub-elliptic problems: the existence of \emph{characteristic points} on the boundary. If $\Omega \subset \R^{3}$ is a $C^{1}$-domain, these are the points $p \in \partial \Omega$ where the tangent space of $\partial \Omega$ is spanned by $\{X_{p},Y_{p}\}$. Jerison demonstrated that whenever $\Omega \subset \R^{2n + 1}$ is a bounded $C^{\infty}$-domain, then the solutions of the Dirichlet problem \eqref{dirichlet}, with $\beta$-H\"older continuous boundary values remain $\beta$-H\"older continuous near non-characteristic points, see \cite[Theorem (7.1)]{MR639800}. Similar results with smooth boundary values are contained in \cite{MR601055,MR181815}. On the other hand, Jerison \cite[Theorem (3.1)]{MR633978} gave counterexamples to show that the conclusion of \cite[Theorem (7.1)]{MR639800} can fail near characteristic points. The delicate case of regularity near "isolated" characteristic points was also studied by Jerison \cite[\S 5]{MR633978}.

Summarising, Jerison was not trying to solve the Dirichlet problem \eqref{dirichlet} with minimal assumptions of $\partial \Omega$ and the boundary values $g$, but rather understand the phenomenon of characteristic points in the simplest case where $\partial \Omega$ is $C^{\infty}$-smooth, and, at least, $g \in C(\partial \Omega)$. 

The case $g \in L^{p}(\partial \Omega)$ was considered almost simultaneously by Lanconelli and Uguzzoni \cite{MR1620876} and Capogna, Garofalo, and Nhieu \cite{MR1890994} in the late 90s. The domains $\Omega \subset \He^{n}$ studied in \cite{MR1620876,MR1890994} were assumed to be bounded, and to satisfy the \emph{uniform outer ball condition} (UOBC), namely the following: there exists $R > 0$ such that each $p \in \partial \Omega$ lies on the boundary of a \emph{Kor\'anyi ball} $B_{d}(q,R) \subset \He^{n} \, \setminus \, \Omega$. In brief, Kor\'anyi balls are (left translates of) super level sets of the fundamental solution "$G$" of $-\bigtriangleup^{\flat}$. They also happen to be metric balls in the metric $d(p,q) := \|q^{-1} \cdot p\|$, where $\|(z,t)\| := \sqrt[4]{|z|^{4} + 16t^{2}}$ for $(z,t) \in \R^{2n} \times \R \cong \He^{n}$. This metric will be used throughout the present paper.

For example, all bounded convex set in $\He^{n}$ satisfy the UOBC, see \cite[Theorem 2.6]{MR1890994} or \cite[Corollary A.3]{MR1620876}. The UOBC implies very good properties for the $\bigtriangleup^{\flat}$-harmonic measure of $\Omega$ (see \cite[Theorem 4.3]{MR1620876} and \cite[Theorem 1.1]{MR1890994}), and in particular allows one to solve the Dirichlet problem \eqref{dirichlet} with $g \in L^{p}(\partial \Omega)$, $1 < p \leq \infty$. This was done explicitly by Capogna, Garofalo, and Nhieu in \cite{MR2500489}, partially relying on the previous work \cite{MR1658616}. The approach via harmonic measure is not suited for studying the Neumann problem \eqref{neumann}, and it has received little attention: see \cite[Theorem 4.1]{MR3511807}, which solves \eqref{neumann} in the case where $\Omega \subset \He^{n}$ is a Kor\'anyi ball, and the boundary data is (a bit better than) continuous. 

For non-smooth, or unbounded, domains, even less is known. We mention the paper \cite{MR3632681} of Garofalo, Ruzhansky, and Suragan, where Green's functions are explicitly computed for vertical half-spaces, and certain unbounded domains with "$l$-wedges" in Carnot groups. In $\He \cong \R^{3}$, an "$l$-wedge domain" would be $\{(x,y,t) : x,y > 0, \, t \in \R\}$ -- which is in particular a flag domain.

\subsubsection{The layer potential approach} We already mentioned that Jerison \cite{MR639800} applied layer potentials in his work. However, Jerison observed that even if $\partial \Omega$ is $C^{\infty}$-smooth, the "boundary double layer potential" $D$, which appears in Theorem \ref{main1}, is not a compact operator on $L^{p}(\partial \Omega)$. This is a major difference to the double layer potentials arising from elliptic and parabolic problems. Even in the case where $\Omega$ is a (vertical) half-space, the operator $D$ is a genuine singular integral operator, and one cannot e.g. use Fredholm theory to prove the invertibility of $\tfrac{1}{2}I \pm D$. For comparison, the elliptic boundary double layer potential is the zero operator if $\Omega \subset \R^{n}$ is a half-space.

The situation is actually worse than that: before one even gets to study the invertibility of $\tfrac{1}{2}I \pm D$ on $L^{p}(\partial \Omega)$, one needs make sure that the principal value operator $D$ is well-defined, and bounded on $L^{p}(\partial \Omega)$. It remains largely an open problem to determine for which domains $\Omega \subset \He^{n}$ this holds. We only solve these problems (in Section \ref{s:principal}) for flag domains in $\He \cong \R^{3}$. The solutions are based on the recent information, due to F\"assler and the first author \cite{2018arXiv181013122F}, that the \emph{$\He$-Riesz transform} is bounded on $L^{p}(\partial \Omega)$ in this generality. 

So, Jerison did not employ the double layer potential $\mathcal{D}$, and its use in boundary value problems for $\bigtriangleup^{\flat}$ has been quite limited, up to now. Two exceptions are the papers \cite{MR3430847,MR3600064} of Ruzhansky and Suragan, where e.g. \cite[Theorem 4.4]{MR3600064} records the \emph{jump relations} for $\mathcal{D}f$ in the case where $\partial \Omega$ is smooth, and $f \in C^{1}(\overline{\Omega})$, and \cite[Theorem 5.1]{MR3600064} solves a sub-elliptic version of \emph{Kac's boundary value problem}. 

Jerison only considered the Dirichlet problem \eqref{dirichlet} with H\"older-continuous boundary values. For this problem, it sufficed to study the invertibility of the single layer potential $S$, defined in Theorem \ref{main4}. Jerison did this by first proving the invertibility (between H\"older classes) of $S$ in the special case where $\partial \Omega$ is a $1$-codimensional \emph{vertical subgroup} of $\He^{n}$. Then, using a lifting technique, originally devised by Folland and Stein \cite{MR367477}, he managed to extend the results to small non-characteristic patches of smooth surfaces.

We are interested in $L^{2}$-boundary values in the Dirichlet and Neumann problems, so we are forced to study "$D$". We prove the invertibility of $\tfrac{1}{2}I \pm D$ on the boundaries of flag domains by adapting ideas of Verchota \cite{MR769382}, Brown \cite{MR987761}, and Hofmann and Lewis \cite{MR1418902}. For readers familiar with elliptic or parabolic theory, we say the following. The argument of Verchota is based on \emph{Rellich's identities}, which say that the $L^{2}(\partial \Omega)$-norms of certain "normal" and "tangential" derivatives of harmonic functions on $\Omega$ are comparable, up to a "lower order term". In the parabolic setting, Rellich's identities are still useful, but the "lower order term" becomes more difficult to control. Rellich's identities for $\bigtriangleup^{\flat}$ are also well-known, see \cite[p. 81]{BLU}, or Proposition \ref{p:rellich}. Interestingly, the "lower order term" looks quite similar to the one which one encounters in the parabolic situation. This clue was essential to us in proving the invertibility of $\tfrac{1}{2}I \pm D$ and $\tfrac{1}{2}I \pm D^{t}$.

\subsection{Open problems} The "umbrella" open problem is naturally to solve the Dirichlet and Neumann problems \eqref{dirichlet}-\eqref{neumann} in more generality than flag domains in $\R^{3} \cong \He$. And even in flag domains, it is reasonable to ask if the problems remain solvable for boundary values $g,h \in L^{p}(\partial \Omega)$ with $p \neq 2$. Besides these generic questions, which have little to do with the geometry of $\bigtriangleup^{\flat}$, we pose explicitly the following two problems:
\begin{question}\label{Q1} Let $\Omega \subset \He^{n}$ be a domain bounded by an intrinsic Lipschitz graph, in the sense of Franchi, Serapioni, and Serra Cassano \cite{MR2287539}. Are the Dirichlet and Neumann problems \eqref{dirichlet}-\eqref{neumann} solvable in $\Omega$ in the sense of Theorem \ref{main3}? \end{question}
As sub-questions, one could ask if the (principal value) operators $D,D^{t}$ exist, and are bounded on $L^{p}(\partial \Omega)$, $1 < p < \infty$. A more basic question, still, is whether the $\He$-Riesz transform is bounded on $L^{p}(\partial \Omega)$. For partial results, see \cite{CFO2,2018arXiv181013122F,2019arXiv191103223F}. The argument in Section \ref{s:principal} has been written so that the following point becomes explicit: the existence of the principal value $Df$ is easier to prove than the existence of $D^{t}f$: the latter result relies on the structure of flag domains, the former does not. However, to be clear, the flag domain assumption in the present paper is used for much more than the existence of $D^{t}f$: it is particularly indispensable in Section \ref{s:injectivity}, where the injectivity of the operators $\tfrac{1}{2}I \pm D^{t}$ is established. The proof employs fractional differentiation operators "$T^{\alpha}$" in the $t$-variable, which would be ill-defined if $\Omega$ and $\partial \Omega$ were not foliated by vertical lines.

The following special case of Question \ref{Q1} is worth stating separately:
\begin{question} Solve Question \ref{Q1} in case $\Omega \subset \He^{n}$ is a flag domain, and $n \geq 2$. \end{question}
There are two key reasons why we are confined to the case $n = 1$. The first one is that, for $n = 1$, boundaries of flag domains $\Omega \subset \He$ admit a nice "graph map" $\Gamma \colon \R^{2} \to \partial \Omega$, defined below \eqref{curveGamma}. This is not the usual graph map "$\Phi$" associated to every intrinsic Lipschitz graph in $\He^{n}$, see \cite{MR2287539}. Rather, $\Gamma = \Phi \circ \chi$, where $\chi \colon \R^{2} \to \R^{2}$ is known as the \emph{characteristic straightening map}. The map $\chi$, and problems regarding its existence and regularity, are well-known among specialists in the area, see e.g. \cite{MR2603594,MR3400438}. The map $\Gamma$ plays a prominent role in our arguments, and we do not know if it exists for boundaries of flag domains in $\He^{n}$, $n \geq 2$, let alone more general intrinsic Lipschitz graphs. 

The second reason is that proving the surjectivity of $\tfrac{1}{2}I \pm D^{t}$ forces us to deal with rather general singular integral operators on $\partial \Omega$, not just the $\He$-Riesz transform. To survive this issue, we rely on the main results of the recent paper \cite{2019arXiv191103223F}, which only works in $\He$. It seems non-trivial to generalise \cite{2019arXiv191103223F} to higher Heisenberg groups. In fact, these two problems mix nicely together: the explicit form of the map $\Gamma$ determines in Section \ref{s:surjectivity} the singular integrals we need to tackle, so without knowing how $\Gamma$ looks like in $\He^{n}$, we do not even know which singular integrals we need to understand!

\subsection{Acknowledgements} We would like to thank Katrin F\"assler for many useful discussions.

\section{Preliminaries}

\subsection{The Heisenberg group}\label{s:heisenberg} While the results described in the introduction can be formulated in $\R^{3}$, they might be best viewed as statements about the first Heisenberg group $\He = (\R^{3},\cdot)$, where "$\cdot$" is the non-commutative group law
\begin{displaymath} (x,y,t) \cdot (x',y',t') = (x + x',y + y',t + t' + \tfrac{1}{2}(xy' - yx')), \qquad (x,y,t),(x',y',t') \in \He. \end{displaymath}
We will sometimes abbreviate $\omega(z,z') := xy' - yx'$ for $z = (x,y)$ and $z' = (x',y')$, so $(z,t) \cdot (z',t') = (z + z',t + t' + \tfrac{1}{2}\omega(z,z'))$. Metric concepts in the paper, for example open balls $B(p,r)$, the non-tangential maximal functions $\mathcal{N}_{\theta}$ introduced in Section \ref{s:nt}, and principal value (singular) integral operators
\begin{displaymath} \pv \int K(p,q)f(q) \, d\mu(q) := \lim_{\epsilon \to 0} \int_{B(p,\epsilon)^{c}} K(p,q)f(q) \, d\mu(q) \end{displaymath}
will be defined with respect to the Kor\'anyi metric $d(p,q) := \|q^{-1} \cdot p\|$, where
\begin{displaymath} \|(z,t)\| := \sqrt[4]{|z|^{4} + 16t^{2}}, \qquad (z,t) \in \C \times \R. \end{displaymath} 
The Kor\'anyi norm is $1$-homogeneous with respect to the Heisenberg dilations
\begin{displaymath} \delta_{r}(z,t) := (rz,r^{2}t), \qquad r > 0, \, (z,t) \in \C \times \R. \end{displaymath}
The fundamental solution $G \in C^{\infty}(\He \, \setminus \, \{0\})$ of $-\bigtriangleup^{\flat}$, first computed by Folland \cite{MR0315267}, can be expressed as
\begin{equation}\label{fundamentalSolution} G(p) = \frac{c}{\|p\|^{2}}, \qquad p \in \He \, \setminus \, \{0\}, \end{equation}
where $c > 0$ is a constant. If $\Omega \subset \He$ is an open set, and $1 \leq k \leq \infty$, the notation $C^{k}(\Omega)$ means $k$ times continuously Euclidean differentiable functions. In particular, $C^{k}(\He) = C^{k}(\R^{3})$. Functions $f \in C^{k}(\Omega)$ with compact support $\spt f \subset \Omega$ are denoted $f \in C^{k}_{c}(\Omega)$.

The left-invariant vector fields $X$ and $Y$ were introduced in \eqref{XY}, and their right-invariant counterparts $X^{R}$ and $Y^{R}$ in \eqref{XRYR}. For $f \in C^{1}(\Omega)$, we write $\nabla f := (Xf,Yf) \in C(\Omega,\R^{2})$ for the \emph{horizontal gradient} of $f$. The Euclidean gradient, which very rarely appears in the paper, will be denoted $\nabla_{E}$. If $f \in C^{2}(\Omega)$, we write $\bigtriangleup^{\flat}f := X^{2}f + Y^{2}f \in C(\Omega)$.

Somewhat imprecisely, a \emph{horizontal vectorfield on $\Omega$} will be a vector field of the form $V = aX + bY$, where $a,b \colon \Omega \to \R$ are arbitrary coefficient functions. If $a,b \in C^{1}(\Omega)$, we define the \emph{horizontal divergence} $\mathrm{div}^{\flat} V := Xa + Yb \in C(\Omega)$. Horizontal vector fields with coefficients in $C^{k}(\Omega)$ (resp. $C^{k}_{c}(\Omega)$) will be denoted $C^{k}(\Omega,H\He)$ (resp. $C^{k}_{c}(\Omega,H\He)$).

If $(a,b) \colon \Omega \to \R^{2}$ is a function, we frequently identify $(a,b)$ with the horizontal vector field $aX + bY$. This happens particularly often for the horizontal gradient $\nabla f = (Xf,Yf) \colon \Omega \to \R^{2}$. With this identification in mind, we write $\mathrm{div}^{\flat}(a,b) := Xa + Yb \in C(\Omega)$ whenever $(a,b) \in C^{1}(\Omega,\R^{2})$. As a special case, $\mathrm{div}^{\flat} \nabla f = \bigtriangleup^{\flat} f$ for $f \in C^{2}(\Omega)$.

If $V = aX + bY$ and $W = cX + dY$ are two horizontal vector fields, we define their dot product
\begin{displaymath} \langle V(p),W(p) \rangle := (a(p),b(p)) \cdot (c(p),d(p)), \end{displaymath} 
where the RHS refers to the usual dot product in $\R^{2}$. The notation above will frequently be (ab)used in a situation where one of $V,W$ is an $\R^{2}$-valued function. A typical example is "$\langle \nabla f,X \rangle$". Under the identification $\nabla f \cong (Xf)X + (Yf)Y$, one finds $\langle \nabla f,X \rangle = Xf$. For a horizontal vector field $V$, we finally define $|V(p)| := \sqrt{\langle V(p),V(p) \rangle}$. 

\subsection{Surface measures, normal and tangent vectors}\label{s:normals} When $\Omega \subset \R^{3}$ is a flag domain, a natural surface measure "$\sigma$" on $\partial \Omega$ is the restriction of Euclidean $2$-dimensional Hausdorff measure, denoted $\calH^{2}_{E}$, to $\partial \Omega$. If the reader has limited background in Heisenberg groups, for much of the paper (including the introduction) he can think that "$\sigma$" refers to this measure. It is a fortunate coincidence that, for boundaries of flag domains, Euclidean Hausdorff measure equals a more intrinsic object in the Heisenberg group $\He$.

We recall from \cite[p. 482]{FSSC} that a measurable set $E \subset \He$ is a \emph{$\He$-Caccioppoli set} if the \emph{horizontal perimeter} $\sigma = |\partial E|_{\He}$ is a Radon measure. Whenever this holds, then there exists a $\sigma$ measurable horizontal unit vector field "$\nu$" (i.e. $|\nu(p)| = 1$ for $\sigma$ a.e. $p$), called the \emph{inward-pointing horizontal normal} of $E$, with the property that
\begin{displaymath} \int_{E} \mathrm{div}^{\flat} Z(p) \, dp = -\int \langle Z(p),\nu(p) \rangle \, d\sigma(p), \qquad Z \in C^{1}_{c}(\He,H\He). \end{displaymath}
If $E = \Omega$ happens to be a flag domain, then it turns out that $\sigma = \calH^{2}_{E}|_{\partial \Omega}$, and also $\nu$ has a simple relation to the Euclidean (inward) normal of $\partial \Omega$:

\begin{proposition}\label{prop3} Let $\Omega = \{(x,y,t) : x < A(y)\}$ be a flag domain, with $A \colon \R \to \R$ Lipschitz. Then $\Omega$ is a $\He$-Caccioppoli set, and $|\partial \Omega|_{\He} = \calH_{E}^{2}|_{\partial \Omega}$. Moreover, we have the formula 
\begin{displaymath} \nu(p) = n_{1}(A(y),y)X + n_{2}(A(y),y)Y, \qquad p := (A(y),y,t) \in \partial \Omega, \end{displaymath}
where $n = (n_{1},n_{2})$ is the Euclidean inward-pointing unit normal of $\{(x,y) : y < A(y)\} \subset \R^{2}$:
\begin{equation}\label{flagNormal} n_{1}(A(y),y) = \frac{-1}{\sqrt{1 + A'(y)^{2}}} \quad \text{and} \quad n_{2}(A(y),y) = \frac{A'(y)}{\sqrt{1 + A'(y)^{2}}}, \qquad y \in \R. \end{equation} 
In particular, the coordinates of $\nu(z,t)$ are independent of "$t$". It follows that if $Z = aX + bY$ is a $\sigma$ a.e. defined horizontal vector field, then
\begin{equation}\label{form45} \langle Z(z,t),\nu(z,t) \rangle = (a(z,t),b(z,t)) \cdot n(z) \qquad \text{for $\sigma$ a.e. } (z,t) \in \partial \Omega. \end{equation}
\end{proposition}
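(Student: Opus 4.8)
The plan is to reduce everything to the known Euclidean situation for the Lipschitz graph $y \mapsto A(y)$ in the $(x,y)$-plane, exploiting the product-like structure of a flag domain (it is "a Euclidean Lipschitz-subgraph in $(x,y)$, crossed with a line in $t$," albeit twisted by the group law). First I would verify directly that $\Omega = \{(x,y,t) : x < A(y)\}$ is a $\He$-Caccioppoli set. Since $A$ is Lipschitz, the Euclidean boundary $\partial\Omega$ is a Lipschitz graph, hence locally rectifiable, and for a horizontal test field $Z = aX + bY \in C^1_c(\He,H\He)$ the horizontal divergence $\Div Z = Xa + Yb = \partial_x a + \partial_y b - \tfrac{y}{2}\partial_t a + \tfrac{x}{2}\partial_t b$ is an ordinary Euclidean divergence of the Euclidean vector field $(a,\,b,\,\tfrac{x}{2}b - \tfrac{y}{2}a)$ on $\R^3$. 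Applying the classical Gauss–Green theorem on the Lipschitz domain $\Omega \subset \R^3$ to this field, the boundary integral is $\int_{\partial\Omega}\langle (a,b,\tfrac{x}{2}b-\tfrac{y}{2}a), N\rangle\, d\calH^2_E$, where $N$ is the Euclidean unit inward normal of $\partial\Omega$. Because $\partial\Omega$ is a graph over the $(x,y)$-variables only in the sense that it is invariant under $t$-translations, $N(x,y,t) = (n_1(y), n_2(y), 0)$ with $n = (n_1,n_2)$ as in \eqref{flagNormal}; in particular the third coordinate of $N$ vanishes, so the $\tfrac{x}{2}b - \tfrac{y}{2}a$ term drops out entirely and we are left with $\int_{\partial\Omega}(a,b)\cdot n\, d\calH^2_E$. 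This shows at once that $|\partial\Omega|_{\He}$ is the Radon measure $\calH^2_E|_{\partial\Omega}$, that $\nu(p) = n_1(y)X + n_2(y)Y$ for $p = (A(y),y,t)$, and that $|\nu| = |n| = 1$.

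The one genuine point requiring care is the computation of the Euclidean unit normal $N$ and the verification that its $t$-component is zero. This is where I expect the (very mild) main obstacle to sit. The surface $\partial\Omega = \{(A(y),y,t) : y,t \in \R\}$ is parametrised by $(y,t) \mapsto (A(y),y,t)$, with tangent vectors $\tau_1 = (A'(y),1,0)$ (defined for a.e.\ $y$ by Rademacher) and $\tau_2 = (0,0,1)$. The (unnormalised) normal is $\tau_1 \times \tau_2 = (1,-A'(y),0)$, which has no $t$-component; normalising and choosing the inward orientation (pointing into $x < A(y)$, i.e.\ with negative first coordinate) gives exactly \eqref{flagNormal}. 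I would also note here that $\calH^2_E|_{\partial\Omega}$ pushed forward under this parametrisation is $\sqrt{1 + A'(y)^2}\, dy\, dt$, which is consistent with the area factor in the Gauss–Green computation above; this is the ``fortunate coincidence'' the text alludes to, and it is really just the statement that the surface has a flat $t$-direction so no sub-Riemannian area correction (no factor of $|n(z)|$ versus a Heisenberg analogue) intervenes.

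With $\nu(z,t) = n_1(y)X + n_2(y)Y$ established and the coordinates of $\nu$ seen to be independent of $t$, the final assertion \eqref{form45} is immediate from the definition of the dot product of horizontal vector fields: for $Z = aX + bY$,
\begin{displaymath}
\langle Z(z,t), \nu(z,t)\rangle = (a(z,t),b(z,t)) \cdot (n_1(y),n_2(y)) = (a(z,t),b(z,t)) \cdot n(z)
\end{displaymath}
for $\sigma$ a.e.\ $(z,t) = (A(y),y,t) \in \partial\Omega$, where we use that $z = (A(y),y)$ determines $n(z) = n(A(y),y)$. To close the argument cleanly I would first prove the statement for $Z \in C^1_c(\He,H\He)$ (where the Gauss–Green step is literal) and then observe that both the Caccioppoli property and formula \eqref{form45} are statements about $\sigma$-a.e.\ pointwise values of $\nu$, so no approximation is needed for general $\sigma$-a.e.\ defined $Z$: once $\nu$ is identified as the explicit field above, \eqref{form45} is purely algebraic.
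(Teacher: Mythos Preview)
Your proof is correct. The paper takes a slightly different route: it quotes the general formula $|\partial\Omega|_{\He} = |Cn|\cdot\calH^2_E|_{\partial\Omega}$ from \cite[Proposition 2.14]{FSSC} and the relation $\nu = Cn/|Cn|$ from \cite[(3.3)]{MR3363670}, valid for any Euclidean Lipschitz domain, where $C$ is the $2\times 3$ matrix with rows $[1,0,-\tfrac{y}{2}]$ and $[0,1,\tfrac{x}{2}]$; then it observes that for a flag domain the Euclidean normal is $(n_1,n_2,0)$, so $Cn=(n_1,n_2)$ and $|Cn|\equiv 1$. Your argument is the same computation done from first principles: writing $\mathrm{div}^\flat Z$ as the Euclidean divergence of $(a,b,\tfrac{x}{2}b-\tfrac{y}{2}a)$ and applying the classical Gauss--Green theorem is exactly what produces the matrix $C$ in the cited results, and your observation that the third component of $N$ vanishes is precisely why $|Cn|=1$ here. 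The paper's version is shorter but requires two external references; yours is self-contained and makes transparent \emph{why} the flag-domain case is special.
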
 

\begin{proof} If $\Omega \subset \R^{3} \cong \He$ is a general (Euclidean) Lipschitz domain, the relationship between the horizontal perimeter $|\partial \Omega|_{\He}$ and the Euclidean $2$-dimensional Hausdorff measure is described by \cite[Proposition 2.14]{FSSC}:
\begin{displaymath} |\partial \Omega|_{\He} = |Cn| \cdot \calH^{2}_{E}|_{\partial \Omega},  \end{displaymath}
where "$n$" is the Euclidean (inward-pointing) normal of $\Omega$, and $C$ is the $(2 \times 3)$-matrix with rows $[1, \, 0, \, -\tfrac{1}{2}y]$ and $[0, \, 1, \, \tfrac{1}{2}x]$. In the special case where $\Omega$ is a flag domain, the Euclidean normal "$n$" evidently has the form $(n_{1},n_{2},0)$, where $n_{1},n_{2}$ are defined in \eqref{flagNormal}. In particular, $Cn = (n_{1},n_{2})$, and $|Cn| = |(n_{1},n_{2})| \equiv 1$. Therefore, $\sigma := |\partial \Omega|_{\He} = \calH^{2}_{E}|_{\partial \Omega}$.\footnote{Of course, such a relation is only true up to a normalising constant for $\calH^{2}_{E}$. The constant here is chosen so that the Euclidean divergence theorem holds in the form $\int_{\Omega} \mathrm{div} F = -\int_{\partial \Omega} F \cdot n \, d\calH^{2}_{E}$ for $F \in C^{1}_{c}(\R^{3})$.} For general Lipschitz domains $\Omega \subset \R^{3}$, the relationship between (inward) horizontal and Euclidean normals is given by $\nu = Cn/|Cn|$, see \cite[(3.3)]{MR3363670}. For flag domains we just observed that $|Cn| \equiv 1$, so $\nu = Cn = (n_{1},n_{2})$, as claimed.
\end{proof}

In addition to the horizontal normal, we also define a horizontal tangent vector field -- only in the case of flag domains:
\begin{definition}[Tangent vector field]\label{HTangent} Let $\Omega = \{(x,y,t) : x < A(y)\}$ be a flag domain. For $\sigma$ a.e. $p = (A(y),y,t) \in \partial \Omega$, define 
\begin{equation}\label{eq:tau} \tau(p) := n_{2}(A(y),y)X - n_{1}(A(y),y)Y. \end{equation}
The vector field $p \mapsto \tau(p)$ will be called the \emph{horizontal tangent vector field} of $\partial \Omega$.
\end{definition} 
Evidently "$\tau$" is a horizontal unit vector field with $\langle \nu,\tau \rangle \equiv 0$. We record that the integral curves of the horizontal vector field $\tau$ foliate $\partial \Omega$, and the perimeter measure $\sigma$ satisfies a Fubini-type theorem with respect to these integral curves:

\begin{lemma}\label{lemma5} Let $\Omega = \{(x,y,t) : x < A(y)\}$ be a flag domain. For every $t \in \R$, define the curve $\gamma_{t} \colon \R \to \partial \Omega$,
\begin{equation}\label{curveGamma} \gamma_{t}(y) = \left( A(y), y, t - \tfrac{y}{2}A(y) + \int_{0}^{y} A(r) \, dr \right), \end{equation}
and the map $\Gamma \colon \R^{2} \to \partial \Omega$, $\Gamma(y,t) := \gamma_{t}(y)$. Then the following area formula holds:
\begin{equation}\label{form16} \int g \, d\sigma = c\iint g(\Gamma(y,t))\sqrt{1 + A'(y)^{2}} \, dy \, dt, \qquad g \in L^{1}(\sigma), \end{equation}
where $c > 0$ is an absolute constant. \end{lemma}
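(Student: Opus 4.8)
The plan is to deduce the area formula \eqref{form16} from the elementary area formula for the Lipschitz ``vertical graph'' parametrisation
\begin{displaymath} \Phi \colon \R^{2} \to \partial \Omega, \qquad \Phi(y,s) := (A(y),y,s), \end{displaymath}
and then to carry out a one-dimensional change of variables in the ``$t$''-direction. First, by Proposition \ref{prop3} we have $\sigma = \calH^{2}_{E}|_{\partial \Omega}$, so it suffices to understand $\calH^{2}_{E}$ on $\partial \Omega$ through $\Phi$. Since $A$ is Lipschitz, $\Phi$ is an injective Lipschitz map of $\R^{2}$ onto $\partial \Omega$, whose differential $D\Phi(y,s)$ has columns $(A'(y),1,0)$ and $(0,0,1)$ for a.e. $(y,s)$; this has rank two, with two-dimensional Jacobian $J\Phi(y,s) = \sqrt{\det((D\Phi)^{T}D\Phi)} = \sqrt{1+A'(y)^{2}}$. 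Invoking the classical area formula for Lipschitz maps then gives, for every Borel $g \colon \partial \Omega \to [0,\infty]$,
\begin{displaymath} \int g \, d\sigma = c\iint g(\Phi(y,s))\sqrt{1 + A'(y)^{2}} \, dy \, ds, \end{displaymath}
where $c>0$ is the absolute constant relating $\calH^{2}_{E}$ to the Hausdorff measure used in the area formula; with the normalisation fixed in the footnote to Proposition \ref{prop3} one may take $c = 1$.

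Next I would observe that $\Gamma(y,t) = \Phi(y,\psi(y,t))$ with $\psi(y,t) := t - \tfrac{1}{2}yA(y) + \int_{0}^{y} A(r)\,dr$, and that for each fixed $y$ the map $t \mapsto \psi(y,t)$ is merely a translation of $\R$, hence a measure-preserving bijection of $\R$. Then I would apply Tonelli's theorem to integrate first in the vertical variable, substitute $s = \psi(y,t)$ in the inner integral (whose one-dimensional Jacobian is identically $1$), and recombine by Tonelli, so that the previous display becomes
\begin{displaymath} \int g \, d\sigma = c\iint g\big(\Phi(y,\psi(y,t))\big)\sqrt{1+A'(y)^{2}}\,dy\,dt = c\iint g(\Gamma(y,t))\sqrt{1+A'(y)^{2}}\,dy\,dt \end{displaymath}
for every nonnegative Borel $g$; the general case $g\in L^{1}(\sigma)$ follows by splitting $g = g^{+}-g^{-}$, both sides then being finite. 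This is \eqref{form16}.

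I do not expect a serious obstacle. The only points needing a word of care are that $\Phi$ (equivalently $\Gamma$) really is a Borel bijection onto $\partial\Omega$, so that the area formula genuinely transports integration from $\partial\Omega$ to $\R^{2}$, and the bookkeeping of the constant implicit in $\sigma = \calH^{2}_{E}|_{\partial\Omega}$. The ``Fubini-type'' character of the statement is exactly the content of the substitution $s = \psi(y,t)$: a direct check shows that the $\gamma_{t}$ are reparametrised integral curves of the horizontal tangent field $\tau$ of Definition \ref{HTangent} — the restriction of $A'X+Y$ to $\partial\Omega$ has $t$-component $\tfrac{1}{2}(A(y)-yA'(y))$, and integrating this from $\gamma_{t}(0)=(A(0),0,t)$ produces exactly \eqref{curveGamma} after one integration by parts — so $\{\gamma_{t}\}_{t\in\R}$ foliates $\partial\Omega$, and $\sigma$ disintegrates along this foliation with conditional density $\sqrt{1+A'(y)^{2}}\,dy$ on each leaf.
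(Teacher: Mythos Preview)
Your argument is correct: the Euclidean area formula for the Lipschitz graph map $\Phi(y,s)=(A(y),y,s)$ gives exactly \eqref{eq:sigmaFubini}, and the substitution $s=\psi(y,t)$ with $\partial_{t}\psi\equiv 1$ then yields \eqref{form16}. The paper does not give its own proof but simply cites \cite[Lemma 7.5]{2019arXiv191103223F}, so your self-contained derivation is a welcome addition rather than a deviation; note that the intermediate Euclidean formula you establish is precisely what the paper records separately in \eqref{eq:sigmaFubini}.
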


\begin{proof} This is \cite[Lemma 7.5]{2019arXiv191103223F}. \end{proof}
The map "$\Gamma$" defined under \eqref{curveGamma} will play a major role in the paper as the natural "graph map" parametrising the boundary of a flag domain. The equation \eqref{form16} is an "area" formula for $\Gamma$, and it has the following corollary:

\begin{cor} Let $\Omega \subset \R^{3}$ be a flag domain, and let $V \in L^{1}(\sigma)$ be a horizontal vector field. Then, 
\begin{equation}\label{area} \int \langle V,\tau \rangle \, d\sigma = c\iint \langle V(\gamma_{t}(y)),\dot{\gamma}_{t}(y) \rangle \, dy \, dt, \end{equation} 
where $\gamma_{t} \colon \R \to \partial \Omega$ is the curve defined in \eqref{curveGamma}.
\end{cor}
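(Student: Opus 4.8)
The plan is to deduce \eqref{area} directly from the area formula \eqref{form16} of Lemma \ref{lemma5}, applied to the scalar function $g := \langle V,\tau \rangle$. Since $|\tau| \equiv 1$, the Cauchy--Schwarz inequality gives $|\langle V,\tau \rangle| \leq |V|$ pointwise, so $g \in L^{1}(\sigma)$ and Lemma \ref{lemma5} is applicable. The only genuine computation is to identify the Euclidean velocity $\dot{\gamma}_{t}(y)$ of the curve in \eqref{curveGamma} with an explicit horizontal vector at the point $p = \gamma_{t}(y) = (A(y),y,\cdot) \in \partial\Omega$.

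First I would differentiate \eqref{curveGamma} coordinatewise. Writing $\gamma_{t}(y) = (A(y),\,y,\,t - \tfrac{y}{2}A(y) + \int_{0}^{y}A(r)\,dr)$, one finds
\begin{displaymath} \dot{\gamma}_{t}(y) = \bigl(A'(y),\;1,\;\tfrac{1}{2}A(y) - \tfrac{y}{2}A'(y)\bigr). \end{displaymath}
On the other hand, at $p = (A(y),y,\cdot)$ the left-invariant frame, viewed as Euclidean vectors, is $X|_{p} = (1,0,-\tfrac{y}{2})$ and $Y|_{p} = (0,1,\tfrac{A(y)}{2})$, whence $A'(y)X|_{p} + Y|_{p} = (A'(y),\,1,\,\tfrac{A(y)}{2} - \tfrac{y}{2}A'(y)) = \dot{\gamma}_{t}(y)$. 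Thus $\gamma_{t}$ is a horizontal curve, and in the horizontal frame $\dot{\gamma}_{t}(y) = A'(y)X + Y$, i.e. it has horizontal coordinates $(A'(y),1)$. Comparing with the definition \eqref{eq:tau} of $\tau$ and the formula \eqref{flagNormal} for $n$, which give $\tau(\gamma_{t}(y)) = (1 + A'(y)^{2})^{-1/2}\bigl(A'(y)X + Y\bigr)$, we obtain the pointwise identity $\dot{\gamma}_{t}(y) = \sqrt{1 + A'(y)^{2}}\,\tau(\gamma_{t}(y))$. (Incidentally this re-proves that, up to reparametrisation, the $\gamma_{t}$ are exactly the integral curves of $\tau$, as asserted before Lemma \ref{lemma5}.)

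Now, by bilinearity of the pairing $\langle\cdot,\cdot\rangle$ on horizontal vectors, for any horizontal vector field $V$ and $\sigma$-a.e.\ point $\gamma_{t}(y) \in \partial\Omega$,
\begin{displaymath} \langle V(\gamma_{t}(y)),\dot{\gamma}_{t}(y) \rangle = \sqrt{1 + A'(y)^{2}}\;\langle V(\gamma_{t}(y)),\tau(\gamma_{t}(y)) \rangle = \sqrt{1 + A'(y)^{2}}\;g(\Gamma(y,t)). \end{displaymath}
Substituting $g = \langle V,\tau \rangle$ into \eqref{form16} and using this identity, the Jacobian factor $\sqrt{1 + A'(y)^{2}}$ in \eqref{form16} is absorbed and the right-hand side becomes precisely $c\iint \langle V(\gamma_{t}(y)),\dot{\gamma}_{t}(y) \rangle \, dy \, dt$, which is \eqref{area} with the same constant $c$. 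I do not expect any real obstacle: the statement is a bookkeeping corollary of Lemma \ref{lemma5}, and the one point deserving a line of care is the (notational) identification of the Euclidean velocity $\dot{\gamma}_{t}(y)$ with its coordinates $(A'(y),1)$ in the horizontal frame $\{X,Y\}$, which is exactly the kind of abuse of the bracket notation $\langle\cdot,\cdot\rangle$ flagged in Section \ref{s:heisenberg}.
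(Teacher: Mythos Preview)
Your proposal is correct and follows essentially the same route as the paper: both compute $\dot{\gamma}_{t}(y)$ explicitly, verify the horizontal identity $\dot{\gamma}_{t}(y) = \sqrt{1+A'(y)^{2}}\,\tau(\gamma_{t}(y))$, and then apply the area formula \eqref{form16} to $g = \langle V,\tau\rangle$ so that the Jacobian factor cancels. Your presentation is arguably a little cleaner in that you check horizontality directly via $A'(y)X|_{p}+Y|_{p}=\dot{\gamma}_{t}(y)$ before invoking $\tau$, but the content is the same.
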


Here "$\dot{\gamma}(y)$" is an abbreviation for the vector field $\dot{\gamma}_{1}\partial_{x} + \dot{\gamma}_{2}\partial_{y} + \dot{\gamma}_{3}\partial_{t}$ evaluated at $\gamma_{t}(y)$. It turns out that this vector field is horizontal, so the dot product in \eqref{area} is well-defined.

\begin{proof} Recall the formula for "$\tau(p)$" from \eqref{eq:tau}, and the formulae for $n_{1},n_{2}$ from \eqref{flagNormal}:
\begin{align}\label{form94} \tau(\gamma_{t}(y)) & = \tfrac{A'(y)}{\sqrt{1 + A'(y)^{2}}}X_{\gamma_{t}(y)} + \tfrac{1}{\sqrt{1 + A'(y)^{2}}}Y_{\gamma_{t}(y)}\\
& = \tfrac{A'(y)}{\sqrt{1 + A'(y)^{2}}}\left[\partial_{x} - \tfrac{y}{2} \partial_{t} \right] + \tfrac{1}{\sqrt{1 + A'(y)^{2}}}\left[ \partial_{y} +  \tfrac{A(y)}{2}\partial_{t} \right] = \frac{\dot{\gamma}_{t}(y)}{\sqrt{1 + A'(y)^{2}}}, \notag\end{align}
noting that $\dot{\gamma}_{t}(y) = A'(y)\partial_{x} + \partial_{y} + [\tfrac{1}{2}A(y) - \tfrac{y}{2}A(y)]\partial_{t}$. In particular, $\dot{\gamma}_{t}$ is a horizontal vector field, with $\dot{\gamma}(y) = \sqrt{1 + A'(y)}\tau(\gamma_{t}(y))$. Consequently, the "area" formula \eqref{form16} applied to $g = \langle V,\tau \rangle \in L^{1}(\sigma)$ gives
\begin{align*} \int \langle V,\tau \rangle \, d\sigma & = c\iint \langle V(\gamma_{t}(y)),\tau(\gamma_{t}(y)) \rangle \sqrt{1 + A'(y)} \, dy \, dt = c\iint \langle V(\gamma_{t}(y)),\dot{\gamma}_{t}(y) \rangle \, dy \, dt,  \end{align*}
as claimed. \end{proof}

\begin{remark} In addition to the "Heisenberg" area formula \eqref{form16}, the measure $\sigma = |\partial \Omega|_{\He} = \calH^{2}_{E}|_{\partial \Omega}$ also satisfies a "Euclidean" area formula, since $\sigma$ is the surface measure on the graph of the Euclidean Lipschitz function $(y,t) \mapsto (A(y),y,t)$. More precisely,
\begin{equation}\label{eq:sigmaFubini} \int g \, d\sigma = c \iint g(A(y),y,t)\sqrt{1 + A'(y)^{2}} \, dy \, dt = c\iint g(z,t) \, \calH_{E}^{1}|_{\partial U}(z) \, dt, \end{equation}
where $U = \{(x,y) : x < A(y)\}$ is the projection of $\Omega$ to the $xy$-plane. In particular, from \eqref{eq:sigmaFubini} we see that $\sigma$ is a product measure: $\sigma = c\calH^{1}_{E}|_{\partial U} \times \mathcal{L}^{1}$ for some constant $c > 0$. \end{remark}

\subsubsection{Regular and upper regular measures} A Radon measure $\mu$ on $\He$ will be called \emph{upper $s$-regular} if there exists a constant $C > 0$ such that
\begin{displaymath} \mu(B(x,r)) \leq Cr^{s}, \qquad x \in \He, \, r > 0. \end{displaymath}
It is worth emphasising that "$B(x,r)$" here refers to an (open) ball in the metric "$d$" introduced in Section \ref{s:heisenberg}. The measure $\mu$ is \emph{$s$-regular}, if it is upper $s$-regular, and further
\begin{displaymath} \mu(B(x,r)) > c r^{s}, \qquad x \in \spt \mu, \, 0 < r < \diam(\spt \mu) \end{displaymath}
for some constant $c > 0$.

\begin{ex} If $\Omega \subset \He$ is a flag domain, the measure $\sigma := |\partial \Omega|_{\He}$ is $3$-regular. This is easy to verify by hand, using that $\sigma = \calH^{2}_{E}|_{\partial \Omega}$. Alternatively, one can apply the general fact that the horizontal perimeter of a domain bounded by an \emph{intrinsic Lipschitz graph} is $3$-regular, see \cite[Theorem 3.13]{MR2287539} or \cite[Theorem 3.9]{MR3511465}. Intrinsic Lipschitz graphs will not appear prominently in this paper, but we note that the boundary of a flag domain $\{(x,y,t) : x < A(y)\}$ can be written in the form $\partial \Omega \{(0,y,t) \cdot (A(y),0,0) : y,t \in \R\}$. This expression shows that $\partial \Omega$ is the intrinsic graph of the intrinsic Lipschitz function $(0,y,t) \mapsto (A(y),0,0)$. \end{ex}

\subsection{Summary of key operators}\label{s:operatorList} We will use a number of integral and singular integral operators throughout the paper. Their motivation will naturally have to wait until later, but it may be instructive to see all of them once before the paper really begins.

\begin{definition}[Single layer potential] Let $\Omega \subset \He$ be an open set such that $\sigma := |\partial \Omega|_{\He}$ is upper $3$-regular, and let $1 \leq p < 3$. Then, for $f \in L^{p}(\sigma)$, we define the \emph{single layer potential}
\begin{align*}
    \calS f(x) :=  \int  G(y^{-1} \cdot x) f(y) \, d \sigma(y), \qquad x \in \He \, \setminus \, \partial \Omega. 
\end{align*}
\end{definition}
The integral defining $\mathcal{S}f(x)$ is absolutely convergent for all $x \in \He \, \setminus \, \partial \Omega$, observing that $y \mapsto G(y^{-1} \cdot x) \sim d(x,y)^{-2} \in L^{q}(\sigma)$ for all $q > \tfrac{3}{2}$ and using H\"older's inequality. Also, $\mathcal{S}$ is formally self-adjoint, since $G(y^{-1} \cdot x) = G(x^{-1} \cdot y)$ for $x \neq y$. As we verify in Proposition \ref{prop11}, the "boundary values" of $\mathcal{S}f$ on $\partial \Omega$ are given by the following operator:
\begin{definition}[Boundary single layer potential] Let $\Omega \subset \He$ be a domain such that $\sigma := |\partial \Omega|_{\He}$ is upper $3$-regular, and let $1 \leq p < 3$. For $f \in L^{p}(\sigma)$, we define the \emph{boundary single layer potential}
\begin{displaymath} Sf(x) := \int G(y^{-1} \cdot x)f(y) \, d\sigma(y), \qquad x \in \partial \Omega.  \end{displaymath} 
\end{definition}
It turns out (see Proposition \ref{prop11}) that the integral defining $Sf(x)$ is absolutely convergent for $\sigma$ a.e. $x \in \partial \Omega$, and $Sf \in L^{p}(\sigma) + L^{\infty}(\sigma)$. The mapping properties of $S$ (on boundaries of flag domains) are investigated in Theorem \ref{main4}.

\begin{definition} Let $\Omega \subset \He$ be an open set such that $\sigma := |\partial \Omega|_{\He}$ is upper $3$-regular, and let $1 \leq p < \infty$. Then, for a horizontal vector field $V \in L^{p}(\sigma)$, and a point $x \in \He \, \setminus \, \partial \Omega$, we define the \emph{Riesz transform} and the \emph{adjoint Riesz transform}
\begin{displaymath} \mathcal{R}V(x) := \int \langle \nabla G(y^{-1} \cdot x),V(y) \rangle \, d\sigma(y) \quad \text{and} \quad \mathcal{R}^{t}V(x) := \int \langle \nabla G(x^{-1} \cdot y),V(y) \rangle \, d\sigma(y). \end{displaymath} 
\end{definition}
The integrals defining $\mathcal{R}V(x)$ and $\mathcal{R}^{t}V(x)$ are absolutely convergent for all $x \in \He \, \setminus \, \partial \Omega$, because $|\nabla G(y)| \lesssim \|y\|^{-3}$ for all $y \in \He \, \setminus \, \{0\}$, and hence $y \mapsto \nabla G(y^{-1} \cdot x) \in L^{q}(\sigma)$ and $y \mapsto \nabla G(x^{-1} \cdot y) \in L^{q}(\sigma)$ for all $1 < q \leq \infty$. Furthermore, the adjoint Riesz transform $\mathcal{R}^{t}V$ defines a $\bigtriangleup^{\flat}$-harmonic function in $\He \, \setminus \, \Omega$, for the reason explained in Remark \ref{harmonicityRemark}.

The "boundary behaviour" of $\mathcal{R}f$ and $\mathcal{R}^{t}f$ is substantially more delicate than that of $\mathcal{S}f$, and we only have a complete result for flag domains, Theorem \ref{t:rJumps}. The formulae involve the following principal value operators:
\begin{definition} Let $\Omega \subset \He$ be a flag domain, $\sigma := |\partial \Omega|_{\He}$. For $1 < p < \infty$, a horizontal vector field $V \in L^{p}(\sigma)$, and $x \in \partial \Omega$, define 
\begin{displaymath} RV(x) := \pv \int \langle \nabla G(y^{-1} \cdot x),V(y) \rangle \, d\sigma(y) \quad \text{and} \quad R^{t}V(x) := \pv \int \langle \nabla G(x^{-1} \cdot y),V(y) \rangle \, d\sigma(y). \end{displaymath}
\end{definition}
Under the hypotheses of the definition above, we will show in Theorem \ref{t:pValues} that the principal values $RV(x)$ and $R^{t}V(x)$ exist for $\sigma$ a.e. $x \in \partial \Omega$, and $\|RV\|_{L^{p}(\sigma)} + \|R^{t}V\|_{L^{p}(\sigma)} \lesssim \|V\|_{L^{p}(\sigma)}$. The \emph{double layer potential}, which appeared prominently in the introduction, can be expressed in terms of the adjoint Riesz transform:
\begin{definition}[Double layer potential] Let $\Omega \subset \He$ be an open set such that $\sigma := |\partial \Omega|_{\He}$ is upper $3$-regular, and let $1 \leq p < \infty$. Then, for $f \in L^{p}(\sigma)$, we define the \emph{double layer potential}
\begin{displaymath} \mathcal{D}f(x) := \mathcal{R}^{t}(f\nu) = \int \langle \nabla G(x^{-1} \cdot y),\nu(y) \rangle f(y) \, d\sigma(y), \end{displaymath}
where "$\nu$" is the inward-pointing horizontal normal of $\Omega$. \end{definition}
The "boundary behaviour" of $\mathcal{D}f$ can be deduced from the boundary behaviour of $\mathcal{R}^{t}(f\nu)$, see Corollary \ref{c:allJumps}. The formula involves the principal value operator "$D$" from the definition below:
\begin{definition}[Boundary double layer potential]\label{boundaryDoubleLayers} Let $\Omega \subset \He$ be a flag domain, $\sigma := |\partial \Omega|_{\He}$. For $1 < p < \infty$ and $f \in L^{p}(\sigma)$, define the \emph{boundary double layer potential} 
\begin{displaymath} Df(x) := R^{t}(f\nu)(x) = \pv \int \langle \nabla G(x^{-1} \cdot y),\nu(y) \rangle f(y) \, d\sigma(y) \end{displaymath}
and the \emph{adjoint boundary double layer potential} 
\begin{displaymath} \quad D^{t}f(x) := \pv \int \langle \nabla G(y^{-1} \cdot x),\nu(x) \rangle f(y) \, d\sigma(y) = \langle R(fX)(x)X_{x} + R(fY)(x)Y_{x},\nu(x) \rangle.  \end{displaymath} 
\end{definition}
Under the hypotheses of the definition, it follows from the $\sigma$ a.e. existence of the principal values $R(fX),R(fY)$, and $R^{t}(f\nu)$, that the principal values defining $Df$ and $D^{t}f$ exist $\sigma$ a.e., and $\|Df\|_{L^{p}(\sigma)} + \|D^{t}f\|_{L^{p}(\sigma)} \lesssim \|f\|_{L^{p}(\sigma)}$.


\section{Principal values of the Riesz transform}\label{s:principal}

The purpose of this section is to check that several singular integral operators defined on the boundaries of flag domains are well-defined and bounded on $L^{p}(\sigma)$. The boundedness questions are fully resolved by a previous result of F\"assler and the first author (see either \cite[Theorem 1.8]{2018arXiv181013122F} for a fairly simple argument based on the harmonicity of $G$, or \cite[Theorem 1.8]{2019arXiv191103223F} for a result for arbitrary horizontally odd $3$-dimensional kernels):
\begin{thm}\label{t:FO1} Let $\Omega = \{(x,y,t) : x < A(y)\}$ be a flag domain, and let $\sigma := \calH^{2}|_{\partial \Omega}$. Then the maximal (vectorial) Riesz transform
\begin{displaymath} \vec{R}^{\ast}f(p) := \sup_{\epsilon > 0} \left| \int_{\{\|q^{-1} \cdot q\| > \epsilon\}} \nabla G(q^{-1} \cdot p)f(q) \, d\sigma(q) \right|, \qquad f \in L^{p}(\sigma), \end{displaymath}
is a bounded operator $L^{p}(\sigma) \to L^{p}(\sigma)$ for all $1 < p < \infty$. \end{thm}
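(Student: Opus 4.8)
The plan is to reduce the maximal $L^{p}$-bound to a single $L^{2}(\sigma)$-bound by Calder\'on-Zygmund theory, and then to prove that $L^{2}$-bound by exploiting the two structural features of flag domains: invariance of $\partial\Omega$ under vertical translations, and the $\bigtriangleup^{\flat}$-harmonicity of $G$.

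\textbf{Step 1 (reduction to $L^{2}$).} As already observed, $\sigma=\calH^{2}_{E}|_{\partial\Omega}$ is Ahlfors $3$-regular, so $(\partial\Omega,d,\sigma)$ is a space of homogeneous type. The kernel $K(p,q):=\nabla G(q^{-1}\cdot p)$ is $C^{\infty}$ off the diagonal and satisfies $|K(p,q)|\lesssim d(p,q)^{-3}$ together with the usual first-order difference estimates (these follow from $|\nabla G(r)|\lesssim\|r\|^{-3}$ and the smoothness of $G$ on $\He\setminus\{0\}$); moreover $K$ is \emph{horizontally odd}, i.e.\ it changes sign under $(z,t)\mapsto(-z,t)$. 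Granting that the truncations $R_{\epsilon}$ are bounded on $L^{2}(\sigma)$ uniformly in $\epsilon>0$ (Step 2), the horizontal oddness provides the cancellation making the truncations satisfy the weak $(1,1)$ bound via the standard Calder\'on-Zygmund decomposition, Cotlar's inequality dominates $\vec{R}^{\ast}f$ pointwise by $M(R_{\epsilon_{0}}f)+Mf$ for a fixed $\epsilon_{0}$ (with $M$ the Hardy-Littlewood maximal operator on $(\partial\Omega,d,\sigma)$), and boundedness of $M$ on $L^{p}(\sigma)$ for $1<p\le\infty$ then yields $\vec{R}^{\ast}\colon L^{p}(\sigma)\to L^{p}(\sigma)$ for all $1<p<\infty$. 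So the real content is Step 2.

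\textbf{Step 2 ($L^{2}$-boundedness).} Here I would use that $\partial\Omega$ is foliated by the vertical lines $\{\Gamma(y,t):t\in\R\}$ and is invariant under the (Heisenberg, central) translations $(z,t)\mapsto(z,t+s)$; equivalently, by \eqref{eq:sigmaFubini}, $\sigma=c\,\calH^{1}_{E}|_{\partial U}\times\calL^{1}$ with $U=\{(x,y):x<A(y)\}$ a planar Lipschitz domain. Two routes suggest themselves. \emph{(a) Harmonicity and a Rellich identity.} The single layer potential $\mathcal{S}f$ is $\bigtriangleup^{\flat}$-harmonic in $\He\setminus\partial\Omega$ (Remark \ref{harmonicityRemark}), and its horizontal gradient carries the Riesz transform. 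Applying the horizontal divergence theorem over $\Omega$ and over $\He\setminus\overline{\Omega}$ to a Rellich vector field built from $\nabla\mathcal{S}f$ and a fixed horizontal field transversal to $\partial\Omega$ produces an identity (cf.\ Proposition \ref{p:rellich}) comparing the $L^{2}(\sigma)$-norms of the $\nu$- and $\tau$-components of the boundary trace of $\nabla\mathcal{S}f$, up to a lower-order term; the crucial gain for flag domains is that the vertical-translation invariance makes this lower-order term harmless. Feeding this into a Caccioppoli/square-function estimate (again from harmonicity) that controls $\nabla\mathcal{S}f$ near $\partial\Omega$ by $\mathcal{S}f$, one bootstraps to the $L^{2}(\sigma)$-bound for the boundary trace of $\nabla\mathcal{S}f$, i.e.\ for the Riesz transform. \emph{(b) Fourier in the $t$-variable.} Conjugating by $\Gamma$ and taking a partial Fourier transform in $t$, the vertical invariance makes $R$ act fiberwise over the vertical frequency $\lambda$, reducing matters to a one-parameter family of Calder\'on-Zygmund operators $T_{\lambda}$ on the Lipschitz curve $\partial U\subset\R^{2}$; one then runs a $T(b)$-argument with an accretive function adapted to $\partial U$ (as in the Cauchy-integral theory of Calder\'on and Coifman-McIntosh-Meyer), checking the hypotheses uniformly in $\lambda$.

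\textbf{Main obstacle.} In route (b), the hard part is the uniformity in $\lambda$: the kernels of $T_{\lambda}$ are twisted by the group law rather than being the bare Euclidean Cauchy kernel, and they degenerate as $\lambda\to0$ and oscillate as $\lambda\to\infty$, so one needs a single (para-)accretive function valid for all $\lambda$ at once. In route (a), the difficulty is instead the a priori regularity required to legitimise the divergence-theorem identities for $\mathcal{S}f$ when $A$ is merely Lipschitz; this is handled by exhausting $\Omega$ by smooth subdomains (smoothing $A$) and passing to the limit, keeping all constants dependent only on $\|A'\|_{\infty}$. Either way, adapting the estimates to the merely Lipschitz hypothesis is where the work concentrates; once the $L^{2}$-bound of Step 2 is secured, Step 1 completes the proof.
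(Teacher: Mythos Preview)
The paper does not prove Theorem \ref{t:FO1}; it is quoted as an input from the earlier works \cite{2018arXiv181013122F} and \cite{2019arXiv191103223F} (see the sentence immediately preceding the statement). So there is no ``paper's own proof'' to compare against beyond the hint that one of the cited proofs is ``based on the harmonicity of $G$'' and the other works for arbitrary horizontally odd $3$-dimensional kernels.

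Your Step 1 is standard and correct: on a $3$-regular space $(\partial\Omega,d,\sigma)$, uniform $L^{2}$-boundedness of the truncations plus the standard kernel estimates gives weak $(1,1)$, then Cotlar's inequality and interpolation/duality yield the maximal $L^{p}$-bound. This is also how the paper itself passes from the quoted $L^{p}$-boundedness to the non-tangential maximal bounds (Lemma \ref{lemma-cotlar}, Corollary \ref{K-MaxNT}).

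Step 2, route (a), however, is not a workable sketch as written. The Rellich identity (Proposition \ref{p:rellich}) compares $\|\nabla_{\nu}u\|_{L^{2}(\sigma)}$ and $\|\nabla_{\tau}u\|_{L^{2}(\sigma)}$ for $u=\mathcal{S}f$; it does not bound either one by $\|f\|_{L^{2}(\sigma)}$, and indeed in this paper the Rellich machinery of Section \ref{s:injectivity} is run only \emph{after} the Riesz bound of Theorem \ref{t:FO1} is already in hand (it is used to justify dominated convergence via $\mathcal{N}_{\mathrm{rad}}(\nabla\mathcal{S}f)\in L^{2}(\sigma)$, see e.g.\ \eqref{form65}). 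So invoking Rellich to \emph{prove} the $L^{2}$-bound is circular. The phrase ``bootstraps to the $L^{2}(\sigma)$-bound'' hides the actual mechanism; a genuine harmonicity proof (as in \cite{2018arXiv181013122F}) proceeds instead by a good-$\lambda$ or $T(b)$-type argument in which the harmonicity of $G$ manufactures the cancellation condition directly, not through Rellich. Route (b) is closer in spirit to \cite{2019arXiv191103223F}, and you correctly identify the main difficulty (uniformity in the vertical frequency), but as stated it is only a programme, not a proof.
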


We remind the reader that the notation "$\nabla$" stands for the horizontal gradient $\nabla f = (Xf,Yf) \in \R^{2}$ in this paper. The main new result in this section is the following:
\begin{thm}\label{t:pValues} Let $\Omega \subset \He$ be a flag domain, $\sigma := |\partial \Omega|_{\He}$, let $1 < p < \infty$, and let $V \in L^{p}(\sigma)$ be a horizontal vector field. Then, the principal values
\begin{displaymath} RV(p) := \pv \int \langle \nabla G(q^{-1} \cdot p),V(q) \rangle \, d\sigma(q) \end{displaymath} 
and
\begin{displaymath} R^{t}V(p) := \pv \int \langle \nabla G(p^{-1} \cdot q),V(q) \rangle \, d\sigma(q) \end{displaymath}
exist for $\sigma$ a.e. $p \in \partial \Omega$, and $\max\{\|RV\|_{L^{p}(\sigma)},\|R^{t}V\|_{L^{p}(\sigma)}\} \lesssim \|V\|_{L^{p}(\sigma)}$. \end{thm}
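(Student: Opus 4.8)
The plan is to deduce the a.e.\ existence of the principal values from the $L^p$-boundedness of the maximal Riesz transform (Theorem \ref{t:FO1}) via the standard Cotlar-type scheme: once the maximal operator is bounded, it suffices to exhibit a dense subclass of $L^p(\sigma)$ on which the truncated integrals converge as $\epsilon \to 0$. First I would record that, by linearity and the identifications $\langle \nabla G(q^{-1}\cdot p), V(q)\rangle$, it is enough to treat scalar kernels of the form $K(p,q) = \langle \nabla G(q^{-1}\cdot p), e_i \rangle$ and their ``left'' analogue $K^t(p,q) = \langle \nabla G(p^{-1}\cdot q), e_i\rangle$; both are $(-3)$-homogeneous, smooth off the diagonal, horizontally odd, and satisfy standard Calder\'on--Zygmund estimates relative to the Kor\'anyi metric on the $3$-regular set $\partial\Omega$. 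So $R$ and $R^t$ are a finite sum of (components of) CZ operators on the space of homogeneous type $(\partial\Omega, d, \sigma)$, and Theorem \ref{t:FO1} gives the $L^p \to L^p$ bound for the associated maximal truncations $R^\ast V$ and $(R^t)^\ast V$ (the latter since $R^t$ has the same kernel structure as $R$, with the roles of $p$ and $q$ swapped, which again produces a horizontally odd $(-3)$-homogeneous kernel).

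Given the maximal bounds, the a.e.\ convergence follows by a routine density argument. I would take the dense class to be $f = V$ with $V$ a Lipschitz, compactly supported horizontal vector field on $\partial\Omega$ (identified via the graph map $\Gamma$ with $\mathrm{Lip}_c(\R^2)$, which is dense in $L^p(\sigma)$ since $\sigma$ is $3$-regular). For such $V$, fix $p$ a Lebesgue point of $V$; writing the truncated integral over $\{d(p,q) > \epsilon\}$ and using the cancellation of the kernel on Kor\'anyi annuli — here the horizontal oddness $K(p, \cdot)$ integrates to zero against constants on symmetric annuli centered (in the appropriate group sense) at $p$, up to an error controlled by the surface curvature of $\partial\Omega$ — together with the Lipschitz modulus of $V$ near $p$ and the integrability $\int_{d(p,q)<1} d(p,q)^{-3}\cdot d(p,q)\, d\sigma(q) < \infty$, one sees the truncations form a Cauchy net as $\epsilon \to 0$. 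The contribution from $\{d(p,q) > 1\}$ converges trivially by absolute integrability ($|\nabla G| \lesssim d^{-3}$ against the compactly supported $V$). Hence $RV(p)$ and $R^tV(p)$ exist at every such $p$, i.e.\ $\sigma$-a.e. For general $V \in L^p(\sigma)$, approximate by $V_k \to V$ in $L^p$ from the dense class and control the oscillation
\begin{displaymath}
\Lambda V(p) := \limsup_{\epsilon \to 0} \Big| \int_{\{d(p,q)>\epsilon\}} K(p,q) \langle \cdot \rangle\, d\sigma(q) \Big| - \liminf_{\epsilon\to 0}(\cdots)
\end{displaymath}
by $2 R^\ast(V - V_k)(p)$; since $\|R^\ast(V-V_k)\|_{L^p(\sigma)} \to 0$, a subsequence converges a.e., forcing $\Lambda V = 0$ a.e., which is the desired a.e.\ existence. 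The norm bound $\|RV\|_{L^p(\sigma)} + \|R^tV\|_{L^p(\sigma)} \lesssim \|V\|_{L^p(\sigma)}$ is then immediate from $|RV| \le R^\ast V$, $|R^tV| \le (R^t)^\ast V$ and Theorem \ref{t:FO1}.

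The main obstacle is the annulus-cancellation step for the left-invariant kernel $K^t(p,q) = \langle \nabla G(p^{-1}\cdot q), V(q)\rangle$: unlike $K(p,q)$, it is not literally odd in $q^{-1}\cdot p$, so one has to verify that $\nabla_p G(p^{-1}\cdot q) = -\nabla^R_q G(q^{-1}\cdot p)$ (the identity recorded in Remark \ref{harmonicityRemark}) still yields a horizontally odd kernel of the right homogeneity when paired against $V(q)$, and then carry out the cancellation with respect to Kor\'anyi balls centered at $p$ while accounting for the mismatch between the Euclidean surface measure $\sigma$ on $\partial\Omega$ and the group structure. This is exactly the point where the flag structure is convenient (though, as the authors remark, not strictly needed for $R$): the area formula of Lemma \ref{lemma5}, and the fact that $\nu$ is independent of $t$, let one reduce the curvature error terms to manageable size. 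Everything else is the standard CZ-theory-on-spaces-of-homogeneous-type machinery applied to the $3$-regular measure $\sigma$.
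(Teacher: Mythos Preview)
Your overall architecture is right and matches the paper: bound the maximal truncations by Theorem \ref{t:FO1}, verify pointwise convergence on a dense class, then pass to general $V\in L^p(\sigma)$ via the oscillation estimate (the paper quotes this as Theorem \ref{t:general-pv}). The $L^p$ norm bound then follows trivially from the maximal bound.

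The gap is precisely where you flag ``the main obstacle'': the annulus-cancellation step. Your claim that ``horizontal oddness of $K(p,\cdot)$ integrates to zero against constants on symmetric annuli centered at $p$, up to an error controlled by surface curvature'' does not hold as stated. The kernel $\nabla G$ is horizontally odd on $\He$, but the measure $\sigma$ lives on a curved hypersurface and does not respect that symmetry; there is no generic ``curvature error'' argument here. Concretely, what you need is that
\[
\int_{\{\epsilon<d(p,q)<R\}}\langle \nabla G(q^{-1}\cdot p),\,e_i\rangle\, d\sigma(q)
\]
stays bounded (or converges) as $\epsilon\to 0$, and nothing you have written establishes this. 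For Lipschitz $V$ the term $\int K(p,q)(V(q)-V(p))\,d\sigma$ is indeed absolutely convergent, but the remaining term $V(p)\int_A K(p,q)\,d\sigma$ is exactly the issue, and it is not handled by oddness alone.

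The paper resolves this by a completely different, more structural route. It decomposes $V=f_1\nu+f_2\tau$ in the moving normal/tangent frame (Remark \ref{rem:strategy}) and treats the four operators $R^t(f\nu)$, $R^t(f\tau)$, $R(f\nu)$, $R(f\tau)$ by four genuinely distinct arguments, none of which is ``oddness plus curvature'':
\begin{itemize}
\item $R^t(f\nu)=Df$: the divergence theorem and the harmonicity of $q\mapsto G(p^{-1}\cdot q)$ compute $\int_{A(p,\epsilon,R)}\langle\nabla G(p^{-1}\cdot q),\nu(q)\rangle\,d\sigma$ explicitly (Lemma \ref{pv-K}), with the limit $\epsilon\to 0$ controlled by the weak-tangent-point property.
\item $R^t(f\tau)$: the area formula for the graph map $\Gamma$ reduces the annulus integral to line integrals along the horizontal curves $\gamma_t$, which vanish by the fundamental theorem of calculus and radial symmetry of $G$ (Lemma \ref{l:Ttau-pv}).
\item $R(f\nu)$ and $R(f\tau)$: Lemma \ref{l:right-left} converts $\nabla G(q^{-1}\cdot p)$ into $-\nabla G(p^{-1}\cdot q)$ \emph{when paired against $\nu$ or $\tau$ and integrated over $\sigma$}; this uses crucially that $\nu(q),\tau(q)$ depend only on the $z$-coordinate of $q$ (the flag structure) and that $TG$ is odd in $t$. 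This reduces to the two previous cases.
\end{itemize}
The choice of the $\{\nu,\tau\}$ frame is not cosmetic: the divergence theorem needs $\nu$, the area formula needs $\tau$, and Lemma \ref{l:right-left} needs the $t$-independence of both. Your fixed-frame kernels $\langle\nabla G,\,e_i\rangle$ enjoy none of these identities directly.
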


\begin{remark}\label{rem:strategy} The strategy is the following. We will first prove the $\sigma$ a.e. existence of $RV$ and $R^{t}V$ for special vector fields of the form $V = f\nu$ and $V = f\tau$, where $f \in L^{p}(\sigma)$ is a complex valued function, and $\nu$ and $\tau$ are the horizontal normal and tangent vector fields, respectively, associated to $\Omega$, recall Section \ref{s:normals}. These vector fields are defined $\sigma$ a.e. on $\partial \Omega$, and $\{\nu(p),\tau(p)\}$ is an orthonormal basis for $\spa\{X_{p},Y_{p}\}$ for $\sigma$ a.e. $p \in \partial \Omega$. So, once the principal values $R(f\nu),R(f\tau),R^{t}(f\nu)$ and $R^{t}(f\tau)$ are known to exist for all $f \in L^{p}(\sigma)$, the theorem follows by decomposing a general vector field $V \in L^{p}(\sigma)$ as
\begin{displaymath} V = \langle V,\nu \rangle \nu + \langle V,\tau \rangle \tau =: f_{1}\nu + f_{2}\tau, \end{displaymath}
noting that $f_{1},f_{2} \in L^{p}(\sigma)$, and applying the linearity of $R$ and $R^{t}$. \end{remark}

It is worth recording (or rather repeating from Section \ref{s:operatorList}) the relations
\begin{equation}\label{form162} Df = R^{t}(f\nu) \quad \text{and} \quad D^{t}f = \langle R(fX)X + R(fY)Y,\nu \rangle, \end{equation}
where $Df$ and $D^{t}f$ are the (boundary) double layer potential and its adjoint, namely
\begin{align*}
    & Df(p) := \pv \int \left< \nabla G(p^{-1} \cdot q), \nu(q) \right> f(q) \, d\sigma(q), \\
    & D^t f(p) := \pv \int \left< \nabla G(q^{-1} \cdot p), \nu(p)\right> f(q) \, d\sigma(q).  
\end{align*}
From the relations \eqref{form162}, one sees that Theorem \ref{t:pValues} implies the $\sigma$ a.e. pointwise existence of $Df,D^{t}f \in L^{p}(\sigma)$ for all $f \in L^{p}(\sigma)$ (in the setting of flag domains).

The easiest sub-problem turns out to be the existence of $R^{t}(f\nu) = Df$. The structure of flag domains is only used in a mild way. In fact, as Lemma \ref{pvL2-K} shows, the principal values $Df$ exist $\sigma$ a.e. on $\partial \Omega$ whenever $\Omega \subset \He$ is a corkscrew domain (see Definition \ref{def:csDomain}) with $3$-regular boundary such that the maximal Riesz transform is bounded on $L^{p}(\sigma)$. The maximal Riesz transform might \textbf{always} be bounded on $L^{p}(\sigma)$ under these assumptions, but this is an open problem. We know the $L^{p}(\sigma)$ boundedness only when $\Omega$ is a flag domain, by Theorem \ref{t:FO1}. In fact, \cite{2018arXiv181013122F} contains a somewhat more general result, and the existence of $Df$ could indeed be extended to the class of domains treated in \cite{2018arXiv181013122F}.

The other three sub-problems, concerning the existence of $R^{t}(f\tau)$, $R(f\nu)$, and $R(f\tau)$, require arguments more tied to the structure of flag domains: for example, the area formula \eqref{area} is needed to survive $R^{t}(f\tau)$, and $R(f\nu),R(f\tau)$ require even more \emph{ad hoc} arguments involving right invariant vector fields. 


\subsection{Existence of $R^{t}(f\nu)$} The aim of this section is to show that $R^{t}(f\nu)(p) = Df(p)$ exists for $\sigma$ a.e. $p \in \partial \Omega$ whenever $f \in L^{p}(\sigma)$, $1 < p < \infty$, and $\Omega \subset \He$ is a flag domain. We mainly keep to the notation $Df$ in this section. For $\ve>0$ and $f \in L^{p}(\sigma)$, we set
\begin{displaymath}
    D_\ve f(p) := \int_{B(p, \ve)^c }\left<\nabla G (p^{-1} \cdot q) , \nu(q) \right> f(q)\, d \sigma (q).
\end{displaymath}

\begin{definition}[Tangent points] Let $E \subset \He$ be a set. 
A point $p \in E$ is called a \emph{weak tangent point} if $\beta_{E}(p,r) \to 0$ as $r \to 0$, where $\beta_{E}(p,r)$ is the \emph{vertical $\beta$-number}
\begin{displaymath} \beta_{E}(p,r) := \inf_{z,\W} \sup \left\{ \tfrac{\dist(q,z \cdot \W)}{r} : q \in E \cap B(p,r) \right\}. \end{displaymath}
Here the "$\inf$" runs over all $z \in \He$ and vertical subgroups $\W \subset \He$. \end{definition}
We record the following easy and well-known observations:
\begin{lemma}\label{lemma9} A point $p \in E$ is a weak tangent point if and only if
\begin{displaymath} \inf_{\W} \sup \left\{\tfrac{\dist(q,p \cdot \W)}{r} : q \in E \cap B(p,r) \right\} \to 0 \quad \text{as } r \to 0. \end{displaymath} \end{lemma}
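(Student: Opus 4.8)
The statement to prove is Lemma \ref{lemma9}: a point $p \in E$ is a weak tangent point if and only if the ``restricted'' quantity obtained by forcing $z = p$ tends to $0$.

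The plan is to prove the two implications. The ``if'' direction is trivial: if the infimum over $\W$ alone of the supremum with $z = p$ tends to $0$, then a fortiori the infimum over all $z \in \He$ and all vertical $\W$ of the same supremum tends to $0$, since allowing $z$ to vary can only make the infimum smaller; so $\beta_E(p,r) \to 0$, meaning $p$ is a weak tangent point. The real content is the ``only if'' direction, and the standard strategy here is a triangle-inequality argument: given a near-optimal coset $z \cdot \W$ for $\beta_E(p,r)$, I want to replace it by $p \cdot \W$ (same vertical subgroup, but translated so that it passes through $p$) at the cost of a controlled error.

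Here is how I would carry out the ``only if'' direction. Fix $r > 0$ small and let $z \in \He$, $\W$ a vertical subgroup, achieve (up to a factor $2$, say) the infimum defining $\beta_E(p,r)$, so that $\dist(q, z \cdot \W) \le 2\beta_E(p,r)\, r$ for all $q \in E \cap B(p,r)$. Since $p \in E \cap B(p,r)$ itself, we have $\dist(p, z \cdot \W) \le 2\beta_E(p,r)\, r$; pick $w_0 \in z \cdot \W$ with $d(p, w_0) \le 2\beta_E(p,r)\, r$ (or within a negligible slack). Now I claim $p \cdot \W$ is close to $z \cdot \W = w_0 \cdot \W$ on the ball $B(p,r)$. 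The key point is left-invariance of the metric together with the fact that for a \emph{vertical} subgroup $\W$, the Hausdorff distance between two cosets $a \cdot \W$ and $b \cdot \W$ that both come ``close'' to a common point behaves well under the Korányi dilation structure — more precisely, within any ball $B(p,r)$, if $d(p,w_0)\le \eta r$ then every point of $w_0 \cdot \W \cap B(p, Cr)$ is within $\lesssim \eta^{1/2} r$ of $p \cdot \W$ (and conversely). The exponent $1/2$ reflects the non-isotropic scaling in the $t$-direction; this is exactly where the vertical-subgroup hypothesis is used, and one should double-check the precise power. Consequently, for each $q \in E \cap B(p,r)$,
\begin{displaymath}
\dist(q, p \cdot \W) \le \dist(q, w_0 \cdot \W) + \sup_{v \in (w_0\cdot\W)\cap B(p,Cr)} \dist(v, p\cdot\W) \lesssim \beta_E(p,r)\, r + (\beta_E(p,r))^{1/2} r,
\end{displaymath}
which tends to $0$ faster than $r$ as $r \to 0$, since $\beta_E(p,r) \to 0$. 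Dividing by $r$ and taking the infimum over $\W$ gives the claim.

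I expect the main obstacle to be the precise quantitative comparison of the two cosets $w_0 \cdot \W$ and $p \cdot \W$ inside a fixed ball, i.e.\ the geometric lemma that left-translating a vertical subgroup by a short vector perturbs it by a controlled amount (with the correct, possibly square-root, dependence) in Korányi Hausdorff distance on bounded sets. This is elementary — it reduces to writing $p = w_0 \cdot (w_0^{-1}\cdot p)$ with $\|w_0^{-1}\cdot p\|$ small, using left-invariance to reduce to comparing $\W$ with $(w_0^{-1}\cdot p)\cdot \W$, and then an explicit computation with the group law and the definition of a vertical subgroup — but it is the only non-formal step, so it deserves care. Since the authors call this ``easy and well-known,'' I would either cite a standard reference (e.g.\ analogous statements in the intrinsic-rectifiability literature) or relegate this coset-perturbation estimate to a one-line computation. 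Everything else is the triangle inequality and monotonicity of the infimum.
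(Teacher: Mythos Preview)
Your proposal is correct and follows essentially the same approach as the paper: both argue that since $p \in E \cap B(p,r)$, the point $p$ itself is at distance $\le 2\beta_E(p,r)r$ from the near-optimal coset $z\cdot\W$, and then replace $z\cdot\W$ by $p\cdot\W$ via a triangle-inequality / Hausdorff-distance comparison. Your only deviation is the conjectured square-root loss in the coset-perturbation step; in fact the dependence is linear: for a vertical subgroup $\W$ with complementary horizontal line $\mathbb{L}$ one has $\dist(q,\W)=\|\pi_{\mathbb{L}}(q)\|$, and since $\pi_{\mathbb{L}}(a\cdot w)=\pi_{\mathbb{L}}(a)$ for $w\in\W$, every point of $a\cdot\W$ lies within $\|a\|$ of $\W$ (no restriction to a ball needed). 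This simplifies your estimate to $\dist(q,p\cdot\W)\lesssim \beta_E(p,r)\,r$ directly, but your more conservative bound is of course still sufficient.
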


\begin{proof} The statement means that if $\beta_{E}(p,r) \to 0$, then the "base point" $z \in \He$ in the definition of the $\beta$-number can be fixed to be $z = p$, and the slightly larger quantities still tend to $0$ as $r \to 0$. The reason is that if $\beta_{E}(p,r) \to 0$, then the point $p \in E$ itself lies at vanishing relative distance from the best-approximating plane $z_{r} \cdot \W_{r}$ at scale $r > 0$. Hence the $\tfrac{1}{r}$-normalised Hausdorff distance between $[z_{r} \cdot \W_{r}] \cap B(p,r)$ and $[p \cdot \W_{r}] \cap B(p,r)$ tends to $0$ as $r \to 0$, and the claim follows. We leave further details to the reader.  \end{proof}

In the sequel, we will abbreviate $\sigma_{p,r} := |\partial B(p,r)|_{\He}$ and $\sigma_{r} := \sigma_{0,r}$. It follows by easy calculations, using the definition of horizontal perimeter, that
\begin{equation}\label{eq:sigmas} \sigma_{p,r} = r^{3} \cdot \tau_{p\sharp} \sigma_{1}, \end{equation}
where $\tau_{p\sharp}$ refers to push-forward under the left translation $\tau_{p}(q) = p \cdot q$.

\begin{definition}[Corkscrews]\label{def:csDomain} An open set $\Omega \subset \He$ satisfies the \emph{corkscrew condition} if there exists a constant $0 < c < 1$ (the "corkscrew constant") such that the following holds. For every $p \in \partial \Omega$ and $0 < r < \diam(\partial \Omega)$, there exist points $q_{1} \in B(p,r) \cap \Omega$ and $q_{2} \in B(p,r) \, \setminus \, \overline{\Omega}$ such that
\begin{displaymath} \min\{\dist(q_{1},\partial \Omega),\dist(q_{2},\partial \Omega)\} \geq cr. \end{displaymath}
\end{definition}

Here is one reason why the corkscrew condition is useful:

\begin{proposition}\label{p:tangentPoints} Let $\Omega \subset \He$ be an open set of locally finite perimeter satisfying the corkscrew condition such that the perimeter measure $\sigma := |\partial \Omega|_{\He}$ is upper $3$-regular. Then $\sigma$ a.e. point $p \in \partial \Omega$ is a weak tangent point of $\Omega$. 
\end{proposition}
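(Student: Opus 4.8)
The statement is a classical-style rectifiability/tangent result, and the plan is to show that the density of $\sigma$ together with the corkscrew condition forces the vertical $\beta$-numbers to vanish almost everywhere. I would proceed by contradiction: suppose there is a set $E_{0} \subset \partial \Omega$ with $\sigma(E_{0}) > 0$ on which $\limsup_{r \to 0} \beta_{E}(p,r) \geq \eta > 0$ for some fixed $\eta$. After passing to a subset (and using upper $3$-regularity to guarantee $\sigma$-density points), we may assume $E_{0}$ consists of points of positive upper density and positive lower density of $\partial \Omega$ with respect to $\sigma$. The first step is to set up a blow-up: at a density point $p$, rescale $\partial \Omega$ by $\delta_{1/r_{j}} \circ \tau_{p^{-1}}$ along a sequence $r_{j} \to 0$ realising $\beta_{E}(p,r_{j}) \geq \eta/2$; by the $3$-regularity of $\sigma$ and weak-$\ast$ compactness of Radon measures (plus lower Ahlfors regularity so mass does not escape), a subsequence of the rescaled measures $r_{j}^{-3}\,(\delta_{1/r_{j}})_{\sharp}\tau_{p^{-1}\sharp}\sigma$ converges to a nonzero $3$-regular limit measure $\sigma_{\infty}$ supported on a closed set $E_{\infty} = \lim \delta_{1/r_{j}}(p^{-1}\cdot \partial \Omega)$.

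The second, central step is to identify the blow-up $E_{\infty}$ as a \emph{vertical subgroup} $\W$, which then contradicts $\beta_{E}(p,r_{j}) \geq \eta/2$ (since that $\beta$-bound survives the limit and says $E_{\infty}$ stays $\eta/2$-far from every translated vertical plane at unit scale, impossible if $E_{\infty}$ is itself one). To get that $E_{\infty}$ is a vertical subgroup I would use the corkscrew condition: it is scale-invariant and inherited by blow-ups, so $\He \setminus E_{\infty}$ has (at least) two components, one containing a corkscrew point $q_{1}$ and one containing $q_{2}$, each at distance $\geq c$ from $E_{\infty}$ at unit scale, and more generally the blow-up $\Omega_{\infty}$ is a corkscrew domain with boundary $E_{\infty}$ and $3$-regular perimeter $\sigma_{\infty} = |\partial \Omega_{\infty}|_{\He}$. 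The key non-trivial input is a monotonicity/constancy-of-density argument: since $p$ is a point where $\Theta^{3}(\sigma,p) := \lim_{r \to 0} \sigma(B(p,r))/r^{3}$ exists $\sigma$-a.e. (this needs a differentiation-of-measures argument, valid because the metric $d$ satisfies a Vitali covering property), the density of $\sigma_{\infty}$ is constant, equal to $\Theta^{3}(\sigma,p)$, at \emph{every} point and \emph{every} scale. A constant-density, $3$-regular perimeter measure whose associated domain is a corkscrew domain must, in $\He$, be the perimeter of a vertical half-space: one argues that $\sigma_{\infty}$ is a minimizer-type object and the only self-similar $3$-regular perimeters in $\He$ bounding corkscrew domains are vertical subgroups. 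This can be made precise by invoking the structure theory of locally finite perimeter sets in $\He$ (De Giorgi–type blow-up: $\sigma_{\infty}$-a.e. point of $\partial^{\ast}\Omega_{\infty}$ has a vertical half-space as its own blow-up), combined with the constancy of density to propagate the half-space structure globally.

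Concretely I would organise it as: (i) reduce to density points and fix $\eta$; (ii) extract a blow-up limit measure $\sigma_{\infty}$, nonzero and $3$-regular, using $3$-regularity and compactness; (iii) show the corkscrew constant passes to the limit, so $\Omega_{\infty}$ is a corkscrew domain with $\partial\Omega_{\infty} = \spt \sigma_{\infty}$; (iv) invoke $\sigma$-a.e. existence of the density $\Theta^{3}(\sigma,\cdot)$ to deduce $\sigma_{\infty}$ has constant density; (v) apply the De Giorgi structure theorem for $\He$-Caccioppoli sets to $\Omega_{\infty}$ to find one point $q_{0}$ whose blow-up is a vertical subgroup $\W$, then use constancy of density plus the corkscrew condition to upgrade this to $\Omega_{\infty}$ itself being a vertical half-space, hence $E_{\infty} = z_{0}\cdot\W$; (vi) observe that $\dist(q, z_{0}\cdot\W)/1 \leq \beta_{E_{\infty}}(0,1) + \text{(small)}$ gives $\beta_{E_{\infty}}(0,1) = 0$, while the lower-semicontinuity of $\beta$ under the blow-up gives $\beta_{E_{\infty}}(0,1) \geq \eta/2 > 0$, a contradiction.

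The step I expect to be the main obstacle is (v), namely extracting a \emph{vertical subgroup} rather than merely a codimension-one minimal surface or a "monotone" set from the blow-up. In Euclidean space the corresponding fact is that constant-density rectifiable measures blow up to planes; in $\He$ one must rule out exotic $3$-regular corkscrew boundaries (e.g. cones over non-flat curves), and the cleanest route is probably to combine the constancy of the density $\Theta^{3}$ at all scales — which makes $\sigma_{\infty}$ a \emph{self-similar} uniform measure — with the known classification (via the first-variation/monotonicity formula for horizontal perimeter and the corkscrew condition ruling out lower-dimensional pieces) that such measures in $\He$ are exactly the vertical-subgroup perimeters. If a self-contained classification is unavailable, an acceptable fallback is to invoke the De Giorgi blow-up at a single reduced-boundary point of $\Omega_{\infty}$ to obtain a vertical-subgroup tangent there, and then run a second, nested blow-up using constancy of density to globalise; I would flag that this nested-blow-up bookkeeping, while routine in spirit, is where the technical care concentrates.
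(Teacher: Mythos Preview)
The paper's proof is entirely different and much shorter: it observes that the corkscrew condition forces $\partial\Omega$ to coincide with the measure-theoretic boundary, whence by \cite[Corollary~7.6]{FSSC} the perimeter $\sigma$ is comparable to $\mathcal{S}^{3}|_{\partial\Omega}$; together with upper $3$-regularity this makes $\partial\Omega$ a \emph{Semmes surface} in the sense of \cite{2018arXiv180304819F}, and the weak-tangent conclusion is then quoted directly from \cite[Proposition~5.11]{2018arXiv180304819F}. All the geometric work is outsourced to existing structure theory.

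Your blow-up strategy is a reasonable attempt at a self-contained argument, but step (v) is a genuine gap that you flag yourself and do not close. The assertion that a constant-density $3$-regular $\He$-perimeter bounding a corkscrew domain must be a vertical half-space is not an available classification in $\He$, and the nested-blow-up fallback does not work as written: constancy of density alone does not propagate a local half-space tangent at a single point $q_{0}\in\partial\Omega_{\infty}$ to a global statement. What you would need is a rigidity result of the form ``density everywhere equals the half-space density $\Rightarrow$ the set is a half-space'', and such a statement is only known for $\He$-perimeter \emph{minimisers} (via a monotonicity formula), which your $\Omega_{\infty}$ has no reason to be. There is also a smaller gap in step (iv): a Vitali covering argument does not by itself give existence of the limit $\Theta^{3}(\sigma,p)$, only comparability of upper and lower densities. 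The clean fix bypasses the intermediate blow-up $\Omega_{\infty}$ altogether: apply the FSSC De Giorgi-type theorem directly to $\Omega$ at the original point $p$. For $\sigma$-a.e.\ $p$ in the reduced boundary (and the corkscrew condition ensures the topological boundary agrees with it up to a $\sigma$-null set), the rescalings $\delta_{1/r}(p^{-1}\cdot\Omega)$ converge to a fixed vertical half-space, which gives $\beta_{\partial\Omega}(p,r)\to 0$ immediately. This is essentially the content of the cited \cite[Proposition~5.11]{2018arXiv180304819F}, and since density existence in your step (iv) already requires the FSSC machinery, invoking its full blow-up conclusion costs nothing extra.
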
 

\begin{proof} The corkscrew condition implies that $\partial \Omega$ equals the \emph{measure theoretic boundary} of $\Omega$, see \cite[Definition 7.4]{FSSC}. Therefore, \cite[Corollary 7.6]{FSSC} implies that $\sigma$ equals (up to a multiplicative constant) the $3$-dimensional spherical Hausdorff measure $\mathcal{S}^{3}$ restricted to $\partial \Omega$. In particular, the assumption that $\sigma$ is upper $3$-regular implies that $\partial \Omega$ is upper $3$-regular in the usual sense
\begin{displaymath} \mathcal{H}^{3}(B(p,r) \cap \partial \Omega) \lesssim r^{3}, \qquad p \in \He, \, r > 0. \end{displaymath}
Consequently, $\partial \Omega$ is a \emph{Semmes surface}, as in \cite[Definition 1.6]{2018arXiv180304819F}. The fact that $\sigma$ a.e. point $p \in \partial \Omega$ is a weak tangent point then follows immediately from \cite[Proposition 5.11]{2018arXiv180304819F}. \end{proof} 

\begin{lemma}\label{lemma10} Let $\Omega \subset \He$ satisfying the corkscrew condition with constant "$c$" such that $0 \in \partial \Omega$. Then, for every $\eta > 0$ there exists $\delta > 0$ (depending only on $c,\eta$) such that the following holds. If $r > 0$, and 
\begin{equation}\label{form146} \inf_{\W} \sup \left\{\dist(q,\W) : q \in \partial \Omega \cap \bar{B}(r) \right\} \leq \delta r, \end{equation} 
there exists a half-space $\He^{+}_{r}$ bounded by a vertical subgroup $\W_{r}$ such that $\sigma_{r}(\Omega \bigtriangleup \He_{r}^{+}) \leq \eta r^{3}$. \end{lemma}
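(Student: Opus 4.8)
The plan is to argue by contradiction and compactness. Suppose the conclusion fails for some fixed $\eta > 0$. Then there is a sequence of flag domains (or corkscrew domains with constant $c$) $\Omega_{j} \subset \He$ with $0 \in \partial \Omega_{j}$, and scales $r_{j} > 0$, such that
\begin{displaymath} \inf_{\W} \sup \{ \dist(q,\W) : q \in \partial \Omega_{j} \cap \bar{B}(r_{j}) \} \leq \tfrac{1}{j} r_{j}, \end{displaymath}
yet $\sigma_{r_{j}}(\Omega_{j} \bigtriangleup \He^{+}) > \eta r_{j}^{3}$ for every half-space $\He^{+}$ bounded by a vertical subgroup. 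By the Heisenberg dilations $\delta_{1/r_{j}}$, which rescale $\sigma_{r_{j}}$ to $\sigma_{1}$ by \eqref{eq:sigmas}, and which preserve the corkscrew constant, I may assume $r_{j} = 1$ for all $j$. So I have domains $\Omega_{j}$, all satisfying the corkscrew condition with the same constant $c$, all containing $0$ in their boundary, with $\partial \Omega_{j} \cap \bar{B}(1)$ lying within Hausdorff distance $1/j$ of \emph{some} vertical subgroup $\W_{j}$, but with $\sigma_{1}(\Omega_{j} \bigtriangleup \He^{+}) > \eta$ for all vertical half-spaces $\He^{+}$.

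The next step is to extract a limit. The vertical subgroups $\W_{j}$ through $0$ form a compact family (parametrised by a direction in the horizontal plane), so after passing to a subsequence $\W_{j} \to \W$ for some fixed vertical subgroup $\W$. I then want to show $\Omega_{j}$ converges, in a suitable sense (e.g. local Hausdorff convergence of boundaries, or $L^{1}_{loc}$ convergence of characteristic functions), to one of the two half-spaces $\He^{+}$ bounded by $\W$. The point $0 \in \partial \Omega_{j}$ for all $j$, together with the corkscrew condition — which provides interior corkscrew points $q_{1}^{j}$ and exterior corkscrew points $q_{2}^{j}$ at every boundary point and every scale, at definite distance $\geq c r$ from $\partial \Omega_{j}$ — prevents $\partial \Omega_{j}$ from degenerating or pinching: it forces $\partial \Omega_{j} \cap \bar{B}(1)$ to be "large" (a genuine $3$-regular piece of surface, not just a small arc), so that in the limit $\partial \Omega$ actually fills out all of $\W \cap \bar{B}(1)$ rather than a proper subset. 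The corkscrew points also pin down which side is the interior: passing to a further subsequence, the interior corkscrews at $0$ converge to a point on one fixed side of $\W$, forcing $\Omega_{j} \to \He^{+}$ for that side.

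The contradiction then comes from the stability of perimeter measure under this convergence. Since $\partial \Omega_{j} \cap \bar{B}(1) \to \W \cap \bar{B}(1)$ and $\Omega_{j} \cap B(1) \to \He^{+} \cap B(1)$ in $L^{1}$, and since all the $\sigma_{\Omega_{j}} = |\partial \Omega_{j}|_{\He}$ are uniformly $3$-regular (Ahlfors regular, by the corkscrew condition and upper regularity, via the argument in Proposition \ref{p:tangentPoints} identifying $\sigma_{\Omega_{j}}$ with $\mathcal{S}^{3}|_{\partial \Omega_{j}}$), the measures $|\partial \Omega_{j}|_{\He}$ converge weakly-$\ast$ to $|\partial \He^{+}|_{\He} = \sigma_{\W}$ on $B(1)$; in particular $\sigma_{1}(\Omega_{j} \bigtriangleup \He^{+}) = |\partial\Omega_j|_\He(B(1)) - o(1) \to 0$ can be made $< \eta$ for $j$ large, against our standing assumption.

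\textbf{Main obstacle.} The delicate part is the compactness/convergence step: extracting a limit domain from $(\Omega_{j})$ and proving that the limit is exactly a half-space bounded by $\W$ (neither empty, nor all of $B(1)$, nor a strictly smaller set than $\He^{+}$), together with the weak convergence of the perimeter measures. This is where uniform Ahlfors $3$-regularity of $\partial\Omega_j$ and the two-sided corkscrew condition do the real work — regularity gives precompactness and rules out the surface measure either blowing up or collapsing, while the corkscrew condition keeps both complementary components substantial and identifies the interior side. One should be careful that the hypothesis \eqref{form146} only controls $\partial\Omega_j$ \emph{inside} $\bar B(1)$, so all convergence statements must be confined to slightly smaller balls $B(1-\epsilon)$; this only costs an $\epsilon$'s worth of perimeter, which is harmless after a diagonal argument. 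For flag domains specifically, one could alternatively bypass abstract compactness: \eqref{form146} says the Lipschitz graph $A$ defining $\Omega_j$ has small oscillation relative to a line over the relevant window, from which a half-space approximation and the perimeter estimate follow by a direct (if slightly tedious) computation with the area formula \eqref{eq:sigmaFubini}. I would present the compactness argument as the main line, since it is cleaner and extends to corkscrew domains.
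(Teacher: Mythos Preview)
Your compactness strategy is plausible in spirit but the concluding step contains a genuine confusion about what $\sigma_{1}$ is. Recall that $\sigma_{r} := |\partial B(0,r)|_{\He}$ is the horizontal perimeter measure of the \emph{ball} $B(r)$; it is a measure supported on the \emph{sphere} $\partial B(r)$, and $\sigma_{1}(\Omega_{j} \bigtriangleup \He^{+})$ is simply the surface measure on $\partial B(1)$ of the set where $\mathbf{1}_{\Omega_{j}}$ and $\mathbf{1}_{\He^{+}}$ disagree. It has nothing to do with $|\partial \Omega_{j}|_{\He}$, so your displayed identity ``$\sigma_{1}(\Omega_{j} \bigtriangleup \He^{+}) = |\partial \Omega_{j}|_{\He}(B(1)) - o(1)$'' is meaningless, and weak convergence of the perimeters $|\partial \Omega_{j}|_{\He}$ to $\sigma_{\W}$ is irrelevant to the conclusion. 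What you would actually need is $\mathbf{1}_{\Omega_{j}} \to \mathbf{1}_{\He^{+}}$ in $L^{1}(\sigma_{1})$, i.e.\ convergence of the traces on the fixed sphere $\partial B(1)$. From $L^{1}_{\mathrm{loc}}$ convergence in the solid ball, coarea only gives this for a.e.\ radius, not for $r = 1$ on the nose. Also note that the lemma assumes only the corkscrew condition, not locally finite perimeter or Ahlfors regularity, so the $BV$-compactness you invoke is not available under the stated hypotheses.

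The paper's proof is entirely direct and avoids all of this. Let $\W$ attain the infimum in \eqref{form146}, so $\partial \Omega \cap \bar{B}(r) \subset \overline{\W(\delta r)}$. The open set $B(r) \setminus \overline{\W(\delta r)}$ has two connected components $U^{\pm}$, and since neither meets $\partial \Omega$, each is entirely contained in $\Omega$ or in $\He \setminus \overline{\Omega}$. If both were on the same side, then either $B(r) \cap \Omega$ or $B(r) \setminus \overline{\Omega}$ would lie in the slab $\W(\delta r)$, which for $\delta \ll c$ contradicts the corkscrew condition at $0$. Hence $U^{+} \subset \Omega$ and $U^{-} \subset \He \setminus \overline{\Omega}$ (say), and taking $\He^{+}$ to be the half-space containing $U^{+}$ one gets $(\Omega \bigtriangleup \He^{+}) \cap \partial B(r) \subset \overline{\W(\delta r)}$, whence $\sigma_{r}(\Omega \bigtriangleup \He^{+}) \leq \sigma_{r}(\overline{\W(\delta r)}) = r^{3}\sigma_{1}(\overline{\W(\delta)})$ by \eqref{eq:sigmas}, which is $\leq \eta r^{3}$ once $\delta = \delta(\eta)$ is small. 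No compactness, no limits, no extra regularity: just the connectedness observation you were implicitly reaching for, applied at the given scale.
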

\begin{proof} Let $\eta > 0$, and assume that \eqref{form146} holds with a constant $\delta > 0$ to be specified in the course of the proof. Let $\W$ be a vertical subgroup minimising \eqref{form146}, so $\partial \Omega \cap \bar{B}(r) \subset \overline{\W(\delta r)}$. This implies that the two components $U^{-}$ and $U^{+}$ of $U := B(r) \, \setminus \overline{\W(\delta r)}$ are contained in either $\Omega$ or $\He \, \setminus \, \overline{\Omega}$. In fact, we claim that one of $U^{-},U^{+}$ is contained in $\Omega$, and the other one in $\He \, \setminus \, \overline{\Omega}$. If this were not the case, then either $U \subset \Omega$ or $U \subset \He \, \setminus \, \overline{\Omega}$, and hence 
\begin{displaymath} B(r) \, \setminus \, \overline{\Omega} \subset \W(\delta r) \quad \text{or} \quad B(r) \cap \Omega \subset \W(\delta r). \end{displaymath}
For $\delta \ll c$ small enough, both options would violate the corkscrew condition at $0 \in \partial \Omega$. For example, if the first option took place, pick $q_{2} \in B(\tfrac{r}{2}) \, \setminus \, \overline{\Omega}$ with $\dist(q_{2},\partial \Omega) \geq \tfrac{cr}{2}$. Then $B(q_{2},\tfrac{cr}{2}) \subset B(r) \, \setminus \, \overline{\Omega} \subset \W(\delta r)$ (since $c < 1$), which is not possible if $\delta \ll c$.

 So, we may assume $U^{-} \subset \He \, \setminus \, \overline{\Omega}$ and $U^{+} \subset \Omega$. We define $\He^{+}$ to be the half-space bounded by $\W$ containing $U^{+}$. Then $[\Omega \bigtriangleup \He^{+}] \cap \partial B(r) \subset \overline{\W(\delta r)}$, and hence $\sigma_{r}(\Omega \bigtriangleup \He^{+}) \leq \sigma_{r}(\overline{\W(\delta r))})$. But using \eqref{eq:sigmas}, we infer that
\begin{displaymath} \sigma_{r}(\Omega \bigtriangleup \He^{+}) \leq r^{3} \cdot \sigma_{1}(\overline{\W(\delta)}). \end{displaymath} 
Now, it remains to choose $\delta = \delta(\eta) > 0$ so small that $\sigma_{1}(\overline{\W(\delta)}) \leq \eta$. \end{proof}

Here is the first result on the existence of principal values:

\begin{lemma}\label{pv-K} Let $\Omega \subset \He$ be an open set of locally finite $\He$-perimeter satisfying the corkscrew condition. If $p \in \partial \Omega$ is a weak tangent point of $\partial \Omega$, and $R > 0$, then 
\begin{align*}
   D\mathbf{1}_{B(p,R)}(p) := \lim_{\ve \to 0} D_{\ve} \mathbf{1}_{B(p,R)}(p) = \tfrac{1}{2} - \mathfrak{r}(\Omega,R, p), 
\end{align*}
where
\begin{equation}\label{K-pv10} \mathfrak{r}(\Omega,R,p) := \int_{\Omega} \langle \nabla G(p^{-1} \cdot q),\nu_{p,R}(q) \rangle \, d\sigma_{p,R}(q). \end{equation} 
\end{lemma}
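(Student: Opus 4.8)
The plan is to apply the horizontal Gauss--Green theorem to the $\bigtriangleup^{\flat}$-harmonic vector field $V(q) := \nabla G(p^{-1} \cdot q)$ on the truncated annulus $U_{\epsilon} := \Omega \cap (B(p,R) \setminus \overline{B(p,\epsilon)})$, and then to evaluate the resulting ``small sphere'' flux term as $\epsilon \to 0$. First I would normalise $p = 0$: left translations are isometries of $(\He,d)$ preserving horizontal perimeter and the corkscrew condition, and they transform $G(p^{-1} \cdot q)$, $\nu$ and $\sigma_{p,r}$ into their $p = 0$ counterparts, so no generality is lost.

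Fix $0 < \epsilon < R$. Discarding the at most countably many $\epsilon$ with $\sigma(\partial \Omega \cap \partial B(0,\epsilon)) > 0$ (harmless, since we compute a limit as $\epsilon \to 0$), the set $U_{\epsilon}$ is a bounded $\He$-Caccioppoli set whose reduced boundary is, up to a $|\partial U_{\epsilon}|_{\He}$-null set, the union of $\partial \Omega \cap (B(0,R) \setminus \overline{B(0,\epsilon)})$, $\Omega \cap \partial B(0,R)$ and $\Omega \cap \partial B(0,\epsilon)$, with inward horizontal normals $\nu$, $\nu_{0,R}$, $-\nu_{0,\epsilon}$ and boundary measures $\sigma$, $\sigma_{0,R}$, $\sigma_{0,\epsilon}$ respectively (here $\nu_{0,r}$ is the inward horizontal normal of $B(0,r)$). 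Since $0 \notin \overline{U_{\epsilon}}$, the function $G$ is smooth and $\bigtriangleup^{\flat}$-harmonic near $\overline{U_{\epsilon}}$, so $\mathrm{div}^{\flat} V = \bigtriangleup^{\flat}G \equiv 0$ there; multiplying $V$ by a cutoff $\equiv 1$ near $\overline{U_{\epsilon}}$ to land in $C^{1}_{c}(\He,H\He)$ and applying the divergence theorem from Section \ref{s:normals} (together with standard properties of intersections of finite-perimeter sets, cf.\ \cite{FSSC}) gives $0 = \int_{U_{\epsilon}} \mathrm{div}^{\flat}V = -\int_{\partial^{\ast}U_{\epsilon}} \langle V, \nu_{U_{\epsilon}}\rangle$, which rearranges to
\begin{displaymath} D_{\epsilon}\mathbf{1}_{B(0,R)}(0) = \int_{\partial \Omega \cap (B(0,R) \setminus \overline{B(0,\epsilon)})} \langle \nabla G(q),\nu(q) \rangle \, d\sigma(q) = -\mathfrak{r}(\Omega,R,0) + \mathfrak{f}_{\epsilon}, \end{displaymath}
where $\mathfrak{f}_{\epsilon} := \int_{\Omega \cap \partial B(0,\epsilon)} \langle \nabla G(q), \nu_{0,\epsilon}(q) \rangle \, d\sigma_{0,\epsilon}(q)$; the first equality uses that $\sigma$ is carried by $\partial \Omega$ and, by our choice of $\epsilon$, $\sigma(\partial \Omega \cap \partial B(0,\epsilon)) = 0$.

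It remains to show $\mathfrak{f}_{\epsilon} \to \tfrac{1}{2}$ as $\epsilon \to 0$. Since $\nabla G$ is $\delta_{r}$-homogeneous of degree $-3$ and horizontal perimeter is $3$-homogeneous under dilations (so $\sigma_{0,\epsilon} = \epsilon^{3}(\delta_{\epsilon})_{\sharp}\sigma_{0,1}$, with horizontal normal directions preserved), the substitution $q = \delta_{\epsilon}q'$ gives $\mathfrak{f}_{\epsilon} = \int_{\Omega_{\epsilon} \cap \partial B(0,1)} \langle \nabla G(q'),\nu_{0,1}(q') \rangle \, d\sigma_{0,1}(q')$, where $\Omega_{\epsilon} := \delta_{1/\epsilon}(\Omega)$, the factors $\epsilon^{-3}$ and $\epsilon^{3}$ cancelling. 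Now I would combine two facts. \emph{(i)} For every half-space $\He^{+}$ bounded by a vertical subgroup $\W \ni 0$ one has exactly $\int_{\He^{+} \cap \partial B(0,1)} \langle \nabla G,\nu_{0,1}\rangle \, d\sigma_{0,1} = \tfrac{1}{2}$: the integral over all of $\partial B(0,1)$ equals $1$ because $-\bigtriangleup^{\flat}G = \delta_{0}$ (Green's identity on an annulus $\{1 < \|q\| < M\}$, using that the integrated flux is with respect to the inward normal), and there is an isometric group automorphism of $\He$ fixing $\W$ pointwise and interchanging the two components of $\He \setminus \W$ — a horizontal rotation composed with a reflection such as $(x,y,t) \mapsto (-x,y,-t)$ — under which $G$, $\partial B(0,1)$ and $\sigma_{0,1}$ are invariant while $\langle \nabla G,\nu_{0,1}\rangle$ is unchanged, so the two half-spaces carry equal flux. \emph{(ii)} Since $0$ is a weak tangent point, $\beta_{\partial \Omega}(0,r) \to 0$, so by Lemma \ref{lemma9} the hypothesis \eqref{form146} of Lemma \ref{lemma10} holds at all small scales; hence for every $\eta > 0$ and all small enough $\epsilon$ there is a half-space $\He^{+}_{\epsilon}$ bounded by a vertical subgroup through $0$ with $\sigma_{\epsilon}(\Omega \bigtriangleup \He^{+}_{\epsilon}) \leq \eta \epsilon^{3}$, i.e.\ $\sigma_{0,1}(\Omega_{\epsilon} \bigtriangleup \widetilde{\He}^{+}_{\epsilon}) \leq \eta$ after rescaling, where $\widetilde{\He}^{+}_{\epsilon} := \delta_{1/\epsilon}(\He^{+}_{\epsilon})$ is again a half-space through $0$. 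Because $|\nabla G| \lesssim 1$ on $\partial B(0,1)$, facts (i) and (ii) give $|\mathfrak{f}_{\epsilon} - \tfrac{1}{2}| = |\mathfrak{f}_{\epsilon} - \int_{\widetilde{\He}^{+}_{\epsilon} \cap \partial B(0,1)} \langle \nabla G,\nu_{0,1}\rangle \, d\sigma_{0,1}| \lesssim \sigma_{0,1}(\Omega_{\epsilon} \bigtriangleup \widetilde{\He}^{+}_{\epsilon}) \lesssim \eta$ for all small $\epsilon$; letting $\eta \to 0$ yields $\mathfrak{f}_{\epsilon} \to \tfrac{1}{2}$, and the lemma follows.

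The main obstacle is the first step: setting up Gauss--Green on the merely finite-perimeter set $U_{\epsilon}$, correctly identifying its reduced boundary and the orientations of the three normal contributions, and disposing of the degenerate possibility that $\partial \Omega$ and $\partial B(0,\epsilon)$ overlap on a set of positive perimeter — which is the reason for restricting $\epsilon$ to a co-countable set. Once that bookkeeping (and the homogeneity computation that makes the $\epsilon^{\pm 3}$ factors cancel in $\mathfrak{f}_{\epsilon}$) is in place, the geometric content — that the limiting flux is exactly $\tfrac{1}{2}$ — is a clean consequence of the exact half-space computation in (i) and the quantitative flatness at weak tangent points supplied by Lemma \ref{lemma10}.
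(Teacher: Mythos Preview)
Your proposal is correct and follows essentially the same approach as the paper: apply the horizontal divergence theorem on $\Omega \cap (B(R)\setminus \overline{B(\epsilon)})$ to write $D_{\epsilon}\mathbf{1}_{B(R)}(0) = -\mathfrak{r}(\Omega,R,0) + (\text{small-sphere flux})$, then show the flux tends to $\tfrac{1}{2}$ by combining Lemma~\ref{lemma10} (half-space approximation at weak tangent points) with an exact symmetry computation on a half-sphere. The only cosmetic differences are that you rescale to $\partial B(0,1)$ first (the paper stays at scale $\epsilon$), and for the symmetry the paper uses the $180^{\circ}$ rotation $(x,y,t)\mapsto(-x,-y,t)$ after explicitly computing $\langle \nabla G,\nu_{\epsilon}\rangle = 2c|z|/\epsilon^{4}$. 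One small slip: your map $(x,y,t)\mapsto(-x,y,-t)$ does \emph{not} fix $\W$ pointwise (it sends $(0,y,t)$ to $(0,y,-t)$), but pointwise fixing is irrelevant---what matters, and what your map does deliver, is that it preserves $\partial B(0,1)$, $\sigma_{0,1}$, and the integrand while interchanging the two half-spaces.
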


We need one more auxiliary result, which is virtually a restatement of the fact that $G$ is the fundamental solution of $-\bigtriangleup^{\flat}$:
\begin{lemma}\label{K-ball} Let $\Omega \subset \He$ be an open set with finite $\He$-perimeter. Write $\sigma := |\partial \Omega|_{\He}$, and let $\nu$ be the inward-pointing horizontal normal of $\Omega$. Then,
\begin{equation}\label{kappa}
     \int_{\partial \Omega} \langle \nabla G(p^{-1} \cdot q), \nu(q) \rangle \, d\sigma(q) = 1, \qquad p \in \Omega.
\end{equation}
\end{lemma}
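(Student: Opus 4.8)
The plan is to combine the Gauss--Green theorem on $\Omega$ (with the singularity of the kernel excised) with the defining property of the fundamental solution. Fix $p \in \Omega$ and set $u(q) := G(p^{-1}\cdot q)$. Since $X,Y$ are left-invariant, $\nabla_{q}[G(p^{-1}\cdot q)] = (\nabla G)(p^{-1}\cdot q)$, so $\nabla u$ is exactly the horizontal field appearing in the statement; moreover $\bigtriangleup^{\flat}$ is left-invariant and $-\bigtriangleup^{\flat}G = \delta_{0}$ in the sense of distributions (Folland \cite{MR0315267}), whence $-\bigtriangleup^{\flat}u = \delta_{p}$ in the sense of distributions. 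In particular $u \in C^{\infty}(\He \setminus \{p\})$ and $\mathrm{div}^{\flat}(\nabla u) = \bigtriangleup^{\flat}u \equiv 0$ on $\He \setminus \{p\}$.

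First I would excise a small Kor\'anyi ball around the singularity. Choose $r_{0} > 0$ with $\overline{B(p,r_{0})} \subset \Omega$, and for $0 < \ve \le r_{0}$ consider $\Omega_{\ve} := \Omega \setminus \overline{B(p,\ve)}$. This set has $|\partial \Omega_{\ve}|_{\He} = \sigma + \sigma_{p,\ve}$ (the pieces $\partial \Omega$ and $\partial B(p,\ve)$ are disjoint, as $\overline{B(p,\ve)} \subset \Omega$), with inward horizontal normal $\nu$ on $\partial \Omega$ and $-\nu_{p,\ve}$ on $\partial B(p,\ve)$, where $\nu_{p,\ve}$ is the inward normal of $B(p,\ve)$. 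Applying the Gauss--Green theorem to the horizontal field $\nabla u$ on $\Omega_{\ve}$, where $\mathrm{div}^{\flat}(\nabla u) \equiv 0$, gives
\begin{displaymath} 0 = \int_{\Omega_{\ve}} \mathrm{div}^{\flat}(\nabla u)\,dq = -\int_{\partial \Omega}\langle \nabla u,\nu \rangle\,d\sigma + \int_{\partial B(p,\ve)}\langle \nabla u,\nu_{p,\ve} \rangle\,d\sigma_{p,\ve}, \end{displaymath}
i.e. $\int_{\partial \Omega}\langle \nabla G(p^{-1}\cdot q),\nu(q)\rangle\,d\sigma(q)$ equals the ``flux through the small sphere'' $\int_{\partial B(p,\ve)}\langle \nabla u,\nu_{p,\ve}\rangle\,d\sigma_{p,\ve}$, for every $0 < \ve \le r_{0}$.

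It remains to show this flux equals $1$, and this is where the fundamental solution property enters. I would fix $\phi \in C_{c}^{\infty}(B(p,\tfrac{r_{0}}{2}))$ with $\phi \equiv 1$ on $B(p,\tfrac{r_{0}}{4})$ and apply Green's second identity $\mathrm{div}^{\flat}(u\nabla \phi - \phi \nabla u) = u\bigtriangleup^{\flat}\phi - \phi\bigtriangleup^{\flat}u$ on $\He \setminus \overline{B(p,\ve)}$ for $0 < \ve < \tfrac{r_{0}}{4}$ (away from the singularity, so the field may be truncated into $C^{1}_{c}(\He,H\He)$ without changing anything on $\overline{\He \setminus \overline{B(p,\ve)}}$). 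Using $\bigtriangleup^{\flat}u \equiv 0$ there, the compact support of $\phi$, and $\phi \equiv 1$, $\nabla \phi \equiv 0$ on $\partial B(p,\ve)$, every boundary term collapses and one is left with
\begin{displaymath} \int_{\He \setminus \overline{B(p,\ve)}} u\,\bigtriangleup^{\flat}\phi\,dq = -\int_{\partial B(p,\ve)}\langle \nabla u,\nu_{p,\ve}\rangle\,d\sigma_{p,\ve} = -\int_{\partial \Omega}\langle \nabla u,\nu\rangle\,d\sigma. \end{displaymath}
Since $u(q) \sim d(p,q)^{-2}$ near $p$ is locally integrable with respect to Haar measure, letting $\ve \downarrow 0$ and using the defining property of the fundamental solution together with left-invariance (which gives $\int_{\He} u\,\bigtriangleup^{\flat}\phi\,dq = -\phi(p) = -1$) yields $-\int_{\partial \Omega}\langle \nabla u,\nu\rangle\,d\sigma = -1$, i.e. $\int_{\partial \Omega}\langle \nabla G(p^{-1}\cdot q),\nu(q)\rangle\,d\sigma(q) = 1$, as claimed.

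The one point requiring care is the first Gauss--Green step: $\nabla u$ is bounded and smooth near $\overline{\Omega_{\ve}}$ but not compactly supported when $\Omega$ is unbounded, so one should obtain that identity by truncating $\nabla u$ with radial cutoffs $\psi_{R}$ and letting $R \to \infty$; the error equals $\int_{\Omega_{\ve}}\langle \nabla^{\flat}\psi_{R},\nabla u\rangle$, which is $\lesssim_{\ve} R^{-1}\,|\Omega_{\ve} \cap (B(0,2R)\setminus B(0,R))|$ and hence tends to $0$ provided $|\Omega| < \infty$ (in particular if $\Omega$ is bounded, which is the case of interest; for a general finite-perimeter $\Omega$ one reads the hypothesis with this finiteness understood). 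Everything else — the normal orientations, the collapse of the boundary terms in Green's identity, and the dominated-convergence limits as $\ve \downarrow 0$ — is routine.
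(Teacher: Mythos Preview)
Your argument is correct. Both your proof and the paper's handle the same issue---the singularity of $G(p^{-1}\cdot q)$ at $q=p$---but by different standard devices: you \emph{excise} the singularity by removing $\overline{B(p,\ve)}$ and then identify the residual sphere-flux with the value $1$ via a second application of Green's identity against a test function $\phi$; the paper instead \emph{mollifies}, replacing $G$ by $G_{\ve}=\psi_{\ve}\ast G$ so that $\nabla G_{\ve}(p^{-1}\cdot)$ is globally smooth and $\bigtriangleup^{\flat}G_{\ve}=-\psi_{\ve}$ is compactly supported near $p$, and then applies the divergence theorem once. The mollification route is shorter (one integration by parts, one limit) and sidesteps any discussion of $\nu_{p,\ve}$ or of the flux through the small sphere; your excision route is more hands-on and, as you note, makes the constant $1$ appear directly from the distributional equation $\int u\,\bigtriangleup^{\flat}\phi=-\phi(p)$ rather than from the normalisation of an approximate identity. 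Your caveat about the truncation step when $\Omega$ is unbounded is appropriate; the paper's proof carries the same implicit restriction, and the lemma is in any case only invoked for balls. One cosmetic remark: since $\bigtriangleup^{\flat}\phi$ vanishes on $B(p,r_{0}/4)$, the integral $\int_{\He\setminus\overline{B(p,\ve)}}u\,\bigtriangleup^{\flat}\phi$ is actually constant for $\ve<r_{0}/4$, so the ``$\ve\downarrow 0$'' in your final step is not really a limit.
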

\begin{proof} Define $G_{\epsilon} := \psi_{\epsilon} \ast G$, where $\{\psi_{\epsilon}\}_{\epsilon > 0}$ is a (compactly supported) approximate identity, and observe that $G_{\epsilon}$ is a smooth function satisfying $\bigtriangleup^{\flat}(\psi_{\epsilon} \ast G) = -\psi_{\epsilon}$. Consequently, by the divergence theorem,
\begin{displaymath} \int_{\partial \Omega} \langle \nabla G_{\epsilon}(p^{-1} \cdot q),\nu(q) \rangle \, d\sigma(q) = \int_{\Omega} \psi_{\epsilon}(p^{-1} \cdot q) \, dq = 1 \end{displaymath}
for all $\epsilon > 0$ so small that $\spt (q \mapsto \psi_{\epsilon}(p^{-1} \cdot q)) \subset \Omega$. On the other hand, the LHS clearly tends to the LHS of \eqref{kappa}. \end{proof}


We are then prepared to prove Lemma \ref{pv-K}:

 \begin{proof}[Proof of Lemma \ref{pv-K}] We assume with no loss of generality that $p = 0$, so we claim that if $0 \in \partial \Omega$ is a weak tangent point of $\partial \Omega$, then $D_{\epsilon}\mathbf{1}_{B(R)}(0) \to \tfrac{1}{2} + \mathfrak{r}(\Omega,R,0)$ as $\epsilon \to 0$. Set $\Omega_{\ve,R} := \Omega\cap A(\ve, R)$, where $A(\ve,R) := B(R) \, \setminus \, \bar{B}(\epsilon)$. We denote $\sigma_{\epsilon,R} := |\partial \Omega_{\epsilon,R}|_{\He}$, and write $\nu_{\ve, R}$ for the inward-pointing horizontal normal of $\Omega_{\epsilon,R}$. We remark that 
 \begin{itemize}
 \item $\sigma_{\epsilon,R}|_{\partial B(R) \cap \Omega} = \sigma_{R}|_{\Omega}$ and $\sigma_{\epsilon,R}|_{\partial B(\epsilon) \cap \Omega} = \sigma_{\epsilon}|_{\Omega}$ and $\sigma_{\epsilon,R}|_{\partial \Omega} = \sigma|_{\partial \Omega}$,
 \item $\nu_{\ve, R}|_{\partial B(R) \cap \Omega} = \nu_{R}|_{\Omega}$ and $\nu_{\epsilon,R}|_{\partial B(\epsilon) \cap \Omega} = -\nu_{\epsilon}|_{\Omega}$ and $\nu_{\epsilon,R}|_{\partial \Omega} = \nu|_{\partial \Omega}$,
\end{itemize}
where $\sigma,\nu$ are the horizontal perimeters and (inward) normals of $\Omega$, and $\sigma_{s},\nu_{s}$ are these objects for the ball $B(s)$, as before. These statements follow, most easily, from the locality of horizontal perimeter, see \cite[Corollary 2.5]{MR2678937} and \cite[Theorem 2.8]{MR2678937}. 
  
By first applying the divergence theorem in \cite{FSSC}, we find that
\begin{align}\label{div-to-lap}
 -\int_{\partial \Omega_{\epsilon,R}} \left< \nabla G(q), \nu_{\ve, R}(q) \right>
  \, d \sigma_{\ve, R}(q)  =  \int_{\Omega_{\epsilon,R}} \Lap^{\flat} G (q) \, dq = 0.
\end{align}
Then, using \eqref{div-to-lap} and the various relationships between the horizontal perimeters and normals listed above, we see that
\begin{align*}
    D_{\epsilon}\mathbf{1}_{B(R)}(0) = \int_{\partial \Omega \cap A(\ve, R)} \left< \nabla G(q), \nu(q) \right> \,
      d\sigma(q) & =  \int_{\partial B(\ve) \cap \Omega} \left< \nabla G(q), \nu_\ve(q) \right> d \sigma_{\epsilon}(q)  \nonumber \\
      & \quad -  \int_{\partial B(R) \cap
      \Omega} \left< \nabla G(q), \nu_R(q)\right> \, d \sigma_{R}(q). 
\end{align*}
The second term equals $\mathfrak{r}(\Omega,R,0)$ by definition, so it remains to show that the first term, denoted $I_{\epsilon}$, tends to $1/2$ as $\epsilon \to 0$. Since $0$ is a weak tangent point of $\partial \Omega$, by Lemma \ref{lemma9} we have
\begin{displaymath} \lim_{\epsilon \to 0} \inf_{\W} \left\{ \tfrac{\dist(q,\W)}{\epsilon} : q \in \partial \Omega \cap B(\epsilon) \right\} = 0. \end{displaymath} 
Since $\Omega$ and $\He \, \setminus \, \overline{\Omega}$ were assumed $4$-regular, Lemma \ref{lemma10} applies for $\epsilon > 0$ small enough and yields a half-space $\He^{+}_{\epsilon}$, bounded by a vertical subgroup $\W_{\epsilon}$, such that $\sigma_{\epsilon}(\Omega \bigtriangleup \He^{+}_{\epsilon}) \leq \epsilon^{3} \eta$. Here $\eta = \eta(\epsilon) \to 0$ as $\epsilon \to 0$. We may then rewrite $I_{\epsilon}$ as 
\begin{align}\label{e:pv11}
     \int_{\partial B(\ve) \cap \Omega} \left< \nabla G(q), \nu_\ve(q) \right> \, d \sigma_{\epsilon}(q)
      & = \int_{\partial B(\ve)} (\mathbf{1}_{\Omega}(q) - \mathbf{1}_{\He^+}(q))\left<
      \nabla G(q), \nu_\ve(q) \right>  \, d \sigma_{\epsilon}(q) \nonumber\\
      & \enskip + \int_{B(\ve)
      \cap \He^+} \left< \nabla G(q), \nu_\ve(q) \right> \, d \sigma_{\epsilon}(q)  =: I_{\epsilon}^1+ I_{\epsilon}^2.
\end{align}
We first claim that
\begin{align} \label{e:pv-halfball}
  I_{\epsilon}^{2} \equiv \int_{\partial B(0, \ve)\cap \He^+} \left< \nabla G(q), \nu_\ve(q) \right> \, d\sigma_\ve(q) = \frac{1}{2}.
\end{align}
To see this, one recalls (from e.g. Folland's original work \cite{MR0315267}) that $G(q) = c\|q\|^{-2}$ for some positive constant $c > 0$. Therefore, the sphere $\partial B(0,\epsilon)$ can be written as a level set $\{G > \kappa\}$ for some $\kappa = \kappa(\epsilon) > 0$. It then follows from \cite[Theorem 6.5]{FSSC} that the inward-pointing horizontal normal $\nu_{\epsilon}$ can be represented as $\nu_{\epsilon} = \nabla G/|\nabla G|$, and consequently 
\begin{displaymath} \langle \nabla G(q),\nu_{\epsilon}(q) \rangle =  \frac{\langle \nabla G,\nabla G \rangle}{|\nabla G|} = \frac{2c|z|}{\epsilon^{4}}, \qquad q = (z,t) \in \partial B(\epsilon), \end{displaymath} 
by direct computation. Next, we note that a $180^{\circ}$ rotation "$\mathcal{O}$" around the $t$-axis is an isometry which preserves the quantity displayed above, and the perimeter measure $\sigma_{\epsilon}$. Therefore,
\begin{align*}
\int_{\partial B(\ve) \cap \He^+} \left< \nabla G(q), \nu_\ve(q) \right>  \, d \sigma_\ve(q) & =
\int_{\partial B(\ve) \cap \He^+} \left< \nabla G(\mathcal{O}(q)), \nu_\ve(\mathcal{O}(q)) \right>  \, d \sigma_\ve(q) \\
& = \int_{\partial B(\ve) \cap \He^{-}} \left< \nabla G(q), \nu_\ve(q) \right>  \, d \mathcal{O}_{\sharp}\sigma_\ve(q)\\
& = \int_{\partial B(\ve) \cap \He^-} \left< \nabla G(q), \nu_\ve(q) \right> \, d \sigma_\ve(q),
\end{align*}
which gives \eqref{e:pv-halfball}. 

For the integral $I_{\epsilon}^1$ in \eqref{e:pv11}, we just need to estimate
\begin{displaymath} |I_{\epsilon}^{1}| \lesssim \frac{\sigma_{\epsilon}([\Omega \bigtriangleup \He^{+}])}{\epsilon^{3}} \leq \eta(\epsilon) \to 0 \quad \text{as } \epsilon \to 0. \end{displaymath}
Thus $\lim_{\epsilon \to 0} I_{\epsilon} = 1/2$, and the proof is complete.  \end{proof}

From Lemma \ref{pv-K} we infer that $Df$ exists for $f \in C_{c}^{\infty}(\He)$:
\begin{cor}\label{c:pv-R-smooth}
  Let $f \in C^\infty_c(\He)$, and let $p$ is a weak tangent point of $\partial \Omega$, where $\Omega \subset \He$ is an open set with locally finite $\He$-perimeter satisfying the corkscrew condition. Then, the principal value
  \begin{align*}
   Df(p) = \pv   \int_{B(p, \ve)^c} \left< \nabla G(p^{-1} \cdot q), \nu(q) \right> f(q)\, d \sigma(q)
  \end{align*}
  exists.
\end{cor}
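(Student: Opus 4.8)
The plan is to subtract the value $f(p)$ so as to isolate the genuinely singular part of the integral, and then to combine Lemma \ref{pv-K} with dominated convergence.

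First I would fix $R > 0$ so large that $\spt f \subset B(p,R)$. For $0 < \ve < R$ this permits the splitting
\begin{displaymath}
D_{\ve}f(p) = f(p)\cdot D_{\ve}\mathbf{1}_{B(p,R)}(p) + \int_{B(p,\ve)^{c}} \langle \nabla G(p^{-1}\cdot q), \nu(q) \rangle\,\big[f(q) - f(p)\mathbf{1}_{B(p,R)}(q)\big]\, d\sigma(q).
\end{displaymath}
The integrand of the second term is supported on $B(p,R)\cap\partial\Omega$, since both $f$ and $\mathbf{1}_{B(p,R)}$ vanish outside $B(p,R)$, and on that set it equals $\langle \nabla G(p^{-1}\cdot q),\nu(q)\rangle\,(f(q)-f(p))$. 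Because $p$ is a weak tangent point of $\partial\Omega$, Lemma \ref{pv-K} shows that the first term converges, as $\ve \to 0$, to $f(p)\big(\tfrac{1}{2} - \mathfrak{r}(\Omega,R,p)\big)$.

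For the second term I would argue by dominated convergence. Since $f \in C^{\infty}_{c}(\He)$ has a bounded horizontal gradient, integrating $\nabla f$ along horizontal curves yields $|f(q)-f(p)| \lesssim d(p,q)$; alternatively, $f$ is Euclidean-Lipschitz and $|q-p|_{\R^{3}} \lesssim d(p,q)$ on the bounded set $B(p,R)$. Together with the homogeneity bound $|\nabla G(w)| \lesssim \|w\|^{-3}$, the integrand of the second term is dominated on $B(p,R)$ by $C_{f}\,d(p,q)^{-2}$, which is $\sigma$-integrable near $p$: decomposing $B(p,R)$ into dyadic annuli and using the upper $3$-regularity of $\sigma$,
\begin{displaymath}
\int_{B(p,R)\cap\partial\Omega} d(p,q)^{-2}\, d\sigma(q) \lesssim \sum_{k\geq k_{0}} 2^{2k}\,\sigma(B(p,2^{-k})) \lesssim \sum_{k\geq k_{0}} 2^{-k} < \infty.
\end{displaymath}
Hence the second term converges, as $\ve \to 0$, to the absolutely convergent integral of $\langle \nabla G(p^{-1}\cdot q),\nu(q)\rangle\,(f(q)-f(p)\mathbf{1}_{B(p,R)}(q))$ over $\partial\Omega$. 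Adding the two limits shows that $D_{\ve}f(p)$ converges as $\ve \to 0$, i.e. that $Df(p)$ exists.

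I do not expect a serious obstacle here: the argument is essentially routine once Lemma \ref{pv-K} is in hand. The only points demanding a little care are the local Lipschitz estimate $|f(q)-f(p)| \lesssim d(p,q)$ and the ensuing domination, which leans on the upper $3$-regularity of $\sigma$ (part of the standing hypotheses); everything else is the $\pm f(p)$ decomposition together with Lemma \ref{pv-K}.
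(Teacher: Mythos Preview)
Your proposal is correct and follows essentially the same approach as the paper: fix $R$ with $\spt f \subset B(p,R)$, split off $f(p)\cdot D_{\ve}\mathbf{1}_{B(p,R)}(p)$ (handled by Lemma~\ref{pv-K}), and observe that the remaining integral of $\langle\nabla G(p^{-1}\cdot q),\nu(q)\rangle(f(q)-f(p))$ over $B(p,R)$ is absolutely convergent. One small remark: upper $3$-regularity of $\sigma$ is not actually among the hypotheses of this corollary (only locally finite perimeter plus the corkscrew condition are assumed), so the domination step is not fully justified as stated; the paper's proof is equally terse on this point, and in all subsequent applications upper $3$-regularity is available.
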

\begin{proof} Choose a radius $R > 0$ such that $\spt(f) \subset B(p,R)$. Then $D_{\epsilon}f = D_{\epsilon}(f\mathbf{1}_{B(p,R)})$ for all $\epsilon > 0$, and
\begin{displaymath} D(f\mathbf{1}_{B(p,R)})(p) =  \int_{B(p,R)} \left< \nabla G(p^{-1} \cdot q), \nu(q) \right> (f(q)-f(p)) \, d\sigma(q) + f(p) \cdot D\mathbf{1}_{B(p,R)}(p).
\end{displaymath}
The first integral is absolutely convergent by the smoothness of $f$, and the second expression is well-defined by Lemma \ref{pv-K}. \end{proof}
We now extend Corollary  \ref{c:pv-R-smooth} to $f \in L^{p}(\sigma)$, for $1 < p < \infty$, via the following result:
\begin{thm}[\cite{duoandi}, Theorem 2.2] \label{t:general-pv}
  Let $(X,\mu)$ be a measure space, $1 < p,q < \infty$, and let $\{T_t\}_{t \in \R}$ be a family of linear operators on $L^p(X, \mu)$; set
  \begin{align*}
    T^* f(x) = \sup_{t} |T_tf(x)|.
  \end{align*}
  If $T^*$ is bounded $L^{p}(X) \to L^{q,\infty}(X)$, then the set
  \begin{align*}
    \left\{ f \in L^p(X, \mu) \,|\, \lim_{t \to 0}T_t f(x) \mbox{ exists } \, \mu-\mbox{a.e.} \right\}
  \end{align*}
  is closed in $L^p(X, \mu)$.
\end{thm}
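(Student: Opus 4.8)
The plan is to run the classical ``Banach principle'' argument: the weak-type bound on $T^{\ast}$ forces the \emph{oscillation} of the family $\{T_t\}$ to be arbitrarily small in measure, and smallness in measure is stable under $L^{p}$-limits. Write $E := \{f \in L^{p}(X,\mu) : \lim_{t \to 0} T_t f(x) \text{ exists } \mu\text{-a.e.}\}$ for the set in question. For $f \in L^{p}(X,\mu)$ introduce the oscillation
\[
  Lf(x) := \limsup_{s,t \to 0} |T_s f(x) - T_t f(x)| \in [0,\infty],
\]
so that, by the Cauchy criterion, $\lim_{t \to 0} T_t f(x)$ exists precisely where $Lf(x) = 0$; hence $E = \{f : Lf = 0 \ \mu\text{-a.e.}\}$. (As usual, one should here restrict $s,t$ to a countable cofinal subfamily so that $Lf$ is a countable $\sup$/$\limsup$ of measurable functions; in the application the truncations $T_{\epsilon}$ depend continuously on $\epsilon$, so this changes nothing.) Two elementary properties of $L$ will be used: \textbf{(i)} subadditivity $L(f+g)(x) \le Lf(x) + Lg(x)$, immediate from the triangle inequality applied to $T_s(f+g)(x) - T_t(f+g)(x)$ followed by $\limsup$; and \textbf{(ii)} the pointwise domination $Lf(x) \le 2\,T^{\ast}f(x)$.

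Next I would take $f$ in the $L^{p}$-closure of $E$, say $f_n \to f$ in $L^{p}(X,\mu)$ with $f_n \in E$, and aim to show $Lf = 0$ $\mu$-a.e., i.e. $f \in E$. Since $f_n \in E$ we have $Lf_n = 0$ $\mu$-a.e., so by \textbf{(i)}, for each fixed $n$,
\[
  Lf(x) \;\le\; L(f - f_n)(x) + Lf_n(x) \;=\; L(f - f_n)(x) \qquad \text{for $\mu$-a.e.\ } x.
\]
Combining this with \textbf{(ii)}, the linearity of each $T_t$, and the hypothesis that $T^{\ast}$ is bounded $L^{p}(X) \to L^{q,\infty}(X)$ with some norm $C$, the Chebyshev/weak-type inequality gives, for every $\lambda > 0$ and every $n$,
\[
  \mu(\{Lf > \lambda\}) \;\le\; \mu(\{2\,T^{\ast}(f - f_n) > \lambda\}) \;\le\; \Big(\tfrac{2\,\|T^{\ast}(f - f_n)\|_{L^{q,\infty}}}{\lambda}\Big)^{q} \;\le\; \Big(\tfrac{2C\,\|f - f_n\|_{L^{p}}}{\lambda}\Big)^{q}.
\]
Letting $n \to \infty$, the right-hand side tends to $0$, so $\mu(\{Lf > \lambda\}) = 0$ for every $\lambda > 0$; applying this with $\lambda = 1/k$ and taking a countable union over $k \in \N$ yields $Lf = 0$ $\mu$-a.e., hence $f \in E$. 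This shows $E$ is closed in $L^{p}(X,\mu)$.

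The only genuinely delicate point is the measurability of $Lf$ (and, implicitly, of $T^{\ast}f$, though the hypothesis ``$T^{\ast}$ is bounded $L^{p} \to L^{q,\infty}$'' already presupposes $T^{\ast}f$ measurable): with no topological hypothesis on $t \mapsto T_t f$, an uncountable $\sup$ or $\limsup$ need not be measurable. This is handled exactly as in the standard references --- by passing to a countable cofinal subfamily of indices, under which nothing is lost in our setting because $\epsilon \mapsto T_{\epsilon}f(x)$ is continuous. Everything else is the short three-way interplay between subadditivity of the oscillation, its domination by $2T^{\ast}$, and the weak-type estimate; no smoothness of the $T_t$, and no structure on $(X,\mu)$ beyond being a measure space, is required.
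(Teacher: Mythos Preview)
The paper does not supply its own proof of this theorem: it is quoted verbatim as \cite[Theorem 2.2]{duoandi} and used as a black box. Your argument is the standard Banach-principle proof (exactly the one given in Duoandikoetxea's book), and it is correct; there is nothing to compare.

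\medskip

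One small remark on presentation: in step \textbf{(ii)} you could note that the bound $Lf \le 2T^{\ast}f$ already forces $Lf(x) < \infty$ for $\mu$-a.e.\ $x$ whenever $T^{\ast}f \in L^{q,\infty}$, so the Cauchy criterion is being applied in $\C$ and not merely formally. Otherwise the write-up is clean.
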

The following lemma proves Theorem \ref{t:pValues} in the special case $Df = R^{t}(f\nu)$:

\begin{lemma}\label{pvL2-K} Let $\Omega \subset \He$ be an open set of locally finite $\He$-perimeter satisfying the corkscrew condition, such that the perimeter measure $\sigma := |\partial \Omega|_{\He}$ is upper $3$-regular. Assume that the maximal (vectorial) Riesz transform
\begin{displaymath} \vec{R}^{\ast}f(p) := \sup_{\epsilon > 0} \left|\int_{\{\|q^{-1} \cdot p\| > \epsilon\}} \nabla G(q^{-1} \cdot p)f(q) \, d\sigma(q) \right|, \qquad f \in L^{p}(\sigma), \end{displaymath} 
defines a bounded operator on $L^{p}(\sigma)$, for some $1 < p < \infty$. Then, the principal values $Df(p)$ exist for $f \in L^{p}(\sigma)$, for $\sigma$ a.e. $p \in \partial \Omega$. Moreover, $Df \in L^{p}(\sigma)$ with $\|Df\|_{L^{p}(\sigma)} \lesssim \|f\|_{L^{p}(\sigma)}$ (the implicit constant depends only on the operator norm of $\vec{R}^{\ast}$).
\end{lemma}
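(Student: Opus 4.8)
The plan is to combine the $\sigma$ a.e. existence of $Df$ on the dense subclass $C_c^\infty(\He) \subset L^p(\sigma)$, established in Corollary \ref{c:pv-R-smooth}, with the closedness criterion of Theorem \ref{t:general-pv}. Concretely, I would first observe that $\sigma$ a.e. point of $\partial \Omega$ is a weak tangent point: this is exactly Proposition \ref{p:tangentPoints}, whose hypotheses (locally finite perimeter, corkscrew condition, upper $3$-regularity of $\sigma$) are precisely what we are assuming. Hence Corollary \ref{c:pv-R-smooth} gives that $D_\epsilon f(p)$ converges as $\epsilon \to 0$ for $\sigma$ a.e. $p$, for every $f \in C_c^\infty(\He)$, and $C_c^\infty(\He)|_{\partial\Omega}$ is dense in $L^p(\sigma)$ (approximating an arbitrary $L^p(\sigma)$ function by simple functions and then mollifying, using that $\sigma$ is a Radon measure).

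The remaining ingredient for Theorem \ref{t:general-pv} is an $L^p \to L^{q,\infty}$ bound on the maximal truncated operator $D^*f(p) := \sup_{\epsilon > 0} |D_\epsilon f(p)|$. Here I would take $q = p$, so it suffices to prove the strong-type bound $\|D^* f\|_{L^p(\sigma)} \lesssim \|f\|_{L^p(\sigma)}$, which certainly implies the weak-type bound. The point is the pointwise domination $D^* f(p) \le |\vec{R}^* (f\nu)(p)|$, or more precisely $D_\epsilon f(p) = \langle \vec{R}_\epsilon(f\nu)(p), \nu(p)\rangle$ where $\vec{R}_\epsilon$ is the truncated vectorial Riesz transform at level $\epsilon$ — recall $Df = R^t(f\nu)$ from \eqref{form162}, and $R^t(f\nu)(p) = \pv \int \langle \nabla G(p^{-1}\cdot q), \nu(q)\rangle f(q)\,d\sigma(q)$. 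One has to be slightly careful that the kernel appearing in $\vec{R}^*$ as stated is $\nabla G(q^{-1}\cdot p)$ (argument $q^{-1}\cdot p$), whereas $D$ uses $\nabla G(p^{-1}\cdot q)$; but by the horizontal-oddness relation $ZG(p^{-1}\cdot q) = -Z^R G(q^{-1}\cdot p)$ from Remark \ref{harmonicityRemark}, together with the fact that $\nu(q)$ is a unit vector, the truncated integrals defining $D_\epsilon f(p)$ are controlled in absolute value by $\vec{R}^*(f\nu)(p)$ uniformly in $\epsilon$ (possibly after also invoking the boundedness of the maximal operator associated to the right-invariant gradient kernel, which is the same kind of $3$-dimensional horizontally odd kernel covered by \cite[Theorem 1.8]{2019arXiv191103223F}; in the flag-domain case one simply cites Theorem \ref{t:FO1}, but since the lemma is stated under the abstract hypothesis that $\vec{R}^*$ is $L^p$-bounded, I would phrase the domination directly in terms of $\vec{R}^*$). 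Since $|\nu| = 1$ $\sigma$ a.e., Cauchy--Schwarz gives $D^* f(p) \le \vec{R}^*(f\nu)(p)$ pointwise, hence $\|D^* f\|_{L^p(\sigma)} \lesssim \|f\nu\|_{L^p(\sigma)} = \|f\|_{L^p(\sigma)}$ by hypothesis.

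With the maximal bound in hand, Theorem \ref{t:general-pv} (applied with $X = \partial\Omega$, $\mu = \sigma$, $T_\epsilon = D_\epsilon$, and $p = q$) tells us that the set of $f \in L^p(\sigma)$ for which $\lim_{\epsilon \to 0} D_\epsilon f$ exists $\sigma$ a.e. is closed in $L^p(\sigma)$. Since this set contains the dense subspace $C_c^\infty(\He)|_{\partial\Omega}$, it is all of $L^p(\sigma)$, which proves the $\sigma$ a.e. existence of $Df(p)$ for every $f \in L^p(\sigma)$. Finally, the quantitative bound $\|Df\|_{L^p(\sigma)} \lesssim \|f\|_{L^p(\sigma)}$ follows because $|Df(p)| = \lim_{\epsilon\to 0}|D_\epsilon f(p)| \le D^* f(p)$ pointwise $\sigma$ a.e., and we have already bounded $\|D^* f\|_{L^p(\sigma)}$ by (a constant times the operator norm of $\vec{R}^*$ times) $\|f\|_{L^p(\sigma)}$; by Fatou's lemma one could also pass to the limit directly. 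I expect the main (modest) obstacle to be the bookkeeping in the pointwise domination $D_\epsilon f \lesssim \vec{R}^*(f\nu)$ — matching the truncation regions $B(p,\epsilon)^c$ versus $\{\|q^{-1}\cdot p\| > \epsilon\}$ (these agree since $d(p,q) = \|q^{-1}\cdot p\|$) and reconciling the two kernel conventions $\nabla G(p^{-1}\cdot q)$ vs $\nabla G(q^{-1}\cdot p)$ via the right-invariant vector fields — rather than any deep difficulty; everything else is a direct application of the cited results.
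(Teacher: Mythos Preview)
Your overall strategy --- weak tangent points via Proposition~\ref{p:tangentPoints}, existence on $C_c^\infty(\He)$ via Corollary~\ref{c:pv-R-smooth}, density, and then closedness via Theorem~\ref{t:general-pv} --- is exactly the paper's route. The one place where your argument is incomplete is the passage from the hypothesis on $\vec{R}^*$ to the bound $\|D^*f\|_{L^p(\sigma)} \lesssim \|f\|_{L^p(\sigma)}$.

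The pointwise domination you actually get from $|\nu|\equiv 1$ is $D^*f(p) \le (\vec{R}^t)^*f(p)$, where $(\vec{R}^t)^*$ is the maximal operator associated to the \emph{adjoint} kernel $(p,q)\mapsto \nabla G(p^{-1}\cdot q)$. The hypothesis, however, concerns $\vec{R}^*$, with kernel $\nabla G(q^{-1}\cdot p)$. Your proposed fix via $ZG(p^{-1}\cdot q) = -Z^R G(q^{-1}\cdot p)$ does not yield a pointwise bound by $\vec{R}^*$: the right-invariant gradient $\nabla^R G$ is a genuinely different kernel, and boundedness of its maximal operator is not part of the abstract hypothesis (appealing to \cite{2019arXiv191103223F} here would need the flag structure and defeat the generality of the lemma). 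So the sentence ``I would phrase the domination directly in terms of $\vec{R}^*$'' is where the gap sits.

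The paper closes it as follows. Boundedness of $\vec{R}^*$ on $L^p(\sigma)$ implies that the truncations $\vec{R}_\epsilon$ are uniformly bounded on $L^p(\sigma)$; by duality, so are their adjoints, which have exactly the kernel $\nabla G(p^{-1}\cdot q)$. One then upgrades uniform boundedness of the adjoint truncations to boundedness of the adjoint maximal operator $(\vec{R}^t)^*$ via a Cotlar-type inequality: either \cite[Theorem~7.1]{MR1626935}, which applies under upper $3$-regularity of $\sigma$, or by first noting (via \cite[Proposition~4.1]{2018arXiv180304819F}, using the corkscrew condition) that $\sigma$ is in fact $3$-regular, so $(\partial\Omega,d,\sigma)$ is a space of homogeneous type and any textbook Cotlar argument applies. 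With $(\vec{R}^t)^*$ bounded on $L^p(\sigma)$, your argument concludes exactly as written.
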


\begin{remark} Theorem \ref{t:FO1} states that the hypothesis concerning $\vec{R}^{\ast}$ is valid whenever $\Omega$ is a flag domain, and $1 < p < \infty$. \end{remark}

\begin{proof}[Proof of Lemma \ref{pvL2-K}] By Proposition \ref{p:tangentPoints}, the assumptions of the lemma imply that $\sigma$ a.e. $p \in \partial \Omega$ is a weak tangent point. Hence $Df(p)$ exists for all $f \in C^\infty_c (\He)$ for $\sigma$ a.e. $p \in \partial \Omega$ by Corollary \ref{c:pv-R-smooth}. Clearly $C_{c}^{\infty}(\He)$ is dense in $L^p(\sigma)$. So, it remains by Theorem \ref{t:general-pv} to prove that the maximal operator
\begin{displaymath} D^{\ast}f(p) := \sup_{\epsilon > 0} \left| \int_{B(p,\epsilon)^{c}} \langle \nabla G(p^{-1} \cdot q),\nu(q) \rangle f(q) \, d\sigma(q) \right|, \qquad f \in L^{p}(\sigma), \end{displaymath} 
is bounded $L^{p}(\sigma) \to L^{p}(\sigma)$. This follows rather immediately from our assumption on the $L^{p}(\sigma)$-boundedness of the maximal vectorial Riesz transform $\vec{R}^{\ast}$, since $|\nu(q)| \leq 1$ for $\sigma$ a.e. $q \in \partial \Omega$. The only small catch is that, to control $D^{\ast}$, we in fact need to know that maximal vectorial Riesz transform with the adjoint kernel $(p,q) \mapsto \nabla G(p^{-1} \cdot q)$ is bounded $L^{p}(\sigma) \to L^{p}(\sigma)$. The hypothesis on $\vec{R}^{\ast}$ implies that the $\epsilon$-truncated operators 
\begin{displaymath} \vec{R}_{\epsilon}f(p) := \int_{\{\|q^{-1} \cdot p\| > \epsilon\}} \nabla G(q^{-1} \cdot p)f(q) \, d\sigma(q)  \end{displaymath} 
are bounded uniformly on $L^{p}(\sigma)$, and hence are their adjoint operators with kernels $(p,q) \mapsto \nabla G(p^{-1} \cdot q)$. So, all we need is a Cotlar-type inequality saying that the adjoint maximal Riesz transform is also bounded on $L^{p}(\sigma)$. One option to use \cite[Theorem 7.1]{MR1626935}, which is available by the upper $3$-regularity of $\sigma$ (see also the Remark just under \cite[Theorem 1.1]{MR1626935}). Another possibility is to first observe that $\sigma$ is actually $3$-regular (not just upper $3$-regular) by \cite[Proposition 4.1]{2018arXiv180304819F}, so $(\partial \Omega,d,\sigma)$ is a space of homogeneous type. In this setting, any textbook proof of Cotlar's inequality will work.   \end{proof}


\subsection{Existence of $R^{t}(f\tau)$} As we discussed under the statement of Theorem \ref{t:pValues}, the proof consists of verifying separately the existence of
\begin{displaymath} Df = R^{t}(f\nu), \, R^{t}(f\tau), \, R(f\nu), \quad \text{and} \quad R(f\tau). \end{displaymath}
The first problem was solved by Lemma \ref{pvL2-K}. In this section we treat the existence of
\begin{displaymath} R^{t}(f\tau) = \pv  \int \langle \nabla G(p^{-1} \cdot p),\tau(q) \rangle f(q) \, d\sigma(q), \qquad f \in L^{p}(\sigma). \end{displaymath} 
The proof is similar to the one seen above, but we need to restrict attention to flag domains: we have not even defined the tangent vector field "$\tau$" in more generality, and we will need the area formula \eqref{area} in the first lemma below. Recall the integral curves $\gamma_{t} \colon \R \to \partial \Omega$ (of $\tau$) which were introduced in \eqref{curveGamma}.

\begin{lemma}\label{l:Ttau-pv} Let $\Omega \subset \He$ be a flag domain, and let $0 < \epsilon < R < \infty$. Then,
  \begin{align*}
    \int_{A(p, \ve, R)} \left< \nabla G(p^{-1}\cdot q), \tau (q) \right> \, d \sigma(q) = 0, \qquad p \in \He,
  \end{align*}
  where $A(p,\epsilon,R) := B(p,R) \, \setminus \, \bar{B}(p,\epsilon)$ is an open annulus around $p$.
  \end{lemma}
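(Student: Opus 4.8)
The plan is to reduce the surface integral over the annular piece of $\partial\Omega$ to a one-parameter family of elementary one-dimensional integrals, using the area formula \eqref{area} along the integral curves $\gamma_{t}$ of the tangent field $\tau$. Concretely, fixing $p \in \He$ and writing $g(q) := G(p^{-1}\cdot q)$, the horizontal vector field $V := \mathbf{1}_{A(p,\epsilon,R)} \cdot \nabla g$ lies in $L^{1}(\sigma)$ (indeed in $L^{\infty}(\sigma)$, since $|\nabla G(p^{-1}\cdot q)| \lesssim \|p^{-1}\cdot q\|^{-3} \le \epsilon^{-3}$ on $A(p,\epsilon,R)$, while $\sigma(B(p,R)) \lesssim R^{3}$ by $3$-regularity). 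So \eqref{area} applies and rewrites the integral in question as $c\int_{\R} \bigl( \int_{\{y : \gamma_{t}(y)\in A(p,\epsilon,R)\}} \langle \nabla G(p^{-1}\cdot\gamma_{t}(y)), \dot{\gamma}_{t}(y)\rangle\, dy\bigr)\, dt$.

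The key observation is that the inner integrand is an exact $y$-derivative: $\langle \nabla G(p^{-1}\cdot\gamma_{t}(y)), \dot{\gamma}_{t}(y)\rangle = \tfrac{d}{dy}[G(p^{-1}\cdot\gamma_{t}(y))]$. This follows from the Euclidean chain rule together with the fact — recorded in \eqref{form94} — that $\dot{\gamma}_{t}(y)$ is the horizontal vector $A'(y) X + Y$ at $\gamma_{t}(y)$, so that the Euclidean directional derivative of $g$ along $\dot{\gamma}_{t}(y)$ coincides with $\langle \nabla g(\gamma_{t}(y)), \dot{\gamma}_{t}(y)\rangle$; and $\nabla g(\gamma_{t}(y)) = \nabla G(p^{-1}\cdot\gamma_{t}(y))$ by left-invariance of $X,Y$. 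Thus it remains to show that $\int_{\{y : \epsilon < \rho_{t}(y) < R\}} \tfrac{d}{dy}[G(p^{-1}\cdot\gamma_{t}(y))]\, dy = 0$ for each $t$, where $\rho_{t}(y) := \|p^{-1}\cdot\gamma_{t}(y)\| = d(p,\gamma_{t}(y))$.

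To prove this last identity, I would first note that, since the second coordinate of $\gamma_{t}(y)$ equals $y$, one has $\|\gamma_{t}(y)\| \ge |y|$ and hence $\rho_{t}(y) \ge |y| - \|p\| \to \infty$ as $y \to \pm\infty$; therefore $F_{t}(y) := G(p^{-1}\cdot\gamma_{t}(y)) = c\,\rho_{t}(y)^{-2}$ tends to $0$ at $\pm\infty$, and the open set $\{y : \epsilon < \rho_{t}(y) < R\} = \{y : c/R^{2} < F_{t}(y) < c/\epsilon^{2}\}$ is bounded. Since the curves $\{\gamma_{t}\}_{t\in\R}$ partition $\partial\Omega$ (the map $\Gamma$ is injective onto $\partial\Omega$, as is clear from the formula \eqref{curveGamma}), the point $p$ lies on $\gamma_{t}(\R)$ for at most one value of $t$, so for all other $t$ the function $F_{t}$ is smooth on all of $\R$. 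For such $t$, the truncation $H_{t} := \min\{c/\epsilon^{2}, \max\{c/R^{2}, F_{t}\}\}$ is Lipschitz on compact intervals, equals the constant $c/R^{2}$ for $|y|$ large, and has $H_{t}' = F_{t}' \cdot \mathbf{1}_{\{c/R^{2} < F_{t} < c/\epsilon^{2}\}}$ a.e. (using that $F_{t}' = 0$ a.e. on the level sets $\{F_{t} = c/\epsilon^{2}\}$ and $\{F_{t} = c/R^{2}\}$). Hence $\int_{\{\epsilon<\rho_{t}<R\}} F_{t}'\, dy = \int_{\R} H_{t}'\, dy = \lim_{y \to +\infty} H_{t}(y) - \lim_{y \to -\infty} H_{t}(y) = c/R^{2} - c/R^{2} = 0$, and integrating in $t$ finishes the proof.

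The steps I expect to be most delicate are: (i) the clean identification of the integrand as an exact derivative — this is conceptually the heart of the matter and relies crucially on $\dot{\gamma}_{t}$ being horizontal, which is special to flag domains; and (ii) justifying the vanishing of the one-dimensional integral, where one must combine the boundedness of the annular parameter set with the decay of $G$ at infinity. The possible pole of $F_{t}$ for the single exceptional value of $t$ is harmless, since it lies inside the region $\{F_{t} > c/\epsilon^{2}\}$ on which $H_{t}$ is constant, so the truncation argument applies verbatim there as well.
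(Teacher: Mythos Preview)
Your argument is correct and follows the same overall strategy as the paper: apply the area formula \eqref{area} to reduce to one-dimensional integrals along the curves $\gamma_{t}$, identify the integrand as the exact derivative $\tfrac{d}{dy}[G(p^{-1}\cdot\gamma_{t}(y))]$, and show that the resulting integral over $S_{t} = \{y : \gamma_{t}(y) \in A(p,\epsilon,R)\}$ vanishes.

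The one place where you diverge is the final step. The paper breaks $S_{t}$ into its open component intervals, classifies each interval by whether its endpoints map to $\partial B(p,\epsilon)$ or $\partial B(p,R)$, and checks that the resulting boundary contributions $G(\epsilon)-G(R)$ and $G(R)-G(\epsilon)$ occur in alternating pairs that cancel. Your truncation $H_{t} = \min\{c/\epsilon^{2},\max\{c/R^{2},F_{t}\}\}$ sidesteps this casework entirely: $H_{t}$ is globally Lipschitz, equals the constant $c/R^{2}$ near $\pm\infty$, and satisfies $H_{t}' = F_{t}'\mathbf{1}_{S_{t}}$ a.e., so a single application of the fundamental theorem of calculus gives $\int_{S_{t}} F_{t}' = 0$. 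This is a neater packaging of the same cancellation.

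One small correction: since $A$ is merely Lipschitz, $\gamma_{t}$ is Lipschitz rather than $C^{1}$, so $F_{t}$ is only locally Lipschitz away from the possible pole, not smooth. Your truncation argument only needs local Lipschitz regularity (plus the standard fact that a Lipschitz function has vanishing derivative a.e.\ on its level sets), so this does not affect the proof.
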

\begin{proof}
  Since $q \mapsto \nabla G(p^{-1} \cdot q) \in L^{1}(A(p,\epsilon,R),\sigma)$, we may apply the area formula \eqref{area}:
  \begin{align*}
    \int_{A(p, \ve, R)} \left< \nabla G (p^{-1} \cdot q), \tau(q) \right> \, d \sigma(q) = c\int_\R \int_{S_t}\left< \nabla G(p^{-1} \cdot \gamma_t (y)), \dot \gamma_t(y) \right> \, dy \, dt,
  \end{align*}
  where $S_t := \left\{ y : \gamma_{t}(y) \in A(p, \ve, R)\right\}$. Let $t \in \R$ so that $S_t \neq \emptyset$. Then $S_t$ is a bounded open set in $\R$ (boundedness follows from $\|\gamma_{y}(t)\| \geq |y|$), hence a disjoint union of its open component intervals, denoted $\mathcal{I}_{t}$. We may write
  \begin{displaymath} \int_{S_t}\left< \nabla G(p^{-1} \cdot \gamma_t (y)), \dot \gamma_t(y) \right> \, dy = \sum_{I \in \mathcal{I}_{t}} \int_{I}\left< \nabla G(p^{-1} \cdot \gamma_t (y)), \dot \gamma_t(y) \right> \, dy =: \sum_{I \in \mathcal{I}_{t}} E(I). \end{displaymath}
  Note that $\gamma_{t}(\partial I) \subset \partial A(p, \ve, R)$ for all $I \in \mathcal{I}_{t}$. Every interval $I = (a,b) \in \mathcal{I}_{t}$ has, therefore, one of the following four types:
  \begin{enumerate}
  \item $\gamma_{t}(\partial I) \subset \partial B(p,\epsilon)$,
  \item $\gamma_{t}(\partial I) \subset \partial B(p,R)$,
    \item $\gamma_{t}(a) \in \partial B(p,R)$ and $\gamma_{t}(b) \in \partial B(p,\epsilon)$.
  \item $\gamma_{t}(a) \in \partial B(p,\epsilon)$ and $\gamma_{t}(b) \in \partial B_{t}(p,R)$,
  \end{enumerate}
 For $I \in \mathcal{I}_{t}$ of type (1), the fundamental theorem of calculus and the radial symmetry of $G$ show that $E(I) = G(\epsilon)- G(\epsilon) = 0$, by a slight abuse of notation. The same argument shows that $E(I) = 0$ for $I \in \mathcal{I}_{t}$ of type (2). For intervals of type (3) and (4), the $E(I) = G(\epsilon) - G(R)$ and $E(I) = G(R) - G(\epsilon)$, respectively. However, we note that there are only finitely many (possibly zero) intervals of type (3) $\cup$ (4). Indeed, since $\gamma_{t}$ is a Lipschitz curve, the intervals of type (3) $\cup$ (4) have length $\sim R - \epsilon$, and there is only space for finitely many of them in the bounded set $S_{t}$. So, the intervals of type (3) $\cup$ (4) can be ordered, from left to right: $I_{1} < I_{2} < \ldots < I_{N}$. Now, we leave to the reader to check that $I_{1}$ is of type (3), $I_{N}$ is of type (4), and there is an alternating pattern: whenever $I_{j}$ is of type (3), then $I_{j + 1}$ is of type (4). In particular $N \in 2\N$, and 
 \begin{displaymath} \sum_{j = 1}^{N} E(I_{j}) = \sum_{j = 1}^{N/2} \left[ E(I_{j}) + E(I_{j + 1}) \right] = \sum_{j = 1}^{N/2} [G(\epsilon) - G(R) + G(R) - G(\epsilon)] = 0. \end{displaymath}
 This concludes the proof of the lemma.  \end{proof}
The existence of $R^{t}(f\tau)$ is an easy corollary:
\begin{cor}\label{c:pv-Htauf0} Let $\Omega \subset \He$ be a flag domain, and $f \in C^\infty_c(\He)$. Then the principal values 
\begin{displaymath}
     R^{t}(f\tau)(q) = \pv   \int \left< \nabla G(p^{-1} \cdot q), \tau(q) \right> f(q) \, d \sigma(q)
\end{displaymath}
exist for all $p \in \He$. If the hypothesis is relaxed to $f \in L^{p}(\sigma)$ for some $1 < p < \infty$, then the principal values $R^{t}(f\tau)(q)$ still exist for $\sigma$ a.e. $p \in \partial \Omega$.
\end{cor}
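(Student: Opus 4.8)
The plan is to mimic the proof of Corollary \ref{c:pv-R-smooth}, replacing Lemma \ref{pv-K} by Lemma \ref{l:Ttau-pv}, and then to extend from smooth $f$ to $L^{p}(\sigma)$ by the same density/maximal-function argument used in Lemma \ref{pvL2-K}.

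First, suppose $f \in C^{\infty}_{c}(\He)$ and fix $p \in \He$. Choose $R > 0$ with $\spt f \subset B(p,R)$, so that for all $0 < \epsilon < R$ the truncated integral only sees the annulus $A(p,\epsilon,R)$. Writing $f(q) = (f(q)-f(p)) + f(p)$, split
\begin{align*}
\int_{A(p,\epsilon,R)} \langle \nabla G(p^{-1}\cdot q),\tau(q)\rangle f(q)\, d\sigma(q)
&= \int_{A(p,\epsilon,R)} \langle \nabla G(p^{-1}\cdot q),\tau(q)\rangle (f(q)-f(p))\, d\sigma(q)\\
&\quad + f(p)\int_{A(p,\epsilon,R)} \langle \nabla G(p^{-1}\cdot q),\tau(q)\rangle\, d\sigma(q).
\end{align*}
The second term vanishes identically for every $0 < \epsilon < R$ by Lemma \ref{l:Ttau-pv}. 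For the first term, we use that $|f(q)-f(p)| \lesssim d(p,q)$ by the (Euclidean, hence horizontal) Lipschitz continuity of the smooth compactly supported $f$, combined with $|\nabla G(p^{-1}\cdot q)| \lesssim d(p,q)^{-3}$ and the upper $3$-regularity of $\sigma$; a standard dyadic decomposition of $B(p,R)\setminus\{p\}$ into annuli then shows the integrand is in $L^{1}(B(p,R),\sigma)$. Hence the limit as $\epsilon \to 0$ exists and equals the absolutely convergent integral over $B(p,R)$, which proves the existence of $R^{t}(f\tau)(p)$ for all $p \in \He$.

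For the passage to $f \in L^{p}(\sigma)$, note that $C^{\infty}_{c}(\He)$ is dense in $L^{p}(\sigma)$, and by what was just proved the principal value $R^{t}(f\tau)(p) = \pv\int \langle \nabla G(p^{-1}\cdot q),\tau(q)\rangle f(q)\, d\sigma(q)$ exists for every $p \in \partial\Omega$ when $f$ is smooth. By Theorem \ref{t:general-pv} it suffices to show that the associated maximal operator
\begin{displaymath}
(R^{t}(\cdot\,\tau))^{\ast}f(p) := \sup_{\epsilon > 0}\left| \int_{B(p,\epsilon)^{c}} \langle \nabla G(p^{-1}\cdot q),\tau(q)\rangle f(q)\, d\sigma(q)\right|
\end{displaymath}
is bounded on $L^{p}(\sigma)$ (from which, a fortiori, bounded into $L^{p,\infty}(\sigma)$). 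But this is immediate from Theorem \ref{t:FO1}: since $|\tau(q)| = 1$ and $\tau$ has real coefficients, the scalar kernel $(p,q)\mapsto \langle \nabla G(p^{-1}\cdot q),\tau(q)\rangle$ is dominated in modulus by $|\nabla G(p^{-1}\cdot q)|$, so the truncated operators are controlled pointwise by the truncated maximal vectorial Riesz transform $\vec{R}^{\ast}$ with kernel $(p,q)\mapsto \nabla G(p^{-1}\cdot q)$. (As in the proof of Lemma \ref{pvL2-K}, the hypothesis of Theorem \ref{t:FO1} on $\vec{R}^{\ast}$ with kernel $\nabla G(q^{-1}\cdot p)$ transfers to the adjoint kernel $\nabla G(p^{-1}\cdot q)$ via a Cotlar-type inequality, using that $\sigma$ is $3$-regular so $(\partial\Omega,d,\sigma)$ is a space of homogeneous type.) This gives the claimed $\sigma$-a.e.\ existence of $R^{t}(f\tau)(p)$ for $f \in L^{p}(\sigma)$.

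I do not expect any step here to be a genuine obstacle — the whole corollary is essentially bookkeeping once Lemma \ref{l:Ttau-pv} is in hand. The only point requiring a little care is making sure the maximal operator domination step is legitimate, i.e.\ that boundedness of $\vec{R}^{\ast}$ for the kernel $\nabla G(q^{-1}\cdot p)$ genuinely yields $L^{p}(\sigma)$-boundedness for the truncated operators with the adjoint kernel $\nabla G(p^{-1}\cdot q)$; this is exactly the "small catch" already addressed in the proof of Lemma \ref{pvL2-K}, and the same Cotlar inequality argument applies verbatim.
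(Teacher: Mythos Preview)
Your proof is correct and follows essentially the same approach as the paper's: split $f = (f-f(p)) + f(p)$, use Lemma \ref{l:Ttau-pv} to kill the second term, absolute convergence for the first, and then extend to $L^{p}(\sigma)$ via density and the maximal-function argument of Lemma \ref{pvL2-K}. One small wording issue: ``Euclidean, hence horizontal Lipschitz'' is not quite the right implication (Euclidean Lipschitz does not imply Kor\'anyi Lipschitz), but the conclusion $|f(q)-f(p)| \lesssim d(p,q)$ is correct since $\nabla f$ is bounded for $f \in C^{\infty}_{c}(\He)$.
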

\begin{proof} Assume first that $f \in C^{\infty}_{c}(\He)$. Fix $p \in \He$, and let $R > 0$ be so large that $\spt(f) \subset B(p,R)$. Then,
    \begin{align*}
      \pv \int \left< \nabla G(p^{-1} \cdot q), \tau(q) \right> f(q) \, d \sigma(q) & = \int_{A(p,R) } \left< \nabla G(p^{-1} \cdot q) \, \tau(q) \right> (f(q)-f(p)) \, d \sigma(q)\\
      & \quad + f(p) \cdot \pv \int_{A(p, R)} \left< \nabla G(p^{-1} \cdot q), \tau (q) \right> \, d \sigma(q). 
    \end{align*}
   The first integral is absolutely convergent since $f \in C^{\infty}_{c}(\He)$. The second term vanishes by Lemma \ref{l:Ttau-pv}. This completes the case $f \in C^{\infty}_{c}(\He)$. The extension to $f \in L^{p}(\sigma)$ follows by the argument in Lemma \ref{pvL2-K} (since $\vec{R}^{\ast}$ is bounded on $L^{p}(\sigma)$ by Theorem \ref{t:FO1}). \end{proof}
   
\subsection{Existence of $R(f\nu)$ and $R(f\tau)$} Recall that
\begin{displaymath} RV(p) = \pv  \int \langle \nabla G(q^{-1} \cdot p),V(q) \rangle \, d\sigma(q), \end{displaymath} 
whenever the principal values exist. In this section, we show that they do if $\Omega \subset \He$ is a flag domain, $\sigma = |\partial \Omega|_{\He}$, and $V = f\nu$ or $V = f\tau$ with $f \in L^{p}(\sigma)$.

Before the details, a pause: what is the difference between, say, $R(f\nu)$ and $R^{t}(f\nu)$? The key to the existence of $R^{t}(f\nu) = Df$ was Lemma \ref{pv-K}, whose proof was based on combining the divergence theorem and the identity
\begin{displaymath} \mathrm{div}^{\flat}[q \mapsto G(p^{-1} \cdot q)] = \bigtriangleup^{\flat}G(p^{-1} \cdot q) = 0, \qquad p \neq q. \end{displaymath}
Unfortunately,
\begin{displaymath} \mathrm{div}^{\flat}[q \mapsto \nabla G(q^{-1} \cdot p)] \neq \bigtriangleup^{\flat} G(q^{-1} \cdot p), \end{displaymath} 
so the same approach will no longer work. Getting around this problem is where the vertical ruling of flag domains gets used most heavily. The key is Lemma \ref{l:right-left} below.

 We denote by $X^R$ and $Y^R$ the right invariant vector fields
\begin{displaymath}
    X^R = \partial_x + \tfrac{y}{2} \partial_t \quad \text{and} \quad Y^R = \partial_y - \tfrac{x}{2} \partial_t. 
\end{displaymath}
We also write $\nabla^{R}f := (X^{R}f,Y^{R}f)$ for $f \in C^{1}(\He)$. A direct computation reveals the following relationship between the left and right horizontal gradients of $G$: 
\begin{align}\label{nablaG-R-L}
    \nabla G(p^{-1}) = -\nabla^R G(p), \qquad p \in \He \, \setminus \, \{0\}. 
\end{align}

\begin{lemma}\label{l:right-left} Let $\Omega$ be a flag domain, and let $\sigma := |\partial \Omega|_{\He}$. Let $\eta = aX + bY \in L^{\infty}(\sigma)$ be a horizontal vector field with the property that the coefficient functions $a,b \in L^{\infty}(\sigma)$ only depend on the $z$-variable, that is, $a(z,t)=a(z)$ and $b(z,t)=b(z)$. Finally, let $\psi \colon \He \to \R$ be a bounded compactly supported radially symmetric Borel function with $0 \notin \spt \psi$. Then, for all $p \in \He$,
\begin{equation}\label{form161}
    \int \psi(q^{-1} \cdot p)\left< \nabla G(q^{-1} \cdot p),
  \eta(q) \right> \, d\sigma(q) = - \int \psi(q^{-1} \cdot p) \left<
  \nabla G(p^{-1} \cdot q), \eta(q) \right> \, d \sigma(q).
\end{equation}
\end{lemma}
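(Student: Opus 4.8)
The plan is to reduce \eqref{form161} to a pointwise identity relating the two kernels $\nabla G(q^{-1}\cdot p)$ and $\nabla G(p^{-1}\cdot q)$, using the special structure of flag domains: $\sigma$ is a product measure, foliated by vertical lines, and the coefficients of $\eta$ depend only on $z$. First I would use the relation \eqref{nablaG-R-L}, $\nabla G(p^{-1}\cdot q) = -\nabla^{R}G(q^{-1}\cdot p)$, to rewrite the right-hand side of \eqref{form161} as $\int \psi(q^{-1}\cdot p)\langle \nabla^{R}G(q^{-1}\cdot p),\eta(q)\rangle\,d\sigma(q)$. So the claim is equivalent to
\begin{equation*}
  \int \psi(q^{-1}\cdot p)\,\langle (\nabla - \nabla^{R})G(q^{-1}\cdot p),\eta(q)\rangle\,d\sigma(q) = 0.
\end{equation*}
Now $(\nabla - \nabla^{R})$ applied to any smooth function $h$ gives $((X-X^{R})h,(Y-Y^{R})h) = (-y\,\partial_{t}h,\;x\,\partial_{t}h)$, so with $q = (x_{q},y_{q},t_{q})$ the integrand becomes $\psi(q^{-1}\cdot p)\,\partial_{t}\big[G(q^{-1}\cdot p)\big]\cdot\big(-y_{q}a(x_{q},y_{q}) + x_{q}b(x_{q},y_{q})\big)$, where I should be careful that $\partial_{t}$ here is differentiation in the $t$-slot of the argument $q^{-1}\cdot p$, evaluated as a function of $q$.

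The key point I would then exploit is that, holding the $z$-coordinate $z_{q} = (x_{q},y_{q})$ of $q$ fixed and letting $t_{q}$ vary along a vertical line in $\partial\Omega$, the map $t_{q}\mapsto q^{-1}\cdot p$ is an affine translation in the $t$-variable (indeed $(z_{q},t_{q})^{-1}\cdot p$ has $z$-component depending only on $z_{q},z_{p}$ and $t$-component that is $t_{p}-t_{q}$ plus a term depending only on $z_{q},z_{p}$). Hence $\partial_{t_{q}}\big[\Psi(q^{-1}\cdot p)\big] = -(\partial_{t}\Psi)(q^{-1}\cdot p)$ for any $\Psi$, where $\partial_{t}\Psi$ on the right is differentiation in the $t$-slot. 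Applying this with $\Psi = \psi\cdot G$ (legitimate since $\psi G$ is a bounded compactly supported function, smooth away from $0$, and $0\notin\spt\psi$), and using the product measure structure from \eqref{eq:sigmaFubini} — $\sigma = c\,\mathcal H^{1}_{E}|_{\partial U}\times\mathcal L^{1}$ — I would write the integral as an iterated integral: first integrate in $t_{q}$ over $\R$ for fixed $z_{q}\in\partial U$. The inner integral is $\big(-y_{q}a(z_{q}) + x_{q}b(z_{q})\big)\int_{\R}\partial_{t_{q}}\big[(\psi G)(q^{-1}\cdot p)\big]\,dt_{q}$, which vanishes by the fundamental theorem of calculus, since $(\psi G)(q^{-1}\cdot p)\to 0$ as $t_{q}\to\pm\infty$ (the support of $\psi$ is compact, so $q^{-1}\cdot p$ leaves $\spt\psi$ for $|t_{q}|$ large). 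Integrating the zero over $z_{q}\in\partial U$ gives \eqref{form161}.

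There is one technical subtlety to handle carefully: the product $\langle\nabla G(q^{-1}\cdot p),\eta(q)\rangle$ does \emph{not} itself equal $\partial_{t_{q}}$ of something nice — it is the \emph{difference} $\langle(\nabla-\nabla^R)G,\eta\rangle$ that does, and this is exactly why the right-invariant vector fields enter. So I would not try to prove the two integrals in \eqref{form161} are separately expressible as $t$-derivatives; rather I subtract them first, and only then recognize the $\partial_{t_q}$-exact structure. The main obstacle, and the place I would spend the most care, is justifying the interchange of differentiation and integration (and the precise bookkeeping of which $t$-variable is being differentiated, with the correct sign): one must check that $q\mapsto \psi(q^{-1}\cdot p)\nabla G(q^{-1}\cdot p)$ and its $t_q$-derivative are dominated by an integrable function of $z_q$ uniformly — this is immediate because $\psi$ is bounded with compact support bounded away from $0$, so on its support $G$ and $\nabla G$ are smooth and bounded, and the $t_q$-integration is over a bounded set. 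Once the exact-derivative structure and the product measure are in place, the vanishing is a one-line application of the fundamental theorem of calculus, exactly as in the proof of Lemma \ref{l:Ttau-pv}.
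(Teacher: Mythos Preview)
Your reduction to showing
\[
\int \psi(q^{-1}\cdot p)\,\langle(\nabla-\nabla^{R})G(q^{-1}\cdot p),\eta(q)\rangle\,d\sigma(q)=0
\]
is exactly the paper's approach, and using the product structure $\sigma=c\,\mathcal{H}^{1}_{E}|_{\partial U}\times\mathcal{L}^{1}$ together with the $t$-independence of $\eta$ to reduce to a one-dimensional inner integral is correct. Two problems remain in the execution.

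First, a coefficient slip: the vector field $(X-X^{R},Y-Y^{R})=(-yT,\,xT)$ is evaluated at the point $q^{-1}\cdot p$, whose first two coordinates are $(x_{p}-x_{q},\,y_{p}-y_{q})$, not $(x_{q},y_{q})$. So the scalar factor multiplying $(TG)(q^{-1}\cdot p)$ is $-(y_{p}-y_{q})\,a(z_{q})+(x_{p}-x_{q})\,b(z_{q})$, not $-y_{q}a+x_{q}b$. This does not break your scheme, since the corrected factor still depends only on $z_{q}$ (and on the fixed $z_{p}$).

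Second, and more seriously, the FTC step fails. The integrand contains $\psi\cdot(TG)$, not $T(\psi G)$: even if $\psi$ were smooth one would have $T(\psi G)=\psi\,(TG)+G\,(T\psi)$, and the term $G\,(T\psi)$ does not disappear. Here $\psi$ is only assumed bounded Borel, so the claim ``$\psi G$ is smooth away from $0$'' is not justified and $\partial_{t_{q}}[(\psi G)(q^{-1}\cdot p)]$ need not exist pointwise. The correct way to kill the inner integral is \emph{oddness}, not FTC: after fixing $z_{q}$ and changing variables in $t_{q}$ (exactly as you describe), one is left with $\int_{\R}\psi(z,t)\,(TG)(z,t)\,dt$ for a fixed $z\neq 0$. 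Radial symmetry of $\psi$ gives $\psi(z,t)=\psi(z,-t)$, while $G(z,t)=G(z,-t)$ forces $(TG)(z,t)=-(TG)(z,-t)$. Thus the integrand is odd in $t$ and the integral vanishes. This is precisely how the paper closes the argument (see the lines around \eqref{right-left-1}).
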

\begin{remark} The relevant examples of "$\eta$" will be the normal and tangent vector fields $\eta = \nu$ and $\eta = \tau$: they satisfy the hypothesis of the lemma by Proposition \ref{prop3}, and the definition of $\tau$, see Definition \ref{HTangent}. The function $\psi$ is typically the characteristic function of an annulus. Note that since $\psi$ is radially symmetric, $\psi(q^{-1} \cdot p) \equiv \psi(p^{-1} \cdot q)$. \end{remark}
\begin{proof}[Proof of Lemma \ref{l:right-left}] Note that
\begin{equation}\label{e:right-left} X - X^R =  - y T \quad \text{and} \quad Y - Y^R = x T, \end{equation}
where $T = \partial_{t}$. Using \eqref{nablaG-R-L} and \eqref{e:right-left}, we see that
\begin{align*}
    \nabla G(q^{-1} \cdot p) & = \left(\nabla G(q^{-1} \cdot p) + \nabla G(p^{-1} \cdot q)\right) - \nabla G(p^{-1} \cdot q)\\
    & = \left(\nabla - \nabla^R \right)G(q^{-1} \cdot p) - \nabla G(p^{-1} \cdot q)\\
    & = \left[ (- (y'-y) T G(q^{-1} \cdot p), (x'-x) TG(q^{-1} \cdot p)) \right] - \nabla G(p^{-1} \cdot q)
\end{align*}
for $p=(x',y',t') $ and $q=(x,y, t)$. Moreover, $TG$ is an odd function (in fact $TG(z,t) = -16t/\|(z,t)\|^{6}$), so $TG(q^{-1} \cdot p) = -TG(p^{-1} \cdot q)$. So, to prove \eqref{form161}, it suffices to verify that
\begin{displaymath} \int \psi(q^{-1} \cdot p)\langle (- (y-y') T G(p^{-1} \cdot q), (x-x') TG(p^{-1} \cdot q)),\eta(q) \rangle \, d\sigma(q) = 0 \end{displaymath} 
for all $p = (x',y',t') \in \He$. With no loss of generality, we only prove this at $p=0$. Since the coefficients of $\eta$ do not depend on the $t$-variable, we may use the "product formula" $\sigma = c\mathcal{H}_{E}^{1}|_{\Gamma} \times \mathcal{L}^{1}$ (recall \eqref{eq:sigmaFubini}) to write
\begin{align*}
     \int \psi(q) \langle \left( -y T G(q), x TG(q) \right), \eta(q) \rangle \, d\sigma(q)
    = \int \left(-y  I(z) , x I(z)\right) \cdot \eta(z) \, d \mathcal{H}_{E}^1(z).
\end{align*}
where $\eta(z) := (a(z),b(z)) \in \R^{2}$, and 
\begin{align} \label{right-left-1}
    I(z) := \int \psi(z,t)TG(z,t) \, dt, \qquad z \in \R^{2}. \end{align}
Since $\psi(z,t) = \psi(z,-t)$ and $TG(z,t) = -TG(z,-t)$, the integrands in \eqref{right-left-1} are odd, and hence $\eqref{right-left-1} \equiv 0$. This completes the proof. \end{proof}

 \begin{cor}\label{c:pv-Htauf} Let $\Omega \subset \He$ be a flag domain, and $f \in C^\infty_c(\He)$. Then the principal values $R(f\nu)(p)$ and $R(f\tau)(p)$ exist at all weak tangent points $p \in \partial \Omega$. If the hypothesis is relaxed to $f \in L^{p}(\sigma)$, then the conclusion still holds for $\sigma$ a.e. $p \in \partial \Omega$.
\end{cor}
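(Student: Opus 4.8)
The plan is to reduce the existence of $R(f\nu)(p)$ and $R(f\tau)(p)$ to facts already established for the "opposite-kernel" operators $R^{t}(f\nu) = Df$ and $R^{t}(f\tau)$, using the left/right symmetry of Lemma \ref{l:right-left}. Fix a weak tangent point $p \in \partial\Omega$ and first take $f \in C^{\infty}_{c}(\He)$; choose $R > 0$ with $\spt f \subset B(p,R)$, so that for every $\epsilon \in (0,R)$ and $\eta \in \{\nu,\tau\}$ the truncated integral $R_{\epsilon}(f\eta)(p)$ lives on the annulus $A(p,\epsilon,R) = B(p,R)\setminus\bar B(p,\epsilon)$. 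I would split $f(q) = (f(q) - f(p)) + f(p)$. The contribution of $f(q) - f(p)$ is absolutely convergent: near $p$ one has $|f(q) - f(p)| \lesssim d(p,q)$ (the Euclidean metric is locally dominated by $d$), while $|\nabla G(q^{-1}\cdot p)| \lesssim d(p,q)^{-3}$ and $\sigma$ is $3$-regular, so the integrand is $\lesssim d(p,q)^{-2}$; hence this term has a limit as $\epsilon \to 0$ and, in fact, a well-defined ordinary integral over $B(p,R)$.

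It then remains to treat the term $f(p)\int_{A(p,\epsilon,R)}\langle \nabla G(q^{-1}\cdot p),\eta(q)\rangle\,d\sigma(q)$, and this is exactly where Lemma \ref{l:right-left} enters. The indicator $\psi := \mathbf 1_{\{\epsilon < \|\cdot\| < R\}}$ is a bounded, compactly supported, radially symmetric Borel function with $0 \notin \spt\psi$, and $\psi(q^{-1}\cdot p) = \mathbf 1_{A(p,\epsilon,R)}(q)$; moreover, by Proposition \ref{prop3} and Definition \ref{HTangent} the coefficients of both $\nu$ and $\tau$ depend only on the $z$-variable, so the hypotheses of Lemma \ref{l:right-left} are met. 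Applying it gives
\[ \int_{A(p,\epsilon,R)}\langle \nabla G(q^{-1}\cdot p),\eta(q)\rangle\,d\sigma(q) = -\int_{A(p,\epsilon,R)}\langle \nabla G(p^{-1}\cdot q),\eta(q)\rangle\,d\sigma(q). \]
For $\eta = \tau$ the right-hand side vanishes for every $\epsilon$ by Lemma \ref{l:Ttau-pv}, so $R(f\tau)(p)$ exists outright. For $\eta = \nu$ the right-hand side equals $-D_{\epsilon}\mathbf 1_{B(p,R)}(p)$, which converges to $-D\mathbf 1_{B(p,R)}(p) = -\tfrac12 + \mathfrak r(\Omega,R,p)$ as $\epsilon \to 0$ by Lemma \ref{pv-K}; this is the single place where the weak-tangent-point hypothesis is used. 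Combining the two contributions settles the case $f \in C^{\infty}_{c}(\He)$.

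The extension to $f \in L^{p}(\sigma)$ then proceeds by the density argument already used in Lemma \ref{pvL2-K}. By Proposition \ref{p:tangentPoints}, $\sigma$-a.e. point of $\partial\Omega$ is a weak tangent point, so the principal values exist $\sigma$-a.e. for every $f$ in the dense subclass $C^{\infty}_{c}(\He) \subset L^{p}(\sigma)$. The maximal truncated operators $f \mapsto \sup_{\epsilon>0}|R_{\epsilon}(f\nu)(p)|$ and $f \mapsto \sup_{\epsilon>0}|R_{\epsilon}(f\tau)(p)|$ are bounded on $L^{p}(\sigma)$, since $|\nu| = |\tau| = 1$ $\sigma$-a.e. and hence each is controlled by a sum of maximal vectorial Riesz transforms $\vec R^{\ast}$ (with kernel $\nabla G(q^{-1}\cdot p)$) applied to the bounded modulations $\nu_{i}f$, $\tau_{i}f$, all of which is bounded on $L^{p}(\sigma)$ by Theorem \ref{t:FO1}. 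Theorem \ref{t:general-pv} then shows that the set of $f$ for which $R(f\nu)$ (resp. $R(f\tau)$) exists $\sigma$-a.e. is closed; containing a dense subset, it is all of $L^{p}(\sigma)$.

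I do not anticipate a genuine obstacle: all the analytic content — $L^{p}$-boundedness of $\vec R^{\ast}$, existence of $D\mathbf 1_{B(p,R)}(p)$, vanishing of the $\tau$-annulus integral, and the left/right kernel symmetry — is already available, and the proof is essentially an assembly of these pieces. The only point needing a little care is checking that the "$f(q)-f(p)$" term is absolutely integrable against $\sigma$ near $p$, i.e. that a Euclidean-smooth function is Lipschitz with respect to $d$ on bounded sets; this is routine from the relation between the Korányi and Euclidean metrics.
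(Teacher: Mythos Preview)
Your proof is correct and follows essentially the same approach as the paper: split off $f(q)-f(p)$ as an absolutely convergent term, then apply Lemma \ref{l:right-left} to the constant term to reduce to $R^{t}(\mathbf{1}_{B(p,R)}\eta)(p)$, handled via Lemma \ref{pv-K} (for $\eta=\nu$) and Lemma \ref{l:Ttau-pv} (for $\eta=\tau$), with the $L^{p}$ extension by density and maximal bounds. Your write-up supplies a bit more detail on the absolute convergence and the maximal-function step than the paper does, but the argument is the same.
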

\begin{proof} Fix $p \in \He$, and let $R > 0$ be so large that $\spt(f) \subset B(p,R)$. Let $\eta \in \{\nu,\tau\}$. Then,
    \begin{displaymath} R(f\nu)(p) = \int_{B(p,R) } \left< \nabla G(q^{-1} \cdot p), \eta(q) \right> (f(q)-f(p)) \, d \sigma(q) + f(p) \cdot R(\mathbf{1}_{B(p,R)}\eta)(p). \end{displaymath} 
   The first integral is absolutely convergent since $f \in C^{\infty}_{c}(\He)$. By Lemma \ref{l:right-left}, the second term equals $-f(p) \cdot R^{t}(\mathbf{1}_{B(p,R)}\eta)(p)$. Furthermore,
\begin{itemize}
   \item $R^{t}(\mathbf{1}_{B(p,R)}\nu)(p)$ exists by Lemma \ref{pv-K} whenever $p \in \partial \Omega$ is a weak tangent point,
   \item $R^{t}(\mathbf{1}_{B(p,R)}\tau)(p) = 0$ for all $p \in \He$ by Lemma \ref{l:Ttau-pv}. 
\end{itemize}
This completes the case $f \in C^{\infty}_{c}(\He)$. The extension to $f \in L^{p}(\sigma)$ follows as in the proof of Lemma \ref{pvL2-K}. \end{proof}

The proof of Theorem \ref{t:pValues} is now complete, recalling the discussion in Remark \ref{rem:strategy}.

\subsection{The choice of truncations is irrelevant} Later on, it will be convenient to know that the principal values $RV$ and $R^{t}V$, whose existence we have now established in Theorem \ref{t:pValues}, can also be defined via smooth truncations of the kernels $\nabla G(q^{-1} \cdot p)$ and $\nabla G(p^{-1} \cdot q)$. The following proposition shows that this can be done very generally, even so that the values of $RV(p)$ and $R^{t}V(p)$ remain unchanged (whenever they exist).

\begin{proposition}\label{prop10} Let $(X,d,\mu)$ be a metric measure space such that $\mu(B(x,r)) \leq Cr^{n}$ for all $x \in X$, $r > 0$, for some $n > 0$. Let $K \colon X \times X \, \setminus \, \{x = y\} \to \C$ be a kernel satisfying $|K(x,y)| \leq Cd(x,y)^{-n}$. Let $\phi \colon [0,\infty) \to [0,1]$ be a non-decreasing function with $0 \notin \spt \phi$ and $\lim_{s \to \infty}\phi(s) = 1$. Assume that $f \in L^{p}(\mu)$ for some $1 < p < \infty$, and 
\begin{displaymath} \pv \, Kf(x) := \lim_{\epsilon \to 0} \int_{\{y \in X : d(x,y) \geq \epsilon\}} K(x,y)f(y) \, d\mu(y) \in \C \end{displaymath}
exists at some point $x \in X$. Then, writing $\phi_{\epsilon}(s) = \phi(s/\epsilon)$ for $\epsilon > 0$, we have
\begin{displaymath} \pv \, Kf(x) = \lim_{\epsilon \to 0} \int \phi_{\epsilon}(d(x,y))K(x,y)f(y) \, d\mu(y). \end{displaymath} \end{proposition}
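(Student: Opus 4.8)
The plan is to reduce the claim to showing that the smoothly truncated integrals converge to $L := \pv\, Kf(x)$, and to do this by comparing them with the sharp truncations $T_{\delta}f(x) := \int_{\{d(x,y) \geq \delta\}} K(x,y)f(y)\,d\mu(y)$, which converge to $L$ by hypothesis. The starting observation is that, since $0 \notin \spt\phi$, there is $a > 0$ with $\phi \equiv 0$ on $[0,a)$; hence $\phi_{\epsilon}(d(x,y))$ vanishes for $d(x,y) < a\epsilon$, so the smoothly truncated integral involves only points at distance $\geq a\epsilon$ from $x$. On any such region $y \mapsto K(x,y)f(y)$ is absolutely integrable: from $|K(x,y)| \lesssim d(x,y)^{-n}$, $\mu(B(x,r)) \lesssim r^{n}$, and a dyadic summation one gets $\|K(x,\cdot)\|_{L^{p'}(\{d(x,\cdot) \geq \rho\})} \lesssim \rho^{-n/p}$ for every $\rho > 0$, and then H\"older's inequality with $f \in L^{p}(\mu)$.

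The key device I would use is a \emph{layer-cake representation}. Since $0 \leq \phi \leq 1$ and $\phi$ is non-decreasing, $\phi(s) = \int_{0}^{1}\mathbf{1}_{\{\phi(s) > \lambda\}}\,d\lambda$, and the set $\{s : \phi(s) > \lambda\}$ is an up-interval with left endpoint $s_{\lambda} := \inf\{s \geq 0 : \phi(s) > \lambda\} \in [a,\infty)$ for every $\lambda \in (0,1)$. Substituting $s = d(x,y)/\epsilon$ and applying Fubini — legitimate because for each $\lambda$ the inner region lies in $\{d(x,\cdot) \geq a\epsilon\}$, on which $\int |K||f|\,d\mu < \infty$ — would yield the identity
\begin{displaymath} \int \phi_{\epsilon}(d(x,y))K(x,y)f(y)\,d\mu(y) = \int_{0}^{1} T_{\epsilon s_{\lambda}}f(x)\,d\lambda, \end{displaymath}
that is, the smooth truncation is a genuine average of sharp truncations at the radii $\epsilon s_{\lambda}$. (The precise shape — open versus closed annulus — of the truncation at the single radius $\epsilon s_{\lambda}$ is immaterial, and I would not dwell on it.) It then remains to prove $\int_{0}^{1} T_{\epsilon s_{\lambda}}f(x)\,d\lambda \to L$ as $\epsilon \to 0$.

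For this I would record two elementary facts. First, the \emph{pointwise maximal truncation at $x$} is finite: $M_{\ast} := \sup_{\delta > 0}|T_{\delta}f(x)| < \infty$. This is \emph{not} a boundedness statement about any operator; it holds at the single point $x$ where the principal value exists, because $T_{\delta}f(x)$ converges as $\delta \to 0$ and hence is bounded on $(0,\delta_{0}]$ for some $\delta_{0} > 0$, while $|T_{\delta}f(x)| \leq \|K(x,\cdot)\|_{L^{p'}(\{d \geq \delta_{0}\})}\|f\|_{L^{p}} < \infty$ for $\delta \geq \delta_{0}$. Second, $s_{\lambda} > M$ forces $\phi(M) \leq \lambda$, so $|\{\lambda \in (0,1) : s_{\lambda} > M\}| \leq 1 - \phi(M) \to 0$ as $M \to \infty$. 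Given these, fix $\eta > 0$, choose $M$ with $1 - \phi(M) < \eta$, and $\delta_{1} \in (0,\delta_{0}]$ with $|T_{\delta}f(x) - L| < \eta$ for $0 < \delta \leq \delta_{1}$. For $0 < \epsilon < \delta_{1}/M$ I would split the $\lambda$-integral: on $\{s_{\lambda} \leq M\}$ one has $\epsilon s_{\lambda} < \delta_{1}$, so the integrand is within $\eta$ of $L$; on $\{s_{\lambda} > M\}$, a set of measure $< \eta$, crudely bound the integrand by $M_{\ast} + |L|$. This gives $\big|\int_{0}^{1}T_{\epsilon s_{\lambda}}f(x)\,d\lambda - L\big| \leq (1 + M_{\ast} + |L|)\eta$, and letting $\eta \to 0$ finishes the proof.

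The main obstacle, and the reason a naive argument fails, is precisely the non-integrability of $y \mapsto K(x,y)f(y)$ near $x$ — the very reason principal values are needed. One cannot bound the difference of the two truncations by an integral over a shrinking annulus around $x$, since such integrals need not tend to $0$ (and may blow up). The layer-cake trick sidesteps this by never isolating the near-diagonal region: it presents the smooth truncation as an average of the sharp truncations, each of which is controlled near $x$ by convergence and away from $x$ by absolute integrability.
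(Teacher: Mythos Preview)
Your proof is correct and follows essentially the same strategy as the paper's: both express the smoothly truncated integral as an average of sharp truncations $T_{\delta}f(x)$ and then pass to the limit using that the pointwise maximal truncation $\sup_{\delta>0}|T_{\delta}f(x)|$ is finite at $x$. The only cosmetic difference is the parametrization of the average --- you use the layer-cake formula over $\lambda\in(0,1)$ with radii $\epsilon s_{\lambda}$, while the paper integrates against the Stieltjes measure $d\phi$ on $[0,\infty)$ --- and these are the same representation via the change of variables $\lambda\leftrightarrow\phi(r)$; the paper then invokes dominated convergence where you do an explicit $\eta$--$M$ splitting.
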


\begin{proof} Define the probability measure $\tn := \tn_{\phi}$ on $[0,\infty)$ by the formula
\begin{displaymath} \tn[r,s] := \phi(s) - \phi(r), \qquad 0 \leq r \leq s < \infty, \end{displaymath}
and note that $0 \notin \spt \tn$, since $0 \notin \spt \phi$. Moreover, for all $\epsilon > 0$ and $s \geq 0$, we have
\begin{displaymath} \phi_{\epsilon}(s) = \tn[0,\tfrac{s}{\epsilon}] = \int_{0}^{\infty} \mathbf{1}_{[0,s/\epsilon]}(r) \, d\tn(r) = \int_{0}^{\infty} \mathbf{1}_{[\epsilon r,\infty)}(s) \, d\tn(r). \end{displaymath}
Consequently,
\begin{equation}\label{form163} \lim_{\epsilon \to 0} \int \phi_{\epsilon}(d(x,y)) K(x,y)f(y) \, d\mu(y) = \lim_{\epsilon \to 0} \int_{0}^{\infty}  \int_{\{d(x,y) \geq \epsilon r\}} K(x,y)f(y) \, d\mu(y) \, d\tn(r). \end{equation} 
The use of Fubini's theorem (for $\epsilon > 0$ fixed) was justified, since
\begin{equation}\label{form164} \int_{\{y \in X : d(x,y) \geq \epsilon r\}} |K(x,y)||f(y)| \, d\mu(y) \lesssim_{C,\epsilon,p,\phi} \|f\|_{L^{p}(\mu)}, \qquad r \in \spt \tn, \end{equation} 
applying H\"older's inequality, the growth and decay estimates for $\mu$ and $K$, and $0 \notin \spt \tn$. On the RHS of \eqref{form163}, we observe that
\begin{displaymath} \lim_{\epsilon \to 0} \int_{\{d(x,y) \geq \epsilon r\}} K(x,y)f(y) \, d\mu(y) = \pv Kf(p) \in \C  \end{displaymath} 
for all $r \in \spt \tn$, so in particular
\begin{equation}\label{form165} r \mapsto \sup_{\epsilon > 0} \left| \int_{\{d(x,y) \geq \epsilon r\}} K(x,y)f(y) \, d\mu(y) \right| = \sup_{\epsilon > 0} \left| \int_{\{d(x,y) \geq \epsilon\}} K(x,y)f(y) \, d\mu(y) \right| \end{equation}
is a (finite) constant function on $\spt \tn$ (we also use here the estimate \eqref{form164}, which shows that the only possible singularity of the expression in \eqref{form165} occurs when $\epsilon \to 0$). This justifies applying the dominated convergence theorem to the RHS of \eqref{form163}:
\begin{displaymath} \lim_{\epsilon \to 0} \int \phi_{\epsilon}(d(x,y))K(x,y)f(y) \, d\mu(y) = \int_{0}^{\infty} \pv Kf(x) \, d\tn(r) = \pv Kf(x), \end{displaymath}
recalling that $\tn$ is a probability measure. The proof is complete. \end{proof}

\section{Various maximal functions}

\subsection{Heisenberg cones and intrinsic Lipschitz graphs}\label{s:ILG} Before introducing any maximal functions, we define \textit{Heisenberg cones}. There are a number of roughly equivalent definitions in the literature, and we prefer the following as a starting point. Let $\mathbb{L} \subset \{(x,y,t) : t = 0\}$ be a line through the origin in the $xy$-plane (also known as a \emph{horizontal subgroup}), and let $\pi_{\mathbb{L}} \colon \He \cong \R^{3} \to \mathbb{L}$ be the orthogonal projection. The (open) \emph{Heisenberg cone at the origin with parameter $\lambda \in (0,1)$ and axis $\mathbb{L}$} is
\begin{align*}
    C_{\Le}(\lambda):= \{ p \in \He : \|\pi_{\mathbb{L}} (p)\| > \lambda \|p\| \}.
\end{align*}
These cones get smaller as $\lambda \nearrow 1$, and their intersection, over $\lambda \in (0,1)$, is the punctured axis $\mathbb{L} \, \setminus \, \{0\}$. An \emph{intrinsic $\lambda$-Lipschitz graph with axis $\mathbb{L}$} is a set $\Gamma \subset \He$ with the property that 
\begin{displaymath} p \cdot C_{\Le}(\lambda) \cap \Gamma = \emptyset, \qquad p \in \Gamma. \end{displaymath}
Boundaries of flag domains $\Omega = \{(x,y,t) : x < A(y)\} \subset \He$ are intrinsic $\lambda$-Lipschitz graphs with axis $\mathbb{L} = \{(x,0,0) : x \in \R\}$, for some parameter $\lambda > 0$ depending only on the Lipschitz constant of $A$. For much more information on intrinsic Lipschitz graphs, see for example \cite{2018arXiv180304819F,MR3511465,MR2287539,2019arXiv190406904R}.

On some occasions, it is useful to take the following definition of cones as a starting point:
\begin{displaymath} \widetilde{C}_{\Le}(\lambda) := \bigcup_{v \in \Le \, \setminus \, \{0\}} B(v,\lambda \|v\|). \end{displaymath} 
Rigot \cite[Proposition 3.9]{2019arXiv190406904R} has shown that the two notions of cones are equivalent: for every $\lambda > 0$ there exist $\lambda_{1},\lambda_{2} > 0$ such that
\begin{equation}\label{form168} C_{\mathbb{L}}(\lambda_{1}) \subset \widetilde{C}_{\mathbb{L}}(\lambda) \subset C_{\mathbb{L}}(\lambda_{2}) \end{equation}
for all horizontal subgroups $\mathbb{L} \subset \He$ (simultaneously). These inclusions lead to the following simple geometric observation:
\begin{lemma}\label{lemma12} Let $\Gamma \subset \He$ be an intrinsic Lipschitz graph with axis $\mathbb{L}$. Then, there exist a constants $\lambda,\theta \in (0,1)$ such that the following holds. If $p \in \Gamma$ and $q \in p \cdot C_{\mathbb{L}}(\lambda)$, then $\dist(q,\Gamma) > \theta d(p,q)$. 
\end{lemma}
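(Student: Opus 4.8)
The plan is to run the argument straight from the definitions of the Heisenberg cone and of an intrinsic Lipschitz graph, the only structural input being that the projection $\pi_{\mathbb{L}}\colon\He\to\mathbb{L}$ onto a horizontal subgroup is a group homomorphism which is $1$-Lipschitz for the Kor\'anyi metric. First I would record these two facts: writing $\mathbb{L}=\{(su,0):s\in\R\}$ for a unit vector $u\in\R^{2}$, so that $\pi_{\mathbb{L}}(z,t)=(\langle z,u\rangle u,0)$, the commutator term in the product of two elements of $\mathbb{L}$ vanishes (they are parallel), whence $\pi_{\mathbb{L}}$ is a homomorphism; and $\|\pi_{\mathbb{L}}(z,t)\|=|\langle z,u\rangle|\le|z|\le\|(z,t)\|$, which combined with the homomorphism property gives $d(\pi_{\mathbb{L}}a,\pi_{\mathbb{L}}b)=\|\pi_{\mathbb{L}}(b^{-1}\cdot a)\|\le\|b^{-1}\cdot a\|=d(a,b)$. (The inclusions \eqref{form168} are not actually needed for this route, although one could instead run the same computation with $\widetilde{C}_{\mathbb{L}}$ in place of $C_{\mathbb{L}}$.)

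Next, fix $\lambda_{0}\in(0,1)$ with $\Gamma$ intrinsic $\lambda_{0}$-Lipschitz, i.e. $p'\cdot C_{\mathbb{L}}(\lambda_{0})\cap\Gamma=\emptyset$ for all $p'\in\Gamma$, and set $\lambda:=\tfrac{1+\lambda_{0}}{2}\in(\lambda_{0},1)$; the constant $\theta\in(0,1)$ will be pinned down at the end. Given $p\in\Gamma$ and $q\in p\cdot C_{\mathbb{L}}(\lambda)$, put $\rho:=d(p,q)$; by definition of the cone, $d(\pi_{\mathbb{L}}p,\pi_{\mathbb{L}}q)=\|\pi_{\mathbb{L}}(p^{-1}\cdot q)\|>\lambda\rho$. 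I would argue by contradiction: assuming there is $p'\in\Gamma$ with $d(p',q)<\theta\rho$, the triangle inequality gives $\|(p')^{-1}\cdot p\|=d(p,p')\le(1+\theta)\rho$, while the homomorphism and $1$-Lipschitz properties of $\pi_{\mathbb{L}}$ give
$\|\pi_{\mathbb{L}}((p')^{-1}\cdot p)\|=d(\pi_{\mathbb{L}}p,\pi_{\mathbb{L}}p')\ge d(\pi_{\mathbb{L}}p,\pi_{\mathbb{L}}q)-d(\pi_{\mathbb{L}}q,\pi_{\mathbb{L}}p')>(\lambda-\theta)\rho$. Hence the ratio $\|\pi_{\mathbb{L}}((p')^{-1}\cdot p)\|/\|(p')^{-1}\cdot p\|$ exceeds $\tfrac{\lambda-\theta}{1+\theta}$.

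Finally, since $\lambda>\lambda_{0}$ and $\tfrac{\lambda-\theta}{1+\theta}\to\lambda$ as $\theta\to0$, I would choose $\theta\in(0,1)$ depending only on $\lambda_{0}$ with $\tfrac{\lambda-\theta}{1+\theta}>\lambda_{0}$ (for instance $\theta=\tfrac{1-\lambda_{0}}{4(1+\lambda_{0})}$). For such $\theta$ the displayed ratio exceeds $\lambda_{0}$, i.e. $(p')^{-1}\cdot p\in C_{\mathbb{L}}(\lambda_{0})$, equivalently $p\in p'\cdot C_{\mathbb{L}}(\lambda_{0})$; since $p,p'\in\Gamma$, this contradicts the intrinsic $\lambda_{0}$-Lipschitz property of $\Gamma$. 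Therefore $B(q,\theta\rho)\cap\Gamma=\emptyset$, so $\dist(q,\Gamma)\ge\theta\,d(p,q)$, and passing from $\theta$ to $\theta/2$ makes the inequality strict, as required. I do not expect a serious obstacle; the only care needed is in the two elementary verifications about $\pi_{\mathbb{L}}$ and in the bookkeeping that keeps $\lambda$ and $\theta$ inside $(0,1)$ with dependence only on the Lipschitz parameter $\lambda_{0}$.
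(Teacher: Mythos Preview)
Your argument is correct and in fact more self-contained than the paper's. The paper invokes the equivalence \eqref{form168} between the cones $C_{\mathbb{L}}$ and $\widetilde{C}_{\mathbb{L}}$ (a result of Rigot cited there) and then runs a ball-inclusion argument: one picks $\lambda,\lambda_{1}$ with $C_{\mathbb{L}}(\lambda)\subset\widetilde{C}_{\mathbb{L}}(\lambda_{1}/2)$ and $p\cdot\widetilde{C}_{\mathbb{L}}(\lambda_{1})\cap\Gamma=\emptyset$; a point $q\in p\cdot C_{\mathbb{L}}(\lambda)$ then lies in some $B(p\cdot v,\tfrac{\lambda_{1}}{2}\|v\|)$ with $v\in\mathbb{L}$, and the entire ball $B(q,\tfrac{\lambda_{1}}{4}d(p,q))$ is shown to sit inside $p\cdot\widetilde{C}_{\mathbb{L}}(\lambda_{1})\subset\Gamma^{c}$. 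Your route avoids \eqref{form168} altogether by using only that $\pi_{\mathbb{L}}$ is a group homomorphism and $1$-Lipschitz, and then arguing by contradiction that a point $p'\in\Gamma$ too close to $q$ would force $p$ into $p'\cdot C_{\mathbb{L}}(\lambda_{0})$, violating the intrinsic Lipschitz condition. Both approaches produce constants depending only on the Lipschitz parameter; yours is slightly more elementary because it does not appeal to Rigot's cone comparison, while the paper's is a bit more geometric in flavour.
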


\begin{proof} By definition of $\Gamma$ being an intrinsic Lipschitz graph, and the inclusions \eqref{form168}, we may find $\lambda,\lambda_{1} \in (0,1)$ such that $C_{\mathbb{L}}(\lambda) \subset \widetilde{C}_{\mathbb{L}}(\lambda_{1}/2)$, and
\begin{displaymath} p \cdot \widetilde{C}_{\mathbb{L}}(\lambda_{1}) \cap \Gamma = \emptyset, \qquad p \in \Gamma. \end{displaymath}
Let $q \in p \cdot C_{\mathbb{L}}(\lambda) \subset p \cdot \widetilde{C}_{\mathbb{L}}(\lambda_{1}/2)$, so $q \in B(p \cdot v,\tfrac{\lambda_{1}}{2}\|v\|)$ for some $v \in \mathbb{L}$, and 
\begin{displaymath} d(p,q) \leq d(p,p \cdot v) + d(p \cdot v,q) \leq (1 + \tfrac{\lambda_{1}}{2})\|v\| \leq 2\|v\|. \end{displaymath}
It follows that
\begin{displaymath} B(q,\tfrac{\lambda_{1}}{4}d(p,q)) \subset B(q,\tfrac{\lambda_{1}}{2}\|v\|) \subset B(p \cdot v,\lambda_{1}\|v\|) \subset p \cdot \widetilde{C}_{\mathbb{L}}(\lambda_{1}) \subset \Gamma^{c}, \end{displaymath}
which means that $\dist(q,\Gamma) \geq \tfrac{\lambda_{1}}{4}d(p,q)$. So, the claim holds with $\theta = \lambda_{1}/4$. \end{proof}

\subsection{Non-tangential and radial limits and maximal functions}\label{s:nt}

Let $\Gamma$ be a set in a metric space $(X,d)$, typically the boundary of a domain in $\He$. For $p \in \Gamma$ and $\theta \in (0,1)$, we define the \emph{non-tangential approach region}
\begin{displaymath} V_{\Gamma}(p,\theta) := \{q \in X : \dist(q,\Gamma) > \theta d(p,q)\} \subset X \, \setminus \, \Gamma. \end{displaymath}
\begin{definition}[Non-tangential maximal function] Let $\Gamma \subset X$ be a set, and let $u \colon X \, \setminus \, \Gamma \to B$ be a function taking values in some normed space $(B,|\cdot|)$ (typically $\R$ or $\R^{2}$). For $p \in \Gamma$ and $\theta > 0$, we define the quantity
\begin{displaymath} \calN_{\theta}  u(p) := \sup \{|u(q)| : q \in V_{\Gamma}(p,\theta)\}. \end{displaymath}
If $u$ is only defined on a subset of $X \, \setminus \, \Gamma$, we keep the same notation, but the "$\sup$" is only taken over $p \in \mathrm{dom}(u) \cap V_{\Gamma}(p,\theta)$. The function $\mathcal{N}_{\theta}u$ is the \emph{non-tangential maximal function of $u$ with aperture $\theta > 0$.} \end{definition}

\begin{definition}[Non-tangential limit] Let $\Gamma \subset X$ and $B$ be as above, and let $u \colon X \, \setminus \, \Gamma \to B$ be a function. Let $F \subset X \, \setminus \, \Gamma$ be a set, and let $b \in B$. We say that $u$ has \emph{non-tangential limit $b$ at $p_{0} \in \Gamma$ along $F$} if
\begin{equation}\label{form170} \mathop{\lim_{p \to p_{0}}}_{p \in F \cap V_{\Gamma}(p_{0},\theta)} u(p) = b \qquad \text{for all } \theta \in (0,1). \end{equation}
\end{definition}
We do not require that $p_{0}$ lies in the closure of $F \cap V_{\Gamma}(p_{0},\theta)$ for any $\theta \in (0,1)$. If this fails, then every $b \in B$ is a non-tangential limit of $u$ at $p_{0}$ along $F$. In practice, however, we are only interested in non-tangential limits in a scenario where $p_{0}$ lies in the closure of $F \cap V_{\Gamma}(p_{0},\theta_{0})$ for some $\theta_{0} \in (0,1)$, equivalently for all $0 < \theta \leq \theta_{0}$. In this situation, the non-tangential limit is unique (if it exists). We do not introduce separate notation for non-tangential limits, but we will always state explicitly (if not clear from the context) whether a certain limit should be understood in the non-tangential or classical sense.

If $\Gamma \subset \He$ happens to be an intrinsic $\lambda_{0}$-Lipschitz graph with axis $\mathbb{L}$, for some $\lambda_{0} \in (0,1)$, and $u$ is a $B$-valued function defined on (a subset of) $\He \, \setminus \, \Gamma$, we also define the \emph{conical maximal function with aperture $\lambda \in [\lambda_{0},1)$},
\begin{displaymath} \mathcal{C}_{\lambda}u(p) := \sup\{|u(q)| : q \in p \cdot C_{\mathbb{L}}(\lambda)\},  \end{displaymath} 
and the \emph{radial maximal function}
\begin{displaymath} \mathcal{N}_{\mathrm{rad}}u(p) := \sup\{|u(p \cdot v)| : v \in \mathbb{L} \, \setminus \, \{0\}\}. \end{displaymath}
Note that the "$\sup$" in the definition of $\mathcal{C}_{\lambda}$ is only taken over points $p \in \He \, \setminus \, \Gamma$ by the intrinsic Lipschitz assumption. Evidently
\begin{displaymath} \mathcal{N}_{\mathrm{rad}}u(p) \leq \mathcal{C}_{\lambda}u(p), \qquad p \in \Gamma, \, \lambda_{0} \leq \lambda < 1. \end{displaymath}
Furthermore, Lemma \ref{lemma12} implies that if $\Gamma \subset \He$ is an intrinsic Lipschitz graph, then there exist $\lambda \in (0,1)$ and $\theta > 0$ such that
\begin{equation}\label{radVsNt} \mathcal{N}_{\mathrm{rad}}u(p) \leq \mathcal{C}_{\lambda}u(p) \leq \mathcal{N}_{\theta}u(p), \qquad p \in \Gamma. \end{equation}
In other words, the radial maximal function is trivially dominated by the conical maximal function, and the conical maximal function is further dominated by the non-tangential maximal function if "$\lambda$" is close enough to $1$ and "$\theta$" is close enough to "$0$". 

We record the following standard Cotlar-type inequality:

\begin{lemma}\label{lemma-cotlar} Let $\sigma$ be an $k$-regular Borel measure on $(X,d)$, $k > 0$, and write $\Gamma := \spt \sigma$. Let $K \colon X \times X \, \setminus \, \{p = q\} \to \R^{n}$, $n \geq 1$, be a Borel function satisfying
\begin{equation}\label{skEstimates} |K(p,q)| \leq Cd(p,q)^{-k} \quad \text{and} \quad |K(p,q) - K(p',q)| \leq \frac{Cd(p,p')^{\eta}}{d(p,q)^{k + \eta}} \end{equation}
for all $p,p',q \in X$ with $d(p,p') \leq d(p,q)/2$, and for some constants $C \geq 1$ and $\eta > 0$. For $f \in L^{p}(\sigma)$, $1 < p < \infty$, define
\begin{displaymath} \mathcal{K}f(p) := \int K(p,q)f(q) \, d\sigma(q), \qquad p \in X \, \setminus \, \Gamma. \end{displaymath}
The integral is absolutely convergent for $p \notin \Gamma$. For $p \in \Gamma$, define also
\begin{displaymath} K^{\ast}f(p) := \sup_{\epsilon > 0} \left|\int_{\{q \in X : d(p,q) > \epsilon\}} K(p,q) f(q) \, d\sigma(q) \right|. \end{displaymath}
Then, for any $\theta > 0$, there exists a constant $A = A(K,\sigma,\theta) \geq 1$ such that
\begin{equation}\label{e:NT-cotlar} \mathcal{N}_{\theta}\mathcal{K}f(p_{0}) \leq A M_{\sigma}f(p_{0}) + K^{\ast}f(p_{0}), \qquad p_{0} \in \Gamma, \end{equation}
where $\calM_\sigma f$ is the centred Hardy-Littlewood maximal function of $f$, relative to the measure $\sigma$.
\end{lemma}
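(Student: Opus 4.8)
The plan is to fix $p_0 \in \Gamma$ and a point $q \in V_\Gamma(p_0,\theta)$, and bound $|\mathcal{K}f(q)|$ by the right-hand side of \eqref{e:NT-cotlar} uniformly in $q$; taking the supremum over $q$ then gives the claim. Write $\delta := \dist(q,\Gamma)$, so $\delta > \theta d(p_0,q)$, and choose a point $\hat q \in \Gamma$ with $d(q,\hat q) \leq 2\delta$ (if $\Gamma$ is closed one can take $d(q,\hat q) = \delta$; an extra factor $2$ changes nothing). Set $\epsilon := 2 d(p_0, q)$; note $\epsilon \leq (2/\theta)\delta$ and also $d(p_0,\hat q) \leq d(p_0,q) + d(q,\hat q) \leq d(p_0,q) + 4\delta \lesssim_\theta \epsilon$. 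I would split
\begin{displaymath} \mathcal{K}f(q) = \int_{d(p_0,y) \leq \epsilon} K(q,y) f(y)\,d\sigma(y) + \int_{d(p_0,y) > \epsilon} K(q,y)f(y)\,d\sigma(y) =: \mathrm{I} + \mathrm{II}, \end{displaymath}
and in the far part further write $K(q,y) = [K(q,y) - K(p_0,y)] + K(p_0,y)$, so that $\mathrm{II} = \mathrm{II}_1 + \mathrm{II}_2$ where $\mathrm{II}_2 = \int_{d(p_0,y)>\epsilon} K(p_0,y)f(y)\,d\sigma(y)$ is exactly a truncation appearing in the definition of $K^\ast f(p_0)$, hence $|\mathrm{II}_2| \leq K^\ast f(p_0)$.

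For $\mathrm{I}$, the point is that on the domain of integration $d(p_0,y) \leq \epsilon$ we have $d(q,y) \gtrsim_\theta \delta \gtrsim_\theta \epsilon$ whenever, say, $d(q,y) \geq \delta/2$; more carefully, $d(q,y) \geq \dist(q,\Gamma) = \delta$ for \emph{every} $y \in \Gamma \supseteq \spt\sigma$, so in fact $d(q,y) \geq \delta$ on the whole support of $\sigma$. Thus the size bound in \eqref{skEstimates} gives $|K(q,y)| \leq C d(q,y)^{-k}$, and on $\{d(p_0,y) \leq \epsilon\}$ we may further estimate $d(q,y) \leq d(q,p_0) + d(p_0,y) \leq \epsilon/2 + \epsilon \leq 2\epsilon$; combining, $d(q,y) \sim_\theta \epsilon$ there, and the ball $B(p_0,\epsilon)$ has $\sigma$-measure $\lesssim \epsilon^k$ by upper $k$-regularity, so
\begin{displaymath} |\mathrm{I}| \leq \int_{B(p_0,\epsilon)} \frac{C|f(y)|}{d(q,y)^k}\,d\sigma(y) \lesssim_\theta \frac{1}{\epsilon^k}\int_{B(p_0,\epsilon)} |f|\,d\sigma \lesssim_\theta \frac{\sigma(B(p_0,\epsilon))}{\epsilon^k}\, M_\sigma f(p_0) \lesssim_\theta M_\sigma f(p_0). \end{displaymath}
For $\mathrm{II}_1$, the Hölder-type (smoothness) bound in \eqref{skEstimates} applies because on $\{d(p_0,y) > \epsilon = 2d(p_0,q)\}$ we have $d(p_0,q) \leq d(p_0,y)/2$, giving $|K(q,y) - K(p_0,y)| \leq C d(p_0,q)^\eta d(p_0,y)^{-k-\eta}$. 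Hence, decomposing the region $\{d(p_0,y) > \epsilon\}$ into dyadic annuli $A_j := \{2^j \epsilon < d(p_0,y) \leq 2^{j+1}\epsilon\}$, $j \geq 0$, and using $\sigma(B(p_0, 2^{j+1}\epsilon)) \lesssim (2^j\epsilon)^k$,
\begin{displaymath} |\mathrm{II}_1| \leq \sum_{j \geq 0} \int_{A_j} \frac{C d(p_0,q)^\eta}{d(p_0,y)^{k+\eta}} |f(y)|\,d\sigma(y) \lesssim \sum_{j\geq 0} \frac{\epsilon^\eta}{(2^j\epsilon)^{k+\eta}} \int_{B(p_0,2^{j+1}\epsilon)} |f|\,d\sigma \lesssim \sum_{j \geq 0} 2^{-j\eta} M_\sigma f(p_0) \lesssim M_\sigma f(p_0), \end{displaymath}
where I used $d(p_0,q) \leq \epsilon/2 \leq \epsilon$ and absolute convergence of $\sum 2^{-j\eta}$. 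Collecting the three estimates yields $|\mathcal{K}f(q)| \lesssim_{K,\sigma,\theta} M_\sigma f(p_0) + K^\ast f(p_0)$ with a constant $A$ depending only on $C,k,\eta$ and $\theta$; taking the supremum over $q \in V_\Gamma(p_0,\theta)$ gives \eqref{e:NT-cotlar}. The main (mild) obstacle is purely bookkeeping: keeping track of how the implicit constants depend on $\theta$ through the inequality $\delta > \theta d(p_0,q)$, and making sure the choice $\epsilon = 2d(p_0,q)$ simultaneously makes $\mathrm{I}$ an "average over a ball" term and puts $\mathrm{II}_2$ into the form of an admissible truncation for $K^\ast f(p_0)$; the absolute convergence of $\mathcal{K}f(q)$ for $q \notin \Gamma$ is immediate from $d(q,y) \geq \dist(q,\Gamma) > 0$ together with the size bound and Hölder's inequality.
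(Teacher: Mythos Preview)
Your proof is correct and follows essentially the same approach as the paper: split $\mathcal{K}f$ at radius $2d(p_0,q)$ centred at $p_0$, bound the near part by the maximal function via the size estimate and $d(q,y) \geq \dist(q,\Gamma) > \theta d(p_0,q)$, and for the far part add and subtract $K(p_0,y)$ to produce the truncated term $K^\ast f(p_0)$ plus a H\"older remainder handled by dyadic annuli. The auxiliary point $\hat q$ you introduce is not actually needed (and you do not use it), and your upper bound $d(q,y) \leq 2\epsilon$ in the estimate for $\mathrm{I}$ is likewise superfluous, but these are harmless.
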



\begin{proof} Fix $p \in V_{\Gamma}(p_{0},\theta)$, and set $r := d(p,p_{0})$. We start by writing
\begin{equation}\label{MaxNT10}
    |\mathcal{K} f(p)| \leq \int_{B(p_{0}, 2r)} |K(p,q)||f(q)| \, d \sigma(q) + \left| \int_{B(p_{0}, 2r)^{c}} K(p,q)f(q) \, d \sigma(q) \right| =: I_1 + I_2. \end{equation}
First, we control $I_1$ by the maximal function. Note that $d(p, q) \geq \dist(p,\Gamma) > \theta r$ for all $q \in \Gamma$, hence $|K(p,q)| \leq Cd(p, q)^{-3} \leq C\theta^{-3}r^{-3}$. We therefore obtain
\begin{align*}
    I_1 \lesssim_{K,\theta} \frac{1}{r^{3}} \int_{B(p_{0}, 2r)} |f(q)| \, d \sigma(q) \lesssim_{\sigma} \calM_\sigma f (0).
\end{align*}
We next consider $I_2$. First, observe that
\begin{displaymath}
    I_2 \leq \left|\int_{B(p_{0}, 2r)^{c}} [K(p,q) - K(p_{0},q)] f(q) \, d \sigma(q)\right| + K^{\ast}f(p_{0}). \end{displaymath}
So, it remains to estimate the first term on the RHS, denoted $I_{2}^{2}$. Evidently $d(p,p_{0}) = r \leq d(p_{0},q)/2$ for all $q \in B(p_{0},2r)^{c}$, so the H\"older estimate in \eqref{skEstimates} is applicable:
\begin{displaymath} I_{2}^{2} \leq C \sum_{j \geq 1} \int_{A(p_{0},2^{j}r,2^{j + 1}r)} \frac{r^{\eta}}{d(p_{0},q)^{3 + \eta}}f(q) \, d\sigma(q) \lesssim_{\sigma} M_{\sigma}f(p_{0}) \sum_{j \geq 1} \frac{r^{\eta}}{(2^{j}r)^{\eta}} \sim_{\eta} M_{\sigma}f(p_{0}). \end{displaymath}
Here $A(x,r,R) = B(x,R) \, \setminus \, \bar{B}(x,r)$. This concludes the proof of \eqref{e:NT-cotlar}. \end{proof}
We apply the lemma to the vectorial Riesz transform and its adjoint, namely
\begin{displaymath} \vec{\mathcal{R}}f(p) =  \int \nabla G(q^{-1} \cdot p)f(q) \, d\sigma(q) \quad \text{and} \quad \vec{\mathcal{R}}^{t}V(p) =  \int \nabla G(p^{-1} \cdot q)f(q) \, d\sigma(q). \end{displaymath} 

\begin{cor}\label{K-MaxNT} Let $\Omega \subset \He$ be a flag domain, $\sigma := |\partial \Omega|_{\He}$. Then, the operators 
\begin{displaymath} f \mapsto \mathcal{N}_{\theta}\vec{\mathcal{R}}f, \, f \mapsto \mathcal{N}_{\theta}\vec{\mathcal{R}}^{t}f, \, f \mapsto \mathcal{N}_{\mathrm{rad}}\vec{\mathcal{R}}f, \quad \text{and} \quad f \mapsto \mathcal{N}_{\mathrm{rad}}\vec{\mathcal{R}}^{t}f, \end{displaymath}
and $f \mapsto \mathcal{N}_{\theta}(\nabla \mathcal{S}f)$, and $f \mapsto \mathcal{N}_{\theta}(\mathcal{D} f)$, are bounded on $L^{p}(\sigma)$ for all $\theta \in (0,1)$ and $1 < p < \infty$.
\end{cor}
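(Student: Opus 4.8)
The plan is to derive every bound in the corollary from the Cotlar-type inequality of Lemma \ref{lemma-cotlar}, fed by the $L^{p}(\sigma)$-boundedness of the maximal Riesz transform (Theorem \ref{t:FO1}). First I would carry out three reductions. (i) Because $X$ and $Y$ are left-invariant, $\nabla_{p}[G(q^{-1}\cdot p)] = (\nabla G)(q^{-1}\cdot p)$, so $\nabla\mathcal{S}f = \vec{\mathcal{R}}f$ on $\He \setminus \partial\Omega$; hence $f \mapsto \mathcal{N}_{\theta}(\nabla\mathcal{S}f)$ is literally the operator $f \mapsto \mathcal{N}_{\theta}\vec{\mathcal{R}}f$. (ii) Since $\partial\Omega$ is an intrinsic Lipschitz graph (Section \ref{s:ILG}), inequality \eqref{radVsNt} furnishes a $\theta \in (0,1)$ with $\mathcal{N}_{\mathrm{rad}}u \le \mathcal{N}_{\theta}u$ pointwise, reducing the two radial-maximal statements to the non-tangential ones. (iii) The map $\theta \mapsto \mathcal{N}_{\theta}u$ is non-increasing, and in any case Lemma \ref{lemma-cotlar} handles each fixed aperture, so it suffices to treat $f \mapsto \mathcal{N}_{\theta}\vec{\mathcal{R}}f$, $f \mapsto \mathcal{N}_{\theta}\vec{\mathcal{R}}^{t}f$ and $f \mapsto \mathcal{N}_{\theta}(\mathcal{D}f)$, for arbitrary $\theta \in (0,1)$ and $1 < p < \infty$.

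Each of these is of the form $\mathcal{N}_{\theta}(\mathcal{K}f)$, where $\mathcal{K}$ is the integral operator with kernel $K(p,q) = \nabla G(q^{-1}\cdot p)$, $K(p,q) = \nabla G(p^{-1}\cdot q)$, or (scalar) $K(p,q) = \langle \nabla G(p^{-1}\cdot q),\nu(q)\rangle$. The next step is to verify that each such $K$ satisfies the size and Hölder bounds \eqref{skEstimates} with $k = 3$ and $\eta = 1$. The size bound is immediate from $|\nabla G(w)| \lesssim \|w\|^{-3}$ (noted in Section \ref{s:operatorList}), the left-invariance of $d$, and $|\nu| \le 1$. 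For the Hölder estimate in the first variable I would use the identities $d(q^{-1}\cdot p, q^{-1}\cdot p') = d(p,p')$ and $d(0, q^{-1}\cdot p) = d(p,q)$ to reduce matters to the standard fact that a $\delta_{r}$-homogeneous kernel $\Phi \in C^{\infty}(\He\setminus\{0\})$ of degree $-3$ obeys $|\Phi(w) - \Phi(w\cdot v)| \lesssim \|v\|\,\|w\|^{-4}$ whenever $\|v\| \le \|w\|/2$; this applies to $\Phi = \nabla G$ directly, and to the adjoint kernel after rewriting $\nabla G(p^{-1}\cdot q) = -\nabla^{R}G(q^{-1}\cdot p)$ via \eqref{nablaG-R-L} and taking $\Phi = \nabla^{R}G$. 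With these estimates in hand, and since $\sigma$ is $3$-regular, Lemma \ref{lemma-cotlar} yields
\begin{displaymath} \mathcal{N}_{\theta}(\mathcal{K}f)(p_{0}) \le A\, M_{\sigma}f(p_{0}) + K^{\ast}f(p_{0}), \qquad p_{0} \in \partial\Omega, \end{displaymath}
where $K^{\ast}$ denotes the corresponding maximal truncated singular integral.

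Finally I would bound the two terms on the right in $L^{p}(\sigma)$. The maximal function $M_{\sigma}$ is bounded on $L^{p}(\sigma)$ for $1 < p < \infty$, because $3$-regularity makes $(\partial\Omega, d, \sigma)$ a space of homogeneous type. For $\mathcal{K} = \vec{\mathcal{R}}$ one has $K^{\ast} = \vec{R}^{\ast}$, bounded on $L^{p}(\sigma)$ by Theorem \ref{t:FO1}. For $\mathcal{K} = \vec{\mathcal{R}}^{t}$, the operator $K^{\ast}$ is the maximal truncated adjoint Riesz transform, whose $L^{p}(\sigma)$-boundedness follows from that of $\vec{R}^{\ast}$ by Cotlar's inequality on the space of homogeneous type $(\partial\Omega, d, \sigma)$ — exactly as already done in the proof of Lemma \ref{pvL2-K}. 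For $\mathcal{K} = \mathcal{D}$, writing $\nu = (\nu_{1},\nu_{2})$ one checks directly that the $\epsilon$-truncation of $\mathcal{D}$ equals the sum of suitable components of $\vec{R}^{t}_{\epsilon}(\nu_{1}f)$ and $\vec{R}^{t}_{\epsilon}(\nu_{2}f)$, so $K^{\ast}f \le \vec{R}^{t,\ast}(\nu_{1}f) + \vec{R}^{t,\ast}(\nu_{2}f)$ pointwise, and since $|\nu_{i}| \le 1$ this is controlled by the previous case. Combining these facts gives $\|\mathcal{N}_{\theta}(\mathcal{K}f)\|_{L^{p}(\sigma)} \lesssim \|f\|_{L^{p}(\sigma)}$ in all three cases, which is the assertion.

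I do not expect a genuine obstacle here: the hard analytic input is already packaged in Theorem \ref{t:FO1} and Lemma \ref{lemma-cotlar}, and the remaining work — checking \eqref{skEstimates} for these explicit kernels and deducing boundedness of the adjoint maximal truncation from Cotlar's inequality — is routine for spaces of homogeneous type. If anything is fiddly, it is keeping track of left versus right translations when verifying the smoothness estimate for the adjoint kernel $\nabla G(p^{-1}\cdot q)$, which is precisely why I would route that computation through \eqref{nablaG-R-L}.
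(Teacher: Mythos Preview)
Your proposal is correct and follows essentially the same route as the paper: apply the Cotlar-type inequality of Lemma \ref{lemma-cotlar} to the kernels $\nabla G(q^{-1}\cdot p)$ and $\nabla G(p^{-1}\cdot q)$, invoke Theorem \ref{t:FO1} for the maximal Riesz transform, use \eqref{radVsNt} for the radial maximal functions, and observe that $\nabla\mathcal{S}f = \vec{\mathcal{R}}f$ and $\mathcal{D}f = \mathcal{R}^{t}(f\nu)$. The only cosmetic differences are that the paper quotes $\eta = \tfrac{1}{2}$ from \cite{CFO2} rather than deriving $\eta = 1$ directly (either value suffices), and that the paper disposes of $\mathcal{D}$ by the pointwise bound $\mathcal{N}_{\theta}(\mathcal{D}f) \leq \mathcal{N}_{\theta}\vec{\mathcal{R}}^{t}(\nu_{1}f) + \mathcal{N}_{\theta}\vec{\mathcal{R}}^{t}(\nu_{2}f)$ rather than running Lemma \ref{lemma-cotlar} a third time; your treatment of the adjoint maximal operator via the argument in Lemma \ref{pvL2-K} is exactly what the paper has in mind.
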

\begin{proof} The kernels $K(p,q) = \nabla G(q^{-1} \cdot p)$ and $K^{t}(p,q) = \nabla G(p^{-1} \cdot q)$ satisfy the assumptions of Lemma \ref{lemma-cotlar} with $k = 3$ and $\eta = \tfrac{1}{2}$ by \cite[Lemma 2.1]{CFO2}. Therefore, we infer from \eqref{e:NT-cotlar} that
\begin{displaymath} \mathcal{N}_{\theta}\vec{\mathcal{R}}f(p_{0}) \lesssim_{\sigma,\theta} M_{\sigma}f(p_{0}) + \vec{R}^{\ast}f(p_{0}), \qquad p_{0} \in \partial \Omega, \end{displaymath}
and a similar inequality holds for $\mathcal{N}_{\theta}\vec{\mathcal{R}}^{t}f$. Here $\vec{R}^{\ast}$ is the maximal vectorial Riesz transform, introduced in Theorem \ref{t:FO1}, which we know to be bounded on $L^{p}(\sigma)$. Since also $M_{\sigma}$ is bounded on $L^{p}(\sigma)$, the proof of is complete (recalling also inequality \eqref{radVsNt} between radial and non-tangential maximal functions associated with intrinsic Lipschitz graphs.) The operator $f \mapsto \mathcal{N}_{\theta}(\nabla \mathcal{S}f)$ is just another way of writing $f \mapsto \mathcal{N}_{\theta}(\vec{\mathcal{R}}f)$, and the boundedness of $f \mapsto \mathcal{N}_{\theta}\mathcal{D}f = \mathcal{N}_{\theta}\mathcal{R}^{t}(f\nu)$ follows from the boundedness of $f \mapsto \mathcal{N}_{\theta}\vec{\mathcal{R}}^{t}f$. \end{proof}

\section{Boundary behaviour of the Riesz transform}\label{s:jump}

In this section examine the non-tangential limits as $p \to p_{0} \in \partial \Omega$ of the operators
 \begin{displaymath} \mathcal{R}V(p) =  \int \langle \nabla G(q^{-1} \cdot p),V(q) \rangle \, d\sigma(q) \quad \text{and} \quad \mathcal{R}^{t}V(p) =  \int \langle \nabla G(p^{-1} \cdot q),V(q) \rangle \, d\sigma(q), \end{displaymath} 
when $\Omega \subset \He$ is a flag domain. The integrals above are absolutely convergent for all horizontal vector fields $V \in L^{p}(\sigma)$, $1 < p < \infty$, and $p \in \He \, \setminus \, \partial \Omega$. The main result is Theorem \ref{t:rJumps} below, but as useful consequences, we obtain variants of the classical \emph{jump relations} for the double layer potential and the normal derivative of the single layer potential. These consequences are collected in Corollary \ref{c:allJumps}.

\begin{thm}\label{t:rJumps} Let $\Omega \subset \He$ be a flag domain, $\sigma := |\partial \Omega|_{\He}$, let $1 < p < \infty$, and let $V = f\nu + g\tau \in L^{p}(\sigma)$ be a horizontal vector field. Then, for $\sigma$-almost all $p \in \partial \Omega$, the non-tangential limits of $\mathcal{R}V$ and $\mathcal{R}^{t}V$ along $\Omega$ and $\He \, \setminus \, \overline{\Omega}$ exist. Their values are given by
\begin{equation}\label{form166}
    \mathop{\lim_{q \to p}}_{q \in \Omega} \mathcal{R}V(q) = -\tfrac{1}{2}f(p) + RV(p) \quad \text{and} \quad \mathop{\lim_{q \to p}}_{q \in \He \, \setminus \, \overline{\Omega}} \mathcal{R}V(q) = \tfrac{1}{2}f(p) + RV(p), \end{equation}
    and
    \begin{equation}\label{form166a}
    \mathop{\lim_{q \to p}}_{q \in \Omega} \mathcal{R}^{t}V(q) = \tfrac{1}{2}f(p) + R^{t}V(p) \quad \text{and} \quad \mathop{\lim_{q \to p}}_{q \in \He \, \setminus \, \overline{\Omega}} \mathcal{R}^{t}V(q) = -\tfrac{1}{2}f(p) + R^{t}V(p). \end{equation}
\end{thm}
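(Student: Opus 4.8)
\emph{Proof plan.} The argument follows the classical template for jump relations: reduce to a dense class of vector fields, and for those read off the jumps from the fundamental-solution identity (Lemma~\ref{K-ball}), the vanishing of tangential annular integrals (Lemma~\ref{l:Ttau-pv}), the left--right swap (Lemma~\ref{l:right-left}), and the half-sphere computation already contained in the proof of Lemma~\ref{pv-K}. First I would reduce to smooth vector fields. By Corollary~\ref{K-MaxNT} the non-tangential maximal operators $\mathcal{N}_{\theta}\vec{\mathcal{R}}$ and $\mathcal{N}_{\theta}\vec{\mathcal{R}}^{t}$ are bounded on $L^{p}(\sigma)$, and by Theorem~\ref{t:pValues} the principal values $RV,R^{t}V$ exist $\sigma$ a.e. Writing, for $p \in \partial \Omega$ and $\theta \in (0,1)$,
\[ \mathcal{O}_{\Omega}V(p) := \limsup_{\xi \to p, \ \xi \in \Omega \cap V_{\partial \Omega}(p,\theta)} \bigl| \mathcal{R}V(\xi) + \tfrac{1}{2}\langle V,\nu \rangle(p) - RV(p) \bigr| \]
and the three obvious variants, one checks that the set of $V \in L^{p}(\sigma)$ for which all four of these $\limsup$'s vanish a.e. is closed: if $V_{n} \to V$ in $L^{p}$ with $\mathcal{O}_{\Omega}V_{n} \equiv 0$, then $\mathcal{O}_{\Omega}V \le \mathcal{N}_{\theta}\vec{\mathcal{R}}(V - V_{n}) + \tfrac{1}{2}|\langle V - V_{n},\nu \rangle| + |R(V - V_{n})|$ a.e., whose $L^{p}$-norm tends to $0$. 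Since $\{\phi\nu + \psi\tau : \phi,\psi \in C^{\infty}_{c}(\He)\}$ is dense in $L^{p}(\sigma)$ (as $\{\nu,\tau\}$ is an a.e. orthonormal frame, recall Remark~\ref{rem:strategy}), it suffices to treat $V = \phi\nu$ and $V = \psi\tau$ with $\phi,\psi \in C^{\infty}_{c}(\He)$, and by linearity these separately.

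Next I would record the ``Gauss mass'' identities underlying the jumps. Fix $p_{0} \in \partial \Omega$ and choose $R > 0$ from the full-measure set of radii with $\sigma(\partial B(p_{0},R)) = 0 = \sigma_{p_{0},R}(\partial \Omega)$. For $\xi \in \He \, \setminus \, \partial \Omega$ with $\xi \in B(p_{0},R)$, applying Lemma~\ref{K-ball} to the finite-perimeter sets $\Omega \cap B(\xi,R)$, $(\He \, \setminus \, \overline{\Omega}) \cap B(\xi,R)$ and $B(\xi,R)$, and splitting each boundary into its $\partial \Omega$-part and $\partial B(\xi,R)$-part exactly as in the proof of Lemma~\ref{pv-K} (using the locality of horizontal perimeter), one gets
\[ \int_{B(\xi,R) \cap \partial \Omega} \langle \nabla G(\xi^{-1} \cdot y),\nu(y) \rangle \, d\sigma(y) = \begin{cases} 1 - \mathfrak{r}(\Omega,R,\xi), & \xi \in \Omega, \\ -\mathfrak{r}(\Omega,R,\xi), & \xi \in \He \, \setminus \, \overline{\Omega}, \end{cases} \]
where $\mathfrak{r}$ is the sphere term from \eqref{K-pv10} and $\mathfrak{r}(\Omega,R,\xi) + \mathfrak{r}(\He \, \setminus \, \overline{\Omega},R,\xi) = 1$; while $\int_{B(\xi,R) \cap \partial \Omega} \langle \nabla G(\xi^{-1} \cdot y),\tau(y) \rangle \, d\sigma(y) = 0$ for every $\xi \in \He \, \setminus \, \partial \Omega$ by Lemma~\ref{l:Ttau-pv} (take the inner annulus radius below $\dist(\xi,\partial \Omega)$). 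Lemma~\ref{l:right-left}, with $\psi = \mathbf{1}_{\{\delta \le \|\cdot\| \le R\}}$ and $\delta < \dist(\xi,\partial \Omega)$, converts these into the corresponding statements for the kernel $\langle \nabla G(y^{-1} \cdot \xi),\eta(y) \rangle$ with an overall sign flip: the $\eta = \nu$ integral becomes $-(1 - \mathfrak{r})$ (from $\Omega$) or $\mathfrak{r}$ (from $\He \, \setminus \, \overline{\Omega}$), and the $\eta = \tau$ integral stays $0$. At $p_{0}$ itself, Lemma~\ref{pv-K} and the proof of Corollary~\ref{c:pv-Htauf} give $R^{t}(\mathbf{1}_{B(p_{0},R)}\nu)(p_{0}) = D\mathbf{1}_{B(p_{0},R)}(p_{0}) = \tfrac{1}{2} - \mathfrak{r}(\Omega,R,p_{0})$, $R(\mathbf{1}_{B(p_{0},R)}\nu)(p_{0}) = -\tfrac{1}{2} + \mathfrak{r}(\Omega,R,p_{0})$, and $R^{t}(\mathbf{1}_{B(p_{0},R)}\tau)(p_{0}) = R(\mathbf{1}_{B(p_{0},R)}\tau)(p_{0}) = 0$.

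Then I would pass to the limit. For $\xi \in \He \, \setminus \, \partial \Omega$ and any of the four kernels $K_{\xi}(y) \in \{\langle \nabla G(\xi^{-1} \cdot y),\eta(y) \rangle, \langle \nabla G(y^{-1} \cdot \xi),\eta(y) \rangle : \eta \in \{\nu,\tau\}\}$, split $\int_{\partial \Omega} K_{\xi}\phi \, d\sigma$ into a ``near'' part over $B(p_{0},R) \cap \partial \Omega$ with $\phi$ replaced by $\phi - \phi(p_{0})$, a ``constant'' part $\phi(p_{0}) \int_{B(p_{0},R) \cap \partial \Omega} K_{\xi}\, d\sigma$, and a ``far'' part over $B(p_{0},R)^{c} \cap \partial \Omega$; do the same for the corresponding principal value at $p_{0}$. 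As $\xi \to p_{0}$ non-tangentially: the near part converges to its value at $p_{0}$ by dominated convergence, using $|\phi(y) - \phi(p_{0})| \lesssim d(p_{0},y)$ and the non-tangential bound $d(\xi,y) \gtrsim d(p_{0},y)$ on $\partial \Omega$; the far part converges likewise, since $|K_{\xi}(y)| \lesssim d(p_{0},y)^{-3}$ and $\phi \in C^{\infty}_{c}(\He)$; and the constant part agrees with $\phi(p_{0})\int_{B(\xi,R) \cap \partial \Omega} K_{\xi}\, d\sigma$ up to an error supported in a thin spherical shell of $\sigma$-mass $o(1)$ (here $\sigma(\partial B(p_{0},R)) = 0$ is used), while $\mathfrak{r}(\Omega,R,\xi) \to \mathfrak{r}(\Omega,R,p_{0})$ by \eqref{eq:sigmas} and dominated convergence (here $\sigma_{p_{0},R}(\partial \Omega) = 0$ is used). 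Feeding in the identities above, the limit from $\Omega$ of $\int_{\partial \Omega} K_{\xi}\phi\, d\sigma$ minus the principal-value expression at $p_{0}$ has its near, far and $\mathfrak{r}(\Omega,R,p_{0})$ contributions all cancel, leaving $\phi(p_{0})[(1 - \mathfrak{r}) - (\tfrac{1}{2} - \mathfrak{r})] = \tfrac{1}{2}\phi(p_{0})$ for $\mathcal{R}^{t}(\phi\nu)$, $\phi(p_{0})[-(1 - \mathfrak{r}) - (-\tfrac{1}{2} + \mathfrak{r})] = -\tfrac{1}{2}\phi(p_{0})$ for $\mathcal{R}(\phi\nu)$, and $0$ for the $\tau$-components. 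The computation from $\He \, \setminus \, \overline{\Omega}$ is identical with $1 - \mathfrak{r}$ replaced by $-\mathfrak{r}$ and $-(1 - \mathfrak{r})$ by $\mathfrak{r}$, which flips each sign; this yields \eqref{form166} and \eqref{form166a} for $V = f\nu$ ($g = 0$) and $V = g\tau$ ($f = 0$), and linearity plus the closedness from the first paragraph finish the proof for all $V \in L^{p}(\sigma)$.

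The hard part is not conceptual: the heart of the matter is already in Lemma~\ref{pv-K}, namely that an interior point carries a full unit of Gauss mass while a boundary point sees exactly half of it (by the reflection symmetry in that proof), and that the tangential component carries none. The genuine work is the technical bookkeeping of the last paragraph --- the dominated-convergence estimates for the near and far pieces along non-tangential approach, the continuity $\mathfrak{r}(\Omega,R,\xi) \to \mathfrak{r}(\Omega,R,p_{0})$ and the selection of an admissible radius $R$ --- together with keeping the inward-normal sign conventions for $\Omega$, $\He \, \setminus \, \overline{\Omega}$ and the balls consistent, so that the ``$\tfrac{1}{2}$'' surfaces with the correct sign on each side of $\partial \Omega$.
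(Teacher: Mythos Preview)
Your proposal is correct and follows essentially the same route as the paper's proof (Lemma~\ref{jump-K-smooth} followed by the density argument at the end of the proof of Theorem~\ref{t:rJumps}): reduce to $V=\phi\nu$ and $V=\psi\tau$ with $\phi,\psi\in C^\infty_c(\He)$, use the Gauss identity from Lemma~\ref{K-ball} to compute the ``constant'' piece, Lemma~\ref{l:right-left} to swap $\nabla G(\xi^{-1}\cdot y)$ and $\nabla G(y^{-1}\cdot\xi)$, Lemma~\ref{l:Ttau-pv} for the tangential vanishing, and then close by the maximal-function bound of Corollary~\ref{K-MaxNT}.

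One cosmetic difference: the paper chooses $R$ large enough that $\spt\phi\subset B(p_j,R)$ for every point $p_j$ in the approaching sequence, and works throughout with balls $B(p_j,R)$ centred at $p_j$ rather than at $p_0$. This eliminates your ``far'' piece entirely and avoids the shell-comparison between $B(p_0,R)\cap\partial\Omega$ and $B(\xi,R)\cap\partial\Omega$, so the genericity conditions $\sigma(\partial B(p_0,R))=0$ and $\sigma_{p_0,R}(\partial\Omega)=0$ are not explicitly invoked (though the latter is implicitly needed, and your making it explicit is arguably cleaner). Otherwise the bookkeeping---the $I_1$/$I_2$ split, the dominated-convergence estimate via $d(\xi,y)\gtrsim_\theta d(p_0,y)$ on $\partial\Omega$, and the exterior case with $\mathfrak{r}(\He\setminus\overline{\Omega},R,\cdot)=1-\mathfrak{r}(\Omega,R,\cdot)$---matches the paper line for line.
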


It is worth emphasising the asymmetric role of the functions $f$ and $g$ in the relations \eqref{form166}-\eqref{form166a}. This will be important in Corollary \ref{c:allJumps}.  We start by treating a special case:

\begin{lemma}\label{jump-K-smooth}
Theorem \ref{t:rJumps} holds for $V = f\nu + g\tau$, where $f,g \in C_c^\infty (\He)$. In fact, then the non-tangential limits \eqref{form166} exist at all weak tangent points $p \in \partial \Omega$.
\end{lemma}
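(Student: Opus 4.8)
The plan is to reduce the computation of the non-tangential limits to the already-established existence of the principal values $RV(p)$ and $R^{t}V(p)$ (Theorem \ref{t:pValues} and its constituent lemmas), plus a "harmonic measure"-type jump term which is extracted from the behaviour of $G$ near the boundary. Fix a weak tangent point $p \in \partial\Omega$; by left-invariance assume $p = 0$, and by choosing $R > 0$ with $\spt f \cup \spt g \subset B(0,R)$ we may assume $V$ is compactly supported. The first step is to split $\mathcal{R}^{t}V(q)$, for $q$ in a non-tangential approach region $V_{\partial\Omega}(0,\theta)$, as
\begin{displaymath}
\mathcal{R}^{t}V(q) = \int \langle \nabla G(q^{-1}\cdot x) - \nabla G(0^{-1}\cdot x),V(x)\rangle\,d\sigma(x) + \int \langle \nabla G(x^{-1}),V(x)\rangle\,d\sigma(x),
\end{displaymath}
wait — better to subtract a frozen-coefficient version: write $V(x) = V(0) + (V(x) - V(0))$. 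The term carrying $V(x) - V(0)$ has a kernel which is $O(d(0,x)^{-3}\cdot d(0,x)) = O(d(0,x)^{-2})$ near $0$ by smoothness of $V$, hence is given by an absolutely convergent integral that is continuous across $\partial\Omega$ at $0$; its value at $q = 0$ is exactly $R^{t}V(0) - R^{t}(V(0)\mathbf{1}_{B(0,R)})(0) + (\text{the same convergent tail})$, so modulo the principal-value bookkeeping this contributes $R^{t}V(0)$ in the limit, for approach from either side. So everything reduces to understanding
\begin{displaymath}
\lim_{q\to 0,\ q\in\Omega}\ \int_{B(0,R)} \langle \nabla G(q^{-1}\cdot x),V(0)\rangle\,d\sigma(x)
\end{displaymath}
and the corresponding limit from $\He\setminus\overline{\Omega}$, and similarly for $\mathcal{R}V$.

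The second, and main, step is this frozen-coefficient computation. Decompose $V(0) = f(0)\nu(0) + g(0)\tau(0)$. For the $\tau$-component: by Lemma \ref{l:Ttau-pv} the annular integrals $\int_{A(q,\epsilon,R)}\langle \nabla G(q^{-1}\cdot x),\tau(x)\rangle\,d\sigma(x)$ vanish — but here the coefficient is frozen at $\tau(0)$, not $\tau(x)$, so I instead use that $\tau(x) - \tau(0)$ is again $O(d(0,x))$ on $\spt g$ (smoothness), reducing to the $\tau(x)$-weighted integral which vanishes; hence the $\tau$-component contributes \emph{no jump}, consistent with $g$ being absent from \eqref{form166}-\eqref{form166a}. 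For the $\nu$-component, this is precisely the content of Lemma \ref{pv-K} (and its proof): one writes $\int_{B(0,R)}\langle\nabla G(q^{-1}\cdot x),\nu(0)\rangle\,d\sigma(x)$, again replaces $\nu(0)$ by $\nu(x)$ up to a harmless $O(d(0,x))$ error, and applies the divergence theorem on $\Omega \cap [B(0,R)\setminus \overline{B(q,\epsilon)}]$ exactly as in the proof of Lemma \ref{pv-K}. The new ingredient compared to Lemma \ref{pv-K} is that $q$ is an \emph{interior} point near $0$, not $0$ itself: the small sphere $\partial B(q,\epsilon)$ lies entirely inside $\Omega$ once $\epsilon \ll \dist(q,\partial\Omega)$, so the half-ball computation $I_\epsilon^2 = \tfrac12$ of Lemma \ref{pv-K} is replaced by the \emph{full}-ball computation, giving $\int_{\partial B(q,\epsilon)}\langle \nabla G(q^{-1}\cdot x),\nu_\epsilon(x)\rangle d\sigma_{q,\epsilon}(x) = 1$ by Lemma \ref{K-ball}. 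Tracking the resulting identity and letting first $\epsilon\to 0$ then $q\to 0$ along the approach region (using that $0$ is a weak tangent point, so Lemma \ref{lemma10} gives the half-space approximation controlling the remainder $\mathfrak{r}$), one lands on $\tfrac12 f(0) + R^{t}V(0)$ for approach from $\Omega$; approach from $\He\setminus\overline{\Omega}$ gives the $0$ full-ball value instead of $1$, producing $-\tfrac12 f(0) + R^{t}V(0)$. This is \eqref{form166a}.

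The third step handles $\mathcal{R}V$, i.e. \eqref{form166}. Here I invoke Lemma \ref{l:right-left}: since $\nu$ and $\tau$ have $t$-independent coefficients (Proposition \ref{prop3} and Definition \ref{HTangent}), the kernel $\langle\nabla G(x^{-1}\cdot q),\eta(x)\rangle$ equals $-\langle\nabla G(q^{-1}\cdot x),\eta(x)\rangle$ after integration against $\sigma$ with a radial cutoff, for $\eta\in\{\nu,\tau\}$ — the same device used in Corollary \ref{c:pv-Htauf}. Applying this with the annular cutoffs appearing in the divergence-theorem argument above converts the $\mathcal{R}V$ computation into (minus) the $\mathcal{R}^{t}V$ one, flipping the sign of the jump term while (by $TG$ odd, exactly as in the proof of Lemma \ref{l:right-left}) leaving the principal value $RV(0) = -R^{t}V(0)$... — more precisely, Lemma \ref{l:right-left} gives $R(\mathbf 1_{B(0,R)}\eta)(0) = -R^{t}(\mathbf 1_{B(0,R)}\eta)(0)$, and combined with the sign flip on the boundary jump term this yields $\mp\tfrac12 f(0) + RV(0)$ as in \eqref{form166}. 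The main obstacle I anticipate is purely bookkeeping: making the "freeze the coefficient, absorb the $O(d(0,x))$ error" step uniform in $q$ as $q\to 0$ non-tangentially, and checking that the remainder term $\mathfrak{r}(\Omega,R,q)$ — now based at the moving interior point $q$ rather than at $0$ — still tends to the correct limit; this is where the weak-tangent-point hypothesis and Lemma \ref{lemma10} do the real work, and where one must be a little careful that the approach region $V_{\partial\Omega}(0,\theta)$ keeps $q$ at distance $\gtrsim_\theta d(0,q)$ from $\partial\Omega$ so that the small full sphere $\partial B(q,\epsilon)$ is genuinely interior.
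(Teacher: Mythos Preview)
Your overall architecture matches the paper's: split off a term carrying a smooth difference (handled by dominated convergence and Lemma~\ref{pv-K}), leave a ``constant coefficient'' term to which the divergence theorem applies (the full-sphere value $1$ from Lemma~\ref{K-ball} replacing the half-sphere value $\tfrac12$, exactly as you say), use Lemma~\ref{l:Ttau-pv} for the $\tau$-part, and invoke Lemma~\ref{l:right-left} for $\mathcal{R}$ versus $\mathcal{R}^{t}$.

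There is, however, one genuine gap. You freeze the \emph{entire} vector $V(0)=f(0)\nu(0)+g(0)\tau(0)$, and then, to apply the divergence theorem and Lemma~\ref{l:Ttau-pv}, you ``replace $\nu(0)$ by $\nu(x)$ up to a harmless $O(d(0,x))$ error'' (and likewise for $\tau$). But for a flag domain with merely Lipschitz $A$, the fields $\nu$ and $\tau$ depend on $A'$ (see \eqref{flagNormal} and \eqref{eq:tau}) and are therefore only bounded measurable, not continuous --- let alone Lipschitz. The difference $\nu(x)-\nu(0)$ carries no decay as $x\to 0$, and the error integral $\int |\nabla G(q^{-1}\cdot x)|\,|\nu(0)-\nu(x)|\,d\sigma(x)$ need not even stay bounded as $q\to 0$ (the kernel alone gives a logarithmically divergent integral against the $3$-regular measure $\sigma$). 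Without this conversion you cannot apply the divergence theorem, because $\int \langle\nabla G(q^{-1}\cdot x),e\rangle\,d\sigma(x)$ for a \emph{constant} vector $e$ has no direct divergence-theorem interpretation.

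The fix is exactly what the paper does: freeze only the smooth scalar $f(p)$ (respectively $g(p)$), not the vector. Writing
\[
\mathcal{R}^{t}(f\nu)(p_{j})=\int_{B(p_{j},R)}\langle\nabla G(p_{j}^{-1}\cdot q),\nu(q)\rangle\bigl(f(q)-f(p)\bigr)\,d\sigma(q)+f(p)\int_{B(p_{j},R)}\langle\nabla G(p_{j}^{-1}\cdot q),\nu(q)\rangle\,d\sigma(q),
\]
the first integral is absolutely convergent because $f\in C_{c}^{\infty}$ gives $|f(q)-f(p)|\lesssim d(p,q)$, and the second already contains $\nu(q)$, ready for the divergence theorem without any conversion. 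With this adjustment your argument is essentially the paper's proof.
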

\begin{proof}
Let $p \in \partial \Omega$ be a weak tangent point of $\partial \Omega$, so all the principal values $R^{t}(f\nu)(p)$, $R^{t}(g\tau)(p)$ and $R(f\nu)(p)$, and $R(g\tau)(p)$ exist by Corollaries \ref{c:pv-R-smooth}, \ref{c:pv-Htauf0}, and \ref{c:pv-Htauf}. The plan of the proof is to first treat $R^{t}(f\nu)$ and $R(f\nu)$, which are a little more difficult, and then $R^{t}(g\tau)$ and $R(g\tau)$. 

Fix $\theta > 0$, and consider a sequence  $\{p_j\}_{j \in \N} \subset \Omega \cap V_{\partial \Omega}(p,\theta)$ such that $p_j \to p$ as $j\to \infty$. (We will deal with the "exterior" case $\{p_{j}\}_{j \in \N} \subset \He \, \setminus \, \overline{\Omega}$ later). For each $j \in \N$, there exists $\epsilon_{j} \in (0,1)$ such that $B(p_j, \ve_{j}) \subset \Omega$. Moreover, there exists $R \geq 1$ such that $\spt(f) \subset B(p_{j}, R)$ for all $j \in \N$. Now set
\begin{align*}
   E_{j} = \left( \Omega \cap B(p_{j}, R) \right) \, \setminus \, B(p_j, \ve_{j}), \qquad j \in \N, 
\end{align*}
so
\begin{displaymath} \partial E_{j} = [\partial \Omega \cap B(p_{j},R)] \cup [\Omega \cap \partial B(p_{j},R)] \cup \partial B(p_{j},\epsilon_{j}). \end{displaymath}
The perimeter measure $|\partial E_{j}|_{\He}$ restricted to these three pieces equals $\sigma$, $\sigma_{p_{j},R}$, and $\sigma_{p_{j},\epsilon_{j}}$, respectively, using the locality of horizontal perimeter (see \cite{MR2678937}), as we already did in the proof of Lemma \ref{pv-K}. Therefore, using that $q \mapsto G(p_j^{-1} \cdot q)$ is $\bigtriangleup^{\flat}$-harmonic in $E_{j}$, and $q \mapsto \nabla G (p^{-1}_j \cdot q) \in C^{\infty}(\bar{E}_{j})$, we infer from the divergence theorem that
\begin{align*}
   0 = - \int_{E_{j}} \Lap^{\flat} G(p^{-1}_j \cdot q) \, d q &  = \int_{\partial E_{j}} \left< \nabla G(p_j^{-1} \cdot q), \nu_{E_{j}}(q) \right> d \sigma_{E_{j}}(q)\\
   & = \int_{\partial \Omega \cap B(p_{j},R)} \langle G(p_{j}^{-1} \cdot q),\nu(q) \rangle \, d\sigma(q)\\
   & \quad + \int_{\Omega \cap \partial B(p_{j},R)} \langle \nabla G(p_{j}^{-1} \cdot q),\nu_{p_{j},R}(q) \rangle \, d\sigma_{p,R}(q)\\
   & \quad - \int_{\partial B(p_{j},\epsilon_{j})} \langle \nabla G(p_{j}^{-1} \cdot q),\nu_{p_{j},\epsilon_{j}}(q) \rangle \, d\sigma_{p_{j},\epsilon_{j}}(q).
\end{align*}
Rearranging terms, and recalling from \eqref{kappa} that the last term above equals "$1$", we have
\begin{align}\label{jump-10}
     & \int_{\partial \Omega \cap B(p_{j}, R)} \langle \nabla G(p_j^{-1} \cdot q),\nu(q) \rangle \, d\sigma(q) \notag\\
    & =  1 -  \int_{\Omega \cap \partial B(p_{j},R)} \langle \nabla G(p_{j}^{-1} \cdot q),\nu_{p_{j},R}(q) \rangle \, d\sigma_{p,R}(q). 
\end{align}
Since $d(p_{j},q) \geq R/2$ for all $q \in \partial B(p,R)$ and $j \in \N$ large enough, we may let $j \to \infty$ on the RHS of \eqref{jump-10} to deduce that
\begin{equation}\label{form167} \lim_{j \to \infty}  \int_{\partial \Omega \cap B(p_{j},R)} \langle \nabla G(p_{j}^{-1} \cdot q),\nu(q) \rangle \, d\sigma(q) = 1 - \mathfrak{r}(\Omega,R,p), \end{equation}
where the constant $\mathfrak{r}(\Omega,R,p) \leq 1$ was defined in \eqref{K-pv10}. Recalling that $\Omega$ is a flag domain, we also infer from \eqref{form167} and Lemma \ref{l:right-left} (with $\psi := \mathbf{1}_{B(0,R)}$) that
\begin{equation}\label{form167a} \lim_{j \to \infty}  \int_{\partial \Omega \cap B(p_{j},R)} \langle \nabla G(q^{-1} \cdot p_{j}),\nu(q) \rangle \, d\sigma(q) = \mathfrak{r}(\Omega,R,p) - 1. \end{equation}
(The assumption $0 \notin \spt \psi$ in that lemma can be omitted here, since $p_{j} \notin \partial \Omega$.) Now we are prepared to compute the limits of $\mathcal{R}(f\nu)(p_{j})$ and $\mathcal{R}^{t}(f\nu)(p_{j})$ as $j \to \infty$. The proofs are virtually the same, so we only give full details for $\mathcal{R}^{t}(f\nu)(p_{j})$. Recall that $\spt(f) \subset B(p_{j},R)$ for all $j \in \N$. For $j \in \N$ fixed, we may therefore write
\begin{align*}
    \mathcal{R}^{t}(f\nu)(p_j) & =  \int_{B(p_{j}, R)} \langle\nabla G(p_j^{-1} \cdot q),\nu(q) \rangle \left( f(q) - f(p) \right) \, d\sigma(q) \\
    & \quad + f(p) \cdot  \int_{B(p_{j}, R)} \langle \nabla G(p_j^{-1} \cdot q), \nu(q) \rangle \, d\sigma(q) = I_1(p_j) + I_2(p_j). 
\end{align*}
Since $f \in C^{\infty}_{c}(\He)$, and
\begin{displaymath} d(p,q) \leq d(p,p_{j}) + d(p_{j},q) \leq \theta^{-1} \dist(p_{j},\partial \Omega) + d(p_{j},q) \leq (1 + \theta^{-1})d(p_{j},q) \end{displaymath}
for $p_{j} \in V_{\partial \Omega}(p,\theta)$, we have 
\begin{displaymath} |\nabla G(p_{j}^{-1} \cdot q)||f(q) - f(p)| \lesssim_{\theta} d(p,q)^{-3}|f(p) - f(q)| \lesssim d(p,q)^{-2}. \end{displaymath}
The RHS is $\sigma$-integrable for $q \in B(p_{j},R) \subset B(p,2R)$, so dominated convergence applies:
\begin{align}\label{form171}
   \lim_{j\to \infty} I_1(p_j) &  =   \int_{B(p,R)} \left< \nabla G(p^{-1} \cdot q), \nu(q) \right>  ( f(q) - f(p)) \, d \sigma(q) \\
   & = R^{t}(f\nu)(p) - f(p)\left( \tfrac{1}{2} - \mathfrak{r}(\Omega, R, p)\right). \notag
\end{align}
We used Lemma \ref{pv-K} in the last equation.
On the other hand, \eqref{form167} gives 
\begin{align*}
    \lim_{j \to \infty} I_2(p_j) = f(p)\left(1 - \mathfrak{r}(\Omega, R, p)\right).
\end{align*}
All in all we obtain 
\begin{displaymath}
    \lim_{j \to \infty} \mathcal{R}^{t}(f\nu)(p_j) = R^{t}(f\nu)(p) - f(p)\left(\tfrac{1}{2} - \mathfrak{r}(\Omega, R, p)\right) + f(p)(1-\mathfrak{r}(\Omega, R, p)) = \tfrac{1}{2}f(p) + R^{t}(f\nu)(p),
\end{displaymath}
as desired. Regarding $\mathcal{R}(f\nu)(p_{j})$, one needs to replace "$\nabla G(p_{j}^{-1} \cdot q)$" by "$\nabla G(q^{-1} \cdot p_{j})$" in the definitions of $I_{1}(p_{j})$ and $I_{2}(p_{j})$. For $I_{1}(p_{j})$, dominated convergence can be applied for the same reasons as above, and this time
\begin{displaymath} \lim_{j \to \infty} I_{1}(p_{j}) = R(f\nu)(p) - f(p) \cdot \pv \int_{B(p,R)} \langle \nabla G(q^{-1} \cdot p),\nu(q) \rangle \, d\sigma(q). \end{displaymath}
Recalling that $\Omega$ is a flag domain, one may then apply Lemma \ref{l:right-left} to turn "$q^{-1} \cdot p$" into "$p^{-1} \cdot q$" in the latter principal value, at the cost of changing the sign. After this one finds from Lemma \ref{pv-K} that
\begin{displaymath} \lim_{j \to \infty} I_{1}(p_{j}) = R(f\nu)(p) - f(p) \cdot \left(\mathfrak{r}(\Omega,R,p) - \tfrac{1}{2} \right). \end{displaymath}
The limit $\lim_{j \to \infty} I_{2}(p_{j})$ was already computed in \eqref{form167a}, and all in all we find that
\begin{displaymath} \lim_{j \to \infty} \mathcal{R}(f\nu)(p_{j}) = R(f\nu)(p) - f(p)\left(\mathfrak{r}(\Omega,R,p) - \tfrac{1}{2} \right) + f(p)(\mathfrak{r}(\Omega,R,p) - 1) = -\tfrac{1}{2}f(p) + R(f\nu)(p), \end{displaymath}
as desired.

We have now dealt with the "interior approach" for vector fields of the form $V = f\nu$. Before turning to $V = g\tau$, let us indicate the small changes needed for the "exterior approach". Let $\{p_{j}\}_{j \in \N} \subset \He \, \setminus \, \overline{\Omega} \cap V_{\partial \Omega}(p,\theta)$ be a sequence with $p_{j} \to p$ as $j \to \infty$. This time one defines $E_{j} := [B(p_{j},R) \, \setminus \, \overline{\Omega}] \, \setminus \, B(p_{j},\epsilon_{j}) \subset \He \, \setminus \, \overline{\Omega}$. This has the effect that
\begin{displaymath} \nu_{E_{j}}|_{\partial \Omega} = -\nu, \end{displaymath}
so \eqref{form167}-\eqref{form167a} are consequently replaced by
\begin{displaymath} \lim_{j \to \infty}  \int_{\partial \Omega \cap B(p_{j},R)} \langle \nabla G(p_{j}^{-1} \cdot q),\nu(q) \rangle \, d\sigma(q) = -1 + \mathfrak{r}(\He \, \setminus \, \overline{\Omega},R,p) = -\mathfrak{r}(\Omega,R,p) \end{displaymath}
and
\begin{displaymath} \lim_{j \to \infty}  \int_{\partial \Omega \cap B(p_{j},R)} \langle \nabla G(q^{-1} \cdot p_{j}),\nu(q) \rangle \, d\sigma(q) = \mathfrak{r}(\Omega,R,p). \end{displaymath}
The splitting of $\mathcal{D}f(p_{j})$ to the parts $I_{1}(p_{j})$ and $I_{2}(p_{j})$ is carried out as above. Eventually, 
\begin{displaymath} \lim_{j \to \infty} \mathcal{R}^{t}(f\nu)(p_j) = R^{t}(f\nu)(p) - f(p)(\tfrac{1}{2} - \mathfrak{r}(\Omega,R,p)) - f(p)\mathfrak{r}(\Omega,R,p) = -\tfrac{1}{2}f(p) + R^{t}(f\nu)(p), \end{displaymath} 
and
\begin{displaymath} \lim_{j \to \infty} \mathcal{R}(f\nu)(p_j) = R(f\nu)(p) - f(p)(\mathfrak{r}(\Omega,R,p) - \tfrac{1}{2}) + f(p)\mathfrak{r}(\Omega,R,p) = \tfrac{1}{2}f(p) + R^{t}(f\nu)(p), \end{displaymath} 
as claimed.

We have now proven \eqref{form166}-\eqref{form166a} for $V = f\nu$ with $f \in C^{\infty}_{c}(\He)$. Fortunately the case $V = g\tau$, with $g \in C^{\infty}_{c}(\He)$, is much simpler, and in particular the cases $\{p_{j}\}_{j \in \N} \subset \Omega$ and $\{p_{j}\}_{j \in \N} \subset \He \, \setminus \, \overline{\Omega}$ can be combined. So, as before, fix a weak tangent point $p \in \partial \Omega$, and $\theta > 0$, and pick a sequence $\{p_{j}\}_{j \in \N} \subset [\He \, \setminus \, \partial \Omega] \cap V_{\partial \Omega}(p,\theta)$ which tends to $p$ as $j \to \infty$. As before, take $R > 0$ so large that $\spt(g) \subset B(p_{j},R)$ for all $j \in \N$. Then, 
\begin{align*} \mathcal{R}^{t}(g\tau)(p_{j}) & =  \int_{B(p_{j},R)} \langle \nabla G(p_{j}^{-1} \cdot q),\tau(q) \rangle (g(q) - g(p)) \, d\sigma(q)\\
&\quad + g(p) \cdot  \int_{B(p_{j},R)} \langle \nabla G(p_{j}^{-1} \cdot q),\tau(q) \rangle \, d\sigma(q) =: I_{1}(p_{j}) + I_{2}(p_{j}). \end{align*}
The term $I_{2}(p_{j})$ vanishes identically by Lemma \ref{l:Ttau-pv} (where one may let $\epsilon \to 0$ since $p_{j} \notin \partial \Omega$). For the first term, we use the assumption $g \in C^{\infty}_{c}(\He)$ and dominated convergence exactly as in \eqref{form171} to deduce that
\begin{displaymath} \lim_{j \to \infty} I_{1}(p_{j}) = R^{t}(g\tau)(p) + g(p) \cdot \pv  \int_{B(p,R)} \langle \nabla G(p^{-1} \cdot q),\tau(q) \rangle \, d\sigma(q). \end{displaymath} 
The second term vanishes by another application of Lemma \ref{l:Ttau-pv}, and we have shown, as desired, that
\begin{displaymath} \lim_{j \to \infty} \mathcal{R}^{t}(g\tau)(p_{j}) = R^{t}(g\tau)(p). \end{displaymath}
The same formula holds if $\mathcal{R}^{t}$ and $R^{t}$ are replaced by $\mathcal{R}$ and $R$, respectively. Indeed, after decomposing $\mathcal{R}(g\tau)(p_{j}) = I_{1}(p_{j}) + I_{2}(p_{j})$ as above, one infers that $I_{2}(p_{j}) \equiv 0$ by first applying Lemma \ref{l:right-left}, and only then Lemma \ref{l:Ttau-pv}. By applying the same two lemmas once more, one also sees that $\lim_{j \to \infty} I_{1}(p_{j}) = R(g\tau)(p)$. This completes the proof of the lemma.  \end{proof}

We then record the extension to $V \in L^{p}(\sigma)$.

\begin{proof}[Proof of Theorem \ref{t:rJumps}] Fix a horizontal vector field $V \in L^{p}(\sigma)$, $1 < p < \infty$. Since $\{\nu(p),\tau(p)\}$ is an orthonormal basis for $\{X_{p},Y_{p}\}$ at $\sigma$ a.e. $p \in \partial \Omega$, we may rewrite
\begin{displaymath} V = \langle V,\nu \rangle \nu + \langle V,\tau \rangle \tau =: f\nu + g\tau \end{displaymath}
for some $f,g \in L^{p}(\sigma)$. Choose sequences $\{f_k\}_{k \in \N},\{g_{k}\}_{k \in \N} \subset C^\infty_c(\He)$ converging to $f$ and $g$, respectively, in $L^{p}(\sigma)$ and $\sigma$ a.e. Set $V_{k} := f_{k}\nu + g_{k}\tau$, so $V_{k} \to V$ in $L^{p}(\sigma)$. It follows from the boundedness of $R,R^{t}$ in $L^{p}(\sigma)$ that also $RV_{k} \to RV$ and $R^{t}V_{k} \to R^{t}V$ in $L^{p}(\sigma)$. Passing to further subsequences, we may assume that the convergence also occurs $\sigma$ a.e.

For the argument below, there is no difference between $\mathcal{R}$ and $\mathcal{R}^{t}$, so we only consider $\mathcal{R}$. Also, there is no conceptual difference between interior and exterior approaches, so we only consider the former. Fix $\theta > 0$, and infer from Lemma \ref{jump-K-smooth} that
\begin{displaymath} \lim_{k \to \infty} \lim_{q \to p} \mathcal{R}V_{k}(q) = \lim_{k \to \infty} -\tfrac{1}{2}f_{k}(p) + RV_{k}(p) = -\tfrac{1}{2}f(p) + RV(p) \end{displaymath}
for $\sigma$ a.e. $p \in \partial \Omega$. Here "$p \to q$" is an abbreviation for "$q \to p$ in $\Omega \cap V_{\partial \Omega}(p,\theta)$". Consequently, for any $\lambda > 0$ and $\sigma$ a.e. $p \in \partial \Omega$,
\begin{displaymath} \limsup_{q \to p} |\mathcal{R}V(q) + \tfrac{1}{2}f(p) - RV(p)| > \lambda \quad \Longrightarrow \quad \liminf_{k \to \infty} \mathcal{N}_{\theta}\mathcal{R}[V - V_{k}](p) > \lambda. \end{displaymath}
If the set of points $p \in \partial \Omega$ satisfying the LHS condition is denoted "$B_{\lambda}$", it follows that
\begin{displaymath} \sigma(B_{\lambda}) \leq \lambda^{-p} \liminf_{k \to \infty} \int \mathcal{N}_{\theta}\mathcal{R}[V - V_{k}]^{p} \, d\sigma \lesssim \lambda^{-p} \liminf_{k \to \infty} \|V - V_{k}\|_{L^{p}(\sigma)}^{p} = 0, \end{displaymath}
applying Corollary \ref{K-MaxNT} in the only non-trivial inequality. It follows that $\lim_{q \to p} \mathcal{R}V(q) = \tfrac{1}{2}f(p) - RV(p)$ for $\sigma$ a.e. $p \in \partial \Omega$, as claimed. \end{proof}

As a corollary of Theorem \ref{t:rJumps}, the following non-tangential limits exist for all $f \in L^{p}(\sigma)$, $1 < p < \infty$, and for $\sigma$ a.e. $p \in \partial \Omega$ (when $\Omega \subset \He$ is a flag domain):
\begin{displaymath} Z^{+}\mathcal{S}f(p) := \mathop{\lim_{q \to p}}_{q \in \Omega} Z \mathcal{S}f(q) = \mathop{\lim_{q \to p}}_{q \in \Omega} \mathcal{R}(fZ)(q) \quad \text{and} \quad Z^{-}\mathcal{S}f(p) := \mathop{\lim_{q \to p}}_{q \in \He \, \setminus \, \overline{\Omega}} Z \mathcal{S}f(q), \end{displaymath}
where $Z \in \{X,Y\}$, and
\begin{displaymath} \mathcal{D}^{+}f(p) := \mathop{\lim_{q \to p}}_{q \in \Omega} \mathcal{D}f(q) = \mathop{\lim_{q \to p}}_{q \in \Omega} \mathcal{R}^{t}(f\nu)(q) \quad \text{and} \quad \mathcal{D}^{-}f(p) := \mathop{\lim_{q \to p}}_{q \in \He \, \setminus \, \overline{\Omega}} \mathcal{D}f(q). \end{displaymath}
We define 
\begin{align}\label{nablapm}
\nabla^{\pm}\mathcal{S}f(p) := (X^{\pm}\mathcal{S}f(p),Y^{\pm}\mathcal{S}f(p)).
\end{align} The values of the limits above are easy to extract from Theorem \ref{t:rJumps}, and they can be expressed in terms of the principal values
\begin{displaymath} Df = R^{t}(f\nu) \quad \text{and} \quad D^{t}f = R(fX)\langle \nu,X \rangle + R(fY)\langle \nu,Y \rangle, \end{displaymath}
whose existence occupied Section \ref{s:principal}. We record the formulae for future reference:
\begin{cor}\label{c:allJumps} Let $\Omega \subset \He$ be a flag domain, and let $f \in L^{p}(\sigma)$ for some $1 < p < \infty$. Then, the following relations hold $\sigma$ a.e:
\begin{enumerate}
\item $\mathcal{D}^{\pm}f = (\pm\tfrac{1}{2}I + D)f$,
\item $\nabla_{\nu}^{\pm}\calS f := \langle \nabla^{\pm} \mathcal{S}f,\nu \rangle = (\mp\tfrac{1}{2}I + D^t)f$,
\end{enumerate}
and
\begin{equation}\label{noJump} \nabla_{\tau}Sf := \langle \nabla^{\pm} \mathcal{S}f,\tau \rangle = \pv  \int \langle \nabla G(q^{-1} \cdot),\tau(\cdot) \rangle f(q) \, d\sigma(q). \end{equation}
\end{cor}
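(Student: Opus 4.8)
The statement is a bookkeeping consequence of Theorem~\ref{t:rJumps}; the plan is to rewrite $\mathcal{D}$ and $\nabla\mathcal{S}$ in terms of the Riesz transforms $\mathcal{R},\mathcal{R}^{t}$, apply Theorem~\ref{t:rJumps}, and take suitable linear combinations. First I would record that $\mathcal{D}f = \mathcal{R}^{t}(f\nu)$ (Definition~\ref{boundaryDoubleLayers}), and that differentiating under the integral sign -- legitimate for $q\notin\partial\Omega$, where the kernel is smooth with the decay recorded in Section~\ref{s:operatorList} -- together with the left-invariance of $Z\in\{X,Y\}$, i.e. $Z_{q}[G(w^{-1}\cdot q)] = (ZG)(w^{-1}\cdot q)$, gives $X\mathcal{S}f = \mathcal{R}(fX)$ and $Y\mathcal{S}f = \mathcal{R}(fY)$ on $\He\setminus\partial\Omega$. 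Hence $\nabla^{\pm}\mathcal{S}f(p) = \big(\lim_{q\to p}\mathcal{R}(fX)(q),\,\lim_{q\to p}\mathcal{R}(fY)(q)\big)$, with the limits taken non-tangentially from $\Omega$ (the ``$+$'' case) and from $\He\setminus\overline{\Omega}$ (the ``$-$'' case), existing $\sigma$-a.e. by Theorem~\ref{t:rJumps}. Applying \eqref{form166a} to $V = f\nu$, i.e. with $g\equiv 0$ in the decomposition $V = f\nu+g\tau$, immediately gives $\mathcal{D}^{\pm}f = \pm\tfrac12 f + R^{t}(f\nu) = (\pm\tfrac12 I + D)f$, which is item (1).

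For items (2) and (3) I would decompose $fX$ and $fY$ in the $\sigma$-a.e.\ orthonormal horizontal frame $\{\nu,\tau\}$. From $\nu = n_{1}X + n_{2}Y$ (Proposition~\ref{prop3}) and $\tau = n_{2}X - n_{1}Y$ (Definition~\ref{HTangent}) one reads off $\langle X,\nu\rangle = n_{1}$, $\langle X,\tau\rangle = n_{2}$, $\langle Y,\nu\rangle = n_{2}$, $\langle Y,\tau\rangle = -n_{1}$, hence $fX = (fn_{1})\nu + (fn_{2})\tau$ and $fY = (fn_{2})\nu - (fn_{1})\tau$. Since in \eqref{form166} the jump is carried by the $\nu$-coefficient of the vector field, applying it to $V = fX$ and to $V = fY$ yields, $\sigma$-a.e.,
\[ X^{\pm}\mathcal{S}f = \mp\tfrac12 fn_{1} + R(fX), \qquad Y^{\pm}\mathcal{S}f = \mp\tfrac12 fn_{2} + R(fY). \]

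Now I combine these at a point $p\in\partial\Omega$. Because $\langle\nabla^{\pm}\mathcal{S}f,\nu\rangle = n_{1}X^{\pm}\mathcal{S}f + n_{2}Y^{\pm}\mathcal{S}f$, the jump terms add to $\mp\tfrac12 f(n_{1}^{2}+n_{2}^{2}) = \mp\tfrac12 f$ by \eqref{flagNormal}, and the remaining terms add to $\langle R(fX)X + R(fY)Y,\nu\rangle = D^{t}f$, which gives item (2). Because $\langle\nabla^{\pm}\mathcal{S}f,\tau\rangle = n_{2}X^{\pm}\mathcal{S}f - n_{1}Y^{\pm}\mathcal{S}f$, the jump terms cancel identically ($\mp\tfrac12 fn_{1}n_{2} \pm \tfrac12 fn_{1}n_{2} = 0$), so there is no jump, and the remaining terms add to $\langle R(fX)X + R(fY)Y,\tau\rangle$, which is exactly \eqref{noJump}.

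The one step that is not purely formal, and which I would treat with care, is the recombination of the two \emph{separate} principal values $R(fX)(p)$ and $R(fY)(p)$ into a single one: for $\sigma$-a.e.\ fixed $p$ both exist by Theorem~\ref{t:pValues} (since $fX, fY \in L^{p}(\sigma)$) and $n_{1}(p),n_{2}(p)$ are then fixed scalars, so the $\epsilon$-truncated integrals may be combined and the limit passed, giving
\[ n_{2}(p)R(fX)(p) - n_{1}(p)R(fY)(p) = \pv\int \langle \nabla G(q^{-1}\cdot p),\tau(p)\rangle f(q)\, d\sigma(q), \]
and the analogous identity with $\nu(p)$ in place of $\tau(p)$ for $D^{t}f$. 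It is also worth keeping in mind that in $D^{t}f$ and in \eqref{noJump} the normal and tangent are evaluated at the \emph{outer} point $p$, not at the integration variable $q$: decomposing $fX, fY$ in the frame at $q$ and then pairing the results against $n_{i}(p)$ is precisely what makes the formulae come out clean, and is why one must feed the $\nu$--$\tau$ decomposition into \eqref{form166} rather than try to read the jump off the kernel $\langle\nabla G(q^{-1}\cdot p),\tau(p)\rangle$ directly.
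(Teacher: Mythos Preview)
Your proof is correct and follows essentially the same approach as the paper: both reduce (1) to \eqref{form166a} with $V=f\nu$, and for (2)--\eqref{noJump} both decompose $fX$ and $fY$ in the $\{\nu,\tau\}$-frame, apply \eqref{form166} to obtain $Z^{\pm}\mathcal{S}f = \mp\tfrac{1}{2}\langle\nu,Z\rangle f + R(fZ)$, and then combine linearly. The only cosmetic difference is that you write everything in the explicit coordinates $n_{1},n_{2}$ whereas the paper uses the inner-product notation $\langle\nu,X\rangle$, $\langle\tau,Y\rangle$, etc.; your extra remark on recombining the separate principal values $R(fX)(p)$ and $R(fY)(p)$ into a single one is a nice clarification that the paper leaves implicit.
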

Notably, even though $\nabla^{+}\mathcal{S}f(p) \neq \nabla^{-}\mathcal{S}f(p)$ in general, \eqref{noJump} states that the dot products with $\tau$ have the same value $\sigma$ a.e.

\begin{proof}[Proof of Corollary \ref{c:allJumps}] Part (1) is a restatement of \eqref{form166a}, applied to the horizontal vector field $V = f\nu \in L^{p}(\sigma)$. To prove (2) and \eqref{noJump}, we first decompose $fZ$ in the $\{\nu,\tau\}$-frame,
\begin{displaymath} Z \mathcal{S}f(q) = \mathcal{R}(fZ)(q) = \mathcal{R}(\langle \nu,Z \rangle f\nu + \langle \tau,Z \rangle f\tau)(q), \qquad Z \in \{X,Y\}, \, q \in \He \, \setminus \, \partial \Omega, \end{displaymath}
and then use \eqref{form166} (with "$f =\langle \nu,Z \rangle f$" and "$g = \langle \tau,Z \rangle f$") to deduce that
\begin{align*} Z^{\pm}\mathcal{S}f & = \mp \tfrac{1}{2}\langle \nu,Z \rangle f + R(fZ)  \end{align*} 
$\sigma$ a.e. on $\partial \Omega$. Consequently, writing $\nu = \langle \nu,X \rangle X + \langle \nu,Y \rangle Y$, we have 
\begin{align*} \langle \nabla^{\pm} \mathcal{S}f,\nu \rangle & = (X^{\pm}\mathcal{S}f)\langle \nu, X\rangle + (Y^{\pm}\mathcal{S}f)\langle \nu,Y \rangle\\
& = \mp \tfrac{1}{2}f \left[ \langle \nu,X \rangle^{2} + \langle \nu,Y \rangle^{2} \right] + \left[ R(fX)\langle \nu,X \rangle + R(fY)\langle \nu,Y \rangle \right] \\
& = \mp \tfrac{1}{2}f + D^{t}f.   \end{align*} 
This proves (2). Finally, \eqref{noJump} follows by a similar computation, decomposing instead $\tau = \langle \tau,X \rangle X + \langle \tau,Y \rangle Y$, and noting $\langle \nu,X \rangle \langle \tau,X \rangle + \langle \nu,Y \rangle \langle \tau,Y \rangle = \langle \nu,\tau \rangle \equiv 0$. Therefore,
\begin{displaymath} \langle \nabla^{\pm}\mathcal{S}f,\tau \rangle = R(fX)\langle \tau,X \rangle + R(fY)\langle \tau,Y \rangle, \end{displaymath}
which is another way of expressing the RHS of \eqref{noJump}. \end{proof}

\section{The divergence theorem and some corollaries}\label{s:div}

This section contains a few auxiliary results which easily follow from the horizontal divergence theorem in \cite{FSSC}. We recall that a measurable set $E \subset \He$ is a \emph{$\He$-Caccioppoli set} if the horizontal perimeter $\sigma = |\partial E|_{\He}$ is a locally finite measure.
\begin{thm}[Divergence theorem]\label{t:div} Let $E \subset \He$ be a $\He$-Caccioppoli set and let $Z \in C^{1}(\bar{E},H\He)$ be a horizontal vector field satisfying 
\begin{enumerate}
\item $\mathrm{div}^{\flat} Z \in L^{1}(E)$,
\item $Z \in L^{1}(\sigma) \cap L^{1}(E)$.
\end{enumerate}
Then the usual divergence theorem holds:
\begin{equation}\label{divThm} \int_{E} \mathrm{div}^{\flat} Z(p) \, dp = -\int \langle Z,\nu \rangle \, d\sigma. \end{equation}
\end{thm}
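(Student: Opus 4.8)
$\textbf{Proof plan for the divergence theorem (Theorem \ref{t:div}).}$

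The plan is to reduce the claimed identity \eqref{divThm} to the distributional divergence theorem already available in \cite{FSSC}, which is stated there for compactly supported $C^{1}$ test vector fields, and then remove the compact support assumption by a truncation/exhaustion argument using the integrability hypotheses (1) and (2). First I would recall the setup: since $E$ is a $\He$-Caccioppoli set, the horizontal perimeter $\sigma = |\partial E|_{\He}$ is a Radon measure, and there is a $\sigma$-measurable horizontal unit vector field $\nu$ (the inward horizontal normal) such that $\int_{E} \mathrm{div}^{\flat} W \, dp = -\int \langle W, \nu \rangle \, d\sigma$ for every $W \in C^{1}_{c}(\He, H\He)$. This is the only external input needed.

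The key step is the truncation. Fix a radial cutoff $\chi \in C^{\infty}_{c}(\He)$ with $\chi \equiv 1$ on $B(0,1)$, $\spt \chi \subset B(0,2)$, and $0 \leq \chi \leq 1$, and set $\chi_{k}(p) := \chi(\delta_{1/k}(p))$, so that $\chi_{k} \equiv 1$ on $B(0,k)$, $\spt \chi_{k} \subset B(0,2k)$, and $|\nabla \chi_{k}| \lesssim 1/k$ pointwise (the horizontal gradient of a Koranyi-dilation-rescaled bump decays like $1/k$). Apply the compactly supported divergence theorem to $W_{k} := \chi_{k} Z \in C^{1}_{c}(\He, H\He)$ (here one uses $Z \in C^{1}(\bar{E}, H\He)$; strictly one first extends $Z$ to a $C^{1}$ horizontal vector field on a neighbourhood of $\bar E$, or argues directly with the one-sided version of the divergence theorem in \cite{FSSC}, which only requires $C^{1}$ up to the boundary). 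By the Leibniz rule $\mathrm{div}^{\flat}(\chi_{k} Z) = \chi_{k} \, \mathrm{div}^{\flat} Z + \langle \nabla \chi_{k}, Z \rangle$, so
\begin{equation}\label{divtrunc} \int_{E} \chi_{k} \, \mathrm{div}^{\flat} Z \, dp + \int_{E} \langle \nabla \chi_{k}, Z \rangle \, dp = -\int \chi_{k} \langle Z, \nu \rangle \, d\sigma. \end{equation}

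It remains to pass to the limit $k \to \infty$ in each of the three terms of \eqref{divtrunc}. For the first term, $\chi_{k} \, \mathrm{div}^{\flat} Z \to \mathrm{div}^{\flat} Z$ pointwise with $|\chi_{k} \mathrm{div}^{\flat} Z| \leq |\mathrm{div}^{\flat} Z| \in L^{1}(E)$ by hypothesis (1), so dominated convergence gives convergence to $\int_{E} \mathrm{div}^{\flat} Z \, dp$. Likewise for the right-hand side, $\chi_{k}\langle Z,\nu\rangle \to \langle Z, \nu\rangle$ pointwise $\sigma$-a.e. and is dominated by $|Z| \in L^{1}(\sigma)$ by hypothesis (2), giving $-\int \langle Z,\nu\rangle \, d\sigma$. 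The main (and only) obstacle is the middle "error" term $\int_{E} \langle \nabla \chi_{k}, Z \rangle \, dp$: I would bound it by $|\int_{E} \langle \nabla \chi_{k}, Z\rangle \, dp| \lesssim \tfrac{1}{k} \int_{E \cap B(0,2k) \setminus B(0,k)} |Z| \, dp \leq \tfrac{1}{k}\|Z\|_{L^{1}(E)} \to 0$, since $\nabla \chi_{k}$ is supported in the annulus $B(0,2k)\setminus B(0,k)$ with $|\nabla \chi_{k}| \lesssim 1/k$, and $Z \in L^{1}(E)$ by hypothesis (2). Letting $k \to \infty$ in \eqref{divtrunc} then yields \eqref{divThm}. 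The one technical point worth care is justifying the application of the compactly supported divergence theorem to $W_{k}$, which is only $C^{1}$ on $\bar E$ rather than on all of $\He$; this is handled either by the version of the divergence theorem in \cite{FSSC} that already allows vector fields merely $C^{1}$ up to the boundary, or by a routine extension argument, and in either case the estimate on the error term is unaffected.
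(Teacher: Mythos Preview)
Your proposal is correct and follows essentially the same approach as the paper: the paper also uses a dilated bump $\psi_{R} = \psi \circ \delta_{1/R}$, applies the FSSC divergence theorem to the compactly supported field $\psi_{R}\varphi Z$, and passes to the limit by dominated convergence together with the error bound $|\int_E \langle \nabla \psi_R, Z\rangle| \lesssim R^{-1}\|Z\|_{L^1(E)}$. The only cosmetic difference is that the paper makes the extension step explicit by fixing, via a partition of unity, a smooth $\varphi$ with $\varphi \equiv 1$ on a neighbourhood of $\bar E$ and $\varphi Z \in C^{1}(\He)$, which is precisely the ``routine extension argument'' you allude to.
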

\begin{remark} (i) The notation $Z \in C^{1}(\bar{E})$ means that for every $p \in \bar{E}$, there is an open neighbourhood $U \ni p$ such that $Z \in C^{1}(U)$. This implies, by a standard partition-of-unity argument, the existence of a function $\varphi \in C^{\infty}(\He)$ with $\|\varphi\|_{L^{\infty}} \leq 1$ such that $\varphi \equiv 1$ on a neighbourhood of $\bar{E}$, and $\varphi Z \in C^{1}(\He)$.

(ii) The proof shows that the condition $Z \in L^{1}(E)$ could be replaced by the somewhat weaker hypothesis $R^{-1} \cdot \|Z\|_{L^{1}(E \cap [B(2R) \, \setminus B(R)])} \to 0$ as $R \to \infty$.
\end{remark}

\begin{proof}[Proof of Theorem \ref{t:div}] We just check that the obvious limiting procedure works. First, let $\varphi \in C^{\infty}(\He)$ be a function as mentioned in the previous remark. Then, let $\psi_{R} := \psi \circ \delta_{1/R}$ be a smooth cut-off at scale $R$, with $\mathbf{1}_{B(R)} \leq \psi_{R} \leq \mathbf{1}_{B(2R)}$. Note that 
\begin{equation}\label{form173} |\nabla \psi_{R}| \lesssim R^{-1} \cdot \mathbf{1}_{B(2R) \, \setminus \, B(R)}. \end{equation}
Then $\psi_{R}\varphi Z \in C_{c}^{1}(\He)$, so by definition of $\sigma$ and $\nu$ (see \cite[p. 482]{FSSC}), we have
\begin{displaymath} \int_{E} \mathrm{div}^{\flat} (\psi_{R}\varphi Z)(p) \, dp = -\int \psi_{R}\varphi \langle Z,\nu \rangle \, d\sigma. \end{displaymath}
We can now drop "$\varphi$" from both sides of the equation above, since $\varphi \equiv 1$ on a neighbourhood of $\bar{E} \supset \spt \sigma$. Since $Z \in L^{1}(\sigma)$, the RHS converges to the RHS of \eqref{divThm} as $R \to \infty$. For the LHS, we note that $\mathrm{div}^{\flat}(\psi_{R}Z) = \langle \nabla \psi_{R},Z \rangle + \psi_{R} \cdot \mathrm{div}^{\flat} Z$. The $E$-integral of the second term converges to the LHS of \eqref{divThm}, since $\mathrm{div}^{\flat} E \in L^{1}(E)$, whereas the integral of the first term satisfies
\begin{displaymath} \left| \int \langle \nabla_{\He}\psi_{R}(p),Z(p) \rangle \, dp \right| \lesssim R^{-1} \cdot \|Z\|_{L^{1}(E \cap B(2R) \, \setminus \, B(R))} \to 0 \end{displaymath}
by \eqref{form173}. This completes the proof.  \end{proof}

We record three corollaries. The first one is an integration-by-parts formula:
\begin{proposition} Let $E \subset \He$ be a $\He$-Caccioppoli set, write $\sigma := |\partial E|_{\He}$, let $Z \in \{X,Y\}$, and let $\varphi,\psi \in C^{1}(\bar{E})$ such that $(\varphi \psi)Z$ satisfies the hypotheses of Theorem \ref{t:div}. Then,
\begin{equation}\label{eq:iParts} \int_{E} Z(\varphi) \cdot \psi \, dp = -\int_{E} \varphi \cdot Z(\psi) - \int (\varphi \psi) \langle Z,\nu \rangle \, d\sigma. \end{equation}
\end{proposition}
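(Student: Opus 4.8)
The plan is to derive the integration-by-parts formula \eqref{eq:iParts} by applying the divergence theorem, Theorem \ref{t:div}, to the single horizontal vector field $W := (\varphi\psi)Z$, and then expanding its horizontal divergence using the product rule. This is the natural approach because $Z \in \{X,Y\}$ has constant coefficients in the $\partial_x,\partial_y,\partial_t$ basis (more precisely, $Z$ is one of the defining left-invariant vector fields), so $\mathrm{div}^{\flat}$ of a scalar multiple of $Z$ is just the derivative of that scalar along $Z$.

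First I would make the product-rule computation explicit. Writing $Z = X = \partial_x - \tfrac{y}{2}\partial_t$ (the case $Z = Y$ is identical), one has $W = (\varphi\psi)X$, which in the notation of Section \ref{s:heisenberg} is the horizontal vector field with coefficients $(a,b) = (\varphi\psi, 0)$. By definition $\mathrm{div}^{\flat} W = X(\varphi\psi) + Y(0) = X(\varphi\psi)$, and since $X$ is a first-order differential operator it obeys the Leibniz rule:
\begin{equation}\label{eq:leibnizW} \mathrm{div}^{\flat} W = X(\varphi\psi) = X(\varphi)\psi + \varphi X(\psi) = Z(\varphi)\cdot\psi + \varphi\cdot Z(\psi). \end{equation}
In general, for $Z \in \{X,Y\}$ one records $\mathrm{div}^{\flat}((\varphi\psi)Z) = Z(\varphi)\psi + \varphi Z(\psi)$ in the same way.

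Next I would check the hypotheses of Theorem \ref{t:div} for $W = (\varphi\psi)Z$ and apply it. By assumption $\varphi,\psi \in C^{1}(\bar{E})$, hence $W \in C^{1}(\bar{E},H\He)$; and the assumption that $(\varphi\psi)Z$ "satisfies the hypotheses of Theorem \ref{t:div}" is precisely the statement that $\mathrm{div}^{\flat} W \in L^{1}(E)$ and $W \in L^{1}(\sigma)\cap L^{1}(E)$. Therefore \eqref{divThm} gives
\begin{displaymath} \int_{E}\mathrm{div}^{\flat}\big((\varphi\psi)Z\big)\,dp = -\int \langle (\varphi\psi)Z,\nu\rangle\,d\sigma = -\int (\varphi\psi)\langle Z,\nu\rangle\,d\sigma. \end{displaymath}
Substituting \eqref{eq:leibnizW} into the left-hand side and rearranging yields exactly \eqref{eq:iParts}:
\begin{displaymath} \int_{E} Z(\varphi)\cdot\psi\,dp + \int_{E}\varphi\cdot Z(\psi)\,dp = -\int (\varphi\psi)\langle Z,\nu\rangle\,d\sigma. \end{displaymath}
The only minor care needed is that each of the two terms $Z(\varphi)\psi$ and $\varphi Z(\psi)$ is separately in $L^{1}(E)$, so that the sum can be split; but this is automatic once one observes that the finiteness of $\int_E |\mathrm{div}^{\flat} W|$ together with, say, splitting on the set where the two terms have the same sign gives integrability of each — alternatively, in the applications both terms will visibly be in $L^{1}(E)$, so I would simply note this and move on. There is no real obstacle here: the statement is essentially a bookkeeping corollary of Theorem \ref{t:div} combined with the Leibniz rule for the first-order operators $X$ and $Y$.
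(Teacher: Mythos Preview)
Your proof is correct and follows exactly the same approach as the paper: apply Theorem \ref{t:div} to the vector field $(\varphi\psi)Z$, use the Leibniz rule to expand $\mathrm{div}^{\flat}[(\varphi\psi)Z] = Z(\varphi)\psi + \varphi Z(\psi)$, and rearrange. The paper's proof is just a more compressed version of what you wrote, and it does not address the separate $L^1$-integrability of the two terms either.
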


\begin{proof} Note that
\begin{displaymath} \mathrm{div}^{\flat} [(\varphi \psi)Z] = Z(\varphi \psi) = Z(\varphi) \cdot \psi + \varphi \cdot Z(\psi).  \end{displaymath}
Consequently, by Theorem \ref{t:div}, we have
\begin{displaymath} \int (\varphi \psi) \langle Z,\nu \rangle \, d\sigma = - \int_{E} Z(\varphi) \cdot \psi \, dp  - \int_{E} \varphi \cdot Z(\psi) \, dp. \end{displaymath}
Re-arrange terms to complete the proof. \end{proof}

The second corollary is an analogue of the \emph{Rellich identity}:
 
 \begin{proposition}\label{p:rellich} Let $E \subset \He$ be a $\mathbb{H}$-Caccioppoli set, write $\sigma := |\partial E|_{\He}$, and assume that $u \in C^{2}(\bar{E})$ satisfies the following properties:
 \begin{enumerate}
 \item $\bigtriangleup^{\flat} u(p) = 0$ for all $p \in E$,
 \item $\nabla u \in L^{2}(\sigma) \cap L^{2}(E)$,
 \item $(Z_{1}u)(Z_{2}Z_{3}u) \in L^{1}(E)$ for all $Z_{1},Z_{2},Z_{3} \in \{X,Y\}$.
 \end{enumerate}
 Then,
\begin{equation}\label{rellich} \int |\nabla u|^{2} \langle X,\nu \rangle \, d\sigma = 2\int  Xu\langle \nabla  u,\nu \rangle \, d\sigma - 2 \int_{E} (Yu)(p)\, (Tu)(p) \, dp. \end{equation}
\end{proposition}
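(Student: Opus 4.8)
The plan is to derive the Rellich identity by applying the divergence theorem of Theorem~\ref{t:div} to a cleverly chosen horizontal vector field, in the spirit of the classical Rellich--Pohozaev argument. The right candidate here is the horizontal vector field
\begin{displaymath} Z := |\nabla u|^{2} X - 2(Xu)\nabla u = \left(|\nabla u|^{2} - 2(Xu)^{2}\right)X - 2(Xu)(Yu)Y, \end{displaymath}
so that $\langle Z,\nu\rangle = |\nabla u|^{2}\langle X,\nu\rangle - 2(Xu)\langle \nabla u,\nu\rangle$ along $\partial E$. Thus $-\int\langle Z,\nu\rangle\,d\sigma$ is precisely the difference of the two boundary integrals in \eqref{rellich}, and the identity will follow once we compute $\int_{E}\mathrm{div}^{\flat}Z$ and show it equals $2\int_{E}(Yu)(Tu)\,dp$.

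The main computation is the pointwise identity $\mathrm{div}^{\flat}Z = 2(Yu)(Tu)$ on $E$, using only $\bigtriangleup^{\flat}u = X^{2}u + Y^{2}u = 0$ and the commutator relation $[X,Y] = XY - YX = T$ (which holds for the left-invariant fields in \eqref{XY}). Expanding,
\begin{displaymath} \mathrm{div}^{\flat}Z = X\!\left(|\nabla u|^{2}\right) - 2X\!\left((Xu)\,Xu\right) - 2Y\!\left((Xu)\,Yu\right). \end{displaymath}
Since $X(|\nabla u|^{2}) = 2(Xu)(X^{2}u) + 2(Yu)(XYu)$ and $2X((Xu)Xu) = 4(Xu)(X^{2}u)$ and $2Y((Xu)Yu) = 2(YXu)(Yu) + 2(Xu)(Y^{2}u)$, adding these up gives
\begin{displaymath} \mathrm{div}^{\flat}Z = 2(Yu)(XYu) - 2(Xu)(X^{2}u) - 2(YXu)(Yu) - 2(Xu)(Y^{2}u) = 2(Yu)(XYu - YXu) - 2(Xu)\bigtriangleup^{\flat}u. \end{displaymath}
The last term vanishes by harmonicity (1), and $XYu - YXu = [X,Y]u = Tu$, so $\mathrm{div}^{\flat}Z = 2(Yu)(Tu)$, as wanted.

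It then remains to verify that $Z$ satisfies the hypotheses of Theorem~\ref{t:div}: condition (2) of the theorem, $Z \in L^{1}(\sigma)\cap L^{1}(E)$, follows from $Z$ being quadratic in $\nabla u$ together with the assumption $\nabla u \in L^{2}(\sigma)\cap L^{2}(E)$ of the proposition (and Cauchy--Schwarz); condition (1), $\mathrm{div}^{\flat}Z \in L^{1}(E)$, is exactly the identity just computed together with hypothesis (3) applied to $Z_{1} = Y$, $Z_{2}Z_{3} \in \{XY, YX\}$ (note $Tu = XYu - YXu$ is a combination of the allowed terms). The regularity $Z \in C^{1}(\bar E, H\He)$ is immediate from $u \in C^{2}(\bar E)$. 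Invoking Theorem~\ref{t:div} yields
\begin{displaymath} \int_{E} 2(Yu)(Tu)\,dp = \int_{E}\mathrm{div}^{\flat}Z\,dp = -\int\langle Z,\nu\rangle\,d\sigma = -\int|\nabla u|^{2}\langle X,\nu\rangle\,d\sigma + 2\int (Xu)\langle\nabla u,\nu\rangle\,d\sigma, \end{displaymath}
which is \eqref{rellich} after rearranging. The only genuinely delicate point is bookkeeping the integrability conditions so that Theorem~\ref{t:div} applies with no extra decay assumptions — this is why hypotheses (2) and (3) of the proposition are phrased the way they are, and I expect that matching them to conditions (1)--(2) of Theorem~\ref{t:div} is the one step requiring care rather than the (routine) commutator computation.
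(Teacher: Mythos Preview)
Your proof is correct and follows essentially the same approach as the paper: both reduce to applying Theorem~\ref{t:div} after the commutator computation showing that the relevant horizontal divergence equals $2(Yu)(Tu) - 2(Xu)\bigtriangleup^{\flat}u$. The only cosmetic difference is that you package the two pieces into a single vector field $Z = |\nabla u|^{2}X - 2(Xu)\nabla u$ and apply the divergence theorem once, whereas the paper derives the pointwise identity $-\mathrm{div}^{\flat}(|\nabla u|^{2}X) = -2\,\mathrm{div}^{\flat}(Xu\,\nabla u) - 2(Yu)(Tu)$ and applies Theorem~\ref{t:div} to each of $|\nabla u|^{2}X$ and $(Xu)\nabla u$ separately; the verification of the integrability hypotheses is identical.
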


\begin{proof} We start with the following computation which works for $\varphi \in C^{2}(\R^{3})$:
\begin{align*} \tfrac{1}{2}X|\nabla \varphi|^{2} & = X\varphi X^{2}\varphi + Y\varphi XY\varphi\\
& = (2X\varphi X^{2}\varphi + Y\varphi YX\varphi + X\varphi Y^{2}\varphi)\\
&\quad + (Y\varphi XY\varphi - Y\varphi YX\varphi - X\varphi X^{2}\varphi - X\varphi Y^{2}\varphi)\\
& = \mathrm{div}^{\flat}(X\varphi \nabla \varphi) + Y\varphi[X,Y]\varphi - X\varphi\bigtriangleup^{\flat}\varphi\\
& = \mathrm{div}^{\flat}(X\varphi \nabla \varphi) + Y\varphi T\varphi - X\varphi \bigtriangleup^{\flat}\varphi. \end{align*} 
We then specialise to $\varphi = u \in C^{2}(\bar{E})$. By the $\bigtriangleup^{\flat}$-harmonicity assumption (1), and noting that $\mathrm{div}^{\flat}(\varphi X) = X\varphi$, the previous computation shows that
\begin{equation}\label{form7} -\mathrm{div}^{\flat}(|\nabla u|^{2}X)(p) = -2 \cdot \mathrm{div}^{\flat}(Xu\nabla u)(p) - 2Yu(p)Tu(p), \qquad p \in E. \end{equation}
We integrate this equation over $p \in E$, and then apply the divergence theorem, Theorem \ref{t:div}, to both vector fields $Z = |\nabla u|^{2}X$ and $Z = Xu\nabla u$. We note that
\begin{displaymath} \mathrm{div}^{\flat}(Xu\nabla u) = \langle \nabla Xu,\nabla u \rangle \in L^{1}(E) \quad \text{and} \quad YuTu = Yu[XY - YX]u \in L^{1}(E) \end{displaymath}
by the assumption (3), which implies the first condition ($\mathrm{div}^{\flat}Z \in L^{1}(E)$) of Theorem \ref{t:div}. The second condition ($Z \in L^{1}(\sigma) \in L^{1}(E)$) follows immediately from the present assumption (2). Now \eqref{rellich} is a consequence of \eqref{form7} and \eqref{divThm}. 
\end{proof}

The third corollary is a "Fubini theorem" for Lebesgue measure on the super- or sub-graph of an intrinsic Lipschitz function, recall the definition from Section \ref{s:ILG}. For a much more general result, but with slightly different hypotheses, see Montefalcone's work \cite[Theorem 2.2]{MR2165404}.

\begin{lemma} Let $\W := \{(0,y,t) : y,t \in \R\}$ and $\V := \{(x,0,0) : x \in \R\}$. Let $\phi \colon \W \to \V$ be an intrinsic Lipschitz function with intrinsic graph $\Gamma = \{w \cdot \phi(w) : w \in \W\} \subset \He$. Write $\Gamma^{+} := \{w \cdot v : v > \phi(v)\}$, and let $\sigma := |\partial \Gamma^{+}|_{\He}$. Let $g \colon \He \to \R$ be non-negative and $\mathcal{L}^{3}$ measurable, or $g \in C^{1}_{c}(\He)$. Then, $r \mapsto g(w \cdot (r,0,0))$ is $\mathcal{L}^{1}|_{[0,\infty]}$ measurable for $\sigma$ a.e. $w \in \Gamma$, and
\begin{displaymath} w \mapsto \int_{0}^{\infty} g(w \cdot (r,0,0)) \, dr \end{displaymath}
is $\sigma$ measurable, and
\begin{equation}\label{eq:Fubini} \int_{\Gamma^{+}} g(p) \, dp = \int_{\Gamma} \int_{0}^{\infty} g(w \cdot (r,0,0)) \, dr \, \langle X,\nu(w) \rangle \, d\sigma(w). \end{equation}
Here $\nu$ is the inward-pointing horizontal normal of $\Gamma^{+}$. \end{lemma}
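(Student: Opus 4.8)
The plan is to derive \eqref{eq:Fubini} from the horizontal divergence theorem, Theorem \ref{t:div}, applied on $E = \Gamma^{+}$ to a horizontal vector field of the form $hX$, where $h$ is an $X$-antiderivative of $g$ chosen so that its boundary values on $\Gamma$ reproduce the inner integral in \eqref{eq:Fubini}.

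\textbf{Step 1: the case $g \in C^{1}_{c}(\He)$.} Since $(0,s,u) \cdot (r,0,0) = (r,s,u - \tfrac{sr}{2})$, the curve $r \mapsto p \cdot (r,0,0)$ is the integral curve of $X$ through $p$ (left-invariance of $X$). I would set
\begin{displaymath} h(p) := -\int_{0}^{\infty} g(p \cdot (r,0,0)) \, dr, \qquad p \in \He, \end{displaymath}
and check, via $h(p \cdot (s,0,0)) = -\int_{s}^{\infty} g(p \cdot (u,0,0)) \, du$ and differentiation at $s = 0$, that $Xh = g$ on $\He$, hence $\mathrm{div}^{\flat}(hX) = g$; also $h \in C^{1}(\He)$ because for $p$ in a fixed compact set the integrand is supported in a fixed bounded $r$-interval, so one may differentiate under the integral. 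The geometric point to verify is that $\spt h \cap \overline{\Gamma^{+}}$ is bounded. Writing $w(\cdot) \colon \He \to \W$ for the projection along $\V$-cosets (explicitly $w(x,y,t) = (0,y,t + \tfrac{xy}{2})$), one has $w(p) = w(p \cdot (r,0,0))$, so $h(p) \neq 0$ forces $w(p) \in w(\spt g)$, a compact subset of $\W$; and if moreover $p \in \overline{\Gamma^{+}} = \{w \cdot v : v \geq \phi(w)\}$, then on the $X$-flowline through $p$ the point $p$ lies between $w(p) \cdot \phi(w(p)) \in \Gamma$ and the point of $\spt g$ on that flowline, which (by continuity of $\phi$ and compactness of $\spt g$, $w(\spt g)$) confines $p$ to a compact set. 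Consequently $hX \in C^{1}(\overline{\Gamma^{+}},H\He)$, $hX \in L^{1}(\sigma) \cap L^{1}(\Gamma^{+})$, and $\mathrm{div}^{\flat}(hX) = g \in L^{1}(\Gamma^{+})$, so Theorem \ref{t:div} applies (using that $\Gamma^{+}$ is a $\He$-Caccioppoli set, being one side of an intrinsic Lipschitz graph) and gives
\begin{displaymath} \int_{\Gamma^{+}} g \, dp = \int_{\Gamma^{+}} \mathrm{div}^{\flat}(hX) \, dp = -\int h \langle X,\nu \rangle \, d\sigma = \int_{\Gamma} \Big(\int_{0}^{\infty} g(w \cdot (r,0,0)) \, dr\Big) \langle X,\nu(w) \rangle \, d\sigma(w), \end{displaymath}
which is \eqref{eq:Fubini} in this case.

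\textbf{Step 2: the general case.} Next I would observe that $\Psi \colon \Gamma \times (0,\infty) \to \Gamma^{+}$, $\Psi(w,r) := w \cdot (r,0,0)$, is a homeomorphism: it is bijective since each $p \in \Gamma^{+}$ has a unique decomposition $p = w_{0} \cdot v_{0}$ with $v_{0} > \phi(w_{0})$ (identifying $\V \cong \R$), whence $p = \Psi(w_{0} \cdot \phi(w_{0}), v_{0} - \phi(w_{0}))$, and bi-continuity follows from continuity of the $\W \times \V$-decomposition and of $\phi$. Also $\langle X,\nu \rangle > 0$ $\sigma$-a.e.\ on $\Gamma$: an intrinsic Lipschitz graph with axis $\V$ has no characteristic points and the $X$-flow crosses $\Gamma$ transversally from $\Gamma^{-}$ into $\Gamma^{+}$ (for flag domains this is the explicit formula $\langle X,\nu \rangle = (1 + A'(y)^{2})^{-1/2}$ from Proposition \ref{prop3}). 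Hence $m := \Psi_{\#}\big((\langle X,\nu \rangle \, d\sigma) \otimes \mathcal{L}^{1}|_{(0,\infty)}\big)$ is a positive, locally finite Borel measure on $\Gamma^{+}$, and for Borel $g \geq 0$ Tonelli's theorem yields both the asserted measurability statements and $\int_{\Gamma^{+}} g \, dm = \int_{\Gamma} \int_{0}^{\infty} g(w \cdot (r,0,0)) \, dr \, \langle X,\nu(w) \rangle \, d\sigma(w)$. By Step 1, $\int g \, d\mathcal{L}^{3}|_{\Gamma^{+}} = \int g \, dm$ for every $g \in C^{1}_{c}(\He)$, so $\mathcal{L}^{3}|_{\Gamma^{+}} = m$; this proves \eqref{eq:Fubini} for Borel $g \geq 0$, and the passage to a general non-negative $\mathcal{L}^{3}$-measurable $g$ is routine (modify on an $\mathcal{L}^{3}$-null set, whose $\Psi$-preimage is $(\sigma \otimes \mathcal{L}^{1})$-null since $\mathcal{L}^{3}|_{\Gamma^{+}} = m$ and $\langle X,\nu \rangle > 0$ $\sigma$-a.e.).

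\textbf{Expected main obstacle.} The only genuinely non-trivial step is the construction of $h$ and the verification of the integrability hypotheses of Theorem \ref{t:div}, in particular the boundedness of $\spt h \cap \overline{\Gamma^{+}}$ — this is exactly where the one-sided intrinsic-graph structure of $\Gamma^{+}$ is used. The positivity $\langle X,\nu \rangle > 0$ and the measure-theoretic bookkeeping in Step 2 are standard.
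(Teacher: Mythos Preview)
Your proof is correct and follows essentially the same approach as the paper: in Step~1 you build the same $X$-antiderivative (the paper's $G$ is your $-h$) and apply Theorem~\ref{t:div} identically, and in Step~2 both arguments hinge on the positivity $\langle X,\nu\rangle>0$ $\sigma$-a.e., for which the paper cites \cite[Theorem 1.6 and Corollary 4.2]{MR3168633} rather than arguing informally. Your Step~2 packaging via the pushforward measure $m=\Psi_{\#}((\langle X,\nu\rangle\,d\sigma)\otimes\mathcal{L}^{1})$ is a slightly slicker reorganisation of the paper's explicit monotone-class climb (open sets $\to$ $G_{\delta}$ $\to$ null sets $\to$ measurable), but the content is the same.
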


\begin{proof} We start by proving \eqref{eq:Fubini} for $g \in C^{1}_{c}(\He)$, in which case the measurability statements are obvious. We define the auxiliary function $G \in C^{1}(\R^{3})$,
\begin{displaymath} G(p) := \int_{0}^{\infty} g(p \cdot (r,0,0)) \, dr. \end{displaymath}
This is well-defined, since $p \cdot (r,0,0) \in \Gamma^{+}$ for all $p \in \Gamma^{+}$ and $r \geq 0$. Moreover,
\begin{equation}\label{divG} \mathrm{div}^{\flat} (GX) = XG = -g. \end{equation}
The easiest way to see this is to define the path $\gamma_{p}(s) := p \cdot (s,0,0)$, and note that $Xf(p) = (f \circ \gamma_{p})'(0)$ for any $f \in C^{1}(\R^{3})$ (in particular $f = G$). On the other hand,
\begin{displaymath} (G \circ \gamma_{p})(s) = \int_{0}^{\infty} g(p \cdot (r + s,0,0)) \, dr = \int_{s}^{\infty} g(p \cdot (r,0,0)) \, dr,  \end{displaymath}
so $XG(p) = (G \circ \gamma_{p})'(0) = -g(p)$. Now we apply the divergence theorem, Theorem \ref{t:div}, to the $\He$-Caccioppoli set $E = \Gamma^{+}$ and the vector field $GX \in C^{1}(\bar{E},H\He)$. Note that the support of $G$ intersected with $\bar{E}$ is compact. Hence, there is no problem in verifying the hypotheses of Theorem \ref{t:div}, and, recalling \eqref{divG}, the conclusion is
\begin{displaymath} \int_{\Gamma^{+}} g(p) \, dp = \int_{\Gamma} G(w)\langle X,\nu(w) \rangle \, d\sigma(w) = \int_{\Gamma} \int_{0}^{\infty} g(w \cdot (r,0,0)) \, dr \, \langle X,\nu(w) \rangle \, d\sigma(w), \end{displaymath} 
as claimed. 

The extension to non-negative measurable functions follows the standard proof of Fubini's theorem, with one non-trivial step in the middle. One first argues that the measurability statements and the formula \eqref{eq:Fubini} remain valid for linear combinations and limits of monotone sequences of non-negative functions (for decreasing sequences, one adds the hypothesis that the first element is Lebesgue integrable). As a corollary, one obtains the theorem for open sets (their characteristic functions are increasing limits of $C^{1}_{c}$-functions), and then for bounded $G_{\delta}$-sets. After this, one is prepared to treat characteristic functions of bounded $\mathcal{L}^{3}$-null sets $E \subset \He$, since each of these is contained in a bounded $G_{\delta}$-set $B \subset \He$ with $\mathcal{L}^{3}(B) = 0$. Applying \eqref{eq:Fubini} to $g = \mathbf{1}_{B}$, one finds that
\begin{equation}\label{form155} 0 = \mathcal{L}^{3}(B) = \int_{\Gamma} \int_{0}^{\infty} \mathbf{1}_{B}(w \cdot (r,0,0)) \, dr \, \langle X,\nu(w) \rangle \, d\sigma(w). \end{equation}
Here is the non-trivial step: \cite[Theorem 1.6 and Corollary 4.2]{MR3168633} imply that $\langle X,\nu(w) \rangle > 0$ for $\sigma$ a.e. $w \in \Gamma$. From this and \eqref{form155}, it follows that $\mathbf{1}_{B}(w \cdot (r,0,0)) = 0$ for $\mathcal{L}^{1}$ a.e. $r \in [0,\infty]$ for $\sigma$ a.e. $w \in \Gamma$. Since $E \subset B$, one infers that $\mathbf{1}_{E}(w \cdot (r,0,0)) = 0$ for $\mathcal{L}^{1}$ a.e. $r \in [0,\infty]$ for $\sigma$ a.e. $w \in \Gamma$. Consequently $r \mapsto \mathbf{1}_{E}(w \cdot (r,0,0))$ is $\mathcal{L}^{1}|_{[0,\infty]}$ measurable and
\begin{displaymath} w \mapsto I_{E}(w) := \int_{0}^{\infty} \mathbf{1}_{E}(w \cdot (r,0,0)) \, dr = 0 \end{displaymath}
for $\sigma$ a.e. $w \in \Gamma$. Therefore $I_{E}$ is $\sigma$ measurable, and $\int I_{E}(w) \langle X,\nu(w) \rangle \, d\sigma(w) = 0$, which completes the case of bounded null sets. The remaining steps contain no surprises: first characteristic functions of bounded $\mathcal{L}^{3}$ measurable sets, then simple functions, and finally non-negative measurable functions. \end{proof}


\section{Vertical distributions and $T$-multipliers}\label{s:Fourier} In the arguments for the invertibility of the operators $\tfrac{1}{2}I \pm D$ and $\tfrac{1}{2}I \pm D^{t}$, we will use fractional differentiation operators, and other Fourier multipliers, in the $t$-variable. The purpose of this section is to develop a framework to treat these objects rigorously.

\begin{definition}[Vertically tempered distributions] Let $U \subset \R^{2}$ be open, and let $\Omega := U \times \R \subset \R^{3}$. Write $\calB(\Omega)$ for functions $\psi \in C^{\infty}(\Omega)$ such that $\|\psi\|_{L^{\infty}(\R^{3})} \leq 1$, and $\{z \in U : \psi_{z} \neq 0\}$ is compactly contained in $U$, where $\psi_{z}(t) := \psi(z,t)$. We define $\mathcal{S}'(\Omega)$ to be the collection of distributions $u \in \mathcal{D}'(\Omega)$ such that $\psi u \in \mathcal{S}'(\R^{3})$ for all $\psi \in \mathcal{B}(\Omega)$. \end{definition}

Here $\mathcal{D}'(\Omega)$ refers to distributions in $\Omega$, and $\mathcal{S}'(\R^{3})$ to tempered distributions in $\R^{3}$. The typical example of a vertically tempered distribution will be a smooth function on $\Omega = U \times \R$ which blows up near $\partial \Omega$, but lies in $L^{\infty}(K \times \R)$ for every compact set $K \subset U$.

\begin{definition}[Vertical Schwartz class] Let $\Omega := U \times \R$, as above. We say that $\varphi$ is a \emph{vertical Schwartz function in $\Omega$}, denoted $\mathcal{S}(\Omega)$, if $\varphi \in \mathcal{S}(\R^{3})$, and $\{z \in \R^{2} : \varphi_{z} \neq 0\}$ is compactly contained in $U$. \end{definition}

For $\varphi \in \mathcal{S}(\Omega)$, we define the \emph{vertical Fourier transform}
\begin{equation}\label{form52} \widehat{\varphi}(z,\tau) := \int_{\R} e^{-2\pi i t\tau} \varphi(z,t) \, dt, \qquad (z,\tau) \in \Omega. \end{equation}
We will not emphasise the "verticality" in the notation; this is the only sort of Fourier transform which will appear in a long while. Once the "full" Fourier transform eventually appears, we will make the distinction clear in the notation. Clearly $\widehat{\varphi} \in \mathcal{S}(\Omega)$ for every $\varphi \in \mathcal{S}(\Omega)$, and the vertical Fourier transform is invertible on $\mathcal{S}(\Omega)$: the inverse map is given by changing the sign in \eqref{form52}. 

\begin{definition}[Vertical Fourier transform of distributions]\label{def:vertFDist} Let $u \in \mathcal{S}'(\Omega)$. Then, we define the vertically tempered distribution $\hat{u} \in \mathcal{S}'(\Omega)$ by $\hat{u}(\varphi) := (\psi u)(\widehat{\varphi})$ for $\varphi \in \mathcal{S}(\Omega)$, where $\psi \in \mathcal{B}(\Omega)$ is an arbitrary function satisfying $\psi \widehat{\varphi} = \widehat{\varphi}$ (such $\psi \in \mathcal{B}(\Omega)$ exists, because $\widehat{\varphi} \in \mathcal{S}(\Omega)$, and the definition is evidently independent on the particular choice of $\psi$).  \end{definition}  


Since we will not need this fact, we leave it to the reader to check that $\hat{u} \in \mathcal{S}'(\Omega)$. Differentiation, and multiplication by polynomials, preserve the class of vertically tempered distributions, with the obvious definitions $\partial^{\beta}u(\varphi) = (-1)^{|\beta|}u(\partial^{\beta}\varphi)$ and $Pu(\varphi) := u(P\varphi)$. As usual, the vertical Fourier transform turns polynomials into derivatives and derivatives into polynomials. We record the formulae, but omit the proofs:
\begin{lemma}\label{lemma2} Let $u \in \mathcal{S}'(\Omega)$, and let $P(z)$ be a polynomial in the $z$-variables. Then,
\begin{enumerate}
\item $\widehat{P(z)u} = P(z)\hat{u}$ and $\widehat{\partial_{x}^{m}u} = \partial_{x}^{m}\hat{u}$ and $\widehat{\partial_{y}^{n}u} = \partial_{y}^{n}\hat{u}$, and
\item $\widehat{\partial_{t}u} = (2\pi i \tau)\hat{u}$, and $\widehat{(-2\pi it)u} = \partial_{t}\hat{u}$.
\end{enumerate}
\end{lemma}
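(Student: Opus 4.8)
The plan is to deduce each of the five identities from its classical analogue for Schwartz functions on $\R^{3}$, and then transfer it to $\mathcal{S}'(\Omega)$ by unwinding Definition \ref{def:vertFDist}. The first step is to record the ``test function'' versions. For $\varphi \in \mathcal{S}(\Omega)$, differentiation under the integral sign in \eqref{form52} gives $\partial_{x}^{m}\widehat\varphi = \widehat{\partial_{x}^{m}\varphi}$, $\partial_{y}^{n}\widehat\varphi = \widehat{\partial_{y}^{n}\varphi}$, $P(z)\widehat\varphi = \widehat{P(z)\varphi}$ and $\partial_{t}\widehat\varphi = \widehat{(-2\pi it)\varphi}$, while one integration by parts in $t$ gives $\widehat{\partial_{t}\varphi} = (2\pi i\tau)\widehat\varphi$. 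It is important that $\mathcal{S}(\Omega)$ is stable under each of the operations $\varphi\mapsto P(z)\varphi$, $\partial_{x}$, $\partial_{y}$, $\partial_{t}$, $\varphi\mapsto t\varphi$, and under $\varphi\mapsto\widehat\varphi$; this is where one uses that if $A := \{z\in U : \varphi_{z}\not\equiv 0\}$ has $\overline{A}\Subset U$, then $\varphi$ vanishes on the open set $(U\setminus\overline{A})\times\R$, so the same is true of any partial derivative or polynomial multiple of $\varphi$, and the fibrewise Fourier transform in $t$ is injective on each line $\{z\}\times\R$, hence does not enlarge the vertical support at all.

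Second, I would observe that in Definition \ref{def:vertFDist} the auxiliary cutoff may always be taken of the product form $\psi(z,t) = \chi(z)$ with $\chi\in C^{\infty}_{c}(U)$, $0\le\chi\le 1$, and $\chi\equiv 1$ on a neighbourhood of the vertical support of $\widehat\varphi$ (which, by the injectivity remark, coincides with that of $\varphi$). Such a $\psi$ lies in $\mathcal{B}(\Omega)$ and satisfies $\psi\widehat\varphi = \widehat\varphi$. Since the value of $\hat u(\varphi)$ does not depend on the choice of $\psi$, after fixing $\varphi\in\mathcal{S}(\Omega)$ we may pick one such $\psi$ and use it simultaneously for $\widehat\varphi$ and for $\widehat{L\varphi}$, where $L$ ranges over the operators above: indeed the vertical support of $L\varphi$ is contained in that of $\varphi$, whence $\psi\widehat{L\varphi} = \widehat{L\varphi}$ as well.

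With these two preparations, each identity reduces to a one-line manipulation using only the adjoint relations $(\partial^{\beta}u)(\varphi) = (-1)^{|\beta|}u(\partial^{\beta}\varphi)$ and $(Pu)(\varphi) = u(P\varphi)$, the fact that $(\psi v)(\chi) = v(\psi\chi)$, and the Schwartz identities of the first step. For example, for $\widehat{\partial_{t}u} = (2\pi i\tau)\hat u$ and $\varphi\in\mathcal{S}(\Omega)$ with $\psi$ chosen as above,
\begin{align*}
\widehat{\partial_{t}u}(\varphi) &= (\psi\,\partial_{t}u)(\widehat\varphi) = (\partial_{t}u)(\psi\widehat\varphi) = (\partial_{t}u)(\widehat\varphi) = -u(\partial_{t}\widehat\varphi) \\
&= -u\bigl(\widehat{(-2\pi it)\varphi}\bigr) = -(\psi u)\bigl(\widehat{(-2\pi it)\varphi}\bigr) = -\hat u\bigl((-2\pi it)\varphi\bigr) = \hat u\bigl((2\pi it)\varphi\bigr) = \bigl((2\pi i\tau)\hat u\bigr)(\varphi).
\end{align*}
The identities in (1) are proved by the same scheme, substituting $P(z)\widehat\varphi = \widehat{P(z)\varphi}$ and $\partial_{x}^{m}\widehat\varphi = \widehat{\partial_{x}^{m}\varphi}$ (resp.\ the $y$-analogue), and the second formula in (2), $\widehat{(-2\pi it)u} = \partial_{t}\hat u$, is obtained by moving the single $t$-derivative from $\hat u$ back onto $u$ via the adjoint relation and invoking $\widehat{\partial_{t}\varphi} = (2\pi i\tau)\widehat\varphi$. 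No step presents a serious difficulty; the only point requiring genuine care is the bookkeeping with the cutoffs $\psi\in\mathcal{B}(\Omega)$ inside Definition \ref{def:vertFDist}, and this is dispatched once and for all by the product-form choice of $\psi$ described in the second paragraph.
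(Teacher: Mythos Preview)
Your argument is correct. The paper omits the proof of this lemma entirely (``We record the formulae, but omit the proofs''), so there is no approach to compare against; what you have written is exactly the routine verification the authors had in mind, carried out via the product-form cutoff $\psi(z,t)=\chi(z)$ so that $\psi$ commutes with $\partial_t$ and is transparent to the vertical Fourier transform.

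One cosmetic point: intermediate pairings such as ``$(\partial_{t}u)(\widehat\varphi)$'' or ``$u(\partial_{t}\widehat\varphi)$'' are not literally defined, since $u\in\mathcal{D}'(\Omega)$ only acts on $C^{\infty}_{c}(\Omega)$. Your chain becomes fully rigorous if you replace the step $(\psi\,\partial_{t}u)(\widehat\varphi)=(\partial_{t}u)(\psi\widehat\varphi)$ by $(\psi\,\partial_{t}u)(\widehat\varphi)=(\partial_{t}(\psi u))(\widehat\varphi)=-(\psi u)(\partial_{t}\widehat\varphi)$, using that your $\psi$ is $t$-independent and that $\psi u\in\mathcal{S}'(\R^{3})$; the rest then goes through unchanged. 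This is precisely the ``bookkeeping with the cutoffs'' you already flagged.
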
 

\subsubsection{Fourier multipliers in the $\tau$-variable}\label{tauMult} As before, let $U \subset \R^{2}$ be open, and let $\Omega := U \times \R$. For suitable distributions $u \in \mathcal{S}'(\Omega)$, and for suitable functions $\mathfrak{m} \in L^{1}_{\mathrm{loc}}(\R)$, the purpose of this section is to define the vertical Fourier multipliers $\mathfrak{m}(T)u$, and show that $Z(\mathfrak{m}(T)u) = \mathfrak{m}(T)(Zu)$ for all horizontal derivatives $Z$. Applications will include the functions $\mathfrak{m}(\tau) = |\tau|^{\alpha}$, and $\mathfrak{m}(\tau) = |\tau|/\tau$.

\begin{definition}[Class $\mathcal{G}(\Omega)$]\label{classG} Let $\mathcal{G}(\Omega)$ consist of those $u \in \mathcal{S}'(\Omega)$ such that $\widehat{\partial u} \in L^{1}(K \times \R)$ for all compact sets $K \subset U$, and for all (distributional) derivatives $\partial = \partial_{x}^{k}\partial_{y}^{m}\partial_{t}^{n}$.
 \end{definition}

We start by observing that $\mathcal{G}(\Omega)$, in fact, consists of smooth functions:
\begin{lemma}\label{lemma1} $\mathcal{G}(\Omega) \subset C^{\infty}(\Omega)$. \end{lemma}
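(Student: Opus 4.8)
The statement $\mathcal{G}(\Omega) \subset C^{\infty}(\Omega)$ should follow from a standard Fourier-analytic regularity argument, localized in the $z$-variable and carried out in the $\tau$-variable. The plan is to fix a point $(z_{0},t_{0}) \in \Omega$, choose a small ball $B = B(z_{0},\rho)$ with $\overline{B} \subset U$, and multiply $u \in \mathcal{G}(\Omega)$ by a cut-off $\psi \in \mathcal{B}(\Omega)$ with $\psi \equiv 1$ on a neighbourhood of $(z_{0},t_{0})$ and $\{z : \psi_{z} \neq 0\} \Subset B$. Then $\psi u \in \mathcal{S}'(\R^{3})$, and it suffices to prove that $\psi u$ agrees with a $C^{\infty}$ function near $(z_{0},t_{0})$.

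First I would record the key consequence of the defining property of $\mathcal{G}(\Omega)$: for every multi-index derivative $\partial = \partial_{x}^{k}\partial_{y}^{m}\partial_{t}^{n}$, the vertical Fourier transform $\widehat{\partial u}$ lies in $L^{1}(\overline{B} \times \R)$. By Lemma \ref{lemma2}, $\widehat{\partial u}(z,\tau) = (2\pi i \tau)^{n} \partial_{x}^{k}\partial_{y}^{m}\hat{u}(z,\tau)$, interpreted distributionally; the hypothesis says each of these is an honest $L^{1}$-function on $\overline{B} \times \R$. The point is that we have $L^{1}$-control, uniformly over $z \in \overline{B}$, of $\tau^{n} \partial_{x}^{k}\partial_{y}^{m}\hat{u}(z,\tau)$ for \emph{all} $k,m,n \geq 0$ simultaneously. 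Since multiplication by $(1 + \tau^{2})^{N}$ for any fixed $N$ is absorbed by taking $n$ large, this gives that $\tau \mapsto \partial_{x}^{k}\partial_{y}^{m}\hat{u}(z,\tau)$ decays faster than any polynomial in $L^{1}(\overline{B},d z)$-averaged sense; more precisely, $(1+|\tau|)^{N}\partial_{x}^{k}\partial_{y}^{m}\hat{u} \in L^{1}(\overline{B} \times \R)$ for every $N,k,m$.

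Next I would invert the vertical Fourier transform. Define, for $z \in B$,
\begin{displaymath} v(z,t) := \int_{\R} e^{2\pi i t \tau} \hat{u}(z,\tau) \, d\tau. \end{displaymath}
Because $(1+|\tau|)^{N}\partial_{x}^{k}\partial_{y}^{m}\hat{u} \in L^{1}(\overline{B} \times \R)$ for all $N,k,m$, one may differentiate under the integral sign freely: each $t$-derivative brings down a factor $(2\pi i \tau)$, absorbed by the decay; each $z$-derivative $\partial_{x},\partial_{y}$ passes onto $\hat{u}$ and is still integrable. Hence $v \in C^{\infty}(B \times \R)$, with all derivatives given by absolutely convergent integrals. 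It remains to check that $v$ represents the distribution $u$ on $B \times \R$: for $\varphi \in \mathcal{S}(B \times \R)$, unwind Definition \ref{def:vertFDist} and Fubini (justified by the $L^{1}$-integrability of $\hat u$ on $\overline B \times \R$ against the Schwartz function $\widehat\varphi$) to get $u(\varphi) = \hat u(\check\varphi) = \iint \hat{u}(z,\tau) \overline{\cdots} = \iint v(z,t)\varphi(z,t)\,dz\,dt$; I would write this out carefully matching the conventions in \eqref{form52} and Definition \ref{def:vertFDist}. Since the test function was arbitrary, $u = v$ on $B \times \R$ as distributions, so $u$ is smooth near $(z_{0},t_{0})$; as $(z_{0},t_{0})$ was arbitrary, $u \in C^{\infty}(\Omega)$.

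The main obstacle I anticipate is purely bookkeeping rather than conceptual: making precise the claim that $\partial_{x}^{k}\partial_{y}^{m}\hat{u}$ (a priori only a \emph{distribution} in $z$) is genuinely an $L^{1}$-function and that the various applications of Fubini and differentiation-under-the-integral are legitimate. The cleanest route is to first establish, for fixed $z$, that $\tau \mapsto \hat u(z,\tau)$ is a Schwartz function, by exploiting that $\widehat{\partial_t^n u}(z,\cdot) \in L^1$ for all $n$ together with the (distributional) bound on $z$-derivatives; then the $z$-regularity of $v$ follows by a routine dominated-convergence / difference-quotient argument using the uniform-in-$z$ integrable majorants supplied by Definition \ref{classG}. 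Once the majorants are in hand, everything else is the standard Fourier-inversion proof of the Sobolev embedding / Paley–Wiener type lemma.
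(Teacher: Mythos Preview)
Your strategy—invert the vertical Fourier transform and argue smoothness of the resulting $v$ by differentiating under the integral sign—is close in spirit to the paper's, but the paper takes a slightly different and cleaner route that sidesteps the very obstacle you flag. Rather than proving $v \in C^{\infty}$ directly, the paper shows that for \emph{each} derivative $\partial = \partial_{x}^{k}\partial_{y}^{m}\partial_{t}^{n}$, the distribution $\partial u$ coincides with the $L^{1}_{\mathrm{loc}}$ function
\[
h(z,t) := \int_{\R} e^{2\pi i t\tau}\,\widehat{\partial u}(z,\tau)\,d\tau.
\]
This identification only requires Fubini, which is legitimate because $\widehat{\partial u} \in L^{1}(K\times\R)$ by hypothesis. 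Once every distributional derivative of $u$ is known to lie in $L^{1}_{\mathrm{loc}}(\Omega)$, the standard Sobolev embedding theorem yields $u \in C^{\infty}(\Omega)$ in one stroke.

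The gap in your argument is the step you label ``bookkeeping'', but it is a bit more than that. To differentiate $v(z,t)=\int e^{2\pi i t\tau}\hat u(z,\tau)\,d\tau$ under the integral in $z$ you need a majorant of the form $|\partial_{z}\hat u(z,\tau)| \le M(\tau)$ with $M \in L^{1}(\R)$, \emph{uniformly} in $z$ on your ball. Definition~\ref{classG} only supplies $\partial_{z}\hat u \in L^{1}(K\times\R)$, i.e.\ an $L^{1}_{z}L^{1}_{\tau}$ bound, not an $L^{\infty}_{z}L^{1}_{\tau}$ bound; so the ``uniform-in-$z$ integrable majorants'' you invoke are not actually furnished by the definition, and a difference-quotient argument presupposes exactly what is missing. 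One \emph{can} close this gap---for instance by applying Sobolev embedding in the $z$-variable, slice-by-slice in $\tau$, to convert the $L^{1}_{z}$ control of high-order $z$-derivatives of $\hat u$ into $L^{\infty}_{z}$ control of lower-order ones with an integrable $\tau$-dependence---but at that point you have essentially reproduced the paper's argument with extra steps. The paper's packaging (stay at the level of distributional derivatives, invoke Sobolev embedding once at the end) is the more economical one.
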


\begin{proof} Fix $u \in \mathcal{G}(\Omega)$, let $k,m,n \in \N$, and let $\partial = \partial_{x}^{k}\partial_{y}^{m}\partial_{t}^{n}$. Note that the function $h \colon \Omega \to \C$,
\begin{displaymath} h(z,t) := \int_{\R} e^{2\pi i t\tau} \widehat{\partial u}(z,\tau) \, d\tau, \end{displaymath} 
satisfies $h \in L^{1}(K \times I)$ whenever $K \subset U$ and $I \subset \R$ are compact. Indeed,
\begin{displaymath} \int_{K \times I} |h(z,t)| \, dz \, dt \leq \int_{I} \int_{K} \int_{\R} |\widehat{\partial u}(z,\tau)| \, d\tau \, dz \, dt < \infty. \end{displaymath} 
Therefore, the following application of Fubini's theorem is legitimate for all $\varphi \in C^{\infty}_{c}(\Omega)$:
\begin{displaymath} \partial u(\varphi) =: \widehat{\partial u}(\widecheck{\varphi}) := \int_{\Omega} \widehat{\partial u}(z,\tau) \left( \int_{\R} e^{2\pi i t\tau} \varphi(z,t) \, dt \right) \, dz \, d\tau = \int_{\Omega} h(z,t) \cdot \varphi(z,t) \, dz \, dt. \end{displaymath}
This means that $\partial u = h \in L^{1}_{\mathrm{loc}}(\Omega)$ for all derivatives $\partial$. It now follows from the standard Sobolev embedding theorem, see \cite[Theorem 6, p. 2.70]{MR1625845}, that $u \in C^{\infty}(\Omega)$. \end{proof} 

\begin{lemma} Let $u \in \mathcal{G}(\Omega)$. Then any derivative of $u$ lies in $\mathcal{G}(\Omega)$. Also, if $P = P(z)$ is a polynomial in the $z$-variables, then $Pu \in \mathcal{G}(\Omega)$. \end{lemma}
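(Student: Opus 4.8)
The plan is to verify the two closure properties directly from Definition \ref{classG}, using Lemma \ref{lemma2} to commute the vertical Fourier transform with differentiation and with multiplication by polynomials in $z$.

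\textbf{Derivatives stay in $\mathcal{G}(\Omega)$.} First I would fix $u \in \mathcal{G}(\Omega)$ and a single derivative $\partial_{0} \in \{\partial_{x},\partial_{y},\partial_{t}\}$; the general case follows by iterating. I need to check that $\widehat{\partial(\partial_{0}u)} \in L^{1}(K \times \R)$ for every compact $K \subset U$ and every multi-derivative $\partial = \partial_{x}^{k}\partial_{y}^{m}\partial_{t}^{n}$. But $\partial(\partial_{0}u)$ is again a derivative of $u$ of the same type $\partial_{x}^{k'}\partial_{y}^{m'}\partial_{t}^{n'}$, so $\widehat{\partial(\partial_{0}u)} = \widehat{\partial'u}$ for the corresponding derivative $\partial'$, and this lies in $L^{1}(K \times \R)$ precisely because $u \in \mathcal{G}(\Omega)$. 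One should also observe that $\partial_{0}u \in \mathcal{S}'(\Omega)$, which is immediate since $\mathcal{S}'(\Omega)$ is closed under differentiation (remarked in the text just before Lemma \ref{lemma2}). So $\partial_{0}u \in \mathcal{G}(\Omega)$, and hence so is any iterated derivative.

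\textbf{Multiplication by $P(z)$ stays in $\mathcal{G}(\Omega)$.} By writing $P$ as a sum of monomials and using linearity, it suffices to treat $P(z) = x^{a}y^{b}$, and by iterating further it suffices to treat $P(z) = x$ (the case $P(z)=y$ being symmetric). Fix a compact $K \subset U$ and a derivative $\partial = \partial_{x}^{k}\partial_{y}^{m}\partial_{t}^{n}$. The only subtlety is that $\partial$ need not commute with multiplication by $x$: by the Leibniz rule, $\partial(xu) = x\,\partial u + k\,\partial_{x}^{k-1}\partial_{y}^{m}\partial_{t}^{n}u$ (interpreting the second term as $0$ if $k=0$). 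Taking the vertical Fourier transform and applying Lemma \ref{lemma2}(1) — which says $\widehat{xv} = x\hat v$ and that the transform commutes with $\partial_{x},\partial_{y}$, together with Lemma \ref{lemma2}(2) for $\partial_{t}$ — we get
\begin{displaymath} \widehat{\partial(xu)}(z,\tau) = x \cdot \widehat{\partial u}(z,\tau) + k \cdot \widehat{\partial_{x}^{k-1}\partial_{y}^{m}\partial_{t}^{n}u}(z,\tau). \end{displaymath}
On $K \times \R$ the factor $x$ is bounded by $\sup_{(x,y)\in K}|x| < \infty$, so the first term is in $L^{1}(K\times\R)$ because $\widehat{\partial u} \in L^{1}(K\times\R)$; the second term is in $L^{1}(K\times\R)$ because it is the vertical Fourier transform of another derivative of $u$, again by $u \in \mathcal{G}(\Omega)$. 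Hence $\widehat{\partial(xu)} \in L^{1}(K\times\R)$ for all such $\partial$ and $K$. Since $xu \in \mathcal{S}'(\Omega)$ (multiplication by polynomials in $z$ preserves $\mathcal{S}'(\Omega)$, as remarked before Lemma \ref{lemma2}), we conclude $xu \in \mathcal{G}(\Omega)$, and the general polynomial case follows by induction on the degree.

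I do not expect a genuine obstacle here; the statement is essentially bookkeeping once Lemma \ref{lemma2} is in hand. The only point requiring a moment's care is the non-commutativity of $\partial_{x}$ (or $\partial_{y}$) with multiplication by $x$ (or $y$), which produces the extra lower-order term above — but that term is harmless because it is handled by the same hypothesis applied to a lower-order derivative. One should also be mindful that all identities here are identities of distributions, so the Leibniz rule is used in the distributional sense, which is valid.
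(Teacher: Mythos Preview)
Your proof is correct and follows essentially the same approach as the paper's: the derivative case is immediate from the definition, and the polynomial case is handled via the Leibniz rule together with the boundedness of polynomial factors on compact $K \subset U$. The only cosmetic difference is ordering: you apply Leibniz first and then take the vertical Fourier transform, whereas the paper takes the transform first (using $\widehat{Pu} = P\hat u$) and then applies Leibniz on the transform side, treating a general $P(z)$ directly rather than reducing to $x$ and $y$.
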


\begin{proof} The first claim is clear from the definition. The second claim requires a little case-chase. One needs to check that $\widehat{\partial Pu} \in L^{1}(K \times \R)$ for all compact sets $K \subset U$ and all possible derivatives $\partial = \partial_{x}^{k}\partial_{y}^{m}\partial_{t}^{n}$. For example,
\begin{displaymath} \widehat{\partial_{x}Pu} = \partial_{x}(P\hat{u}) = (\partial_{x}P)\hat{u} + P(\partial_{x}\hat{u}) \in L^{1}(K \times \R). \end{displaymath}
A similar argument works more generally for $\partial_{x}^{k}$ and $\partial_{y}^{m}$. Finally,
\begin{displaymath} \widehat{\partial_{t}^{n}Pu} = P\widehat{\partial_{t}^{n}u} \in L^{1}(K \times \R), \end{displaymath}
since $u \in \mathcal{G}(\Omega)$, and $P$ does not depend on the $t$-variable.
 \end{proof}



\begin{remark} How about multiplication by $t$? Here is a counterexample. Consider $u(z,t) = (1 + t^{2})^{-1/2} \in \mathcal{S}'(\R^{3})$. Then the vertical Fourier transform $\hat{u}$ is (up to some constants) the modified Bessel function $(z,\tau) \mapsto K_{0}(\tau)$, and one may verify that $u \in \mathcal{G}(\Omega)$. However, the Fourier transform of $tu$ would be the $\tau$-derivative of $K_{0}$, which has a non-integrable singularity over the origin.
\end{remark}

\begin{cor}\label{cor1} Let $u \in \mathcal{G}(\Omega)$, and let $Z$ be a horizontal derivative, that is, an arbitrary linear combination of concatenations of the $X$ and $Y$ vector fields. Then $Zu \in \mathcal{G}(\Omega)$.
\end{cor}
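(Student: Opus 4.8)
The plan is to reduce $Z$ to a sum of terms of the form $P(z)\,\partial$, where $P(z)$ is a polynomial in the $z$-variables and $\partial = \partial_x^k \partial_y^m \partial_t^n$ is a Euclidean derivative, and then invoke the two lemmas immediately preceding the corollary, which say that $\mathcal{G}(\Omega)$ is stable under Euclidean differentiation and under multiplication by polynomials in $z$. Since $X = \partial_x - \tfrac{y}{2}\partial_t$ and $Y = \partial_y + \tfrac{x}{2}\partial_t$, each of $X$ and $Y$ is already of this shape (a sum of a bare derivative and a degree-one $z$-monomial times a derivative). Thus $Xu, Yu \in \mathcal{G}(\Omega)$ directly. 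The only real content is that this structure is preserved under concatenation.

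First I would observe that the set $\mathcal{A}$ of operators expressible as finite $\C$-linear combinations $\sum_\alpha c_\alpha P_\alpha(z)\,\partial^{\alpha}$, with $P_\alpha$ polynomials in $z = (x,y)$ and $\partial^\alpha$ Euclidean monomial derivatives, is a linear space, and I claim it is closed under left multiplication by $X$ and by $Y$. Indeed $Z'\big(P(z)\partial u\big) = (Z'P)\,\partial u + P\,(Z'\partial u)$ for $Z' \in \{\partial_x,\partial_y,\partial_t\}$; the first summand has the required form because $Z'P$ is again a polynomial in $z$, and for the second summand one uses that $Z'$ commutes with $\partial^\alpha$, so $Z'\partial^\alpha u = \partial^\alpha \partial_{(\cdot)} u$ is a Euclidean monomial derivative of $u$ again. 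Since $X$ and $Y$ are each $\C$-linear combinations of $\partial_x,\partial_y,\partial_t$ with coefficients that are polynomials in $z$ (namely $1$, $-\tfrac{y}{2}$, $\tfrac{x}{2}$), applying $X$ or $Y$ to $P(z)\partial^\alpha u$ again lands in $\mathcal{A}$ applied to $u$. By induction on the number of vector fields in a concatenation, every concatenation of $X$'s and $Y$'s, hence every $\C$-linear combination $Z$ of such concatenations, satisfies $Zu = \sum_\alpha c_\alpha P_\alpha(z)\,\partial^\alpha u$ for finitely many polynomials $P_\alpha$ and Euclidean monomial derivatives $\partial^\alpha$.

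Finally I would conclude: fix $u \in \mathcal{G}(\Omega)$ and write $Zu = \sum_\alpha c_\alpha P_\alpha(z)\partial^\alpha u$ as above. By the lemma stating that any derivative of an element of $\mathcal{G}(\Omega)$ lies in $\mathcal{G}(\Omega)$, each $\partial^\alpha u \in \mathcal{G}(\Omega)$; by the lemma stating that $Pu \in \mathcal{G}(\Omega)$ whenever $P = P(z)$ is a polynomial in the $z$-variables, each $P_\alpha(z)\partial^\alpha u \in \mathcal{G}(\Omega)$; and since $\mathcal{G}(\Omega)$ is a linear subspace of $\mathcal{S}'(\Omega)$, the finite sum $Zu$ lies in $\mathcal{G}(\Omega)$ as well.

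I do not anticipate a genuine obstacle here: the only thing to be careful about is the bookkeeping that the commutator $[X,Y] = T = \partial_t$ does not take us outside the class $\mathcal{A}$ — but it does not, since $\partial_t$ is itself a bare Euclidean derivative. So the induction goes through cleanly, and the statement follows formally from the two preceding lemmas plus the explicit form of $X$ and $Y$.
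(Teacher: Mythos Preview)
Your proposal is correct and follows essentially the same approach as the paper: the paper's proof is the one-line assertion that $Zu$ can be rewritten as a finite linear combination $\sum_\beta P_\beta(z)\partial^\beta u$ and then the previous lemma is applied. You have simply supplied the (routine) inductive justification for that rewriting, which the paper omits.
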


\begin{proof} One may re-write $Zu$ as a finite linear combination $Zu(z,t) = \sum_{\beta} P_{\beta}(z)\partial^{\beta}u(z,t)$, with $\partial^{\beta}$ referring to any derivative, and then apply the previous lemma. \end{proof}

We then define the relevant class of multipliers:
\begin{definition}[Class $\mathcal{M}$] A Lebesgue measurable function $\mathfrak{m} \colon \R^{3} \to \R$ lies in the class $\mathcal{M}$ if $\mathfrak{m}$ only depends on the third variable (this is denoted by writing "$\mathfrak{m}(\tau)$" in place of $\mathfrak{m}(z,\tau)$"), and there exists $k \in \N$ such that $|\mathfrak{m}(\tau)| \leq (1 + |\tau|)^{k}$ for a.e. $\tau \in \R$. \end{definition}

\begin{ex} Clearly $\tau \mapsto |\tau|^{\alpha} \in \mathcal{M}$ for all $\alpha \geq 0$, and $\tau \mapsto |\tau|/\tau \in \mathcal{M}$.  \end{ex}

For $u \in \mathcal{G}(\Omega)$ and $\mathfrak{m} \in \mathcal{M}$, we may then define $\mathfrak{m}(T)u$:

\begin{definition}[$\mathfrak{m}(T)u$] Let $u \in \mathcal{G}(\Omega)$ and $\mathfrak{m} \in \mathcal{M}$. We define
\begin{displaymath} (\mathfrak{m}(T)u)(z,t) := \int_{\R} e^{2\pi i t\tau} \mathfrak{m}(\tau)\hat{u}(z,\tau) \, d\tau, \qquad \text{for a.e. } (z,t) \in \Omega. \end{displaymath}
\end{definition}

\begin{remark} Note that if $u \in \mathcal{G}(\Omega)$, then $(2\pi i \tau)^{k}\hat{u} = \widehat{\partial_{t}^{n}u} \in L^{1}(K \times \R)$ for all $n \geq 0$ and for all compact $K \subset U$. In particular, 
\begin{displaymath} \iint_{K \times \R} |\hat{u}(z,\tau)| \, dz \, d\tau < \infty \quad \text{and} \quad \iint_{K \times \R} |\tau|^{k}|\hat{u}(z,\tau)| \, dz \, d\tau < \infty. \end{displaymath} 
Consequently, 
\begin{equation}\label{form23} \iint_{K \times \R} |\mathfrak{m}(\tau)||\hat{u}(z,\tau)| \, d\tau \, dz < \infty, \qquad u \in \mathcal{G}(\Omega), \end{equation}
for all compact sets $K \subset U$. It follows that the integral defining $\mathfrak{m}(T)u$ converges for a.e. $z \in U$ for all $t \in \R$, and in fact defines a function in $L^{1}_{\mathrm{loc}}(\Omega)$. \end{remark}

\begin{lemma} Let $u \in \mathcal{G}(\Omega)$, and $\mathfrak{m} \in \mathcal{M}$. Then $\mathfrak{m}(T)u \in \mathcal{G}(\Omega) \subset C^{\infty}(\Omega)$. \end{lemma}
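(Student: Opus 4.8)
The plan is to show that $v := \mathfrak{m}(T)u$ belongs to $\mathcal{G}(\Omega)$; the inclusion $\mathcal{G}(\Omega) \subset C^{\infty}(\Omega)$ is then exactly Lemma \ref{lemma1}. Everything will hinge on the identity $\widehat{v}(z,\tau) = \mathfrak{m}(\tau)\hat{u}(z,\tau)$, after which the membership $v \in \mathcal{G}(\Omega)$ follows formally from Lemma \ref{lemma2}. The first thing I would record is a strengthening of \eqref{form23}: since $u \in \mathcal{G}(\Omega)$, for every compact $K \subset U$ and all $k,m,n' \in \N$ we have $\widehat{\partial_{x}^{k}\partial_{y}^{m}\partial_{t}^{n'}u} = (2\pi i\tau)^{n'}\partial_{x}^{k}\partial_{y}^{m}\hat{u} \in L^{1}(K \times \R)$ (using Lemma \ref{lemma2} and that multiplication by a function of $\tau$ alone commutes with $\partial_{x},\partial_{y}$), hence $(1+|\tau|)^{j}\,\partial_{x}^{k}\partial_{y}^{m}\hat{u} \in L^{1}(K \times \R)$ for every $j \in \N$. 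Combining this with the bound $|\mathfrak{m}(\tau)| \leq (1+|\tau|)^{k_{0}}$ from the definition of $\mathcal{M}$, one gets
\begin{displaymath} \iint_{K \times \R} |\tau|^{n}\,|\mathfrak{m}(\tau)|\,|\partial_{x}^{k}\partial_{y}^{m}\hat{u}(z,\tau)| \, d\tau \, dz < \infty \end{displaymath}
for all $k,m,n \in \N$ and all compact $K \subset U$. Call this the \emph{basic integrability estimate}; it is the workhorse of the proof.

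Next I would check that $v \in \mathcal{S}'(\Omega)$. From the basic estimate with $k=m=n=0$ the integral defining $v(z,t)$ converges absolutely for a.e.\ $z \in U$ and every $t$, with the $t$-\emph{independent} pointwise bound $|v(z,t)| \leq h(z) := \int_{\R} |\mathfrak{m}(\tau)|\,|\hat{u}(z,\tau)|\,d\tau$, where $h \in L^{1}_{\mathrm{loc}}(U)$. Fixing $\psi \in \mathcal{B}(\Omega)$ with $\{z : \psi_{z} \neq 0\}$ contained in a compact $K \subset U$, the function $\psi v$ is supported in $K \times \R$ and satisfies $\iint (1+t^{2})^{-1}|\psi v| \leq \pi \int_{K} h < \infty$, so $\psi v \in \mathcal{S}'(\R^{3})$ and hence $v \in \mathcal{S}'(\Omega)$. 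Then I would compute $\widehat{v}$ straight from Definition \ref{def:vertFDist}: for $\varphi \in \mathcal{S}(\Omega)$ and a suitable $\psi \in \mathcal{B}(\Omega)$ with $\psi\widehat{\varphi} = \widehat{\varphi}$,
\begin{displaymath} \widehat{v}(\varphi) = (\psi v)(\widehat{\varphi}) = \iint v(z,t)\,\widehat{\varphi}(z,t) \, dz\,dt = \iint \Big( \int e^{2\pi i t\tau}\mathfrak{m}(\tau)\hat{u}(z,\tau)\,d\tau \Big)\widehat{\varphi}(z,t) \, dz\,dt. \end{displaymath}
Fubini applies because $\mathfrak{m}\hat{u} \in L^{1}(K \times \R)$ (basic estimate) while $\widehat{\varphi}$ is Schwartz, so $\int_{\R}|\widehat{\varphi}(z,t)|\,dt$ is bounded uniformly for $z$ in the $z$-support of $\varphi$. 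Interchanging the integrals and using the one-dimensional Fourier inversion $\int e^{2\pi i t\tau}\widehat{\varphi}(z,t)\,dt = \varphi(z,\tau)$ yields $\widehat{v}(\varphi) = \iint \mathfrak{m}(\tau)\hat{u}(z,\tau)\varphi(z,\tau)\,d\tau\,dz$, i.e.\ $\widehat{v} = \mathfrak{m}(\tau)\hat{u}(z,\tau)$ as an element of $L^{1}_{\mathrm{loc}}(\Omega)$.

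With $\widehat{v} = \mathfrak{m}\hat{u}$ in hand, I would finish as follows. For any derivative $\partial = \partial_{x}^{k}\partial_{y}^{m}\partial_{t}^{n}$, Lemma \ref{lemma2} gives $\widehat{\partial v} = \partial_{x}^{k}\partial_{y}^{m}\big[(2\pi i\tau)^{n}\widehat{v}\big] = (2\pi i\tau)^{n}\mathfrak{m}(\tau)\,\partial_{x}^{k}\partial_{y}^{m}\hat{u}$, where the last step again uses that $(2\pi i\tau)^{n}\mathfrak{m}(\tau)$ depends only on $\tau$ and so commutes with the distributional derivatives $\partial_{x},\partial_{y}$. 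By the basic integrability estimate, $\widehat{\partial v} \in L^{1}(K \times \R)$ for every compact $K \subset U$, which is precisely the defining property of $\mathcal{G}(\Omega)$; hence $v = \mathfrak{m}(T)u \in \mathcal{G}(\Omega)$, and Lemma \ref{lemma1} gives $v \in C^{\infty}(\Omega)$. The only genuinely non-formal part of the argument is the justification of the Fubini interchange and the distributional manipulations that produce $\widehat{v} = \mathfrak{m}\hat{u}$ — and the basic integrability estimate is exactly what legitimises them, so I expect no real obstacle beyond careful bookkeeping.
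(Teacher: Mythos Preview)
Your proof is correct and follows essentially the same route as the paper: establish $v \in \mathcal{S}'(\Omega)$, verify the key identity $\widehat{v} = \mathfrak{m}\cdot\hat{u}$ (the paper's \eqref{form55}), and then use Lemma~\ref{lemma2} together with the $L^{1}(K\times\R)$ bounds coming from $u \in \mathcal{G}(\Omega)$ and $\mathfrak{m} \in \mathcal{M}$ to check the defining condition of $\mathcal{G}(\Omega)$ for all derivatives. The only cosmetic difference is that you package the integrability as a single ``basic estimate'' covering all mixed derivatives $\partial_{x}^{k}\partial_{y}^{m}\partial_{t}^{n}$ at once, whereas the paper treats the $z$-derivatives and $t$-derivatives in two separate lines.
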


\begin{proof} We first observe that $\mathfrak{m}(T)u \in \mathcal{S}'(\Omega)$. Indeed, if $\psi \in \mathcal{B}(\Omega)$ is fixed, and we write $K := \{z \in U : \psi_{z} \neq 0\}$ (a compact subset of $U$ by definition), then 
\begin{displaymath} |(\psi \cdot \mathfrak{m}(T)u)(\varphi)| \leq \int_{\R} \sup_{z \in \R^{2}} |\varphi(z,t)| \left( \iint_{K \times \R} |\mathfrak{m}(\tau)||\hat{u}(z,\tau)| \, d\tau \, dz \right) \, dt < \infty, \quad \varphi \in \mathcal{S}(\R^{3}), \end{displaymath} 
using \eqref{form23}, and noting that $\sup_{z \in \R^{2}} |\varphi(z,t)| \lesssim (1 + |t|)^{-2}$. This calculation shows that $\psi \cdot \mathfrak{m}(T)u \in \mathcal{S}'(\R^{3})$, and consequently $\mathfrak{m}(T)u \in \mathcal{S}'(\Omega)$. It is now easy to verify that the vertical Fourier transform of $\mathfrak{m}(T)u$ is the locally integrable function $\mathfrak{m} \cdot \hat{u}$:
\begin{equation}\label{form55} \widehat{\mathfrak{m}(T)u} = \mathfrak{m} \cdot \hat{u}. \end{equation}
With this information in hand, one may compute
\begin{displaymath} \widehat{\partial_{x}^{k}\partial_{y}^{k}\mathfrak{m}(T)u} = \partial_{x}^{k}\partial_{y}^{m}\widehat{\mathfrak{m}(T)u} = \partial_{x}^{k}\partial_{y}^{m}\mathfrak{m} \cdot \hat{u} =\ldots=\mathfrak{m} \cdot \widehat{\partial_{x}^{k}\partial_{y}^{m}u} \in L^{1}(K \times \R) \end{displaymath} 
for any compact $K \subset U$, applying \eqref{form23} to $\partial_{x}^{k}\partial_{y}^{m}u \in \mathcal{G}(\Omega)$, and noting that multiplication by $\mathfrak{m}$ commutes with $x$ and $y$ derivatives. Similarly,
\begin{displaymath} \widehat{\partial_{t}^{n}\mathfrak{m}(T)u} = [(2\pi i\tau)^{n} \cdot \mathfrak{m}] \cdot \hat{u} \in L^{1}(K \times \R), \end{displaymath}
this time applying \eqref{form23} to $\tau \mapsto (2\pi i \tau)^{n}\mathfrak{m}(\tau) \in \mathcal{M}$.
These computations show that $\mathfrak{m}(T)u \in \mathcal{G}(\Omega)$, as claimed, and the inclusion $\mathcal{G}(\Omega) \subset C^{\infty}(\Omega)$ was already Lemma \ref{lemma1}. \end{proof}


We then package the information above into a useful proposition:

\begin{proposition}\label{prop2} Let $\Omega = U \times \R$, with $U$ open, let $u \in \mathcal{G}(\Omega)$ as in Definition \ref{classG}, and let $\mathfrak{m} \in \mathcal{M}$. If $Z$ is any finite linear combination of concatenations of the $X$ and $Y$ vector fields (or their right invariant counterparts $\tilde{X}$ and $\tilde{Y}$), then 
\begin{displaymath} Zu,\mathfrak{m}(T)u \in \mathcal{G}(\Omega) \subset C^{\infty}(\Omega) \quad \text{and} \quad \mathfrak{m}(T)(Zu) = Z(\mathfrak{m}(T)u). \end{displaymath}
\end{proposition}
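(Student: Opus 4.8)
The plan is to collect the three conclusions from the lemmas already established in this section, and then to verify the commutation identity $\mathfrak{m}(T)(Zu) = Z(\mathfrak{m}(T)u)$ directly on the vertical Fourier transform side. First I would record that $Zu \in \mathcal{G}(\Omega)$: this is Corollary \ref{cor1} in the left-invariant case, and the identical argument (rewriting $Zu = \sum_{\beta} P_{\beta}(z)\partial^{\beta}u$ as a finite linear combination of $z$-polynomials times plain derivatives, using that $X,Y,\tilde X,\tilde Y$ all have polynomial coefficients independent of $t$, or of degree one in $z$) handles the right-invariant vector fields $\tilde X,\tilde Y$ as well. Next, $\mathfrak{m}(T)u \in \mathcal{G}(\Omega) \subset C^{\infty}(\Omega)$ is exactly the content of the preceding lemma together with Lemma \ref{lemma1}. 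So the membership statements are immediate, and the only genuine content is the commutation.

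For the commutation identity, the key computational fact is \eqref{form55}: for any $v \in \mathcal{G}(\Omega)$ one has $\widehat{\mathfrak{m}(T)v} = \mathfrak{m} \cdot \hat v$ as locally integrable functions, and by \eqref{form23} all the objects involved lie in $L^{1}(K \times \R)$ for compact $K \subset U$, so the vertical Fourier transform is injective on $\mathcal{G}(\Omega)$ (two functions in $L^1_{\mathrm{loc}}$ with equal vertical Fourier transforms agree a.e., by the one-dimensional Fourier inversion applied slicewise). Hence it suffices to check that $\widehat{\mathfrak{m}(T)(Zu)}$ and $\widehat{Z(\mathfrak{m}(T)u)}$ coincide. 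Using \eqref{form55} with $v = Zu \in \mathcal{G}(\Omega)$ gives $\widehat{\mathfrak{m}(T)(Zu)} = \mathfrak{m} \cdot \widehat{Zu}$. For the other side, I would write $Zu = \sum_{\beta} P_{\beta}(z)\partial^{\beta}u$ and use Lemma \ref{lemma2} to pass the vertical Fourier transform through: $\widehat{Z w} = \sum_{\beta} P_{\beta}(z)(2\pi i\tau)^{n_{\beta}}\partial_{x}^{k_{\beta}}\partial_{y}^{m_{\beta}}\hat w$ for any $w \in \mathcal{G}(\Omega)$, where $\beta = (k_{\beta},m_{\beta},n_{\beta})$. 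Applying this with $w = \mathfrak{m}(T)u$, and then \eqref{form55}, one gets $\widehat{Z(\mathfrak{m}(T)u)} = \sum_{\beta} P_{\beta}(z)(2\pi i\tau)^{n_{\beta}}\partial_{x}^{k_{\beta}}\partial_{y}^{m_{\beta}}(\mathfrak{m}\hat u)$. Since $\mathfrak{m} = \mathfrak{m}(\tau)$ depends only on $\tau$, it commutes with $\partial_{x}$ and $\partial_{y}$, and the scalar $(2\pi i\tau)^{n_{\beta}}$ may be absorbed; so this equals $\mathfrak{m}(\tau)\sum_{\beta} P_{\beta}(z)(2\pi i\tau)^{n_{\beta}}\partial_{x}^{k_{\beta}}\partial_{y}^{m_{\beta}}\hat u = \mathfrak{m}\cdot\widehat{Zu}$, matching the first computation.

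The main obstacle — really the only point requiring care — is justifying that the identities from Lemma \ref{lemma2}, which are stated for distributions $u \in \mathcal{S}'(\Omega)$, may be applied to $\mathfrak{m}(T)u$ and that the derivatives $\partial_{x}^{k}\partial_{y}^{m}$ can legitimately be moved outside the multiplication by $\mathfrak{m}$; this is where the membership $\mathfrak{m}(T)u \in \mathcal{G}(\Omega)$ is used, together with the observation that multiplication by a $\tau$-function commutes with $\partial_{x},\partial_{y}$ in the sense of distributions (trivial) and that $(2\pi i\tau)^{n}\mathfrak{m}(\tau) \in \mathcal{M}$ so all products remain locally integrable by \eqref{form23}. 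One should also note the slight subtlety that the right-invariant fields $\tilde X, \tilde Y$ have the form $\partial_{x} \pm \tfrac{y}{2}\partial_{t}$ and $\partial_{y} \mp \tfrac{x}{2}\partial_{t}$, so the polynomials $P_{\beta}(z)$ appearing in the expansion of $Zu$ are again $t$-independent and the argument goes through verbatim. Since the authors explicitly say routine verifications are left to the reader in this section, I would present the commutation identity via the Fourier-side computation above and keep the justification of local integrability brief, citing \eqref{form23} and \eqref{form55}.
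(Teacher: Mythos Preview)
Your proposal is correct and follows essentially the same approach as the paper: both reduce to a single term $Z = P(z)\partial_{x}^{k}\partial_{y}^{m}\partial_{t}^{n}$, compute $\widehat{\mathfrak{m}(T)(Zu)}$ and $\widehat{Z(\mathfrak{m}(T)u)}$ via \eqref{form55} and Lemma \ref{lemma2}, and observe that multiplication by $\mathfrak{m}(\tau)$ commutes with $P(z)$, $(2\pi i\tau)^{n}$, and $\partial_{x},\partial_{y}$. Your explicit remark on injectivity of the vertical Fourier transform on $\mathcal{G}(\Omega)$ and your handling of the right-invariant fields are minor elaborations the paper leaves implicit, but the argument is the same.
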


\begin{proof} It only remains to prove the last claim. As observed in the proof of Corollary \ref{classG}, the vector field $Z$ can be written in the form
\begin{displaymath} Z = \sum P_{\beta}(z)\partial^{\beta}, \end{displaymath}
where $\partial^{\beta} = \partial_{x}^{k}\partial_{y}^{m}\partial_{t}^{n}$ is an arbitrary derivative, and $P_{\beta}(z)$ is a polynomial in only the $z$-variables. To prove the formula $\mathfrak{m}(T)(Zu) = Z(\mathfrak{m}(T)u)$, it then suffices to consider the case of a single summand, that is, $Z = P(z)\partial_{x}^{k}\partial_{y}^{m}\partial_{t}^{n}$. Applying \eqref{form55} first to $Zu$, then using Lemma \ref{lemma2}, then using that multiplication by $\mathfrak{m}$ commutes with $P(z),(2\pi i\tau)^{n},\partial_{x},\partial_{y}$, and finally applying \eqref{form55} once again to $u$, one has
\begin{align*} \widehat{m(T)(Z u)} & = \mathfrak{m} \cdot \widehat{Z u}\\
& = \mathfrak{m} \cdot P(z)(2\pi i \tau)^{n}\partial_{x}^{k}\partial_{y}^{m}\hat{u}\\
& = P(z)(2\pi i\tau)^{n}\partial_{x}^{k}\partial_{y}^{m} \cdot \widehat{m(T)u}\\
& = \widehat{Z(\mathfrak{m}(T)u)}. \end{align*} 
This completes the proof. \end{proof}

The machinery above will mainly be applied to functions of the form $u = \mathcal{S}f$:
\begin{proposition}\label{prop1} Let $\Omega = U \times \R$ be a flag domain, and let $f \in C^{\infty}_{c}(\He)$. Then $\mathcal{S}f \in \mathcal{G}(\Omega)$. \end{proposition}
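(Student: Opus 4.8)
Write $u := \mathcal{S}f$ and recall $\Omega = U\times\R$ with $U = \{(x,y) : x < A(y)\}$; fix $M\ge 1$ with $\spt f\subset B(0,M)$, so the measure $f\,\sigma$ is carried by $\partial\Omega\cap B(0,M)$. The plan is to (i) check that $u\in C^\infty(\Omega)\cap\mathcal{S}'(\Omega)$, (ii) establish uniform pointwise decay of the derivatives of $u$ on vertical slabs $K\times\R$, and (iii) convert this into the integrability of $\widehat{\partial u}$ demanded by Definition \ref{classG}. For (i), smoothness of $u$ on $\Omega$ is the standard ``differentiate under the integral sign'' argument: near any compact subset of $\Omega$ the quantity $\dist(p,\partial\Omega)$ is bounded below, so $q\mapsto G(q^{-1}\cdot p)$ stays smooth and all its $p$-derivatives are bounded on $\spt f\cap\partial\Omega$. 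For the vertically tempered property, since the $z$-component of $q^{-1}\cdot p$ is $z - z'$, any compact $K\subset U$ satisfies $\|q^{-1}\cdot p\|\ge|z-z'|\ge\dist(K,\partial U) =: \delta_0 > 0$ for all $p = (z,t)\in K\times\R$ and $q\in\partial\Omega$; with $G(w) = c\|w\|^{-2}$ (see \eqref{fundamentalSolution}) and $f\,\sigma$ a finite measure this gives $\|u\|_{L^\infty(K\times\R)}\lesssim_K 1$, hence $\psi u\in L^\infty(\R^3)\subset\mathcal{S}'(\R^3)$ for every $\psi\in\mathcal{B}(\Omega)$.

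Step (ii) is the heart of the matter. I claim that for every compact $K\subset U$ and every multi-index $\beta = (k,m,n)$,
\begin{displaymath} |\partial_x^k\partial_y^m\partial_t^n u(z,t)|\ \lesssim_{K,\beta,f}\ (1+|t|)^{-1}\quad\text{on } K\times\R, \end{displaymath}
with the exponent improved to $-2$ as soon as $k+m+n\ge 1$. To prove this I would parametrise $\partial\Omega$ by the Lipschitz graph of $A$ via the product formula $\sigma = c\,\mathcal{H}_E^1|_{\partial U}\times\mathcal{L}^1$ from \eqref{eq:sigmaFubini} and differentiate $G(q^{-1}\cdot p)$ under the integral. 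Since $p\mapsto q^{-1}\cdot p$ is affine in $p$ with coefficients polynomial in $(z',t')$ of bounded degree, hence bounded on $\spt f$, each $\partial_x^k\partial_y^m\partial_t^n[G(q^{-1}\cdot p)]$ is a finite sum of terms $(\text{bounded})\cdot(\partial_w^a\partial_s^b G)(q^{-1}\cdot p)$ with $a+b = k+m+n$. A direct computation from $G(w,s) = c(|w|^4+16s^2)^{-1/2}$ shows $|\partial_w^a\partial_s^b G(w,s)|\lesssim_{a,b,M}\|(w,s)\|^{-2}$ when $a = b = 0$ and $\lesssim_{a,b,M}\|(w,s)\|^{-4}$ when $a+b\ge 1$, uniformly in $|w|\le M$ (each horizontal or vertical differentiation strictly lowers the Kor\'anyi homogeneity). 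Finally, for $q\in\spt f$ and $z\in K$ one has $\|q^{-1}\cdot p\|\ge\delta_0$ and, since the $t$-component of $q^{-1}\cdot p$ equals $t + O_{K,M}(1)$, also $\|q^{-1}\cdot p\|\gtrsim|t|^{1/2}$ once $|t|$ is large; hence $\|q^{-1}\cdot p\|^{-2}\lesssim_{K,M}(1+|t|)^{-1}$ and $\|q^{-1}\cdot p\|^{-4}\lesssim_{K,M}(1+|t|)^{-2}$. Integrating against $|f|\,d\sigma$ yields the claim.

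For step (iii), fix a compact $K\subset U$ and a derivative $\partial = \partial_x^k\partial_y^m\partial_t^n$. By (ii), $\partial u(z,\cdot)$ is continuous with $|\partial u(z,t)|\lesssim_K(1+|t|)^{-1}$, so $\partial u(z,\cdot)\in L^2(\R)$ with norm $\lesssim_K 1$ uniformly in $z\in K$; Plancherel then bounds $\int_{|\tau|\le 1}|\widehat{\partial u}(z,\tau)|\,d\tau\le\sqrt2\,\|\partial u(z,\cdot)\|_{L^2(\R)}\lesssim_K 1$. Moreover $\partial_t^2\partial u$ carries at least one derivative, so the improved form of (ii) gives $\|\partial_t^2\partial u(z,\cdot)\|_{L^1(\R)}\lesssim_K 1$ uniformly in $z\in K$; since $\widehat{\partial_t^2\partial u} = -4\pi^2\tau^2\widehat{\partial u}$ by Lemma \ref{lemma2}, this yields $|\widehat{\partial u}(z,\tau)|\lesssim_K|\tau|^{-2}$ for $|\tau|\ge 1$, so $\int_{|\tau|\ge 1}|\widehat{\partial u}(z,\tau)|\,d\tau\lesssim_K 1$ uniformly in $z\in K$. (One checks that the $L^2$-Fourier transform of $\partial u(z,\cdot)$ agrees with the vertical Fourier transform of $\partial u$ in the sense of Definition \ref{def:vertFDist}, and that $(z,\tau)\mapsto\widehat{\partial u}(z,\tau)$ is jointly measurable, being a local $L^2$-limit of the continuous functions $(z,\tau)\mapsto\int_{-R}^R e^{-2\pi it\tau}\partial u(z,t)\,dt$.) Integrating the two bounds over $z\in K$ gives $\widehat{\partial u}\in L^1(K\times\R)$; as $K$ and $\partial$ were arbitrary, $u\in\mathcal{G}(\Omega)$.

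The only genuinely delicate point is the uniform decay estimate of step (ii): one must simultaneously keep track of the $t$-decay gained from each differentiation, through the Kor\'anyi homogeneity of $G$, and of the fact that $q^{-1}\cdot p$ never approaches the origin on the slab $K\times\R$, so no spurious singularity creeps in. Note also that $u$ itself decays only like $(1+|t|)^{-1}$, not $L^1$ in $t$, which is why the $L^2$/Plancherel argument (rather than a crude $L^1$ bound) is needed to control $\widehat{\partial u}$ near $\tau = 0$ in the case $\partial = \mathrm{id}$. Steps (i) and (iii) are otherwise routine once (ii) is available.
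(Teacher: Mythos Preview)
Your argument is correct, and it takes a genuinely different route from the paper's own proof.

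The paper proves Proposition~\ref{prop1} by first computing the vertical Fourier transform $\widehat{\mathcal{S}f}$ explicitly in terms of the modified Bessel function $K_{0}$ (Lemma~\ref{l:Fourier1}), and then feeding the known asymptotics of $K_{0}$ and its derivatives into decay estimates for $\partial_{z}^{n}\widehat{\mathcal{S}f}$ (Lemma~\ref{lemma3}), from which $\widehat{\partial u}\in L^{1}(K\times\R)$ follows (Corollary~\ref{cor3}). You bypass the Fourier side entirely: you establish pointwise $t$-decay of $u$ and all of its Euclidean derivatives on every slab $K\times\R$, then recover the $L^{1}(K\times\R)$ control on $\widehat{\partial u}$ by a Plancherel argument near $\tau=0$ combined with the identity $\widehat{\partial_{t}^{2}\partial u}=-4\pi^{2}\tau^{2}\widehat{\partial u}$ for $|\tau|\geq 1$. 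Your step~(ii) is the substantive one; the key observation that $|\partial_{w}^{a}\partial_{s}^{b}G(w,s)|\lesssim_{M}\|(w,s)\|^{-4}$ for $a+b\geq 1$ and $|w|\leq M$ does require the explicit structure of $G$ (pure Kor\'anyi homogeneity only yields $\|(w,s)\|^{-3}$ in the case $(a,b)=(1,0)$, but the extra $|w|^{3}$ factor in $\partial_{w_{i}}G$ together with $|w|^{2}\leq\|(w,s)\|^{2}$ closes the gap), and you correctly flag this with ``a direct computation''.

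Each approach has its advantages. Yours is more elementary and more robust: it would go through verbatim for any kernel with the same homogeneity and smoothness, and it avoids the Bessel-function machinery altogether. The paper's approach, on the other hand, yields the explicit formula~\eqref{form70} for $\widehat{\mathcal{S}f}$, and this formula is not just a means to Proposition~\ref{prop1} but is used independently later in the paper --- for instance in the proof of~\eqref{form56}, in Proposition~\ref{prop4}, and in Remark~\ref{r:SLP}. So while your route is cleaner for the bare statement $\mathcal{S}f\in\mathcal{G}(\Omega)$, the paper's detour through~\eqref{form70} is doing double duty.
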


\begin{proof} In Section \ref{appA}, we compute the following explicit formula for the (distributional) Fourier transform of $\mathcal{S}f \in \mathcal{S}'(\Omega)$:
\begin{equation}\label{form70} \widehat{(\mathcal{S}f)}(z,\tau) = c\int_{\partial U} e^{\pi i \omega(z,w)\tau} K_{0}(\tfrac{\pi}{2} |\tau| |z - w|^{2})\hat{f}(w,\tau) \, d\calH_{E}^{1}(w), \qquad (z,\tau) \in \R^{3}. \end{equation} 
The function $K_{0}$ is the modified Bessel function of the second kind of index $0$. It has a mild singularity at $0$, and decays rapidly at infinity. From these observations, and similar facts about higher order derivatives of $K_{0}$, it will follow that $\mathcal{S}f \in \mathcal{G}(\Omega)$. The full proof is a little technical, and hence postponed to Corollary \ref{cor3}. \end{proof}

\begin{cor}\label{cor2} Let $\Omega$ be a flag domain, let $f \in C^{\infty}_{c}(\He)$, and let $\mathfrak{m} \in \mathcal{M}$. Then $\mathfrak{m}(T)(\mathcal{S}f)$ is $\bigtriangleup^{\flat}$-harmonic in $\Omega$. \end{cor}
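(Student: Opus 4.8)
The plan is to combine Proposition \ref{prop1}, which provides the crucial input $\mathcal{S}f \in \mathcal{G}(\Omega)$, with the commutation identity of Proposition \ref{prop2}. First I would observe that $\bigtriangleup^{\flat} = X^{2} + Y^{2}$ is, by definition, a finite linear combination of concatenations of the vector fields $X$ and $Y$, so Proposition \ref{prop2} is applicable with the choice $Z = \bigtriangleup^{\flat}$. Since $\mathcal{S}f \in \mathcal{G}(\Omega)$, this proposition gives that $\bigtriangleup^{\flat}(\mathcal{S}f) \in \mathcal{G}(\Omega)$, that $\mathfrak{m}(T)(\mathcal{S}f) \in \mathcal{G}(\Omega) \subset C^{\infty}(\Omega)$, and, most importantly, the identity
\[ \bigtriangleup^{\flat}\big(\mathfrak{m}(T)(\mathcal{S}f)\big) = \mathfrak{m}(T)\big(\bigtriangleup^{\flat}(\mathcal{S}f)\big) \qquad \text{in } \Omega. \]

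Next I would invoke the $\bigtriangleup^{\flat}$-harmonicity of the single layer potential, recorded in the introduction and Remark \ref{harmonicityRemark}: $\mathcal{S}f$ is $\bigtriangleup^{\flat}$-harmonic in $\He \, \setminus \, \partial \Omega$, hence in particular throughout the open vertical strip $\Omega = U \times \R$. Thus $\bigtriangleup^{\flat}(\mathcal{S}f) \equiv 0$ on all of $\Omega$. Since $\mathfrak{m}(T)$ sends the identically zero function on $\Omega$ to the identically zero function — immediate from the defining integral formula, as $\widehat{0} = 0$ — we conclude $\mathfrak{m}(T)(\bigtriangleup^{\flat}(\mathcal{S}f)) = 0$ in $\Omega$. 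Substituting this into the displayed identity yields $\bigtriangleup^{\flat}(\mathfrak{m}(T)(\mathcal{S}f)) = 0$ in $\Omega$, which, together with $\mathfrak{m}(T)(\mathcal{S}f) \in C^{\infty}(\Omega)$, is exactly the assertion.

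I do not expect any genuine obstacle internal to this corollary: the whole content is the commutation $\bigtriangleup^{\flat} \mathfrak{m}(T) = \mathfrak{m}(T) \bigtriangleup^{\flat}$, which is already packaged in Proposition \ref{prop2}. The one substantive hypothesis to be checked is $\mathcal{S}f \in \mathcal{G}(\Omega)$, i.e. Proposition \ref{prop1}, whose verification is postponed to Section \ref{appA} (Corollary \ref{cor3}) and relies on the explicit Bessel-function formula \eqref{form70} for $\widehat{\mathcal{S}f}$ together with the mild singularity and rapid decay of $K_{0}$ and its derivatives. A second, minor point worth a sentence in the write-up is that $\mathfrak{m}(T)$ is nonlocal in the $t$-variable, so it is essential that $\bigtriangleup^{\flat}(\mathcal{S}f)$ vanishes on the \emph{entire} strip $\Omega$ rather than only on a sub-region; this is automatic here, since $\Omega$ has the form $U \times \R$ and $\mathcal{S}f$ is $\bigtriangleup^{\flat}$-harmonic on all of it.
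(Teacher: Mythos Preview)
Your proposal is correct and follows essentially the same argument as the paper: apply Proposition \ref{prop2} with $Z = \bigtriangleup^{\flat} = X^{2} + Y^{2}$ (using Proposition \ref{prop1} to ensure $\mathcal{S}f \in \mathcal{G}(\Omega)$) to commute $\bigtriangleup^{\flat}$ past $\mathfrak{m}(T)$, then use that $\bigtriangleup^{\flat}\mathcal{S}f = 0$ in $\Omega$. Your additional remarks on the nonlocality of $\mathfrak{m}(T)$ and the full-strip vanishing are accurate elaborations but not needed beyond what the paper's one-line proof records.
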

\begin{proof} Since $\bigtriangleup^{\flat} = X^{2} + Y^{2}$, Proposition \ref{prop2} is applicable, and implies that $\bigtriangleup^{\flat}(\mathfrak{m}(T)\mathcal{S}f) = \mathfrak{m}(T)(\bigtriangleup^{\flat}\mathcal{S}f) = 0$, since $\bigtriangleup^{\flat}\mathcal{S}f = 0$ in $\Omega$.  \end{proof}


\section{The operators $\tfrac{1}{2}I \pm D^{t}$ are injective and have closed range}\label{s:injectivity}
We now begin the proof of Theorem \ref{main2}, stating that the operators $\tfrac{1}{2}I \pm D$ and $\tfrac{1}{2}I + D^{t}$ are invertible on $L^{2}(\sigma)$ whenever $\Omega$ is a flag domain and $\sigma = |\partial \Omega|_{\He} = \mathcal{H}^{2}_{E}|_{\partial \Omega}$. It follows from Theorem \ref{t:FO1} that both operators are bounded $L^{2}(\sigma) \to L^{2}(\sigma)$, so the remaining tasks are to establish injectivity and surjectivity. 

The strategy is the following. In Theorem \ref{t:injectivity}, we prove that $\|f\|_{L^{2}(\sigma)} \lesssim \|(\tfrac{1}{2}I\pm D^{t})f\|_{L^{2}(\sigma)}$ for all $f \in L^{2}(\sigma)$. This shows that $\tfrac{1}{2}\pm D^{t}$ is injective. Then, we recall a standard result in functional analysis, see \cite[Theorem 4.15]{MR1157815}: a bounded operator $T \colon L^{2}(\sigma) \to L^{2}(\sigma)$ is surjective if and only if the adjoint $T^{t}$ satisfies $\|T^{t}f\|_{L^{2}(\sigma)} \gtrsim \|f\|_{L^{2}(\sigma)}$ for all $f \in L^{2}(\sigma)$. Since $\tfrac{1}{2}I \pm D^{t}$ is the adjoint of $\tfrac{1}{2}I \pm D$, we infer that $\tfrac{1}{2}I \pm D$ is surjective. In Section \ref{s:surjectivity}, we prove separately that $\tfrac{1}{2}I \pm D^{t}$ is surjective. By applying  \cite[Theorem 4.15]{MR1157815} once more, this implies that $\tfrac{1}{2}I + D$ is injective, and completes the proof of Theorem \ref{main2}. In particular: to prove Theorem \ref{main2}, we only need to prove the invertibility of $\tfrac{1}{2}I \pm D^{t}$.

\begin{thm}\label{t:injectivity} Let $\Omega = \{(x,y,t) : x < A(y)\} \subset \R^{3}$ be a flag domain, and let $\sigma = |\partial \Omega|_{\He} = \calH_{E}^{2}|_{\partial \Omega}$. Then the operators $\tfrac{1}{2}I \pm D^{t}$ are injective and have closed range in $L^{2}(\sigma)$. In fact,
 \begin{equation}\label{form4} \|f\|_{L^{2}(\sigma)} \lesssim \min\{\|(\tfrac{1}{2}I - D^{t})f\|_{L^{2}(\sigma)},\|(\tfrac{1}{2}I + D^{t})f\|_{L^{2}(\sigma)}\}, \qquad f \in L^{2}(\sigma), \end{equation}
 where the implicit constant only depends on the Lipschitz constant of $A$.
 \end{thm}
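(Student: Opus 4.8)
The plan is to adapt Verchota's Rellich-identity method \cite{MR769382}, in the parabolic-flavoured form of Brown \cite{MR987761} and Hofmann--Lewis \cite{MR1418902}. Since $D^{t}$ is bounded on $L^{2}(\sigma)$ (Theorem \ref{t:FO1}), a density argument reduces \eqref{form4} to the case $f \in C^{\infty}_{c}(\He)$. For such $f$, put $u := \mathcal{S}f$. Then $u$ is $\bigtriangleup^{\flat}$-harmonic in $\He \setminus \partial \Omega$, belongs to $\cG(\Omega) \cap \cG(\Omega^{*}) \subset C^{\infty}$ by Proposition \ref{prop1} and its evident analogue for $\Omega^{*} := \He \setminus \overline{\Omega}$ (whose $\He$-perimeter is again $\sigma$ and whose inward normal is $-\nu$), decays like $|u(p)| \lesssim \|p\|^{-2}$ and $|\nabla u(p)| \lesssim \|p\|^{-3}$ at infinity, and has non-tangential limits of $u$ and $\nabla u$ in $L^{2}(\sigma)$ from both sides of $\partial \Omega$ (Corollary \ref{K-MaxNT} and Theorem \ref{t:rJumps}). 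These facts — after the standard approximation of $\Omega$ and $\Omega^{*}$ from inside by domains on which $u$ is smooth up to the closure, and a passage to the limit justified by the $L^{2}(\sigma)$ bound on the non-tangential maximal function of $\nabla u$ — make the Rellich identity of Proposition \ref{p:rellich} applicable with $E \in \{\Omega, \Omega^{*}\}$ and the vector field $X$.

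Here the structure of flag domains is decisive: by \eqref{flagNormal}, $\nu = n_{1}X + n_{2}Y$ with $\langle X,\nu \rangle = n_{1} = -(1 + A'^{2})^{-1/2}$, so $|\langle X,\nu \rangle| \geq (1 + L^{2})^{-1/2} =: c_{0}$ with a fixed sign, where $L := \mathrm{Lip}(A)$; thus $X$ is genuinely transversal, which is exactly what a single-vector-field Rellich argument needs. Writing $|\nabla u|^{2} = (\nabla_{\nu}u)^{2} + (\nabla_{\tau}u)^{2}$ and $Xu = n_{1}\nabla_{\nu}u + n_{2}\nabla_{\tau}u$ in the orthonormal frame $\{\nu,\tau\}$, Proposition \ref{p:rellich} on $\Omega$ rearranges to
\begin{equation}\label{PLANrellich} \int |n_{1}|(\nabla_{\nu}^{+}u)^{2}\,d\sigma = \int |n_{1}|(\nabla_{\tau}u)^{2}\,d\sigma + 2\int n_{2}(\nabla_{\tau}u)(\nabla_{\nu}^{+}u)\,d\sigma - 2\int_{\Omega}(Yu)(Tu)\,dp, \end{equation}
and on $\Omega^{*}$, where $\langle X,\nu_{\Omega^{*}}\rangle = |n_{1}| > 0$, to the same identity with $\nabla_{\nu}^{+}u$ replaced by $\nabla_{\nu}^{-}u$ and $\int_{\Omega}$ by $\int_{\Omega^{*}}$; here one uses that $\nabla_{\tau}u$ has no jump across $\partial \Omega$, by \eqref{noJump}. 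Since $c_{0} \leq |n_{1}| \leq 1$ and $|n_{2}| \leq 1$, Cauchy--Schwarz turns \eqref{PLANrellich} and its exterior analogue into the two-sided comparisons $\|\nabla_{\tau}\mathcal{S}f\|_{L^{2}(\sigma)} \lesssim_{L} \|\nabla_{\nu}^{\pm}\mathcal{S}f\|_{L^{2}(\sigma)} + |b(f)|^{1/2}$ and $\|\nabla_{\nu}^{\pm}\mathcal{S}f\|_{L^{2}(\sigma)} \lesssim_{L} \|\nabla_{\tau}\mathcal{S}f\|_{L^{2}(\sigma)} + |b(f)|^{1/2}$, where $b(f) := \int_{\He}(Y\mathcal{S}f)(T\mathcal{S}f)\,dp$ is the ``lower order term''. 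Feeding in the jump relations from Corollary \ref{c:allJumps}, namely $\nabla_{\nu}^{\pm}\mathcal{S}f = (\mp\tfrac{1}{2}I + D^{t})f$ and $f = \nabla_{\nu}^{-}\mathcal{S}f - \nabla_{\nu}^{+}\mathcal{S}f$, yields
\begin{equation}\label{PLANreduced} \|f\|_{L^{2}(\sigma)} \leq \|\nabla_{\nu}^{+}\mathcal{S}f\|_{L^{2}(\sigma)} + \|\nabla_{\nu}^{-}\mathcal{S}f\|_{L^{2}(\sigma)} \lesssim_{L} \min\{\|(\tfrac{1}{2}I - D^{t})f\|_{L^{2}(\sigma)}, \|(\tfrac{1}{2}I + D^{t})f\|_{L^{2}(\sigma)}\} + |b(f)|^{1/2}. \end{equation}

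The main obstacle — and the place where the flag-domain hypothesis is used most heavily — is to absorb $b(f)$ into the left side of \eqref{PLANreduced}; this is where the fractional $T$-multiplier calculus of Section \ref{s:Fourier} enters, and it is unavailable without the vertical ruling of $\partial \Omega$. I would first rewrite the solid integral $b(f)$ as a boundary integral by combining the divergence theorem (Theorem \ref{t:div}) with an integration by parts in the $t$-variable — legitimate because $[Y,T] = 0$, $u$ has matching one-sided traces of $u$ and of $Tu$ on $\partial \Omega$ ($T = \partial_{t}$ being tangent to the ruled surface $\partial \Omega$), and $u$ decays as $t \to \pm\infty$ — obtaining an expression of the form $\int_{\partial \Omega}(Sf)(TSf)\langle Y,\nu\rangle\,d\sigma$ up to harmless terms. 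One then splits the $t$-derivative symmetrically, $T = |T|^{1/2}\,\sgn(T)\,|T|^{1/2}$; since $|T|^{1/2}$ and $\sgn(T)$ commute with $\mathcal{S}$ — visible from the explicit Fourier formula \eqref{form70}, where $\hat f$ enters at the same frequency $\tau$ — with restriction to $\partial \Omega$, and with the $z$-only factor $\langle Y,\nu\rangle$, and since $\sgn(T)$ is a self-adjoint contraction (Section \ref{s:Fourier}), this gives $|b(f)| \lesssim_{L} \big\| |T|^{1/2}Sf \big\|_{L^{2}(\sigma)}^{2} = \big\| S(|T|^{1/2}f) \big\|_{L^{2}(\sigma)}^{2}$. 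The delicate point is that this half-order $t$-derivative quantity has the \emph{same} parabolic weight as $\|\nabla_{\tau}\mathcal{S}f\|_{L^{2}(\sigma)}$, so the absorption is not a matter of homogeneity but requires a genuine gain; I expect to obtain it by a frequency decomposition in $\tau$ — the high frequencies being controlled, via the rapid decay of $K_{0}$ in \eqref{form70}, by $\|\nabla_{\tau}\mathcal{S}f\|_{L^{2}(\sigma)}$ with a small constant, and the low frequencies forming an honest smoothing operator — exploiting the smoothing properties of $\mathcal{S}$ encoded in \eqref{form70}. This is exactly the parabolic-type subtlety flagged in the introduction, and the use of the operators $|T|^{\alpha}$ is what forces the flag (vertically ruled) structure. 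Once the term $|b(f)|^{1/2}$ in \eqref{PLANreduced} has been reabsorbed, \eqref{form4} follows, and injectivity together with closedness of the range of $\tfrac{1}{2}I \pm D^{t}$ are immediate consequences.
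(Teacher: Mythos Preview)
Your overall architecture is correct and matches the paper: reduce to $f\in C^\infty_c(\He)$, set $u=\mathcal{S}f$, apply the Rellich identity \eqref{rellich} on interior and exterior approximating domains, exploit $|\langle X,\nu\rangle|\ge c_0>0$, and reduce everything to controlling the solid term $\int Yu\cdot Tu$. The structure up to and including \eqref{PLANreduced} is fine (modulo the minor sloppiness that $b(f)$ should really be the interior and exterior integrals separately, with opposite signs, not a single integral over $\He$).

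The gap is in your treatment of the solid term. Your proposed conversion of $\int_\Omega Yu\cdot Tu$ to the boundary integral $\int_{\partial\Omega}(Sf)(TSf)\langle Y,\nu\rangle\,d\sigma$ does not work: if you apply the divergence theorem to the vector field $u(Tu)Y$ and also integrate by parts in $t$ (using $[Y,T]=0$ and decay), you find that this boundary integral is \emph{identically zero}, so it cannot carry the solid term. Whatever ``harmless terms'' remain would have to contain the full solid integral. Consequently your bound $|b(f)|\lesssim\|T^{1/2}Sf\|_{L^2(\sigma)}^2$ is not established, and even if it were, your proposed absorption by ``frequency decomposition in $\tau$'' is too vague: there is no small parameter, the low-$\tau$ part is not a smoothing operator in any sense that would yield absorption, and you have not explained how to break the circularity between $\|T^{1/2}Sf\|$, $\|\nabla_\tau Sf\|$, and $\|\nabla_\nu^\pm u\|$.

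The paper handles the solid term quite differently. After Plancherel in $t$, one writes $\int_\Omega Yu\cdot Tu\lesssim I_1\cdot I_2$ with $I_1=\|\nabla(T^{1/4}u)\|_{L^2(\Omega_j)}$ and $I_2=\|T^{3/4}u\|_{L^2(\Omega_j)}$. The key step (Lemma~\ref{lemma4}) is that $T^{1/4}u$ is itself $\bigtriangleup^\flat$-harmonic (Corollary~\ref{cor2}), so $|\nabla(T^{1/4}u)|^2=\mathrm{div}^\flat[(T^{1/4}u)\nabla(T^{1/4}u)]$ and the divergence theorem converts $I_1^2$ to $\int_{\partial\Omega_j}(T^{1/4}u)\langle\nabla(T^{1/4}u),\nu_j\rangle\,d\sigma_j$; redistributing the $T^{1/4}$ via Plancherel gives $I_1^2\le\|T^{1/2}u\|_{L^2(\sigma_j)}\|\nabla_\nu u\|_{L^2(\sigma_j)}$. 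The crucial gain is the explicit factor $\|\nabla_\nu u\|^{1/2}$ in the final bound on the solid term --- this is what your $\|T^{1/2}Sf\|^2$ bound lacks, and it is what ultimately allows absorption. Finally, $\|T^{1/2}u\|_{L^2(\sigma)}$ itself is controlled (see \eqref{form63}--\eqref{form64}) by yet another divergence-theorem argument applied to $|T^{1/2}u|^2X$, yielding $\|T^{1/2}u\|_{L^2(\sigma)}\lesssim\|f\|^{1/2}\|\nabla_\nu^- u\|^{1/2}$ directly --- no frequency cutoff is needed. Combining these gives the inequality \eqref{form2} with the mixed exponents $\|f\|^{1/2}\|(\tfrac12 I+D^t)f\|^{1/2}$ and $\|f\|^{3/4}\|(\tfrac12 I+D^t)f\|^{1/4}$, which then absorb cleanly.
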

  
\begin{remark} The injectivity of the operators $\tfrac{1}{2}I \pm D^{t}$ is immediately clear from \eqref{form4}. The closed range also follows easily. If, for example, $(\tfrac{1}{2}I + D^{t})f_{j} \to g$, then $\{(\tfrac{1}{2}I + D^{t})f_{j}\}_{j \in \N}$ is a bounded sequence in $L^{2}(\sigma)$, and hence also $\{f_{j}\}_{j \in \N}$ is bounded in $L^{2}(\sigma)$ by \eqref{form4}. Therefore, one may pick a subsequence converging weakly to some $f \in L^{2}(\sigma)$, and now it is easy to check (using duality) that $(\tfrac{1}{2}I + D^{t})f = g$. \end{remark}
 
Since $\tfrac{1}{2}I + D^{t}$ is a bounded operator on $L^{2}(\sigma)$, it suffices to establish \eqref{form4} for \emph{a priori} $f \in C^{\infty}_{c}(\He)$. Fix such a function $f$. We start by writing
 \begin{displaymath} \|f\|_{L^{2}(\sigma)} \leq \|(\tfrac{1}{2}I + D^{t})f\|_{L^{2}(\sigma)} + \|(-\tfrac{1}{2}I + D^{t})f\|_{L^{2}(\sigma)}, \end{displaymath}
by the triangle inequality. So, to establish \eqref{form4}, it will suffice to prove that
 \begin{align} \|(-\tfrac{1}{2}I + D^{t})f\|_{L^{2}(\sigma)} \lesssim \|(\tfrac{1}{2}I + D^{t})f\|_{L^{2}(\sigma)} & + \|f\|_{L^{2}(\sigma)}^{1/2}\|(\tfrac{1}{2}I + D^{t})f\|_{L^{2}(\sigma)}^{1/2} \notag\\
 &\label{form2} + \|f\|_{L^{2}(\sigma)}^{3/4}\|(\tfrac{1}{2}I + D^{t})f\|_{L^{2}(\sigma)}^{1/4}. \end{align}
 Formally, one should also prove the same inequality with the roles of "$+$" and "$-$" reversed, but this is clear by symmetry. To prove \eqref{form2}, we write $u := \mathcal{S}f$, and recall from the jump relations in Corollary \ref{c:allJumps} that
 \begin{equation}\label{form17} (\tfrac{1}{2}I + D^{t})f = \nabla_{\nu}^{-}u \quad \text{and} \quad  (-\tfrac{1}{2}I + D^{t})f = \nabla_{\nu}^{+}u. \end{equation} 
As in Corollary \ref{c:allJumps}, the notation $\nabla_{\nu}^{+}u(p)$ and $\nabla^{-}_{\nu}u(p)$ refer to the interior and exterior, respectively, non-tangential limits of $q \mapsto \langle \nabla u(q),\nu(p) \rangle$. In particular, they coincide $\sigma$ a.e. with the radial limits
 \begin{displaymath} \nabla_{\nu}^{+}u(p) = \lim_{r \to 0+} \langle \nabla u(p \cdot (-r,0,0)),\nu(p) \rangle \quad \text{and} \quad \nabla_{\nu}^{-}u(p) = \lim_{r \to 0+} \langle \nabla u(p \cdot (r,0,0)),\nu(p) \rangle. \end{displaymath}
We also recall from Corollary \ref{c:allJumps} that
 \begin{displaymath} \nabla_{\tau}u(p) := \mathop{\lim_{q \to p}}_{q \in \He \, \setminus \, \partial \Omega} \langle \nabla u(q),\tau(p) \rangle = \mathrm{p.v.} \int \langle \nabla G(q^{-1} \cdot p),\tau(p) \rangle f(q) \, d\sigma(q), \end{displaymath} 
for $\sigma$ a.e. $p \in \partial \Omega$, where $\tau(p)$ is the horizontal tangent vector at $p \in \partial \Omega$, introduced in Definition \ref{HTangent}. The proof of \eqref{form2} consists of two parts. First, we show that
 \begin{equation}\label{form3} \|\nabla^{+}_{\nu}u\|_{L^{2}(\sigma)} \lesssim \|\nabla_{\tau}u\|_{L^{2}(\sigma)} + \|f\|_{L^{2}(\sigma)}^{1/2}\|T^{1/2}u\|_{L^{2}(\sigma)}^{1/2},  \end{equation}
 (the interpretation of $\|T^{1/2}u\|_{L^{2}(\sigma)}$ will be clarified in \eqref{form56}), and then
 \begin{equation}\label{form5} \begin{cases} \|T^{1/2}u\|_{L^{2}(\sigma)} \lesssim \|f\|_{L^{2}(\sigma)}^{1/2}\|\nabla_{\nu}^{-}u\|_{L^{2}(\sigma)}^{1/2}. \\ \|\nabla_{\tau}u\|_{L^{2}(\sigma)} \lesssim \|\nabla_{\nu}^{-}u\|_{L^{2}(\sigma)} +  \|f\|_{L^{2}(\sigma)}^{1/2}\|\nabla_{\nu}^{-}u\|_{L^{2}(\sigma)}^{1/2}. \end{cases} \end{equation}
 Combining the estimates \eqref{form3}-\eqref{form5}, we see that
 \begin{displaymath} \|\nabla^{+}_{\nu}u\|_{L^{2}(\sigma)} \lesssim \|\nabla_{\nu}^{-}u\|_{L^{2}(\sigma)} +  \|f\|_{L^{2}(\sigma)}^{1/2}\|\nabla_{\nu}^{-}u\|_{L^{2}(\sigma)}^{1/2} + \|f\|^{3/4}_{L^{2}(\sigma)}\|\nabla_{\nu}^{-}u\|_{L^{2}(\sigma)}^{1/4}, \end{displaymath}
 which is equivalent to \eqref{form2} by \eqref{form17}.
 
 \subsubsection{Proof of \eqref{form3}} Recall that $\Omega = U \times \R$, where $U = \{(x,y) \in \R^{2} : x < A(y)\}$, and $A \colon \R \to \R$ is Lipschitz. We begin by introducing a sequence of auxiliary domains $\overline{\Omega}_{j} \subset \Omega$, which are, in fact, just translates of $\Omega$. For $j \in \N$, we define the map $\Phi_{j} \colon \He \to \He$ as the right translation $\Phi_{j}(p) = p \cdot (-2^{-j},0,0)$. Then, we set
 \begin{displaymath} \Omega_{j} := \Phi_{j}(\Omega). \end{displaymath} 
 We list some basic properties of the domains $\Omega_{j}$ and the maps $\Phi_{j}$:
 \begin{itemize}
 \item[(a)] $\Omega_{j}$ is also a flag domain, indeed $\Omega_{j} = \{(x,y,t) : x < A(y) - 2^{-j}\}$, so in particular the horizontal normals $\nu_{j}$ and tangents $\tau_{j}$ are well-defined (see Section \ref{s:normals}).
 \item[(b)] The following relation holds between the (coordinates of the) horizontal normals and tangents of $\partial \Omega$ and $\partial \Omega_{j}$: 
 \begin{equation}\label{form86} \nu_{j}(\Phi_{j}(p)) = \nu(p) \quad \text{and} \quad \tau_{j}(\Phi_{j}(p)) = \tau(p) \qquad \text{for $\sigma$ a.e. $p \in \partial \Omega$.}\end{equation}
 \item[(c)] The $\Phi_{j}$-push-forward of $\sigma$ coincides with $\sigma_{j} = \mathcal{H}^{2}|_{\partial \Omega_{j}}$, that is, $\sigma_{j} = \Phi_{j\sharp}\sigma$.
 \end{itemize}
Properties (b) and (c) use essentially the flag domain assumption, which guarantees that left- and right-invariant horizontal perimeter measures coincide (with Euclidean $\calH^{2}$-measure), and the coordinates of the left and right horizontal normals also coincide (as vectors in $S^{1}$, recall Proposition \ref{prop3}). With this in mind, (b) and (c) follow by noting that the right translation $\Phi_{j}$ preserves right-invariant objects, which in the flag domain setting happen to coincide with their left-invariant counterparts.

\begin{remark} Why do we use the right translation $\Phi_{j}$ here? The reason is that $\nabla u(\Phi_{j}(p))$ is dominated by radial maximal function $\mathcal{N}_{\mathrm{rad}}(\nabla u)(p)$ defined in Section \ref{s:nt}, and the same would not (necessarily) be true of $u((-2^{-j},0,0) \cdot p)$.  \end{remark}

 Armed with the properties (a)-(c), we start from the LHS of \eqref{form3}. The first equation below is by dominated convergence, noting that $|\nabla u(\Phi_{j}(p))| \leq \mathcal{N}_{\mathrm{rad}}(\nabla u)(p)$ for $j \geq 1$:
 \begin{align} \|\nabla_{\nu}^{+}u\|_{L^{2}(\sigma)}^{2} & = \lim_{j \to \infty} \int |\langle \nabla  u(\Phi_{j}(p)), \nu(p) \rangle|^{2} \, d\sigma(p) \notag\\
 & \stackrel{\textup{(b)}}{=} \lim_{j \to \infty} \int |\langle \nabla u(\Phi_{j}(p)),\nu_{j}(\Phi_{j}(p)) \rangle|^{2} \, d\sigma(p) \notag\\
 &\label{form13} \stackrel{\textup{(c)}} = \lim_{j \to \infty} \int |\langle \nabla u(p),\nu_{j}(p) \rangle|^{2} \, d\sigma_{j}(p) = \lim_{j \to \infty} \|\langle \nabla u,\nu_{j} \rangle\|_{L^{2}(\sigma_{j})}^{2}. \end{align} 
There are two key reasons to work in the domains $\Omega_{j}$ instead of $\Omega$: first, $u \in C^{\infty}(\overline{\Omega}_{j})$, so the divergence theorem, Theorem \ref{t:div} and its corollaries in Section \ref{s:div} becomes applicable. Second, functions of the form $Z(T^{\alpha}u)$ are well-defined and smooth in $\overline{\Omega}_{j}$.

We start by applying the Rellich identity, Proposition \ref{p:rellich}, to the function $u = \mathcal{S}f$, so let us check the conditions (1)-(3). The harmonicity of $u$ in $\Omega_{j}$ is clear. Second, we have $\nabla u \in L^{2}(\sigma_{j}) \cap L^{2}(\Omega_{j})$, recalling that $\nabla u(p) = \int (\nabla G)(q^{-1} \cdot p)f(q) \, d\sigma(q)$, where $f \in C^{\infty}_{c}(\He)$. This implies that $\nabla u$ is bounded on $\overline{\Omega}_{j}$ with $|\nabla u(p)| \lesssim_{f,j} \min\{1,\|p\|^{-3}\}$ for $p \in \overline{\Omega}_{j}$. Finally, if $Z_{1},Z_{2},Z_{3} \in \{X,Y\}$, we note that $Z_{2}Z_{3}u$ is bounded on $\overline{\Omega}_{j}$ with $|(Z_{1}Z_{2}u)(p)| \lesssim \|p\|^{-4}$ for $p \in \overline{\Omega}_{j}$. Consequently 
 \begin{displaymath} |(Z_{1}u)(p)(Z_{2}Z_{3}u)(p)| \lesssim_{f,j} \min\{1,\|p\|^{-7}\}, \qquad p \in \overline{\Omega}_{j}, \end{displaymath}
 so certainly $(Z_{1}u)(Z_{2}Z_{3}u) \in L^{1}(\Omega_{j})$.
 
 With the conditions verified, we apply \eqref{rellich} to deduce that
 \begin{equation}\label{form9} \int |\nabla u|^{2} \langle X,\nu_{j} \rangle \, d\sigma_{j} = 2\int  Xu\langle \nabla u,\nu_{j} \rangle \, d\sigma_{j} - 2 \int_{\Omega_{j}} Yu \cdot Tu \, dp. \end{equation}
To benefit from this information, we expand $|\nabla u|^{2}$ in the $\{\nu_{j},\tau_{j}\}$-basis as
\begin{equation}\label{form8} |\nabla u(p)|^{2} = \langle \nabla u(p),\nu_{j}(p) \rangle^{2} + \langle \nabla u(p),\tau_{j}(p) \rangle^{2}.   \end{equation} 
The equation \eqref{form8} follows from the fact that $\{\tau_{j}(p),\nu_{j}(p)\}$ is an orthonormal basis for $\spa\{X_{p},Y_{p}\}$, so $\nabla u  = \langle \nabla u,\nu_{j} \rangle\nu_{j} + \langle \nabla u,\tau_{j} \rangle \tau_{j}$. We may also use this expansion to write
\begin{equation}\label{form10} Xu = \langle \nabla u,X \rangle = \langle \nabla u, \nu_{j} \rangle\langle X,\nu_{j} \rangle + \langle \nabla u,\tau_{j} \rangle\langle X,\tau_{j} \rangle. \end{equation}
Now, combining \eqref{form9}, \eqref{form8}, and \eqref{form10}, we obtain the identity
\begin{align*} \int & \Big( |\langle \nabla u,\nu_{j} \rangle|^{2} + |\langle \nabla u,\tau_{j} \rangle|^{2} \Big)\langle X,\nu_{j} \rangle \, d\sigma_{j} = -2 \int_{\Omega_{j}} Yu \cdot Tu \,dp \\
& \quad + 2\int \Big( |\langle \nabla u,\nu_{j} \rangle|^{2} \langle X,\nu_{j} \rangle + \langle \nabla u, \tau_{j} \rangle \langle X,\tau_{j} \rangle\langle \nabla u,\nu_{j} \rangle \,  \Big) \, d\sigma_{j}. \end{align*}
Rearranging terms, we obtain
\begin{align} \int |\langle \nabla u,\nu_{j} \rangle|^{2} \langle X,\nu_{j} \rangle \, d\sigma_{j} & = \int |\langle \nabla u,\tau_{j} \rangle|^{2}\langle X,\nu_{j} \rangle \, d\sigma_{j} \notag\\
&\label{form11} \qquad - 2 \int \langle \nabla u,\nu_{j} \rangle \langle \nabla u, \tau_{j} \rangle \langle X,\tau_{j} \rangle \, d\sigma_{j}\\
& \qquad + 2 \int_{\Omega_{j}} Yu \cdot Tu \,dp. \notag \end{align}
To proceed, we remark that the dot product $p \mapsto \langle X,\nu_{j}(p) \rangle$ has constant negative sign $\sigma_{j}$ a.e. on $\partial \Omega_{j}$ and is bounded from below in absolute value: indeed $\langle X,\nu_{j} \rangle = (1,0) \cdot (n_{1},n_{2})$, where
\begin{displaymath} (n_{1}(y,A(y) - 2^{-j}),n_{2}(y,A(y) - 2^{-j})) = \left(\frac{-1}{\sqrt{1 + |A'(y)|^{2}}},\frac{A'(y)}{\sqrt{1 + |A'(y)|^{2}}} \right), \quad y \in \R. \end{displaymath}
Therefore, taking absolute values of \eqref{form11} (and using Cauchy-Schwarz if the middle term happens to dominate), we obtain 
\begin{equation}\label{form12} \|\langle \nabla u,\nu_{j} \rangle\|_{L^{2}(\sigma_{j})}^{2} \lesssim \|\langle \nabla u,\tau_{j}\rangle\|_{L^{2}(\sigma_{j})}^{2} + \left|\int_{\Omega_{j}} Yu \cdot Tu \, dp \right|. \end{equation}
For later reference, we note that the roles of $\tau_{j}$ and $\nu_{j}$ could be reversed, by the same argument:
\begin{equation}\label{form66} \|\langle \nabla u,\tau_{j} \rangle\|_{L^{2}(\sigma_{j})}^{2} \lesssim \|\langle \nabla u,\nu_{j}\rangle\|_{L^{2}(\sigma_{j})}^{2} + \left|\int_{\Omega_{j}} Yu \cdot Tu \, dp \right|. \end{equation}
Comparing \eqref{form12} with \eqref{form13}, we have shown that
\begin{equation}\label{form14} \|\nabla_{\nu}^{+}u\|_{L^{2}(\sigma)}^{2} \lesssim \liminf_{j \to \infty} \left[ \|\langle \nabla u,\tau_{j}\rangle\|_{L^{2}(\sigma_{j})}^{2} + \left|\int_{\Omega_{j}} Yu \cdot Tu \, dp \right| \right]. \end{equation} 
For the first term on the RHS in \eqref{form14}, the limit exists and equals $\|\nabla_{\tau}u\|_{L^{2}(\sigma)}$. In fact,
\begin{displaymath} \|\langle \nabla u,\tau_{j}\rangle\|_{L^{2}(\sigma_{j})}^{2} = \int_{\partial \Omega} |\langle \nabla u(\Phi_{j}(p)),\tau(p) \rangle|^{2} \, d\sigma(p),   \end{displaymath} 
by the properties (b) and (c) of the domains $\Omega_{j}$. Furthermore, 
\begin{displaymath} |\nabla u(\Phi_{j}(p))| = |\nabla u(p \cdot (-2^{-j},0,0))| \leq \mathcal{N}_{\mathrm{rad}}(\nabla u)(p), \qquad p \in \partial \Omega. \end{displaymath}
Since $\mathcal{N}_{\mathrm{rad}}(\nabla u) \in L^{2}(\sigma)$ by Corollary \ref{K-MaxNT}, it follows from dominated convergence that
\begin{equation}\label{form18} \lim_{j \to \infty} \|\langle \nabla u,\tau_{j}\rangle\|_{L^{2}(\sigma_{j})}^{2} = \|\nabla_{\tau}u\|_{L^{2}(\sigma)}^{2}. \end{equation} 
We then consider the second term on the RHS of \eqref{form14}. First a remark about notation:
\begin{remark} Next, we will use the formalism of vertical Fourier multipliers introduced in Section \ref{tauMult}. We will only need $\mathfrak{m}_{\alpha}(\tau) = |\tau|^{\alpha}$, with $\alpha \geq 0$, and $\mathfrak{m}_{\sgn}(\tau) = |\tau|/\tau$, so we assign special notation for these two types of multiplier:
\begin{displaymath} T^{\alpha}u := \mathfrak{m}_{\alpha}(T)u \quad \text{and} \quad Hu := \mathfrak{m}_{\sgn}(T)u, \qquad u \in \mathcal{G}(\Omega). \end{displaymath}
The letter "$H$" stands for "Hilbert transform" (in the vertical direction). It may be worth noting that $T^{1}u \neq Tu$, in fact $T^{1}u = THu$. Propositions \ref{prop2} and \ref{prop1} imply that $T^{\alpha}u,Hu \in C^{\infty}(\Omega) \cap \mathcal{G}(\Omega)$, and $Z(T^{\alpha}u) = T^{\alpha}(Zu)$ in $\Omega$ for horizontal derivatives $Z$.
\end{remark}

 Here is the estimate for the second term on the RHS of \eqref{form14}:
\begin{lemma}\label{lemma4} With the previous notation,
\begin{displaymath}\left| \int_{\Omega_{j}} Yu \cdot Tu \, dp \right| \lesssim \|T^{1/2}u\|_{L^{2}(\sigma_{j})}\|\langle \nabla u,\nu_{j} \rangle\|_{L^{2}(\sigma_{j})}^{1/2}\|\nabla u\|_{L^{2}(\sigma_{j})}^{1/2}, \quad j \in \N. \end{displaymath}
\end{lemma}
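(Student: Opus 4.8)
The plan is to reduce the volume integral to a one-dimensional pairing in the $t$-variable, estimate that pairing using half-order vertical derivatives, and then control the resulting weighted volume integrals by square-function (Carleson-type) inequalities for $\bigtriangleup^{\flat}$-harmonic functions on $\Omega_{j}$. Recall that $u = \mathcal{S}f$ with $f \in C^{\infty}_{c}(\He)$, so $u \in \mathcal{G}(\Omega_{j})$ by Proposition \ref{prop1} applied to the flag domain $\Omega_{j}$; hence, by Proposition \ref{prop2}, $T^{1/2}u$ and $T^{1/2}Yu = Y(T^{1/2}u)$ are $\bigtriangleup^{\flat}$-harmonic functions in $C^{\infty}(\Omega_{j}) \cap \mathcal{G}(\Omega_{j})$, and the slicewise vertical Fourier transforms $\widehat{u}(z,\cdot)$, $\widehat{Yu}(z,\cdot)$ decay rapidly in $\tau$ for each $z \in U_{j}$ — this is visible from the explicit formula \eqref{form70}, or from the bounds $|\nabla u(p)| \lesssim_{f,j} \min\{1,\|p\|^{-3}\}$ on $\overline{\Omega}_{j}$ recorded above together with the analogous decay of the $T^{1/2}$-variants.

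First I would establish the slicewise estimate. Since $\Omega_{j} = U_{j} \times \R$, we have $\int_{\Omega_{j}} Yu \cdot Tu \, dp = \int_{U_{j}} \bigl( \int_{\R} Yu(z,t)\, \partial_{t}u(z,t)\, dt \bigr) \, dz$, and for fixed $z$, Plancherel in $t$ together with Lemma \ref{lemma2} gives $\int_{\R} Yu \, \partial_{t}u \, dt = \int_{\R} \widehat{Yu}(z,\tau)\,(-2\pi i \tau)\,\widehat{u}(z,-\tau)\, d\tau$. Splitting $|\tau| = |\tau|^{1/2}\,|\tau|^{1/2}$, applying Cauchy–Schwarz on the frequency side, and using \eqref{form55} to recognise $\int_{\R}|\tau|\,|\widehat{v}(z,\tau)|^{2}\,d\tau = \|T^{1/2}v(z,\cdot)\|_{L^{2}(\R)}^{2}$, one obtains
\[
\Bigl| \int_{\R} Yu(z,t)\, Tu(z,t)\, dt \Bigr| \leq 2\pi \, \|T^{1/2}Yu(z,\cdot)\|_{L^{2}(\R)} \, \|T^{1/2}u(z,\cdot)\|_{L^{2}(\R)}, \qquad z \in U_{j}.
\]

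Next I would integrate this over $z \in U_{j}$ and apply Cauchy–Schwarz with a weight comparable to a suitable power of $\delta(p) := \dist(p,\partial\Omega_{j})$; because $\Omega_{j}$ is a flag domain with the same Lipschitz constant as $\Omega$, this distance is comparable to the horizontal gap $A(z_{2}) - 2^{-j} - z_{1}$ and is handled cleanly through the Fubini/foliation formula \eqref{eq:Fubini}. This reduces the lemma to two inequalities of square-function (Carleson-measure) type: a bound of a weighted $L^{2}(\Omega_{j})$-integral of $T^{1/2}Yu$ by $\|\langle \nabla u,\nu_{j}\rangle\|_{L^{2}(\sigma_{j})}\,\|\nabla u\|_{L^{2}(\sigma_{j})}$ (which is where the refinement in the statement comes from), and a bound of a weighted $L^{2}(\Omega_{j})$-integral of $T^{1/2}u$ by $\|T^{1/2}u\|_{L^{2}(\sigma_{j})}^{2}$; both would be proved via Caccioppoli-type inequalities for $\bigtriangleup^{\flat}$-harmonic functions and the foliation of $\Omega_{j}$ by the curves $r \mapsto q \cdot (-r,0,0)$, $q \in \partial\Omega_{j}$. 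It is worth noting that the superficially simpler route — using $Tu = [X,Y]u$ and $\bigtriangleup^{\flat}u = 0$ to write $Yu \cdot Tu = \mathrm{div}^{\flat}\bigl(\tfrac12((Yu)^{2}-(Xu)^{2})X - (XuYu)Y\bigr)$, so that $\int_{\Omega_{j}}Yu \cdot Tu$ equals a boundary integral quadratic in $\nabla u$ — only yields the crude bound $\lesssim \|\nabla u\|_{L^{2}(\sigma_{j})}^{2}$, which is too weak to close the Rellich estimate \eqref{form12}; the half-derivative argument is what extracts the needed gain.

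The main obstacle is the Carleson estimate for $T^{1/2}Yu$: a vertical half-derivative of $Yu$ amounts to "$\tfrac32$ horizontal derivatives" of $u$ in the sub-elliptic count, yet its weighted square integral over $\Omega_{j}$ must be controlled by only the first-order quantity $\|\nabla u\|_{L^{2}(\sigma_{j})}$ (refined by $\|\langle\nabla u,\nu_{j}\rangle\|_{L^{2}(\sigma_{j})}$) on the boundary. Reconciling the non-isotropic scaling of the Kor\'anyi metric — under which $T$ behaves like two horizontal derivatives, hence $T^{1/2}$ like one — with the Euclidean-looking boundary gradient norm is the crux, and it is precisely here that the flag-domain structure (the vertical ruling and the comparability of $\delta$ with the horizontal gap) is indispensable.
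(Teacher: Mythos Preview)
Your opening reduction via Fubini and Plancherel in $t$ is correct and matches the paper, but the subsequent strategy has a genuine gap. With the symmetric split $|\tau|=|\tau|^{1/2}\cdot|\tau|^{1/2}$ and any reciprocal weights $w_{1}w_{2}=1$, the two ``square-function type'' estimates you propose do not follow from Caccioppoli or Carleson arguments in the form you need. For instance, with $w_{1}=\delta$ the area-integral bound for the $\bigtriangleup^{\flat}$-harmonic function $v=T^{1/2}u$ gives $\int_{\Omega_{j}}|\nabla v|^{2}\,\delta\lesssim\|T^{1/2}u\|_{L^{2}(\sigma_{j})}^{2}$, not $\|\langle\nabla u,\nu_{j}\rangle\|_{L^{2}(\sigma_{j})}\|\nabla u\|_{L^{2}(\sigma_{j})}$; and the complementary factor $\int_{\Omega_{j}}|T^{1/2}u|^{2}\,\delta^{-1}$ is not controlled by boundary data because $\delta^{-1}$ fails to be integrable along the foliation. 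With no weight, the divergence theorem on $\int_{\Omega_{j}}|\nabla T^{1/2}u|^{2}$ produces the boundary term $\int_{\sigma_{j}}T^{1/2}u\cdot T^{1/2}\langle\nabla u,\nu_{j}\rangle=\int_{\sigma_{j}}T^{1}u\cdot\langle\nabla u,\nu_{j}\rangle$, which involves a full vertical derivative on $\partial\Omega_{j}$ and is not dominated by first-order data. You correctly flag this as ``the main obstacle'', but the sketch offers no mechanism to overcome it.

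The paper's argument differs in two essential ways: it uses an \emph{asymmetric} split $|\tau|=|\tau|^{1/4}\cdot|\tau|^{3/4}$ (placing $|\tau|^{1/4}$ on $\widehat{Yu}$), and it uses \emph{no} distance weights --- volume integrals are converted directly to boundary integrals. The resulting factors are $I_{1}=\|\nabla T^{1/4}u\|_{L^{2}(\Omega_{j})}$ and $I_{2}=\|T^{3/4}u\|_{L^{2}(\Omega_{j})}$. For $I_{1}$, harmonicity of $T^{1/4}u$ plus the divergence theorem give $I_{1}^{2}=\int_{\sigma_{j}}T^{1/4}u\,\langle\nabla T^{1/4}u,\nu_{j}\rangle$; shifting the two quarter-derivatives by Plancherel in $t$ on the (vertically ruled) boundary yields $\int_{\sigma_{j}}T^{1/2}u\,\langle\nabla u,\nu_{j}\rangle$, hence $I_{1}^{2}\le\|T^{1/2}u\|_{L^{2}(\sigma_{j})}\|\langle\nabla u,\nu_{j}\rangle\|_{L^{2}(\sigma_{j})}$. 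The exponent $\tfrac14$ is precisely calibrated so that $T^{1/2}u$, not $T^{1}u$, appears on the boundary. For $I_{2}$, one writes $|\tau|^{3/2}|\hat u|^{2}=|\tau|^{1/2}\hat u\cdot(\tau\,\mathrm{sgn}\,\tau)\,\overline{\hat u}$, interprets $\tau$ as the symbol of $T=XY-YX$, integrates by parts once in $X$ (resp.\ $Y$), and reduces the volume term back to $I_{1}^{2}$ plus a boundary term bounded by $\|T^{1/2}u\|_{L^{2}(\sigma_{j})}\|\nabla u\|_{L^{2}(\sigma_{j})}$.
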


\begin{proof}[Proof of Lemma \ref{lemma4}] Let us start by mentioning that $Yu \cdot Tu \in L^{1}(\Omega_{j})$, since $Yu$ and $Tu$ are bounded functions on $\Omega_{j}$ with $|Yu(p)| \lesssim \|p\|^{-3}$ and $|Tu(p)| \lesssim \|p\|^{-4}$ for $p \in \Omega_{j}$. So, it is legitimate to use Fubini's theorem and write
\begin{equation}\label{form19} \int_{\Omega_{j}} Yu \cdot Tu \, dp = \int_{U_{j}} \int_{\R} Yu(z,t) \cdot Tu(z,t) \, dt \, dz, \end{equation} 
where $U_{j} = \{(x,y) : x < A(y) - 2^{-j}\}$. It is also clear that $t \mapsto Yu(z,t) \in L^{1}(\R)$ and $t \mapsto Tu(z,t) \in L^{1}(\R)$ for $z \in U_{j}$ fixed, so the Fourier transforms in the $t$-variable exist in the classical sense, and Plancherel's theorem may be applied:
\begin{equation}\label{form20} \int_{\R} Yu(z,t)Tu(z,t) \, dt \lesssim \left( \int_{\R} |\tau|^{1/2} |\widehat{Yu}(z,\tau)|^{2} \, d\tau \right)^{1/2} \cdot \left(\int_{\R} |\tau|^{3/2}|\hat{u}(z,\tau)|^{2}  \, d\tau\right)^{1/2} \end{equation}
for $z \in U_{j}$. Here, the notation $\widehat{\varphi}(z,\tau)$ refers to the Fourier transform of the function $t \mapsto \varphi(z,t)$ evaluated at $\tau$. Now, plugging \eqref{form20} to the RHS of \eqref{form19}, and using Cauchy-Schwarz, we find that
\begin{align*} \left| \int_{\Omega_{j}} Yu \cdot Tu \, dp \right| & \lesssim \left(\int_{U_{j}} \int_{\R} |\tau|^{1/2}|\widehat{\nabla u}(z,\tau)|^{2} \, d\tau \, dz \right)^{1/2}\\
& \qquad \times \left(\int_{U_{j}} \int_{\R} |\tau|^{3/2}|\hat{u}(z,\tau)|^{2} \, d\tau \, dz \right)^{1/2} =: I_{1} \times I_{2}. \end{align*}
We will show that
\begin{equation}\label{form21} I_{1} \leq \|T^{1/2}u\|_{L^{2}(\sigma_{j})}^{1/2}\|\langle \nabla u,\nu_{j} \rangle\|_{L^{2}(\sigma_{j})}^{1/2} \end{equation}
and
\begin{equation}\label{form37} I_{2} \lesssim \|T^{1/2}u\|_{L^{2}(\sigma_{j})}^{1/2}\|\nabla u\|_{L^{2}(\sigma_{j})}^{1/2}, \end{equation}
which combined prove the lemma. Note that \eqref{form21} is slightly sharper than \eqref{form37}. We begin with \eqref{form21}, first applying Plancherel in the $\tau$-variable to find that $I_{1}$ equals the $L^{2}(\Omega_{j})$-norm of $T^{1/4}(\nabla u)$. Then, we apply Propositions \ref{prop2} and \ref{prop1} to deduce that $T^{1/4}(\nabla u) = \nabla (T^{1/4}u)$ pointwise in $\Omega$, so in fact
\begin{displaymath} I_{1} = \left(\int_{\Omega_{j}} |\nabla (T^{1/4}u)(p)|^{2} \, dp \right)^{1/2}. \end{displaymath} 
From Corollary \ref{cor2}, we know that $\bigtriangleup^{\flat}(T^{1/4}u)(p) = 0$ for $p \in \Omega$, hence
\begin{equation}\label{form49} |\nabla (T^{1/4}u)(p)|^{2} = \mathrm{div}^{\flat}[(T^{1/4}u) \cdot \nabla (T^{1/4}u)](p), \qquad p \in \Omega. \end{equation}
With this in mind, we define the vector field
\begin{displaymath} Z := (T^{1/4}u) \cdot \nabla (T^{1/4}u). \end{displaymath} 
To apply the divergence theorem, Theorem \ref{t:div}, we need to know the following facts:
\begin{itemize}
\item $\mathrm{div}^{\flat} Z \in L^{1}(\Omega_{j})$, because of the relation \eqref{form49}, and because $\nabla (T^{1/4}u) \in L^{2}(\Omega_{j})$. This is a special case of Corollary \ref{cor3}.
\item $Z \in L^{1}(\Omega_{j}) \cap L^{1}(\sigma_{j})$ by first using Cauchy-Schwarz, and then using Corollary \ref{cor3} to deduce that that $T^{1/4}u \in L^{2}(\mathfrak{M})$ and $\nabla (T^{1/4}u) \in L^{2}(\mathfrak{M})$ with either
\begin{displaymath} \mathfrak{M} = \mathcal{L}^{2}|_{U_{j}} \times \mathcal{L}^{1} = \mathcal{L}^{3}|_{\Omega_{j}} \quad \text{or} \quad \mathfrak{M} = c\mathcal{H}^{1}|_{\partial U_{j}} \times \mathcal{L}^{1} = \sigma_{j}. \end{displaymath}
\end{itemize}
After these preliminaries, Theorem \ref{t:div} shows that
\begin{align}\label{form50} \int_{\Omega_{j}} |\nabla (T^{1/4}u)(p)|^{2} \, dp & = \int (T^{1/4}u) \cdot \langle \nabla (T^{1/4}u),\nu_{j} \rangle \, d\sigma_{j}\\
& = c\int_{\partial U_{j}} \int_{\R} (T^{1/4}u)(z,t)[T^{1/4}(\nabla u)(z,t) \cdot n_{j}(z)] \, dt  \, d\calH^{1}(z). \notag \end{align} 
To arrive at the second line, we used again Proposition \ref{prop2} to infer that $\nabla (T^{1/4}u)(p) = T^{1/4}(\nabla u)(p)$ for $p \in \partial \Omega_{j} \subset \Omega$. Then we recalled from \eqref{form45} that $\nu_{j}(z,t) = n_{j}(z)$, and also used that $\sigma_{j} = c\calH^{1}|_{\partial U_{j}} \times \mathcal{L}^{1}$. Next, fix $z \in \partial U_{j}$, and note that the dot product with $n_{j}(z)$ contributes only a multiplication by a constant relative to the $t$-integration. Hence, we may use Plancherel's theorem twice to deduce that
\begin{align*} \int_{\R} (T^{1/4}u)(z,t)[T^{1/4}(\nabla u)(z,t) \cdot n_{j}(z)] \, dt & = \int_{\R} (|\tau|^{1/2}\hat{u}(z,\tau))[\widehat{\nabla u}(z,\tau) \cdot n_{j}(z)] \, d\tau\\
& = \int_{\R} (T^{1/2}u(z,t))\langle \nabla u(z,t),\nu_{j}(z) \rangle \, dt. \end{align*} 
Integrating over $z \in \partial U_{j}$, and using Cauchy-Schwarz, \eqref{form21} follows.

Next, we prove the estimate \eqref{form37} for the factor $I_{2}$, repeated below:
\begin{displaymath} I_{2} = \left(\int_{U_{j}} \int_{\R} |\tau|^{3/2}|\hat{u}(z,\tau)|^{2} \, d\tau \, dz \right)^{1/2} \lesssim \|T^{1/2}u\|_{L^{2}(\sigma_{j})}^{1/2}\|\nabla u\|_{L^{2}(\sigma_{j})}^{1/2}. \end{displaymath} 
We start by fixing $z \in U_{j}$ and re-writing the inner integral as 
\begin{displaymath} \int_{\R} |\tau|^{1/2}\hat{u}(z,\tau)[\tau \cdot \tfrac{|\tau|}{\tau} \hat{u}(z,\tau)] \, d\tau = (2\pi i)\int_{\R} (T^{1/2}u)(z,t) \cdot T(Hu)(z,t) \, dt, \end{displaymath} 
using Plancherel. Next, we expand $T = XY - YX$, and estimate separately the terms
\begin{equation}\label{form172} I_{21} := \int_{\Omega_{j}} (T^{1/2}u)(p) \cdot XY(Hu)(p) \, dp \quad \text{and} \quad I_{22} := \int_{\Omega_{j}} (T^{1/2}u)(p) \cdot YX(Hu)(p) \, dp. \end{equation}
Formally, we use the integration-by-parts formula \eqref{eq:iParts} to the vector field $Z = X$ and the functions
\begin{displaymath} \varphi = T^{1/2}u \in C^{\infty}(\overline{\Omega}_{j}) \quad \text{and} \quad \psi = YHu \in C^{\infty}(\overline{\Omega}_{j}) \end{displaymath}
 as follows:
\begin{equation}\label{form51} I_{21}  = -\int_{\Omega_{j}} (XT^{1/2}u)(p) \cdot (YHu)(p) \, dp - \int (T^{1/2}u)(YHu)\langle X,\nu_{j} \rangle \, d\sigma_{j}.\end{equation}
The justifications of using \eqref{eq:iParts} are very similar to the justifications needed to apply the divergence theorem at \eqref{form50}. The relevant facts are
\begin{displaymath} (X\varphi)\psi, \varphi X\psi \in L^{1}(\Omega_{j}) \quad \text{and} \quad \varphi \psi \in L^{1}(\sigma_{j}) \cap L^{1}(\Omega_{j}), \end{displaymath}
and each can be verified by applying Cauchy-Schwarz and Corollary \ref{cor3} (also note that $YHu = HYu$ and $Yu$ have the same norms in $L^{2}(\Omega_{j})$ and $L^{2}(\sigma_{j})$ by Plancherel).

With \eqref{form51} in hand, we may then estimate
\begin{equation}\label{form53} I_{21} \leq \left|\int_{\Omega_{j}} (XT^{1/2}u)(p) \cdot (YHu)(p) \, dp \right| + \|T^{1/2}u\|_{L^{2}(\sigma_{j})}\|\nabla u\|_{L^{2}(\sigma_{j})}. \end{equation}
To evaluate the first term on the RHS, we write $XT^{1/2}u = T^{1/2}Xu$ and $YHu = HYu$, using Proposition \ref{prop2}, and then apply Plancherel in the $t$-variable:
\begin{align*} \left| \int_{\Omega_{j}} (XT^{1/2}u)(p) \cdot (YHu)(p) \, dp \right| & \leq \int_{U_{j}} \int_{\R} |\tau|^{1/2} |\widehat{Xu}(z,\tau)| \cdot |\widehat{Yu}(z,\tau)| \, d\tau \, dz\\
& \leq \int_{U_{j}}\int_{\R} |\tau|^{1/2}|\widehat{\nabla u}(z,\tau)|^{2} \, d\tau \, dz.   \end{align*} 
Now, it remains to note that the RHS is the square of $I_{1}$, and we already obtained a good estimate for this factor in \eqref{form21}. It follows from this, and \eqref{form53} that
\begin{displaymath} I_{21} \lesssim \|T^{1/2}u\|_{L^{2}(\sigma_{j})}\|\nabla u\|_{L^{2}(\sigma_{j})}. \end{displaymath} 
The term $I_{22}$ from \eqref{form172} can be handled in the same way, just swapping the roles of $X$ and $Y$ in the calculations above. So, we have established \eqref{form37}, and hence completed the proof of Lemma \ref{lemma4}. \end{proof}
 
Recalling \eqref{form14} and \eqref{form18}, and noting also that
\begin{equation}\label{form65} \|\nabla u\|_{L^{2}(\sigma_{j})} = \int |\nabla u(\Phi_{j}(p))|^{2} \, d\sigma(p) \leq \int |\mathcal{N}_{\mathrm{rad}}(\nabla u)(p)|^{2} \, d\sigma(p) \lesssim \|f\|_{L^{2}(\sigma)} \end{equation}
for all $j \geq 1$, we may now deduce from Lemma \ref{lemma4} that
\begin{equation}\label{form22} \|\nabla_{\nu}^{+}u\|_{L^{2}(\sigma)}^{2} \lesssim \|\nabla_{\tau}u\|_{L^{2}(\sigma)} + \|f\|_{L^{2}(\sigma)}^{1/2} \cdot \liminf_{j \to \infty} \|T^{1/2}u\|_{L^{2}(\sigma_{j})}^{1/2}. \end{equation}
To proceed further, we need to know the limit on the RHS:
\begin{lemma}\ Let $f \in C_{c}^{\infty}(\He)$, and let $u := \mathcal{S}f$. Then,
\begin{equation}\label{form56} \lim_{j \to \infty} \|T^{1/2}u\|_{L^{2}(\sigma_{j})}^{2} = \int_{\partial \Omega} |\tau||\hat{u}(z,\tau)|^{2} \, d\sigma(z,\tau) =: \|T^{1/2}u\|_{L^{2}(\sigma)}^{2}. \end{equation}
\end{lemma}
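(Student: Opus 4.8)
The plan is to turn \eqref{form56} into an instance of the dominated convergence theorem, after expressing both sides as integrals over $\R$ against the explicit formula \eqref{form70} for $\widehat{\mathcal{S}f}$.

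First, recall from properties (a)--(c) that $\Omega_{j} = U_{j} \times \R$ is again a flag domain with $U_{j} = \{(x,y) : x < A(y) - 2^{-j}\}$, and that, just as $\sigma = c\,\mathcal{H}^{1}_{E}|_{\partial U} \times \mathcal{L}^{1}$ in \eqref{eq:sigmaFubini}, one has $\sigma_{j} = c\,\mathcal{H}^{1}_{E}|_{\partial U_{j}} \times \mathcal{L}^{1}$. By Proposition \ref{prop1}, $u = \mathcal{S}f \in \mathcal{G}(\Omega)$, so $T^{1/2}u$ is well-defined with $\widehat{T^{1/2}u}(z,\tau) = |\tau|^{1/2}\hat u(z,\tau)$ by \eqref{form55}, and (by the $L^{2}$-bounds in Corollary \ref{cor3}) for $\mathcal{H}^{1}_{E}|_{\partial U_{j}}$-a.e.\ $z$ the slice $t \mapsto T^{1/2}u(z,t)$ lies in $L^{2}(\R)$. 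Plancherel in the $t$-variable then gives $\int_{\R} |T^{1/2}u(z,t)|^{2}\,dt = \int_{\R} |\tau|\,|\hat u(z,\tau)|^{2}\,d\tau$. Setting
\begin{displaymath} g(z) := \int_{\R} |\tau|\,|\hat u(z,\tau)|^{2}\,d\tau, \qquad z \in \R^{2}, \end{displaymath}
where $\hat u = \widehat{\mathcal{S}f}$ is defined for \emph{every} $z \in \R^{2}$ by \eqref{form70}, the product structure of $\sigma_{j}$ and the arc-length parametrisation $y \mapsto (A(y) - 2^{-j}, y)$ of $\partial U_{j}$ yield
\begin{displaymath} \|T^{1/2}u\|_{L^{2}(\sigma_{j})}^{2} = c\int_{\R} g(A(y) - 2^{-j},y)\sqrt{1 + A'(y)^{2}}\,dy, \end{displaymath}
while the same computation with $2^{-j}$ replaced by $0$ shows that the right-hand side of \eqref{form56} equals $c\int_{\R} g(A(y),y)\sqrt{1 + A'(y)^{2}}\,dy$.

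It therefore remains to pass to the limit $j \to \infty$ inside the $y$-integral, for which I would establish two properties of $g$ directly from \eqref{form70}: (i) $z \mapsto g(z)$ is continuous on $\R^{2}$; and (ii) $g(z) \lesssim \min\{1, |z|^{-4}\}$ for all $z \in \R^{2}$, with an implicit constant depending only on the Schwartz seminorms of $\hat f$, the diameter of the $w$-support of $\hat f$, and the Lipschitz constant of $A$. Both follow from the behaviour of the Bessel kernel $K_{0}$ appearing in \eqref{form70} --- its integrable logarithmic singularity at $z = w$ and its rapid decay at infinity --- together with the $1$-regularity of $\mathcal{H}^{1}_{E}|_{\partial U}$ and the rapid decay of $\hat f$ in the $\tau$-variable; this is precisely the kind of estimate collected in Corollary \ref{cor3}. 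Granting (ii), and using that $|z| \ge |y|$ and $|z| \lesssim 1 + |y|$ on $\partial U_{j}$ uniformly in $j$, the functions $y \mapsto g(A(y) - 2^{-j},y)\sqrt{1 + A'(y)^{2}}$ are dominated by the fixed $L^{1}(\R)$ function $y \mapsto C(1 + |y|)^{-4}$; granting (i), they converge pointwise to $y \mapsto g(A(y),y)\sqrt{1 + A'(y)^{2}}$. Dominated convergence then gives $\lim_{j\to\infty}\|T^{1/2}u\|_{L^{2}(\sigma_{j})}^{2} = c\int_{\R} g(A(y),y)\sqrt{1 + A'(y)^{2}}\,dy$, which is \eqref{form56}.

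The only genuinely technical point is the pair of estimates (i)--(ii): one must control $\int_{\R}|\tau|\,|\hat u(z,\tau)|^{2}\,d\tau$ uniformly as $z$ approaches $\partial U$ --- where the convolution kernel $K_{0}(\tfrac{\pi}{2}|\tau||z - w|^{2})$ in \eqref{form70} becomes logarithmically singular at points $z = w \in \partial U$ lying in the $w$-support of $\hat f$ --- and at the same time extract the $|z|^{-4}$ decay needed for an $L^{1}$ majorant. In the write-up I would carry this out by splitting the $w$-integral into the region $|z - w|^{2} \lesssim |\tau|^{-1}$ (where $K_{0} \lesssim 1 + |\log(|\tau||z-w|^{2})|$ and the $\mathcal{H}^{1}$-measure of the region is $\lesssim |\tau|^{-1/2}$) and its complement (where $K_{0}$ decays exponentially), but the cleanest route is to invoke Corollary \ref{cor3}, which already packages these Bessel-function estimates and is also the source of the $L^{2}(\sigma_{j})$- and $L^{2}(\Omega_{j})$-bounds used in Lemma \ref{lemma4}.
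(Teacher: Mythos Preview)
Your approach is correct and, at its core, the same as the paper's: both reduce to dominated convergence after expressing everything via the explicit formula \eqref{form70} and the Bessel asymptotics. The paper does DCT jointly in $(z,\tau)$ against $\sigma$, with the dominating function $\mathfrak{u}(z,\tau)=\sup_{|\epsilon|\leq 1}|\tau||\hat u(z+\epsilon,\tau)|^{2}$, and verifies $\mathfrak u\in L^{1}(\sigma)$ by a dyadic decomposition in $|z|$ using Lemma~\ref{lemma3}. You instead integrate out $\tau$ first to obtain $g(z)$ and run DCT in $y$ alone; the decomposition you would then need for $g$ is exactly the same Bessel computation the paper carries out for $\mathfrak u$. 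Your packaging is arguably tidier, but the substance is identical.

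One correction: your suggestion that ``the cleanest route is to invoke Corollary~\ref{cor3}'' for the bound $g(z)\lesssim 1$ uniformly in $z$ is misplaced. Corollary~\ref{cor3} only yields $T^{1/2}u\in L^{2}(\mathfrak{M})$ for measures $\mathfrak{M}=\eta\times\mathcal L^{1}$ with $\operatorname{spt}\eta\subset U_{\epsilon}=\{\dist(z,K_{f})\geq\epsilon\}$, with constants depending on $\epsilon$; this gives neither a pointwise bound on $g$ nor any control as $z$ approaches $K_{f}$, which is precisely the regime at issue. What you need is the \emph{first} line of \eqref{uDecay1} in Lemma~\ref{lemma3}, namely $|\hat u(z,\tau)|\lesssim_{f}\|\hat f(\cdot,\tau)\|_{\infty}|\tau|^{-\beta}$ valid for \emph{all} $z\in\R^{2}$, which (with any $0<\beta<\tfrac12$) immediately gives $g(z)\lesssim\int|\tau|^{1-2\beta}\|\hat f(\cdot,\tau)\|_{\infty}^{2}\,d\tau<\infty$ uniformly. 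Alternatively, the direct $w$-integral splitting you sketch also works and in fact yields the endpoint $\beta=\tfrac12$. For the decay $g(z)\lesssim\dist(z,K_{f})^{-4}$ you combine this with the second line of \eqref{uDecay1}, exactly as the paper does for $\mathfrak u$.
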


\begin{remark} Although it is not strictly needed here, it may be clarifying to remark that $f \mapsto T^{1/2}u$ defines an operator bounded on $L^{2}(\sigma)$. This follows from Theorem \ref{t:bdd} and Proposition \ref{prop5}. We also recall here that $\sigma = \mathcal{H}^{2}_{E}|_{\partial \Omega} = c\mathcal{H}^{1}_{E}|_{\partial U} \times \mathcal{L}^{1}$, where $U = \{(A(y),y) : y \in \R\}$. Also, the proof below will show that the RHS of \eqref{form56} is finite. \end{remark}

\begin{proof} The RHS of \eqref{form56} is an integral of the vertical distributional Fourier transform of $u = \mathcal{S}f$, which is a tempered distribution in $\He$, and not only $\Omega$. As will be justified in Lemma \ref{l:Fourier1}, this Fourier transform turns out to be a function in $C^{\infty}(\He \, \setminus \, \{\tau = 0\})$, so its $\sigma$-integral makes sense. Let us also record the formula for $\hat{u}$ here:
\begin{displaymath} \widehat{u}(z,\tau) = c\int_{\partial U} e^{\pi i \omega(z,w)\tau} K_{0}(\tfrac{\pi}{2} |\tau| |z - w|^{2})\hat{f}(w,\tau) \, d\calH^{1}(w), \qquad \tau \in \R \, \setminus \, \{0\}, \end{displaymath}
where $\partial U = \{(A(y),y) : y \in \R\}$. Since $\hat{u} \in C^{\infty}(\He \, \setminus \, \{\tau = 0\})$, we have
\begin{displaymath} \int_{\partial \Omega} |\tau||\hat{u}(z,\tau)|^{2} \, d\sigma(z,\tau) = \int_{\partial \Omega} \lim_{j \to \infty} |\tau||\hat{u}(z - (2^{-j},0),\tau)|^{2} \, d\sigma(z,\tau). \end{displaymath} 
On the other hand, recalling that $\sigma_{j} = \calH^{2}|_{\partial \Omega_{j}}$, $\sigma = \calH^{2}|_{\partial \Omega}$, and $\partial \Omega_{j} = \partial \Omega - (2^{-j},0,0)$, we have, using Plancherel in the $t$-variable,
\begin{displaymath} \lim_{j \to \infty} \|T^{1/2}u\|_{L^{2}(\sigma_{j})}^{2} = \lim_{j \to \infty} \int_{\partial \Omega} |\tau||\hat{u}(z - (2^{-j},0),\tau)|^{2} \, d\sigma(z,\tau). \end{displaymath} 
So, proving \eqref{form56} boils down to justifying the use of the dominated convergence theorem, that is, verifying that
\begin{displaymath} \mathfrak{u}(z,\tau) := \sup_{0 \leq |\epsilon| \leq 1} |\tau||\hat{u}(z + \epsilon,\tau)|^{2} \in L^{1}(\sigma). \end{displaymath}
To accomplish this, we employ the decay estimates for $\hat{u}$ established in Lemma \ref{lemma3}, and which we copy here for the reader's convenience: for all $0 < \beta < \tfrac{1}{2}$, we have
\begin{displaymath} |\hat{u}(z,\tau)| \lesssim_{f} \begin{cases} \|\hat{f}(\cdot,\tau)\|_{L^{\infty}(\R^{2})}|\tau|^{-\beta}, & \text{for } (z,\tau) \in \He, \, \tau \neq 0, \\ e^{-\tfrac{\pi}{2}|\tau|\dist(z,K_{f})^{2}}, & \text{for } |\tau| \geq \dist(z,K_{f})^{-2}, \end{cases}  \end{displaymath}
where $K_{f} := \{w \in \R^{2} : f_{w} \neq \emptyset\}$. Since $K_{f}$ is a bounded set by the assumption $f \in C^{\infty}_{c}(\He)$, we assume with no loss of generality that $K_{f} \subset B(0,\tfrac{1}{2}) \subset \R^{2}$. We estimate the $L^{1}(\sigma)$-norm of $\mathfrak{u}$ as follows:
\begin{equation}\label{form42} \int \mathfrak{u} \, d\sigma = \int_{\R}\int_{\Gamma \cap B(2)} \mathfrak{u} \, d\sigma + \sum_{k \geq 1} \int_{\R} \int_{\Gamma \cap A_{k}} \mathfrak{u} \, d\sigma =: I_{0} + \sum_{k \geq 1} I_{k}. \end{equation} 
Here $B(r) := B(0,r) \subset \R^{2}$ and $A_{k} := B(2^{k +1}) \, \setminus \, \bar{B}(2^{k})$ for $k \geq 1$. The term $I_{0}$ is easy:
\begin{displaymath} I_{0} \lesssim_{f} \int_{\R} \|\hat{f}(\cdot,\tau)\|_{L^{\infty}(\R^{2})}^{2}|\tau|^{1 - 2\beta} \, d\tau < \infty, \end{displaymath} 
since $|\hat{f}(z,\tau)| \lesssim (1 + |\tau|)^{-2}$ by our assumption $f \in C^{\infty}_{c}(\He)$. Next, to estimate the series in \eqref{form42}, fix $k \geq 1$, and write further
\begin{align*} I_{k} & \lesssim_{f} \int_{\{|\tau| \leq 2^{-k + 3}\}} |\tau| \int_{\Gamma \cap A_{k}} |\tau|^{-\beta} \, d\sigma(z,\tau)\\
& \quad + \int_{\{|\tau| \geq 2^{-k + 3}\}} |\tau| \int_{\Gamma \cap A_{k}} \sup_{0 \leq |\epsilon| \leq 1} |\hat{u}(z + \epsilon,\tau)|^{2} \, d\sigma(z,\tau). \end{align*} 
The point of this splitting is that if $z \in A_{k}$, $k \geq 1$, and $|\tau| \geq 2^{-k + 3}$, then also
\begin{equation}\label{form40} |\tau| \geq 2^{2(2 - k)} \geq \sup_{0 \leq |\epsilon| \leq 1} \dist(z + \epsilon,K_{f})^{-2}, \end{equation}
which is needed for the second decay bound on $\hat{u}(z,\tau)$. With implicit constants depending on the Lipschitz constant of $\Gamma$, we first note that
\begin{displaymath}  \int_{\{|\tau| \leq 2^{-k + 3}\}} |\tau| \int_{\Gamma \cap A_{k}} |\tau|^{-\beta} \, d\sigma(z,\tau) \leq \calH^{1}(\Gamma \cap B(2^{k + 1})) \int_{\{|\tau| \leq 2^{-k + 3}\}} |\tau|^{1-\beta} \, d\tau \lesssim 2^{-(1 - \beta)k}, \end{displaymath}
which is certainly summable in $k$ whenever $0 < \beta < \tfrac{1}{2}$. To deal with the second term in the bound for $I_{k}$, we apply the second decay bound for $\hat{u}$, which is available by \eqref{form40}:
\begin{displaymath} \int_{\{|\tau| \geq 2^{-k + 3}\}} |\tau| \int_{\Gamma \cap A_{k}} \sup_{0 \leq |\epsilon| \leq 1} |\hat{u}(z + \epsilon,\tau)|^{2} \, d\sigma(z,\tau) \lesssim_{f} 2^{k} \cdot \int_{\{|\tau| \geq 2^{-k + 3}\}} |\tau| e^{-\tfrac{\pi}{2}|\tau|2^{2(k - 2)}} \, d\tau. \end{displaymath}
By making a change-of-variable $\tau \mapsto \xi \cdot 2^{-2k}$, it is easy to see that the terms on the RHS are summable over $k \geq 1$. This concludes the verification that $\mathfrak{u} \in L^{1}(\sigma)$, and proves the lemma. \end{proof}
Combining the previous lemma with \eqref{form22}, we have now shown that
\begin{displaymath}\|\nabla_{\nu}^{+}u\|_{L^{2}(\sigma)} \lesssim \|\nabla_{\tau}u\|_{L^{2}(\sigma)} + \|f\|_{L^{2}(\sigma)}^{1/2}\|T^{1/2}u\|_{L^{2}(\sigma)}^{1/2}, \end{displaymath}
as claimed in \eqref{form3}. 

\subsubsection{Proof of \eqref{form5}} Recall the claims:
\begin{equation}\label{form59} \begin{cases} \|T^{1/2}u\|_{L^{2}(\sigma)} \lesssim \|f\|_{L^{2}(\sigma)}^{1/2}\|\nabla_{\nu}^{-}u\|_{L^{2}(\sigma)}^{1/2}. \\ \|\nabla_{\tau}u\|_{L^{2}(\sigma)} \lesssim \|\nabla_{\nu}^{-}u\|_{L^{2}(\sigma)} + \|f\|_{L^{2}(\sigma)}^{1/2}\|\nabla_{\nu}^{-}u\|_{L^{2}(\sigma)}^{1/2}. \end{cases} \end{equation}
In the previous section, we extensively used the approximating domains $\Omega_{j} = U_{j} \times \R = \{(x,y,t) : x < A(y) - 2^{-j}\}$, which were images of $\Omega$ under the map $\Phi_{j}(p) = p \cdot (-2^{-j},0,0)$. In this section, we redefine the notation as follows:
\begin{displaymath} \Omega_{j} := U_{j} \times \R = \{(x,y,t) : x > A(y) + 2^{-j}\} \quad \text{and} \quad \Phi_{j}(p) = p \cdot (2^{-j},0,0). \end{displaymath}
We also denote the inward-pointing horizontal normal of $\Omega_{j}$ by $\nu_{j}$. In particular, the computations from \eqref{form13}, \eqref{form18}, and \eqref{form56} now show that
\begin{equation}\label{form60} \|\nabla_{\nu}^{-}u\|_{L^{2}(\sigma)} = \lim_{j \to \infty} \|\langle \nabla u,\nu_{j} \rangle\|_{L^{2}(\sigma_{j})}, \end{equation}
\begin{equation}\label{form61} \|\nabla_{\tau}u\|_{L^{2}(\sigma)} = \lim_{j \to \infty} \|\langle \nabla u,\tau_{j} \rangle\|_{L^{2}(\sigma_{j})}, \end{equation}
and
\begin{equation}\label{form62} \|T^{1/2}u\|_{L^{2}(\sigma)} = \lim_{j \to \infty} \|T^{1/2}u\|_{L^{2}(\sigma_{j})}. \end{equation}
We then begin proving the first part of \eqref{form59}. Motivated by \eqref{form62}, we start with the following lemma:
\begin{lemma} For $j \in \N$, we have
\begin{displaymath} \|T^{1/2}u\|_{L^{2}(\sigma_{j})} \lesssim \|\langle \nabla u,\nu_{j} \rangle\|_{L^{2}(\sigma_{j})}^{1/2} \cdot \|\nabla u\|_{L^{2}(\sigma_{j})}^{1/2}. \end{displaymath}
\end{lemma}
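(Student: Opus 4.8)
\emph{Plan.} The proof should run the scheme that produced \eqref{form50}--\eqref{form21}, now with the roles of the interior and boundary quantities interchanged. The first, purely geometric, reduction: since $\Omega_{j} = \{(x,y,t) : x > A(y) + 2^{-j}\}$ is again a flag domain, Proposition \ref{prop3} gives $\langle X,\nu_{j}(p) \rangle = (1 + A'(y)^{2})^{-1/2} \geq (1 + L^{2})^{-1/2} =: c_{0} > 0$ for $\sigma_{j}$-a.e. $p = (A(y)+2^{-j},y,t)$, where $L$ is the Lipschitz constant of $A$; moreover this dot product has constant sign. Hence it suffices to bound $\int_{\partial \Omega_{j}} |T^{1/2}u|^{2} \langle X,\nu_{j} \rangle \, d\sigma_{j}$ from above by a constant times $\|\langle \nabla u,\nu_{j}\rangle\|_{L^{2}(\sigma_{j})} \|\nabla u\|_{L^{2}(\sigma_{j})}$. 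Throughout, by Corollaries \ref{cor2} and \ref{cor3} every fractional power $T^{\alpha}u$ (in particular $T^{1/2}u$ and $T^{1/4}u$) lies in $\mathcal{G}(\Omega_{j}) \cap C^{\infty}(\overline{\Omega}_{j})$, is $\bigtriangleup^{\flat}$-harmonic in $\Omega_{j}$, and decays together with its horizontal derivatives on $\overline{\Omega}_{j}$ rapidly enough for Theorem \ref{t:div}, the integration-by-parts formula \eqref{eq:iParts}, and Plancherel's theorem in the $t$-variable to be applied exactly as in the proof of Lemma \ref{lemma4}.

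The main step is then the following. Writing $|\tau| = |\tau|^{1/2} \cdot |\tau|^{1/2}$ and using Plancherel in $t$ together with the product structure $\sigma_{j} = c\,\calH_{E}^{1}|_{\partial U_{j}} \times \mathcal{L}^{1}$, one realises $\|T^{1/2}u\|_{L^{2}(\sigma_{j})}^{2}$ (equivalently, by \eqref{form56}, $c\iint_{\partial U_{j} \times \R} |\tau| |\hat u(z,\tau)|^{2} \, d\tau \, d\calH^{1}$) as a boundary pairing of $T^{1/4}u$ against $T^{3/4}u$; moving one factor $T^{1/4}$ onto the harmonic function $T^{1/4}u$ and invoking the divergence theorem for the vector field $(T^{1/4}u)\,\nabla(T^{1/4}u)$ exactly as in the derivation of \eqref{form50}, this equals the interior integral $\int_{\Omega_{j}} |\nabla(T^{1/4}u)|^{2}\,dp$ plus a boundary remainder which is already controlled, via \eqref{form21} and Cauchy--Schwarz, by $\|T^{1/2}u\|_{L^{2}(\sigma_{j})}\|\langle \nabla u,\nu_{j}\rangle\|_{L^{2}(\sigma_{j})}$. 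To close the estimate one must bound the interior integral $\int_{\Omega_{j}} |\nabla(T^{1/4}u)|^{2}\,dp = \iint_{U_{j}\times\R} |\tau|^{1/2} |\widehat{\nabla u}(z,\tau)|^{2}\,d\tau\,dz$ by $\|\langle \nabla u,\nu_{j}\rangle\|_{L^{2}(\sigma_{j})}\|\nabla u\|_{L^{2}(\sigma_{j})}$: here, for each fixed frequency $\tau$, one uses a trace inequality at the scale $|\tau|^{-1/2}$, which is legitimate precisely because $\widehat{u}(\cdot,\tau)$ and $\widehat{\nabla u}(\cdot,\tau)$ have Gaussian-type decay on that scale by Lemma \ref{lemma3}; this transfers the $|\tau|^{1/2}$-weighted interior mass of $\widehat{\nabla u}(\cdot,\tau)$ to the boundary $\partial U_{j}$, and integrating back in $\tau$ and applying a final Cauchy--Schwarz in $(z,\tau)$ yields the product of boundary norms. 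Absorbing the resulting inequality $\|T^{1/2}u\|_{L^{2}(\sigma_{j})}^{2} \lesssim \|T^{1/2}u\|_{L^{2}(\sigma_{j})}\|\langle \nabla u,\nu_{j}\rangle\|_{L^{2}(\sigma_{j})} + \|\langle \nabla u,\nu_{j}\rangle\|_{L^{2}(\sigma_{j})}\|\nabla u\|_{L^{2}(\sigma_{j})}$ (and using $\|\langle \nabla u,\nu_{j}\rangle\|_{L^{2}(\sigma_{j})} \leq \|\nabla u\|_{L^{2}(\sigma_{j})}$) gives the claim.

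The main obstacle is precisely the last step: making rigorous the scale-$|\tau|^{-1/2}$ trace estimate for $\widehat{u}(\cdot,\tau)$ on the half-plane $U_{j}$, and the bookkeeping needed to repackage it as a clean bound by $\|\langle \nabla u,\nu_{j}\rangle\|_{L^{2}(\sigma_{j})}\|\nabla u\|_{L^{2}(\sigma_{j})}$; this is where the explicit formula for $\widehat{\mathcal{S}f}$ and its decay estimates developed in Section \ref{s:Fourier} (and Section \ref{appA}) really do the work. Everything else is a rerun of the divergence-theorem-plus-Plancherel manipulations already carried out for Lemma \ref{lemma4}. Once the per-$j$ estimate is in hand, letting $j \to \infty$ and invoking \eqref{form60}, \eqref{form62}, and the uniform bound $\|\nabla u\|_{L^{2}(\sigma_{j})} \lesssim \|f\|_{L^{2}(\sigma)}$ from Corollary \ref{K-MaxNT} yields the first line of \eqref{form59}.
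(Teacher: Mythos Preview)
Your plan contains the right ingredients (divergence theorem, Plancherel in $t$, commuting $T^{\alpha}$ with horizontal derivatives) but the central reduction is miswired, and the proposed ``trace inequality at scale $|\tau|^{-1/2}$'' is both vague and unnecessary.

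\textbf{The gap.} You claim that, via the divergence theorem applied to $(T^{1/4}u)\,\nabla(T^{1/4}u)$, the boundary quantity $\|T^{1/2}u\|_{L^{2}(\sigma_{j})}^{2} = \int_{\partial\Omega_{j}} T^{1/4}u \cdot T^{3/4}u \, d\sigma_{j}$ decomposes as $\int_{\Omega_{j}}|\nabla(T^{1/4}u)|^{2}\,dp$ plus a controllable boundary remainder. But the divergence theorem for that vector field gives the identity
\[
\int_{\Omega_{j}}|\nabla(T^{1/4}u)|^{2}\,dp \;=\; -\int_{\partial\Omega_{j}} T^{1/4}u\,\langle \nabla(T^{1/4}u),\nu_{j}\rangle\,d\sigma_{j},
\]
whose right-hand side (after shifting a $T^{1/4}$ via Plancherel) is $-\int T^{1/2}u\,\langle\nabla u,\nu_{j}\rangle\,d\sigma_{j}$, \emph{not} $\int T^{1/4}u\cdot T^{3/4}u\,d\sigma_{j}$. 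There is no identity of the form you describe, and no ``remainder'' appears. This is precisely what \eqref{form50} and the lines just after it already say. Consequently the chain you outline does not start, and the interior integral $\int_{\Omega_{j}}|\nabla(T^{1/4}u)|^{2}$ is never linked to $\|T^{1/2}u\|_{L^{2}(\sigma_{j})}^{2}$ in the way you need. Your fallback (bounding that interior integral directly by $\|\langle\nabla u,\nu_{j}\rangle\|\,\|\nabla u\|$ through a per-frequency trace estimate using the Bessel-kernel decay of Lemma~\ref{lemma3}) is not worked out and would be substantially harder than necessary.

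\textbf{The paper's route.} The correct (and much shorter) argument applies the divergence theorem to a \emph{different} vector field, namely $|T^{1/2}u|^{2}X$. Since $\langle X,\nu_{j}\rangle$ has constant sign and is bounded away from zero, one gets
\[
\|T^{1/2}u\|_{L^{2}(\sigma_{j})}^{2} \;\lesssim\; \left| \int_{\partial\Omega_{j}} |T^{1/2}u|^{2}\langle X,\nu_{j}\rangle\,d\sigma_{j}\right| \;=\; 2\left|\int_{\Omega_{j}} (T^{1/2}u)\,X(T^{1/2}u)\,dp\right|.
\]
Now commute $X(T^{1/2}u)=T^{1/2}(Xu)$ and, via Plancherel in $t$, shift a quarter-derivative to obtain
\[
\left|\int_{\Omega_{j}} (T^{1/2}u)\,X(T^{1/2}u)\,dp\right| \;\leq\; \|\nabla(T^{1/4}u)\|_{L^{2}(\Omega_{j})}\,\|T^{3/4}u\|_{L^{2}(\Omega_{j})}.
\]
The two factors are exactly the quantities $I_{1}$ and $I_{2}$ already bounded in \eqref{form21} and \eqref{form37} (those estimates were proved on the interior approximating domains, but the identical argument works verbatim on the present exterior $\Omega_{j}$). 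This yields
\[
\|T^{1/2}u\|_{L^{2}(\sigma_{j})}^{2} \;\lesssim\; \|T^{1/2}u\|_{L^{2}(\sigma_{j})}\,\|\langle\nabla u,\nu_{j}\rangle\|_{L^{2}(\sigma_{j})}^{1/2}\,\|\nabla u\|_{L^{2}(\sigma_{j})}^{1/2},
\]
and dividing by $\|T^{1/2}u\|_{L^{2}(\sigma_{j})}$ finishes. No per-$\tau$ trace inequality, and no new appeal to the asymptotics of $\widehat{\mathcal{S}f}$ beyond what justified \eqref{form21}--\eqref{form37}, is needed.
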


\begin{proof} We, again, use the observation that $\langle X,\nu_{j} \rangle$ has constant sign and stays bounded away from $0$ in absolute value. Therefore, we may estimate, and then use the divergence theorem (Theorem \ref{t:div}), as follows:
\begin{equation}\label{form63} \|T^{1/2}u\|_{L^{2}(\sigma_{j})}^{2} \lesssim \int \langle |T^{1/2}u|^{2}X,\nu_{j} \rangle \, d\sigma_{j} = 2 \left| \int_{\Omega_{j}} X(T^{1/2}u)(p) \cdot (T^{1/2}u)(p) \, dp \right|. \end{equation} 
The justification for the use of the divergence theorem is very similar to the one seen at \eqref{form50}, so we will not repeat the details; what needs to be checked is that
\begin{displaymath} T^{1/2}u \in L^{2}(\Omega_{j}) \cap L^{2}(\sigma_{j}) \quad \text{and} \quad X(T^{1/2}u) \cdot T^{1/2}u \in L^{1}(\Omega_{j}), \end{displaymath}
these facts can be inferred from Corollary \ref{cor3}. We then proceed with \eqref{form63} with the familiar tricks of commuting $X(T^{\alpha}u) = T^{\alpha}(Xu)$, and moving one-quarter $T$-derivative across (by applying Plancherel in the $t$-variable):
\begin{equation}\label{form64} \eqref{form63} \leq \|\nabla (T^{1/4}u)\|_{L^{2}(\Omega_{j})} \cdot \|T^{3/4}u\|_{L^{2}(\Omega_{j})}. \end{equation} 
But we have seen the factors on the RHS before, within the proof of Lemma \ref{lemma4}; they are exactly the factors $I_{1}$ and $I_{2}$, except that the domain $\Omega_{j}$ is now contained in $\He \, \setminus \, \overline{\Omega}$. Therefore, the (proof of the) estimates \eqref{form21}-\eqref{form37} show that
\begin{displaymath} \eqref{form64} \lesssim \|T^{1/2}u\|_{L^{2}(\sigma_{j})}\|\langle \nabla u,\nu_{j} \rangle\|_{L^{2}(\sigma_{j})}^{1/2}\|\nabla u\|_{L^{2}(\sigma_{j})}^{1/2}. \end{displaymath}
Plugging this estimate back into \eqref{form63}, and dividing by $\|T^{1/2}u\|_{L^{2}(\sigma_{j})}$ completes the proof of the lemma. \end{proof}
Combining the lemma with \eqref{form60}, \eqref{form62}, and $\|\nabla u\|_{L^{2}(\sigma_{j})} \lesssim \|f\|_{L^{2}(\sigma)}$ (as observed in \eqref{form65}), we arrive at the first part of \eqref{form59}. To prove the second part, we apply a corollary of the Rellich identity, which we already recorded in \eqref{form66}:
\begin{displaymath} \|\langle \nabla u,\tau_{j} \rangle\|_{L^{2}(\sigma_{j})}^{2} \lesssim \|\langle \nabla u,\nu_{j}\rangle\|_{L^{2}(\sigma_{j})}^{2} + \left|\int_{\Omega_{j}} Yu \cdot Tu \, dp \right|, \qquad j \geq 1. \end{displaymath}
Consequently, applying \eqref{form60}-\eqref{form61}, we arrive at
\begin{displaymath} \|\nabla_{\tau}u\|_{L^{2}(\sigma)}^{2} \lesssim \|\nabla_{\nu}^{-}u\|_{L^{2}(\sigma)}^{2} + \liminf_{j \to \infty} \left| \int_{\Omega_{j}} Yu \cdot Tu \, dp \right|, \end{displaymath}
and finally an application of Lemma \ref{lemma4}, \eqref{form60}, and \eqref{form62} shows that
\begin{displaymath} \liminf_{j \to \infty} \left| \int_{\Omega_{j}} Yu \cdot Tu \, dp \right| \lesssim \|T^{1/2}u\|_{L^{2}(\sigma)}\|\nabla_{\nu}^{-}u\|_{L^{2}(\sigma)}^{1/2}\|f\|_{L^{2}(\sigma)}^{1/2}. \end{displaymath}
To complete the proof the second part of \eqref{form59}, it remains to estimate the factor $\|T^{1/2}u\|_{L^{2}(\sigma)}$ by the first part of \eqref{form59}.

\subsubsection*{Summary} We have now completed the proofs of the estimates \eqref{form3}-\eqref{form5}. As explained right after these estimates were first claimed, they imply \eqref{form2}, and hence Theorem \ref{t:injectivity}.


\section{The operators $\tfrac{1}{2}I \pm D^{t}$ are surjective}\label{s:surjectivity}

If $\Omega = \{(x,y,t) : x < A(y)\}$ is a flag domain, we denote by $D^{t}_{A}$ the adjoint double layer potential associated to $\partial \Omega$, or more precisely $\sigma := |\partial \Omega|_{\He}$. In this section, we prove that the operators $\tfrac{1}{2}I \pm D^{t}_{A}$ are surjective for all Lipschitz functions $A \colon \R \to \R$, and we begin by treating the case $A \equiv 0$. Then, $\partial \Omega = \W = \{(0,y,t) : y,t \in \R\}$.
\begin{proposition}\label{p:invert} The operators $\tfrac{1}{2}I \pm D^{t}_{0}$ are invertible on $L^{2}(\W)$.
\end{proposition}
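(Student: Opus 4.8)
The key simplification when $A \equiv 0$ is that $\partial \Omega = \W = \{(0,y,t) : y,t \in \R\}$ is a \emph{vertical subgroup}, so the measure $\sigma = |\partial \W|_{\He}$ is simply (a constant times) Lebesgue measure $\mathcal{L}^{2}$ on the $(y,t)$-plane, the inward horizontal normal is the constant vector field $\nu \equiv -X$ (the first coordinate of $n = (n_{1},n_{2})$ is $-1$ and the second is $0$ when $A' \equiv 0$), and the tangent is $\tau \equiv -Y$. Consequently, for $f \in L^{2}(\W)$ and $p = (0,y,t) \in \W$, the operator $D^{t}_{0}$ becomes the explicit singular integral
\begin{displaymath} D^{t}_{0}f(p) = \pv \int_{\W} \langle \nabla G(q^{-1} \cdot p), X \rangle f(q) \, d\sigma(q) = \pv \int_{\W} (XG)(q^{-1} \cdot p) f(q) \, d\sigma(q), \end{displaymath}
a genuine convolution-type operator on the group $\W$ (after identifying $\W$ with $\R^{2}$ via $(y,t)$). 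The first thing I would do is write down this kernel explicitly from $G(z,s) = c(|z|^{4} + 16s^{2})^{-1/2}$: with $q = (0,y',t')$ and $p = (0,y,t)$, one has $q^{-1} \cdot p = (0, y - y', t - t' - \tfrac{1}{2} \cdot 0)$ — wait, more carefully $q^{-1} \cdot p$ has $z$-component $(0, y-y')$ and $t$-component $t - t'$ up to the symplectic correction, which vanishes since both points have zero $x$-coordinate — so the kernel is a function of $(y - y', t - t')$ only. This exhibits $D^{t}_{0}$ as a translation-invariant operator on $\R^{2}$ with a kernel homogeneous of degree $-3$ with respect to the parabolic dilations $(y,t) \mapsto (ry, r^{2}t)$, hence amenable to the Fourier transform in both $y$ and $t$.

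The plan is then to diagonalize $\tfrac{1}{2}I \pm D^{t}_{0}$ via the Fourier transform and show the multiplier is bounded away from $0$. Concretely: since the kernel is parabolically $(-3)$-homogeneous and odd in an appropriate sense (it is the $X$-derivative of the radial function $G$, so it changes sign under $(y,t) \mapsto (-y,t)$ composed with the relevant symmetry), its full Fourier transform $m(\xi,\theta)$ (dual variables to $(y,t)$) is a bounded function, $0$-homogeneous with respect to the dual dilations, and I would compute it — or at least its essential features — explicitly. The assertion $\tfrac{1}{2}I \pm D^{t}_{0}$ invertible on $L^{2}$ is then equivalent to $|\tfrac{1}{2} \pm m(\xi,\theta)| \geq c > 0$ for a.e. $(\xi,\theta)$, i.e. to showing $m$ stays uniformly away from the two points $\mp \tfrac{1}{2}$. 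A clean way to see this without a brute-force computation: the jump relations of Corollary \ref{c:allJumps}, applied to the half-space $\Omega = \{x < 0\}$, say that $\mathcal{D}^{\pm}f = (\pm\tfrac{1}{2}I + D)f$ and $\nabla^{\pm}_{\nu}\mathcal{S}f = (\mp \tfrac12 I + D^{t})f$; combined with the explicit Folland fundamental solution one can compute the boundary values of the single layer potential of $\W$ directly (the half-space case is essentially the model computation in Jerison's work and in \cite{MR3632681}), and read off that $\tfrac{1}{2}I \pm D^{t}_{0}$ corresponds to reflection-type operators whose symbols are explicit. Alternatively — and this is the route I would actually push — invoke Theorem \ref{t:injectivity}, which is already proved for \emph{all} flag domains including $A \equiv 0$: it gives $\|f\|_{L^{2}(\W)} \lesssim \|(\tfrac{1}{2}I \pm D^{t}_{0})f\|_{L^{2}(\W)}$, so $\tfrac{1}{2}I \pm D^{t}_{0}$ is injective with closed range, and it only remains to prove \emph{surjectivity}, equivalently (by \cite[Theorem 4.15]{MR1157815}, as in the strategy paragraph of Section \ref{s:injectivity}) that the adjoint $\tfrac{1}{2}I \pm D_{0}$ is injective. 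But on $\W$ the operators $D_{0}$ and $D^{t}_{0}$ are, up to the reflection $(y,t) \mapsto (-y,t)$, conjugate to one another (this uses the subgroup structure: $q^{-1} \cdot p$ and $p^{-1} \cdot q$ differ only by a global sign on $\W$, and $\nabla G(p^{-1}) = -\nabla^{R}G(p)$ from \eqref{nablaG-R-L} together with Lemma \ref{l:right-left} makes this precise), so injectivity of $\tfrac{1}{2}I \pm D_{0}$ follows from injectivity of $\tfrac{1}{2}I \pm D^{t}_{0}$, which is Theorem \ref{t:injectivity} again.

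So the skeleton is: (1) compute $\sigma$, $\nu$, $\tau$ on $\W$ and reduce $D^{t}_{0}$ to an explicit translation-invariant parabolic singular integral on $\R^{2}$; (2) cite Theorem \ref{t:injectivity} for the lower bound $\|f\| \lesssim \|(\tfrac{1}{2}I \pm D^{t}_{0})f\|$, giving injectivity and closed range; (3) use the symmetry of $\W$ as a subgroup — concretely Lemma \ref{l:right-left} with $\eta = \nu$ (constant, hence certainly $z$-independent) and $\psi$ the characteristic function of an annulus — to identify $D_{0}$ with a reflected conjugate of $D^{t}_{0}$, deduce injectivity of $\tfrac{1}{2}I \pm D_{0}$, and conclude surjectivity of $\tfrac{1}{2}I \pm D^{t}_{0}$ via the duality criterion. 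The main obstacle I anticipate is step (3): making the conjugacy between $D_{0}$ and $D^{t}_{0}$ rigorous at the level of the \emph{principal value} operators (not just the truncated integrals) requires care — one needs Lemma \ref{l:right-left} to swap $q^{-1}\cdot p \leftrightarrow p^{-1} \cdot q$ and then a reflection symmetry of $\W$ and of the kernel $\nabla G$ to swap the two, and one must check these symmetries act compatibly with the truncations, which is exactly the kind of thing Proposition \ref{prop10} (irrelevance of the truncation) is designed to handle. If that conjugacy turns out to be more delicate than expected, the fallback is the honest Fourier-multiplier computation of $m(\xi,\theta)$ in step (1), showing directly that $\tfrac12 \pm m$ is bounded below — tedious but elementary, since everything is an explicit integral of Bessel-type functions, and in fact the formula \eqref{form70} for $\widehat{\mathcal{S}f}$ specialized to $A \equiv 0$ already puts the relevant $K_{0}$-kernel on the table.
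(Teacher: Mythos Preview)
Your approach is essentially the paper's: use Theorem \ref{t:injectivity} for injectivity of $\tfrac{1}{2}I \pm D^{t}_{0}$, establish a symmetry relation between $D_{0}$ and $D^{t}_{0}$ on $\W$, and close via the duality criterion. The paper's execution of the symmetry step is cleaner than your proposed route through Lemma \ref{l:right-left} and reflections: one simply computes $XG(0,y,t) = 8cyt/\|(0,y,t)\|^{6}$ and observes this is \emph{even} under $(y,t) \mapsto (-y,-t)$, so that $XG(q^{-1}\cdot p) = XG(p^{-1}\cdot q)$ on $\W$ and hence $D^{t}_{0} = D_{0}$ exactly---whence surjectivity of $\tfrac{1}{2}I \pm D^{t}_{0}$ is literally the surjectivity of $\tfrac{1}{2}I \pm D_{0}$ already obtained from Theorem \ref{t:injectivity} plus duality.
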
 
\begin{proof} The inward horizontal normal $\{(x,y,t) : x < 0\}$ is the constant vector $(-1,0)$ (in the $\{X,Y\}$-basis). Therefore,
\begin{displaymath} D^{t}_{0}f(p) = \mathrm{p.v. } \int -XG(q^{-1} \cdot p)f(q) \, d\sigma(q).  \end{displaymath} 
Recall that $G(z,t) = c\|(z,t)\|^{-2} = c(|z|^{4} + t^{2})^{-1/2}$ for some positive constant $c > 0$. It follows by explicit computation that for $(x,y,t) \in \W \, \setminus \, \{0\}$, we have
\begin{displaymath} XG(x,y,t) = -\frac{8cyt}{\|(x,y,t)\|^{6}} = -\frac{8c(-y)(-t)}{\|(-x,-y,-t)\|^{6}} = XG(-x,-y,-t), \end{displaymath}
that is, the restriction of $XG$ to $\W$ is even. Therefore, using again that $\nu \equiv (-1,0)$,
\begin{displaymath} D^{t}_{0}f(p) = \mathrm{p.v. } \int -XG(p^{-1} \cdot q)f(q) \, d\sigma(q) = \mathrm{p.v. } \int \langle \nabla G(p^{-1} \cdot q),\nu(q) \rangle f(q) \, d\sigma(q) = D_{0}f(p), \end{displaymath}
and hence $\tfrac{1}{2}I \pm D_{0}^{t} = \tfrac{1}{2}I\pm D_{0}$. We already verified in the previous section that the operators $\tfrac{1}{2}I \pm D_{0}$ are surjective on $L^{2}(\sigma)$ (recall the discussion just above Theorem \ref{t:injectivity}). It follows that the operators $\tfrac{1}{2}I \pm D_{0}^{t}$ are surjective, and they are injective by Theorem \ref{t:injectivity}.  \end{proof}

To show that the operators $\tfrac{1}{2}I \pm D_{A}^{t}$ are invertible for a general Lipschitz function $A \colon \R \to \R$, we employ a standard tool, the \emph{method of continuity}. It is based on the following well-known lemma, whose proof we include for completeness:
\begin{lemma}\label{l:kenig} Let $B$ be a Banach space, and let $V$ be a normed space. Let $\{T_{s}\}_{s \in [0,1]} \subset \mathcal{B}(B,V)$ be a parametrised family of bounded linear operators satisfying
\begin{itemize}
\item[(a)] $\|T_{s}f\|_{V} \geq c\|f\|_{B}$ for all $f \in B$ and for some $c > 0$ independent of $s$,
\item[(b)] $s \mapsto T_{s}$ is continuous $[0,1] \to \mathcal{B}(B,V)$,
\item[(c)] $T_{0}$ is surjective.
\end{itemize}
Then $T_{1}$ is surjective $B \to V$.
\end{lemma}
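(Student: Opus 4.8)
The plan is to prove Lemma \ref{l:kenig} by the standard method-of-continuity argument: show that the set of parameters $s$ for which $T_s$ is surjective is non-empty, open, and closed in $[0,1]$. Let me describe the steps.

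First I would set $S := \{s \in [0,1] : T_s \text{ is surjective}\}$. Condition (c) gives $0 \in S$, so $S \neq \emptyset$. The main observation is that assumption (a) — the uniform lower bound $\|T_s f\|_V \geq c\|f\|_B$ — together with surjectivity forces $T_s$ to be a bijection onto $V$ with bounded inverse: indeed (a) shows $T_s$ is injective, and it also shows that $T_s^{-1} \colon V \to B$ (defined on the range) satisfies $\|T_s^{-1} g\|_B \leq c^{-1}\|g\|_V$, so whenever $s \in S$ we get $\|T_s^{-1}\|_{\mathcal{B}(V,B)} \leq c^{-1}$, a bound \emph{uniform in $s$}. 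This uniformity is the crux.

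Next I would prove $S$ is open (relatively in $[0,1]$). Fix $s_0 \in S$. For $s$ close to $s_0$, write $T_s = T_{s_0} + (T_s - T_{s_0}) = T_{s_0}(I + T_{s_0}^{-1}(T_s - T_{s_0}))$, using that $T_{s_0}$ is invertible with $\|T_{s_0}^{-1}\| \leq c^{-1}$. By continuity (b), choose a neighbourhood of $s_0$ so small that $\|T_s - T_{s_0}\|_{\mathcal{B}(B,V)} < c/2$; then $\|T_{s_0}^{-1}(T_s - T_{s_0})\|_{\mathcal{B}(B,B)} < 1/2$, so $I + T_{s_0}^{-1}(T_s - T_{s_0})$ is invertible on $B$ by a Neumann series (here one uses that $B$ is a \emph{Banach} space, so the Neumann series converges). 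Hence $T_s$ is a composition of two invertible maps, so $T_s$ is surjective and $s \in S$. For closedness, take $s_n \in S$ with $s_n \to s$. The same factorisation with $s_0 = s_n$ for $n$ large (so that $\|T_s - T_{s_n}\| < c/2$, using $\|T_{s_n}^{-1}\| \leq c^{-1}$ uniformly) again exhibits $T_s$ as a product of invertible operators, so $s \in S$; thus $S$ is closed. Since $[0,1]$ is connected and $S$ is non-empty, open, and closed, $S = [0,1]$, and in particular $1 \in S$, i.e. $T_1$ is surjective.

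I do not anticipate a serious obstacle here — this is a textbook argument (cf. \cite[Theorem 5.2]{GT} or Kenig's book \cite{MR1282720}). The only point requiring a little care is making sure the Neumann-series step is legitimate: it needs $B$ complete, and it needs the uniform bound $\|T_s^{-1}\| \le c^{-1}$ for $s \in S$ so that the "radius" of the open neighbourhood in the openness/closedness arguments can be taken independent of the base point along a sequence. Both are supplied by hypotheses (a) and the Banach assumption. After establishing the lemma, the subsequent application to $\{T_s\} = \{\tfrac{1}{2}I \pm D^t_{sA}\}_{s \in [0,1]}$ with $B = V = L^2(\sigma_{sA})$ will use Proposition \ref{p:invert} for the base case $s = 0$, Theorem \ref{t:injectivity} for the uniform lower bound (a), and an operator-norm continuity estimate for the family $s \mapsto D^t_{sA}$ to verify (b).
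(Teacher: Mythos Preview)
Your proof is correct and essentially the same as the paper's: both use that $\|T_s^{-1}\| \le c^{-1}$ uniformly for $s$ where $T_s$ is surjective, and then a small-perturbation argument to propagate surjectivity. The paper packages this as uniform continuity plus a finite chain of subintervals and invokes the Banach fixed point theorem for the contraction $\Psi_g(f) = T_{s_0}^{-1}g + T_{s_0}^{-1}(T_{s_0} - T_s)f$, while you phrase it as ``$S$ is open and closed'' and invoke the Neumann series --- but the fixed-point iteration of $\Psi_g$ \emph{is} the Neumann series, so the arguments are the same in substance.
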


\begin{proof} Note that $s \mapsto T_{s}$ is uniformly continuous on the compact set $[0,1]$, so there exists $\delta > 0$ such that 
\begin{displaymath} |s - t| < \delta \quad \Longrightarrow \quad \|T_{s} - T_{s}\|_{B \to V} < \tfrac{c}{2}. \end{displaymath} 
Split the interval $[0,1]$ into consecutive subintervals $I_{1},\ldots,I_{N}$ of length $< \delta/2$. We claim that if $T_{s}$ is surjective for some $s \in I_{j}$, then $T_{t}$ is surjective for all $t \in I_{j + 1}$. Since $T_{0}$ is surjective, this will imply that $T_{1}$ is surjective by induction.

To check the claim, fix $s \in I_{j}$ such that $T_{s}$ is surjective. Clearly $T_{s}$ is injective by (a), so $T_{s}^{-1} \colon V \to B$ exists, is linear, and $\|T_{s}^{-1}\|_{V \to B} \leq c^{-1}$. Fix $t \in I_{j + 1}$ (so $|s - t| < \delta$) and $g \in V$. Consider the map $\Psi_{g} \colon B \to B$,
\begin{displaymath} \Psi_{g}(f) := T_{s}^{-1}g + T_{s}^{-1}(T_{s} - T_{t})f, \qquad f \in B. \end{displaymath}
Since $\|T_{s} - T_{t}\|_{B \to V} < c/2$, we see that $\Phi_{g}$ is a contraction:
\begin{displaymath} \|\Psi_{g}f_{1} - \Psi_{g}f_{2}\|_{B} = \|T_{s}^{-1}(T_{s} - T_{t})(f_{1} - f_{2})\|_{B}  \leq \tfrac{1}{2}  \|f_{1} - f_{2}\|, \qquad f_{1},f_{2} \in B. \end{displaymath}
It follows from the Banach fixed point theroem that there exists $f \in B$ with $\Psi_{g}(f) = f$. This is equivalent to $T_{t}f = g$, so $T_{t}$ is surjective. \end{proof}

To apply the lemma, we define the Lipschitz maps $A_{s} \colon \R \to \R$ for all $s \in \R$ (not just $s \in [0,1]$) by $A_{s}(y) := sA(y)$, and the associated flag domains $\Omega_{s} := \{(x,y,t) : x < A_{s}(y)\}$. We abbreviate $D_{s}^{t} := D^{t}_{A_{s}}$. Lemma \ref{l:kenig} is not directly applicable to the operators $D^{t}_{s}$, as they are not defined on a common space "$B$". To fix this, we need to use the parametrisations of $\partial \Omega_{s}$ by $\R^{2}$, which we already encountered in Lemma \ref{lemma5}. Recall from Lemma \ref{lemma5} the map $\Gamma_{s} \colon \R^{2} \to \partial \Omega_{s}$ defined by
\begin{equation}\label{eq:Gammas} \Gamma_{s}(y,t) := \left(A_{s}(y),y,t - \tfrac{y}{2}A_{s}(y) + \int_{0}^{y} A_{s}(r) \, dr \right), \qquad (y,t) \in \R^{2}, \, s \in \R. \end{equation}
We then define the auxiliary operators $R_{s,\epsilon}f(w) := \langle \vec{R}_{s,\epsilon}f(w),\nu_{s}(\Gamma_{s}(w)) \rangle$, where
\begin{displaymath} \vec{R}_{s,\epsilon}f(w) := \int (\nabla G)_{\epsilon}(\Gamma_{s}(v)^{-1} \cdot \Gamma_{s}(w))J_{s}(v)f(v) \, dv, \qquad w \in \R^{2}, \, f \in L^{2}(\R^{2}).\end{displaymath}  
In the definition above,
\begin{itemize}
\item $\nu_{s}$ refers to the inward pointing horizontal normal of $\Omega_{s}$,
\item $(\nabla G)_{\epsilon} = \psi_{\epsilon}(p)\nabla G(p)$ is a smooth radial truncation of the kernel $\nabla G$, and
\item $J_{s}(y,t) := \sqrt{1 + A_{s}'(y)^{2}}$ for the "Jacobian" of the parametrisation $\Gamma_{s}$.
\end{itemize}
As we have seen in Proposition \ref{prop10}, principal values are not affected by the particular choice of (radial) truncations, so
\begin{displaymath} D_{s}^{t}g(p) = \lim_{\epsilon \to 0} \int \langle (\nabla G)_{\epsilon}(q^{-1} \cdot p),\nu_{s}(p)\rangle g(q) \, d\sigma_{s}(q), \qquad g \in L^{2}(\sigma_{s}), \end{displaymath}
for $\sigma_{s}$ a.e. $p \in \partial \Omega_{s}$. Consequently, using also the area formula \eqref{form16}, we may infer that the limit
\begin{equation}\label{form74} R_{s}f(w) := \lim_{\epsilon \to 0} R_{s,\epsilon}f(w) = D_{s}^{t}(f \circ \Gamma_{s}^{-1})(\Gamma_{s}(w)) \end{equation}
exists for all $f \in L^{2}(\R^{2})$ and for a.e. $w \in \R^{2}$. Also, noting that $J_{s} \sim_{A} 1$ for all $0 \leq s \leq 1$, it is clear from \eqref{form74}, and the area formula \eqref{form16}, that $R_{s}$ defines a bounded operator on $L^{2}(\R^{2})$, and $\tfrac{1}{2}I \pm D_{s}^{t}$ is invertible on $L^{2}(\sigma_{s})$ if and only if 
\begin{equation}\label{form184} T_{s} := \tfrac{1}{2}I \pm R_{s} \end{equation}
is invertible on $L^{2}(\R^{2})$ (indeed $T_{s}f(w) = (\tfrac{1}{2}I \pm D_{t})(f \circ \Gamma_{s}^{-1})(\Gamma_{s}(w))$ for a.e. $w \in \R^{2}$).

So, it remains to verify the conditions (a)-(c) of Lemma \ref{l:kenig} for the operators $T_{s}$. Theorem \ref{t:injectivity} applied to the flag domain $\Omega_{s}$ shows that (a) is satisfied, since 
\begin{displaymath} \int |T_{s}f(w)|^{2} \, dw = \int \left|(\tfrac{1}{2}I \pm D_{s}^{t})(f \circ \Gamma_{s}^{-1})(\Gamma_{s}(w))\right|^{2} \, dw \gtrsim_{A} \|f \circ \Gamma_{s}^{-1}\|_{L^{2}(\sigma_{s})}^{2} \sim_{A} \|f\|_{2}^{2} \end{displaymath}
for $f \in L^{2}(\R^{2})$. The invertibility of $T_{0}$ follows from Lemma \ref{p:invert}, since $\Gamma_{0} = \mathrm{Id}$ (under the identification $\R^{2} \cong \W$), and $T_{0} = \tfrac{1}{2}I \pm D^{t}_{0}$. So, it remains to verify condition (b), and we will prove \emph{a fortiori} that 
\begin{equation}\label{form190} \|T_{r} - T_{s}\|_{L^{2}(\R^{2}) \to L^{2}(\R^{2})} \lesssim_{A} |r - s|, \qquad r,s \in [0,1]. \end{equation}
This will follow if we can show that $\|R_{r,\epsilon} - R_{s,\epsilon}\|_{L^{2}(\R^{2}) \to L^{2}(\R^{2})} \lesssim_{A} |r - s|$ with bounds independent of $\epsilon > 0$. To show this, fix $f \in C^{\infty}_{c}(\R^{2})$, $w \in \R^{2}$, $\epsilon > 0$, and expand
\begin{align}\label{form80} R_{r,\epsilon}f(w) - R_{s,\epsilon}f(w) = & \left\langle \vec{R}_{s,\epsilon}f(w) - \vec{R}_{r,\epsilon}f(w), \nu_{s}(\Gamma_{s}(w)) \right\rangle\\
&\label{form81} \quad + \left\langle \vec{R}_{r,\epsilon}f(w), [\nu_{s}(\Gamma_{s}(w)) - \nu_{r}(\Gamma_{r}(w))] \right \rangle. \end{align} 
Writing $w := (y,t)$, we have $\Gamma_{s}(w) = (A_{s}(y),y,\tau)$ for some $\tau \in \R$ which will not affect our calculations. Since $\Omega_{r}$ and $\Omega_{s}$ are both flag domains, we can apply the formula \eqref{flagNormal} to find their horizontal normals, and the result is $\left| \nu_{s}(\Gamma_{s}(w)) - \nu_{r}(\Gamma_{r}(w)) \right| = $
\begin{displaymath} \left| \left(\tfrac{-1}{\sqrt{1 + s^{2}A'(y)^{2}}},\tfrac{sA'(y)}{\sqrt{1 + s^{2}A'(y)^{2}}} \right) -  \left(\tfrac{-1}{\sqrt{1 + r^{2}A'(y)^{2}}},\tfrac{rA'(y)}{\sqrt{1 + r^{2}A'(y)^{2}}} \right)\right| \lesssim_{A} |r - s|, \quad r,s \in [0,1]. \end{displaymath}
Therefore,
\begin{displaymath} \|\eqref{form81}\|_{2} \lesssim_{A} |r - s| \|f\|_{2}, \end{displaymath}
also using that $\|\vec{R}_{s,\epsilon}\|_{L^{2}(\R^{2}) \to L^{2}(\R^{2})} \leq C$ for some $C > 0$ independent of $s \in [0,1]$ or $\epsilon > 0$ (this follows by another application of the area formula \eqref{form16}, and the $L^{2}$-boundedness of the maximal Riesz transform on boundaries of flag domains, recall Theorem \ref{t:FO1}).

To show that also $\|\eqref{form80}\|_{2} \lesssim |r - s| \cdot \|f\|_{2}$, we need to introduce one more "cross term":
\begin{align}\label{form187} \vec{R}_{r,\epsilon}f(w) - \vec{R}_{s,\epsilon}f(w) & = \int (\nabla G)_{\epsilon}(\Gamma_{s}(v)^{-1} \cdot \Gamma_{s}(w))[J_{r}(v) - J_{s}(v)]f(v) \, dv\\
&\label{form186} + \int [(\nabla G)_{\epsilon}(\Gamma_{r}(v)^{-1} \cdot \Gamma_{r}(w)) - (\nabla G)_{\epsilon}(\Gamma_{s}(v)^{-1} \cdot \Gamma_{s}(w))]g_{r}(v) \, dv, \end{align}
where $g_{r} := J_{r}f$. Relying again on the $L^{2}$-boundedness of $\vec{R}_{s,\epsilon}$, we have
\begin{displaymath} \|\eqref{form187}\|_{2} \lesssim_{A} \|(J_{r} - J_{s})f\|_{2} \lesssim_{A} |r - s| \|f\|_{2}, \qquad r,s \in [0,1], \end{displaymath} 
using also that $\|J_{r} - J_{s}\|_{\infty} \lesssim_{A} |r - s|$ (recall $J_{s}(y,t) = \sqrt{1 + sA'(y)}$). To treat (the most difficult) term \eqref{form186}, we will eventually apply the following result:
\begin{thm}\label{t:sDerOp} Let $K \in C^{\infty}(\He)$ be a horizontally odd kernel satisfying
\begin{equation}\label{kernelConstants} |\nabla^{n}K(p)| \leq C_{n}\|p\|^{-3 - n}, \qquad p \in \He, \, n \geq 0. \end{equation}
Let $\Gamma_{s} \colon \R^{2} \to \partial \Omega_{s}$, $s \in \R$, be the maps from \eqref{eq:Gammas}. For every $s \in \R$, define the kernel $K_{s} \colon \R^{2} \times \R^{2} \to \C$,
\begin{displaymath} K_{s}(w,v) := \partial_{s}K(\Gamma_{s}(v)^{-1} \cdot \Gamma_{s}(w)), \qquad v,w \in \R^{2}, \end{displaymath}
Then, there exists a constant $C \geq 1$, depending only on the kernel constants "$C_{n}$" in \eqref{kernelConstants}, and the Lipschitz constant of $A$ such that 
\begin{displaymath} \int_{\R^{2}} \left| \int_{\R^{2}} K_{s}(w,v)f(v) \, dv \right|^{2} \, dw \leq C\|f\|_{2}^{2}, \qquad f \in L^{2}(\R^{2}), \, 0 \leq s \leq 1. \end{displaymath}
\end{thm}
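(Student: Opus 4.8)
## Proof plan for Theorem \ref{t:sDerOp}

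\textbf{Overview of the strategy.} The plan is to recognize the operator with kernel $K_s(w,v)$ as (after a change of variables) a singular integral operator of the type handled by the main result of \cite{2019arXiv191103223F}, i.e. a convolution-type singular integral with a horizontally odd, $3$-dimensional kernel, acting on the graph $\partial \Omega_s$. The only new point is that $K_s$ is the \emph{$s$-derivative} of the naturally occurring kernel $K(\Gamma_s(v)^{-1}\cdot\Gamma_s(w))$, so the first task is to compute this $s$-derivative explicitly and check that the resulting kernel, transported back to $\partial\Omega_s$ via $\Gamma_s$, is again (a sum of pieces each reducible to) a horizontally odd $3$-dimensional Calderón–Zygmund kernel, uniformly in $s\in[0,1]$.

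\textbf{Step 1: differentiate the kernel.} Write $p = p(s,v,w) := \Gamma_s(v)^{-1}\cdot\Gamma_s(w)$. By the chain rule, $K_s(w,v) = \langle \nabla_E K(p), \partial_s p\rangle$ where $\nabla_E$ is the Euclidean gradient. From the explicit formula \eqref{eq:Gammas} for $\Gamma_s$, one computes $\partial_s\Gamma_s(y,t) = (A(y),0,-\tfrac{y}{2}A(y) + \int_0^y A(r)\,dr)$, which is a bounded perturbation; more importantly, using the group law, $\partial_s p$ can be written as a combination of the "difference" coordinates of $p$ times bounded factors depending only on $v,w$ through $A,A'$, together with terms controlled by $\|p\|$ itself. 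The key structural observation I expect to need is that each component of $\partial_s p$ is $O(\|p\|)$ in the first two (horizontal) coordinates and $O(\|p\|^2)$ in the vertical coordinate, uniformly in $s\in[0,1]$ and in $v,w$. Combining this with the kernel bounds \eqref{kernelConstants} — in particular $|\nabla_E K(p)| \lesssim \|p\|^{-4}$ in the horizontal directions and $|\partial_t K(p)|\lesssim \|p\|^{-5}$ — one gets the pointwise size bound $|K_s(w,v)| \lesssim d(\Gamma_s(w),\Gamma_s(v))^{-3}$, and a Hölder/smoothness estimate of the same order, uniformly in $s$. This is where most of the routine-but-delicate computation lives, and it is the main obstacle: one must track how the noncommutativity of the group law interacts with $\partial_s$, and verify that no term of order worse than $\|p\|^{-3}$ survives (the cancellation that makes $K$ $3$-dimensional must be shown to persist after differentiating in $s$).

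\textbf{Step 2: transport to $\partial\Omega_s$ and apply \cite{2019arXiv191103223F}.} Having the size and regularity bounds, push the operator forward under $\Gamma_s$: by the area formula \eqref{form16}, $L^2(\R^2)$-boundedness of $f\mapsto \int K_s(w,v)f(v)\,dv$ with a constant uniform in $s$ is equivalent to $L^2(\sigma_s)$-boundedness (uniform in $s$, since $J_s\sim_A 1$) of the operator on $\partial\Omega_s$ with kernel $\tilde K_s(q,p)$ obtained by undoing the parametrization. Now $\tilde K_s$ is, by Step 1, a (sum of) horizontally odd $3$-dimensional CZ kernels on the intrinsic Lipschitz graph $\partial\Omega_s$, whose intrinsic Lipschitz constant is bounded by that of $A$ uniformly in $s\in[0,1]$; hence \cite[Theorem 1.8]{2019arXiv191103223F} (the general boundedness theorem for horizontally odd $3$-dimensional kernels on boundaries of flag domains) applies and yields the bound with a constant depending only on the kernel constants $C_n$ and the Lipschitz constant of $A$, as required. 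One should check that the relevant result applies to truncated/smooth kernels with uniform constants, or alternatively argue via the maximal operator as in Theorem \ref{t:FO1}; either way the uniformity in $s$ is automatic because all bounds depend on $A$ only through $\|A'\|_\infty$.

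\textbf{Remark on organization.} It may be cleaner to first prove the theorem for $s$ fixed (ignoring uniformity), then observe that every estimate used depended on $s$ only through $\|A_s'\|_\infty \le \|A'\|_\infty$ and through the uniform bounds on $\partial_s\Gamma_s$ and $J_s$ on $[0,1]$. Also, since in the application \eqref{form186} one really wants $\|\eqref{form186}\|_2 \lesssim_A |r-s|\,\|f\|_2$, note that $(\nabla G)_\epsilon(\Gamma_r(v)^{-1}\Gamma_r(w)) - (\nabla G)_\epsilon(\Gamma_s(v)^{-1}\Gamma_s(w)) = \int_s^r K_\rho(w,v)\,d\rho$ with $K = (\nabla G)_\epsilon$ componentwise; applying Theorem \ref{t:sDerOp} to each component of $(\nabla G)_\epsilon$ (which satisfies \eqref{kernelConstants} uniformly in $\epsilon$ after the radial truncation is accounted for) and Minkowski's integral inequality then gives the desired Lipschitz-in-$s$ bound. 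So the theorem as stated, applied to the fixed smooth kernel $K=\nabla G$ (and its truncations), is exactly what is needed.
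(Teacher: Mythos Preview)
Your overall plan is on the right track, but there is a real gap in Step~2. When you compute $\partial_s p$ with $p = \Gamma_s(v)^{-1}\cdot\Gamma_s(w)$, the components are \emph{not} functions of $p$ alone: writing $w=(x,t)$, $v=(y,r)$, one finds
\[
K_s(w,v) = K_{1,s}(w,v)\cdot D_{A,1}(w,v) + K_{2,s}(w,v)\cdot D_{A,2}(w,v),
\]
where $K_j$ are the horizontally odd $3$-dimensional kernels $K_1=y\,\partial_x K$, $K_2=y^2\,\partial_t K$, $K_{j,s}(w,v)=K_j(\Gamma_s(v)^{-1}\cdot\Gamma_s(w))$, and
\[
D_{A,1}(w,v)=\frac{A(x)-A(y)}{x-y},\qquad D_{A,2}(w,v)=\int_x^y \frac{A(x)+A(y)-2A(\theta)}{2(x-y)^2}\,d\theta.
\]
The factors $D_{A,j}$ are bounded by $\mathrm{Lip}(A)$, which is why your size and H\"older estimates go through, but they depend on $w,v$ \emph{separately}, not through $\Gamma_s(v)^{-1}\cdot\Gamma_s(w)$. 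Consequently the transported kernel $\tilde K_s$ on $\partial\Omega_s$ is \emph{not} of the convolution form $\tilde K(q^{-1}\cdot p)$, so \cite[Theorem~1.8]{2019arXiv191103223F} does not apply. Size and smoothness of a standard kernel are not enough for $L^2$-boundedness; you would need a $T(1)$-type argument, and verifying the cancellation condition is exactly the hard part you are trying to black-box.

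The paper handles this by invoking a different, sharper result from the same reference, namely \cite[Theorem~7.8]{2019arXiv191103223F} (stated in the paper as Theorem~\ref{t:FO}), which is tailored to precisely this situation: it asserts $L^2(\R^2)$-boundedness for kernels of the form $K_\Gamma\cdot D_{A,j}$ where $K_\Gamma(w,v)=K(\Gamma(v)^{-1}\cdot\Gamma(w))$ with $K$ horizontally odd and $3$-dimensional, and $D_{A,j}$ are the specific ``divided difference'' factors above. Once the decomposition of $K_s$ is written out, Theorem~\ref{t:FO} applied with $B=A_s$ to each summand gives the bound, with constants depending only on $\mathrm{Lip}(A_s)\le\mathrm{Lip}(A)$ and the $C_n$. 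So the missing ingredient in your plan is to recognize the $D_{A,j}$ structure explicitly and to cite Theorem~7.8 rather than Theorem~1.8; the latter is genuinely too weak here.
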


Before proving the Theorem \ref{t:sDerOp}, let us apply it to treat the term \eqref{form186}. Theorem \ref{t:sDerOp}, applied with $K := (\nabla G)_{\epsilon} \in C^{\infty}(\He)$, shows that the operator
\begin{displaymath} \vec{\mathcal{R}}_{s,\epsilon}g(w) := \int \partial_{s}(\nabla G)_{\epsilon}(\Gamma_{s}^{-1}(v) \cdot \Gamma_{s}(w))g(v) \, dv, \qquad g \in L^{2}(\R^{2}), \end{displaymath}
satisfies $\|\vec{\mathcal{R}}_{s,\epsilon}g\|_{2} \lesssim \|g\|_{2}$ with implicit constant independent of $s \in [0,1]$ and $\epsilon > 0$. Moreover, for $g \in C^{\infty}_{c}(\R^{2})$ and $\epsilon > 0$ fixed, it is easy to check that
\begin{equation}\label{form87} \frac{\vec{R}_{s,\epsilon}g - \vec{R}_{r,\epsilon}g}{s - r} \rightharpoonup \vec{\mathcal{R}}_{s,\epsilon}g \end{equation}
as $r \to s$ (the harpoon refers to weak* convergence). We will give some details after the proof of Theorem \ref{t:sDerOp}, see Remark \ref{rem2}; by then we will have an explicit expression for $\vec{\mathcal{R}}_{s,\epsilon}g$, which will be helpful for verifying \eqref{form87}. Taking \eqref{form87} for granted, we find that if $\varphi \in C^{\infty}_{c}(\R^{2})$ is a fixed test function with $\|\varphi\|_{2} = 1$, and we define the auxiliary path $\gamma_{g,\varphi} \colon [0,1] \to \R^{2}$ by
\begin{displaymath} \gamma_{g,\varphi}(s) := \int (\vec{R}_{s,\epsilon}g)(w)\varphi(w) \, dw, \qquad s \in [0,1], \end{displaymath}
then
\begin{displaymath} \limsup_{r \to s} \frac{|\gamma_{g,\varphi}(s) - \gamma_{g,\varphi}(r)|}{|s - r|} \leq \left| \int (\vec{\mathcal{R}}_{s,\epsilon}g)(w)\varphi(w) \, dw \right| \lesssim \|g\|_{2}. \end{displaymath} 
It follows by a standard argument, which we record in Lemma \ref{l:metric} below, that $\gamma_{g,\varphi}$ is a Lipschitz map $[0,1] \to \R^{2}$ with Lipschitz constant $\lesssim \|g\|_{2}$. This implies that the operator norm of $\vec{R}_{s,\epsilon} - \vec{R}_{r,\epsilon}$ on $L^{2}(\R^{2})$ is bounded from above by $\lesssim |r - s|$. Indeed, we have just shown that
\begin{displaymath} \|[\vec{R}_{s,\epsilon} - \vec{R}_{r,\epsilon}]g\|_{2} = \sup_{\|\varphi\|_{2} = 1} |\gamma_{g,\varphi}(s) - \gamma_{g,\varphi}(r)| \lesssim |s - r| \|g\|_{2} \end{displaymath}
for $g \in C^{\infty}_{c}(\R^{2})$, and the bound extends to all $g \in L^{2}(\R^{2})$. In particular, it may be applied with $g = g_{r}$:
\begin{displaymath} \|\eqref{form186}\|_{2} \lesssim \|[\vec{R}_{s,\epsilon} - \vec{R}_{r,\epsilon}]g_{r}\|_{2} \lesssim |r - s| \|g_{r}\|_{2} \lesssim_{A} |r - s| \|f\|_{2} \end{displaymath}
for all $0 \leq r,s \leq 1$. This completes the verification of the hypotheses of Lemma \ref{l:kenig}, and the lemma then shows that the operators $\tfrac{1}{2}I \pm R_{1}$ and $\tfrac{1}{2}I \pm D^{t}_{1} = \tfrac{1}{2}I \pm D^{t}_{A}$ are invertible. Before moving to the proof of Theorem \ref{t:sDerOp}, let us record the "local to global Lipschitz" lemma used right above. We claim no originality for this argument, see for example \cite[Lemma 2.3]{MR2369866} for a very similar result.

\begin{lemma}\label{l:metric} Let $(X,d)$ be a metric space, and let $\gamma \colon [0,1] \to X$ be a map satisfying 
\begin{displaymath} \limsup_{r \to s} \frac{d(\gamma(s),\gamma(r))}{|r - s|} \leq C, \qquad 0 \leq s \leq 1. \end{displaymath}
Then $\gamma$ is $C$-Lipschitz.
\end{lemma}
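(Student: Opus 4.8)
The statement to prove is Lemma~\ref{l:metric}: a map $\gamma \colon [0,1] \to X$ into a metric space whose "upper metric derivative" is bounded by $C$ everywhere is globally $C$-Lipschitz.

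This is a classical real-analysis argument. The plan is to fix $\varepsilon > 0$ and show that $d(\gamma(0),\gamma(1)) \le (C + \varepsilon)$, and more generally $d(\gamma(a),\gamma(b)) \le (C+\varepsilon)(b-a)$ for all $0 \le a \le b \le 1$; letting $\varepsilon \to 0$ then gives the claim. To do this I would use a continuity/connectedness argument on the set
\begin{displaymath} E := \{ s \in [a,b] : d(\gamma(a),\gamma(s)) \le (C + \varepsilon)(s - a) + \varepsilon(s-a) \}, \end{displaymath}
or, more cleanly, apply the following standard sublemma: if $g \colon [a,b] \to \mathbb{R}$ is continuous with $g(a) = 0$ and $\limsup_{r \to s}(g(r) - g(s))/(r - s) \le C$ for every $s$, then $g(b) \le C(b-a)$. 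However, there is a subtlety: the lemma as stated does \emph{not} assume $\gamma$ is continuous! So the first step must be to \emph{derive} continuity of $\gamma$ from the hypothesis: the bound $\limsup_{r\to s} d(\gamma(s),\gamma(r))/|r-s| \le C$ forces $d(\gamma(s),\gamma(r)) \to 0$ as $r \to s$, hence $\gamma$ is continuous at each $s$. Therefore $s \mapsto d(\gamma(a),\gamma(s))$ is a continuous real-valued function, and the sublemma applies.

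Let me lay out the steps in order. First, I would observe that the hypothesis implies that for each $s \in [0,1]$ there is $\delta_s > 0$ with $d(\gamma(s),\gamma(r)) \le (C+\varepsilon)|r - s|$ whenever $|r - s| < \delta_s$; in particular $\gamma$ is continuous. Second, fix $0 \le a \le b \le 1$ and $\varepsilon > 0$, and set $\phi(s) := d(\gamma(a),\gamma(s)) - (C + \varepsilon)(s - a)$, a continuous function on $[a,b]$ with $\phi(a) = 0$. Third, let $s^* := \sup\{ s \in [a,b] : \phi(\sigma) \le 0 \text{ for all } \sigma \in [a,s] \}$. By continuity $\phi(s^*) \le 0$. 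If $s^* < b$, then using the local estimate at $s^*$ and the triangle inequality $d(\gamma(a),\gamma(r)) \le d(\gamma(a),\gamma(s^*)) + d(\gamma(s^*),\gamma(r)) \le (C + \varepsilon)(s^* - a) + (C + \varepsilon)(r - s^*) = (C+\varepsilon)(r - a)$ for $r$ slightly larger than $s^*$ (specifically $r - s^* < \delta_{s^*}$), i.e. $\phi(r) \le 0$ for such $r$, contradicting the definition of $s^*$. Hence $s^* = b$, so $\phi(b) \le 0$, i.e. $d(\gamma(a),\gamma(b)) \le (C+\varepsilon)(b-a)$. Fourth, let $\varepsilon \to 0$ to conclude $d(\gamma(a),\gamma(b)) \le C(b-a)$, which is exactly $C$-Lipschitz continuity.

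I do not expect any real obstacle here — this is a textbook "local implies global" argument; the only point requiring a moment's care is the one I flagged, namely that continuity of $\gamma$ is not assumed but must be extracted from the $\limsup$ hypothesis before one can run the standard sup-of-good-set argument. An alternative, equally valid route is to invoke a known result directly, e.g. cite \cite[Lemma 2.3]{MR2369866} as the paper itself suggests, but since the argument is so short it seems cleanest to just write it out.

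\medskip

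\begin{proof} First note that the hypothesis forces $\gamma$ to be continuous: for each $s \in [0,1]$ there exists $\delta_{s} > 0$ such that
\begin{equation}\label{eq:locLip} d(\gamma(s),\gamma(r)) \leq (C + 1)|r - s| \qquad \text{whenever } |r - s| < \delta_{s}, \end{equation}
and in particular $d(\gamma(s),\gamma(r)) \to 0$ as $r \to s$. Now fix $0 \leq a \leq b \leq 1$ and $\varepsilon > 0$. The function
\begin{displaymath} \phi(s) := d(\gamma(a),\gamma(s)) - (C + \varepsilon)(s - a), \qquad s \in [a,b], \end{displaymath}
is continuous by the previous remark, and $\phi(a) = 0$. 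Let
\begin{displaymath} s^{\ast} := \sup\{ s \in [a,b] : \phi(\sigma) \leq 0 \text{ for all } \sigma \in [a,s] \}. \end{displaymath}
By continuity of $\phi$ we have $\phi(\sigma) \leq 0$ for all $\sigma \in [a,s^{\ast}]$. Suppose, for contradiction, that $s^{\ast} < b$. Choose, using the definition of $\limsup$, a radius $0 < \rho < \min\{\delta_{s^{\ast}},\, b - s^{\ast}\}$ such that
\begin{displaymath} d(\gamma(s^{\ast}),\gamma(r)) \leq (C + \varepsilon)(r - s^{\ast}), \qquad s^{\ast} \leq r \leq s^{\ast} + \rho. \end{displaymath}
Then, for such $r$, the triangle inequality and $\phi(s^{\ast}) \leq 0$ give
\begin{displaymath} d(\gamma(a),\gamma(r)) \leq d(\gamma(a),\gamma(s^{\ast})) + d(\gamma(s^{\ast}),\gamma(r)) \leq (C + \varepsilon)(s^{\ast} - a) + (C + \varepsilon)(r - s^{\ast}) = (C + \varepsilon)(r - a), \end{displaymath}
that is, $\phi(r) \leq 0$ for all $r \in [s^{\ast},s^{\ast} + \rho]$. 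Combined with $\phi \leq 0$ on $[a,s^{\ast}]$, this shows that $\phi(\sigma) \leq 0$ for all $\sigma \in [a,s^{\ast} + \rho]$, contradicting the maximality of $s^{\ast}$. Hence $s^{\ast} = b$ and $\phi(b) \leq 0$, i.e.
\begin{displaymath} d(\gamma(a),\gamma(b)) \leq (C + \varepsilon)(b - a). \end{displaymath}
Letting $\varepsilon \to 0$ yields $d(\gamma(a),\gamma(b)) \leq C(b - a)$. Since $a \leq b$ in $[0,1]$ were arbitrary, $\gamma$ is $C$-Lipschitz. \end{proof}
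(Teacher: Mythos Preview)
Your proof is correct, but the route differs from the paper's. The paper argues by covering $[r_{0},s_{0}]$ with countably many sets $E_{ij}$ on each of which $\gamma$ is $(C+\varepsilon)$-Lipschitz, and then bounding $d(\gamma(r_{0}),\gamma(s_{0})) \leq \mathcal{H}^{1}(\gamma([r_{0},s_{0}])) \leq \sum \mathcal{H}^{1}(\gamma(E_{ij})) \leq (C+\varepsilon)\sum \mathcal{H}^{1}(E_{ij}) = (C+\varepsilon)|s_{0}-r_{0}|$; this uses the connectedness of the image (hence implicitly continuity of $\gamma$) and the subadditivity of Hausdorff measure. Your approach is the classical ``creeping along'' argument via the supremum of the good set, which is more elementary in that it avoids any appeal to Hausdorff measure and instead uses only continuity of the auxiliary function $\phi$ and the triangle inequality. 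The paper's decomposition argument, on the other hand, is slightly more robust in spirit (it would adapt to settings where the pointwise $\limsup$ bound only holds almost everywhere modulo suitable additional hypotheses), but for the statement at hand your argument is cleaner and entirely self-contained.
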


\begin{proof} Fix $\epsilon > 0$, $0 \leq r_{0} < s_{0} \leq 1$, and define
\begin{displaymath} \widetilde{E}_{i} := \{s \in [r_{0},s_{0}] : d(\gamma(s),\gamma(r)) < (C + \epsilon)|r - s| \text{ for all } r \in B(s,1/i)\}, \quad i \geq 1. \end{displaymath}
Then the sets $\widetilde{E}_{i}$, $i \geq 1$, cover $[r_{0},s_{0}]$, and so do the disjoint sets
\begin{displaymath} E_{i} := \widetilde{E}_{i} \setminus \, \bigcup_{j < i} \widetilde{E}_{j}, \qquad i \geq 1. \end{displaymath}
If the sets $E_{i}$ are further decomposed into disjoint subsets $E_{ij}$ of diameter $< 1/i$, then it is clear from $E_{ij} \subset \widetilde{E}_{i}$ that $\gamma|_{E_{ij}}$ is $(C + \epsilon)$-Lipschitz for all $i,j \in \N$. It follows that
\begin{displaymath} d(\gamma(r_{0}),\gamma(s_{0})) \leq \calH^{1}(\gamma([r_{0},s_{0}])) \leq \sum_{i,j} \calH^{1}(\gamma(E_{ij})) \leq (C + \epsilon)\sum_{i,j} \calH^{1}(E_{ij}) = (C + \epsilon)|s_{0} - r_{0}|. \end{displaymath}
Letting $\epsilon \to 0$ completes the proof. \end{proof}

We then prove Theorem \ref{t:sDerOp}, which follows rather easily from \cite[Theorem 7.8]{2019arXiv191103223F}. We copy the statement of \cite[Theorem 7.8]{2019arXiv191103223F} here for the reader's convenience:
\begin{thm}\label{t:FO} Let $A,B \colon \R \to \R$ be Lipschitz functions, and let $K \in C^{\infty}(\He)$ be a horizontally odd kernel satisfying \eqref{kernelConstants}. Let
\begin{displaymath} \Gamma(y,t) := \left(B(y),y,t -\tfrac{y}{2}B(y) + \int_{0}^{y} B(r) \, dr \right), \qquad (y,t) \in \R^{2}, \end{displaymath}
and write $K_{\Gamma}(w,v) := K(\Gamma(v)^{-1} \cdot \Gamma(w))$. Then, the kernels $K_{\Gamma}D_{A,1}$ and $K_{\Gamma}D_{A,2}$ define operators bounded on $L^{2}(\R^{2})$, where
\begin{equation}\label{form82} D_{A,1}(w,v) := \frac{A(x) - A(y)}{x - y} \quad \text{and} \quad D_{A,2}(w,v) := \int_{x}^{y} \frac{A(x) + A(y) - 2A(\theta)}{2(x - y)^{2}} \, d\theta \end{equation} 
for $w = (x,s)$ and $v = (y,s)$. The norms $\|K_{\Gamma}D_{A,j}\|_{L^{2}(\R^{2}) \to L^{2}(\R^{2})}$ only depend on the Lipschitz constants of $A,B$ and the kernel constants \eqref{kernelConstants}. \end{thm}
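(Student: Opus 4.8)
The plan: since Theorem \ref{t:FO} is quoted verbatim from \cite[Theorem 7.8]{2019arXiv191103223F}, in the present paper it is simply invoked, but here I sketch the route one would take to prove it. The key realisation is that $K_{\Gamma}D_{A,1}$ and $K_{\Gamma}D_{A,2}$ are Heisenberg analogues of the first and second \emph{Calder\'on commutator kernels}. The plain kernel $K_{\Gamma}(w,v) = K(\Gamma(v)^{-1}\cdot\Gamma(w))$ --- the ``$k=0$'' case --- already defines a bounded operator on $L^{2}(\R^{2})$: this is the $L^{2}$-boundedness of horizontally odd $3$-dimensional singular integrals on the intrinsic Lipschitz graph $\partial\Omega_{B}$ (for $K = \nabla G$ this specialises to Theorem \ref{t:FO1}, provable via the harmonicity trick of \cite{2018arXiv181013122F}; for general $K$ it is itself a, simpler, result of the type in \cite{2019arXiv191103223F}). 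The factors $D_{A,1}(w,v) = \tfrac{A(x)-A(y)}{x-y}$ and $D_{A,2}(w,v)$ are bounded by the Lipschitz constant of $A$ but carry no smoothness; they are exactly the difference-quotient multipliers of the classical commutator theory, so Theorem \ref{t:FO} asserts that one may insert one or two of them and still obtain an $L^{2}$-bounded operator, with norm depending polynomially on $\|A'\|_{\infty}$ and on the data of $B$ --- which is precisely the uniformity needed for the method of continuity above.

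To carry this out I would work on the space of homogeneous type $(\R^{2},\rho,\mathcal{L}^{2})$ for which $\rho(w,v)\approx\|\Gamma(v)^{-1}\cdot\Gamma(w)\|$; computing the group element $\Gamma(v)^{-1}\cdot\Gamma(w)$ for $w=(y_{1},t_{1})$ and $v=(y_{2},t_{2})$ shows its horizontal part is $(B(y_{1})-B(y_{2}),y_{1}-y_{2})$ and its vertical part is $t_{1}-t_{2}$ minus the trapezoidal-rule expression $\tfrac{y_{1}-y_{2}}{2}(B(y_{1})+B(y_{2}))-\int_{y_{2}}^{y_{1}}B$, so that $\rho$ is the parabolic quasimetric $\rho(w,v)\approx|y_{1}-y_{2}|+|t_{1}-t_{2}-\tfrac{y_{1}-y_{2}}{2}(B(y_{1})+B(y_{2}))+\int_{y_{2}}^{y_{1}}B|^{1/2}$. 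Using $|\nabla^{n}K(p)|\lesssim\|p\|^{-3-n}$ and the Lipschitz bounds on $B$, one verifies the size estimate $|K_{\Gamma}D_{A,j}(w,v)|\lesssim\rho(w,v)^{-3}$, and the regularity defect coming from $D_{A,j}$ would then be disposed of in one of two standard ways: (i) a partial Fourier transform in the vertical variable $t$, turning $K_{\Gamma}$ into a one-parameter family of oscillatory kernels on the $y$-line and reducing matters to a \emph{uniform-in-frequency} Coifman--McIntosh--Meyer bound for Cauchy-type integrals along $y\mapsto(B(y),y)$ with inserted difference quotients of $A$; or (ii) a $Tb$ argument with an accretive function adapted to $\Gamma$, the $T(1)$- and $T^{*}(1)$-type obstructions being controlled by the oddness properties of $K$ (used in the same spirit as the principal-value computations of Section \ref{s:principal}).

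The main obstacle is exactly the absence of regularity in $D_{A,1}$ and $D_{A,2}$: being merely bounded measurable functions, they put $K_{\Gamma}D_{A,j}$ outside the reach of the naive $T(1)$/Calder\'on--Zygmund machinery, so one must genuinely exploit the commutator structure --- recognise $D_{A,j}$ as assembled from difference quotients of one Lipschitz function and either expand multilinearly, invoking the CMM estimates with their controlled dependence on the commutator order, or absorb the bad factor into the $b$ of a $Tb$ theorem. A secondary, more technical difficulty is the summation of the vertical-frequency pieces produced in route (i): the oscillatory phase carries only Lipschitz regularity in the $y$-variables, so the sum over vertical scales must be organised through the parabolic dilation structure of $\rho$, much as in the treatment of $\|T^{1/2}u\|_{L^{2}(\sigma)}$ in Section \ref{s:injectivity}. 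All of this being done in detail in \cite{2019arXiv191103223F}, the present paper only needs to cite \cite[Theorem 7.8]{2019arXiv191103223F}.
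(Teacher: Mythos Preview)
You are correct that the paper gives no proof of Theorem \ref{t:FO}: the statement is copied verbatim from \cite[Theorem 7.8]{2019arXiv191103223F} and simply invoked. So there is nothing in the paper to compare your sketch against, and your opening sentence is accurate.

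That said, a couple of remarks on your sketch versus what can be inferred about the actual proof in \cite{2019arXiv191103223F}. From Remark \ref{rem1} of the present paper (which discusses the closely related \cite[Theorem 4.3]{2019arXiv191103223F}), the method used there is neither a vertical Fourier transform nor a $Tb$ argument as such. Instead one writes the kernel as a function $\kappa(u;\theta_{1},\theta_{2})$ of the scalar increment $u$ and two \emph{bounded} parameters $\theta_{1},\theta_{2}$ encoding the difference quotients of $B$, then expands $\kappa$ as a Fourier series in $(\theta_{1},\theta_{2})$. Each Fourier mode yields a one-dimensional odd Calder\'on--Zygmund kernel whose constants decay rapidly in the mode index $\mathbf{n}$; summing gives the bound. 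This is closer in spirit to the ``expand multilinearly'' option you mention than to your route (i), but it is a specific and efficient implementation of it.

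Your route (i) --- partial Fourier transform in $t$, reducing to a family of oscillatory one-dimensional operators --- is precisely the strategy the present paper uses to prove the \emph{different} Theorem \ref{t:bdd}; there it is the resulting one-dimensional problems that are handled by citing \cite[Theorem 4.3]{2019arXiv191103223F}. So your two routes are not really alternatives for Theorem \ref{t:FO} itself: the vertical Fourier transform is a reduction device used elsewhere in the paper, while the commutator/Fourier-expansion machinery is what actually lives inside \cite{2019arXiv191103223F}.
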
 

We can then prove Theorem \ref{t:sDerOp}:

\begin{proof}[Proof of Theorem \ref{t:sDerOp}] We need to compute an explicit expression for the kernel $K_{s}(w,v) = \partial_{s}K(\Gamma_{s}(v)^{-1} \cdot \Gamma_{s}(w))$. To this end, start by computing that
\begin{displaymath} \Gamma_{s}(y,r)^{-1} \cdot \Gamma_{s}(x,t) = \left(A_{s}(x) - A_{s}(y), x - y, t - r + \int_{x}^{y} \left[\frac{A_{s}(x) + A_{s}(y) - 2A_{s}(\theta)}{2} \right] \, d
\theta \right). \end{displaymath} 
Recalling that $A_{s} = sA$, writing $w := (x,t)$ and $v := (y,r)$, and applying the chain rule, this yields
\begin{equation}\label{form88} K_{s}(w,v) = K_{1,s}(w,v) \cdot D_{A,1}(w,v) + K_{2,s}(w,v) \cdot D_{A,2}(w,v), \end{equation} 
where
\begin{displaymath} K_{1}(x,y,t) = y\partial_{x}K(x,y,t) \quad \text{and} \quad K_{2}(x,y,t) = y^{2}\partial_{t}K(x,y,t), \end{displaymath} 
$K_{j,s}(w,v) := K_{j}(\Gamma_{s}(v)^{-1} \cdot \Gamma_{s}(w))$, and the factors $D_{A,j}$ are as in \eqref{form82}. The horizontal oddness of the kernel $K$ implies that the kernels $K_{1}$ and $K_{2}$ are both horizontally odd. Also, applying the chain rule, recalling that $K$ satisfies \eqref{kernelConstants}, and and using
\begin{displaymath} \partial_{t} = XY - YX \quad \text{and} \quad \partial_{x} = X + (y/2)(XY - YX), \end{displaymath}
it is easy to verify that $K_{1},K_{2}$ also satisfy \eqref{kernelConstants}, with possibly different constants. Consequently, Theorem \ref{t:FO} applied to both kernels $K_{1}$ and $K_{2}$, and with "$B = A_{s}$", now concludes the proof of Theorem \ref{t:sDerOp}. \end{proof}

\begin{remark}\label{rem2} We close the section by giving some details for the claim
\begin{displaymath} \frac{\vec{R}_{s,\epsilon}g - \vec{R}_{r,\epsilon}g}{s - r} \rightharpoonup \vec{\mathcal{R}}_{s,\epsilon}g \end{displaymath}
whenever $g \in C^{\infty}_{c}(\R^{2})$, which appeared in \eqref{form87}. Fix $s \neq r$, $w \in \R$, abbreviate $K := (\nabla G)_{\epsilon}$, and write
\begin{align*} & \frac{\vec{R}_{s,\epsilon}g(w) - \vec{R}_{r,\epsilon}g(w)}{s - r} - \vec{\mathcal{R}}_{s,\epsilon}g(w)\\
&\qquad = \tfrac{1}{s - r}\int_{r}^{s} \int_{\R^{2}} \left[ \partial_{\xi} K(\Gamma_{\xi}^{-1}(v) \cdot \Gamma_{\xi}(w)) - \partial_{s} K(\Gamma_{s}^{-1}(v) \cdot \Gamma_{s}(w)) \right]g(v) \, dv \, d\xi. \end{align*} 
Now, recall the explicit expression \eqref{form88} for the kernels $\partial_{s}K(\Gamma_{s}^{-1}(v) \cdot \Gamma_{s}(w))$, and note that the factors $D_{A,1}(w,v)$ and $D_{A,2}(w,v)$ are uniformly bounded, and do not depend on $s$ or $\xi$. Therefore, the difference of the kernels in square brackets is bounded (in absolute value) by the sum of two differences of the form
\begin{equation}\label{form135} |\mathcal{K}(\Gamma_{\xi}(v)^{-1} \cdot \Gamma_{\xi}(w)) - \mathcal{K}(\Gamma_{s}(v)^{-1} \cdot \Gamma_{s}(w))|, \end{equation}
where $\mathcal{K} \in \{y\partial_{x}K,y^{2}\partial_{t}K\}$ is another kernel satisfying the $3$-dimensional kernel estimates \eqref{kernelConstants}. Moreover, since $K$ was supported away from $0$, the same is true for $\mathcal{K}$. So, it remains to show that
\begin{displaymath} \lim_{\xi \to s} \iint |\mathcal{K}(\Gamma_{\xi}(v)^{-1} \cdot \Gamma_{\xi}(w)) - \mathcal{K}(\Gamma_{s}(v)^{-1} \cdot \Gamma_{s}(w))||g(v)||\varphi(w)| \, dv \, dw = 0 \end{displaymath}
for any $\varphi \in C^{\infty}_{c}(\R^{2})$. Since also $g \in C^{\infty}_{c}(\R^{2})$, and $0 \notin \spt \mathcal{K}$, this follows easily from the fact that the differences in \eqref{form135} tend to zero for all $(v,w) \in \R^{2} \times \R^{2}$ as $\xi \to s$. \end{remark}


\section{Invertibility of the single layer potential}\label{s:singleInvert}

In this section, we prove Theorem \ref{main4}, which characterises the set of boundary values $\{(\mathcal{S}f)|_{\partial \Omega} : f \in L^{2}(\sigma)\}$ as a homogeneous Sobolev space defined on $\partial \Omega$. Before getting to the details, we verify that the boundary values exist in the non-tangential sense:
\begin{proposition}\label{prop11} Let $\Omega \subset \He$ be an open set with locally finite $\He$-perimeter, $\sigma := |\partial \Omega|_{\He}$, and assume that $\sigma$ is upper $3$-regular. Let $f \in L^{p}(\sigma)$ with $1 \leq p < 3$. Then, $\mathcal{S}f$ has non-tangential limit equal to
\begin{equation}\label{form178} Sf(p_{0}) := \int G(q^{-1} \cdot p_{0})f(q) \, d\sigma(q) \end{equation} 
at $p_{0} \in \partial \Omega$ along $\He \, \setminus \, \partial \Omega$ whenever the integral above is absolutely convergent. This is true $\sigma$ a.e., and $Sf \in L^{p}(\sigma) + L^{\infty}(\sigma)$. \end{proposition}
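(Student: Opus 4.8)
The plan is to prove the three assertions — $\sigma$-a.e.\ absolute convergence of the integral \eqref{form178}, the membership $Sf\in L^{p}(\sigma)+L^{\infty}(\sigma)$, and the $\sigma$-a.e.\ non-tangential convergence $\mathcal{S}f\to Sf$ — in that order, working throughout with the size bound $G(q^{-1}\cdot p)\sim d(p,q)^{-2}$. Because this kernel has homogeneity $-2$ against the (upper) $3$-regular measure $\sigma$, the operator $f\mapsto Sf$ behaves like a fractional integral of order $1$, and the argument follows the classical template for Fatou-type theorems for Riesz potentials.

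First I would split the kernel at unit scale around the target point: $Sf=S_{\mathrm{far}}f+S_{\mathrm{near}}f$, where $S_{\mathrm{far}}f(p):=\int_{d(p,q)\ge 1}G(q^{-1}\cdot p)f(q)\,d\sigma(q)$ and $S_{\mathrm{near}}f(p):=\int_{d(p,q)<1}G(q^{-1}\cdot p)f(q)\,d\sigma(q)$. A dyadic decomposition into annuli $A_{j}(p):=\{2^{j}\le d(p,q)<2^{j+1}\}$, together with H\"older's inequality and $\sigma(A_{j}(p))\lesssim 2^{3j}$, gives for the far part
\begin{displaymath} |S_{\mathrm{far}}f(p)|\lesssim \sum_{j\ge 0}2^{-2j}\int_{A_{j}(p)}|f|\,d\sigma\lesssim \sum_{j\ge 0}2^{j(1-3/p)}\|f\|_{L^{p}(A_{j}(p))}\lesssim \|f\|_{L^{p}(\sigma)}, \end{displaymath}
since $1-3/p<0$ for $p<3$; thus $S_{\mathrm{far}}f\in L^{\infty}(\sigma)$, uniformly in $p$. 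For the near part one recognises a truncated fractional integral of order $1$: invoking the $L^{p}(\sigma)$-boundedness of the Hardy--Littlewood maximal operator $M_{\sigma}$, Hedberg's argument yields $\|S_{\mathrm{near}}f\|_{L^{q}(\sigma)}\lesssim \|f\|_{L^{p}(\sigma)}$ with $1/q=1/p-1/3$ when $1<p<3$, and the weak $(1,\tfrac{3}{2})$ bound when $p=1$. In particular $S_{\mathrm{near}}f<\infty$ $\sigma$-a.e., which, with the far part, gives the absolute convergence of \eqref{form178}. Finally $L^{q}(\sigma)\subset L^{p}(\sigma)+L^{\infty}(\sigma)$ by splitting according to $\{|g|>1\}$, so $Sf\in L^{p}(\sigma)+L^{\infty}(\sigma)$. (Here one uses that $(\partial\Omega,d,\sigma)$ is a space of homogeneous type, i.e.\ that the perimeter measure is $3$-regular; cf.\ \cite[Proposition 4.1]{2018arXiv180304819F}.)

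For the non-tangential convergence I would run the standard density argument. The building block is $f\in C_{c}(\partial\Omega)$: then $\mathcal{S}f$ extends continuously across $\partial\Omega$ with boundary value $Sf$, because the only delicate contribution is the near field, and $\int_{B(p_{0},\rho)}d(p_{0},q)^{-2}\,d\sigma(q)\lesssim \rho\to 0$ by the dyadic estimate; so the non-tangential limit equals $Sf(p_{0})$ at every $p_{0}$. For general $f\in L^{p}(\sigma)$ and fixed aperture $\theta\in(0,1)$, write $f=f^{(k)}+g_{k}$ with $f^{(k)}\in C_{c}(\partial\Omega)$ and $\|g_{k}\|_{L^{p}(\sigma)}\to 0$. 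The key pointwise estimate is: for $\sigma$-a.e.\ $p_{0}$ (where \eqref{form178} converges absolutely for $f,f^{(k)},g_{k}$) and $x\in V_{\partial\Omega}(p_{0},\theta)$ with $r:=d(x,p_{0})$ small, split $g_{k}=g_{k}\mathbf{1}_{B(p_{0},1)}+g_{k}\mathbf{1}_{B(p_{0},1)^{c}}$ and further $\partial\Omega=B(p_{0},2r)\cup B(p_{0},2r)^{c}$. Since $d(x,q)\ge\theta r$ on $\partial\Omega$ and $d(p_{0},q)<2r$ on $B(p_0,2r)$, the $B(p_{0},2r)$-piece of $\mathcal{S}(g_{k}\mathbf{1}_{B(p_{0},1)})(x)$ is $\lesssim_{\theta}r^{-2}\int_{B(p_{0},2r)}|g_{k}|\,d\sigma\lesssim_{\theta}\widetilde{I}_{1}|g_{k}|(p_{0})$, where $\widetilde{I}_{1}h(p):=\int_{B(p,1)}d(p,q)^{-2}|h(q)|\,d\sigma(q)$; the intermediate annulus $B(p_{0},1)\setminus B(p_{0},2r)$, where $d(x,q)\sim d(p_{0},q)$, is also $\lesssim\widetilde{I}_{1}|g_{k}|(p_{0})$; and $\mathcal{S}(g_{k}\mathbf{1}_{B(p_{0},1)^{c}})$ is continuous at $p_{0}$ by dominated convergence, with limit of absolute value $\lesssim\|g_{k}\|_{L^{p}(\sigma)}$. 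Combining these with the same decomposition of $Sf^{(k)}(p_{0})-Sf(p_{0})=Sg_k(p_0)$ and with the base case, one obtains
\begin{displaymath} \limsup_{x\to p_{0},\,x\in V_{\partial\Omega}(p_{0},\theta)}\,|\mathcal{S}f(x)-Sf(p_{0})|\lesssim_{\theta}\widetilde{I}_{1}|g_{k}|(p_{0})+\|g_{k}\|_{L^{p}(\sigma)},\qquad k\in\N. \end{displaymath}
Since $\widetilde{I}_{1}$ enjoys the same $L^{p}\to L^{q,\infty}$ bound as $S_{\mathrm{near}}$, for each $\lambda>0$ the set where the left-hand side exceeds $\lambda$ has $\sigma$-measure $\lesssim \lambda^{-q}\|g_{k}\|_{L^{p}(\sigma)}^{q}\to 0$; hence it vanishes $\sigma$-a.e. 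Intersecting the resulting full-measure sets over a sequence $\theta_{n}\downarrow 0$ (and noting that a smaller aperture is the harder case) yields the non-tangential limit for every $\theta\in(0,1)$, completing the proof.

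The main obstacle is purely bookkeeping: making the fractional-integral estimates survive the fact that $f$ is globally in $L^{p}(\sigma)$ but neither bounded nor compactly supported. This is exactly what the $B(p_{0},1)$ split resolves — after it, the far field contributes a harmless $O(\|f\|_{L^{p}(\sigma)})$ uniformly, while the near field is a genuine truncated fractional integral. The single quantitative input from outside is the (weak-type) boundedness of $\widetilde{I}_{1}$ on $L^{p}(\sigma)$, which requires $\sigma$ to behave like a $3$-regular measure; this is the only point where the geometry of $\partial\Omega$ beyond upper regularity is used, and it is available in all the settings of this paper.
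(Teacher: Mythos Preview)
Your argument is essentially correct, but the paper's route is markedly more elementary and yields a sharper conclusion under the stated hypothesis (only \emph{upper} $3$-regularity).

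For the decomposition $Sf\in L^{p}(\sigma)+L^{\infty}(\sigma)$, the paper avoids Hedberg's inequality and the maximal function altogether: writing $G=G_{\mathrm{loc}}+G_{\mathrm{glo}}$ with $G_{\mathrm{loc}}=G\mathbf{1}_{B(1)}$, upper $3$-regularity alone gives $G_{\mathrm{loc}}\in L^{1}(\sigma)$, so Schur's test yields $S_{\mathrm{loc}}\colon L^{p}(\sigma)\to L^{p}(\sigma)$ for every $1\le p\le\infty$; and $G_{\mathrm{glo}}\in L^{q}(\sigma)$ for $q>\tfrac{3}{2}$ gives $S_{\mathrm{glo}}f\in L^{\infty}(\sigma)$ by H\"older. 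This matches the proposition's hypotheses exactly, whereas your approach (as you note) needs full $3$-regularity for the maximal operator.

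The more significant difference is the non-tangential limit. You run a density/maximal-function argument, which only yields convergence $\sigma$-a.e. The proposition actually asserts convergence at \emph{every} $p_{0}$ where \eqref{form178} is absolutely convergent, and the paper obtains this with a one-line domination: if $p\in V_{\partial\Omega}(p_{0},\theta)$, then $d(p,p_{0})<\theta^{-1}\dist(p,\partial\Omega)\le\theta^{-1}d(p,q)$, hence $d(p_{0},q)\le(1+\theta^{-1})d(p,q)$ for all $q\in\partial\Omega$, so $G(q^{-1}\cdot p)\lesssim_{\theta}G(q^{-1}\cdot p_{0})$. Thus $q\mapsto G(q^{-1}\cdot p_{0})|f(q)|$ is a dominating function independent of $p$ in the cone, and dominated convergence gives $\mathcal{S}f(p)\to Sf(p_{0})$ directly. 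No density, no maximal function, no extra regularity --- and the pointwise statement comes for free.
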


\begin{proof} To see that the integral defining $Sf(p)$ is absolutely convergent for $\sigma$ a.e. $p \in \partial \Omega$, decompose
\begin{displaymath} Sf(p) := \int G_{\mathrm{loc}}(q^{-1} \cdot p)f(q) \, d\sigma(q) + \int G_{\mathrm{glo}}(q^{-1} \cdot p)f(q) \, d\sigma(q) =: S_{\mathrm{loc}}f(p) + S_{\mathrm{glo}}f(p), \end{displaymath}
where $G_{\mathrm{loc}} := G\mathbf{1}_{B(1)}$, and $G_{\mathrm{glo}} := G - G_{\mathrm{loc}}$ (both non-negative kernels). Since $G_{\mathrm{loc}} \in L^{1}(\sigma)$ by the upper $3$-regularity of $\sigma$, Schur's test implies
\begin{displaymath} \int \left( \int G_{\mathrm{loc}}(q^{-1} \cdot p)|f(q)| \, d\sigma(q) \right)^{p} \, d\sigma(p) \lesssim \|f\|_{L^{p}(\sigma)}^{p}, \qquad 1 \leq p < \infty. \end{displaymath}
In particular, the integral defining $S_{\mathrm{loc}}f$ is absolutely convergent $\sigma$ a.e. and $S_{\mathrm{loc}}f \in L^{p}(\sigma)$. On the other hand $G_{\mathrm{glo}} \in L^{q}(\sigma)$ for all $q > \tfrac{3}{2}$, which implies by H\"older's inequality, and $f \in L^{p}(\sigma)$ with $1 \leq p < 3$, that the integral defining $S_{\mathrm{glo}}f(p)$ converges absolutely for all $p \in \partial \Omega$, and $S_{\mathrm{glo}}f \in L^{\infty}(\sigma)$. Hence, $Sf$ converges absolutely whenever $S_{\mathrm{loc}}f$ does.

Now, fix $\theta \in (0,1)$, and let $p_{0} \in \partial \Omega$ be a point where $Sf(p_{0})$ converges absolutely, and which lies in the closure of $[\He \, \setminus \, \partial \Omega] \cap V_{\partial \Omega}(p_{0},\theta)$. Fix $p \in [\He \, \setminus \, \partial \Omega] \cap V_{\partial \Omega}(p_{0},\theta)$, and use the triangle inequality to find that $d(p_{0},q) \leq (1 + \theta^{-1})d(p,q)$ for all $q \in \partial \Omega$. Recalling that $G(p) \sim \|p\|^{-2}$, it follows that $G(q^{-1} \cdot p) \lesssim_{\theta} G(q^{-1} \cdot p_{0})$ for all $q \in \partial \Omega$, hence $q \mapsto G(q^{-1} \cdot p_{0})|f(q)|$ is a $\sigma$-integrable dominating function for $q \mapsto G(q^{-1} \cdot p)|f(q)|$, with constants independent of $p \in [\He \, \setminus \, \partial \Omega] \cap V_{\partial \Omega}(p_{0},\theta)$. By dominated convergence,
\begin{displaymath} \lim_{p \to p_{0}} \mathcal{S}f(p) = \lim_{p \to p_{0}} \int G(p^{-1} \cdot q)f(q) \, d\sigma(q) = \int G(p_{0}^{-1} \cdot q)f(q) \, d\sigma(q) = Sf(p_{0}), \end{displaymath} 
where the limit naturally refers to non-tangential approach. This concludes the proof. \end{proof} 

It may be worth emphasising that the small technicalities in defining $Sf(p)$ are absent if, say, $f \in L^{\infty}(\sigma) \cap L^{1}(\sigma)$. This generality will suffice for most of this section. The homogeneous Sobolev space $L^{2}_{1,1/2}(\sigma)$ will be defined as a "$\Gamma$-lift" of the following non-isotropic homogeneous Sobolev space on $\R^{2}$:

\begin{definition}\label{def:L2112} The homogeneous Sobolev space $L^{2}_{1,1/2}(\R^{2})$ consists of those tempered distributions $T \in \mathcal{S}'(\R^{2})$ such that $\widehat{T} \in L^{1}_{\mathrm{loc}}(\R^{2})$, and
\begin{displaymath} \|T\|_{1,1/2} := \left( \iint \|(\xi,\tau)\|^{2} \cdot |\widehat{T}(\xi,\tau)|^{2} \, d\xi \, d\tau \right)^{1/2}. \end{displaymath}
Here $\|(\xi,\tau)\| := |\xi| + \sqrt{|\tau|}$, and $\widehat{T}$ refers to the full Fourier transform in $\R^{2}$. \end{definition}
\begin{remark} For a clear treatment of homogeneous Sobolev spaces, see \cite[\S 1.3]{MR2768550}. Here is what we need: using that $\|(\xi,\tau)\|^{-2} \in L^{1}(B(0,1))$, we have the inclusion $L^{2}_{1,1/2}(\R^{2}) \subset L^{2}(\R^{2}) + L^{\infty}(\R^{2})$, and $(L^{2}_{1,1/2}(\R^{2}),\|\cdot\|_{1,1/2})$ is, in fact, a Hilbert space. In particular, $\|\cdot\|_{1,1/2}$ is a proper norm in $L^{2}_{1,1/2}(\R^{2})$ (this uses the \emph{a priori} assumption $\widehat{T} \in L^{1}_{\mathrm{loc}}(\R^{2})$, which excludes Dirac masses at $0$). \end{remark}

Since $L^{2}_{1,1/2}(\R^{2})$ is a Hilbert space, we have the following useful property:
\begin{lemma}\label{lemma7} Let $\{T_{j}\}_{j \in \N} \subset L^{2}_{1,1/2}(\R^{2})$ be a bounded sequence. Then there exists $T \in L^{2}_{1,1/2}(\R^{2})$ such that $T_{j} \rightharpoonup T$ and
\begin{equation}\label{form120} \|T\|_{1,1/2} \leq \liminf_{j \to \infty} \|T_{j}\|_{1,1/2}. \end{equation}
Here the symbol "$\rightharpoonup$" may refer to either weak convergence w.r.t. the inner product in $L^{1,1/2}(\R^{2})$, or the convergence of tempered distributions (i.e. both types of convergence hold simultaneously).
\end{lemma}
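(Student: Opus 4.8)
The plan is to use that $(L^{2}_{1,1/2}(\R^{2}),\|\cdot\|_{1,1/2})$ is a Hilbert space, which is exactly the content of the Remark preceding the lemma. First I would invoke Banach--Alaoglu (equivalently, reflexivity of Hilbert spaces): since $\{T_{j}\}_{j\in\N}$ is norm-bounded in this Hilbert space, there is a subsequence converging weakly, with respect to the inner product of $L^{2}_{1,1/2}(\R^{2})$, to some $T\in L^{2}_{1,1/2}(\R^{2})$. (Strictly speaking one only gets a subsequence; in the places where the lemma is applied, the distributional identification below forces every subsequential weak limit to coincide, so this is harmless, and I would record that remark explicitly if needed.) The norm bound \eqref{form120} is then weak lower semicontinuity of the Hilbert norm: $\|T\|_{1,1/2}=\sup_{\|S\|_{1,1/2}\le 1}|\langle T,S\rangle|=\sup_{\|S\|_{1,1/2}\le 1}\lim_{j\to\infty}|\langle T_{j},S\rangle|\le\liminf_{j\to\infty}\|T_{j}\|_{1,1/2}$, using the definition of weak convergence and Cauchy--Schwarz.

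The one substantive point is to check that weak convergence in $L^{2}_{1,1/2}(\R^{2})$ also yields convergence in $\mathcal{S}'(\R^{2})$, so that the two senses of "$\rightharpoonup$" in the statement agree. The inner product on $L^{2}_{1,1/2}(\R^{2})$ is $\langle S,U\rangle=\iint\|(\xi,\tau)\|^{2}\,\widehat{S}(\xi,\tau)\overline{\widehat{U}(\xi,\tau)}\,d\xi\,d\tau$, with $\|(\xi,\tau)\|=|\xi|+\sqrt{|\tau|}$ as in Definition \ref{def:L2112}. Given a test function $\varphi\in\mathcal{S}(\R^{2})$, I would exhibit an element $U_{\varphi}\in L^{2}_{1,1/2}(\R^{2})$ whose Fourier transform is a suitable constant multiple of $\|(\xi,\tau)\|^{-2}\widehat{\varphi}$ (up to the usual reflection and complex conjugation), so that $T(\varphi)=\langle T,U_{\varphi}\rangle$ for every $T\in L^{2}_{1,1/2}(\R^{2})$; this is just Parseval once the definitions are unwound, and the a priori hypothesis $\widehat{T}\in L^{1}_{\mathrm{loc}}$ makes the pairing legitimate. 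That $U_{\varphi}$ genuinely lies in $L^{2}_{1,1/2}(\R^{2})$ follows because $\|(\xi,\tau)\|^{-2}$ is locally integrable on $\R^{2}$ and $\widehat{\varphi}$ is bounded and rapidly decreasing: $\widehat{U_{\varphi}}\in L^{1}_{\mathrm{loc}}(\R^{2})$ and $\iint\|(\xi,\tau)\|^{2}|\widehat{U_{\varphi}}|^{2}\sim\iint\|(\xi,\tau)\|^{-2}|\widehat{\varphi}|^{2}<\infty$. Consequently $T_{j}(\varphi)=\langle T_{j},U_{\varphi}\rangle\to\langle T,U_{\varphi}\rangle=T(\varphi)$ for every $\varphi\in\mathcal{S}(\R^{2})$, i.e. $T_{j}\rightharpoonup T$ in $\mathcal{S}'(\R^{2})$ as well.

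I do not expect a serious obstacle here: the lemma is a soft functional-analytic statement, and the only real care is in the interplay of the two weak topologies, namely verifying that every Schwartz function is paired by the explicit formula above against an element of $L^{2}_{1,1/2}(\R^{2})$ and that this pairing is continuous for the Hilbert-space weak topology. The secondary, purely bookkeeping point is the subsequence issue flagged above, which I would dispose of by noting that the distributional limit pins $T$ down uniquely, so once one subsequential distributional limit is identified the whole sequence converges to it.
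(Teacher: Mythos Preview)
Your proposal is correct and matches the paper's proof essentially line for line: both extract a weak limit via the Hilbert-space structure, then identify the Hilbert weak limit with a distributional limit by writing the pairing with $\varphi\in\mathcal{S}(\R^{2})$ as the Hilbert inner product against the element with Fourier transform $\|(\xi,\tau)\|^{-2}\widehat{\varphi}$ (using local integrability of $\|(\xi,\tau)\|^{-2}$). Your explicit handling of the subsequence issue is a minor addition the paper leaves implicit.
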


\begin{proof} Every bounded sequence in a Hilbert space has a weakly convergent subsequence, so there exists an element $T \in L^{2}_{1,1/2}(\R^{2})$ satisfying \eqref{form120} such that
\begin{displaymath} \lim_{j \to \infty} \iint \|(\xi,\tau)\|^{2} \cdot \widehat{T}_{j}(\xi,\tau) \cdot \overline{\widehat{T}}'(\xi,\tau) \, d\xi \, d\tau = \iint \|(\xi,\tau)\|^{2} \cdot \widehat{T}(\xi,\tau) \cdot \overline{\widehat{T}}'(\xi,\tau) \, d\xi \, d\tau \end{displaymath}
for all $T' \in L^{2}_{1,1/2}(\R^{2})$. It follows that
\begin{equation}\label{form145} \lim_{j \to \infty} \iint \widehat{T}_{j}(\xi,\tau) \cdot \varphi(\xi,\tau) \, d\xi \, d\tau = \iint \widehat{T}(\xi,\tau) \cdot \varphi(\xi,\tau) \, d\xi \, d\tau, \qquad \varphi \in \mathcal{S}(\R^{2}), \end{equation}
because we can factorise
\begin{displaymath} \varphi(\xi,\tau) = \|(\xi,\tau)\|^{2} \cdot \frac{\varphi(\xi,\tau)}{\|(\xi,\tau)\|^{2}} =: \|(\xi,\tau)\|^{2} \cdot H, \qquad (\xi,\tau) \neq 0, \end{displaymath}
and note that the inverse Fourier transform of $H \in L^{1}(\R^{2})$ is an element of $L^{2}_{1,1/2}(\R^{2})$. Now \eqref{form145} means that $\widehat{T}_{j} \rightharpoonup \widehat{T}$ in the sense of distributions, and hence $T_{j} \rightharpoonup T$. \end{proof}

We proceed to define the space $L^{2}_{1,1/2}(\sigma)$ via the familiar map $\Gamma_{A} \colon \R^{2} \to \partial \Omega$,
\begin{equation}\label{form93} \Gamma_{A}(y,t) := \left(A(y),y,t - \tfrac{y}{2}A(y) + \int_{0}^{y} A(r) \, dr \right), \qquad (y,t) \in \R^{2}. \end{equation}

\begin{definition} The space $L^{2}_{1,1/2}(\sigma)$ consists of those functions $f \in L^{2}(\sigma) + L^{\infty}(\sigma)$ such that $f \circ \Gamma_{A} \in L^{2}_{1,1/2}(\R^{2})$. We define the norm $\|f\|_{1,1/2,\sigma} := \|f \circ \Gamma_{A}\|_{1,1/2}$. \end{definition}

The area formula \eqref{form16} for the map $\Gamma_{A}$ is used here implicitly to make sure that $f \circ \Gamma_{A} \in L^{2}(\R^{2}) + L^{\infty}(\R^{2}) \subset \mathcal{S}'(\R^{2})$. The main result in this section is the following:
\begin{thm}\label{sInvert1} The operator $S$ is bounded and invertible $L^{2}(\sigma) \to L^{2}_{1,1/2}(\sigma)$. In other words, whenever $f \in L^{2}(\sigma)$, then the $\sigma$ a.e. convergent integral $Sf$ from \eqref{form178} defines an element of $L^{2}_{1,1/2}(\sigma)$ with $\|Sf\|_{L^{2}_{1,1/2}(\sigma)} \lesssim \|f\|_{L^{2}(\sigma)}$. Moreover, every $g \in L^{2}_{1,1/2}(\sigma)$ can be represented as $g = Sf$ for some $f \in L^{2}(\sigma)$ with $\|f\|_{L^{2}(\sigma)} \sim \|g\|_{1,1/2,\sigma}$. \end{thm}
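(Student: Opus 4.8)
The plan is to prove Theorem \ref{sInvert1} by establishing boundedness and invertibility separately, both via the vertical Fourier transform machinery from Section \ref{s:Fourier} and the explicit formula \eqref{form70} for $\widehat{\mathcal{S}f}$. First I would set up the key identity: by Proposition \ref{prop11} the non-tangential boundary value $Sf$ agrees $\sigma$ a.e. with the "frozen" integral, and precomposing with $\Gamma_{A}$ and using the area formula \eqref{form16} I want to express $\widehat{(Sf)\circ \Gamma_{A}}(\xi,\tau)$ — the \emph{full} $\R^{2}$-Fourier transform in the $(y,t)$ variables — in terms of $\hat f$ (or rather $\widehat{f\circ\Gamma_A}$, the vertical transform followed by the $y$-transform). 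The starting point is \eqref{form70}: taking the further Fourier transform in the $y$-variable of $\widehat{(\mathcal{S}f)}(z,\tau)$ restricted to $z = (A(y),y)$, the inner kernel $e^{\pi i\omega(z,w)\tau}K_{0}(\tfrac{\pi}{2}|\tau||z-w|^{2})$ becomes, after the change of variables adapted to the graph, a convolution-type kernel whose symbol I expect to be comparable to $\|(\xi,\tau)\|^{-1} = (|\xi| + \sqrt{|\tau|})^{-1}$, uniformly in the Lipschitz constant of $A$. This is the sub-elliptic analogue of the classical fact that the single layer potential on a hyperplane has Fourier symbol $|\xi|^{-1}$.

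Second, with such a symbol identity in hand, boundedness $L^{2}(\sigma)\to L^{2}_{1,1/2}(\sigma)$ would reduce to showing $\iint \|(\xi,\tau)\|^{2}|\widehat{(Sf)\circ\Gamma_A}|^{2} \lesssim \|f\|_{L^{2}(\sigma)}^{2}$, which follows from Plancherel once the multiplier relating $\widehat{(Sf)\circ\Gamma_A}$ to $\widehat{f\circ\Gamma_A}$ is bounded by $C\|(\xi,\tau)\|^{-1}$. For the lower bound $\|Sf\|_{1,1/2,\sigma}\gtrsim\|f\|_{L^{2}(\sigma)}$, the clean case is $A\equiv 0$: then $\partial\Omega = \W$, $\Gamma_{A} = \mathrm{Id}$, the cross-term $\omega(z,w)$ in \eqref{form70} simplifies, and one can compute the symbol exactly, getting two-sided bounds $\|(\xi,\tau)\|^{-1}$; this gives both boundedness and invertibility of $S$ in the vertical-subgroup case. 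For general Lipschitz $A$, I would run a method-of-continuity argument exactly parallel to Section \ref{s:surjectivity}: set $A_{s} = sA$, $\Gamma_{s}$ as in \eqref{eq:Gammas}, and define $\widetilde{S}_{s}f := (S_{A_s}(f\circ\Gamma_s^{-1}))\circ\Gamma_s$ as operators on a fixed space, say $L^2(\R^2)\to L^2_{1,1/2}(\R^2)$. Condition (c) of Lemma \ref{l:kenig} is the $A\equiv 0$ case just handled; condition (b), continuity in $s$, should follow from a variant of Theorem \ref{t:sDerOp} / Theorem \ref{t:FO} applied to the kernel $K = G$ (or smooth truncations), controlling $\partial_s$ of the kernel by the $D_{A,1},D_{A,2}$ factors — but here one must track the mapping into the Sobolev space rather than $L^2$, which requires commuting a $\|(\xi,\tau)\|$-weight past the operator.

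The main obstacle I anticipate is precisely the lower bound (coercivity estimate) $\|Sf\|_{1,1/2,\sigma} \gtrsim \|f\|_{L^2(\sigma)}$ for general $A$: unlike the upper bound, it is not obviously stable under the continuity method because Lemma \ref{l:kenig}(a) demands a \emph{uniform} lower bound, and establishing that requires either (i) a Rellich-type identity for the single layer potential analogous to the one driving Theorem \ref{t:injectivity}, or (ii) deriving it as a consequence of the already-proven invertibility of $\tfrac{1}{2}I\pm D^{t}$ together with the jump relation $\nabla_{\nu}^{\pm}\mathcal{S}f = (\mp\tfrac12 I + D^t)f$ from Corollary \ref{c:allJumps}. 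Route (ii) seems most promising: from $(\tfrac12 I + D^t)f = \nabla^-_\nu u$ and the injectivity estimate \eqref{form4}, one gets $\|f\|_{L^2(\sigma)} \lesssim \|\nabla^-_\nu u\|_{L^2(\sigma)} \le \|\nabla u|_{\partial\Omega}\|_{L^2(\sigma)}$, and then one needs to bound $\|\nabla u|_{\partial\Omega}\|_{L^2(\sigma)}$, together with the $T^{1/2}$-component, by $\|Sf\|_{1,1/2,\sigma}$ — i.e. that the $L^2_{1,1/2}$-norm of the boundary trace controls the tangential horizontal derivative and the half-order $t$-derivative of $u$ on $\partial\Omega$. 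That last implication is essentially the statement that the graph map $\Gamma_A$ intertwines the abstract Sobolev norm with the geometric "tangential derivative plus $T^{1/2}$" norm, which is where the Rellich estimates \eqref{form3}--\eqref{form5} and the vertical multiplier calculus of Section \ref{s:Fourier} get reused. Once both inequalities are in place, $S$ is bounded below and has dense range (the range contains $S(C^\infty_c)$, and one checks density of the image via the $A\equiv 0$ model plus continuity), hence invertible, and the norm equivalence $\|f\|_{L^2(\sigma)}\sim\|Sf\|_{1,1/2,\sigma}$ follows.
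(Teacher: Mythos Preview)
Your overall architecture matches the paper's: reduce to the parametrised operator $S_A$ on $\R^2$ via $\Gamma_A$ (Remark \ref{rem3}), handle $A\equiv 0$ by an explicit symbol computation (this is Proposition \ref{prop4}, and your expectation $\widehat G(\xi,\tau)\sim\|(\xi,\tau)\|^{-1}$ is exactly what is verified there), and then run the method of continuity for general $A$. Your route (ii) for coercivity is also essentially what the paper does in Section \ref{s:checklist2}: from $\|f\|\le\|(\tfrac12 I+D^t)f\|+\|(-\tfrac12 I+D^t)f\|=\|\nabla_\nu^-u\|+\|\nabla_\nu^+u\|$, the Rellich estimate \eqref{form3} bounds both normal derivatives by $\|\nabla_\tau u\|+\|f\|^{1/2}\|T^{1/2}u\|^{1/2}$, and after absorption this gives $\|f\|\lesssim\|\nabla_\tau u\|+\|T^{1/2}u\|$; the identification of the right-hand side with $\|Sf\|_{1,1/2,\sigma}$ is Proposition \ref{prop5}.

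There is, however, a genuine gap in how you propose to obtain boundedness and the Lipschitz estimate $\|S_s-S_t\|_{L^2\to L^2_{1,1/2}}\lesssim|s-t|$. Your first two paragraphs lean on a Fourier-multiplier picture: express $\widehat{(Sf)\circ\Gamma_A}$ as a multiplier times $\widehat{f\circ\Gamma_A}$ and bound the multiplier by $\|(\xi,\tau)\|^{-1}$. For general $A$ this fails outright: after pulling back by $\Gamma_A$ the operator $S_A$ is a convolution in the $t$-variable (so the vertical Fourier transform applies, giving \eqref{form123}), but in the $y$-variable the resulting kernel is $k_\tau(\gamma(y')^{-1}\cdot\gamma(y))$ with $\gamma$ depending on $A$ --- not translation-invariant, so there is no symbol in $\xi$. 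Calling the needed estimate a ``variant of Theorem \ref{t:sDerOp}'' also understates what is required: that theorem treats horizontally \emph{odd} $3$-dimensional kernels with target $L^2$, whereas here $G$ is horizontally \emph{even} and $2$-dimensional, and the target is $L^2_{1,1/2}$. The paper proves a separate result, Theorem \ref{t:bdd}, for exactly this. Its proof takes the Fourier transform in $t$ only, rescales to normalise $|\tau|=1$, and reduces both the $\partial_y$-part and the $|\tau|^{1/2}$-part of the Sobolev norm to $L^2(\R)$-boundedness of one-dimensional variable-coefficient singular integrals of Calder\'on-commutator type (carrying the $D_{A,j}$ factors), handled via \cite[Theorem 4.3]{2019arXiv191103223F}. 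A further subtlety you have not anticipated: the $s$-derivative of $G_s$ produces the auxiliary kernels $G_1=y\partial_x G$ and $G_2=y^2\partial_t G$ (see \eqref{form118a}), and the argument requires the extra integrability $\sup_z\int|G_j(z,t)|\,dt<\infty$ --- a property that $G$ itself does \emph{not} enjoy --- so the two halves of Theorem \ref{t:bdd} (plain $K_B$ versus $K_B D_{A,j}$) genuinely need different hypotheses, and ``commuting a $\|(\xi,\tau)\|$-weight past the operator'' is not how the proof proceeds.
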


Theorem \ref{sInvert1} and Proposition \ref{prop11} combined imply that if $g \in L^{2}_{1,1/2}(\sigma)$, then $g$ coincides with the non-tangential limits of $\mathcal{S}f$ for some $f \in L^{2}(\sigma)$. This completes the proof of Theorem \ref{main5}.
\begin{ex}\label{SLPEx} Let $\Omega \subset \He$ be a flag domain, and let $p \in \Omega$. Then, $q \mapsto g_{p} := G(q^{-1} \cdot p) \in L^{2}_{1,1/2}(\sigma)$ by Lemma \ref{lemma6} (see Remark \ref{r:SLP} for details). Consequently, Theorem \ref{main5} implies that there exists $f_{p} \in L^{2}(\sigma)$ with the property that $g_{p}$ coincides $\sigma$ a.e. with the non-tangential limits of $u_{p} := \mathcal{S}f_{p}$ on $\partial \Omega$. Further, Corollary \ref{K-MaxNT} implies that 
\begin{displaymath} \|\mathcal{N}_{\theta}(\nabla u_{p})\|_{L^{2}(\sigma)} \lesssim \|f_{p}\|_{L^{2}(\sigma)} \sim \|g_{p}\|_{1,1/2}, \qquad \theta \in (0,1). \end{displaymath}
However, this approach does not seem to give $\|\mathcal{N}_{\theta}u_{p}\|_{L^{2}(\sigma)} \lesssim \|g_{p}\|_{L^{2}(\sigma)}$, although clearly $g_{p} \in L^{2}(\sigma)$. For this reason, it is preferable to find the harmonic function $u_{p}$, instead, as a double layer potential of some $f_{p} \in \mathbf{L}^{2}_{1,1/2}(\sigma)$, as explained in Example \ref{ex:Green}. \end{ex}

For technical reasons (similar to those encountered in Section \ref{s:surjectivity}), we will not prove Theorem \ref{sInvert1} directly, but rather the following variant: 

\begin{thm}\label{sInvert2} Let $A \colon \R \to \R$ be Lipschitz, and $f \in L^{2}(\R^{2})$. Then, the integral
\begin{equation}\label{eq:SA} S_{A}f(w) := \int G(\Gamma_{A}(v)^{-1} \cdot \Gamma_{A}(w))J_{A}(v)f(v) \, dv \end{equation}
converges absolutely for Lebesgue a.e. $w \in \R^{2}$, and $S_{A}f \in L^{2}_{1,1/2}(\R^{2})$ with $\|S_{A}f\|_{1,1/2} \lesssim \|f\|_{2}$. Moreover, the map $S_{A} \colon L^{2}(\R^{2}) \to L^{2}_{1,1/2}(\R^{2})$ is invertible.
\end{thm}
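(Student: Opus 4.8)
The plan is to reduce Theorem~\ref{sInvert2} to a careful study of the symbol of $S_A$ on the Fourier side, using the explicit Bessel-kernel formula \eqref{form70} as the starting point. First I would establish the absolute convergence of \eqref{eq:SA} for a.e.\ $w$: this follows exactly as in Proposition~\ref{prop11}, splitting $G$ into a local piece $G_{\mathrm{loc}} = G\mathbf{1}_{B(1)}$ and a global piece, observing $J_A \sim_A 1$, and using that $G_{\mathrm{loc}} \in L^q$ of the pushforward measure for appropriate $q$ together with Schur's test and H\"older's inequality. So $S_A f$ is a well-defined element of $L^2(\R^2) + L^\infty(\R^2) \subset \mathcal{S}'(\R^2)$.

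Next, the boundedness $\|S_A f\|_{1,1/2} \lesssim \|f\|_2$. The most robust route is to interpret $S_A$ as a lift of the single-layer potential and transport the $L^2$-bounds already available. Concretely, fixing a Lipschitz $A$, by the area formula \eqref{form16} the operator $S_A$ is conjugate (up to the bounded multiplier $J_A$) to the boundary single-layer potential $S$ on $\partial\Omega$ where $\Omega = \{x < A(y)\}$. One then has to show that $\nabla_{\R^2}(S_A f)$ and a half-order $t$-derivative $T^{1/2}(S_A f)$ both lie in $L^2(\R^2)$ with norm $\lesssim \|f\|_2$; since $\|(\xi,\tau)\|^2 \sim |\xi|^2 + |\tau|$, controlling $\|(\xi,\tau)\|\widehat{S_Af}$ in $L^2$ is equivalent to controlling the (horizontal) gradient and the $T^{1/2}$-derivative. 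The $\nabla_{\R^2}$ part: differentiating under the integral produces kernels $\partial_{x}G, \partial_{y}G$ which, after passing to the intrinsic variables, become horizontally-odd $3$-dimensional Calder\'on--Zygmund kernels on $(\R^2, \text{parabolic-type metric})$; the relevant $L^2$-bound is precisely of the type covered by \cite[Theorem 7.8]{2019arXiv191103223F} (quoted here as Theorem~\ref{t:FO}), modulo identifying the kernel factorization $D_{A,1}, D_{A,2}$ coming from $\Gamma_A(v)^{-1}\cdot\Gamma_A(w)$. The $T^{1/2}$ part: using the explicit formula \eqref{form70} for $\widehat{\mathcal{S}f}$ — namely $\widehat{S_Af}(z,\tau) = c\int e^{\pi i\omega(z,w)\tau} K_0(\tfrac\pi2|\tau||z-w|^2)\hat f(w,\tau)\,d\mathcal{H}^1(w)$ — one computes that $|\tau|^{1/2}\widehat{S_Af}$ is, for each fixed $\tau$, a convolution-type operator against the one-dimensional profile $|\tau|^{1/2}K_0(\tfrac\pi2|\tau||\cdot|^2)$, whose $L^1(\R)$ norm is $\sim 1$ uniformly in $\tau$; Young/Schur plus Plancherel in $t$ then give the desired bound. (For a Lipschitz graph rather than a line, one must first straighten via $\Gamma_A$ and absorb the lower-order terms; this is where the analogue of the arguments in Sections~\ref{s:injectivity}--\ref{s:surjectivity} enters.)

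For injectivity and the lower bound $\|S_Af\|_{1,1/2} \gtrsim \|f\|_2$, the cleanest argument is again via the Fourier symbol: on the model case $A \equiv 0$ one has $\partial U = \W$, $J_0 \equiv 1$, and \eqref{form70} shows that $\tau \mapsto \widehat{S_0 f}(\cdot,\tau)$ acts as Fourier multiplication (in the remaining spatial variable $y$) by a positive radial function comparable to $\|(\eta,\tau)\|^{-1}$ — indeed the Fourier transform of $K_0(\tfrac\pi2|\tau|\,y^2)$ in $y$ is, up to constants and the modulation coming from $\omega$, of size $\sim |\tau|^{-1/2}(\eta^2/|\tau| + 1)^{-1/2} \sim \|(\eta,\tau)\|^{-1}$. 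Hence $S_0$ is, up to bounded-above-and-below factors, the Fourier multiplier $\|(\eta,\tau)\|^{-1}$, which maps $L^2(\R^2)$ isometrically (up to constants) onto $L^2_{1,1/2}(\R^2)$; this proves Theorem~\ref{sInvert2} for $A\equiv 0$. For general Lipschitz $A$ I would then run the \emph{method of continuity}, exactly as in Section~\ref{s:surjectivity}: set $A_s := sA$, $\Gamma_s := \Gamma_{A_s}$, and consider $T_s := S_{A_s} \colon L^2(\R^2) \to L^2_{1,1/2}(\R^2)$. Condition (a) of Lemma~\ref{l:kenig} (the uniform lower bound $\|T_s f\|_{1,1/2} \gtrsim \|f\|_2$) is the substantive input and follows by combining the boundedness just proven with a Rellich-type estimate; in fact one can bypass proving (a) directly by instead establishing that $T_s$ is injective with closed range and that the adjoint has the analogous property, invoking the $L^2$-theory already built. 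Condition (c) is the $A\equiv 0$ case. Condition (b), Lipschitz-in-$s$ continuity of $s\mapsto T_s$ in operator norm, is proven verbatim as in \eqref{form187}--\eqref{form186}: one differentiates the kernel $G(\Gamma_s(v)^{-1}\cdot\Gamma_s(w))$ in $s$, uses the chain rule to produce the factorization \eqref{form88} with factors $D_{A,1}, D_{A,2}$ and kernels $y\partial_x G$, $y^2\partial_t G$ satisfying the $3$-dimensional estimates \eqref{kernelConstants}, and applies Theorem~\ref{t:FO} (now measuring the output in $L^2_{1,1/2}$ rather than $L^2$, which is harmless since the $s$-derivative lands one in the smoothing class).

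The main obstacle I anticipate is making the lower bound $\|S_A f\|_{1,1/2} \gtrsim \|f\|_2$ rigorous for \emph{general} Lipschitz $A$ in a way that is genuinely uniform in the deformation parameter $s$ — the model-case symbol computation is transparent, but once the graph is curved the symbol is no longer a clean multiplier and the half-order $t$-derivative interacts with the horizontal gradient in a nontrivial way (this is the same difficulty that forced the multi-step Rellich argument of Section~\ref{s:injectivity}). The safest strategy is therefore to phrase everything so that the method-of-continuity machinery of Section~\ref{s:surjectivity} can be quoted almost wholesale, with the only new ingredients being: (i) the identification of the model symbol as $\|(\eta,\tau)\|^{-1}$ from \eqref{form70}, and (ii) the observation that $\partial_s$ of the single-layer kernel is a sum of the $3$-dimensional kernels covered by Theorem~\ref{t:FO}, so that the continuity estimate \eqref{form190} carries over with $L^2_{1,1/2}$ in place of $L^2$ on the target. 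Finally, Theorem~\ref{sInvert1} follows from Theorem~\ref{sInvert2} by conjugating with $\Gamma_A$ and noting, via the area formula \eqref{form16}, that $f \mapsto J_A\cdot(f\circ\Gamma_A)$ is an isomorphism $L^2(\sigma) \to L^2(\R^2)$ and $g \mapsto g\circ\Gamma_A$ is, by definition of $L^2_{1,1/2}(\sigma)$, an isometry onto $L^2_{1,1/2}(\R^2)$, so that $S = \Gamma_A$-conjugate of $S_A$.
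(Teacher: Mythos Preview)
Your overall architecture matches the paper's: absolute convergence via local--global splitting, the model case $A\equiv 0$ via the explicit multiplier $\widehat{G}(\xi,\tau)\sim\|(\xi,\tau)\|^{-1}$, and the method of continuity along $A_s = sA$ with the $s$-derivative of the kernel factorised as in \eqref{form88} and handled by the results of \cite{2019arXiv191103223F}. Your treatment of the boundedness (the $\partial_y$-part via direct differentiation into horizontally odd $3$-dimensional kernels, and the $T^{1/2}$-part via the uniform $L^1(\R)$ bound on $|\tau|^{1/2}K_0(\tfrac{\pi}{2}|\tau|\,\cdot^{\,2})$) is a valid route, close in spirit to the paper's Theorem~\ref{t:bdd} --- the paper takes a slightly more indirect Fourier-in-$t$ path, reducing to one-variable singular integrals via \cite[Theorem~4.3]{2019arXiv191103223F} rather than quoting Theorem~\ref{t:FO} directly, but the content is the same.

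There are two places where your sketch has a genuine gap. First, the lower bound $\|S_sf\|_{1,1/2}\gtrsim\|f\|_2$ (condition~(a) of Lemma~\ref{l:kenig}) cannot be ``bypassed'' by an adjoint argument in the way you suggest: $S_s$ maps between different spaces, and Lemma~\ref{l:kenig} needs~(a) as input. The paper's argument (Section~\ref{s:checklist2}) is concrete and short: Proposition~\ref{prop5} gives $\|S_s\varphi_s\|_{1,1/2}\sim\|\nabla_\tau S\varphi\|_{L^2(\sigma_s)}+\|T^{1/2}\mathcal{S}\varphi\|_{L^2(\sigma_s)}$; then the jump relations together with the Rellich-type inequality \eqref{form3} (already proved in Section~\ref{s:injectivity}) bound the right side below by $\|\varphi\|_{L^2(\sigma_s)}\sim\|\varphi_s\|_2$. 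You should state this chain explicitly. Second, the claim that the $s$-continuity \eqref{form190} ``carries over with $L^2_{1,1/2}$ in place of $L^2$ on the target'' is not automatic. The paper isolates this as the second half of Theorem~\ref{t:bdd}, which requires an \emph{additional} kernel hypothesis \eqref{kernelConstants3}: $\sup_z\int|K(z,t)|\,dt<\infty$. This fails for $G$ itself (the Fourier-side kernel $k$ then has a logarithmic blow-up, see \eqref{form128}), but it is verified in \eqref{form118} for the specific kernels $G_1=y\partial_xG$ and $G_2=y^2\partial_tG$ coming from $\partial_sG_s$. Without checking \eqref{kernelConstants3} for $G_1,G_2$, the Lipschitz-in-$s$ estimate into $L^2_{1,1/2}$ is not justified.
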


To motivate the definition of "$S_{A}f"$, we record that
\begin{equation}\label{form144} (Sf) \circ \Gamma_{A} = S_{A}(f \circ \Gamma_{A}), \qquad f \in L^{2}(\sigma). \end{equation} 
This follows easily from the area formula \eqref{form16}.

\begin{remark}\label{rem3} Let us a briefly remark that Theorem \ref{sInvert1} follows from Theorem \ref{sInvert2}. Once Theorem \ref{sInvert2} is known, we have can use the formula \eqref{form144} to deduce that
\begin{displaymath} \|Sf\|_{1,1/2,\sigma} := \|(Sf) \circ \Gamma_{A}\|_{1,1/2} = \|S_{A}(f \circ \Gamma_{A})\|_{1,1/2} \sim_{A} \|f \circ \Gamma_{A}\|_{2} \sim \|f\|_{L^{2}(\sigma)}. \end{displaymath}
The invertibility of $S$ also follows from the invertibility of $S_{A}$: if $h \in L^{2}_{1,1/2}(\sigma)$, then $h \circ \Gamma_{A} \in L^{2}_{1,1/2}(\R^{2})$ by definition, so $h \circ \Gamma_{A} = S_{A}f$ for some $f \in L^{2}(\R^{2})$ by the invertibility of $S_{A}$. Now, one defines $g := f \circ \Gamma_{A}^{-1} \in L^{2}(\sigma)$ and uses \eqref{form144} to show that $Sg = h$.
\end{remark}

\begin{remark}\label{rem4} We make some initial remarks on the proof of Theorem \ref{sInvert2}. The kernel $G_{A}(w,v) := G(\Gamma_{A}(v)^{-1} \cdot \Gamma_{A}(w))$ is a $2$-dimensional standard kernel on $\R^{2}$ equipped with the parabolic metric $d_{\mathrm{par}}((y,t),(y',t')) = |y - y'| + \sqrt{|t - t'|}$. This follows from the observations that $G(p) \sim \|p\|^{-2}$ is a $2$-dimensional standard kernel in $\He$, and $\Gamma_{A} \colon (\R^{2},d_{\mathrm{par}}) \to (\partial \Omega,d)$ is bilipschitz, see \cite[Lemma 7.5]{2019arXiv191103223F}. In particular, since Lebesgue measure on $\R^{2}$ is $3$-regular in $(\R^{2},d_{\mathrm{par}})$, we have $G_{A} \in L^{1}_{\mathrm{loc}}(\R^{2} \times \R^{2})$. This implies, following the local-global decomposition argument in Proposition \ref{prop11}, that the integral in \eqref{eq:SA} converges absolutely Lebesgue a.e. (for $f \in L^{2}(\R^{2})$), and
\begin{displaymath} S_{A}f \in L^{2}(\R^{2}) + L^{\infty}(\R^{2}) \subset \mathcal{S}'(\R^{2}). \end{displaymath}
To prove that \emph{a fortiori} $S_{A}f \in L^{2}_{1,1/2}(\R^{2})$ with $\|S_{A}f\|_{1,1/2} \lesssim \|f\|_{2}$, it suffices to show the same under the additional assumption $f \in C^{\infty}_{c}(\R^{2})$. Indeed, it is easy to check that if $\{\varphi_{j}\}_{j \in \N} \subset C^{\infty}_{c}(\R^{2})$ converge to $f \in L^{2}(\R^{2})$ in $L^{2}(\R^{2})$, then $S_{A}\varphi_{j} \rightharpoonup S_{A}f$ as tempered distributions. Then, assuming that the boundedness of $S_{A}$ on $C^{\infty}_{c}(\R^{2})$ is already known, Lemma \ref{lemma7} implies that
\begin{displaymath} \|S_{A}f\|_{1,1/2} \leq \liminf_{j \to \infty} \|S_{A}\varphi_{j}\|_{1,1/2} \lesssim \liminf_{j \to \infty} \|\varphi_{j}\|_{2} = \|f\|_{2}. \end{displaymath}
So, we may concentrate on proving the boundedness of $S_{A}$ on $C^{\infty}_{c}(\R^{2})$. The invertibility will, again, be established by the method of continuity, see Section \ref{s:checklist} for more details.  \end{remark}

\subsection{Some preliminary lemmas} The main purpose of this section is to quantify the intuition that the space $L^{2}_{1,1/2}(\sigma)$ consists of functions on $\partial \Omega$ with one tangential "$\tau$-derivative" in $L^{2}(\sigma)$, and additionally a $\tfrac{1}{2}$-order $t$-derivative in $L^{2}(\sigma)$. See Lemma \ref{lemma6} for a more precise statement. The proofs in this subsection are quite routine, so we suggest that the reader just skims through the statements and moves on to Section \ref{s:caseA0}. We start with a technical observation:
\begin{lemma}\label{lemma8} Let $f \in L^{2}(\R^{2})$, and let $f_{y}(t) := f(y,t)$ for $y \in \R$. Then,
\begin{displaymath} \iint \rho(\tau) \cdot |\widehat{f_{y}}(\tau)|^{2} \, d\tau \, dy = \iint \rho(\tau) \cdot |\hat{f}(\xi,\tau)|^{2} \, d\xi \, d\tau \end{displaymath}
for any non-negative $\rho \in L^{\infty}_{\mathrm{loc}}(\R)$. \end{lemma}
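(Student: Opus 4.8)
The plan is to prove this via the Plancherel theorem applied coordinate-by-coordinate. The statement is a one-variable Fubini/Plancherel computation, essentially saying that integrating $|\widehat{f_y}(\tau)|^2$ over $y$ reproduces the partial Fourier transform of $f$ in the $t$-variable only.

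First I would reduce to the case $f \in \mathcal{S}(\R^2)$ (or $C^\infty_c(\R^2)$) by a density argument: both sides define continuous functionals of $f \in L^2(\R^2)$ once $\rho$ is, say, bounded, and for general non-negative $\rho \in L^\infty_{\mathrm{loc}}(\R)$ one truncates $\rho$ to $\rho \mathbf{1}_{[-N,N]}$, applies the bounded case, and lets $N \to \infty$ by monotone convergence on both sides. So it suffices to handle Schwartz $f$ and bounded $\rho$.

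Next, for fixed $y$, let $g_y(\tau) := \widehat{f_y}(\tau) = \int_\R e^{-2\pi i t\tau} f(y,t)\,dt$ be the partial Fourier transform of $f$ in the $t$-variable only; this is the function whose full Fourier transform (now in the $y$-variable) gives $\hat f(\xi,\tau)$, i.e. $\hat{f}(\xi,\tau) = \int_\R e^{-2\pi i y\xi} g_y(\tau)\,dy$. Then I would write
\begin{displaymath} \iint \rho(\tau) |g_y(\tau)|^2 \, d\tau\, dy = \int_\R \rho(\tau) \left( \int_\R |g_y(\tau)|^2 \, dy \right) d\tau, \end{displaymath}
which is justified by Tonelli's theorem since the integrand is non-negative. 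For each fixed $\tau$, the function $y \mapsto g_y(\tau)$ is in $L^2(\R)$ (Schwartz decay of $f$ ensures this), and the ordinary Plancherel theorem in the $y$-variable gives $\int_\R |g_y(\tau)|^2\, dy = \int_\R |\hat f(\xi,\tau)|^2 \, d\xi$. Substituting this identity and invoking Tonelli once more to restore the double integral yields the right-hand side, completing the proof.

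There is no real obstacle here; the only point requiring minor care is the measurability and the legitimacy of swapping the order of integration, which is handled by non-negativity of the integrands (Tonelli) together with the observation that $(\xi,\tau) \mapsto \hat f(\xi,\tau)$ and $\tau \mapsto \int |g_y(\tau)|^2\,dy$ are jointly/measurable for Schwartz $f$. The extension from Schwartz $f$ to $L^2$ is then immediate because Plancherel's theorem (for bounded $\rho$) makes both sides equal to a quantity depending continuously on $f$, and the extension from bounded $\rho$ to general non-negative $\rho \in L^\infty_{\mathrm{loc}}(\R)$ is monotone convergence.
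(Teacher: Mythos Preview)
Your proposal is correct and follows essentially the same route as the paper: define $g_\tau(y):=\widehat{f_y}(\tau)$, observe for Schwartz $f$ that $\widehat{g_\tau}(\xi)=\hat f(\xi,\tau)$, apply Plancherel in the $y$-variable for fixed $\tau$, and then extend to general $f\in L^2(\R^2)$ and $\rho\in L^\infty_{\mathrm{loc}}(\R)$ by density and truncation of $\rho$ with monotone convergence. The only cosmetic difference is the order of presentation (the paper does the Schwartz computation first, then the extension), and your phrase ``continuous functionals'' should read ``continuous quadratic forms,'' but the argument is the same.
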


Note: the value of both integrals may be infinity. 

\begin{proof} Define $g_{\tau}(y) := \widehat{f_{y}}(\tau)$. For $f \in \mathcal{S}(\R^{2})$, we have the pointwise relation
\begin{displaymath} \widehat{g_{\tau}}(\xi) = \int e^{-2\pi i y\xi} g_{\tau}(y) \, dy = \iint e^{-2\pi i (\xi,\tau) \cdot (y,t)} f(y,t) \, dy \, dt = \hat{f}(\xi,\tau), \quad (\xi,\tau) \in \R^{2}. \end{displaymath}
Consequently, we infer from Fubini's theorem and Plancherel that
\begin{align*} \iint \rho(\tau) \cdot |\widehat{f_{y}}(\tau)|^{2} \, d\tau \, dy = \int \rho(\tau) \int |\widehat{f_{y}}(\tau)|^{2} \, dy \, d\tau & = \int \rho(\tau) \int |\widehat{g_{\tau}}(\xi)|^{2} \, d\xi \, d\tau\\
& = \iint \rho(\tau) \cdot |\hat{f}(\xi,\tau)|^{2} \, d\xi \, d\tau, \end{align*}
as desired. Now, if $f \in L^{2}(\R^{2})$ is arbitrary, choose a sequence $\{\varphi^{i}\}_{i \in \N} \subset C^{\infty}_{c}(\R^{2})$ such that $\varphi^{i} \to f$ in $L^{2}$. Since $f_{y} \in L^{2}(\R)$ for a.e. $y \in \R$, we infer from Plancherel that
\begin{displaymath} \lim_{i \to \infty} \iint \rho(y,\tau) \cdot |\widehat{\varphi^{i}_{y}}(\tau) - \widehat{f_{y}}(\tau)|^{2} \, d\tau \, dy \leq \|\rho\|_{L^{\infty}(\R^{2})} \cdot \lim_{i \to \infty} \|\varphi^{i} - f\|_{L^{2}(\R^{2})}^{2} = 0 \end{displaymath} 
for any weight $\rho \in L^{\infty}(\R^{2})$. Finally, for non-negative $\rho \in L^{\infty}_{\mathrm{loc}}(\R)$, we define $\rho_{r}(\tau) := |\tau| \cdot \mathbf{1}_{|\tau| \leq r} \in L^{\infty}(\R)$ for $r > 0$. Then, by dominated convergence and the previous steps,
\begin{align*} \iint \rho(\tau) \cdot |\widehat{f_{y}}(\tau)|^{2} \, d\tau \, dy & = \lim_{r \to \infty} \iint \rho_{r}(\tau) \cdot |\widehat{f_{y}}(\tau)|^{2} \, d\tau \, dy\\
& = \lim_{r \to \infty} \lim_{i \to \infty} \iint \rho_{r}(\tau) \cdot |\widehat{\varphi_{y}^{i}}(\tau)|^{2} \, d\tau \, dy\\
& = \lim_{r \to \infty} \lim_{i \to \infty} \iint \rho_{r}(\tau) \cdot |\widehat{\varphi^{i}}(\xi,\tau)|^{2} \, d\xi \, d\tau\\
& = \lim_{r \to \infty} \iint \rho_{r}(\tau) \cdot |\hat{f}(\xi,\tau)|^{2} \, d\xi \, d\tau = \iint \rho(\tau) \cdot |\hat{f}(\xi,\tau)|^{2} \, d\xi \, d\tau.  \end{align*} 
This concludes the proof. \end{proof}

We now give a sufficient condition for $\varphi \in L^{2}_{1,1/2}(\sigma)$. We make a few remarks to clarify the statement. Recall that if $\Omega \subset \He$ is a flag domain, then the perimeter measure $\sigma = |\partial \Omega|_{\He}$ has a product form $\sigma = c\calH^{1}_{E}|_{\mathrm{graph}(A)} \times \mathcal{L}^{1}$. In particular, if $\varphi \in L^{2}(\sigma)$, then $t \mapsto \varphi(z,t) \in L^{2}(\R)$ for $\calH^{1}_{E}$ a.e. $z \in \mathrm{graph}(A)$, and hence $\tau \mapsto \widehat{\varphi}(z,\tau) \in L^{2}(\R)$ for the same $z \in \mathrm{graph}(A)$. Here $\widehat{\varphi}(z,\tau)$ refers to the Fourier transform of $t \mapsto \varphi(z,t)$ evaluated at $\tau$. If $\varphi \in C^{1}(\partial \Omega)$ (i.e. $\varphi$ is continuously differentiable in a neighbourhood of $\partial \Omega$), we write $\nabla_{\tau}\varphi(p) := \langle \nabla \varphi(p),\tau(p) \rangle$, where "$\tau$" is the tangential vector field from Definition \ref{HTangent}.

\begin{lemma}\label{lemma6} Let $\Omega = \{(x,y,t) : x < A(y)\} \subset \He$ be a flag domain, $\sigma := |\partial \Omega|_{\He}$, and let $\varphi \in C^{1}(\partial \Omega) \cap L^{2}(\sigma)$ be such that $\nabla_{\tau} \varphi \in L^{2}(\sigma)$, and 
\begin{equation}\label{form180} \|T^{1/2}\varphi\|_{L^{2}(\sigma)}^{2} := \iint |\tau| \cdot |\widehat{\varphi}(z,\tau)|^{2} \, d\sigma(z,\tau) < \infty. \end{equation}
Then, $\varphi \in L^{2}_{1,1/2}(\sigma)$, and $\|\varphi\|_{1,1/2,\sigma} \sim \|\nabla_{\tau}\varphi\|_{L^{2}(\sigma)} + \|T^{1/2}\varphi\|_{L^{2}(\sigma)}$. \end{lemma}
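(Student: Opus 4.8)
The statement asks to characterize, via Fourier analysis on $\R^{2}$, when a function $\varphi \in C^{1}(\partial \Omega) \cap L^{2}(\sigma)$ lies in the $\Gamma$-lifted Sobolev space $L^{2}_{1,1/2}(\sigma)$, and to identify its norm with the sum of a tangential-derivative norm and a half-order $t$-derivative norm. The natural strategy is to transfer everything to $\R^{2}$ through the graph map $\Gamma = \Gamma_{A}$ of \eqref{form93}, and then compare the three quantities at the level of Fourier transforms. Write $\psi := \varphi \circ \Gamma \in L^{2}(\R^{2})$ (using the area formula \eqref{form16}, together with $J_A \sim_A 1$, to see $\psi \in L^{2}(\R^{2})$). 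By definition $\|\varphi\|_{1,1/2,\sigma} = \|\psi\|_{1,1/2}$, so the goal is to show $\psi \in L^{2}_{1,1/2}(\R^{2})$ with
\begin{displaymath} \|\psi\|_{1,1/2} = \Big( \iint \|(\xi,\tau)\|^{2} |\widehat{\psi}(\xi,\tau)|^{2} \, d\xi \, d\tau \Big)^{1/2} \sim_A \|\nabla_{\tau}\varphi\|_{L^{2}(\sigma)} + \|T^{1/2}\varphi\|_{L^{2}(\sigma)}. \end{displaymath}
Here one uses the equivalence $\|(\xi,\tau)\|^{2} = (|\xi| + \sqrt{|\tau|})^{2} \sim |\xi|^{2} + |\tau|$, so the $L^{2}_{1,1/2}(\R^{2})$-norm splits (up to constants) into the "$|\xi|$-piece" $\iint |\xi|^{2}|\widehat\psi|^{2}$ and the "$\sqrt{|\tau|}$-piece" $\iint |\tau| |\widehat\psi|^{2}$.

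\textbf{The $\tau$-piece.} This is the easy half. Applying Lemma \ref{lemma8} with $\rho(\tau) = |\tau|$ to $\psi \in L^{2}(\R^{2})$ gives $\iint |\tau| |\widehat\psi(\xi,\tau)|^{2}\,d\xi\,d\tau = \iint |\tau| |\widehat{\psi_{y}}(\tau)|^{2}\,d\tau\,dy$, where $\psi_{y}(t) = \psi(y,t) = \varphi(\Gamma(y,t))$. Now for fixed $y$, the map $t \mapsto \Gamma(y,t)$ is simply a vertical translation inside the line $\{(A(y),y)\} \times \R \subset \partial\Omega$ composed with a $t$-shift by the fixed constant $-\tfrac y2 A(y) + \int_0^y A$; in particular $\widehat{\psi_y}(\tau)$ differs from $\widehat{\varphi}(\Gamma(y,0),\tau)$ only by a unimodular factor $e^{2\pi i c(y)\tau}$, so $|\widehat{\psi_y}(\tau)| = |\widehat\varphi(A(y),y,\tau)|$. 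Changing variables via the Euclidean area formula \eqref{eq:sigmaFubini} (i.e. $\sigma = c\,\calH^1_E|_{\partial U} \times \mathcal L^1$ and $d\calH^1_E|_{\partial U} = c\sqrt{1+A'(y)^2}\,dy$, which is $\sim_A dy$) then shows $\iint |\tau||\widehat{\psi_y}(\tau)|^2\,d\tau\,dy \sim_A \iint |\tau||\widehat\varphi(z,\tau)|^2\,d\sigma(z,\tau) = \|T^{1/2}\varphi\|_{L^2(\sigma)}^2$, which is finite by hypothesis \eqref{form180}. Hence the $\tau$-piece of $\|\psi\|_{1,1/2}^2$ is comparable to $\|T^{1/2}\varphi\|_{L^2(\sigma)}^2$.

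\textbf{The $\xi$-piece.} This requires identifying $\iint |\xi|^{2} |\widehat\psi(\xi,\tau)|^{2}\,d\xi\,d\tau \sim \|\partial_y \psi\|_{L^2(\R^2)}^2$ (Plancherel in $\xi$, once one knows the distributional $\partial_y\psi$ is an $L^2$ function), and then comparing $\partial_y\psi$ with $\nabla_\tau\varphi$. Computing with the chain rule, $\partial_y \psi(y,t) = \partial_y\big[\varphi(\Gamma(y,t))\big] = \langle \nabla_E\varphi(\Gamma(y,t)), \dot\gamma_t(y)\rangle$, where $\dot\gamma_t(y) = A'(y)\partial_x + \partial_y + [\tfrac12 A(y) - \tfrac y2 A(y)]\partial_t$ is the (horizontal!) tangent vector computed in \eqref{form94}; indeed $\dot\gamma_t(y) = \sqrt{1+A'(y)^2}\,\tau(\Gamma(y,t))$. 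Since $\tau$ is a horizontal vector field and $\varphi \in C^1(\partial\Omega)$, the Euclidean directional derivative of $\varphi$ along $\dot\gamma_t(y)$ equals the horizontal directional derivative, so $\partial_y\psi(y,t) = \sqrt{1+A'(y)^2}\,\langle \nabla\varphi, \tau\rangle(\Gamma(y,t)) = \sqrt{1+A'(y)^2}\,(\nabla_\tau\varphi)(\Gamma(y,t))$. Therefore $|\partial_y\psi(y,t)| \sim_A |(\nabla_\tau\varphi)(\Gamma(y,t))|$, and one more application of the area formula \eqref{form16} gives $\|\partial_y\psi\|_{L^2(\R^2)} \sim_A \|\nabla_\tau\varphi\|_{L^2(\sigma)} < \infty$ by hypothesis. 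The one subtlety is to make the Fourier-analytic identification rigorous: since $\psi \in L^2(\R^2)$ and its distributional $y$-derivative is the $L^2$ function just computed, standard facts about homogeneous Sobolev spaces (the $L^2(\R^2)+L^\infty(\R^2)$ inclusion and $\widehat\psi \in L^1_{\mathrm{loc}}$, cf. the remark after Definition \ref{def:L2112}) let one conclude $\xi\widehat\psi \in L^2$ with $\|\xi\widehat\psi\|_2 \sim \|\partial_y\psi\|_2$; combined with the $\tau$-piece this yields $\psi \in L^2_{1,1/2}(\R^2)$.

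\textbf{Conclusion and main obstacle.} Adding the two pieces: $\|\varphi\|_{1,1/2,\sigma}^2 = \|\psi\|_{1,1/2}^2 \sim \iint(|\xi|^2 + |\tau|)|\widehat\psi|^2 \sim_A \|\nabla_\tau\varphi\|_{L^2(\sigma)}^2 + \|T^{1/2}\varphi\|_{L^2(\sigma)}^2$, and in particular both summands being finite forces $\varphi \in L^2_{1,1/2}(\sigma)$. I expect the only genuinely delicate point to be the bookkeeping that $\partial_y(\varphi\circ\Gamma)$, computed a priori only in the distributional sense, is the honest $L^2$ function $\sqrt{1+A'^2}\,(\nabla_\tau\varphi)\circ\Gamma$ — this hinges on $\dot\gamma_t$ being horizontal (so that the Euclidean $C^1$-regularity of $\varphi$ near $\partial\Omega$ really does produce the horizontal tangential derivative, with no "$x$-direction" leakage) and on the Lipschitz regularity of $A$ being enough to differentiate under the integral/change of variables. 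Everything else is routine Plancherel and the two area formulas \eqref{form16} and \eqref{eq:sigmaFubini}.
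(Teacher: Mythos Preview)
Your proposal is correct and follows essentially the same route as the paper: pull back to $\R^{2}$ via $\Gamma_{A}$, split $\|(\xi,\tau)\|^{2}\sim|\xi|^{2}+|\tau|$, handle the $|\tau|$-piece with Lemma~\ref{lemma8} plus the unimodular-phase observation and the Euclidean area formula, and handle the $|\xi|^{2}$-piece by identifying $\partial_{y}(\varphi\circ\Gamma_{A}) = J_{A}\cdot(\nabla_{\tau}\varphi)\circ\Gamma_{A}$ via the horizontality of $\dot\gamma_{t}$ (the paper's \eqref{form94} and \eqref{form193}) and then the area formula \eqref{form16}. The only point you flag as delicate---that the distributional $\partial_{y}$ really is this $L^{2}$ function---is dispatched in the paper exactly as you anticipate, by noting $y\mapsto\varphi(\gamma_{t}(y))$ is locally Lipschitz since $\varphi\in C^{1}$ and $\gamma_{t}$ is Lipschitz.
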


\begin{remark}\label{r:SLP} Following up on Example \ref{SLPEx}, we check that the hypotheses of Lemma \ref{lemma6} are satisfied for $\varphi_{p}(q) := \Gamma(q^{-1} \cdot p)$, where $p \in \Omega$. Clearly $\varphi \in C^{1}(\He \, \setminus \, \{p\})$, and $\nabla \varphi_{p} \in L^{2}(\sigma)$ follows from $|\nabla \varphi_{p}(q)| \lesssim_{p} \min\{1,\|q\|^{-3}\}$, $q \in \partial \Omega$. The condition \eqref{form180} follows from an explicit expression for the vertical Fourier transform of $G$, computed in \eqref{form68}. Writing $p = (z_{0},t_{0})$ and $q = (z,t)$, one finds from this formula that
\begin{displaymath} |\widehat{\varphi_{p}}(z,\tau)| = \tfrac{1}{2}K_{0}(\tfrac{\pi}{2}|z - z_{0}|^{2}|\tau|), \qquad (z,\tau) \in \partial \Omega. \end{displaymath}
The (modified Bessel) function $K_{0}$ has a logarithmic singularity at $0$, and decays exponentially at $\infty$, see \eqref{K0asymp1}-\eqref{K0asymp2}. In particular, $(z,\tau) \mapsto |\tau| \cdot |\widehat{\varphi_{p}}(z,\tau)|^{2} \in L^{1}(\sigma)$. \end{remark}

\begin{proof}[Proof of Lemma \ref{lemma6}] Write $\varphi_{A} := \varphi \circ \Gamma_{A} \in L^{2}(\R^{2})$. By definition of the $\|\cdot\|_{1,1/2,\sigma}$-norm, and Lemma \ref{lemma8}, 
\begin{displaymath} \|\varphi_{A}\|_{1,1/2,\sigma}^{2} \sim \iint |\partial_{y} \varphi_{A}(y,t)|^{2} \, dy \, dt + \iint |\tau| \cdot |\widehat{(\varphi_{A})_{y}}(\tau)|^{2} \, dy \, d\tau =: I_{1} + I_{2}. \end{displaymath}
Here $\partial_{y} \varphi_{A}(y,t)$ formally refers to the distributional $y$-derivative of $\varphi_{A}$, but $y \mapsto \varphi_{A}(y,t)$ is locally Lipschitz under the assumptions: indeed, $\varphi_{A}(\cdot,t) = \varphi \circ \gamma_{t}$ is the composition of the $C^{1}$-function $\varphi$ with the horizontal (Lipschitz) curve $\gamma_{t} := \Gamma_{A}(\cdot,t)$. We already computed in \eqref{form94} that $\dot{\gamma}_{t}(y) = \tau(\gamma_{t}(y)) \cdot J_{A}(y,t)$, so
\begin{equation}\label{form193} \partial_{y}\varphi_{A}(y,t) = (\varphi \circ \gamma_{t})'(y) = \langle \nabla \varphi(\gamma_{t}(y)),\dot{\gamma}_{t}(y)) \rangle = \nabla_{\tau}\varphi(\Gamma_{A}(y,t)) \cdot J_{A}(y,t)  \end{equation}
for a.e. $y \in \R$. Consequently, by the area formula \eqref{form16}, we have $I_{1} \sim_{\mathrm{Lip}(A)} \|\nabla_{\tau}\varphi\|_{L^{2}(\sigma)}^{2}$. To treat $I_{2}$, we recall from the definition of $\Gamma_{A}$ that
\begin{displaymath} (\varphi_{A})_{y}(t) = \varphi \left(A(y),y,t + I_{A}(y) \right) \quad \text{with} \quad I_{A}(y) := -\tfrac{y}{2}A(y) + \int_{0}^{y} A(r) \, dr. \end{displaymath}
Consequently, for all $y \in \R$ such that $t \mapsto \varphi(A(y),y,t + I_{A}(y)) \in L^{2}(\R)$, we have
\begin{equation}\label{form139} \widehat{(\varphi_{A})_{y}}(\tau) = e^{2\pi i \tau I_{A}(y)}\widehat{\varphi}(A(y),y,\tau) \end{equation}
as $L^{2}$-functions, and in particular $|\widehat{(\varphi_{A})_{y}}| = |\widehat{\varphi}(A(y),y,\cdot)|$. Recalling once more that $\sigma \sim \calH^{1}_{E}|_{\mathrm{graph}(A)} \times \mathcal{L}^{1}$, and using the (Euclidean) area formula, it follows that
\begin{equation}\label{form179} I_{2} = \iint |\tau| \cdot |\widehat{\varphi}(A(y),y,\tau)|^{2} \, dy \, d\tau \sim_{\mathrm{Lip}(A)} \iint |\tau| \cdot |\widehat{\varphi}(z,\tau)|^{2} \, d\sigma(z,\tau), \end{equation}
as claimed.  \end{proof}

The next proposition is almost a consequence of the lemma above, since $S\varphi \in L^{2}(\sigma)$ for $\varphi \in C^{\infty}_{c}(\He)$. However, a little care is needed with the details, since $S\varphi \notin C^{1}(\partial \Omega)$.

\begin{proposition}\label{prop5} Let $\varphi \in C^{\infty}_{c}(\He)$. Then, $S\varphi \in L^{2}_{1,1/2}(\sigma)$, and
\begin{equation}\label{form140} \|S\varphi\|_{1,1/2,\sigma} \sim_{\mathrm{Lip}(A)} \|\nabla_{\tau}S\varphi\|_{L^{2}(\sigma)} + \|T^{1/2}\mathcal{S}\varphi\|_{L^{2}(\sigma)}, \end{equation} 
where $\nabla_{\tau}S\varphi$ refers to the principal value defined in \eqref{noJump} (or \eqref{form71} below), and
\begin{displaymath} \|T^{1/2}\mathcal{S}\varphi\|_{L^{2}(\sigma)}^{2} := \int |\tau| \cdot |\widehat{\mathcal{S}\varphi}(z,\tau)|^{2} \, d\sigma(z,\tau). \end{displaymath}
Here $\widehat{\mathcal{S}\varphi}$ is the vertical distributional Fourier transform of the single layer potential $\mathcal{S}\varphi$, as in Definition \ref{def:vertFDist}. Lemma \ref{l:Fourier1} verifies that $\widehat{\mathcal{S}\varphi} \in C^{\infty}(\He \, \setminus \, \{\tau = 0\})$, so the $\sigma$-integral makes sense.  \end{proposition}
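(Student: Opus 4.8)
\textbf{Proof proposal for Proposition \ref{prop5}.}

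The plan is to reduce Proposition \ref{prop5} to Lemma \ref{lemma6} by a smooth approximation of $\mathcal{S}\varphi$ near $\partial\Omega$. The only obstruction to applying Lemma \ref{lemma6} directly is that $S\varphi = (\mathcal{S}\varphi)|_{\partial\Omega}$ is not $C^{1}$ in a neighbourhood of $\partial\Omega$ — it is only the non-tangential boundary trace of the $\bigtriangleup^{\flat}$-harmonic function $\mathcal{S}\varphi$, which is smooth strictly inside $\Omega$ and strictly outside $\overline\Omega$. So the first step is to introduce the shifted domains $\Omega_{j} := \Phi_{j}(\Omega) = \{(x,y,t):x<A(y)-2^{-j}\}$, with $\Phi_{j}(p)=p\cdot(-2^{-j},0,0)$, exactly as in Section \ref{s:injectivity}, and restrict $\mathcal{S}\varphi$ to $\overline{\Omega}_{j}$. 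On each $\overline\Omega_{j}$ the function $\mathcal{S}\varphi$ lies in $C^{1}(\overline{\Omega}_{j})\cap L^{2}(\sigma_{j})$ (indeed it is bounded with $|\mathcal{S}\varphi(p)|\lesssim\min\{1,\|p\|^{-2}\}$ and $|\nabla\mathcal{S}\varphi(p)|\lesssim\min\{1,\|p\|^{-3}\}$ on $\overline\Omega_{j}$), so Lemma \ref{lemma6} applies to $\mathcal{S}\varphi$ on $\partial\Omega_{j}$, giving
\begin{displaymath}
\|(\mathcal{S}\varphi)|_{\partial\Omega_{j}}\|_{1,1/2,\sigma_{j}} \sim_{\mathrm{Lip}(A)} \|\langle\nabla\mathcal{S}\varphi,\tau_{j}\rangle\|_{L^{2}(\sigma_{j})} + \|T^{1/2}\mathcal{S}\varphi\|_{L^{2}(\sigma_{j})}.
\end{displaymath}
(Here I use that $\Omega_{j}$ is again a flag domain with $\sigma_{j}=\Phi_{j\sharp}\sigma=\mathcal{H}^{2}_{E}|_{\partial\Omega_{j}}$, properties (a)--(c) from Section \ref{s:injectivity}, and that $(\mathcal{S}\varphi)\circ\Gamma_{A_{j}}$ makes sense as an $L^{2}+L^{\infty}$ function because $\mathcal{S}\varphi\in L^{2}(\sigma_{j})+L^{\infty}(\sigma_{j})$.)

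The second step is to pass to the limit $j\to\infty$ on both sides. For the right-hand side this is already done in Section \ref{s:injectivity}: by dominated convergence against $\mathcal{N}_{\mathrm{rad}}(\nabla\mathcal{S}\varphi)\in L^{2}(\sigma)$ (Corollary \ref{K-MaxNT}) and the properties (b)--(c) of $\Phi_{j}$, we have $\lim_{j}\|\langle\nabla\mathcal{S}\varphi,\tau_{j}\rangle\|_{L^{2}(\sigma_{j})}=\|\nabla_{\tau}S\varphi\|_{L^{2}(\sigma)}$ using the no-jump relation \eqref{noJump}; and $\lim_{j}\|T^{1/2}\mathcal{S}\varphi\|_{L^{2}(\sigma_{j})}=\|T^{1/2}\mathcal{S}\varphi\|_{L^{2}(\sigma)}$ is exactly the content of the lemma proved just after \eqref{form22} (whose proof uses the decay estimates of Lemma \ref{lemma3} for $\widehat{\mathcal{S}\varphi}$ and dominated convergence; this is where we need $\widehat{\mathcal{S}\varphi}\in C^{\infty}(\He\setminus\{\tau=0\})$, which is Lemma \ref{l:Fourier1}). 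So the right-hand side of \eqref{form140} is precisely $\lim_{j\to\infty}$ of the right-hand sides of the displayed inequalities, and in particular it is finite.

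The third step is to identify the limit of the left-hand side $\|(\mathcal{S}\varphi)|_{\partial\Omega_{j}}\|_{1,1/2,\sigma_{j}}$ with $\|S\varphi\|_{1,1/2,\sigma}$. Here I would argue via the parametrised maps $\Gamma_{A_{j}}$: writing $g_{j} := (\mathcal{S}\varphi)|_{\partial\Omega_{j}}\circ\Gamma_{A_{j}}\in L^{2}(\R^{2})+L^{\infty}(\R^{2})$ and $g := (S\varphi)\circ\Gamma_{A}$, one checks that $g_{j}(y,t) = \mathcal{S}\varphi\big(A(y)-2^{-j},\,y,\,t+I_{A_{j}}(y)\big)$ and, since $\mathcal{S}\varphi$ has non-tangential (hence radial, hence this shifted) limit $S\varphi$ at $\sigma$-a.e.\ boundary point, $g_{j}\to g$ pointwise a.e., while $|g_{j}|\le \mathcal{N}_{\mathrm{rad}}(\mathcal{S}\varphi)\circ\Gamma_{A}$ and $\nabla$-type bounds give $g_{j}\rightharpoonup g$ as tempered distributions. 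Combining this with the uniform bound $\sup_{j}\|g_{j}\|_{1,1/2}<\infty$ just obtained, Lemma \ref{lemma7} gives $g\in L^{2}_{1,1/2}(\R^{2})$ with $\|g\|_{1,1/2}\le\liminf_{j}\|g_{j}\|_{1,1/2}$, i.e.\ $S\varphi\in L^{2}_{1,1/2}(\sigma)$ and $\|S\varphi\|_{1,1/2,\sigma}\le\liminf_{j}\|(\mathcal{S}\varphi)|_{\partial\Omega_{j}}\|_{1,1/2,\sigma_{j}}$, which together with the two limit computations of Step~2 yields the "$\lesssim$" half of \eqref{form140}. For the reverse "$\gtrsim$" inequality I would instead apply Lemma \ref{lemma6} directly on $\partial\Omega$ to the function $S\varphi$ once we know $S\varphi\in L^{2}_{1,1/2}(\sigma)$ — but $S\varphi$ is not $C^{1}(\partial\Omega)$, so rather than re-using Lemma \ref{lemma6} I would note that the decomposition $\|g\|_{1,1/2}^{2}\sim I_{1}+I_{2}$ (with $I_{1}$ the $\partial_{y}$-term and $I_{2}$ the half-order $t$-term, as in the proof of Lemma \ref{lemma6}) holds for \emph{every} $g\in L^{2}_{1,1/2}(\R^{2})$ by Lemma \ref{lemma8}, and then identify $I_{1}$ with $\|\nabla_{\tau}S\varphi\|_{L^{2}(\sigma)}^{2}$ and $I_{2}$ with $\|T^{1/2}\mathcal{S}\varphi\|_{L^{2}(\sigma)}^{2}$ via \eqref{form193}--\eqref{form179} — the point being that for a.e.\ horizontal line $t$ the restriction $y\mapsto g(y,t)=(\mathcal{S}\varphi)(\gamma^{j}_{t}(y))|_{j=\infty}$ is, by the jump relation \eqref{noJump} and the non-tangential convergence, a.e.\ differentiable with $\partial_{y}g(y,t)=\nabla_{\tau}S\varphi(\Gamma_{A}(y,t))\cdot J_{A}(y,t)$. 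The main obstacle in the whole argument is precisely this regularity-on-the-boundary issue: making rigorous that $\partial_{y}[(S\varphi)\circ\Gamma_{A}]$ equals the principal-value tangential derivative $\nabla_{\tau}S\varphi$ pulled back along $\Gamma_{A}$, despite $S\varphi$ failing to be $C^{1}$ up to the boundary. The cleanest route is the approximation-by-$\Omega_{j}$ scheme above, so that every differentiation is performed on the genuinely smooth functions $\mathcal{S}\varphi|_{\overline\Omega_{j}}$ and only afterwards is the limit taken, using the maximal-function bound of Corollary \ref{K-MaxNT} to justify the dominated-convergence passages and Lemma \ref{lemma7} to pass the homogeneous Sobolev norm through weak limits.
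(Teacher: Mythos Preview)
Your approach is sound but takes a genuinely different route from the paper. The paper works \emph{directly on $\partial\Omega$}: after the decomposition $\|S_{A}\varphi_{A}\|_{1,1/2}^{2}\sim I_{1}+I_{2}$ via Lemma \ref{lemma8} (your \eqref{form174}), it handles $I_{2}$ exactly as in Lemma \ref{lemma6}, and for $I_{1}=\|\partial_{y}S_{A}\varphi_{A}\|_{2}^{2}$ it establishes the distributional identity
\[
\int \nabla_{\tau}S\varphi(\Gamma_{A}(w))J_{A}(w)\,\psi(w)\,dw \;=\; -\int S_{A}\varphi_{A}(w)\,\partial_{y}\psi(w)\,dw,\qquad \psi\in C_{c}^{\infty}(\R^{2}),
\]
by replacing $G$ with a smooth radial truncation $G_{\epsilon}$, differentiating under the integral sign (legitimate since $G_{\epsilon}$ is smooth), integrating by parts in $y$, and then letting $\epsilon\to 0$ using dominated convergence against the maximal Riesz transform (Theorem \ref{t:FO1}). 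This one-step argument gives both inequalities in \eqref{form140} at once, with no approximating domains and no weak-compactness passage.

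Your route via the shifted domains $\Omega_{j}$ and Lemma \ref{lemma7} also works, but it is heavier: you must track the different graph maps $\Gamma_{A_{j}}$ versus $\Gamma_{A}$ (they differ by a $j$-dependent $t$-shift, not merely by $\Phi_{j}$), and your ``$\gtrsim$'' step ultimately needs the same identification $\partial_{y}[(S\varphi)\circ\Gamma_{A}]=(\nabla_{\tau}S\varphi\circ\Gamma_{A})\,J_{A}$ that the paper proves directly. The advantage of the paper's kernel-truncation argument is that it isolates this identification cleanly as the single nontrivial point and dispatches it without any domain approximation; the advantage of your scheme is that it reuses the $\Omega_{j}$ machinery already in place from Section \ref{s:injectivity}, at the cost of extra bookkeeping.
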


Recall that the finiteness of $\|T^{1/2}\mathcal{S}\varphi\|_{L^{2}(\sigma)}$ was already established during the proof of \eqref{form56}, and $\|\nabla_{\tau}S\varphi\|_{L^{2}(\sigma)} \lesssim \|\varphi\|_{L^{2}(\sigma)} < \infty$ by Theorem \ref{t:FO1}. So, the relation \eqref{form140} implies the finiteness of $\|S\varphi\|_{1,1/2,\sigma}$.

\begin{proof} It is easy to check that $S\varphi \in L^{2}(\sigma)$, so also $(S\varphi) \circ \Gamma_{A} \in L^{2}(\R^{2})$ by the area formula \eqref{form16}. Moreover, we recall from \eqref{form144} that $(S\varphi) \circ \Gamma_{A} = S_{A}\varphi_{A}$
where $\varphi_{A} := \varphi \circ \Gamma_{A}$. So, $S_{A}\varphi_{A} \in L^{2}(\R^{2}) \subset \mathcal{S}'(\R^{2})$. To prove \eqref{form140}, we need to show that
\begin{equation}\label{form134} \|S_{A}\varphi_{A}\|_{1,1/2} \sim_{\mathrm{Lip}(A)} \|\nabla_{\tau}S\varphi\|_{L^{2}(\sigma)} + \|T^{1/2}\mathcal{S}\varphi\|_{L^{2}(\sigma)}. \end{equation}
Recall that $\nabla_{\tau}S\varphi$ is the $\sigma$ a.e. defined principal value
\begin{equation}\label{form71} \nabla_{\tau}S\varphi(p) = \lim_{\epsilon \to 0} \int \langle \nabla G_{\epsilon}(q^{-1} \cdot p),\tau(p) \rangle\varphi(q) \, d\sigma(q), \end{equation}
where $G_{\epsilon}$ is a smooth radially symmetric truncation of $G$; by Proposition \ref{prop10}, the particular choice of truncation does not affect the principal value. 

Since $S_{A}\varphi_{A} \in L^{2}(\R^{2})$, we may use Lemma \ref{lemma8}:
\begin{align} \label{form174}  \|S_{A}\varphi_{A}\|_{1,1/2}^{2} & \sim \|\partial_{y}S_{A}\varphi_{A}\|_{2}^{2} + \iint |\tau| \cdot |\widehat{(S_{A}\varphi_{A})_{y}}(\tau)|^{2} \, d\tau \, dy =: I_{1} + I_{2}. \end{align}
Here $\partial_{y}S_{A}\varphi_{A}$ refers to the distributional $y$-derivative, and $\widehat{(S_{A}\varphi_{A})_{y}}$ refers, for a.e. $y$, to the Fourier transform of the $L^{2}(\R)$-function $t \mapsto (S_{A}\varphi_{A})(y,t)$. The main task is to show that $I_{1} \sim \|\nabla_{\tau}S\varphi\|_{L^{2}(\sigma)}^{2}$. We also need to know that $I_{2} \sim \|T^{1/2}\mathcal{S}\varphi\|_{L^{2}(\sigma)}^{2}$, but this can be done exactly as in the proof of Lemma \ref{lemma6}. Indeed $S_{A}\varphi_{A} = (S\varphi) \circ \Gamma_{A} = (S\varphi)_{A}$ in the lemma's notation. Hence \eqref{form139} shows $|\widehat{(S_{A}\varphi_{A})_{y}}| = |\widehat{\mathcal{S}\varphi}(A(y),y,\cdot)|$ for a.e. $y \in \R$, and then a repetition of \eqref{form179} gives the claim.

To show that $I_{1} \sim \|\nabla_{\tau}S\varphi\|_{L^{2}(\sigma)}^{2}$, we wish to relate the distributional $y$-derivative $\partial_{y} S_{A}\varphi_{A}$ to the principal value $\nabla_{\tau}S\varphi$. We claim that
\begin{equation}\label{form141} \int \nabla_{\tau}S\varphi(\Gamma_{A}(w))J_{A}(w) \cdot \psi(w) \, dw = -\int (S_{A}\varphi_{A})(w) \cdot \partial_{y} \psi(w) \, dw, \quad \psi \in C_{c}^{\infty}(\R^{2}). \end{equation}
Abbreviate $\Gamma := \Gamma_{A}$ and $J := J_{A}$, and let $w := (y,t) \in \R^{2}$ be a point such that the principal value $\nabla_{\tau}S(\Gamma(w))$ exists, and
\begin{displaymath} \partial_{y} [y \mapsto G_{\epsilon}(\Gamma(v)^{-1} \cdot \Gamma(w))] = \langle \nabla G_{\epsilon}(\Gamma(v)^{-1} \cdot \Gamma(w)),\tau(\Gamma(w)) \rangle J(w), \qquad v \in \R^{2}. \end{displaymath}
Both properties holds at a.e. $w \in \R^{2}$ (recall \eqref{form193}). Then, using the area formula \eqref{form16}, we see that
\begin{align*} \nabla_{\tau}(S\varphi)(\Gamma(w))J(w) & = \lim_{\epsilon \to 0} \int \langle \nabla G_{\epsilon}(q^{-1} \cdot \Gamma(w)),\tau(\Gamma(w)) \rangle J(w) \varphi(q) \, d\sigma(q)\\
& = \lim_{\epsilon \to 0} \int \langle \nabla G_{\epsilon}(\Gamma(v)^{-1} \cdot \Gamma(w)),\tau(\Gamma(w)) \rangle J(w) \varphi(\Gamma(v))J(v) \, dv\\
& = \lim_{\epsilon \to 0} \partial_{y} \int G_{\epsilon}(\Gamma(v)^{-1} \cdot \Gamma(w))\varphi(\Gamma(v))J(v) \, dv. \end{align*}
Consequently,
\begin{align*} \int & [\nabla_{\tau}S(\Gamma(w))J(w)] \cdot \psi(w) \, dw\\
& =  \lim_{\epsilon \to 0} \int \left[ \int \langle \nabla G_{\epsilon}(q^{-1} \cdot \Gamma(w)),\tau(\Gamma(w)) \rangle J(w) \varphi(q) \, d\sigma(q) \right] \, \psi(w) \, dw\\
& = - \lim_{\epsilon \to 0} \int \left[ \int G_{\epsilon}(\Gamma(v)^{-1} \cdot \Gamma(w))\varphi(\Gamma(v))J(v) \, dv \right] \cdot \partial_{y} \psi(w) \, dw = - \int (S_{A}\varphi_{A})(w) \cdot \partial_{y} \psi(w) \, dw, \end{align*} 
as claimed. Taking the limit outside the integral on the second line can be justified by noting that
\begin{displaymath} w \mapsto \sup_{\epsilon > 0} \left| \int \langle \nabla G_{\epsilon}(q^{-1} \cdot \Gamma(w)),\tau(\Gamma(w)) \rangle J(w) \varphi(q) \, d\sigma(q) \right| \end{displaymath}
is dominated by the maximal Riesz transform of $\varphi$ evaluated at $\Gamma(w)$, and so defines an $L^{2}$-function by Theorem \ref{t:FO1}. The second interchange of limits and integration is more straightforward, since $S_{A}\varphi_{A} \in L^{\infty}(\R^{2})$ and $\psi \in C^{\infty}_{c}(\R^{2})$.

With \eqref{form141} in hand, and using that $w \mapsto \nabla_{\tau}S\varphi(\Gamma(w))J(w) \in L^{2}(\R^{2})$ by another application of the area formula, we also see that $\partial_{y} S_{A}\varphi_{A} \in L^{2}(\R^{2})$, and
\begin{displaymath} I_{1} = \|\partial_{y} S_{A}\varphi_{A}\|_{L^{2}(\R^{2})} \sim_{\mathrm{Lip}(A)} \|\nabla_{\tau}S\varphi\|_{L^{2}(\sigma)} < \infty. \end{displaymath}
This completes the proof of the proposition.  \end{proof}

\subsection{The case $A \equiv 0$}\label{s:caseA0} We now treat the case $A \equiv 0$ of Theorem \ref{sInvert2}. This special case should also follow from Jerison's result \cite[Theorem (6.1)]{MR639800}, which works in $\He^{n}$ for all $n \geq 1$, and in particular implies that the inverse of $S_{0}$ is a homogeneous distribution which is smooth away from the origin. Since Jerison's proof is rather involved (the problem is harder in higher dimensions), we repeat the details for the special case $n = 1$.

\begin{proposition}\label{prop4} The map $S_{0}$ is invertible $L^{2}(\R^{2}) \to L^{2}_{1,1/2}(\R^{2})$.\end{proposition}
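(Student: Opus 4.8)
The plan is to exploit the fact that when $A\equiv 0$ the map $\Gamma_{0}$ from \eqref{eq:Gammas} is merely the identification $(y,t)\mapsto(0,y,t)$ of $\R^{2}$ with the vertical subgroup $\W=\{(0,y,t):y,t\in\R\}$, and $J_{0}\equiv 1$. Since $\W$ is abelian, $\Gamma_{0}(v)^{-1}\cdot\Gamma_{0}(w)=(0,y-y',t-t')$ for $w=(y,t)$ and $v=(y',t')$, so by \eqref{eq:SA} the operator $S_{0}$ is ordinary convolution on $\R^{2}$ with the kernel $G|_{\W}(y,t)=c(y^{4}+16t^{2})^{-1/2}$. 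This kernel is positive, locally integrable, smooth away from the origin, and homogeneous of degree $-2$ with respect to the parabolic dilations $(y,t)\mapsto(ry,r^{2}t)$, whose homogeneous dimension is $3$. Hence $S_{0}$ is a Fourier multiplier operator on $\R^{2}$, and the whole statement reduces to understanding the symbol $m:=\widehat{G|_{\W}}$, the (distributional) Fourier transform of $G|_{\W}$.

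First I would compute $m$ explicitly, using the machinery of Section~\ref{s:Fourier}. Specialising \eqref{form70} to $A\equiv 0$ (so that $\partial U=\{0\}\times\R$ and $\omega\equiv 0$; one can also check this directly from the classical identity $\int_{\R}(t^{2}+b^{2})^{-1/2}e^{-2\pi it\tau}\,dt=2K_{0}(2\pi b|\tau|)$), the vertical Fourier transform in the $t$-variable of $S_{0}f$ is the $y$-convolution
\[ \widehat{S_{0}f}(y,\tau)=c\int_{\R}K_{0}\bigl(\tfrac{\pi}{2}|\tau|(y-y')^{2}\bigr)\widehat{f}(y',\tau)\,dy'. \]
Taking the Fourier transform in $y$ as well and substituting $y=u/\sqrt{|\tau|}$ in the resulting $y$-kernel, one finds that the full symbol factorises as
\[ m(\xi,\tau)=\frac{c}{\sqrt{|\tau|}}\,\Phi\!\Bigl(\tfrac{\xi}{\sqrt{|\tau|}}\Bigr),\qquad \Phi(s):=\int_{\R}K_{0}\bigl(\tfrac{\pi}{2}u^{2}\bigr)e^{-2\pi i us}\,du, \]
which is already visibly homogeneous of degree $-1$ under $(\xi,\tau)\mapsto(r\xi,r^{2}\tau)$.

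The heart of the matter --- and the step I expect to be the main obstacle --- is the two-sided estimate $\Phi(s)\sim(1+|s|)^{-1}$ together with strict positivity $\Phi>0$. I would prove this from the subordination formula $K_{0}(w)=\int_{0}^{\infty}e^{-w\cosh\theta}\,d\theta$ and a Gaussian integration in $u$, which gives
\[ \Phi(s)=\sqrt{2}\int_{0}^{\infty}\frac{1}{\sqrt{\cosh\theta}}\,e^{-2\pi s^{2}/\cosh\theta}\,d\theta, \]
manifestly positive, continuous, strictly decreasing in $|s|$, and finite at $s=0$. Substituting $w=\cosh\theta$ and then $w=s^{2}/v$ shows $|s|\,\Phi(s)\to 1$ as $|s|\to\infty$ (this is where the logarithmic singularity of $K_{0}(\tfrac{\pi}{2}u^{2})$ at $u=0$ enters). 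Combining positivity, continuity and this decay yields $\Phi(s)\sim(1+|s|)^{-1}$, hence $m(\xi,\tau)\sim(|\xi|+\sqrt{|\tau|})^{-1}=\|(\xi,\tau)\|^{-1}$ for $\tau\neq0$; the bound extends to $\tau=0$ by letting $\tau\to0$ and using the $|s|^{-1}$-tail of $\Phi$. In particular $m$ is continuous, homogeneous of degree $-1$, and nowhere vanishing on $\R^{2}\setminus\{0\}$.

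It then remains to assemble the conclusion. Since $\|(\xi,\tau)\|^{2}m(\xi,\tau)^{2}\sim 1$ and $m\in L^{2}_{\mathrm{loc}}(\R^{2})$, Plancherel and Cauchy--Schwarz give, for every $f\in L^{2}(\R^{2})$, that $\widehat{S_{0}f}=m\widehat f\in L^{1}_{\mathrm{loc}}(\R^{2})$ and
\[ \|S_{0}f\|_{1,1/2}^{2}=\iint\|(\xi,\tau)\|^{2}\,m(\xi,\tau)^{2}\,|\widehat f(\xi,\tau)|^{2}\,d\xi\,d\tau\sim\|f\|_{2}^{2}, \]
so in the sense of Definition~\ref{def:L2112} the operator $S_{0}\colon L^{2}(\R^{2})\to L^{2}_{1,1/2}(\R^{2})$ is bounded (by \eqref{form144} this already gives the boundedness in Theorems~\ref{sInvert2} and \ref{sInvert1} for $A\equiv0$). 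Because $m$ is nowhere vanishing, the operator $T$ sending $g$ to the inverse Fourier transform of $m^{-1}\widehat g$ is well defined on $L^{2}_{1,1/2}(\R^{2})$, takes values in $L^{2}(\R^{2})$ with $\|Tg\|_{2}^{2}=\iint(\|(\xi,\tau)\|\,m)^{-2}\,\|(\xi,\tau)\|^{2}|\widehat g|^{2}\lesssim\|g\|_{1,1/2}^{2}$, and satisfies $S_{0}Tg=g$ and $TS_{0}f=f$. Hence $S_{0}$ is invertible, as claimed; moreover $T$ is convolution with a tempered distribution that is homogeneous of degree $+1$ and smooth away from the origin, matching Jerison's description of the inverse single layer potential.
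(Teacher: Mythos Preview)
Your proposal is correct and follows essentially the same route as the paper: both recognise $S_{0}$ as convolution on $\R^{2}$ with $G|_{\W}$, compute the symbol by first taking the vertical Fourier transform (yielding $K_{0}$) and then integrating out a Gaussian via the subordination formula, arriving at the same explicit expression for $\widehat{G}$ (your $\tfrac{c}{\sqrt{|\tau|}}\Phi(\xi/\sqrt{|\tau|})$ is the paper's integral formula after the obvious factorisation), and then establish the two-sided bound $\widehat{G}\sim\|(\xi,\tau)\|^{-1}$ by the same change of variables in the regime $\tau\to 0$ (equivalently $|s|\to\infty$). Your final assembly is slightly more explicit than the paper's in verifying $\widehat{S_{0}f}\in L^{1}_{\mathrm{loc}}$ and in writing down the inverse operator, but the substance is identical.
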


\begin{proof} Since $G(\Gamma_{0}(y',t')^{-1} \cdot \Gamma_{0}(y,t)) = c((y - y')^{4} + 16(t - t')^{2})^{-1/2}$, the operator $S_{0}$ is simply a convolution operator on $\R^{2}$ with kernel $G(y,t) = c(y^{4} + 16t^{2})^{-1/2}$. Noting that 
\begin{displaymath} \widehat{S\varphi}(\xi,\tau) = \widehat{\varphi}(\xi,\tau)\widehat{G}(\xi,\tau), \end{displaymath}
the invertibility of $S \colon L^{2}(\R^{2}) \to L^{2}_{1,1/2}(\R^{2})$ will follow once we manage to show that
\begin{equation}\label{form91} \widehat{G}(\xi,\tau) \sim \frac{1}{\|(\xi,\tau)\|}.  \end{equation}
To prove \eqref{form91}, we infer from the homogeneity of $G$, that
\begin{displaymath} \widehat{G}(r\xi,r^{2}\tau) = \tfrac{1}{r}\widehat{G}(\xi,\tau), \qquad r > 0. \end{displaymath} 
This (with $r = \|(\xi,\tau)\|^{-1}$) will prove \eqref{form91} once we manage to show that
\begin{equation}\label{form92} 0 < \min_{\|(\xi,\tau)\| = 1} \widehat{G}(\xi,\tau) \leq \max_{\|(\xi,\tau)\| = 1} \widehat{G}(\xi,\tau) < \infty. \end{equation}
In \eqref{form68}, we will show that the (distributional) Fourier transform of $t \mapsto G(y,t)$, $y \neq 0$, is given by $\tau \mapsto \tfrac{c}{2}K_{0}(\tfrac{\pi}{2}|\tau||y|^{2})$, where $K_{0}$ is the modified Bessel function of the first kind of order zero, namely
\begin{displaymath} K_{0}(s) = \int_{0}^{\infty} e^{-s \cosh r} \, dr. \end{displaymath}
The function $K_{0}$ has a logarithmic singularity at $0$ and exponential decay, see \eqref{K0asymp1}-\eqref{K0asymp2}, so $y \mapsto K_{0}(\tfrac{\pi}{2}|\tau||y|^{2}) \in L^{1}(\R)$ with norm $\lesssim |\tau|^{-1/2}$. Using this, it is not difficult to justify that the full (distributional) Fourier transform of $G$ is given by the function
\begin{displaymath} \widehat{G}(\xi,\tau) = \tfrac{c}{2} \int_{\R} e^{-2\pi i y\xi} K_{0}(\tfrac{\pi}{2}|\tau||y|^{2}) \, dy = \tfrac{c}{2} \int_{0}^{\infty} \left[ \int_{\R} e^{-2\pi i y\xi} e^{-\tfrac{\pi}{2}|\tau||y|^{2} \cosh r} \, dy \right] \, dr, \quad \tau \neq 0. \end{displaymath}
But the integral in brackets is the Fourier transform of a Gaussian:
\begin{displaymath} \int_{\R} e^{-2\pi i y\xi} e^{-\tfrac{\pi}{2}|\tau||y|^{2} \cosh r} \, dy = \exp\left(\frac{-2\pi\xi^{2}}{|\tau|\cosh r} \right) \frac{\sqrt{2}}{\sqrt{|\tau| \cosh r}},\end{displaymath} 
and hence
\begin{displaymath} \widehat{G}(\xi,\tau) = \frac{c}{\sqrt{2}} \int_{0}^{\infty}  \exp\left(\frac{-2\pi\xi^{2}}{|\tau|\cosh r} \right) \frac{1}{\sqrt{|\tau| \cosh r}} \, dr, \qquad \tau \neq 0. \end{displaymath}
Given this formula, it is evident that \eqref{form92} holds on all compact subsets of $\{(\xi,\tau) : \|(\xi,\tau)\| = 1 \text{ and } \tau \neq 0\}$. So, to complete the proof, it suffices to show that $\widehat{G}(\xi,\tau)$ stays in a compact subset of $(0,\infty)$ as $(\xi,\tau) \to (1,0)$. Fix $\tau \neq 0$, and change variables $|\tau| \cosh r \mapsto u$, so 
\begin{displaymath} du = |\tau| \sinh r \, dr = \sqrt{\tau^{2} \cosh^{2}(r) - \tau^{2}} \, dr = \sqrt{u^{2} - \tau^{2}} \, dr, \end{displaymath}
and consequently (noting that $|\tau| \cosh 0 = |\tau|$)
\begin{displaymath} \widehat{G}(\xi,\tau) = \frac{c}{\sqrt{2}} \int_{|\tau|}^{\infty} \frac{\exp\left(\tfrac{-2\pi \xi^{2}}{u} \right)}{\sqrt{u}} \frac{du}{\sqrt{u^{2} - \tau^{2}}}. \end{displaymath}
Now, split the integration into the pieces where $|\tau| < u \leq 2|\tau|$ and $u \geq 2|\tau|$. It is straightforward to see that the first piece tends to $0$ as $\tau \to 0$, while
\begin{displaymath}\mathbf{1}_{\{u \geq 2|\tau|\}}(u)\frac{\exp\left(\tfrac{-2\pi \xi^{2}}{u} \right)}{\sqrt{u}} \frac{1}{\sqrt{u^{2} - \tau^{2}}} \leq \sqrt{2} \frac{\exp\left(\tfrac{-2\pi \xi^{2}}{u} \right)}{u^{3/2}}. \end{displaymath} 
So, the dominated convergence theorem yields
\begin{displaymath} \widehat{G}(\xi,\tau) \to \frac{c}{\sqrt{2}} \int_{0}^{\infty}  \frac{\exp\left(\tfrac{-2\pi \xi^{2}}{u} \right)}{u^{3/2}} \, du \quad \text{as $\tau \to 0$}. \end{displaymath}
Evidently the quantity on the right stays in a compact subset of $(0,\infty)$ for $|\xi| \sim 1$, which suffices for \eqref{form92}. For those interested, Maple tells that the value of the integral above is $\frac{c}{2}$. This completes the proof of the proposition. \end{proof}

\subsection{The checklist}\label{s:checklist} To prove the invertibility of $S_{A}$ for a general Lipschitz $A \colon \R \to \R$, we again resort to the method of continuity. Copying the notation from Section \ref{s:surjectivity}, we consider the family of Lipschitz functions $\{A_{s}\}_{s \in \R}$ given by $A_{s}(y) = sA(y)$. We abbreviate $\Gamma_{s}(y,t) := \Gamma_{A_{s}}(y,t)$, $G_{s}(w,v) := G(\Gamma_{s}(v)^{-1} \cdot \Gamma_{s}(w))$, $J_{s}(v) := J_{A_{s}}(v)$, and
\begin{equation}\label{op:Ss} S_{s}\varphi(w) := S_{A_{s}}\varphi(w) = \int G_{s}(w,v)J_{s}(v)\varphi(v) \, dv, \qquad s \in \R, \, \varphi \in C^{\infty}_{c}(\R^{2}). \end{equation}
We have already seen in Proposition \ref{prop4} that $S_{0} \colon L^{2}(\R^{2}) \to L^{2}_{1,1/2}(\R^{2})$ is invertible, and our goal will be to show that $S_{1} \colon L^{2}(\R^{2}) \to L^{2}_{1,1/2}(\R^{2})$ is also invertible. Recalling Lemma \ref{l:kenig}, here are the things that we need to check:
\begin{enumerate}
\item $\|S_{s}\|_{L^{2}(\R^{2}) \to L^{2}_{1,1/2}(\R^{2})} \leq C$ and $\|S_{s} - S_{t}\|_{L^{2}(\R^{2}) \to L^{2}_{1,1/2}(\R^{2})} \leq C|s - t|$,
\item $\|S_{s}f\|_{1,1/2} \geq c\|f\|_{2}$ for all $f \in L^{2}(\R^{2})$.
\end{enumerate}
Here $c,C > 0$ are constants which should be uniform (depend only only $\mathrm{Lip}(A)$) when $s,t$ range in a fixed compact subinterval of $\R$. 

\subsection{Verifying (1)} The property (1) will be deduced from the following general result, whose proof we postpone until the end of the section:
 \begin{thm}\label{t:bdd} Let $K \in C^{\infty}(\He \, \setminus \, \{0\})$ be a horizontally even (i.e. $K(z,t) = K(-z,t)$) kernel satisfying
\begin{equation}\label{kernelConstants2} \quad |\nabla^{n} K(p)| \leq C_{n}\|p\|^{-2 - n}, \qquad p \in \He \, \setminus \, \{0\}, \, n \geq 0. \end{equation}
Then, if $B \colon \R \to \R$ is a Lipschitz function, the kernel $K_{B}(w,v) := K(\Gamma_{B}(v)^{-1} \cdot \Gamma_{B}(w))$ defines a bounded operator $L^{2}(\R^{2}) \to L^{2}_{1,1/2}(\R^{2})$. This, in particular, applies when $K = G$.

Assume, in addition, that 
\begin{equation}\label{kernelConstants3} \sup_{z \in \R^{2} \, \setminus \, \{0\}} \int |K(z,t)| \, dt \leq C_{\star} < \infty, \end{equation}
let $A \colon \R \to \R$ be another Lipschitz function, and write
\begin{equation}\label{form116} D_{A,1}(w,v) := \frac{A(x) - A(y)}{x - y} \quad \text{and} \quad D_{A,2}(w,v) := \int_{x}^{y} \frac{A(x) + A(y) - 2A(\theta)}{2(x - y)^{2}} \, d\theta. \end{equation} 
Then, the kernels $K_{B}D_{A,1}$ and $K_{B}D_{A,2}$ define bounded operators $L^{2}(\R^{2}) \to L^{2}_{1,1/2}(\R^{2})$. The norms $\|K_{B}\|_{L^{2}(\R^{2}) \to L^{2}_{1,1/2}(\R^{2})}$ and $\|K_{B}D_{A,j}\|_{L^{2}(\R^{2}) \to L^{2}_{1,1/2}(\R^{2})}$ in the statements above only depend on the Lipschitz constants of $A,B$, and the kernel constants \eqref{kernelConstants2}-\eqref{kernelConstants3}.  \end{thm}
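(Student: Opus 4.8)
The statement reduces to two claims: (i) a kernel $K$ satisfying the $2$-dimensional standard estimates \eqref{kernelConstants2} and the horizontal evenness assumption gives a bounded operator $L^2(\R^2) \to L^2_{1,1/2}(\R^2)$ after being transported via $\Gamma_B$; and (ii) the same holds for $K_B D_{A,1}$ and $K_B D_{A,2}$ once the extra uniform $t$-integrability \eqref{kernelConstants3} is imposed. The strategy is to characterise membership in $L^2_{1,1/2}(\R^2)$ via two ``components'': a tangential derivative estimate and a fractional $T^{1/2}$ estimate, exactly as in Lemma \ref{lemma6} and Lemma \ref{lemma8}. Concretely, for $\varphi \in C^\infty_c(\R^2)$ write $u := K_B\varphi$ (the operator with kernel $K_B(w,v)J_B(v)$ applied to $\varphi$). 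By Lemma \ref{lemma8}, $\|u\|_{1,1/2}^2 \sim \|\partial_y u\|_2^2 + \iint |\tau| |\widehat{u_y}(\tau)|^2 \, d\tau\, dy$, so it suffices to bound each of these two pieces by $\|\varphi\|_2^2$.

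\textbf{The $\partial_y$-piece.} The key computation is that differentiating $w \mapsto K(\Gamma_B(v)^{-1}\cdot\Gamma_B(w))$ in the $y$-variable of $w = (y,t)$ produces, by the chain rule together with the explicit formula for $\Gamma_B(v)^{-1}\cdot\Gamma_B(w)$ (see the computation preceding \eqref{form88} in the proof of Theorem \ref{t:sDerOp}), a sum of two terms of the shape $\widetilde{K}_1(\Gamma_B(v)^{-1}\cdot\Gamma_B(w)) \cdot D_{B,1}(w,v)$ and $\widetilde{K}_2(\Gamma_B(v)^{-1}\cdot\Gamma_B(w)) \cdot D_{B,2}(w,v)$, where $\widetilde{K}_1 = y\,\partial_x K$ and $\widetilde{K}_2 = y^2\,\partial_t K$. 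Crucially $\partial_x K$ and $\partial_t K$ satisfy the \emph{$3$-dimensional} kernel estimates \eqref{kernelConstants} (this is where one degree of homogeneity is gained), and the extra factor $y$ (resp. $y^2$) cancels against the singularity of $D_{B,j}$ to keep things $3$-dimensional and horizontally odd. Hence $\partial_y u$ is controlled by operators of exactly the type handled by Theorem \ref{t:FO} (applied with ``$A = B$''), giving $\|\partial_y u\|_2 \lesssim \|\varphi\|_2$. For $K_B D_{A,1}$ and $K_B D_{A,2}$, differentiating in $y$ produces additionally terms where the $\partial_y$ hits the $D_{A,j}$ factor; here one uses that $\partial_y D_{A,1}$ and $\partial_y D_{A,2}$ can themselves be re-expressed (via integration by parts in the $\theta$-integral for $D_{A,2}$) as bounded functions times factors of the form $D_{A,1}/(x-y)$, so the product $K_B \cdot (\text{that})$ is again of the $K_B D_{A,j}$ form with $K$ replaced by $y\partial_x K$ or $y^2 \partial_t K$, and Theorem \ref{t:FO} applies. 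Assembling these pieces, and using $\|J_s - J_t\|_\infty \lesssim |s-t|$ plus the differentiability in $s$ of $\Gamma_s$ as in Theorem \ref{t:sDerOp}, also yields the Lipschitz-in-$s$ estimate in part (1) of the checklist.

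\textbf{The $T^{1/2}$-piece.} For $\iint |\tau| |\widehat{u_y}(\tau)|^2$, the point is that $t \mapsto u(y,t)$ is a convolution in $t$ of (a slice of) $\varphi$ against $t \mapsto K(\Gamma_B(v)^{-1}\cdot \Gamma_B(w))$; taking the Fourier transform in $t$ and using $|\tau| = \frac{|\tau|}{\tau}\cdot\tau$, one wants to move half a derivative onto each factor. The cleanest route: write $|\tau|^{1/2}\widehat{u_y}$ and recognise, via Plancherel and the horizontal \emph{evenness} of $K$ (which makes the $t$-Fourier multiplier of each slice real and positive, cf.\ the $K_0$ computation in Proposition \ref{prop4}), that this is controlled by the operator with kernel $\widetilde{K}(\Gamma_B(v)^{-1}\cdot\Gamma_B(w))$ where $\widetilde K$ is a horizontally \emph{odd} kernel satisfying the $3$-dimensional estimates — the fractional differentiation trades one degree of homogeneity for $1/2$ in the $t$-variable and changes the parity, landing us again in the scope of Theorem \ref{t:FO}. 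The uniform $t$-integrability hypothesis \eqref{kernelConstants3} is what guarantees the relevant slice-wise convolutions are well-defined and that the fractional-integral estimates are uniform; for the $D_{A,j}$ variants one again absorbs the bounded factors $D_{A,j}$ and reduces to the pure kernel case.

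\textbf{Main obstacle.} The delicate point is the $T^{1/2}$-piece: justifying rigorously that the half-order $t$-derivative of the transported kernel can be re-expressed as (a bounded perturbation of) another transported standard kernel of the same dimension but opposite horizontal parity, so that Theorem \ref{t:FO} becomes applicable. This requires care with the vertical Fourier multiplier formalism of Section \ref{s:Fourier} (to make sense of $T^{1/2}$ acting on objects that are only $L^2 + L^\infty$, not Schwartz), with the non-commutativity of $X,Y$ (so that $T = XY-YX$ genuinely enters, as in the $I_{21}, I_{22}$ analysis of Lemma \ref{lemma4}), and with the uniformity of all constants as $s$ varies over $[0,1]$. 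The $\partial_y$-piece, by contrast, is essentially a direct citation of Theorem \ref{t:FO} once the chain-rule bookkeeping is done. Everything else — reducing from general $f \in L^2$ to $\varphi \in C^\infty_c$ via Lemma \ref{lemma7} and the weak-$*$ density argument of Remark \ref{rem4}, and the local/global splitting for absolute convergence — is routine.
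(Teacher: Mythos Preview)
There is a genuine gap in the $T^{1/2}$-piece, and the role of the hypothesis \eqref{kernelConstants3} is misidentified.

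\textbf{The $T^{1/2}$-piece does not reduce to Theorem \ref{t:FO}.} Your claim that $|\tau|^{1/2}$ applied to the transported kernel yields ``a horizontally \emph{odd} kernel satisfying the $3$-dimensional estimates'' is not correct: the multiplier $|\tau|^{1/2}$ acts only in the $t$-variable and therefore preserves horizontal parity, so if $K$ is horizontally even then so is $T^{1/2}K$ (in any reasonable sense). Theorem \ref{t:FO} requires horizontally odd kernels, so it is not applicable here. More fundamentally, there is no clean pointwise ``$3$-dimensional standard kernel'' description of $T^{1/2}K$; the half-derivative is nonlocal and does not fit the framework you are invoking. The paper handles this piece by a completely different and much more elementary route: take the partial Fourier transform in $t$ \emph{first}, obtaining the fibrewise kernel
\[
k(p) := k_{K,\pm 1}(p) = \int e^{\mp 2\pi i u}\,K(p\cdot(0,0,u))\,du,
\]
and use the parabolic rescaling $\rho = \sqrt{|\tau|}$ (see \eqref{form125}) to normalise $\tau$. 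After this reduction and a change of variables, the $T^{1/2}$-estimate becomes the purely one-dimensional bound \eqref{form127}, for which one only needs the size estimate $|k(z,t)| \lesssim \min\{1 + \ln^{+}|z|^{-1},\,|z|^{-2}\}$ and Young's convolution inequality. No singular-integral theorem is required for this piece, and in particular the hypothesis \eqref{kernelConstants3} plays \emph{no} role here.

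\textbf{Where $C_{\star}$ is actually needed.} The extra hypothesis \eqref{kernelConstants3} enters in the \emph{tangential} derivative estimate, not the $T^{1/2}$-estimate --- exactly the opposite of what you wrote. After the same Fourier-and-rescale reduction, the $\partial_{y}$-piece becomes the one-dimensional estimate \eqref{form126}; the Leibniz rule splits it into a term where $\partial_{x}$ hits the kernel $k$ (line \eqref{form99}) and a term where $\partial_{x}$ hits the factor $D_{A,j}$ (line \eqref{form100}). The first term produces horizontally odd one-dimensional kernels $\kappa$ and is handled by \cite[Theorem 4.3]{2019arXiv191103223F} (a result on $L^{2}(\R)$, not Theorem \ref{t:FO}). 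The second term, however, leaves $k$ undifferentiated and multiplied by $(x-x')^{-1}$; for this to define a Calder\'on--Zygmund kernel one needs $k \in L^{\infty}$, and that is precisely what \eqref{kernelConstants3} provides via \eqref{form129}. When $D_{A,j}\equiv 1$ (the first part of the theorem) line \eqref{form100} vanishes and $C_{\star}$ is not needed, which is why the first conclusion holds under \eqref{kernelConstants2} alone. Your chain-rule computation for $\partial_{y}K_{B}$ is also borrowed from the $s$-derivative in Theorem \ref{t:sDerOp} and is not the correct expression for the $y$-derivative; the latter has a $\partial_{y}K$ term and different coefficient structure.
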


The fist part of (1) on our checklist is an immediate consequence of the (first part of the) theorem, applied with $K = G$ and $B = A_{s}$, since $\mathrm{Lip}(A_{s}) \lesssim_{I} \mathrm{Lip}(A)$ when $s$ ranges in a compact interval $I \subset \R$. We next turn to the second part, namely verifying that 
\begin{equation}\label{form191} \|S_{s} - S_{t}\|_{L^{2}(\R^{2}) \to L^{2}_{1,1/2}(\R^{2})} \leq C|s - t|, \qquad s,t \in [0,1]. \end{equation}
As a first reduction, note that
\begin{equation}\label{form194} S_{s}\varphi - S_{t}\varphi = S_{s}'([J_{s} - J_{t}]\varphi) + [S_{s}'(J_{t}\varphi) - S_{t}'(J_{t}\varphi)], \qquad \varphi \in C^{\infty}_{c}(\R^{2}), \end{equation}
where
\begin{displaymath} S_{r}'\psi(w) := \int G_{r}(w,v)\psi(v) \, dv. \end{displaymath}
Since Theorem \ref{t:bdd} implies that $S_{s}'$ is bounded $L^{2}(\R^{2}) \to L^{2}_{1,1/2}(\R^{2})$, the first term in \eqref{form194} satisfies $\|S_{s}'([J_{s} - J_{t}]\varphi)\|_{1,1/2} \lesssim \|J_{s} - J_{t}\|_{\infty} \cdot \|\varphi\|_{2} \lesssim |s - t| \cdot \|\varphi\|_{2}$. Therefore, to prove \eqref{form191}, it suffices to verify that
\begin{equation}\label{form191a} \|S_{s}' - S_{t}'\|_{L^{2}(\R^{2}) \to L^{2}_{1,1/2}(\R^{2})} \leq C|s - t|, \qquad s,t \in [0,1]. \end{equation}

As in Section \ref{s:surjectivity}, we plan to use Lemma \ref{l:metric} to deduce bounds for $\|S_{s}'f - S_{t}'f\|_{L^{2}_{1,1/2}(\R^{2})}$, for $f \in C_{c}^{\infty}(\R^{2})$. For $s \in \R$, we define the operator
\begin{displaymath} (\partial S)_{s}f := \int \partial_{s}G_{s}(w,v)f(v) \, dv, \qquad f \in C_{c}^{\infty}(\R^{2}). \end{displaymath} 
To apply Lemma \ref{l:metric}, the main task is to show that $(\partial S)_{s}$ defines a bounded operator $L^{2}(\R^{2}) \to L^{2}_{1,1/2}(\R^{2})$. Repeating the calculation in \eqref{form88}, we find that
\begin{equation}\label{form121} \partial_{s}G_{s}(w,v) = G_{1,s}(w,v) \cdot D_{A,1}(w,v) + G_{2,s}(w,v) \cdot D_{A,2}(w,v), \end{equation} 
where $D_{A,1}(w,v),D_{A,2}(w,v)$ are $L^{\infty}$-factors as in \eqref{form116},
\begin{equation}\label{form118a} G_{1}(x,y,t) := y\partial_{x}G(x,y,t) \quad \text{and} \quad G_{2}(x,y,t) := y^{2}\partial_{t}G(x,y,t), \end{equation}
and of course $G_{j,s}(w,v) = G_{j}(\Gamma_{s}(v)^{-1} \cdot \Gamma_{s}(w))$. So, to prove that $(\partial S)_{s} \colon L^{2}(\R^{2}) \to L^{2}_{1,1/2}(\R^{2})$ is bounded, it suffices to show the boundedness of the operators with kernels
\begin{displaymath} G_{s,1}(w,v) \cdot D_{A,1}(w,v) \quad \text{and} \quad G_{2,s}(w,v) \cdot D_{A,2}(w,v) \end{displaymath}
separately. The necessary pieces of information will be that
\begin{itemize}
\item $G_{1}$ and $G_{2}$ are horizontally even kernels satisfying $|\nabla^{n}G_{j}(p)| \lesssim_{n} \|p\|^{-2 - n}$ for all $p \in \He \, \setminus \, \{0\}$ and $n \geq 0$ (this is immediate from \eqref{form118a}),
\item we have the uniform bounds
\begin{equation}\label{form118} \int |G_{1}(z,t)| \, dt \leq C_{1} \quad \text{and} \quad \int |G_{2}(z,t)| \, dt \leq C_{2}, \qquad z \in \R^{2} \, \setminus \, \{0\}. \end{equation}
\end{itemize}
Perhaps surprisingly, the property \eqref{form118} is not implied by the $2$-dimensional decay assumption in the first point, since it fails for $G(z,t) = c(|z|^{4} + 16t^{2})^{-1/2}$. However, one may explicitly compute that
\begin{displaymath} G_{1}(x,y,t) = \frac{-2xy|z|^{2}}{\sqrt{|z|^{4} + 16t^2}^{3}} \quad \text{and} \quad G_{2}(x,y,t) = \frac{-16y^2t}{\sqrt{|z|^{4} + 16t^2}^{3}}. \end{displaymath}
From these expressions, one sees that $t \mapsto G_{j}(z,t) \in L^{1}(\R)$ for every $z \in \R^{2} \, \setminus \, \{0\}$, and homogeneity considerations yield the uniform bounds required in \eqref{form118}. For example, changing variables $t \mapsto |z|^{2}u$ and estimating $y^{2} \leq |z|^{2}$, we have
\begin{displaymath} \int |G_{2}(z,t)| \, dt \leq 16|z|^{6} \int_{\R} \frac{u \, du}{\sqrt{|z|^{4} + 16|z|^{4}u^{2}}^{3}} = 16 \int_{\R} \frac{u \, du}{\sqrt{1 + 16u^{2}}^{3}} \lesssim 1. \end{displaymath} 
With these properties of the kernels $G_{j}$ in hand, the $L^{2}(\R^{2}) \to L^{2}_{1,1/2}(\R^{2})$ boundedness of $(\partial S)_{s}$ follows from the second part of Theorem \ref{t:bdd}.


So, we know that the operators $(\partial S)_{s}$ are bounded $L^{2}(\R^{2}) \to L^{2}_{1,1/2}(\R^{2})$, but how does this imply \eqref{form191a}? We introduce the doubly truncated operators 
\begin{displaymath} S_{\epsilon,R,s}'f(w) := \int (\psi_{\epsilon,R}G)(\Gamma_{s}(v)^{-1} \cdot \Gamma_{s}(w))f(v) \, dv, \qquad f \in C^{\infty}_{c}(\R^{2}) \end{displaymath}
where $\psi_{\epsilon,R}$ is a smooth radially symmetric cut-off function with 
\begin{displaymath} \psi_{\epsilon,R}|_{B(\epsilon/2)} \equiv 0, \quad \psi_{\epsilon,R}|_{B(R) \, \setminus B(\epsilon)} \equiv 1, \quad \text{and} \quad \psi_{\epsilon,R}|_{B(2R)^{c}} \equiv 0. \end{displaymath}
We note that $S_{\epsilon,R,s}'f$ is continuous and compactly supported for $f \in C_{c}^{\infty}(\R^{2})$, so its Fourier transform can be computed in the classical sense. For $f \in C^{\infty}_{c}(\R^{2})$ fixed, moreover
\begin{displaymath} [S_{\epsilon,R,s}' - S_{\epsilon,R,t}']f \rightharpoonup [S_{s}' - S_{t}']f \end{displaymath}
as tempered distributions as $\epsilon \to 0$ and $R \to \infty$. So, by Lemma \ref{lemma7}, it will suffice to show that $\|S_{\epsilon,R,s}'f - S_{\epsilon,R,t}'f\|_{1,1/2} \leq C|s - t| \cdot \|f\|_{2}$ with $C > 0$ independent of $\epsilon,R$. Further, by the density of $C^{\infty}_{c}(\R^{2})$ in $L^{2}(\|(\xi,\tau)\|^{2} \, d\xi \, d\tau)$, it will suffice to show that
\begin{equation}\label{form136} \iint \varphi(\xi,\tau) \cdot \|(\xi,\tau)\|^{2} \cdot (\widehat{S_{\epsilon,R,s}'f}(\xi,\tau) - \widehat{S_{\epsilon,R,t}'f}(\xi,\tau)) \, d\xi \, d\tau \leq C|s - t| \cdot \|f\|_{2} \end{equation}
uniformly for all $\varphi \in C^{\infty}_{c}(\R^{2})$ with $\iint \|(\xi,\tau)\|^{2} |\varphi(\xi,\tau)|^{2} \, d\xi \, d\tau = 1$. We write
\begin{displaymath} \gamma(s) := \iint \varphi(\xi,\tau) \cdot \|(\xi,\tau)\|^{2} \cdot \widehat{S_{\epsilon,R,s}'f}(\xi,\tau) \, d\xi \, d\tau, \qquad s \in \R. \end{displaymath}
By Lemma \ref{l:metric}, the inequality \eqref{form136} will follow once we manage to establish that $\limsup_{t \to s} |\gamma(s) - \gamma(t)|/|s - t| \leq C\|f\|_{2}$. This statement will follow from
\begin{equation}\label{form119} \lim_{t \to s} \left|\frac{\gamma(t) - \gamma(s)}{t - s} - \int \varphi(\xi,\tau) \cdot \|(\xi,\tau)\|^{2} \cdot \widehat{(\partial S)_{\epsilon,R,s}f}(\xi,\tau) \, d\xi \, d\tau \right| = 0, \end{equation}
since the operator
\begin{displaymath} (\partial S)_{\epsilon,R,s}f(w) = \int \partial_{s}(\psi_{\epsilon,R}G)(\Gamma_{s}(v)^{-1} \cdot \Gamma_{s}(w))f(v) \, dv \end{displaymath}
is bounded $L^{2}(\R^{2}) \to L^{2}_{1,1/2}(\R^{2})$ as a consequence of Theorem \ref{t:bdd} (the smooth truncation of the kernel $G$ does not affect the conclusion). So, it remains to prove \eqref{form119}. We will suppress "$\epsilon$" and "$R$" from the notation. Note, then, that the difference of the operators appearing  in \eqref{form119} can be written as
\begin{displaymath} Z_{s,t}(w) := \tfrac{S_{s}'f(w) - S_{t}'f(w)}{s- t} - (\partial S)_{s}f(w) = \frac{1}{s - t} \int_{t}^{s} \int_{\R^{2}} \left[ \partial_{\xi}G_{\xi}(w,v) - \partial_{s}G_{s}(w,v) \right]f(v) \, dv \, d\xi. \end{displaymath}
After computing the derivatives $\partial_{\xi}G_{\xi}$ and $\partial_{s}G_{s}$ explicitly as in \eqref{form121}, one finds that $Z_{s,t}(w) \to 0$ for every $w \in \R^{2}$ as $t \to s$. Also, for $0 < \epsilon < R < \infty$ fixed, the functions $Z_{s,t}$, $s,t \in [0,1]$, are uniformly compactly supported and bounded, so their Fourier transforms are uniformly in $L^{\infty}(\R^{2})$, and converge pointwise to $0$ as $t \to s$. Therefore, \eqref{form119} is a consequence of the dominated convergence theorem for any fixed $\varphi \in C^{\infty}_{c}(\R^{2})$. This concludes the proof of \eqref{form191a}, that is, $\|S_{s}' - S_{t}'\|_{L^{2}(\R^{2}) \to L^{2}_{1,1/2}(\R^{2})} \leq C|s - t|$.

\subsection{Verifying (2)}\label{s:checklist2} Before turning to the proof of Theorem \ref{t:bdd}, we establish the point (2) on our checklist in Section \ref{s:checklist}, namely that $\|S_{s}f\|_{1,1/2} \geq c\|f\|_{2}$ for all $s \in [0,1]$ and $f \in L^{2}(\R^{2})$. Since we already know that $S_{s} \colon L^{2}(\R^{2}) \to L^{2}_{1,1/2}(\R^{2})$ is bounded, it suffices to verify the lower bound for all "$f$" in a dense subset of $L^{2}(\R^{2})$. Functions of the form $\varphi_{s} := \varphi \circ \Gamma_{s}$ with $\varphi \in C^{\infty}_{c}(\He)$ are a comfortable choice, because during the proof of Proposition \ref{prop5} (more precisely \eqref{form134}) we showed that
\begin{equation}\label{form138} \|S_{s}\varphi_{s}\|_{1,1/2} \sim_{\mathrm{Lip}(A)} \|\nabla_{\tau}S\varphi\|_{L^{2}(\sigma_{s})} + \|T^{1/2}\mathcal{S}\varphi\|_{L^{2}(\sigma_{s})}, \qquad s \in [0,1]. \end{equation}
Since $\|\varphi_{s}\|_{2} \sim \|\varphi\|_{L^{2}(\sigma_{s})}$ by the area formula \eqref{form16}, we also have
\begin{displaymath} \|\varphi_{s}\|_{2} \lesssim \|(\tfrac{1}{2}I + D_{s}^{t})\varphi\|_{L^{2}(\sigma_{s})} + \|(-\tfrac{1}{2}I + D_{s}^{t})\varphi\|_{L^{2}(\sigma_{s})} = \|\nabla_{\nu}^{-}\mathcal{S}\varphi\|_{L^{2}(\sigma_{s})} + \|\nabla_{\nu}^{+}\mathcal{S}\varphi\|_{L^{2}(\sigma_{s})}. \end{displaymath} 
Here we used the jump relations in Corollary \ref{c:allJumps} for the interior and exterior normal derivatives of $\mathcal{S}\varphi$. In \eqref{form3}, we already established that
 \begin{displaymath} \max\{\|\nabla^{-}_{\nu}\mathcal{S}\varphi\|_{L^{2}(\sigma_{s})},\|\nabla^{+}_{\nu}\mathcal{S}\varphi\|_{L^{2}(\sigma_{s})}\} \lesssim \|\nabla_{\tau}S\varphi\|_{L^{2}(\sigma_{s})} + \|\varphi\|_{L^{2}(\sigma_{s})}^{1/2}\|T^{1/2}\mathcal{S}\varphi\|_{L^{2}(\sigma_{s})}^{1/2}.  \end{displaymath}
 Now the inequality $\|\varphi_{s}\|_{2} \lesssim_{\mathrm{Lip}(A)} \|S_{s}\varphi_{s}\|_{1,1/2}$, for $s \in [0,1]$, is an immediate consequence of $\|\varphi\|_{L^{2}(\sigma_{s})} \sim \|\varphi_{s}\|_{2}$ and \eqref{form138}. This completes the proof of (2) on our checklist.

\subsection{Proof of Theorem \ref{t:bdd}} To prove the invertibility of $S_{A} \colon L^{2}(\R^{2}) \to L^{2}_{1,1/2}(\R^{2})$, and hence (by Remark \ref{rem3}) the invertibility of $S \colon L^{2}(\sigma) \to L^{2}_{1,1/2}(\sigma)$, it only remains to prove Theorem \ref{t:bdd}. Fix $\varphi \in C^{\infty}_{c}(\R^{2})$, and write 
\begin{displaymath} T_{j}\varphi(w) := \int K_{B}(w,v)D_{A,j}(w,v)\varphi(v) \, dv, \qquad j \in \{1,2\}. \end{displaymath}
The integral is absolutely convergent since the kernel $K_{B}$ is locally integrable, following the argument in Remark \ref{rem4}. If $K^{\epsilon,R}(p) = \psi_{\epsilon,R}(p)K(p)$ is any (smooth double) truncation of $K$, it then follows from dominated convergence that
\begin{displaymath} T^{\epsilon,R}_{j}\varphi(w) := \int K_{B}^{\epsilon,R}(w,v)D_{A,j}(w,v)\varphi(v) \, dv \to T_{j}\varphi(w), \qquad \text{a.e. } w \in \R^{2}, \end{displaymath}
which implies that $T_{j}^{\epsilon,R}\varphi \rightharpoonup T_{j}\varphi$ in the sense of distributions as $\epsilon \to 0$ and $R \to \infty$. So, if we manage to show that $\|T_{j}^{\epsilon,R}\varphi\|_{1,1/2} \lesssim_{A,B,C_{n},C_{\star}} \|\varphi\|_{2}$, it will follow from Lemma \ref{lemma7} that also $T_{j}\varphi \in L^{2}_{1,1/2}(\R^{2})$, and
\begin{displaymath} \|T_{j}\varphi\|_{1,1/2} \leq \sup_{0 < \epsilon < R < \infty} \|T_{j}^{\epsilon,R}\varphi\|_{1,1/2} \lesssim_{A,B,C_{n},C_{\star}} \|\varphi\|_{2}. \end{displaymath}
Therefore, we may assume that $\spt K$ is a compact subset of $\He \, \setminus \, \{0\}$. This will be (implicitly) used to justify applications of Fubini's theorem and differentiations under the integral sign. Also, if both $\spt K \subset \He \, \setminus \, \{0\}$ and $\spt \varphi \subset \R^{2}$ are compact, then $T_{j}\varphi$ is a bounded compactly supported continuous function.

Under this \emph{a priori} assumption, we will show that
\begin{equation}\label{form96} \iint |\partial_{y}T_{j}\varphi(y,t)|^{2} \, dy \, dt = \iint |\partial_{y}\widehat{T_{j}\varphi}(y,\tau)|^{2} \, d\tau \, dy \lesssim_{A,B,C_{n},C_{\star}} \|\varphi\|_{2}^{2} \end{equation}
and
\begin{equation}\label{form97} \quad \iint |\tau| \cdot |\widehat{T_{j}\varphi}(y,\tau)|^{2} \, d\tau \, dy \lesssim_{A,C_{1},C_{2}} \|\varphi\|_{2}^{2}. \end{equation} 
Here $\widehat{T_{j}\varphi}$ refers to the partial Fourier transform ot $T_{j}\varphi$ in the $t$-variable, i.e. $\widehat{T_{j}\varphi}(y,\tau) := \widehat{(T_{j}\varphi)_{y}}(\tau)$. Note that these estimates imply $\|T_{j}\varphi\|_{1,1/2} \lesssim \|\varphi\|_{2}$ by Lemma \ref{lemma8}.

For as long as possible, we work under the (kernel decay) assumption \eqref{kernelConstants2} alone. Also, to make the proof a little shorter, we will combine some arguments regarding the two terms in \eqref{form96}-\eqref{form97}. 
We recall that $\Gamma_{B}(y,t) = \gamma(y) \cdot (0,0,t)$, where $\gamma(y) = (B(y),y,-\tfrac{y}{2}B(y) + \int_{0}^{y} B(r) \, dr)$. With this notation, we may calculate
\begin{align} \widehat{T_{j}\varphi}(y,\tau) & = \int e^{-2\pi i t\tau} \iint K(\Gamma_{B}(y',t')^{-1} \cdot \Gamma_{B}(y,t))D_{A,j}(y,y')\varphi(y',t') \, dy' \, dt' \, dt \notag\\
& = \int e^{-2\pi i t\tau} \iint K(\gamma(y')^{-1} \cdot \gamma(y) \cdot (0,0,t - t'))D_{A,j}(y,y')\varphi(y',t') \, dy' \, dt' \, dt \notag\\
& = \iint \varphi(y',t')D_{A,j}(y,y') \left[ \int e^{-2\pi i t\tau} K(\gamma(y')^{-1} \cdot \gamma(y) \cdot (0,0,t - t')) \, dt  \right] \, dt' \, dy' \notag\\
& = \iint \varphi(y',t')D_{A,j}(y,y') \left[ \int e^{-2\pi i (u + t')\tau} K(\gamma(y')^{-1} \cdot \gamma(y) \cdot (0,0,u)) \, du \right] \, dt' \, dy' \notag\\
&\label{form123} = \int \widehat{\varphi}(y',\tau)D_{A,j}(y,y') \left[ \int e^{-2\pi i u\tau} K(\gamma(y')^{-1} \cdot \gamma(y) \cdot (0,0,u)) \, du \right] \, dy',  \end{align}
where $\widehat{\varphi}(y',\tau) = \widehat{\varphi_{y}}(\tau)$ is the Fourier transform of $t \mapsto \varphi(y,s)$ at $\tau$. Motivated by final line above, we define the horizontally even kernels
\begin{equation}\label{eq:smallk} k_{K,\tau}(p) := \int e^{-2\pi i u\tau} K(p \cdot (0,0,u)) \, du, \qquad \tau \in \R, \end{equation} 
so that 
\begin{equation}\label{form124} \int e^{-2\pi i u\tau} K(\gamma(y')^{-1} \cdot \gamma(y) \cdot (0,0,u)) \, du = k_{K,\tau}(\gamma(y')^{-1} \cdot \gamma(y)). \end{equation}
It turns out that the $\tau$-dependence leads to technical issues, so we wish to reduce it. To this end, for $r > 0$, define the auxiliary kernels $K_{r}(z,t) := r^{-2}K(r^{-1}z,r^{-2}t)$, and note that $K_{r}$ satisfies the same kernel estimates \eqref{kernelConstants2}-\eqref{kernelConstants3} as $K$ does, and with constants independent of $r > 0$. With this notation, plus writing $\rho := \sqrt{|\tau|}$ and changing variables from $u$ to $u/|\tau|$, we have
\begin{equation}\label{form125} k_{K,\tau}(p) = \int e^{-2\pi i u \sgn(\tau)} (K_{\rho})(\delta_{\rho}(p) \cdot (0,0,u)) \, d\theta = k_{K_{\rho},\sgn(\tau)}(\delta_{\rho}(p)).\end{equation}
The right hand side refers to a kernel of the same type as $k_{K,\tau}$ above, with $\tau = 1$, and with $K$ replaced by the "renormalised" kernel $K_{\rho}$. Combining \eqref{form123}-\eqref{form125}, we find that
\begin{displaymath} \widehat{T_{j}\varphi}(y,\tau) = \int \widehat{\varphi}(y',\tau)D_{A,j}(y,y')k_{K_{\rho},\sgn(\tau)}(\delta_{\rho}(\gamma(y')^{-1} \cdot \gamma(y))) \, dy', \end{displaymath}
and consequently (changing variables $y \mapsto x/\rho$ and $y' \mapsto x'/\rho$),
\begin{displaymath} \eqref{form96} = \int \frac{1}{\rho} \int \left| \partial_{x} \int \widehat{\varphi}\left(\tfrac{x'}{\rho},\tau \right)D_{A,j}\left(\tfrac{x}{\rho},\tfrac{x'}{\rho} \right)k_{K_{\rho},\sgn(\tau)}(\delta_{\rho}(\gamma(x')^{-1} \cdot \gamma(x))) \, dx' \, dx'\right|^{2} \, dx \, d\tau. \end{displaymath}
We remark that, repeating the arguments above verbatim, one sees that \eqref{form97} also equals the displayed formula above, except that the "$\partial_{x}$" should be removed. This expression looks unwieldy, but it can be substantially simplified. First, we fix $\tau \neq 0$ and define $\psi_{\tau}(x) := \widehat{\varphi}\left(x/\rho,\tau \right)$. We then claim, for $\tau \neq 0$ fixed, that
\begin{equation}\label{form98} \int \left| \partial_{x} \int \psi_{\tau}(x')D_{A,j}\left(\tfrac{x}{\rho},\tfrac{x'}{\rho} \right)k_{\rho^{-1}K,\sgn(\tau)}(\delta_{\rho}(\gamma(x')^{-1} \cdot \gamma(x))) \, dx' \, dy'\right|^{2} \, dx \lesssim \|\psi_{\tau}\|_{2}^{2}, \end{equation}
where the implicit constants may depend on $A,B,C_{n},C_{\star}$. Once \eqref{form98} has been verified, we may infer that
\begin{displaymath} \eqref{form96} \lesssim_{A,B,C_{n},C_{\star}} \int \frac{1}{\rho} \int |\psi_{\tau}(x)|^{2} \, dx \, d\tau = \iint |\widehat{\varphi}(x,\tau)|^{2} \, dx \, d\tau = \|\varphi\|_{2}^{2}, \end{displaymath}
as desired. To deal with \eqref{form97}, the same argument as above shows that it suffices to prove the following estimate:
\begin{equation}\label{form108} \int \left| \int \psi_{\tau}(x')D_{A,j}\left(\tfrac{x}{\rho},\tfrac{x'}{\rho} \right)k_{\rho^{-1}K,\sgn(\tau)}(\delta_{\rho}(\gamma(x')^{-1} \cdot \gamma(x))) \, dx' \, dy'\right|^{2} \, dx \lesssim \|\psi_{\tau}\|_{2}^{2}. \end{equation}
The only difference between \eqref{form96} and \eqref{form108} is the presence, or lack, of "$\partial_{x}$". Heuristically, this makes \eqref{form98} into a rather delicate statement regarding singular integrals, whereas \eqref{form108} can be, eventually, handled by maximal function estimates.

To prove \eqref{form98}-\eqref{form108}, we abbreviate $\psi_{\tau} =: \psi$ and $k := k_{K_{\rho},\sgn(\tau)}$. It might look like we are completely sweeping the $\tau$-dependence under the carpet, but all we will use is that $k$ is a kernel of the form $k_{K,1}$ or $k_{K,-1}$ (as in \eqref{eq:smallk}) associated to some horizontally even kernel $K$ satisfying \eqref{kernelConstants2} (and \eqref{kernelConstants3} in the second part of the theorem).

 By direct computation, we moreover observe that
\begin{displaymath} D_{A,j}\left(\tfrac{x}{\rho},\tfrac{x'}{\rho} \right) = D_{A_{\rho},j}(x,x'), \qquad j \in \{1,2\}, \, x,x' \in \R, \end{displaymath}
where $A_{\rho}(x) = \rho \cdot A(x/\rho)$, and
\begin{displaymath} \delta_{\rho}(\gamma(x')^{-1} \cdot \gamma(x)) = \gamma_{\rho}(x')^{-1} \cdot \gamma_{\rho}(x), \qquad x,x' \in \R, \end{displaymath}
where 
\begin{displaymath} \gamma_{\rho}(x) = \left(B_{\rho}(x),x,-\tfrac{x}{2}B_{\rho}(x) + \int_{0}^{x} B_{\rho}(r) \, dr \right). \end{displaymath}
Since $A_{\rho},B_{\rho}$ have the same Lipschitz constants as $A$ and $B$, and this is all the information our bounds will depend on, we may further abbreviate $D_{A_{\rho},j} := D_{A,j}$ and $\gamma_{\rho} := \gamma$. 

With this notation, \eqref{form98} and \eqref{form108} are equivalent to
\begin{equation}\label{form126} \int \left| \partial_{x} \int \psi(x')D_{A,j}(x,x')k(\gamma(x')^{-1} \cdot \gamma(x)) \, dx' \right|^{2} \, dx \lesssim_{A,B,C_{n},C_{\star}} \|\psi\|_{2}^{2} \end{equation}
and
\begin{equation}\label{form127} \int \left|\int \psi(x')D_{A,j}(x,x')k(\gamma(x')^{-1} \cdot \gamma(x)) \, dx' \right|^{2} \, dx \lesssim_{A,C_{1},C_{2}} \|\psi\|_{2}^{2}, \end{equation}
respectively.

To make further progress, we examine the kernel $k$ more closely. We claim that $k$ is a horizontally even kernel (this is immediate from the definition) satisfying 
\begin{equation}\label{form102} |\nabla^{n}k(z,t)| \lesssim C_{n}|z|^{-n}, \qquad (z,t) \in \He \, \setminus \, \{|z| = 0\} \end{equation} 
for all $n \geq 1$, and
\begin{equation}\label{form128} |k(z,t)| \lesssim (C_{1} + C_{2}) \min\{\ln^{+} \tfrac{1}{|z|} + 1,|z|^{-2}\}. \end{equation}
If \eqref{kernelConstants3} is assumed, then \eqref{form128} can be further upgraded to
\begin{equation}\label{form129} |k(z,t)| \lesssim \min\{C_{\star},C_{2}|z|^{-2}\}. \end{equation}
In fact, the decay for $|z| \gg 1$ could be much improved, but we will not need this.

The cases $n \geq 1$ are completely straightforward, using the uniform kernel estimates \eqref{kernelConstants2} for $K = K_{\rho}$:
\begin{align} |\nabla^{n}k(z,t)| \leq \int |\nabla^{n}K(z,t + u)| \, du & \leq C_{n}\int \frac{du}{\|(z,t + u)\|^{n + 2}} \notag\\
&\label{form110} \stackrel{u \mapsto |z|^{2}v}{=} C_{n} \int \frac{|z|^{2} \, dv}{(|z|^{4} + 16|z|^{4}v^{2})^{(n + 2)/4}}\\
& = \frac{C_{n}}{|z|^{n}} \int \frac{dv}{(1 + 16v^{2})^{(n + 2)/4}}. \notag\end{align} 
The final integral is uniformly bounded for $n \geq 1$, and this gives \eqref{form102} for $n \geq 1$. We note that, as a corollary, also 
\begin{equation}\label{form130} |\partial_{y}k(z,t)|,|\partial_{x}k(z,t)| \lesssim (C_{1} + C_{2})|z|^{-1}, \end{equation}
using that $\partial_{x} = X + (y/2)\partial_{t} = X + (y/2)(XY - YX)$ and $\partial_{y} = Y - (x/2)\partial_{t}$.

To handle the case $n = 0$, we first integrate by parts:
\begin{equation}\label{form131} |k(z,t)| \leq \frac{1}{2 \pi} \int |\partial_{t}K(z,t + u)| \, du \leq \frac{1}{2\pi} \int |\nabla^{2}K(z,t + u)| \, du \lesssim \frac{C_{2}}{|z|^{2}}.\end{equation}
This gives the decay at infinity claimed in \eqref{form128}-\eqref{form129}. The uniform bound claimed in \eqref{form129} is immediate from the definition of the constant $C_{\star}$, so it remains to verify the logarithmic blow-up at $0$ stated in \eqref{form128}. We already know from \eqref{form131} that if $|z_{0}| = 1$, then $|k(z_{0},t)| \lesssim C_{2}$, and on the other hand $|\nabla_{z}k(z,t)| \lesssim (C_{1} + C_{2})|z|^{-1}$ by \eqref{form130}. Therefore, if $z = |z| z_{0} \in B(1) \, \setminus \, \{0\}$ with $0 < |z| < 1$, we may write
\begin{displaymath} |k(z,t) - k(z_{0},t)| \leq \int_{|z|}^{1} |\nabla_{z}k(rz_{0},t)| \, dr \lesssim (C_{1} + C_{2}) \int_{|z|}^{1} \frac{dr}{r} = (C_{1} + C_{2}) \ln \frac{1}{|z|}.\end{displaymath}
This completes the proof of the estimate \eqref{form128}.

The proof of \eqref{form127} is now rather immediate: we simply put absolute values inside and first estimate $\|D_{A,j}\|_{L^{\infty}(\R \times \R)} \lesssim \mathrm{Lip}(A)$. Then, we note that the $z$-component of $\gamma(x')^{-1} \cdot \gamma(x)$ is at least $|x - x'|$, so 
\begin{displaymath} \eqref{form127} \lesssim_{A,C_{1},C_{2}} \int \left( \int |\psi(x')| \min\{\ln^{+} \tfrac{1}{|x - x'|} + 1,|x - x'|^{-2}\} \, dx' \right)^{2} \, dx. \end{displaymath}
But $x \mapsto \min\{\ln^{+} |x|^{-1} + 1, |x|^{-2}\} \in L^{1}(\R)$, so the RHS is dominated by $\|\psi\|_{2}^{2}$ by Young's convolution inequality. This completes the proof of \eqref{form127}, and hence \eqref{form97}. Note that this estimate did not involve the constant $C_{\star}$, so it holds as long as $K$ satisfies \eqref{kernelConstants2}.

Now we move to the more delicate estimate \eqref{form126}, where the first task will be to calculate the derivative
\begin{align}\label{form99} \partial_{x}\left[ k(\gamma(x')^{-1} \cdot \gamma(x))D_{A,j}(x,x') \right] & = [\partial_{x}k(\gamma(x')^{-1} \cdot \gamma(x))] \cdot D_{A,j}(x,x')\\
&\label{form100} \qquad + k(\gamma(x')^{-1} \cdot \gamma(x)) \cdot [\partial_{x}D_{A,j}(x,x')]. \end{align}
Note that if $D_{A,j}(x,x') \equiv 1$ (this corresponds to the first part of the theorem, where we only consider the kernel $K_{B}$, and not $K_{B}D_{A,j}$), the term on line \eqref{form100} vanishes identically. And, as we will see immediately below, handling the term \eqref{form100} is the only place in the proof where the stronger "$C_{\star}$"-kernel assumption \eqref{kernelConstants3} is required.

We note that the derivative $\partial_{x} D_{A_{j}}(x,x')$ is a linear combination of
\begin{displaymath} \frac{A'(x)}{x - x'}, \quad \frac{D_{A,1}(x,x')}{(x - x')} \quad \text{and} \quad \frac{D_{A,2}(x,x')}{x - x'} \end{displaymath} 
for a.e. $x \in \R$ (where in fact $A = A_{\rho}$). This means that once the term coming from \eqref{form100} is placed into the LHS of \eqref{form126}, we will face the sum of expressions of the form
\begin{equation}\label{form101} \int \left| \int \psi(x') \frac{k(\gamma(x')^{-1} \cdot \gamma(x))}{2(x - x')}D_{A}(x,x')D_{A,j}(x,x') \, dx' \right|^{2} \, dx, \end{equation}
where $D_{A}(x,x') \in \{A'(x),D_{A,1}(x,x'),D_{A,2}(x,x')\}$. Recalling from \eqref{form129} that $k \in C^{\infty}(\He \, \setminus \, \{|z| = 0\})$ is a horizontally even $L^{\infty}$-function, the kernel
\begin{displaymath} (x,x') \mapsto \frac{k(\gamma(x')^{-1} \cdot \gamma(x))}{2(x - x')}D_{A}(x,x')D_{A,j}(x,x') \end{displaymath}
in \eqref{form101} is essentially of the type treated by \cite[Theorem 4.3]{2019arXiv191103223F}; we recall the result, and what "essentially" means in Remark \ref{rem1}. This theorem then states that 
\begin{equation}\label{form106} \eqref{form101} \lesssim_{A,B,C_{n},C_{\star}} \|\psi\|_{2}^{2}. \end{equation}
Of course, it is critical here that the Lipschitz constants of $A_{\rho}$ and $B_{\rho}$ (hiding in the definition of $\gamma = \gamma_{\rho}$) are no larger than those of $A$ and $B$, respectively.

Next, we consider the term coming from \eqref{form99}. We claim that this term can be handled without the "$C_{\star}$" kernel assumption. Unwrapping the definitions (in particular, recall that $k = k_{K_{\rho},\sgn(\tau)}$ is a kernel of the type defined in \eqref{eq:smallk}), and differentiating under the integral sign, we find $\partial_{x}k(\gamma(x')^{-1} \cdot \gamma(x)) =$
\begin{displaymath} \int e^{-2\pi i u \sgn(\tau)} \partial_{x} \left[x \mapsto  K\left(B(x) - B(x'),x - x',u + \int_{x}^{x'} \tfrac{B(x) + B(x') - 2B(\theta)}{2} \, d\theta \right) \right] \, du, \end{displaymath}
where in fact $\gamma = \gamma_{\rho}$, $K = K_{\rho}$, and $B = B_{\rho}$. A direct calculation shows that the RHS is a linear combination of terms of the form
\begin{align} D_{B}(x,x') & \int e^{-2\pi i u\sgn(\tau)} \mathcal{K}\left(B(x) - B(x'),x - x',u + \int_{x}^{x'} \tfrac{B(x) + B(x') - 2B(\theta)}{2} \, d\theta \right) \, du \notag\\
&\label{form104} = D_{B}(x,x') \int e^{-2\pi i u\sgn(\tau)} \mathcal{K}(\gamma(x')^{-1} \cdot \gamma(x) \cdot (0,0,u)) \, du, \end{align}
where $\mathcal{K} \in \{\partial_{x}K,\partial_{y}K,y\partial_{t}K\}$ and 
\begin{displaymath} D_{B}(x,x') \in \left\{\mathrm{const},B'(x),D_{B,1}(x,x')\right\}. \end{displaymath}
Since $K$ was assumed horizontally even, every $\mathcal{K}$, as above, is horizontally odd, and it follows from the estimates in \eqref{kernelConstants2} that
\begin{displaymath} |\mathcal{K}(p)| \lesssim C_{n}\|p\|^{-3 - n}, \qquad p \in \He \, \setminus \, \{0\}, \, n \geq 0. \end{displaymath} 
The kernels appearing in \eqref{form104}, namely
\begin{displaymath} \kappa(p) := \int e^{-2\pi i u\sgn(\tau)} \mathcal{K}(p \cdot (0,0,u)) \, du, \end{displaymath}
are therefore also horizontally odd, and satisfy $|\nabla^{n}\kappa(z,t)| \lesssim C_{n}|z|^{-1 - n}$ for $(z,t) \in \He$ with $z \neq 0$, and $n \geq 0$. These estimates follow by repeating the computations in \eqref{form110}, and the "$C_{\star}$"-constant is not required. Heuristically, the point is that when $\mathcal{K}$ is a $3$-dimensional kernel to begin with, then $\kappa$ turns out to be a $1$-dimensional kernel. However, in the computations surrounding \eqref{form110}, the kernel $K$ was only $2$-dimensional, and the "$0$-dimensionality" of $k$ is a more delicate point.

Now, plugging in the contribution from \eqref{form99} to \eqref{form98}, and rewriting $\partial_{x}k_{\tau}(\gamma(x')^{-1} \cdot \gamma(x))$ as we have done in \eqref{form104}, we find terms of the form
\begin{equation}\label{form105} \int \left| \int \psi(y')\kappa(\gamma(y')^{-1} \cdot \gamma(y))D_{A,j}(y,y')D_{B}(y,y') \, dy \right|^{2} \, dy. \end{equation} 
As before, \cite[Theorem 4.3]{2019arXiv191103223F} applied to the kernels
\begin{displaymath} (x,x') \mapsto \kappa(\gamma(x')^{-1} \cdot \gamma(x))D_{A,2}(x,x')D_{B}(x,x') \end{displaymath} 
shows that
\begin{displaymath} \eqref{form105} \lesssim \|\psi\|_{2}^{2}. \end{displaymath}
(See Remark \ref{rem1} for similar provisos as we needed to reach \eqref{form106}.) We have now established \eqref{form126}, and hence completed the proof of the theorem. 

We close with a brief summary. The theorem claimed that $K_{B}(w,v)$ induces a bounded operator $L^{2}(\R^{2}) \to L^{2}_{1,1/2}(\R^{2})$ only assuming the kernel estimates \eqref{kernelConstants2}. This worked, because (i) the easier estimate \eqref{form97} (even for the kernels $K_{B}D_{A,j}(w,v)$) did not require the kernels $k$ to be bounded (the logarithmic singularity at $0$ was manageable), and (b) in the more difficult estimate \eqref{form96}, the only term requiring the boundedness of $k$ (namely \eqref{form100}) vanished. For a general Lipschitz function $A \colon \R \to \R$, however, our proof needed the kernel $k$ to be an $L^{\infty}$-function, and this is simply not true without the additional kernel assumption \eqref{kernelConstants3}. For example, if $K = G$, one finds that $|k(z,t)| \sim K_{0}(\tfrac{\pi}{2} |z|^{2})$, where $K_{0}$ is the modified Bessel function of the first kind of index zero (this function appears prominently in Section \ref{appA}). The function $K_{0}$ indeed has a logarithmic singularity at the $0$, which shows that the estimate \eqref{form128} is fairly optimal near the origin.

\begin{remark}\label{rem1} The proof of Theorem \ref{t:bdd} made two references to \cite[Theorem 4.3]{2019arXiv191103223F}, which states the following. If $k \in C^{\infty}(\He \, \setminus \, \{|z| = 0\})$ is a horizontally odd kernel satisfying $|\nabla^{n}k(z,t)| \lesssim_{n} |z|^{-1 - n}$ for $z \in \R^{2} \, \setminus \, \{0\}$, and $A,B \colon \R \to \R$ are Lipschitz functions, then the kernels $K_{B}D_{A,1}$ and $K_{B}D_{A,2}$ yield operators bounded on $L^{2}(\R)$, where $D_{A,j}$ are the same $L^{\infty}$-factors which appear in the statement of Theorem \ref{t:bdd}, and
\begin{displaymath} K_{B}(y,y') = k\left(B(y) - B(y'),y - y',\int_{y}^{y'} \frac{B(y) + B(y') - 2B(\theta)}{2} \, d\theta \right). \end{displaymath}
In \eqref{form101}, we needed two mild extensions of this result. First, in place of $k$ above, we had an an even "$0$-dimensional" kernel $k \in C^{\infty}(\He \, \setminus \, \{|z| = 0\})$ satisfying $|k(z,t)| \lesssim_{n} |z|^{-n}$, and $K_{B}$ is replaced by
\begin{displaymath} \widetilde{K}_{B}(y,y') = \frac{1}{y - y'} \cdot k\left(B(y) - B(y'),y - y',\int_{y}^{y'} \frac{B(y) + B(y') - 2B(\theta)}{2} \, d\theta \right). \end{displaymath}
Second, we needed to know that $\widetilde{K}_{B}D_{A_{1}}D_{A_{2}}$ yields an operator bounded on $L^{2}(\R)$ with two distinct factors $D_{A_{1}}, D_{A_{2}}$ of the familiar kind. The extension to two $D_{A,j}$-factors was also needed in the second application of \cite[Theorem 4.3]{2019arXiv191103223F}, right after \eqref{form105}.

The extension to two -- or more -- factors $D_{A,j}$ requires only notational changes in the proof of \cite[Theorem 4.3]{2019arXiv191103223F}, and this is even mentioned in \cite[Remark 4.5]{2019arXiv191103223F}. In fact, the presence of a single factor causes small technicalities, explained in \cite[Remark 4.58]{2019arXiv191103223F}, but any number of further factors can be dealt with in the same way.

How about the difference between the kernels $K_{B}$ and $\widetilde{K}_{B}$? The proof presented for $K_{B}$ in \cite{2019arXiv191103223F} only requires one change: as the argument begins right after the statement of \cite[Theorem 4.3]{2019arXiv191103223F}, one needs to replace the auxiliary function
\begin{displaymath} "\kappa(u;\theta_{1},\theta_{2}) := \chi(\theta_{1},\theta_{2})k(u,u \cdot (2L\theta_{1}),u^{2} \cdot (4L^{2}\theta_{2}))" \end{displaymath} 
by the function
\begin{displaymath}  "\kappa(u;\theta_{1},\theta_{2}) = \frac{\chi(\theta_{1},\theta_{2})}{u}k(u,u \cdot (2L\theta_{1}),u^{2} \cdot (4L^{2}\theta_{2}))". \end{displaymath}
After this change, the argument works verbatim. The horizontal evenness of $k$ implies that $\kappa$ is an odd function of $u$. Consequently, also using the smoothness of $k$ away from $\{|z| = 0\}$, the kernels "$\kappa_{\mathbf{n}}$" appearing shortly afterwards are odd $1$-dimensional standard kernels, whose kernel constants decay rapidly as $|\mathbf{n}| \to \infty$. The properties of the kernels in \cite{2019arXiv191103223F} are only needed to derive these facts, and they remain true in our setting. \end{remark}


\section{Continuity up to the boundary}

Our main results, Theorems \ref{main3} and \ref{main5}, yield solutions to the Dirichlet problem \eqref{dirichlet} which converge to the boundary values "$g$" non-tangentially $\sigma$ a.e. The main question in this section is: can the a.e. non-tangential convergence be upgraded to classical convergence (everywhere), assuming that $g \in C(\partial \Omega)$? Besides being a natural question, a positive answer would, and will, be useful in the proof of Theorem \ref{main6}, concerning the uniqueness in the Dirichlet problem. Theorem \ref{t:regularity} answers the question affirmatively if $\partial \Omega$ is smooth. The Lipschitz case remains open. Fortunately, the smooth case will be sufficient for the application in Theorem \ref{main6}, see Section \ref{s:uniqueness}.

Recall that if $\Omega = \{(x,y,t) : x < A(y)\}$ is a flag domain, and $u \colon \Omega \to \C$ is a function, typically $\bigtriangleup^{\flat}$-harmonic, then the radial limit of $u$ at $p \in \partial \Omega$ (along $\Omega = \mathrm{dom}(u)$) is
\begin{displaymath} u_{\mathrm{rad}}(p) := \lim_{t \to 0+} u(p \cdot (-t,0,0)), \end{displaymath}
whenever the limit exists. We also recall from Section \ref{s:nt} the radial maximal function
\begin{displaymath} (\mathcal{N}_{\mathrm{rad}}u)(p) := \sup_{r > 0} |u(p \cdot (-r,0,0))|. \end{displaymath}
\begin{thm}\label{t:regularity} Let $A \in C^{\infty}(\R)$, $\Omega := \{(x,y,t) : x < A(y)\}$, and $\sigma := |\partial \Omega|_{\He}$. Let $g \in C(\partial \Omega)$, and let $u \colon \Omega \to \C$ be a $\bigtriangleup^{\flat}$-harmonic function such that $u_{\mathrm{rad}}(p) = g(p)$ for $\sigma$ a.e. $p \in \partial \Omega$.  If $\mathcal{N}_{\mathrm{rad}}u \in L^{1}_{\mathrm{loc}}(\sigma)$, then $u \in C(\overline{\Omega})$. \end{thm}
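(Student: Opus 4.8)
The plan is to establish continuity of $u$ on $\overline{\Omega}$ by combining an interior continuity estimate with control of the boundary behaviour coming from the hypothesis $\mathcal{N}_{\mathrm{rad}}u \in L^{1}_{\mathrm{loc}}(\sigma)$ together with $u_{\mathrm{rad}} = g \in C(\partial \Omega)$ $\sigma$-a.e. Since $u$ is already smooth inside $\Omega$, the only issue is continuity at boundary points, so fix $p_{0} \in \partial \Omega$ and a sequence $q_{j} \to p_{0}$ with $q_{j} \in \Omega$; the goal is $u(q_{j}) \to g(p_{0})$. The first step I would take is to record the relevant potential-theoretic facts about $\bigtriangleup^{\flat}$ on the smooth domain $\Omega$: because $A \in C^{\infty}(\R)$, the boundary $\partial \Omega$ is smooth, and near a non-characteristic point one may invoke the results of Jerison \cite{MR639800} (and the UOBC-type harmonic measure estimates of \cite{MR1620876,MR1890994}) — but since flag boundaries \emph{do} have characteristic points, I would instead build the argument directly from the $\bigtriangleup^{\flat}$-harmonic measure $\omega^{q}$ of $\Omega$, which exists and is a genuine probability measure on $\partial \Omega$ for the smooth unbounded domain $\Omega$, absolutely continuous with respect to $\sigma$ on compact pieces.

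The core of the argument is a representation formula. I would show that, under the hypotheses, $u(q) = \int_{\partial \Omega} g \, d\omega^{q}$ for all $q \in \Omega$. To prove this, truncate: let $\Omega_{R} := \Omega \cap B(p_{0}, R)$, apply the (classical, smooth-domain) solvability of the Dirichlet problem with continuous data on $\partial \Omega_{R}$, and use the maximum principle together with the $L^{1}_{\mathrm{loc}}(\sigma)$ bound on $\mathcal{N}_{\mathrm{rad}}u$ to pass to the limit $R \to \infty$, absorbing the contribution of the "far" part of $\partial \Omega_{R}$ (the spherical cap $\Omega \cap \partial B(p_{0},R)$) via harmonic measure decay estimates for the corkscrew/Semmes domain $\Omega$. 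The radial-limit hypothesis $u_{\mathrm{rad}} = g$ $\sigma$-a.e. is what identifies the boundary trace of $u$ with $g$ in this limiting procedure, using that $\omega^{q} \ll \sigma$ so that $\sigma$-null sets are $\omega^{q}$-null. Once the representation $u = \int g \, d\omega^{\cdot}$ is in hand, continuity at $p_{0}$ reduces to the statement that $\omega^{q} \to \delta_{p_{0}}$ weakly-$*$ as $q \to p_{0}$ in $\Omega$, i.e. that every boundary point of $\Omega$ is \emph{regular} for $\bigtriangleup^{\flat}$.

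So the real work concentrates on proving regularity of every $p_{0} \in \partial \Omega$, and \textbf{this is the step I expect to be the main obstacle}, precisely because of characteristic points. At a non-characteristic point the standard barrier constructions (Kor\'anyi-ball exterior touching, or the Wiener-type criterion via capacity of $\He \setminus \Omega$) apply directly. At a characteristic point $p_{0}$ — where the tangent plane of $\partial \Omega$ is the horizontal plane $\operatorname{span}\{X_{p_{0}}, Y_{p_{0}}\}$ — one must work harder: here I would exploit the \emph{vertical ruling} of the flag domain, namely that $\partial \Omega$ contains the entire vertical line through $p_{0}$, plus the fact that $\Omega$ satisfies the corkscrew condition (Definition \ref{def:csDomain}), to build an explicit barrier. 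A convenient choice is a barrier of the form $w(q) := \operatorname{dist}(q, \mathbb{L})^{\alpha}$ composed with the fundamental solution $G$, or more robustly the function $q \mapsto 1 - \int_{\partial B(p_{0},r) \cap \Omega} \langle \nabla G(p_{0}^{-1}\cdot q'), \nu \rangle \ldots$ appearing in Lemma \ref{pv-K}; the corkscrew point $q_{2} \in B(p_{0},r) \setminus \overline{\Omega}$ gives a super level set of $G$ disjoint from $\Omega$ of definite size, and left-translated, dilated copies of $G$ then furnish a local barrier at $p_{0}$ of the standard kind ($\bigtriangleup^{\flat}$-superharmonic, positive on $\overline{\Omega} \cap B(p_{0},r) \setminus \{p_{0}\}$, vanishing at $p_{0}$). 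Since these exterior corkscrew balls exist at \emph{every} scale and \emph{every} boundary point of a flag domain (uniformly, by Proposition \ref{prop3} and the Lipschitz bound on $A$), the barrier works uniformly, so every $p_{0} \in \partial \Omega$ is regular, hence $\omega^{q} \to \delta_{p_{0}}$ and $u(q) = \int g\, d\omega^{q} \to g(p_{0})$, which together with $u \in C^{\infty}(\Omega)$ gives $u \in C(\overline{\Omega})$ as claimed. A secondary technical point to handle carefully is the justification of the representation formula at the level of the $L^{1}_{\mathrm{loc}}$ (rather than $L^{p}$, $p>1$) maximal-function hypothesis — but this should follow from a routine Fatou/dominated-convergence argument along the radial segments, exactly as in the non-tangential-limit arguments already carried out in Section \ref{s:jump}.
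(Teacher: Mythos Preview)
Your proposal has two issues, one minor and one substantial.

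\textbf{First, a misconception that wastes effort.} Flag domains have \emph{no} characteristic points. Recall from Proposition~\ref{prop3} that the Euclidean normal of $\partial\Omega$ has the form $(n_{1},n_{2},0)$ with $n_{1}^{2}+n_{2}^{2}=1$, and the horizontal normal is $\nu = Cn = (n_{1},n_{2})$, which never vanishes. So your entire discussion of barriers at characteristic points is addressing a difficulty that does not arise here; every $p_{0}\in\partial\Omega$ is non-characteristic, and regularity for the Dirichlet problem at every boundary point follows from standard outer-ball arguments (see Remark~\ref{rem:reg}).

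\textbf{Second, and more seriously, the representation formula step is circular.} You want to prove $u(q)=\int g\,d\omega^{q}$ by truncating to $\Omega_{R}=\Omega\cap B(p_{0},R)$ and invoking the maximum principle. But to apply the maximum principle to $u$ minus the PWB solution on $\Omega_{R}$, you would need $u$ to be continuous up to $\partial\Omega\cap B(p_{0},R)$, or at least to have enough boundary control to dominate $u$ by a superharmonic function with the right boundary values. The hypotheses give only $\sigma$-a.e.\ radial limits and an $L^{1}_{\mathrm{loc}}(\sigma)$ bound on $\mathcal{N}_{\mathrm{rad}}u$; neither yields a pointwise comparison at the boundary. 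This is precisely the continuity-up-to-the-boundary statement you are trying to prove, so the argument as written assumes its own conclusion.

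The paper avoids this circularity by a different mechanism. It localises to a bounded subdomain $D=V_{0}\cap\Omega$, lets $v$ be the PWB solution in $D$ with continuous data $\tilde{g}$, and shows directly that $w:=u-v$ extends continuously by zero to $\partial D$ near $p_{0}$. The key device is Green's representation applied not to $w$ (which is not compactly supported) but to smooth truncations $w_{j}=\eta_{j}w$ with $\eta_{j}\to\psi\cdot\mathbf{1}_{\Omega}$, followed by integration by parts to move derivatives off $w$. The limiting expressions for $w(p)$ are integrals of $w$ against $G_{D}(p,\cdot)$ and $\nabla_{q}G_{D}(p,\cdot)$ over a fixed annulus $A(p_{0},2r_{0},3r_{0})$, and these tend to zero as $p\to p_{0}$ by Lemma~\ref{l:schauder} and dominated convergence. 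The smoothness assumption $A\in C^{\infty}(\R)$ enters exactly here, to obtain the Lipschitz and gradient bounds \eqref{form153}--\eqref{form152} on $G_{D}$ near the smooth boundary piece (via UOBC or Jerison's estimate~\eqref{eq:Jerison}); the $L^{1}_{\mathrm{loc}}(\sigma)$ hypothesis on $\mathcal{N}_{\mathrm{rad}}u$ is used to show $w\in L^{1}(D)$ via the Fubini formula~\eqref{eq:Fubini}, and to prove that the ``boundary-layer'' error terms $I_{j}^{1,0},I_{j}^{2,0}$ vanish as $j\to\infty$.
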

The proof of Theorem \ref{t:regularity} relies on the existence and boundary regularity of Green's function in bounded domains in $D \subset \He$. We briefly recall the definition. Fix $p \in D$, and let $u = u_{D,p}$ be the \emph{Perron-Wiener-Brelot} solution (see \cite[\S6.7]{BLU}) to the Dirichlet problem
\begin{equation}\label{eq:greenFunction} \begin{cases} \bigtriangleup^{\flat}u = 0 \text{ in } D, \\ u|_{\partial D} = G(p^{-1} \cdot q). \end{cases} \end{equation} 
As always, $G$ is the fundamental solution of $-\bigtriangleup^{\flat}$. The function $G_{D}(p,q) := G(p^{-1} \cdot q) - u_{D,p}(q)$ is known as \emph{Green's function for $D$}. As explained in \cite[\S 9.2]{BLU}, in particular \cite[Theorem 9.2.4]{BLU}, Green's function $G_{D}(p,\cdot)$ equals $G(p^{-1} \cdot)$ minus the greatest $\bigtriangleup^{\flat}$-harmonic minorant of $G(p^{-1} \cdot)$ in $D$; in particular $0 \leq G_{D}(p,q) \leq G(p^{-1} \cdot q)$ for all $p,q \in D$. Also, \cite[Proposition 9.2.10]{BLU} states that Green's function is symmetric:
\begin{equation}\label{greenSymmetry} G_{D}(p,q) = G_{D}(q,p), \qquad p,q \in D. \end{equation}
Green's function will be used via the following classical representation formula:
\begin{lemma}\label{l:greenRep} Let $D \subset \He$ be an open set with locally finite $\He$-perimeter, and let $u \in C^{2}_{c}(D)$. Then,
\begin{displaymath} u(p) = -\int_{D} G_{D}(p,q)(\bigtriangleup^{\flat}u)(q) \, dq, \qquad p \in D. \end{displaymath} 
\end{lemma}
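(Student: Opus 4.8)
\textbf{Proof plan for Lemma \ref{l:greenRep}.} The statement is the classical Green's representation formula, so the plan is to reduce it to the known properties of the fundamental solution $G$ together with the two basic facts about Green's function recalled above: that $u_{D,p} := q \mapsto G(p^{-1} \cdot q) - G_{D}(p,q)$ is $\bigtriangleup^{\flat}$-harmonic in $D$, and the symmetry $G_{D}(p,q) = G_{D}(q,p)$. Fix $p \in D$ and $u \in C^{2}_{c}(D)$. First I would write
\begin{displaymath} -\int_{D} G_{D}(p,q)(\bigtriangleup^{\flat}u)(q) \, dq = -\int_{D} G(p^{-1} \cdot q)(\bigtriangleup^{\flat}u)(q) \, dq + \int_{D} u_{D,p}(q)(\bigtriangleup^{\flat}u)(q) \, dq, \end{displaymath}
so the lemma splits into two sub-claims: (i) the first term on the right equals $u(p)$, and (ii) the second term vanishes.

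For (i), the point is that $G$ is the fundamental solution of $-\bigtriangleup^{\flat}$ with pole at $0$, hence $q \mapsto G(p^{-1} \cdot q)$ is the fundamental solution with pole at $p$ (by left invariance of $\bigtriangleup^{\flat}$), so $\bigtriangleup^{\flat}[q \mapsto G(p^{-1} \cdot q)] = -\delta_{p}$ in the distributional sense. Since $u \in C^{2}_{c}(D) \subset C^{2}_{c}(\He)$, testing this identity against $u$ gives $-\int G(p^{-1} \cdot q)(\bigtriangleup^{\flat}u)(q) \, dq = u(p)$; concretely this is the same mollification argument used in the proof of Lemma \ref{K-ball}, namely convolve $G$ with an approximate identity, use $\bigtriangleup^{\flat}(\psi_{\epsilon} \ast G) = -\psi_{\epsilon}$ and the fact that $\bigtriangleup^{\flat}$ is formally self-adjoint with respect to Lebesgue measure (no boundary terms appear because $u$ has compact support in $D$), then let $\epsilon \to 0$.

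For (ii), the function $v := u_{D,p}$ is $\bigtriangleup^{\flat}$-harmonic in $D$ but need not have compact support, so I cannot integrate by parts twice against $\bigtriangleup^{\flat}u$ directly and throw away both boundary terms. Instead I would use that $u \in C^{2}_{c}(D)$: choose an open set $D'$ with $\spt u \subset D' \Subset D$ and $\partial D'$ nice enough (e.g. $D'$ a finite union of Korányi balls, which have locally finite $\He$-perimeter). Then apply the integration-by-parts formula \eqref{eq:iParts} (or the divergence theorem, Theorem \ref{t:div}) twice on $D'$ to $Z(\varphi)\psi$-type terms with $\varphi, \psi \in \{u, v\}$; since $u$ and all its first-order horizontal derivatives vanish near $\partial D'$, every boundary integral over $\partial D'$ is zero, and we obtain the symmetric Green's identity $\int_{D'} v \,\bigtriangleup^{\flat}u = \int_{D'} u \,\bigtriangleup^{\flat}v$. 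Because $v$ is $\bigtriangleup^{\flat}$-harmonic in $D \supset D'$, the right-hand side vanishes, and because $\spt(\bigtriangleup^{\flat}u) \subset D'$ the left-hand side equals $\int_{D} v\,\bigtriangleup^{\flat}u$, giving (ii). Combining (i) and (ii) yields the claimed formula.

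\textbf{Main obstacle.} The only delicate point is the bookkeeping in step (ii): verifying that $v = u_{D,p}$ is genuinely $C^{2}$ (or at least $\bigtriangleup^{\flat}$-harmonic in the classical, twice-differentiable sense, which follows from hypoellipticity of $\bigtriangleup^{\flat}$) on a neighbourhood of $\overline{D'}$, and that the cut-off domain $D'$ can be chosen to be an $\He$-Caccioppoli set so that Theorem \ref{t:div} and formula \eqref{eq:iParts} apply. Neither is serious — the regularity of $v$ comes from interior hypoellipticity away from the pole $p$ (and $p$ can be assumed outside $\overline{D'}$ at the cost of first proving the formula for $p \notin \spt u$, then noting both sides are continuous in $p$), and one may simply take $D'$ to be a small Korányi ball (or finite union thereof) containing $\spt u$. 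So the proof is essentially routine once the splitting above is set up.
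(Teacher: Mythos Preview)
Your proof is correct and follows a standard route, but it differs from the paper's argument. The paper does not split $G_{D}(p,q) = G(p^{-1}\cdot q) - u_{D,p}(q)$; instead it applies the divergence theorem once, directly on $D$, to the compactly supported horizontal vector field
\[
V(q) := u(q)\,\nabla_{q} G_{D}(p,q) - G_{D}(p,q)\,\nabla u(q),
\]
observing (formally) that $\mathrm{div}^{\flat} V = -u\,\delta_{p} - G_{D}(p,\cdot)\,\bigtriangleup^{\flat}u$ and that the boundary term vanishes because $u \in C^{2}_{c}(D)$. Your splitting has the advantage of isolating the singular contribution cleanly (step (i) is exactly the fundamental-solution identity, with no Green's-function analysis needed), at the cost of a second integration-by-parts argument for step (ii). The paper's version is shorter but, as the authors themselves note, ``slightly formal'' in its treatment of the $\delta_{p}$.

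One small correction to your write-up: the ``main obstacle'' you flag is not actually an obstacle. The function $v = u_{D,p}$ is $\bigtriangleup^{\flat}$-harmonic in all of $D$ --- it is the Perron--Wiener--Brelot solution with continuous boundary data --- so by hypoellipticity it is $C^{\infty}$ throughout $D$, including at $p$. There is no pole to avoid, and no need to assume $p \notin \overline{D'}$ or to invoke continuity in $p$. The singularity lives entirely in the $G(p^{-1}\cdot q)$ term you already dealt with in step (i).
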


\begin{proof} We give a slightly formal argument; see \cite[p. 34]{MR1625845} for full details. Fix $p \in D$, and define the horizontal vector field $V \in C^{1}_{c}(D)$ by
\begin{displaymath} V(q) := u(q) \nabla [q \mapsto G_{D}(p,q)] - G_{D}(p,q) \nabla u(q), \qquad q \in D \, \setminus \, \{p\}. \end{displaymath}
Then $\mathrm{div}^{\flat} V(q) = -u(q) \cdot \delta_{p} - G_{D}(p,q)(\bigtriangleup^{\flat} u)(q)$, so the divergence theorem in \cite{FSSC} yields
\begin{displaymath} -u(p) - \int_{D} G_{D}(p,q)(\bigtriangleup^{\flat} u)(q) \, dq = \int \mathrm{div}^{\flat} V(q) \, dq  = -\int_{\partial D} \langle V,\nu \rangle \, d|\partial D|_{\He} = 0, \end{displaymath} 
since $V \in C^{1}_{c}(D)$. Rearranging terms yields the claim.  \end{proof}

Before turning to further properties of Green's functions, we record another useful consequence of the divergence theorem:

\begin{lemma}\label{lemma11} Let $D \subset \He$ be an open set with locally finite $\He$-perimeter, let $u,v \in C^{1}(D)$, and let $\eta \in C^{2}_{c}(D)$. Then,
\begin{displaymath} \int u\langle \nabla v,\nabla \eta \rangle \, dp = -\int \langle \nabla u,\nabla  \eta \rangle v \, dp - \int_{D} u v(\bigtriangleup^{\flat} \eta) \, dp. \end{displaymath} \end{lemma}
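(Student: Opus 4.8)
The identity is a standard integration-by-parts (Green-type) formula, and the strategy is to reduce it to the horizontal divergence theorem from \cite{FSSC}, exactly as in Lemma \ref{l:greenRep} and the integration-by-parts formula \eqref{eq:iParts}. First I would introduce the horizontal vector field
\begin{displaymath} V := u v \nabla \eta \in C^{1}_{c}(D,H\He), \end{displaymath}
which is compactly supported because $\eta \in C^{2}_{c}(D)$, and continuously differentiable because $u, v \in C^{1}(D)$ and $\eta \in C^{2}(D)$. Since $\spt V$ is a compact subset of $D$, the hypotheses of the divergence theorem in \cite{FSSC} (or Theorem \ref{t:div}) are trivially satisfied, and the boundary term vanishes:
\begin{displaymath} \int_{D} \mathrm{div}^{\flat} V(p) \, dp = -\int \langle V,\nu \rangle \, d|\partial D|_{\He} = 0. \end{displaymath}

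Next I would expand $\mathrm{div}^{\flat} V$ using the Leibniz rule for the horizontal derivatives $X, Y$. Writing $\nabla \eta = (X\eta) X + (Y\eta) Y$ and recalling $\mathrm{div}^{\flat}(aX + bY) = Xa + Yb$, one computes
\begin{displaymath} \mathrm{div}^{\flat}(uv\nabla \eta) = \langle \nabla(uv),\nabla \eta \rangle + uv \, \mathrm{div}^{\flat}(\nabla \eta) = \langle \nabla(uv),\nabla \eta \rangle + uv \, \bigtriangleup^{\flat}\eta, \end{displaymath}
using $\mathrm{div}^{\flat}\nabla \eta = \bigtriangleup^{\flat}\eta$ from Section \ref{s:heisenberg}. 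Then $\nabla(uv) = u \nabla v + v \nabla u$, so
\begin{displaymath} \mathrm{div}^{\flat}(uv\nabla \eta) = u\langle \nabla v,\nabla \eta \rangle + v \langle \nabla u,\nabla \eta \rangle + uv \, \bigtriangleup^{\flat}\eta. \end{displaymath}
Integrating this over $D$, setting the result equal to zero, and rearranging the three terms yields precisely the claimed identity.

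There is essentially no obstacle here: the only point requiring a word of care is the justification that the divergence theorem applies, but this is immediate since $V$ has compact support in $D$, so neither the growth/integrability conditions of Theorem \ref{t:div} nor any approximation argument is needed. The regularity assumptions ($u,v \in C^{1}(D)$, $\eta \in C^{2}_{c}(D)$) are exactly what is required for $V \in C^{1}_{c}(D,H\He)$ and for the Leibniz expansion of $\mathrm{div}^{\flat} V$ to be valid pointwise; in particular the second derivatives of $\eta$ enter only through $\bigtriangleup^{\flat}\eta$, which is continuous. One could alternatively cite \eqref{eq:iParts} twice (once for $Z = X$, once for $Z = Y$, with the roles $\varphi = uv$, $\psi = Z\eta$), but the direct vector-field computation is cleaner and self-contained.
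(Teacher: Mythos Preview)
Your proposal is correct and matches the paper's proof essentially verbatim: the paper also introduces the vector field $V = uv\nabla\eta \in C^{1}_{c}(D)$, expands $\mathrm{div}^{\flat}V = u\langle\nabla v,\nabla\eta\rangle + v\langle\nabla u,\nabla\eta\rangle + uv\bigtriangleup^{\flat}\eta$, and concludes by the divergence theorem with vanishing boundary term.
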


\begin{proof} Consider the horizontal vector field $V := uv \nabla \eta \in C^{1}_{c}(D)$, and note that
\begin{displaymath} \mathrm{div}^{\flat} V = u\langle \nabla v,\nabla \eta \rangle + v\langle \nabla u,\nabla \eta \rangle + uv \bigtriangleup^{\flat}\eta. \end{displaymath}
The claim follows from
\begin{displaymath} \int \mathrm{div}^{\flat} V(p) \, dp = - \int_{\partial D} \langle V,\nu \rangle \, d\sigma = 0 \end{displaymath}
by rearranging terms.  \end{proof}

Before the next lemma, we recall the definition of \emph{regular points for the Dirichlet problem}:
\begin{definition} Let $D \subset \He$ be a bounded domain. A point $p_{0} \in \partial D$ is a \emph{regular point for the Dirichlet problem}, denoted $p_{0} \in [\partial D]_{\mathrm{reg}}$, if the Perron-Wiener-Brelot solution of any Dirichlet problem in $D$ with continuous boundary data is continuous at $p_{0}$.
\end{definition}
As a special case of the definition, and the symmetry property \eqref{greenSymmetry}, one has
\begin{equation}\label{form175} \lim_{p \to p_{0}} G_{D}(p,q) = \lim_{p \to p_{0}} G_{D}(q,p) = 0, \qquad p_{0} \in [\partial D]_{\mathrm{reg}}. \end{equation} 
\begin{lemma}\label{l:schauder} Let $D \subset \He$ be a bounded domain, and let $G_{D}$ be Green's function for $D$. Let $Z_{q} = X_{i_{1}}\cdots X_{i_{s}}$ be a horizontal derivative acting on the variable "$q$". Then
\begin{displaymath} \lim_{p \to p_{0}} Z_{q}G_{D}(p,q) = 0 \end{displaymath}
for all $q \in D$, and for all $p_{0} \in [\partial D]_{\mathrm{reg}}$.
\end{lemma}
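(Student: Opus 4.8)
The strategy is to exploit the symmetry \eqref{greenSymmetry} together with the representation formula in Lemma \ref{l:greenRep}, which reduces the question to a boundary-regularity statement about Perron--Wiener--Brelot solutions, and then to combine it with the Schauder-type interior estimates for $\bigtriangleup^{\flat}$-harmonic functions to move the derivative $Z_q$ onto the correct variable. The key observation is that, for $q \in D$ fixed, the function $p \mapsto G_D(p,q)$ is $\bigtriangleup^{\flat}$-harmonic in $D \setminus \{q\}$ (this follows from the definition $G_D(p,q) = G(p^{-1} \cdot q) - u_{D,q}(p)$, using the symmetry \eqref{greenSymmetry} to swap the roles of the variables, and the fact that $u_{D,q}$ is $\bigtriangleup^{\flat}$-harmonic in $D$ while $p \mapsto G(p^{-1} \cdot q)$ is $\bigtriangleup^{\flat}$-harmonic away from $q$). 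So near a regular boundary point $p_0 \in [\partial D]_{\mathrm{reg}}$ -- which we may assume is bounded away from $q$ -- the function $p \mapsto G_D(p,q)$ is a nonnegative $\bigtriangleup^{\flat}$-harmonic function on $D \cap B(p_0,r)$ for $r$ small, vanishing continuously at $p_0$ by \eqref{form175}.

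First I would fix $q \in D$ and a regular point $p_0$, choose $r>0$ with $q \notin B(p_0,2r)$, and work with $w(p) := G_D(p,q)$ on $U := D \cap B(p_0,2r)$, so $w \in C^\infty(U)$, $\bigtriangleup^{\flat} w = 0$ in $U$, $w \geq 0$, and $w$ extends continuously by $0$ to $\partial D \cap B(p_0,2r)$. The derivative we want to control is $Z_q G_D(p,q)$; but by the symmetry \eqref{greenSymmetry}, $Z_q G_D(p,q) = Z_q G_D(q,p)$, which is a horizontal derivative \emph{in the second slot} of $(p',q') \mapsto G_D(p',q')$ evaluated at $(q,p)$ -- i.e. it is a horizontal derivative (in the variable $p$, now renamed) of the $\bigtriangleup^{\flat}$-harmonic function $w$ above, taken at an interior point. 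Wait -- this is not quite right, since $Z_q$ originally acts on $q$ which is the first argument after the swap. The cleaner route: define $\widetilde w(q') := G_D(p,q')$ for $q' $ near $p_0$ with $p$ fixed; then $Z_q G_D(p,q)$ is $Z\widetilde w(q)$. So I would instead fix $p$ (temporarily, to be sent to $p_0$ only at the end is wrong too). Let me restate: the statement asks for $\lim_{p \to p_0} Z_q G_D(p,q)$ with $q$ \emph{fixed}. Here $Z_q$ differentiates the $q$-variable but $q$ is then held fixed, so $Z_q G_D(p,q)$ is a genuine function of $p$ alone, and we want its limit as $p \to p_0$. Using \eqref{greenSymmetry}, $Z_q G_D(p,q) = (Z_{q'} G_D(q',p'))\big|_{q'=q,\,p'=p}$. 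Now for $p' = p$ fixed near $p_0$, the map $q' \mapsto G_D(q',p)$ is $\bigtriangleup^{\flat}$-harmonic in $D \setminus \{p\}$, nonnegative, and bounded above by $G((q')^{-1} \cdot p)$; since $q$ is a fixed interior point bounded away from $\partial D$, interior Schauder estimates for $\bigtriangleup^{\flat}$ (available in the sub-Riemannian setting, e.g. via the smoothness already recorded for $\bigtriangleup^{\flat}$-harmonic functions, or Folland--Stein estimates) give
\begin{displaymath} |Z_{q'} G_D(q',p)| \big|_{q'=q} \lesssim_{Z,q,D} \sup_{q'' \in B(q,\delta)} |G_D(q'',p)| \end{displaymath}
for a ball $B(q,\delta) \subset \subset D$ with $p \notin B(q,\delta)$ for $p$ close to $p_0$. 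Thus it suffices to show $\sup_{q'' \in B(q,\delta)} G_D(q'',p) \to 0$ as $p \to p_0$, i.e. $G_D(q'',p) \to 0$ uniformly for $q''$ in a fixed compact subset of $D$.

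The final step is therefore the uniform version of \eqref{form175}: for $p_0 \in [\partial D]_{\mathrm{reg}}$ and any compact $K \subset D$ with $p_0 \notin K$, one has $\sup_{q'' \in K} G_D(q'',p) \to 0$ as $p \to p_0$. This I would deduce as follows: for fixed $q''$, the function $p \mapsto G_D(q'',p) = G_D(p,q'')$ is the $\bigtriangleup^{\flat}$-harmonic (in $D$) function $G(q''^{-1}\cdot \,) - u_{D,q''}$ evaluated -- no, $p \mapsto G_D(p,q'')$ is $\bigtriangleup^{\flat}$-harmonic in $D \setminus \{q''\}$, nonnegative, dominated by $G(p^{-1}\cdot q'')$, and vanishes continuously at every regular boundary point. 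Near $p_0$ (staying at distance $> \tfrac12\mathrm{dist}(p_0,K)$ from $K$), the family $\{p \mapsto G_D(p,q'') : q'' \in K\}$ is uniformly bounded on $D \cap B(p_0, r)$ by $\sup_{q'' \in K, p \in \partial B(p_0,r)} G(p^{-1}\cdot q'') =: M < \infty$, and each member vanishes on $\partial D \cap B(p_0,r)$. The uniform decay then follows from a barrier argument at the regular point $p_0$: regularity of $p_0$ yields (cf. \cite[\S 6.9, \S 8]{BLU}) a barrier, i.e. a positive $\bigtriangleup^{\flat}$-superharmonic function $\beta$ on $D \cap B(p_0,r)$ with $\beta(p) \to 0$ as $p \to p_0$ and $\beta \geq c > 0$ on $D \cap \partial B(p_0,r)$; comparing $M\beta/c$ with the $\bigtriangleup^{\flat}$-harmonic function $p \mapsto G_D(p,q'')$ on $D \cap B(p_0,r)$ (both agree/dominate on the boundary of this region, $G_D$ being $0$ on the "side" part and $\leq M$ on the "cap") gives $G_D(p,q'') \leq (M/c)\beta(p)$ for all $q'' \in K$, which is the desired uniform bound. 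Combining this with the Schauder estimate above finishes the proof.

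\textbf{Main obstacle.} The delicate point is justifying the interior Schauder estimate
$|Z_{q'}G_D(q',p)|_{q'=q} \lesssim \sup_{B(q,\delta)} G_D(\cdot,p)$ uniformly as $p \to p_0$ -- equivalently, that a bound on a nonnegative $\bigtriangleup^{\flat}$-harmonic function on a fixed interior ball controls its horizontal derivatives of all orders there, with constants independent of the particular harmonic function. This is standard sub-elliptic regularity (hypoellipticity of $\bigtriangleup^{\flat}$ plus scaling, or the Folland--Stein estimates), but it must be invoked carefully so that the constant depends only on $Z$, the ball $B(q,\delta)$, and $\mathrm{dist}(B(q,\delta), \partial D)$, and not on $p$; since $p$ stays bounded away from $B(q,\delta)$ once $p$ is close to $p_0$, this is fine. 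The secondary technical point is producing the barrier $\beta$ at a regular point in the Carnot-group setting, but this is precisely the content of the potential theory developed in \cite{BLU}, so it can be quoted.
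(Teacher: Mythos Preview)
Your approach is correct in spirit and close to the paper's, but more roundabout in two places.

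First, the symmetry detour is unnecessary: the function $q' \mapsto G_D(p,q')$ is directly seen to be nonnegative and $\bigtriangleup^{\flat}$-harmonic on $D \setminus \{p\}$ from the definition $G_D(p,q') = G(p^{-1}\cdot q') - u_{D,p}(q')$, so you can apply interior estimates in the $q$-variable without invoking \eqref{greenSymmetry} at all.

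Second, and more substantively, the paper bypasses your entire ``uniform convergence plus barrier'' step by citing a sharper interior estimate. The Schauder-type bound of Capogna--Garofalo--Nhieu \cite[Corollary 5.3]{MR1500279} gives, for a \emph{nonnegative} $\bigtriangleup^{\flat}$-harmonic function $u$ on a ball $B(q,Cr)$,
\[
|Zu(q)| \lesssim_{s} r^{-s}\, u(q),
\]
with the point value $u(q)$ on the right rather than $\sup_{B(q,\delta)} u$ (this is the gradient estimate combined with Harnack). Applying it to $q' \mapsto G_D(p,q')$ at $q' = q$, with $r = r_0$ fixed so that $B(q,Cr_0) \subset D \setminus \{p\}$ for all $p$ near $p_0$, yields $|Z_q G_D(p,q)| \lesssim G_D(p,q)$ with constant independent of $p$. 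Then the pointwise fact $G_D(p,q) \to 0$ from \eqref{form175} finishes the proof immediately.

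Your barrier argument for the uniform statement $\sup_{q'' \in K} G_D(p,q'') \to 0$ also has a small gap: the comparison with $(M/c)\beta$ via the minimum principle on $D \cap B(p_0,r)$ requires $\liminf_{p \to p'}\big[(M/c)\beta(p) - G_D(p,q'')\big] \geq 0$ at \emph{every} $p' \in \partial D \cap \overline{B(p_0,r)}$, but at irregular boundary points $G_D(\cdot,q'')$ need not tend to $0$. This is easily repaired by replacing the barrier with Harnack: since $q'' \mapsto G_D(p,q'')$ is nonnegative harmonic on $B(q,2\delta)$, Harnack gives $\sup_{q'' \in B(q,\delta)} G_D(p,q'') \lesssim G_D(p,q) \to 0$. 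But once you do this, you have essentially recovered the paper's one-step argument.
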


So, if we define $Z_{q}G_{D}(\cdot,q)|_{\partial D} \equiv 0$, then $Z_{q}G_{D}(\cdot,q) \in C(D \cup [\partial D]_{\mathrm{reg}})$ for all $q \in D$. 

\begin{proof}[Proof of Lemma \ref{l:schauder}] We apply a sub-elliptic Schauder estimate due to Capogna, Garofalo, and Nhieu \cite[Corollary 5.3]{MR1500279}: if $D \subset \He$ is a bounded open set, $u \colon D \to [0,\infty)$ is a non-negative $\bigtriangleup^{\flat}$-harmonic function, and $q \in D$ and $0 < r < r(D)$ are such that $B(q,Cr) \subset D$ for a sufficiently large absolute constant $C \geq 1$, then $|Zu(q)| \lesssim_{D,s} r^{-s} u(q)$. Here $Z = X_{i_{1}}\cdots X_{i_{s}}$ with $X_{i_{j}} \in \{X,Y\}$.

We apply the inequality to the non-negative $\bigtriangleup^{\flat}$-harmonic function $q \mapsto G_{D}(p,q)$, $q \in D \, \setminus \, \{p\}$. Fix $r_{0} > 0$ so small that $B(q,Cr_{0}) \subset D$, and take $p$ so close to $p_{0}$ that also $B(q,Cr_{0}) \subset D \, \setminus \{p\}$. We then infer from the Schauder estimate that
\begin{equation}\label{form151} |Z_{q}G_{D}(p,q)| \lesssim_{D,r_{0},s} G_{D}(p,q).  \end{equation}
Since the implicit constant in \eqref{form151} is independent of $p$ (for $p$ close enough to $p_{0}$), and $G_{D}(p,q) \to 0$ as $p \to p_{0}$ for $p_{0} \in [\partial D]_{\mathrm{reg}}$, recalling \eqref{form175}, we see that also $Z_{q}G_{D}(p,q) \to 0$ as $p \to p_{0}$. \end{proof}

\begin{remark}\label{rem:reg} If $\Omega \subset \He$ is a smooth flag domain, $D \subset \He$ is a bounded domain, and $\partial D \cap B(p_{0},r) = \partial \Omega \cap B(p_{0},r)$ for some $p_{0} \in \partial D \cap \partial \Omega$ and $r > 0$, then $p_{0} \in [\partial D]_{\mathrm{reg}}$. In particular $G_{D}(p,q) \to 0$ as $p \to p_{0}$. This is because the point $p \in \partial D$ then satisfies the \emph{outer (Kor\'anyi) ball condition}, and the regularity can be established via the classical method of barrier functions. The statement is \cite[Ex. 9, \S7]{BLU}, and the use of barrier functions is explained earlier in that section, see also \cite[Theorem 6.10.4]{BLU}.
\end{remark}

We then prove Theorem \ref{t:regularity}.

\begin{proof}[Proof of Theorem \ref{t:regularity}] Fix $p_{0} \in \partial \Omega$, let $V_{0} \ni p$ be any bounded open neighbourhood, and write $D := V_{0} \cap \Omega$. We will need the following estimates for Green's function $G_{D}$ (this is where the proof breaks for general flag domains): there is a radius $r_{0} > 0$ such that
\begin{equation}\label{form153} G_{D}(p,\cdot) \in \mathrm{Lip}(\bar{D} \cap B(p_{0},3r_{0}) \, \setminus \, B(p,\epsilon)), \qquad p \in D \cap B(p_{0},r_{0}), \, \epsilon > 0, \end{equation}
(of course we define $G_{D}(p,\cdot)|_{\partial D} \equiv 0$), and
\begin{equation}\label{form152} |\nabla_{q}G_{D}(p,q)| \lesssim_{D,r_{0}} 1, \qquad p \in D \cap B(p_{0},r_{0}), \, q \in D \cap A(p_{0},2r_{0},3r_{0}). \end{equation}
Here $A(p_{0},2r_{0}3r_{0}) = B(p_{0},3r_{0}) \, \setminus \, \bar{B}(p_{0},2r_{0})$, and the radius $r_{0} > 0$ is chosen (at least) so small that 
\begin{displaymath} \partial D \cap B(p_{0},10r_{0}) = \partial \Omega \cap B(p_{0},10r_{0}).  \end{displaymath}
In particular, $\partial D \cap B(p_{0},10r_{0}) \subset [\partial D]_{\mathrm{reg}}$ by Remark \ref{rem:reg}. The property \eqref{form153} will only be needed to deduce that $G_{D}(p,q) \lesssim_{p} \dist(q,\partial D)$, for $q \in \bar{D} \cap B(p_{0},3r_{0})$ away from $p$, and we are not interested in the $p$-dependence of the implicit constant. The property \eqref{form152} is more delicate: both $p$ and $q$ are allowed to approach $\partial D$ within $D \cap B(p_{0},3r_{0})$ without $|\nabla_{q}G_{D}(p,q)|$ blowing up, as long $d(p,q)$ stays bounded from below. 

There are at least two "well-known" ways to justify \eqref{form153}-\eqref{form152} for some $r_{0} > 0$ small enough (any $r_{0} > 0$ is fine for us). The first one is to choose $D$ so that it satisfies the \emph{uniform outer ball condition} (UOBC) of Lanconelli-Uguzzoni \cite{MR1620876} and Capogna-Garofalo-Nhieu \cite{MR1890994}. This means that every point $p \in \partial D$ has an associated Kor\'anyi ball $B_{p} \subset \He \, \setminus \, D$ with $p \in \partial B_{p}$, with a uniform lower bound on $\diam(B_{p})$. In this case, properties \eqref{form153}-\eqref{form152} hold by \cite[Theorem 3.6]{MR1620876} and \cite[Theorems 4.1-4.2]{MR1890994}.

The UOBC is quite strict, and it may not be comfortable to check, in full detail, that $D$ can be chosen in this manner. Instead, we suggest looking at the proof of \cite[Theorem 4.2]{MR1890994} and noting that the estimates are local: if the UOBC holds for $\partial D \cap B(p_{0},10r_{0}) = \partial \Omega \cap B(p_{0},10r_{0})$, then the bounds for $G_{D}(p,q)$ and $\nabla_{q}G_{D}(p,q)$ hold in $D \cap B(p_{0},3r_{0})$, in the sense specified in \eqref{form153}-\eqref{form152}. Justifying the outer ball condition (locally) for $\partial \Omega$ is easy: instead of finding, directly, Kor\'anyi balls tangent to to $\partial \Omega$, note that the projections of such balls to the $xy$-plane are standard Euclidean discs, and the projection of $\partial \Omega$ is the graph "$\Gamma$" of a $C^{\infty}(\R)$-function. Then, choose Euclidean discs tangent to $\Gamma$ (with radii bounded from below by local upper bounds on the curvature of $\Gamma$), and lift them back to $\He$ to find large Kor\'anyi balls in $\He \, \setminus \, \Omega$ tangent to $\partial \Omega$.

An alternative strategy to prove \eqref{form153}-\eqref{form152} is to apply a result of Jerison, \cite[Theorem (7.1)]{MR639800}, which states that the derivatives of $G_{D}(p,\cdot)$ of all orders are continuous up to $\partial D$ at non-characteristic points (in particular near $\partial D \cap B(p_{0},10r_{0})$). To obtain the quantitative bound \eqref{form152}, one needs to look inside the proof, and use the estimate \cite[(7.2)]{MR639800}, which in our terminlory states the following:
\begin{equation}\label{eq:Jerison} \|\phi \cdot G_{D}(p,\cdot)\|_{\Gamma_{\beta + 2}(\bar{D})} \lesssim_{D,\phi,\psi} \|\psi \cdot G_{D}(p,\cdot)\|_{L^{2}(D)}, \qquad \beta > 0,\footnote{The RHS of Jerison's estimate also involves certain functions "$f$" and "$g$", but they are identically zero in our situation; in fact $f = \bigtriangleup^{\flat} G_{D}(p,\cdot)$ and $g = G_{D}(p,\cdot)|_{\partial D}$.}  \end{equation}
where $\phi,\psi \in C^{\infty}_{c}(\He)$ satisfy $p \notin \spt \psi$, $(\spt \psi) \cap \partial D$ is non-characteristic, and $\psi|_{\spt \phi} \equiv 1$. To obtain \eqref{form152}, one has to choose $\phi \approx \psi \approx \mathbf{1}_{A(p_{0},2r_{0},3r_{0})}$. In English, \eqref{eq:Jerison} then states that all horizontal derivatives of $\phi \cdot G_{D}(p,\cdot)$ are continuous on $\bar{D}$, and their H\"older norms are controlled by $\|\psi \cdot G_{D}(p,\cdot)\|_{2}$. But $G_{D}(p,q) \lesssim d(p,q)^{-2}$, so 
\begin{displaymath} \|\psi \cdot G_{D}(p,\cdot)\|_{2} \lesssim_{r_{0} }\|q \mapsto \psi(q) \cdot d(p,q)^{-2}\|_{\infty} \lesssim_{r_{0}} 1 \end{displaymath}
whenever $p \in D \cap B(p_{0},r_{0})$, and taking $\psi \approx \mathbf{1}_{A(p_{0},2r_{0},3r_{0})}$ as stated above. Therefore \eqref{eq:Jerison} yields \eqref{form152} (and similar bounds for derivatives of all orders, if desired).

After these preliminaries on $G_{D}$, we may now begin the proof of Theorem \ref{t:regularity} in earnest. Recall that $u$ was a $\bigtriangleup^{\flat}$-harmonic function in $\Omega$ with radial limits $\sigma$ a.e. equal to $g \in C(\partial \Omega)$. We plan to prove that $u \in C(\bar{\Omega} \cap B(p_{0},r_{0}))$ by showing that $u - v \in C(\bar{\Omega} \cap B(p_{0},r_{0}))$ for a certain function $v \in C(\bar{D} \cap B(p_{0},10r_{0}))$, which we now introduce: let $v \colon D \to \R$ be the Perron-Wiener-Brelot solution to the following Dirichlet problem in $D$:
\begin{displaymath} \begin{cases} \bigtriangleup^{\flat} v = 0 & \text{in } D, \\ v|_{\partial D} = \tilde{g}. \end{cases} \end{displaymath}
Here $\tilde{g} \equiv g$ in $B(p_{0},10r_{0})$, and we do not care about the values of $\tilde{g}$ in other parts of $\partial D$, as long as $\tilde{g} \in C(\partial D)$. We remark that Perron-Brelot-Wiener solutions enjoy a maximum principle: $\|v\|_{L^{\infty}(D)} \leq \|\tilde{g}\|_{L^{\infty}(\partial \Omega)} < \infty$. Since $\partial D \cap B(p_{0},10r_{0})$ consists of regular points for the Dirichlet problem (recall that this part of $\partial D$ is non-characteristic), we also have
\begin{displaymath} v \in C(\bar{D} \cap B(p_{0},10r_{0})). \end{displaymath}
So, it remains to show that $u - v \in C(\bar{D} \cap B(p_{0},r_{0}))$. This means showing that $u - v$ has classical limits equal to $0$ on $\partial D \cap B(p_{0},r_{0})$.

We define some auxiliary cut-off functions. The first one is any $\psi \in C_{c}^{\infty}(B(p_{0},3r_{0}))$ with the properties that $0 \leq \psi \leq 1$ and $\psi(p) = 1$ for all $p \in B(p_{0},2r_{0})$. To define further auxiliary functions, we first recall the interior domains
\begin{displaymath} \Omega_{j} := \{(x,y,t) : x < A(y) - 2^{-j}\}. \end{displaymath}
Then, we define a family of functions $\{\varphi_{j}\}_{j \in \N} \subset C^{\infty}(\Omega)$ with the following properties: 
\begin{itemize}
\item $\mathbf{1}_{\Omega_{j}} \leq \varphi_{j} \leq \mathbf{1}_{\Omega_{j + 1}}$,
\item $\|\nabla \varphi_{j}\|_{\infty} \lesssim 2^{j}$ and $\|\nabla^{2}\varphi_{j}\|_{\infty} \lesssim 2^{2j}$ (these estimates only need to hold in $B(p_{0},10r_{0})$).
\end{itemize}
We also set $\eta_{j} := \varphi_{j}\psi \in C^{\infty}_{c}(D)$. Clearly $\eta_{j} \to \psi \cdot \mathbf{1}_{\Omega}$ as $j \to \infty$, and $\eta_{j}$ satisfies the same derivative estimates as $\varphi_{j}$, up to multiplicative constants depending on $r_{0}$.

Fix $p \in D \cap B(p_{0},r_{0})$, write $w := u - v$ and $w_{j} := \eta_{j}w \in C^{\infty}_{c}(D)$, and apply Lemma \ref{l:greenRep} to represent $w_{j}$ in terms of Green's function:
\begin{equation}\label{form156} u(p) - v(p) = \lim_{j \to \infty} w_{j}(p) = -\lim_{j \to \infty} \int_{D} G_{D}(p,q) (\bigtriangleup^{\flat} w_{j})(q) \, dq =: -\lim_{j \to \infty} I_{j}(p). \end{equation} 
We record that 
\begin{displaymath} (\bigtriangleup^{\flat}w_{j})(q) =  (\bigtriangleup^{\flat}\eta_{j})(q)w(q) + 2\langle \nabla w(q),\nabla \eta_{j}(q) \rangle, \qquad q \in D, \end{displaymath}
so
\begin{displaymath} I_{j}(p) = \int_{D} G_{D}(p,q)(\bigtriangleup^{\flat}\eta_{j})(q)w(q) \, dq + 2 \int_{D}\langle \nabla w(q),\nabla \eta_{j}(q) \rangle G_{D}(p,q) \, dq. \end{displaymath}
We denote the first term on the RHS by $I^{1}_{j}(p)$. Regarding the second term, we note that $\eta_{j} = \varphi_{j}\psi$ is constant on a neighbourhood of $p \in D \cap B(p_{0},r_{0})$ for all $j \in \N$ large enough (since $\psi$ is), so the integrand vanishes on a neighbourhood of $p$. Therefore Lemma \ref{lemma11} is applicable with "$u := G(p,\cdot)$", "$\eta := \eta_{j}$", and "$v := w$", despite the singularity of $G_{D}(p,q)$ at $p$, and shows that
\begin{displaymath} \int_{D}\langle \nabla w(q),\nabla \eta_{j}(q) \rangle G_{D}(p,q) \, dq = -\int_{D}\langle \nabla_{q}G_{D}(p,q),\nabla \eta_{j}(q) \rangle w(q) \, dq - I_{j}^{1}(p). \end{displaymath}
We denote the integral on the RHS by $I^{2}_{j}(p)$, and so we have shown that
\begin{displaymath} u(p) - v(p) = \lim_{j \to \infty} \left[ I_{j}^{1}(p) + 2I_{j}^{2}(p) \right], \qquad p \in D \cap B(p_{0},r_{0}), \end{displaymath}
assuming that the limit exists. This will be clarified below. Sending $j \to \infty$ formally, and recalling that $\eta_{j} \to \psi \cdot \mathbf{1}_{\Omega}$, one expects (and we will soon prove) that
\begin{equation}\label{form147} I^{1}(p) := \lim_{j \to \infty} I_{j}^{1}(p) = \int_{D \cap A(p_{0},2r_{0},3r_{0})} G_{D}(p,q)w(q)(\bigtriangleup^{\flat}\psi)(q) \, dq \end{equation}
and
\begin{equation}\label{form148} I^{2}(p) := \lim_{j \to \infty} I_{j}^{2}(p) = \int_{D \cap A(p_{0},2r_{0},3r_{0})} \langle \nabla_{q}G_{D}(p,q),\nabla \psi(q) \rangle w(q) \, dq \end{equation} 
for all $p \in D \cap B(p_{0},r_{0})$. We already took, in \eqref{form147}-\eqref{form148}, into account that the function $\psi$ is non-constant only in the annulus $A(p_{0},2r_{0},3r_{0})$. The integrals on the right hand sides of \eqref{form147}-\eqref{form148} are absolutely convergent for all $p \in D \cap B(p_{0},r_{0})$ by virtue of the following estimates:
\begin{itemize}
\item $|G_{D}(p,q)| \lesssim d(p,q)^{-2} \lesssim_{r_{0}} 1$ for $p \in D \cap B(p_{0},r_{0})$ and $q \in D \cap A(p_{0},2r_{0},3r_{0})$, since Green's function is always dominated by the fundamental solution,
\item $|\nabla_{q}G_{D}(p,q)| \lesssim_{D,r_{0}} 1$ for $p \in D \cap B(p_{0},r_{0})$ and $q \in D \cap A(p_{0},2r_{0},3r_{0})$ by \eqref{form152},
\item $w \in L^{1}(D)$. To see this, we apply the Fubini formula \eqref{eq:Fubini} with "$\Gamma = \partial \Omega$" and "$g = w\mathbf{1}_{D}$". We define the (bounded) set $[\partial \Omega]_{D} := \{\xi \in \partial \Omega : (\xi \cdot (-r,0,0)) \in D \text{ for some } r > 0\}$, and then \eqref{eq:Fubini} implies
\begin{displaymath} \int_{D} |w(q)| \, dq \leq \int_{[\partial \Omega]_{D}} \int_{0}^{\infty} (|w|\mathbf{1}_{D})(\xi \cdot (-r,0,0)) \, dr \, d\sigma(\xi) \lesssim_{\diam(D)} \int_{[\partial \Omega]_{D}} \mathcal{N}_{\mathrm{rad}}w \, d\sigma < \infty,  \end{displaymath} 
using the main assumption $\mathcal{N}_{\mathrm{rad}}u \in L^{1}_{\mathrm{loc}}(\partial \Omega)$, and also that $\|v\|_{L^{\infty}(D)} < \infty$, which followed from the maximum principle for Perron-Wiener-Brelot solutions.
\end{itemize}

Before establishing \eqref{form147}-\eqref{form148}, let us use them to conclude the proof of the theorem. We claim that
\begin{equation}\label{form154} \lim_{D \ni p \to p_{0}} I^{1}(p) = 0 = \lim_{D \ni p \to p_{0}} I^{2}(p), \qquad p_{0} \in \partial D \cap B(p_{0},r_{0}), \end{equation} 
where the limits are classical. Since $u - v = w = I^{1} + 2I^{2}$ in $D \cap B(p_{0},r_{0})$ by \eqref{form156}-\eqref{form148}, this will show that $u - v$ has vanishing classical limits on $\partial D \cap B(p_{0},r_{0})$, as desired. To prove \eqref{form154}, we first recall from Lemma \ref{l:schauder} that
\begin{displaymath} \lim_{D \ni p \to p_{0}} G_{D}(p,q) = 0 = \lim_{D \ni p \to p_{0}} \nabla_{q}G_{D}(p,q), \qquad q \in D, \, p_{0} \in \bar{D} \cap B(p_{0},r_{0}), \end{displaymath}
because $\partial D \cap B(p_{0},10r_{0}) \subset [\partial D]_{\mathrm{reg}}$. Moreover, since $w \in L^{1}(D)$, and the other factors in the integrands in \eqref{form147}-\eqref{form148} are uniformly bounded for $p \in D \cap B(p_{0},r_{0})$ and $q \in D \cap A(p_{0},2r_{0},3r_{0})$, dominated convergence applies and proves \eqref{form154}.

It remains to prove \eqref{form147}-\eqref{form148}.

\subsubsection{Proof of \eqref{form147}} Recalling that $\eta_{j}|_{\Omega_{j}} = \psi$, and that $\psi$ is a constant outside the annulus $A(p_{0},2r_{0},3r_{0})$, we may write
\begin{align*} I_{j}^{1}(p) & = \int_{D \, \setminus \, \Omega_{j}} G_{D}(p,q)(\bigtriangleup^{\flat}\eta_{j})(q)w(q) \, dq + \int_{D \cap \Omega_{j}} G_{D}(p,q)(\bigtriangleup^{\flat}\psi)(q)w(q) \, dq\\
& =: I_{j}^{1,0}(p) + \int_{D \cap \Omega_{j} \cap A(p_{0},2r_{0},3r_{0})} G_{D}(p,q)w(q)(\bigtriangleup^{\flat}\psi)(q) \, dq, \qquad p \in B(p_{0},r_{0}). \end{align*} 
The second term evidently converges to the RHS of \eqref{form147} as $j \to \infty$, since $G_{D}(p,\cdot) \in C(\bar{D} \cap A(p_{0},2r_{0},3r_{0}))$, and $\psi \in C^{\infty}_{c}(\He)$, and $w \in L^{1}(D)$. In other words,
\begin{displaymath} \lim_{j \to \infty} I_{j}^{1}(p) = \lim_{j \to \infty} I^{1,0}_{j}(p) + \int_{D \cap A(p_{0},2r_{0},3r_{0})} G_{D}(p,q)(\bigtriangleup^{\flat}\psi)(q)w(q) \, dq. \end{displaymath}
So, to prove \eqref{form147}, it remains to show that 
\begin{displaymath} \lim_{j \to \infty} |I_{j}^{1,0}(p)| \lesssim \limsup_{j \to \infty} 2^{2j} \int_{D \cap B(p_{0},3r_{0}) \, \setminus \, \Omega_{j}} |G_{D}(p,q)||w(q)| \, dq = 0. \end{displaymath}
To see this, we use that that $G_{D}(p,\cdot) \in \mathrm{Lip}(\bar{D} \cap B(p_{0},3r_{0}) \, \setminus \, \Omega_{j})$ by \eqref{form153} (for all $j \in \N$ so large that $p \in \Omega_{j - 1}$), and $G_{D}(p,q) = 0$ for all $q \in \partial D$. It follows that $|G_{D}(p,q)| \lesssim_{p} 2^{-j}$ for all $q \in D \cap B(p_{0},3r_{0}) \, \setminus \, \Omega_{j}$, and hence
\begin{align}\label{form149} |I_{j}^{1,0}(p)| & \lesssim_{p} 2^{j} \int_{D \cap B(p_{0},3r_{0}) \, \setminus \, \Omega_{j}} |w(q)| \, dq\\
& \leq \int_{[\partial \Omega]_{D \cap B(p_{0},3r_{0})}} 2^{j} \int_{0}^{\infty} (|w|\mathbf{1}_{D \, \setminus \, \Omega_{j}})(\xi \cdot (-r,0,0)) \, dr \, d\sigma(\xi) \notag\\
& =: \int_{[\partial \Omega]_{D \cap B(p_{0},3r_{0})}} \mathcal{J}_{j}(\xi) \, d\sigma(\xi). \notag\end{align}
Here we used, again, the Fubini formula \eqref{eq:Fubini}, and denoted
\begin{displaymath} [\partial \Omega]_{D \cap B(p_{0},3r_{0})} := \{\xi \in \partial D : (\xi \cdot (-r,0,0)) \in D \cap B(p_{0},3r_{0}) \text{ for some } r > 0\}. \end{displaymath} Note that $[\partial \Omega]_{D \cap B(p_{0},3r_{0})} \subset \partial D \cap B(p_{0},10r_{0})$, so $w = u - v$ has vanishing radial limits $\sigma$ a.e. on $[\partial \Omega]_{D \cap B(p_{0},3r_{0})}$. Since moreover
\begin{displaymath} \mathcal{L}^{1}(\{r > 0 : (\xi \cdot (-r,0,0)) \in D \, \setminus \, \Omega_{j}\}) \lesssim_{D} 2^{-j}, \qquad \xi \in [\partial \Omega]_{D \cap B(p_{0},3r_{0})}, \end{displaymath}
we infer that $\mathcal{J}_{j}(\xi) \to 0$ for $\sigma$ a.e. $\xi \in [\partial \Omega]_{D \cap B(p_{0},3r_{0})}$ as $j \to \infty$. Moreover, 
\begin{displaymath} \mathcal{J}_{j}(\xi) \lesssim_{D} (\mathcal{N}_{\mathrm{rad}}w)(\xi) \in L^{1}(\partial D \cap B(p_{0},10r_{0})), \end{displaymath}
so dominated convergence applies and shows that $|I_{j}^{1,0}(p)| \to 0$ as $j \to \infty$.

\subsubsection{Proof of \eqref{form148}} We start with a splitting familiar from the previous case, writing
\begin{displaymath} I_{j}^{2}(p) =  I_{j}^{2,0}(p) + \int_{D \cap \Omega_{j}}\langle \nabla_{q} G_{D}(p,q),\nabla \psi(q) \rangle w(q) \, dq, \qquad p \in D \cap B(p_{0},r_{0}), \end{displaymath} 
where
\begin{displaymath} I_{j}^{2,0}(p) = \int_{D \, \setminus \, \Omega_{j}} \langle \nabla_{q} G_{D}(p,q),\nabla \eta_{j}(q) \rangle w(q) \, dq. \end{displaymath}
Again, the plan is to show that $I_{j}^{2,0}(p) \to 0$ as $j \to \infty$. Evidently the limit of the second term is the desired RHS of \eqref{form148}, since the integrand is in $L^{1}(D)$ by the argument under \eqref{form148}. To prove that $I_{j}^{2,0}(p) \to 0$, we use the bounds $|\nabla_{q}G_{D}(p,q)| \lesssim_{D,r_{0}} 1$ and $\|\nabla \eta_{j}\|_{L^{\infty}} \lesssim_{r_{0}} 2^{j}\mathbf{1}_{B(p_{0},3r_{0})}$ to deduce that
\begin{displaymath} |I_{j}^{2,0}(p)| \lesssim_{D,p,r_{0}} 2^{j} \int_{D \cap B(p_{0},3r_{0}) \, \setminus \, \Omega_{j}} |w(q)| \, dq. \end{displaymath}
From this point on, one may follow the argument starting at \eqref{form149}. The proof of Theorem \ref{t:regularity} is complete. \end{proof}


\section{Uniqueness in the Dirichlet problem}\label{s:uniqueness}

In this section, we prove Theorem \ref{main6}, which we repeat here:
\begin{thm} Let $\Omega = \{(x,y,t) : x < A(y)\}$ be a flag domain, $\sigma := |\partial \Omega|_{\He}$. Let $u \in C^{\infty}(\Omega)$ be a $\bigtriangleup^{\flat}$-harmonic function with $\mathcal{N}_{\mathrm{rad}}u \in L^{2}(\sigma)$, whose radial limits vanish $\sigma$ a.e. on $\partial \Omega$. Then $u$ is the zero function.
\end{thm}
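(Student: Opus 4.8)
The natural strategy is a duality argument based on the single layer potential, mirroring the treatment of injectivity in Section \ref{s:injectivity}. Suppose $u \in C^{\infty}(\Omega)$ is $\bigtriangleup^{\flat}$-harmonic with $\mathcal{N}_{\mathrm{rad}}u \in L^{2}(\sigma)$ and vanishing radial limits $\sigma$ a.e. We want to test $u$ against single layer potentials $\mathcal{S}f$ with $f \in C^{\infty}_{c}(\He)$, which by Theorem \ref{main4} and Corollary \ref{K-MaxNT} are nice $\bigtriangleup^{\flat}$-harmonic functions in $\Omega$ with controlled non-tangential maximal functions. Since $\{Sf : f \in L^{2}(\sigma)\}$ is dense in $L^{2}_{1,1/2}(\sigma)$ (by the invertibility of $S$, Theorem \ref{sInvert1}), and $L^{2}_{1,1/2}(\sigma) \cap L^{2}(\sigma)$ is dense in $L^{2}(\sigma)$, it should suffice to show that $u_{\mathrm{rad}} = 0$ forces the "flux" of $u$ through $\partial \Omega$ to vanish against a dense family of test functions, and then conclude $u \equiv 0$ by a uniqueness/maximum-principle argument.

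\textbf{Key steps.} First I would fix $f \in C^{\infty}_{c}(\He)$, write $v := \mathcal{S}f$, and apply Green's second identity (the Rellich-type machinery and divergence theorem of Section \ref{s:div}) in the approximating flag domains $\Omega_{j} = \{x < A(y) - 2^{-j}\}$, each of which is a $\He$-Caccioppoli set on which $u, v \in C^{\infty}(\overline{\Omega_{j}})$. This gives
\begin{displaymath} \int_{\partial \Omega_{j}} \left[ u \langle \nabla v,\nu_{j} \rangle - v\langle \nabla u,\nu_{j} \rangle \right] \, d\sigma_{j} = \int_{\Omega_{j}} \left[ u \bigtriangleup^{\flat}v - v\bigtriangleup^{\flat}u \right] \, dp = 0, \end{displaymath}
provided the integrability hypotheses of Theorem \ref{t:div} hold for the vector field $Z := u\nabla v - v\nabla u$ on each $\Omega_{j}$ — this is where $\mathcal{N}_{\mathrm{rad}}u \in L^{2}(\sigma)$, the decay $|\nabla v|, |v| \lesssim_{f} \min\{1,\|p\|^{-2}\}$, and the Fubini formula \eqref{eq:Fubini} come in to control the behaviour at infinity and near $\partial \Omega$. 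Second, I would let $j \to \infty$. Using dominated convergence (dominating $\nabla u(\Phi_{j}(p))$ by $\mathcal{N}_{\mathrm{rad}}(\nabla u)(p) \in L^{2}(\sigma)$, which lies in $L^{2}(\sigma)$ by Corollary \ref{K-MaxNT} — wait, that bounds $\nabla$ of potentials, not of an abstract $u$; so instead I would dominate $u(\Phi_{j}(p))$ by $\mathcal{N}_{\mathrm{rad}}u(p)$ and handle the $v$-terms using the boundary regularity of $v = \mathcal{S}f$ and $\nabla v = \vec{\mathcal{R}}f$ from Theorem \ref{t:rJumps} and Theorem \ref{main4}), together with $u_{\mathrm{rad}} = 0$ $\sigma$-a.e., the term $\int u\langle \nabla v,\nu_{j}\rangle \, d\sigma_{j} \to 0$. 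Hence
\begin{displaymath} \lim_{j \to \infty} \int_{\partial \Omega_{j}} v\langle \nabla u,\nu_{j} \rangle \, d\sigma_{j} = 0, \end{displaymath}
i.e. the interior non-tangential trace $Sf$ of $v$ pairs to zero with the interior flux $\nabla_{\nu}^{+}u$ of $u$ (interpreted as a suitable weak limit). Third, since this holds for all $f \in C^{\infty}_{c}(\He)$ and $\{Sf\}$ is dense in $L^{2}_{1,1/2}(\sigma)$, which in turn is "large enough" to separate the boundary flux, I would conclude that the distributional normal derivative of $u$ on $\partial \Omega$ vanishes. Then, combining vanishing Dirichlet data ($u_{\mathrm{rad}} = 0$) and vanishing Neumann data, a standard energy estimate $\int_{\Omega}|\nabla u|^{2} = \lim_{j}\int_{\Omega_{j}} u \bigtriangleup^{\flat}u \, dp \mp (\text{boundary terms}) = 0$ (again justified via Theorem \ref{t:div} on $\Omega_{j}$ and the Fubini/maximal-function bounds) forces $\nabla u \equiv 0$, and since $\Omega$ is connected and $u$ has vanishing radial limits, $u \equiv 0$. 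Alternatively, at this last stage I would invoke Theorem \ref{t:regularity} in a smooth exhaustion to get $u \in C(\overline{\Omega})$ with $u|_{\partial\Omega} = 0$ locally, then a maximum principle.

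\textbf{Main obstacle.} The delicate point is the passage to the limit $j \to \infty$ in the boundary integrals, specifically justifying that $\int u\langle \nabla v,\nu_{j}\rangle \, d\sigma_{j} \to 0$: here $\nabla v = \vec{\mathcal{R}}f$ has a jump across $\partial \Omega$ and only has non-tangential limits $\sigma$-a.e., so one must combine the a.e. convergence $u(\Phi_{j}(p)) \to u_{\mathrm{rad}}(p) = 0$ with a uniform $L^{2}(\sigma)$-domination of $\langle \nabla v(\Phi_{j}(p)),\nu(p)\rangle$ by $\mathcal{N}_{\theta}(\nabla \mathcal{S}f)(p) \in L^{2}(\sigma)$ (Corollary \ref{K-MaxNT}), and of $u(\Phi_{j}(p))$ by $\mathcal{N}_{\mathrm{rad}}u(p) \in L^{2}(\sigma)$, invoking Cauchy–Schwarz and the Vitali/dominated convergence theorem. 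A secondary subtlety is verifying the hypotheses of the divergence theorem (Theorem \ref{t:div}) for $Z = u\nabla v - v\nabla u$ on the unbounded domains $\Omega_{j}$; this requires an a priori decay estimate on $u$ at infinity, which must be extracted from $\mathcal{N}_{\mathrm{rad}}u \in L^{2}(\sigma)$ via the Fubini formula \eqref{eq:Fubini} and sub-mean-value properties of $\bigtriangleup^{\flat}$-harmonic functions (Schauder estimates as in Lemma \ref{l:schauder}), or else one truncates $Z$ with the cut-offs $\psi_{R}$ as in the proof of Theorem \ref{t:div} and shows the error terms vanish. I expect this decay-at-infinity bookkeeping to be the most technical part of the argument.
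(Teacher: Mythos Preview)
Your Green's-identity/duality approach has a genuine gap: every step after the first requires control on $\nabla u$, which the hypotheses do not provide. To apply Theorem \ref{t:div} to $Z = u\nabla v - v\nabla u$ on $\Omega_{j}$ you need $v\nabla u \in L^{1}(\Omega_{j}) \cap L^{1}(\sigma_{j})$; to make sense of the limit $\lim_{j}\int_{\partial\Omega_{j}} v\langle\nabla u,\nu_{j}\rangle\,d\sigma_{j}$ you need $\nabla u(\Phi_{j}(p))$ dominated in $L^{2}(\sigma)$; and the final energy estimate $\int_{\Omega}|\nabla u|^{2}=0$ needs the same. You noticed yourself that Corollary \ref{K-MaxNT} only bounds $\mathcal{N}_{\mathrm{rad}}(\nabla\mathcal{S}f)$, not $\mathcal{N}_{\mathrm{rad}}(\nabla u)$ for an abstract $\bigtriangleup^{\flat}$-harmonic $u$, but then you still leave the term $v\langle\nabla u,\nu_{j}\rangle$ in place. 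Interior Schauder estimates give $|\nabla u(p)|\lesssim\dist(p,\partial\Omega)^{-1}\sup_{B}|u|$, which blows up as $j\to\infty$ and involves a \emph{ball} sup rather than the radial maximal function, so they do not close the gap.

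The paper's proof avoids differentiating $u$ altogether. It approximates $\Omega$ from inside by \emph{smooth} flag domains $\Omega_{j}$, solves the Dirichlet problem on each $\Omega_{j}$ with data $u|_{\partial\Omega_{j}}$ by a double layer potential $u_{j}=\mathcal{D}f_{j}$, and uses Theorem \ref{t:regularity} to get $u_{j}\in C(\overline{\Omega_{j}})$. The crux is then to prove $u\equiv u_{j}$ on $\Omega_{j}$: setting $g=u_{j}-u$ (continuous up to $\partial\Omega_{j}$, zero there), one compares $|g|$ on $\Omega_{j}\cap B(p_{0},R)$ with the Perron solution $w$ on the Kor\'anyi ball $B(p_{0},R)$ with boundary data $|g|$, via the maximum principle. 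The Poisson kernel bound $|P_{R}(p,q)|\lesssim d(p,q)^{-3}$, a coarea/Fubini argument, and $\mathcal{N}_{j}g\in L^{2}(\sigma_{j})$ force $|g(p)|^{2}\lesssim R^{-3}\|\mathcal{N}_{j}g\|_{L^{2}(\sigma_{j})}^{2}\to 0$ as $R\to\infty$. Once $u|_{\Omega_{j}}=u_{j}$, one has $\|\mathcal{N}_{j}u\|_{L^{2}(\sigma_{j})}=\|\mathcal{N}_{j}\mathcal{D}f_{j}\|_{L^{2}(\sigma_{j})}\lesssim\|u\|_{L^{2}(\sigma_{j})}\to 0$ by dominated convergence, hence $u\equiv 0$. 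The point is that all gradients fall on the explicitly constructed objects $u_{j}$, $G_{D}$, and $P_{R}$, never on $u$ itself.
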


\begin{proof} We follow \cite[Section 6]{MR1418902} very closely. Write $L := \mathrm{Lip}(A)$. Let $\psi \in C_{c}^{\infty}(\R)$ be a standard bump function, that is, $0 \leq \psi \lesssim \mathbf{1}_{[-1,1]}$ and $\int \psi = 1$. For $\delta > 0$, write $\psi_{\delta}(x) := \delta^{-1} \psi(x/\delta)$, and define the $L$-Lipschitz maps 
\begin{displaymath} A_{j} := A \ast \psi_{c2^{-j}} - 2^{-j} \in C^{\infty}(\R), \qquad j \in \N. \end{displaymath}
Here $c \sim 1/L$ has been chosen so small that $\|A \ast \psi_{c2^{-j}} - A\|_{\infty} < 2^{-j}$, which implies that $A_{j} < A$. Let $\Omega_{j} := \{(x,y,t) : x < A_{j}(y)\} \subset \Omega$ be the flag domain bounded by $A_{j}$, and write $\sigma_{j} := |\partial \Omega_{j}|_{\He}$. In Section \ref{s:injectivity}, where the smoothness of $A_{j}$ played no role, we had the convenient relation $\sigma_{j} = \Phi_{j\sharp}\psi$, where $\Phi_{j}$ was simply right translation by the constant vector $(-2^{-j},0,0)$. We need a similar relationship here, but the map $\Phi_{j}$ is necessarily a bit more complicated, since $\partial \Omega_{j}$ is not just a translate of $\partial \Omega$. Let $\mathbb{L} := \{(x,0,0) : x \in \R\}$. For $p = (A(y),y,t) \in \partial \Omega$, let $\Phi_{j}(p) \in \partial \Omega_{j}$ be the unique point determined by
\begin{displaymath} \Phi_{j}(p) \in \partial \Omega_{j} \cap (p \cdot \mathbb{L}). \end{displaymath}
To see that the point is unique, consider the projection $\pi(x,y,t) = (x,y)$, and note that $\pi(p \cdot \mathbb{L}) \subset \R^{2}$ is the line parallel to the $x$-axis containing $(A(y),y)$. But since $\pi(\partial \Omega_{j}) = \{(A_{j}(y),y) : y \in \R\}$ is also a graph over the $y$-axis, the intersection $\pi(\partial \Omega_{j}) \cap \pi(p \cdot \mathbb{L})$ contains only the point $(A_{j}(y),y)$, and so $\partial \Omega_{j} \cap (p \cdot \mathbb{L})$ contains exactly one point of the form $(A_{j}(y),y,\tau(t,y))$ for some $\tau(t,y) \in \R$.

It moreover turns out that $\tau(t,y) = \tau(0,y) + t$, so $\Phi_{j}$ actually has the form
\begin{displaymath} \Phi_{j}(A(y),y,t) = (A_{j}(y),y,\tau(y) + t), \qquad (y,t) \in \R^{2}, \end{displaymath} 
for some (continuous) $\tau \colon \R \to \R$. The reason is that if $p_{1},p_{2} \in \partial \Omega$ only differ in the $t$-coordinate, thus $p_{1} = (x,y,t_{1})$ and $p_{2} = (x,y,t_{2})$, then the lines $p_{1} \cdot \mathbb{L}$ and $p_{2} \cdot \mathbb{L}$ are vertical translates of each other: 
\begin{displaymath} p_{2} \cdot \mathbb{L} = [p_{1} \cdot (0,0,t_{2} - t_{1})] \cdot \mathbb{L} = (p_{1} \cdot \mathbb{L}) \cdot (0,0,t_{2} - t_{1}). \end{displaymath}
This implies that if $\tau(y) \in \R$ is determined by $\Phi_{j}(A(y),y,0) = (A_{j}(y),y,\tau(y))$, then $\Phi_{j}(A(y),y,t) = (A_{j}(y),y,\tau(y) + t)$ for all $t \in \R$, as desired.

Now we can show that the measures $\Phi_{j\sharp}\sigma$ and $\sigma_{j}$ are mutually absolutely continuous with Radon-Nikodym derivative bounded from below and above by constants depending only on $L$. Indeed, if $f \in C_{c}(\partial \Omega_{j})$, $f \geq 0$, then
\begin{align*} \int f(z) \, d\Phi_{j\sharp}\sigma(z) & = \int (f \circ \Phi_{j})(z) \, d\sigma(z)\\
& = \int \int (f \circ \Phi_{j})(A(y),y,t) \sqrt{1 + A'(y)^{2}} \, dy\, dt\\
& = \int \int f(A_{j}(y),y,\tau(y) + t) \sqrt{1 + A'(y)^{2}} \, dt \, dy\\
& = \int \int f(A_{j}(y),y,t) \sqrt{1 + A'(y)^{2}} \, dt \, dy \sim_{L} \int f(z) \, d\sigma_{j}(z)  \end{align*} 
by the (standard) area formula and Fubini's theorem. We also remark that since $\Phi_{j}(p) \in p \cdot \mathbb{L}$ for all $p \in \partial \Omega$, we have $|u(\Phi_{j}(p))| \leq (\mathcal{N}_{\mathrm{rad}}u)(p)$ for all $u \colon \Omega \to \R$ and $p \in \partial \Omega$.

For $j \in \N$ fixed, we let $u_{j} \in C^{\infty}(\Omega_{j})$ be the double layer potential solution to the Dirichlet problem
\begin{displaymath} \begin{cases} \bigtriangleup^{\flat} u_{j} = 0, \\ (u_{j})|_{\partial \Omega_{j}} = h|_{\partial \Omega_{j}}. \end{cases} \end{displaymath} 
This exists by Theorem \ref{main3}, since
\begin{equation}\label{form90} \int |u(p)|^{2} \, d\sigma_{j}(p) \sim \int |u(\Phi_{j}(p))|^{2} \, d\sigma(p) \leq \int |\mathcal{N}_{\mathrm{rad}}u(p)|^{2} \, d\sigma(p) < \infty. \end{equation}
So, $u_{j} = \mathcal{D}f_{j}$ for some $f_{j} \in L^{2}(\sigma_{j})$ with $\|f_{j}\|_{L^{2}(\sigma_{j})} \lesssim \|u\|_{L^{2}(\sigma_{j})}$. By Theorem \ref{main3}, the radial limits of $u_{j}$ on $\partial \Omega_{j}$ agree $\sigma_{j}$ a.e. with the function $u|_{\partial \Omega_{j}} \in C(\partial \Omega_{j})$, and $\|\mathcal{N}_{j}u_{j}\|_{L^{2}(\sigma_{j})} < \infty$, where $\mathcal{N}_{j}$ is the radial maximal function associated with $\partial \Omega_{j}$. In particular, since $A_{j} \in C^{\infty}(\R)$, Theorem \ref{t:regularity} implies
\begin{equation}\label{form157} u_{j} \in C(\overline{\Omega_{j}}) \quad \text{and} \quad u_{j}|_{\partial \Omega_{j}} \equiv u|_{\partial \Omega_{j}}.  \end{equation}
The crux of the following proof will be to show that
\begin{equation}\label{form89} u|_{\Omega_{j}} \equiv (u_{j})|_{\Omega_{j}}, \qquad j \in \N. \end{equation}
Once \eqref{form89} has been established, the rest is easy. Using \eqref{form89} and Corollary \ref{K-MaxNT}, we have
\begin{displaymath} \|\mathcal{N}_{j}u\|_{L^{2}(\sigma_{j})} = \|\mathcal{N}_{j}u_{j}\|_{L^{2}(\sigma_{j})} = \|\mathcal{N}_{j}(\mathcal{D}f_{j})\|_{L^{2}(\sigma_{j})} \lesssim \|f_{j}\|_{L^{2}(\sigma_{j})} \lesssim \|u\|_{L^{2}(\sigma_{j})}. \end{displaymath} 
The right hand side tends to zero as $j \to \infty$ by \eqref{form90}, dominated convergence, and since $u(\Phi_{j}(p)) \to 0$ for $\sigma$ a.e. $p \in \partial \Omega$. Consequently,
\begin{displaymath} \int \mathcal{N}_{\mathrm{rad}}u(p)^{2} \, d\sigma(p) \lesssim \liminf_{j \to \infty} \int \mathcal{N}_{j}u(\Phi_{j}(p))^{2} \, d\sigma(p) = \liminf_{j \to \infty} \|\mathcal{N}_{j}u\|_{L^{2}(\sigma_{j})} = 0, \end{displaymath}
so $\mathcal{N}_{\mathrm{rad}}u(p) = 0$ for $\sigma$ a.e. $p \in \partial \Omega$. In other words, $u$ vanishes on $\sigma$ a.e. line of the form $p \cdot \mathbb{L}$. The union of these lines is dense in $\Omega$, so $u \equiv 0$ by continuity.

We turn to the proof of \eqref{form89}. Write $g := u_{j} - u$, so $g \in C(\overline{\Omega_{j}})$ with $g|_{\partial \Omega_{j}} \equiv 0$ by \eqref{form157}, and $\|\mathcal{N}_{j}g\|_{L^{2}(\sigma)} < \infty$. We claim that $g|_{\Omega_{j}} \equiv 0$. Since $g|_{\partial \Omega_{j}} \equiv 0$, we may extend $g$ continuously to $\He$ by setting $g|_{\He \, \setminus \overline{\Omega_{j}}} \equiv 0$. Then, we fix $p_{0} \in \partial \Omega_{j}$ and a radius $R > 0$, and consider the Perron-Wiener-Brelot solution to the boundary value problem
\begin{displaymath} \begin{cases} \bigtriangleup^{\flat} w = 0 & \text{in } B(p_{0},R), \\ w|_{\partial B(p_{0},R)} = |g|. \end{cases}  \end{displaymath} 
By \cite[Corollary 10]{MR871992}, the boundary of every domain $D \subset \He$ satisfying the (uniform) outer ball condition is regular for the Dirichlet problem, $[\partial D]_{\mathrm{reg}} = \partial D$. Since $B(p_{0},R)$ satisfies the outer ball condition by \cite[Theorem 2.16]{MR2500489} or \cite[Remark 3.5]{MR1620876}, we have $[\partial B(p_{0},R)]_{\mathrm{reg}} = \partial B(p_{0},R)$, and hence $w \in C(\bar{B}(p_{0},R))$. Evidently $w \geq 0$ in $\bar{B}(p_{0},R)$ by the maximum principle \cite[5.13.4]{BLU}, so since $g|_{\partial \Omega_{j}} \equiv 0$, we see that 
\begin{displaymath} |g||_{\partial [\Omega_{j} \cap B(p_{0},R)]} \leq w|_{\partial [\Omega_{j} \cap B(p_{0},R)]}. \end{displaymath}
In particular $w - g \geq 0$ and $w + g \geq 0$ on $\partial [\Omega_{j} \cap B(p_{0},R)]$, so also $w - g \geq 0$ and $w + g \geq 0$ in $\Omega_{j} \cap B(p_{0},R)$ by another application of the maximum principle. Consequently
\begin{equation}\label{form158} |g||_{\Omega_{j} \cap B(p_{0},R)} \leq w|_{\Omega_{j} \cap B(p_{0},R)}. \end{equation}
It turns out that $w$ is easier to estimate from above than $g$, since $w$ has an explicit representation via the Poisson kernel of $B(p_{0},R)$. Indeed, \cite[Theorem 4.13]{MR1890994} states that
\begin{equation}\label{form159} w(p) = \int_{\partial B(p_{0},R)} w(q)P_{R}(p,q) \, d\sigma_{R}(q) = \int_{\partial B(p_{0},R)} |g(q)|P_{R}(p,q) \, d\sigma_{R}(q), \end{equation}
where $\sigma_{R} := c \cdot \mathcal{S}^{3}|_{\partial B(p_{0},R)}$ for an appropriate absolute constant $c > 0$, and 
\begin{displaymath} P_{R}(p,q) = \langle \nabla_{q} G_{R}(p,q),\nu_{R}(q) \rangle, \qquad p \in B(p_{0},R), \, q \in \partial B(p_{0},R), \end{displaymath}
is the Poisson kernel for $B(p_{0},R)$ (see \cite[(4.10)]{MR1890994}).\footnote{The Poisson kernel defined in \cite[(4.10)]{MR1890994} is normalised in a different way, but also the measure "$\sigma$" appearing in \cite[Theorem 4.13]{MR1890994} is the Euclidean surface measure, rather than the $3$-dimensional spherical measure relative to the metric $d$ as in \eqref{form159}. The relation between the two measures is given by \cite[Corollary 7.7]{FSSC}, and once this relation is plugged into \cite[Theorem 4.13]{MR1890994}, the formula \eqref{form159} appears as stated.}  Here $G_{p_{0},R}(p,\cdot)$ is Green's function for the ball $B(p_{0},R)$ with pole at $p \in B(p_{0},R)$, and $\nu_{R}$ is the inward-pointing horizontal normal of $B(p_{0},R)$. From \cite[Theorem 4.2]{MR1890994}, we infer that
\begin{displaymath} |P_{R}(p,q)| \leq |\nabla_{q}G_{p_{0},R}(p,q)| \lesssim d(p,q)^{-3} \qquad \text{for } p \in B(p_{0},R) \text{ and $\sigma_{R}$ a.e. } q \in \partial B(p_{0},R), \end{displaymath}
in fact for all $q \in \partial B(p_{0},R)$ apart from the two characteristic points of $\partial B(p_{0},R)$. The implicit constants above are independent or $R > 0$ and $p_{0} \in \He$, because it is easy to check from the definition (and uniqueness) of Green's function that
\begin{displaymath} G_{p_{0},R}(p,q) = R^{-2}G_{0,1}(\delta_{1/R}(p_{0}^{-1} \cdot p),\delta_{1/R}(p_{0}^{-1} \cdot q)), \qquad p,q \in B(p_{0},R), \, p \neq q. \end{displaymath}
Combining \eqref{form158} and \eqref{form159}, using Cauchy-Schwarz and noting that $\sigma_{R}(\partial B(p_{0},R)) \sim R^{3}$, we may now deduce for $p \in B(p_{0},R/2)$ that
\begin{displaymath} |g(p)|^{2} \leq w(p)^{2} \lesssim \int_{\partial B(p_{0},R)} \frac{|g(q)|^{2}}{d(p,q)^{3}} \, d\sigma_{R}(q) \lesssim R^{-3} \int_{\partial B(p_{0},R) \cap \Omega_{j}} |g(q)|^{2} \, d\sigma_{R}(q).  \end{displaymath} 
We also took into account that $g \equiv 0$ in $\He \, \setminus \overline{\Omega_{j}}$.  Note that the LHS does not depend on the "free parameter" $R > 0$, so we may take an average over $R \in [R_{0},2R_{0}]$, to obtain the following estimate for all $R_{0} > 0$ and $p \in B(p_{0},R_{0}/2)$:
\begin{equation}\label{form160} |g(p)|^{2} \lesssim R_{0}^{-4} \int_{R_{0}}^{2R_{0}} \int_{\partial B(p_{0},R) \cap \Omega_{j}} |g(q)|^{2} \, d\sigma_{R}(q) \, dR. \end{equation} 
To estimate the RHS, we will apply the coarea inequality (also known as Eilenberg's inequality)
\begin{displaymath} \int_{\R} \int_{f^{-1}(\{r\})} \rho(q) \, d\calS^{3}(q) \, dr \lesssim \mathrm{Lip}(f) \int \rho(q) \, dq, \end{displaymath}
which is valid for all Lipschitz functions $f \colon \He \to \R$ and non-negative Borel functions $\rho \colon \He \to [0,\infty]$. See \cite{MR7643} for the original reference, or for example \cite[Proposition 3.15]{MR2039660} and \cite[Theorem 1.1]{2020arXiv200600419E} for more recent sources. Applying this inequality with the choices $f(p) := \dist(p,p_{0})$ and $\rho(q) := |g(q)|^{2}\mathbf{1}_{A(p_{0},R_{0},2R_{0}) \cap \Omega_{j}}(q)$, we infer that 
\begin{displaymath} \eqref{form160} \lesssim R_{0}^{-4} \int _{A(p_{0}R_{0},2R_{0}) \cap \Omega_{j}} |g(q)|^{2} \, dq \leq R_{0}^{-4} \int_{B(p_{0},2R_{0}) \cap \Omega_{j}} |g(q)|^{2} \, dq.   \end{displaymath} 
To conclude the proof from here, we wish to relate the RHS to the radial maximal function of $g$, so we apply the Fubini-type formula \eqref{eq:Fubini} to the domain $\Omega_{j}$ bounded by the intrinsic Lipschitz graph $\Gamma = \partial \Omega_{j}$:
\begin{align*} R_{0}^{-4}\int_{B(p_{0},2R_{0}) \cap \Omega_{j}} |g(q)|^{2} \, dq & \leq R_{0}^{-4} \int_{\partial \Omega_{j}} \int_{0}^{\infty} |(\mathbf{1}_{B(p_{0},2R_{0})}g)(w \cdot (-r,0,0))|^{2} \, dr \, d\sigma_{j}(w)\\
& \lesssim R_{0}^{-3} \int_{\partial \Omega_{j}} (\mathcal{N}_{j}g)(w)^{2} \, d\sigma_{j}(w). \end{align*} 
Here we used that $\mathcal{L}^{1}(\{r > 0 : w \cdot (-r,0,0) \in B(p_{0},2R_{0})\}) \lesssim R_{0}$ for all $w \in \partial \Omega_{j}$. Since $\mathcal{N}_{j}g \in L^{2}(\sigma_{j})$, the RHS above evidently tends to $0$ as $R_{0} \to \infty$. Recalling \eqref{form160}, which is valid for all $R_{0} > 2d(p,p_{0})$, we infer that $g(p) = 0$. This completes the proof of \eqref{form89}, and hence the proof of the theorem.  \end{proof}


\section{Invertibility of $\tfrac{1}{2}I + D$ on $\mathbf{L}^{2}_{1,1/2}(\sigma)$}\label{s:Invert3}

The purpose of this section is to prove Theorem \ref{main7}, which we recall here:
\begin{thm}\label{t:main7a} Let $\Omega = \{(x,y,t) : x < A(y)\} \subset \He$ be a flag domain, $\sigma := |\partial \Omega|_{\He}$, let $g \in \mathbf{L}^{2}_{1,1/2}(\sigma)$, and let $u$ be the double layer potential solution to the Dirichlet problem \eqref{dirichlet}, as given by Theorem \ref{main3}. Then, $\|\mathcal{N}_{\theta}(\nabla u)\|_{L^{2}(\sigma)} \lesssim \|g\|_{\mathbf{L}^{2}_{1,1/2}(\sigma)}$ for all $\theta \in (0,1)$.
\end{thm}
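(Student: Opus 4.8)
The plan is to combine the two extreme cases already established: $\|\mathcal{N}_{\theta}u\|_{L^{2}(\sigma)} \lesssim \|g\|_{L^{2}(\sigma)}$ when $g \in L^{2}(\sigma)$ (Theorem \ref{main3}), and $\|\mathcal{N}_{\theta}(\nabla u)\|_{L^{2}(\sigma)} \lesssim \|g\|_{L^{2}_{1,1/2}(\sigma)}$ when the Dirichlet solution is realised as a \emph{single} layer potential (Theorem \ref{main5}). The main point to prove is that for $g \in \mathbf{L}^{2}_{1,1/2}(\sigma)$ the \emph{double} layer solution $u = \mathcal{D}f_{g}$, with $(\tfrac{1}{2}I + D)f_{g} = g$, simultaneously enjoys the gradient estimate. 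By Corollary \ref{K-MaxNT}, $\|\mathcal{N}_{\theta}(\nabla u)\|_{L^{2}(\sigma)} = \|\mathcal{N}_{\theta}(\nabla \mathcal{D}f_{g})\|_{L^{2}(\sigma)} \lesssim \|f_{g}\|_{L^{2}(\sigma)}$ only controls the $L^{2}$-norm of the density; what I actually need is $\|f_{g}\|_{L^{2}_{1,1/2}(\sigma)} \lesssim \|g\|_{\mathbf{L}^{2}_{1,1/2}(\sigma)}$, together with a smoothing estimate $\|\mathcal{N}_{\theta}(\nabla \mathcal{D}f)\|_{L^{2}(\sigma)} \lesssim \|f\|_{L^{2}_{1,1/2}(\sigma)}$. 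So the crux is the following invertibility statement, which I would isolate as Theorem \ref{t:Invert}: the operator $\tfrac{1}{2}I + D$ is invertible on $\mathbf{L}^{2}_{1,1/2}(\sigma)$, with $\|f\|_{\mathbf{L}^{2}_{1,1/2}(\sigma)} \sim \|(\tfrac{1}{2}I + D)f\|_{\mathbf{L}^{2}_{1,1/2}(\sigma)}$.

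The route to Theorem \ref{t:Invert} is via the single layer potential, exploiting the classical identity relating $D$ and $D^{t}$ through $S$. Formally, on $\partial \Omega$ one has $S(-\tfrac{1}{2}I + D^{t}) = (\tfrac{1}{2}I + D)S$ (both sides compute $(\mathcal{S}\nabla_{\nu}^{+}\mathcal{S}f)|_{\partial \Omega}$, using $\nabla_{\nu}^{+}\mathcal{S}f = (-\tfrac{1}{2}I + D^{t})f$ from Corollary \ref{c:allJumps} and $\mathcal{D}(Sf)|_{\partial\Omega} = (\tfrac12 I + D)Sf$), so that $\tfrac{1}{2}I + D = S(-\tfrac{1}{2}I + D^{t})S^{-1}$ on the range of $S$. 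Since $S \colon L^{2}(\sigma) \to L^{2}_{1,1/2}(\sigma)$ is invertible by Theorem \ref{main4}, and $-\tfrac{1}{2}I + D^{t}$ is invertible on $L^{2}(\sigma)$ by Theorem \ref{main2}, this factorisation transports invertibility from $L^{2}(\sigma)$ to $L^{2}_{1,1/2}(\sigma)$, provided I can verify the identity rigorously at the level of non-tangential limits (density by $C^{\infty}_{c}$, the jump relations in Corollary \ref{c:allJumps}, and the boundary behaviour of $\mathcal{S}$ in Proposition \ref{prop11}). To get invertibility on $\mathbf{L}^{2}_{1,1/2}(\sigma) = L^{2}(\sigma) \cap L^{2}_{1,1/2}(\sigma)$ rather than just $L^{2}_{1,1/2}(\sigma)$, I would run the same factorisation and observe that $S$ also maps $L^{2}(\sigma)$-data to $L^{2}_{1,1/2}(\sigma)$ and, in the relevant local-plus-bounded sense, the $L^{2}$-control is preserved; more carefully, I would argue directly that $\tfrac12 I + D$ is bounded below on $\mathbf L^2_{1,1/2}(\sigma)$ by combining the $L^{2}(\sigma)$ lower bound (Theorem \ref{main2}) with the $L^{2}_{1,1/2}$ lower bound just obtained, and that its range is closed and dense.

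Granting Theorem \ref{t:Invert}, the proof of Theorem \ref{t:main7a} proceeds as follows. Given $g \in \mathbf{L}^{2}_{1,1/2}(\sigma)$, Theorem \ref{t:Invert} produces $f_{g} = (\tfrac{1}{2}I + D)^{-1}g \in \mathbf{L}^{2}_{1,1/2}(\sigma)$ with $\|f_{g}\|_{L^{2}(\sigma)} + \|f_{g}\|_{L^{2}_{1,1/2}(\sigma)} \lesssim \|g\|_{\mathbf{L}^{2}_{1,1/2}(\sigma)}$; this $f_{g}$ coincides with the one from Theorem \ref{main3} since it is characterised by $(\tfrac12 I + D)f_g = g$, so $u = \mathcal{D}f_{g}$ is exactly the double layer solution in the statement. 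The bound $\|\mathcal{N}_{\theta}u\|_{L^{2}(\sigma)} \lesssim \|g\|_{L^{2}(\sigma)} \le \|g\|_{\mathbf L^2_{1,1/2}(\sigma)}$ is already in Theorem \ref{main3}. For the gradient, I write $\nabla \mathcal{D}f_{g} = \nabla \mathcal{R}^{t}(f_{g}\nu)$ and estimate its non-tangential maximal function by $\|f_{g}\|_{L^{2}_{1,1/2}(\sigma)}$ via the smoothing property of the double layer: concretely, express $\mathcal{D}f_{g}$ in terms of $S$ applied to a density with a tangential derivative in $L^{2}(\sigma)$ (using the factorisation $\mathcal{D}f_{g} = \mathcal{S}((-\tfrac12 I + D^t)S^{-1}f_g)$ on $\Omega$ and $S^{-1}f_g \in L^2_{1,1/2}\!\Rightarrow (-\tfrac12I+D^t)S^{-1}f_g \in L^2(\sigma)$... wait—more simply, $u=\mathcal D f_g$ and $u=\mathcal S h$ for $h:=(-\tfrac12 I+D^t)S^{-1}f_g\in L^2(\sigma)$ with $\|h\|_{L^2(\sigma)}\lesssim\|f_g\|_{L^2_{1,1/2}(\sigma)}$, since both represent the same $\bigtriangleup^{\flat}$-harmonic function with the same non-tangential boundary data $g$ and uniqueness holds by Theorem \ref{main6}), and then apply Corollary \ref{K-MaxNT} to get $\|\mathcal{N}_{\theta}(\nabla u)\|_{L^{2}(\sigma)} = \|\mathcal{N}_{\theta}(\nabla \mathcal{S}h)\|_{L^{2}(\sigma)} \lesssim \|h\|_{L^{2}(\sigma)} \lesssim \|f_{g}\|_{L^{2}_{1,1/2}(\sigma)} \lesssim \|g\|_{\mathbf{L}^{2}_{1,1/2}(\sigma)}$. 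The main obstacle I anticipate is making the operator identity $S(-\tfrac{1}{2}I + D^{t}) = (\tfrac{1}{2}I + D)S$ and the attendant domain/range bookkeeping on the non-standard Sobolev space $L^{2}_{1,1/2}(\sigma)$ fully rigorous — in particular keeping track of the $L^{2}(\sigma) + L^{\infty}(\sigma)$ ambiguity inherent in $S$ and in the definition of $L^{2}_{1,1/2}(\sigma)$, and justifying the uniqueness step (Theorem \ref{main6}) for the harmonic function with these boundary data.
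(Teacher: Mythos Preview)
Your overall architecture is right and matches the paper: isolate Theorem \ref{t:Invert} (invertibility of $\tfrac12 I + D$ on $\mathbf{L}^{2}_{1,1/2}(\sigma)$), then represent the double layer solution $u = \mathcal{D}f_g$ as a single layer potential of an $L^{2}(\sigma)$ density and invoke Corollary \ref{K-MaxNT}. But the step where you identify $\mathcal{D}f_g = \mathcal{S}h$ in $\Omega$ via Theorem \ref{main6} has a genuine gap: the uniqueness theorem requires $\mathcal{N}_{\mathrm{rad}}$ of the difference to lie in $L^{2}(\sigma)$, and there is no estimate in the paper giving $\mathcal{N}_{\mathrm{rad}}(\mathcal{S}h) \in L^{2}(\sigma)$ for general $h \in L^{2}(\sigma)$ --- Proposition \ref{prop11} only yields $Sh \in L^{2}(\sigma) + L^{\infty}(\sigma)$, and Corollary \ref{K-MaxNT} controls $\mathcal{N}_{\theta}(\nabla\mathcal{S}h)$, not $\mathcal{N}_{\theta}(\mathcal{S}h)$. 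The paper avoids this entirely by proving the \emph{interior} identity $\mathcal{D}(Sf)(p) = -\mathcal{S}\bigl((\tfrac12 I + D^{t})f\bigr)(p)$ for all $p \in \Omega$ directly (Proposition \ref{prop12}): apply the divergence theorem on approximating domains $\Omega_j \Subset \Omega$ to the vector field $V_{p} = G_{p}\nabla\mathcal{S}f - \mathcal{S}f\,\nabla G_{p}$, then let $j \to \infty$ using the jump relations and radial maximal bounds for $\nabla\mathcal{S}f$. No uniqueness is needed; once the identity holds pointwise in $\Omega$, the maximal-function estimate is immediate. (Incidentally, the paper's intertwining relation links $\tfrac12 I + D$ to $\tfrac12 I + D^{t}$ with a global minus sign, not to $-\tfrac12 I + D^{t}$ as you wrote; since both $\pm\tfrac12 I + D^{t}$ are invertible on $L^{2}(\sigma)$ this is harmless for the strategy, but worth tracking.)

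A second, smaller gap concerns Theorem \ref{t:Invert} itself. The conjugation $\tfrac12 I + D = \pm S(\pm\tfrac12 I + D^{t})S^{-1}$ gives invertibility on $L^{2}_{1,1/2}(\sigma)$, and Theorem \ref{main2} gives invertibility on $L^{2}(\sigma)$; but concluding invertibility on the intersection $\mathbf{L}^{2}_{1,1/2}(\sigma)$ is not automatic, since neither space is contained in the other and the two inverses are not \emph{a priori} consistent. The paper does not attempt to intersect the inverses; instead it runs the method of continuity (Lemma \ref{l:kenig}) directly on $\mathbf{L}^{2}_{1,1/2}$. The two-sided norm equivalence \eqref{form185} follows from the boundary version \eqref{form183} of the interior identity together with the known invertibility of $S$ and $\tfrac12 I + D^{t}$; the flat case $A \equiv 0$ is a Fourier-multiplier computation on $\mathbb{W}$ (Proposition \ref{p:Invert2}); and the Lipschitz-in-$s$ dependence of the conjugated operators $T_{s}$ on $\mathbf{L}^{2}_{1,1/2}$ is assembled from the corresponding estimates already proved for $T_{s}^{t}$ and $S_{s}$ in Sections \ref{s:surjectivity} and \ref{s:singleInvert}.
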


Recall that $\mathbf{L}^{2}_{1,1/2}(\sigma)$ is the inhomogeneous Sobolev space
\begin{displaymath} \mathbf{L}_{1,1/2}^{2}(\sigma) := L^{2}(\sigma) \cap L^{2}_{1,1/2}(\sigma) \end{displaymath}
equipped with the norm $\|f\|_{\mathbf{L}^{2}_{1,1/2}(\sigma)} := \|f\|_{L^{2}(\sigma)} + \|f\|_{1,1/2,\sigma}$. The norm $\|\cdot\|_{1,1/2,\sigma}$ was introduced in Definition \ref{def:L2112}. The crux of the section will be to show the following:
\begin{thm}\label{t:Invert} The operator $\tfrac{1}{2}I + D$ is bounded and invertible on $\mathbf{L}^{2}_{1,1/2}(\sigma)$. \end{thm}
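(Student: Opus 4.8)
\textbf{Proof proposal for Theorem \ref{t:Invert}.}

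The plan is to combine the two theories we have already built: the $L^{2}(\sigma)$-theory of $\tfrac{1}{2}I + D$ from Theorems \ref{main2} and \ref{t:injectivity}, and the single-layer picture $S \colon L^{2}(\sigma) \to L^{2}_{1,1/2}(\sigma)$ from Theorem \ref{main4}. The key algebraic identity is the standard relation between double and single layer potentials on the boundary: since $D f = R^{t}(f\nu)$ and $Sf$ is the boundary single layer potential, one should verify that
\begin{equation}\label{form200} S(\tfrac{1}{2}I + D^{t})f = (\tfrac{1}{2}I + D)Sf, \qquad f \in L^{2}(\sigma), \end{equation}
by writing both sides as boundary traces of the harmonic function $\mathcal{S}f$ and its horizontal gradient, and using the jump relations in Corollary \ref{c:allJumps} together with a Green-type identity (integration by parts against the fundamental solution) valid on the flag domain. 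This identity says that, under the "change of variables" $g = Sf$, the operator $\tfrac{1}{2}I + D$ on $L^{2}_{1,1/2}(\sigma)$ is conjugate to $\tfrac{1}{2}I + D^{t}$ on $L^{2}(\sigma)$. Since $S$ is invertible $L^{2}(\sigma) \to L^{2}_{1,1/2}(\sigma)$ (Theorem \ref{main4}) and $\tfrac{1}{2}I + D^{t}$ is invertible on $L^{2}(\sigma)$ (Theorem \ref{main2}), it follows at once that $\tfrac{1}{2}I + D$ is invertible on $L^{2}_{1,1/2}(\sigma)$, with inverse $S(\tfrac{1}{2}I + D^{t})^{-1}S^{-1}$.

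To pass from $L^{2}_{1,1/2}(\sigma)$ to $\mathbf{L}^{2}_{1,1/2}(\sigma) = L^{2}(\sigma) \cap L^{2}_{1,1/2}(\sigma)$, I would argue as follows. First, boundedness of $\tfrac{1}{2}I + D$ on $\mathbf{L}^{2}_{1,1/2}(\sigma)$ is the conjunction of its boundedness on $L^{2}(\sigma)$ (already known, via Theorem \ref{t:FO1}) and on $L^{2}_{1,1/2}(\sigma)$ (the conjugation above, since $S$ and $S^{-1}$ are bounded between these spaces and $\tfrac{1}{2}I + D^{t}$ is bounded on $L^{2}(\sigma)$). For invertibility on $\mathbf{L}^{2}_{1,1/2}(\sigma)$, injectivity is immediate since $\tfrac{1}{2}I+D$ is already injective on the larger space $L^{2}_{1,1/2}(\sigma)$. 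For surjectivity: given $g \in \mathbf{L}^{2}_{1,1/2}(\sigma)$, the $L^{2}_{1,1/2}$-theory produces a unique $f \in L^{2}_{1,1/2}(\sigma)$ with $(\tfrac{1}{2}I + D)f = g$; one must show $f \in L^{2}(\sigma)$ as well, with $\|f\|_{L^{2}(\sigma)} \lesssim \|g\|_{\mathbf{L}^{2}_{1,1/2}(\sigma)}$. Here I would use the conjugation: $f = S h$ with $h = (\tfrac{1}{2}I + D^{t})^{-1}(S^{-1}g)$, and $S^{-1}g \in L^{2}(\sigma)$, so $\|h\|_{L^{2}(\sigma)} \lesssim \|S^{-1}g\|_{L^{2}(\sigma)} \lesssim \|g\|_{1,1/2,\sigma}$. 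It remains to bound $\|Sh\|_{L^{2}(\sigma)}$ by $\|g\|_{\mathbf{L}^{2}_{1,1/2}(\sigma)}$; since $g = (\tfrac{1}{2}I+D)Sh$, one should instead directly estimate $\|f\|_{L^{2}(\sigma)}$ by inverting $\tfrac{1}{2}I+D$ \emph{on $L^{2}(\sigma)$}: indeed $(\tfrac{1}{2}I+D)$ is invertible on $L^{2}(\sigma)$ (Theorem \ref{main2}), and $g \in L^{2}(\sigma)$, so there is $f' \in L^{2}(\sigma)$ with $(\tfrac{1}{2}I+D)f' = g$ and $\|f'\|_{L^{2}(\sigma)} \lesssim \|g\|_{L^{2}(\sigma)}$; by injectivity of $\tfrac{1}{2}I+D$ on $L^{2}(\sigma) + L^{\infty}(\sigma) \supset L^{2}(\sigma) \cup L^{2}_{1,1/2}(\sigma)$ we get $f = f'$, hence $f \in \mathbf{L}^{2}_{1,1/2}(\sigma)$ with the desired two-sided estimate $\|f\|_{\mathbf{L}^{2}_{1,1/2}(\sigma)} \lesssim \|g\|_{\mathbf{L}^{2}_{1,1/2}(\sigma)}$.

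The main obstacle I anticipate is establishing the intertwining identity \eqref{form200} rigorously at the level of $L^{2}(\sigma)$-functions, rather than just for smooth compactly supported data. The cleanest route is: (i) prove \eqref{form200} for $f \in C^{\infty}_{c}(\He)$ by a Green's identity argument on the flag domain, comparing interior non-tangential limits of $\nabla \mathcal{S}f$ (whose trace involves $-\tfrac12 f\nu + R(fX)X + \dots$, i.e. $(-\tfrac12 I + D^{t})f$ in the normal direction and $\nabla_\tau Sf$ in the tangential direction, by Corollary \ref{c:allJumps}) with the horizontal gradient of the harmonic extension $\mathcal{D}(Sf)$; and (ii) extend to all $f \in L^{2}(\sigma)$ by density, using the $L^{2}(\sigma) \to L^{2}_{1,1/2}(\sigma)$ boundedness of $S$ (Theorem \ref{main4}) and the $L^{2}(\sigma)$ boundedness of $D, D^{t}$ (Theorem \ref{t:pValues}) to pass to the limit on both sides of \eqref{form200}. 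A secondary technical point is checking that $\tfrac{1}{2}I + D$ really is well-defined and injective on the ambient space $L^{2}(\sigma) + L^{\infty}(\sigma)$ that contains both $L^{2}(\sigma)$ and $L^{2}_{1,1/2}(\sigma)$, so that the two notions of "solution" coincide; this should follow from absolute convergence of the relevant integrals together with the uniqueness argument already used in Section \ref{s:injectivity}. Once \eqref{form200} is in hand, everything else is soft functional analysis, and Theorem \ref{t:main7a} then follows by combining Theorem \ref{t:Invert} with the non-tangential maximal function estimate of Corollary \ref{K-MaxNT} applied to $u = \mathcal{D}f$.
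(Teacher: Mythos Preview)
Your intertwining identity is the right key, and matches the paper's Proposition \ref{prop12} and its corollary \eqref{form183} (up to a sign: the correct relation is $(\tfrac{1}{2}I+D)(Sf)=-S(\tfrac{1}{2}I+D^{t})f$, not $+$). From this, boundedness and the two-sided estimate $\|(\tfrac{1}{2}I+D)f\|_{\mathbf{L}^{2}_{1,1/2}} \sim \|f\|_{\mathbf{L}^{2}_{1,1/2}}$ on $\mathbf{L}^{2}_{1,1/2}(\sigma)$ follow exactly as you say, and this is precisely how the paper obtains \eqref{form185}.

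The gap is in your surjectivity step. You propose to produce two preimages, $f' \in L^{2}(\sigma)$ and $f \in L^{2}_{1,1/2}(\sigma)$, and then identify them by ``injectivity of $\tfrac{1}{2}I+D$ on $L^{2}(\sigma)+L^{\infty}(\sigma)$''. But $D$ is \emph{not} well-defined on $L^{\infty}(\sigma)$: the kernel $\langle \nabla G(p^{-1}\cdot q),\nu(q)\rangle$ decays like $d(p,q)^{-3}$ and $\sigma$ is $3$-regular, so the integral against a merely bounded function diverges logarithmically at infinity. Thus the ``absolute convergence'' you appeal to fails, and the injectivity argument of Section \ref{s:injectivity} (which is built on Rellich identities and $L^{2}$ norms) does not transfer. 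Equivalently, writing $f=Sh$ with $h\in L^{2}(\sigma)$ gives no control on $\|Sh\|_{L^{2}(\sigma)}$, since $S$ does not map $L^{2}(\sigma)$ into $L^{2}(\sigma)$. What you flag as a ``secondary technical point'' is in fact the crux.

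The paper closes this gap differently: having \eqref{form185} (hence injectivity and closed range), it runs the method of continuity once more. The flat case $A\equiv 0$ is handled directly, because on $\W\cong\R^{2}$ the operator $\tfrac{1}{2}I+D$ is translation-invariant and its Fourier multiplier (and its inverse) are bounded, so invertibility on $\mathbf{L}^{2}_{1,1/2}(\R^{2})$ is immediate. For general $A$, continuity of $s\mapsto T_{s}$ in the $\mathbf{L}^{2}_{1,1/2}$ operator norm is obtained by combining the intertwining identity with the already-proved Lipschitz estimates $\|S_{r}-S_{s}\|_{L^{2}\to L^{2}_{1,1/2}}\lesssim |r-s|$ (from Section \ref{s:singleInvert}) and $\|T_{r}^{t}-T_{s}^{t}\|_{L^{2}\to L^{2}}\lesssim |r-s|$ (from Section \ref{s:surjectivity}). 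This extra continuity argument is the missing ingredient in your proposal.
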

Before showing how Theorem \ref{t:main7a} follows, we prove a useful formula:

\begin{proposition}\label{prop12} Let $\Omega \subset \He$ be a flag domain, $\sigma := |\partial \Omega|_{\He}$, and let $f \in L^{p}(\sigma)$, $1 < p < 3$, be such that also $Sf \in L^{p}(\sigma)$. Then,
\begin{equation}\label{SanD} \mathcal{D}(Sf)(p) =  -\mathcal{S}(\tfrac{1}{2}I + D^{t})f(p), \qquad p \in \Omega. \end{equation}
\end{proposition}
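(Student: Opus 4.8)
The plan is to verify the identity \eqref{SanD} by reducing it to a pointwise identity between the relevant kernels, integrated against $f$ over $\partial\Omega$. The left-hand side $\mathcal{D}(Sf)(p)$ unwinds into a double integral: $\mathcal{D}(Sf)(p) = \int \langle \nabla G(p^{-1}\cdot q),\nu(q)\rangle \left(\int G(r^{-1}\cdot q)f(r)\,d\sigma(r)\right)d\sigma(q)$, while the right-hand side $-\mathcal{S}(\tfrac12 I+D^t)f(p)$ expands to $-\int G(r^{-1}\cdot p)\left(\tfrac12 f(r) + D^t f(r)\right)d\sigma(r)$, where $D^t f(r) = \pv\int \langle \nabla G(s^{-1}\cdot r),\nu(r)\rangle f(s)\,d\sigma(s)$. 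So after a Fubini-type interchange (justified by the absolute convergence estimates quoted in Section~\ref{s:operatorList}: $y\mapsto G(y^{-1}\cdot x)\in L^q(\sigma)$ for $q>\tfrac32$ and $f\in L^p(\sigma)$), the claim amounts to the kernel identity, for fixed $p\in\Omega$ and $\sigma$ a.e. $r\in\partial\Omega$,
\begin{equation}\label{form:kernelID}
\int \langle \nabla G(p^{-1}\cdot q),\nu(q)\rangle G(r^{-1}\cdot q)\,d\sigma(q) = -\tfrac12 G(r^{-1}\cdot p) - D^t f\text{-kernel applied},
\end{equation}
i.e. one wants $\int \langle \nabla G(p^{-1}\cdot q),\nu(q)\rangle G(r^{-1}\cdot q)\,d\sigma(q) = -\tfrac12 G(r^{-1}\cdot p) - \pv\int \langle \nabla G(q^{-1}\cdot r),\nu(r)\rangle G(r^{-1}\cdot p)\ldots$ — more precisely, recognizing the left side as $\mathcal{D}$ applied to the function $q\mapsto G(r^{-1}\cdot q)$, which is exactly $\mathcal{D}g_r(p)$ for $g_r := G(r^{-1}\cdot\,)$ restricted to $\partial\Omega$.

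Thus the cleanest route: fix $r\in\partial\Omega$, let $g_r(q):=G(r^{-1}\cdot q)=Sf$'s kernel, and observe $\mathcal{D}(Sf)(p) = \int_{\partial\Omega}\mathcal{D}g_r(p)f(r)\,d\sigma(r)$ after Fubini, so it suffices to prove $\mathcal{D}g_r(p) = -\mathcal{S}\big((\tfrac12 I + D^t)\delta_r\big)$ in the appropriate sense — but since $g_r = S(\text{unit mass at }r)$ only formally, it is better to argue directly that for $p\in\Omega$,
\begin{equation}\label{form:Dgr}
\mathcal{D}g_r(p) = -\tfrac12 G(r^{-1}\cdot p) - Dg_r\text{-type term} \qustop
\end{equation}
Here is the honest way: apply the divergence theorem (Theorem~\ref{t:div}, in the form used throughout Section~\ref{s:jump}) on the domain $\Omega$ to the horizontal vector field $V(q) = g_r(q)\nabla[q\mapsto G(p^{-1}\cdot q)] - G(p^{-1}\cdot q)\nabla g_r(q)$, which is Green's second identity for $\bigtriangleup^\flat$. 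Since both $q\mapsto G(p^{-1}\cdot q)$ and $q\mapsto g_r(q)=G(r^{-1}\cdot q)$ are $\bigtriangleup^\flat$-harmonic away from $p$ and $r$ respectively (the former being $\bigtriangleup^\flat$-harmonic at $q$ because $\bigtriangleup^\flat_q G(p^{-1}\cdot q) = 0$ for $q\neq p$, the latter because $\bigtriangleup^\flat_q G(r^{-1}\cdot q)=0$ for $q\neq r$), one gets $\mathrm{div}^\flat V = -g_r\cdot\delta_p + G(p^{-1}\cdot\,)\cdot\delta_r$ distributionally; integrating over $\Omega$ and excising small balls around $p\in\Omega$ and $r\in\partial\Omega$ — the latter producing the boundary jump $-\tfrac12$ exactly as in Lemma~\ref{pv-K} — yields $-g_r(p)\;=\;-\int_{\partial\Omega}\langle\nabla G(p^{-1}\cdot q),\nu(q)\rangle g_r(q)\,d\sigma(q) + \int_{\partial\Omega} G(p^{-1}\cdot q)\langle\nabla g_r(q),\nu(q)\rangle\,d\sigma(q) + (\text{half-residue at }r)$. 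Rearranging and recalling $\langle\nabla g_r,\nu\rangle = \langle\nabla G(r^{-1}\cdot\,),\nu\rangle$ which is the $D$/$D^t$ kernel, and integrating the resulting identity against $f(r)\,d\sigma(r)$, gives precisely \eqref{SanD}. The sign and the $\tfrac12$ come from the interior jump relation for $\mathcal{R}^t(f\nu)=\mathcal{D}f$ established in Theorem~\ref{t:rJumps}, \eqref{form166a}, applied to the exterior approach $q\to r$ with $q\notin\overline\Omega$ — or rather, from the residue of the fundamental solution at the boundary point $r$, which is half of the full residue $1$ recorded in Lemma~\ref{K-ball}.

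The main obstacle is the careful justification of the limiting procedure when excising the ball around the \emph{boundary} point $r\in\partial\Omega$: one must show the half-residue equals $\tfrac12$, which is where the weak tangent point property (Proposition~\ref{p:tangentPoints}) and the computation in the proof of Lemma~\ref{pv-K} (specifically the identity $I_\epsilon^2 = \tfrac12$ in \eqref{e:pv-halfball}) enter, and one must control the cross terms, which needs the $L^p$--$L^q$ mapping estimates for $S$ and the non-tangential maximal function bounds of Corollary~\ref{K-MaxNT} to make Fubini and dominated convergence legitimate. A cleaner alternative, which I would likely present instead to avoid re-deriving jump relations: take boundary values in \eqref{SanD}. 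Both sides are $\bigtriangleup^\flat$-harmonic in $\Omega$ with controlled non-tangential maximal functions; by Theorem~\ref{main1}, $(\mathcal{D}(Sf))|_{\partial\Omega} = (\tfrac12 I + D)(Sf) = (\tfrac12 I + D)Sf$, and by Theorem~\ref{main4}/Proposition~\ref{prop11}, $(\mathcal{S}(\tfrac12 I + D^t)f)|_{\partial\Omega} = S(\tfrac12 I + D^t)f$; so it suffices to check the \emph{boundary} identity $(\tfrac12 I+D)Sf = -S(\tfrac12 I + D^t)f$ on $\partial\Omega$, together with a uniqueness statement (Theorem~\ref{main6}) to upgrade from the boundary to all of $\Omega$ — noting the difference of the two sides of \eqref{SanD} is $\bigtriangleup^\flat$-harmonic with $L^2$ radial maximal function and vanishing radial limits. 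The boundary identity $DS + SD^t = -\tfrac12 S - \tfrac12 S = -S$ — i.e. $DS = SD^t$ as operators plus the $\pm\tfrac12$ bookkeeping — is itself most naturally proven by the same Green's-identity computation above but now entirely on $\partial\Omega$, or by a direct Fubini argument on the (absolutely convergent, since $f, Sf\in L^p$) double integrals using the kernel symmetry $G(q^{-1}\cdot p)=G(p^{-1}\cdot q)$ and the relation $\langle\nabla G(p^{-1}\cdot q),\nu(q)\rangle = -\langle\nabla^R G(q^{-1}\cdot p),\nu(q)\rangle$ from Remark~\ref{harmonicityRemark} combined with Lemma~\ref{l:right-left}. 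I expect the Fubini/uniqueness route to be the shortest to write rigorously.
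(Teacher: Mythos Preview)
Your Green's-identity instinct is correct, but the paper applies it one level up and thereby avoids your ``main obstacle'' entirely. Instead of fixing $r\in\partial\Omega$ and running Green's second identity on the pair $q\mapsto G(p^{-1}\cdot q)$ and $q\mapsto g_r(q)=G(r^{-1}\cdot q)$ (the second singular \emph{on} the boundary, forcing a delicate half-residue computation at $r$), the paper applies it to $q\mapsto G(p^{-1}\cdot q)$ and $q\mapsto \mathcal{S}f(q)$, with $f$ already integrated in. The vector field $V_p=G_p\,\nabla\mathcal{S}f-\mathcal{S}f\,\nabla G_p$ then has its only singularity at the \emph{interior} point $p$, and $\mathcal{S}f$ is smooth in $\Omega$ with well-understood non-tangential boundary behaviour. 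One applies Theorem~\ref{t:div} on the approximating flag domains $\Omega_j\Subset\Omega$ (so $V_p\in C^1(\overline{\Omega}_j)$), picks up the residue $\mathcal{S}f(p)$ at $p$, and lets $j\to\infty$ by dominated convergence; the dominating function for the $\nabla\mathcal{S}f$ term is $\mathcal{N}_{\mathrm{rad}}(\nabla\mathcal{S}f)(q)\cdot d(p,q)^{-2}\in L^1(\sigma)$. The $\tfrac12$ then appears not as a boundary half-residue but from the jump relation $\langle\nabla^{+}\mathcal{S}f,\nu\rangle=(-\tfrac12 I+D^t)f$ of Corollary~\ref{c:allJumps}, already proved. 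Rearranging $\mathcal{S}f(p)=-\mathcal{S}[(-\tfrac12 I+D^t)f](p)-\mathcal{D}(Sf)(p)$ gives \eqref{SanD}. This is done first for bounded compactly supported $f$ (so decay of $V_p$ at infinity is trivial), then extended to $f\in L^p$ by monotone approximation.

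Your route~1 could be made to work, but the boundary-singularity excision you flag is genuinely more technical and entirely unnecessary once you integrate in $f$ first. Your route~2 (boundary values plus uniqueness) has a structural issue: Theorem~\ref{main6} is only stated for $L^2$ radial maximal functions, so it does not directly cover the full range $1<p<3$; moreover the paper \emph{derives} the boundary identity $(\tfrac12 I+D)Sf=-S(\tfrac12 I+D^t)f$ as the very next corollary of this proposition, so proving it independently would duplicate work rather than save it.
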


\begin{proof} We first consider the case where $f \in B(\sigma)$, where $B(\sigma)$ stands for bounded and compactly supported $\sigma$-measurable functions. Note that if $f \in B(\sigma)$, then $|Sf(q)| \lesssim \min\{1,\|q\|^{-2}\}$ for $q \in \partial \Omega$, and hence $Sf \in L^{r}(\sigma)$ for all $r > \tfrac{3}{2}$. In particular, $\mathcal{D}(Sf)$ is pointwise defined on $\Omega$. 

Fix $f \in B(\sigma)$ and $p \in \Omega$, write $G_{p}(q) = G(p^{-1} \cdot q)$, and consider the vector field
\begin{displaymath}
    V_p := G \nabla \calS f - \calS f  \nabla G_{p} \in C^{\infty}(\Omega). 
\end{displaymath}
We note that $\mathrm{div}^{\flat} V_{p} = \mathcal{S}f \cdot \delta_{p}$, recalling that $\mathcal{S}f$ is $\bigtriangleup^{\flat}$-harmonic in $\Omega$, and $G$ is the fundamental solution of $-\bigtriangleup^{\flat}$. Therefore, a formal application of the divergence theorem, Theorem \ref{t:div}, yields
\begin{align}\label{form181} \mathcal{S}f(p) = \int_{\Omega} \mathrm{div}^{\flat} V_{p}(q) \, dq & = -\int_{\partial \Omega} \langle \nabla^{+} \mathcal{S}f(q),\nu(q) \rangle G(p^{-1} \cdot q) \, d\sigma(q)\\
&\label{form192}\qquad - \int_{\partial \Omega} \langle \nabla G_{p},\nu(q) \rangle Sf(q) \, d\sigma(q).\end{align} 
Recall that $\nabla^\pm \mathcal{S}f$ was defined in \eqref{nablapm}.
There are two challenges in making this computation rigorous. First, $V_{p} \notin C^{1}(\overline{\Omega})$. To fix this, one uses the auxiliary (flag) domains $\overline{\Omega}_{j} \subset \Omega$ familiar from Section \ref{s:injectivity}, more precisely \eqref{form86}. We also recall the maps $\Phi_{j} \colon \partial \Omega \to \partial \Omega_{j}$ with the convenient properties that $\nu_{j}(\Phi_{j}(p)) = \nu(p)$ and $\Phi_{j\sharp}\sigma = \sigma_{j}$. Then $V_{p} \in C^{1}(\overline{\Omega}_{j})$. For $f \in B(\sigma)$, it is also straightforward to justify the integrability conditions in Theorem \ref{t:div}, noting that $|V_{p}(q)| \lesssim \min\{1,\|q\|^{-5}\}$ for $q \notin B(p,\epsilon)$, and since also $V_{p} \in L^{1}(B(p,\epsilon))$. So, \eqref{form181} holds with "$\Omega_{j}$" in place of "$\Omega$" (for all $j \in \N$ so large that $p \in \Omega_{j}$). Finally, to obtain \eqref{form181} as stated, one uses dominated convergence and the $\sigma$ a.e. existence of the non-tangential limits 
\begin{equation}\label{form182} \langle \nabla^{+} \mathcal{S}f(q),\nu(q) \rangle = (-\tfrac{1}{2}I + D^{t})f(q) \quad \text{and} \quad Sf(q), \qquad q \in \partial \Omega, \end{equation}
recall Theorem \ref{c:allJumps} and Proposition \ref{prop11}. To give some details for the term on line \eqref{form181}, one uses the relations $\sigma_{j} = \Phi_{j\sharp}\sigma$ and $\nu_{j}(q) = \nu(\Phi_{j}(q))$ to write the term on line \eqref{form181} as
\begin{displaymath} \int_{\partial \Omega_{j}} \langle \nabla \mathcal{S}f(q),\nu_{j}(q) \rangle G(p^{-1} \cdot q) \, d\sigma_{j}(q) = \int_{\partial \Omega} \langle \nabla \mathcal{S}f(\Phi_{j}(q)),\nu(q) \rangle G(p^{-1} \cdot \Phi_{j}(q)) \, d\sigma(q), \end{displaymath}
and then estimates $|\langle \nabla \mathcal{S}f(q),\nu_{j}(q) \rangle| \leq \mathcal{N}_{\mathrm{rad}}(\nabla \mathcal{S}f)(q)$ and $G(p^{-1} \cdot \Phi_{j}(q)) \lesssim d(p,q)^{-2}$, uniformly for all $j \in \N$ so large that $d(q,\Phi_{j}(q)) < d(p,q)/2$. Since $f \in B(\sigma) \subset L^{p}(\sigma)$ for all $1 < p < \infty$, one e.g. has $\mathcal{N}_{\mathrm{rad}}(\nabla \mathcal{S}f) \in L^{2}(\sigma)$ by Corollary \ref{K-MaxNT}, and then it follows from H\"older's inequality that $q \mapsto \mathcal{N}_{\mathrm{rad}}(\nabla \mathcal{S}f)d(p,q)^{-2} \in L^{1}(\sigma)$. Dominated convergence for the term on line \eqref{form192} is even easier to justify, using $|\mathcal{S}f(q)| \lesssim d(p,q)^{-2}$.

Now that \eqref{form181} has been established, one re-writes it as
\begin{displaymath} \mathcal{S}f(p) = -\mathcal{S}(-\tfrac{1}{2}I + D^{t})f(p) - \mathcal{D}(Sf)(p), \end{displaymath}
recalling the definition of "$\mathcal{D}$", and also using the jump relation recorded in \eqref{form182}. Rerranging terms yields \eqref{SanD} for $f \in B(\sigma)$.

To handle the general case, it suffices to consider non-negative $f \in L^{p}(\sigma)$ with $Sf \in L^{p}(\sigma)$, by the linearity of \eqref{SanD}. For such $f$, define $f_{j} := f \cdot \mathbf{1}_{E_{j}} \in B(\sigma)$, $j \in \N$, where $E_{j} := B(0,j) \cap \{p \in \partial \Omega : f(p) \leq j\}$. Then \eqref{SanD} holds for each $f_{j}$. Moreover, $\|f_{j} - f\|_{L^{p}(\sigma)} \to 0$, and also $\|Sf_{j} - Sf\|_{L^{p}(\sigma)} \to 0$ for the reasons that $0 \leq Sf_{j} \leq Sf$, and $Sf_{j}(p) \nearrow Sf(p)$ for $\sigma$ a.e. $p \in \partial \Omega$. Moreover, since $\tfrac{1}{2}I + D^{t}$ is bounded on $L^{p}(\sigma)$, we have $g_{j} := (\tfrac{1}{2}I + D^{t})f_{j} \to (\tfrac{1}{2}I + D^{t})f =: g$ in $L^{p}(\sigma)$. It follows easily from H\"older's inequality, and the assumption $1 < p < 3$ (note that $\mathcal{S}g_{j},\mathcal{S}g$ are only well-defined when $p < 3$), that
\begin{displaymath} \mathcal{D}(Sf_{j})(p) \to \mathcal{D}(Sf)(p) \quad \text{and} \quad \mathcal{S}g_{j}(p) \to \mathcal{S}g(p), \qquad p \in \Omega, \end{displaymath}
which gives \eqref{SanD} for $f$. \end{proof}

The formula \eqref{SanD} will be used within the proof of Theorem \ref{t:Invert}, but it is also needed to infer Theorem \ref{t:main7a} from Theorem \ref{t:Invert}:

\begin{proof}[Proof of Theorem \ref{t:main7a} assuming Theorem \ref{t:Invert}] The solution "$u$" has the form $\mathcal{D}h$, where $(\tfrac{1}{2}I + D)h = g$. Since $\tfrac{1}{2}I + D$ is invertible on $\mathbf{L}^{2}_{1,1/2}(\sigma)$, we have $\|h\|_{\mathbf{L}^{2}_{1,1/2}(\sigma)} \lesssim \|g\|_{\mathbf{L}^{2}_{1,1/2}(\sigma)} < \infty$. In particular, $h \in L^{2}(\sigma)$ can be expressed, by Theorem \ref{main4}, as the single layer potential "$Sf$" of some function $f \in L^{2}(\sigma)$ satisfying $\|f\|_{L^{2}(\sigma)} \lesssim \|h\|_{1,1/2,\sigma} \lesssim \|g\|_{\mathbf{L}^{2}_{1,1/2}(\sigma)}$. Now, we may use the formula \eqref{SanD} to write
\begin{displaymath} u = \mathcal{D}h = \mathcal{D}(Sf) = -\mathcal{S}(\tfrac{1}{2}I + D^{t})f. \end{displaymath}
It finally it follows from Corollary \ref{K-MaxNT}, and the $L^{2}(\sigma)$-boundedness of $D^{t}$, that
\begin{displaymath} \|\mathcal{N}_{\theta}(\nabla u)\|_{L^{2}(\sigma)} = \left\|\mathcal{N}_{\theta}\left[\nabla \mathcal{S}(\tfrac{1}{2}I + D^{t})f \right]\right\|_{L^{2}(\sigma)} \lesssim \|(\tfrac{1}{2}I + D^{t})f\|_{L^{2}(\sigma)} \lesssim \|g\|_{\mathbf{L}^{2}_{1,1/2}(\sigma)}. \end{displaymath}
This completes the proof of Theorem \ref{t:main7a} (and hence Theorem \ref{main7}). \end{proof}

We then move to other corollaries of \eqref{SanD}, which will be used to prove Theorem \ref{t:Invert}.

\begin{cor} Let $\Omega \subset \He$ be a flag domain, $\sigma := |\partial \Omega|_{\He}$, and $1 < p < 3$. Let $f \in L^{p}(\sigma)$ be such that $Sf \in L^{p}(\sigma)$. Then
\begin{equation}\label{form183} (\tfrac{1}{2}I + D)(Sf)(p) = -S(\tfrac{1}{2}I + D^{t})f(p) \end{equation}
for $\sigma$ a.e. $p \in \partial \Omega$.\end{cor}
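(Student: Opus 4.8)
The statement \eqref{form183} is the boundary ($\sigma$ a.e. limit) version of the interior identity \eqref{SanD}, so the plan is to take non-tangential boundary limits on both sides of $\mathcal{D}(Sf)(p) = -\mathcal{S}(\tfrac{1}{2}I + D^{t})f(p)$, $p \in \Omega$, which is Proposition \ref{prop12}. The hypotheses $f \in L^{p}(\sigma)$ with $1 < p < 3$ and $Sf \in L^{p}(\sigma)$ are exactly what is needed to apply all the boundary-limit results already proved, so no new analytic input is required; the task is simply to identify the non-tangential limits of the two sides.

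For the left-hand side, I would apply Theorem \ref{main1} (equivalently Corollary \ref{c:allJumps}(1)) to the function $Sf$, which lies in $L^{p}(\sigma)$ by hypothesis: the interior non-tangential limit of $\mathcal{D}(Sf)$ equals $(\tfrac{1}{2}I + D)(Sf)$ $\sigma$ a.e. on $\partial \Omega$. For the right-hand side, note that $(\tfrac{1}{2}I + D^{t})f \in L^{p}(\sigma)$ because $D^{t}$ is bounded on $L^{p}(\sigma)$ (Theorem \ref{t:pValues} and Definition \ref{boundaryDoubleLayers}), and $1 < p < 3$, so $\mathcal{S}(\tfrac{1}{2}I + D^{t})f$ is a well-defined single layer potential. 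By Proposition \ref{prop11}, its non-tangential limit at $\sigma$ a.e. $p \in \partial \Omega$ is $S(\tfrac{1}{2}I + D^{t})f(p)$, provided the defining integral converges absolutely there — which Proposition \ref{prop11} itself guarantees for a.e. $p$ for any $L^{p}(\sigma)$-function with $p < 3$.

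Putting these two facts together: since \eqref{SanD} is an equality of functions on all of $\Omega$, their interior non-tangential limits agree $\sigma$ a.e. on $\partial \Omega$, giving
\begin{displaymath} (\tfrac{1}{2}I + D)(Sf)(p) = -S(\tfrac{1}{2}I + D^{t})f(p) \qquad \text{for $\sigma$ a.e. } p \in \partial \Omega, \end{displaymath}
as claimed. I do not anticipate a genuine obstacle here; the only point requiring a little care is bookkeeping the membership conditions so that each cited result (Theorem \ref{main1} for $Sf \in L^{p}(\sigma)$, Proposition \ref{prop11} for $(\tfrac{1}{2}I + D^{t})f \in L^{p}(\sigma)$ with $p < 3$) genuinely applies. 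One could also give an alternative direct proof by repeating the divergence-theorem computation of Proposition \ref{prop12} with the pole $p$ taken on $\partial \Omega$ and using the approximating domains $\Omega_{j}$, but routing through \eqref{SanD} plus the already-established jump relations is cleaner and avoids re-proving boundary convergence.
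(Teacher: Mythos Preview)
Your proposal is correct and matches the paper's own proof essentially verbatim: the paper simply says to take non-tangential limits on both sides of \eqref{SanD} and apply the jump relation for $\mathcal{D}^{+}$ from Corollary \ref{c:allJumps}. Your expanded version, with the explicit check that $Sf \in L^{p}(\sigma)$ and $(\tfrac{1}{2}I + D^{t})f \in L^{p}(\sigma)$ with $p < 3$ so that Corollary \ref{c:allJumps}(1) and Proposition \ref{prop11} apply, is exactly the bookkeeping the paper leaves implicit.
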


\begin{proof} Take non-tangential limits on both sides of the equation \eqref{SanD}, and use the jump relation for $\mathcal{D}^{+}$ recorded in Corollary \ref{c:allJumps}. This proves \eqref{form183}. \end{proof}

\begin{cor} Let $\Omega = \{(x,y,t) : x < A(y)\}$ be a flag domain, $\sigma := |\partial \Omega|_{\He}$. Then, the operator $\tfrac{1}{2}I + D$ is bounded on $\mathbf{L}^{2}_{1,1/2}(\sigma)$, and in fact
\begin{equation}\label{form185} \|(\tfrac{1}{2}I + D)f\|_{\mathbf{L}^{2}_{1,1/2}(\sigma)} \sim \|f\|_{\mathbf{L}^{2}_{1,1/2}(\sigma)}, \qquad f \in \mathbf{L}^{2}_{1,1/2}(\sigma). \end{equation}
The implicit constant only depends on the Lipschitz constant of $A$. \end{cor}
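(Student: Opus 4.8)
The plan is to prove the two-sided estimate \eqref{form185}, from which the boundedness of $\tfrac{1}{2}I + D$ on $\mathbf{L}^{2}_{1,1/2}(\sigma)$ is immediate, and which, combined with Theorem \ref{t:injectivity} and the surjectivity results of Sections \ref{s:injectivity}--\ref{s:surjectivity}, will eventually give invertibility (Theorem \ref{t:Invert}). The starting point is the identity \eqref{form183}, namely $(\tfrac{1}{2}I + D)(Sf) = -S(\tfrac{1}{2}I + D^{t})f$ for $f \in L^{2}(\sigma)$ with $Sf \in L^{2}(\sigma)$. By Theorem \ref{main4}, $S \colon L^{2}(\sigma) \to L^{2}_{1,1/2}(\sigma)$ is invertible, so every $g \in \mathbf{L}^{2}_{1,1/2}(\sigma) = L^{2}(\sigma) \cap L^{2}_{1,1/2}(\sigma)$ can be written as $g = Sf$ for a unique $f \in L^{2}(\sigma)$, with $\|f\|_{L^{2}(\sigma)} \sim \|g\|_{1,1/2,\sigma}$; moreover $g \in L^{2}(\sigma)$ by assumption, so \eqref{form183} is applicable. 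Then
\begin{displaymath} \|(\tfrac{1}{2}I + D)g\|_{1,1/2,\sigma} = \|S(\tfrac{1}{2}I + D^{t})f\|_{1,1/2,\sigma} \sim \|(\tfrac{1}{2}I + D^{t})f\|_{L^{2}(\sigma)} \lesssim \|f\|_{L^{2}(\sigma)} \sim \|g\|_{1,1/2,\sigma}, \end{displaymath}
using the $L^{2}(\sigma)$-boundedness of $D^{t}$ (Theorem \ref{t:FO1}) and the boundedness of $S$. For the reverse inequality, I would use that $\tfrac{1}{2}I + D^{t}$ is invertible on $L^{2}(\sigma)$ (Theorem \ref{main2}, already available): writing $f = (\tfrac{1}{2}I + D^{t})^{-1}h$, one gets $\|f\|_{L^{2}(\sigma)} \lesssim \|(\tfrac{1}{2}I + D^{t})f\|_{L^{2}(\sigma)} \sim \|S(\tfrac{1}{2}I + D^{t})f\|_{1,1/2,\sigma} = \|(\tfrac{1}{2}I+D)g\|_{1,1/2,\sigma}$, and hence $\|g\|_{1,1/2,\sigma} \sim \|f\|_{L^{2}(\sigma)} \lesssim \|(\tfrac{1}{2}I+D)g\|_{1,1/2,\sigma}$.

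This handles the homogeneous seminorm $\|\cdot\|_{1,1/2,\sigma}$; the $L^{2}(\sigma)$-component of the $\mathbf{L}^{2}_{1,1/2}(\sigma)$-norm is controlled separately. The upper bound $\|(\tfrac{1}{2}I + D)g\|_{L^{2}(\sigma)} \lesssim \|g\|_{L^{2}(\sigma)}$ is just the $L^{2}(\sigma)$-boundedness of $D$ from Theorem \ref{t:FO1}, and the lower bound $\|g\|_{L^{2}(\sigma)} \lesssim \|(\tfrac{1}{2}I + D)g\|_{L^{2}(\sigma)}$ follows from the invertibility of $\tfrac{1}{2}I + D$ on $L^{2}(\sigma)$ (Theorem \ref{main2}). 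Combining the two regimes gives $\|(\tfrac{1}{2}I + D)g\|_{\mathbf{L}^{2}_{1,1/2}(\sigma)} = \|(\tfrac{1}{2}I + D)g\|_{L^{2}(\sigma)} + \|(\tfrac{1}{2}I + D)g\|_{1,1/2,\sigma} \sim \|g\|_{L^{2}(\sigma)} + \|g\|_{1,1/2,\sigma} = \|g\|_{\mathbf{L}^{2}_{1,1/2}(\sigma)}$, which is \eqref{form185}.

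The main thing to be careful about is the applicability of \eqref{form183}: the identity was established for $f \in L^{2}(\sigma)$ with the extra hypothesis $Sf \in L^{2}(\sigma)$, and one must check that this hypothesis is automatic in the situation at hand. It is, because $g = Sf$ is \emph{assumed} to lie in $\mathbf{L}^{2}_{1,1/2}(\sigma) \subset L^{2}(\sigma)$, so $Sf = g \in L^{2}(\sigma)$ by construction. A second minor point is that Theorem \ref{main4} asserts $\|Sf\|_{1,1/2,\sigma} \sim \|f\|_{L^{2}(\sigma)}$ (the homogeneous estimate), which is exactly what is used above; there is no claim or need that $\|Sf\|_{L^{2}(\sigma)} \sim \|f\|_{L^{2}(\sigma)}$, and indeed the $L^{2}(\sigma)$-norms of $g$ and $f$ are handled by the separate ($D$- and $D^t$-)invertibility on $L^{2}(\sigma)$ rather than through $S$. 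So the proof is essentially a bookkeeping exercise chaining together \eqref{form183}, Theorem \ref{main4}, Theorem \ref{main2}, and Theorem \ref{t:FO1}; no new analysis is required beyond verifying these compatibility conditions. The only genuine subtlety — and the reason the statement is placed here rather than earlier — is that it presupposes the full invertibility of $\tfrac{1}{2}I \pm D$ and $\tfrac{1}{2}I \pm D^{t}$ on $L^{2}(\sigma)$, i.e. Theorem \ref{main2}, whose proof occupied the preceding sections.
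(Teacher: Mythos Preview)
Your proof is correct and follows essentially the same approach as the paper's: both represent an arbitrary element of $\mathbf{L}^{2}_{1,1/2}(\sigma)$ as a single layer potential via Theorem \ref{main4}, apply the identity \eqref{form183}, and then chain together the invertibility of $S \colon L^{2}(\sigma) \to L^{2}_{1,1/2}(\sigma)$ and of $\tfrac{1}{2}I + D^{t}$, $\tfrac{1}{2}I + D$ on $L^{2}(\sigma)$. Your discussion of why the hypothesis $Sf \in L^{2}(\sigma)$ in \eqref{form183} is automatically satisfied is a useful clarification that the paper leaves implicit.
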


\begin{proof} Let $f \in \mathbf{L}^{2}_{1,1/2}(\sigma)$. Since $f \in L^{2}_{1,1/2}(\sigma)$, there exists by Theorem \ref{main4} a function $g \in L^{2}(\sigma)$ such that $Sg = f \in L^{2}(\sigma)$, and $\|g\|_{L^{2}(\sigma)} \sim \|f\|_{1,1/2,\sigma}$. Consequently, by \eqref{form183},
\begin{align*} \|(\tfrac{1}{2}I + D)f\|_{1,1/2,\sigma} & = \|(\tfrac{1}{2}I + D)(Sg)\|_{1,1/2,\sigma}\\
& = \|S(\tfrac{1}{2}I + D^{t})g\|_{1,1/2,\sigma}\\
& \sim \|(\tfrac{1}{2}I + D^{t})g\|_{L^{2}(\sigma)} \sim \|f\|_{1,1/2,\sigma}, \end{align*}
using also the boundedness and invertibility of the operators $\tfrac{1}{2}I + D^{t} \colon L^{2}(\sigma) \to L^{2}(\sigma)$ and $S \colon L^{2}(\sigma) \to L^{2}_{1,1/2}(\sigma)$, recall Theorem \ref{main2} and Theorem \ref{main4}. Since $\tfrac{1}{2}I + D$ is also invertible on $L^{2}(\sigma)$ by Theorem \ref{main2}, we have now proven \eqref{form185}. \end{proof}

It remains to prove the surjectivity of $\tfrac{1}{2}I + D$ on $\mathbf{L}^{2}_{1,1/2}(\sigma)$. As usual, we first dispose of the special case $A \equiv 0$:
\begin{proposition}\label{p:Invert2} The operator $\tfrac{1}{2}I + D$ is invertible on $\mathbf{L}^{2}_{1,1/2}(\W)$.
\end{proposition}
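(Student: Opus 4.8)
The plan is to mimic the proof of Proposition \ref{p:invert}, exploiting the fact that when $A \equiv 0$ the boundary $\partial \Omega = \W$ is a vertical subgroup, which makes the operators $D$ and $D^{t}$ coincide. Recall from the proof of Proposition \ref{p:invert} that the inward horizontal normal of $\{(x,y,t) : x < 0\}$ is the constant vector $(-1,0)$ in the $\{X,Y\}$-frame, and that the restriction of $XG$ to $\W$ is an even function. Consequently $D^{t}_{0}f = D_{0}f$ as principal value operators, so in particular $\tfrac{1}{2}I + D = \tfrac{1}{2}I + D^{t}$ on $\W$. The task therefore reduces to showing that $\tfrac{1}{2}I + D^{t}$ is invertible on $\mathbf{L}^{2}_{1,1/2}(\W)$.

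The boundedness part is already contained in \eqref{form185}, which holds for every flag domain and in particular for $A \equiv 0$; what remains is surjectivity. Here I would use the factorisation identity \eqref{form183}, which reads $(\tfrac{1}{2}I + D)(Sf) = -S(\tfrac{1}{2}I + D^{t})f$ for $f \in L^{2}(\W)$ with $Sf \in L^{2}(\W)$. Given a target $h \in \mathbf{L}^{2}_{1,1/2}(\W)$, the fact that $h \in L^{2}_{1,1/2}(\W)$ lets me apply Theorem \ref{main4} to write $h = Sg$ for some $g \in L^{2}(\W)$ with $\|g\|_{L^{2}(\W)} \sim \|h\|_{1,1/2,\W}$. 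By Theorem \ref{main2}, $\tfrac{1}{2}I + D^{t}$ is invertible on $L^{2}(\W)$, so there exists $f \in L^{2}(\W)$ with $(\tfrac{1}{2}I + D^{t})f = -g$, and $\|f\|_{L^{2}(\W)} \lesssim \|g\|_{L^{2}(\W)}$. Then $Sf \in L^{2}_{1,1/2}(\W)$ by Theorem \ref{main4}, with $\|Sf\|_{1,1/2,\W} \sim \|f\|_{L^{2}(\W)}$; moreover $Sf \in L^{2}(\W)$ by Theorem \ref{main4} (the invertibility statement $S : L^{2} \to L^{2}_{1,1/2}$ together with $h\in L^2$ would need a little care — alternatively use that $f \in L^2(\W)$ forces $Sf$ into $L^2(\W)+L^\infty(\W)$, and the hypothesis $h=Sg\in L^2$ plus the identity pins down membership in $L^2$). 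Hence $Sf \in \mathbf{L}^{2}_{1,1/2}(\W)$, and \eqref{form183} gives $(\tfrac{1}{2}I + D)(Sf) = -S(\tfrac{1}{2}I + D^{t})f = Sg = h$. This exhibits a preimage of $h$, so $\tfrac{1}{2}I + D$ is surjective; combined with \eqref{form185} (which already yields injectivity and closed range) it is invertible.

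The one technical point I would want to pin down carefully — and I expect this to be the only real obstacle — is the membership $Sf \in L^{2}(\W)$, needed both to apply \eqref{form183} and to land in $\mathbf{L}^{2}_{1,1/2}(\W)$ rather than merely $L^{2}_{1,1/2}(\W)$. The cleanest route is: since $h = Sg \in L^2(\W)$ and $(\tfrac12 I + D^t)f = -g$, apply $S$ to get $S(\tfrac12 I + D^t)f = -Sg = -h$, and then \eqref{form183} rewrites the left side as $(\tfrac12 I + D)(Sf)$; since $\tfrac12 I + D$ is invertible on $L^2(\W)$, we get $Sf = -(\tfrac12 I + D)^{-1} h \in L^2(\W)$, closing the loop. (One must double-check that \eqref{form183} is applicable with this $f$, i.e. that a priori $Sf \in L^2(\W)$ or at least in a class where the identity was proved; the proof of Proposition \ref{prop12} establishes \eqref{SanD}, and hence \eqref{form183}, under exactly the hypothesis $f \in L^p(\sigma)$ with $Sf \in L^p(\sigma)$, so a short approximation argument using $f_j := f\mathbf 1_{E_j} \in B(\W)$ as in Proposition \ref{prop12} removes the circularity.) Everything else is a direct transcription of the $A\equiv 0$ argument for $L^2$ together with the single-layer characterisation, so no new ideas beyond the $D_0 = D_0^t$ symmetry and the factorisation \eqref{form183} should be required.
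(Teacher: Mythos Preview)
Your approach is genuinely different from the paper's, and you have correctly identified the one place where it is delicate --- but the fix you propose does not work.

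The paper's proof does not use the factorisation \eqref{form183} at all. Instead it observes that on the \emph{abelian} subgroup $\W\cong\R^{2}$ the operator $\tfrac12 I+D$ commutes with translations, hence is a Fourier multiplier $f\mapsto (\hat u\hat f)^{\vee}$ with $\hat u\in L^{\infty}$. Invertibility on $L^{2}(\W)$ (Theorem~\ref{main2}) forces $\hat u^{-1}\in L^{\infty}$ as well. Since $\|f\|_{\mathbf{L}^{2}_{1,1/2}(\R^{2})}^{2}=\iint (1+\|(\xi,\tau)\|^{2})|\hat f(\xi,\tau)|^{2}\,d\xi\,d\tau$, multiplication by any bounded symbol with bounded inverse is automatically invertible on $\mathbf{L}^{2}_{1,1/2}(\R^{2})$. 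That is the whole argument.

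Your route via \eqref{form183} would, if it worked, prove surjectivity of $\tfrac12 I+D$ on $\mathbf{L}^{2}_{1,1/2}(\sigma)$ for \emph{every} flag domain at once, making the continuity method in the remainder of Section~\ref{s:Invert3} unnecessary. The obstruction is exactly the point you flag: you need $Sf\in L^{2}(\W)$ both to invoke \eqref{form183} and to place the preimage in $\mathbf{L}^{2}_{1,1/2}$ rather than merely $L^{2}_{1,1/2}$. Your proposed approximation $f_{j}=f\mathbf 1_{E_{j}}$ does give $Sf_{j}\in L^{2}$ and hence $(\tfrac12 I+D)(Sf_{j})=-S(\tfrac12 I+D^{t})f_{j}=:h_{j}$, but to pass to the limit in $L^{2}$ you would need $\{Sf_{j}\}$ (equivalently $\{h_{j}\}$) Cauchy in $L^{2}$. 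All you know is $(\tfrac12 I+D^{t})f_{j}\to -g$ in $L^{2}$, hence $h_{j}\to h$ in $L^{2}_{1,1/2}$; since $S$ is \emph{not} bounded $L^{2}\to L^{2}$ (on $\W$ its symbol $\widehat G(\xi,\tau)\sim\|(\xi,\tau)\|^{-1}$ is unbounded near the origin), you get no $L^{2}$ control on $h_{j}-h_{k}$, and the argument stalls. The approximation in Proposition~\ref{prop12} succeeds precisely because there $Sf\in L^{p}$ is a \emph{hypothesis}, which is what you are trying to prove.

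On $\W$ the gap can be closed: with the multiplier notation above, $\widehat{Sf}=\widehat G\hat f = -\hat u^{-1}\hat h\in L^{2}$ since $\hat u^{-1}\in L^{\infty}$ and $h\in L^{2}$. But that step \emph{is} the paper's argument, so your proof collapses to it. For general $A$ there is no obvious replacement, which is why the paper resorts to the method of continuity.
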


\begin{proof} On the (Abelian) vertical subgroup $(\W,\cdot) \cong (\R^{2},+)$, the operator $\tfrac{1}{2}I + D$ is a bounded operator on $L^{2}(\R^{2})$ which commutes with Euclidean translations. Therefore, by \cite[Theorem 2.5.10]{MR3243734}, the operator $\tfrac{1}{2}I + D$ can be expressed as convolution with a tempered distribution $u \in \mathcal{S}'(\R^{2})$ with $\hat{u} \in L^{\infty}(\R^{2})$. By Theorem \ref{main2}, the same assertions are true about the inverse $(\tfrac{1}{2}I + D)^{-1}$, which implies that also $\hat{u}^{-1} \in L^{\infty}(\R^{2})$. Since the norm of $f \in \mathbf{L}^{2}_{1,1/2}(\R^{2})$ can be expressed as
\begin{displaymath} \|f\|_{\mathbf{L}^{2}_{1,1/2}(\R^{2})}^2 = \iint (1 + \|(\xi,\tau)\|^{2})|\hat{f}(\xi,\tau)|^{2} \, d\xi \, d\tau, \end{displaymath}
recall Definition \ref{def:L2112}, it is now evident that Fourier multiplication by $\hat{u}$ defines a bounded invertible operator on $\mathbf{L}^{2}_{1,1/2}(\R^{2})$. \end{proof}

To prove the surjectivity of $\tfrac{1}{2}I + D$ on $\mathbf{L}^{2}_{1,1/2}(\sigma)$ in the general case, we aim to apply, once more, the method of continuity, Lemma \ref{l:kenig}. For this purpose, we recall the graph maps $\Gamma_{s} \colon \R^{2} \to \partial \Omega_{s} = \{(sA(y),y,t) : y,t \in \R\}$ used several times in the paper, see e.g. \eqref{eq:Gammas}, and define the auxiliary operators
\begin{itemize}
\item $R_{s}f := D(f \circ \Gamma_{s}^{-1}) \circ \Gamma_{s}$ and $R_{s}^{t}f := D^{t}(f \circ \Gamma_{s}^{-1}) \circ \Gamma_{s}$,
\item $T_{s} := \tfrac{1}{2}I + R_{s}$ and $T_{s}^{t} := \tfrac{1}{2}I + R_{s}^{t}$,
\item $S_{s}f := S(f \circ \Gamma_{s}^{-1}) \circ \Gamma_{s}$ (these operators are the same as in \eqref{op:Ss}).
\end{itemize}
There is a slight notational inconsistency regarding the operators $T_{s}$ and $T_{s}^{t}$. In Section \ref{s:surjectivity}, see \eqref{form74}-\eqref{form184}, we studied "$R_{s}$" and "$T_{s}$", which in the current notation are "$R_{s}^{t}$" and "$T_{s}^{t}$". This is important to keep in mind, because the properties of the operators $R_{s}^{t}$ and $T_{s}^{t}$ established in Section \ref{s:surjectivity} will be used, below, to infer similar properties for $R_{s}$ and $T_{s}$.

To prove that $\tfrac{1}{2}I + D$ is surjective on $\mathbf{L}^{2}_{1,1/2}(\sigma)$, it suffices to show that $T_{s}$ is surjective on $\mathbf{L}^{2}_{1,1/2}(\R^{2})$ (because $\mathbf{L}^{2}_{1,1/2}(\sigma) = \{f \in L^{2}(\sigma) : f \circ \Gamma_{s} \in \mathbf{L}^{2}_{1,1/2}(\R^{2})\}$). We have already established that the operators $T_{s}$ are bounded on $\mathbf{L}^{2}_{1,1/2}(\R^{2})$ and satisfy $\|T_{s}f\|_{\mathbf{L}^{2}_{1,1/2}(\R^{2})} \sim \|f\|_{\mathbf{L}^{2}_{1,1/2}(\R^{2})}$ by \eqref{form185}. Also, $T_{0}$ is invertible on $\mathbf{L}^{2}_{1,1/2}(\R^{2})$ by Proposition \ref{p:Invert2}. Therefore, by Lemma \ref{l:kenig}, it remains to verify that
\begin{equation}\label{form188} \|T_{r} - T_{s}\|_{\mathbf{L}^{2}_{1,1/2}(\R^{2}) \to \mathbf{L}^{2}_{1,1/2}(\R^{2})} \lesssim |r - s|, \qquad r,s \in [0,1]. \end{equation}
As usual, the implicit constant will only depend on $\mathrm{Lip}(A)$. Note that
\begin{displaymath} \|T_{r} - T_{s}\|_{L^{2}(\sigma) \to L^{2}(\sigma)} = \|T_{r}^{t} - T_{s}^{t}\|_{L^{2}(\sigma) \to L^{2}(\sigma)} \lesssim |r - s|, \qquad r,s \in [0,1], \end{displaymath}
by \eqref{form190}, so \eqref{form188} will follow once we manage to show that
\begin{equation}\label{form189} \|T_{r}\varphi - T_{s}\varphi\|_{1,1/2} \lesssim |r - s| \|\varphi\|_{1,1/2}, \qquad r,s \in [0,1], \, \varphi \in C^{\infty}_{c}(\R^{2}). \end{equation}
To prove \eqref{form189}, fix $\varphi \in C^{\infty}_{c}(\R^{2})$, and choose (using Theorem \ref{sInvert2}) functions $f_{r},f_{s} \in L^{2}(\R^{2})$ with the properties 
\begin{itemize}
\item $S_{r}f_{r} = \varphi = S_{s}f_{s}$,
\item $\|f_{r}\|_{2} \sim \|\varphi\|_{1,1/2} \sim \|f_{s}\|_{2}$.
\end{itemize}
Now, applying \eqref{form183} to the function $f_{r} \circ \Gamma_{r}^{-1} \in L^{2}(\sigma_{r})$, whose single layer potential also satisfies $S(f_{r} \circ \Gamma_{r}) = (S_{r}f_{r}) \circ \Gamma_{r}^{-1} = \varphi \circ \Gamma_{r}^{-1} \in L^{2}(\sigma_{r})$, we write
\begin{align*} T_{r}\varphi & = (\tfrac{1}{2}I + D)(\varphi \circ \Gamma_{r}^{-1}) \circ \Gamma_{r}\\
& = (\tfrac{1}{2}I + D)((S_{r}f_{r}) \circ \Gamma_{r}^{-1}) \circ \Gamma_{r}\\
& = (\tfrac{1}{2}I + D)(S(f_{r} \circ \Gamma_{r}^{-1})) \circ \Gamma_{r}\\
& = -S[(\tfrac{1}{2}I + D^{t})(f_{r} \circ \Gamma_{r}^{-1})] \circ \Gamma_{r}\\
& = -S_{r}[((\tfrac{1}{2}I + D^{t})(f_{r} \circ \Gamma_{r}^{-1}) \circ \Gamma_{r}] = -S_{r}[T_{r}^{t}(f_{r})], \qquad r \in \R. \end{align*} 
Consequently, introducing cross terms and using the estimate $\|S_{r} - S_{s}\|_{L^{2}(\R^{2}) \to L^{2}_{1,1/2}(\R^{2})} \lesssim |r - s|$ (established in \eqref{form191}), we have
\begin{align*} \|T_{r}\varphi - T_{s}\varphi\|_{1,1/2} & = \|S_{r}[T_{r}^{t}(f_{r})] - S_{s}[T_{s}^{t}(f_{s})]\|_{1,1/2}\\
& \leq \|S_{r}[T_{r}^{t}(f_{r}) - T_{s}^{t}(f_{s})]\|_{1,1/2} + \|(S_{r} - S_{s})[T_{s}^{t}(f_{s})]\|_{1,1/2}\\
& \lesssim \|[T_{r}^{t}(f_{r}) - T_{s}^{t}(f_{s})]\|_{2} + |r - s|\|T^{t}_{s}(f_{s})\|_{2}. \end{align*} 
Since $T^{t}_{s}$ is bounded on $L^{2}(\R^{2})$, the final term is bounded from above by $|r - s|\|f_{s}\|_{2} \sim |r - s|\|\varphi\|_{1,1/2}$, as desired. To bound the penultimate term, we introduce more cross terms:
\begin{displaymath} \|[T_{r}^{t}(f_{r}) - T_{s}^{t}(f_{s})]\|_{2} \leq \|T_{r}^{t}(f_{r} - f_{s})]\|_{2} + \|(T_{r}^{t} - T_{s}^{t})f_{s}\|_{2}. \end{displaymath} 
The last term is bounded from above by $|r - s|\|f_{s}\|_{2} \sim |r - s|\|\varphi\|_{1,1/2}$ by \eqref{form190}, while the penultimate term is bounded from above by
\begin{align*} \|f_{r} - f_{s}\|_{2} & = \|(S_{r}^{-1}S_{s} - I)f_{s}\|_{2}\\
& \sim \|S_{r}[(S_{r}^{-1}S_{s} - I)f_{s}]\|_{1,1/2}\\
& = \|(S_{s} - S_{r})f_{s}\|_{1,1/2} \lesssim |r - s|\|f_{s}\|_{2} \sim |r - s|\|\varphi\|_{1,1/2}.  \end{align*}
Above, we first used the invertibility of $S_{r} \colon L^{2}(\R^{2}) \to L^{2}_{1,1/2}(\R^{2})$, and finally the estimate \eqref{form191}. This concludes the proof of \eqref{form189}, and hence the proof of Theorem \ref{t:Invert}.

\section{Vertical Fourier transform of the single layer potential}\label{appA}

The purpose of this section is to study the vertical distributional Fourier transform (as in Definition \ref{def:vertFDist}) of the single layer potential $\mathcal{S}f$ in flag domains, and somewhat more general settings. In particular, we will prove Proposition \ref{prop1}. We start by deriving a formula for the distributional vertical Fourier transform of $\mathcal{S}f$:

\begin{lemma}\label{l:Fourier1} Let $\sigma$ be a measure on $\He$ of the form $\sigma = \mu \times \mathcal{L}^{1}$, where $\mu$ is a Radon measure on $\R^{2}$ satisfying
\begin{equation}\label{frostman} \mu(B(z,r)) \leq Cr^{\nu}, \qquad z \in \R^{2}, \, r > 0, \end{equation}
for some $\nu > 0$ and $C \geq 1$. Let $f \in C_{c}^{\infty}(\He)$, and write
\begin{displaymath} \mathcal{S}_{\sigma}f(p) := \int G(q^{-1} \cdot p)f(q) \, d\sigma(q), \qquad p \in \He, \end{displaymath}
where $G(p) := \|p\|^{-2}$. Then $\mathcal{S}_{\sigma}f \in L^{\infty}(\He) \cap C(\He)$, in particular $\mathcal{S}_{\sigma}f \in \mathcal{S}'(\He)$, and the vertical distributional Fourier transform of $\mathcal{S}_{\sigma}f$ (as in Definition \ref{def:vertFDist}) is the function
\begin{equation}\label{form25} \widehat{(\mathcal{S}_{\sigma}f)}(z,\tau) = \tfrac{1}{2}\int e^{\pi i \omega(z,w)\tau} K_{0}(\tfrac{\pi}{2} |\tau| |z - w|^{2})\hat{f}(w,\tau) \, d\mu(w), \qquad \tau \in \R \, \setminus \, \{0\}. \end{equation} 
The integral in \eqref{form25} converges absolutely for every $(z,\tau) \in \He$ with $\tau \neq 0$ and defines a function in $C(\He \, \setminus \, \{\tau = 0\})$. In \eqref{form25}, $K_{0}$ is the \emph{the modified Bessel function of the second kind of order $0$}, namely
\begin{displaymath} K_{0}(s) = \int_{0}^{\infty} e^{-s \cosh r} \, dr, \qquad s > 0, \end{displaymath}
and $\omega(w,z) = xy' - yx'$ for $w = (x,y),z = (x',y') \in \R^{2}$.
\end{lemma}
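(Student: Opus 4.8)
The plan is to compute $\widehat{(\mathcal{S}_\sigma f)}$ by pairing the distribution with a vertical Schwartz test function $\varphi\in\mathcal{S}(\He)$, using Definition \ref{def:vertFDist}, and then reducing everything to the single computation of the vertical Fourier transform of $t\mapsto G(z,t)=(\|(z,t)\|)^{-2}=(|z|^4+16t^2)^{-1/2}$. First I would check that $\mathcal{S}_\sigma f\in L^\infty(\He)\cap C(\He)$: this follows from the Frostman condition \eqref{frostman} and $G(p)\sim\|p\|^{-2}$ by the standard local/global splitting, writing $G=G\mathbf{1}_{B(1)}+G\mathbf{1}_{B(1)^c}$, noting $G\mathbf{1}_{B(1)}\in L^1(\sigma)$ whenever $\nu>0$ (so by dominated convergence its convolution with the bounded compactly supported $f$ is continuous and bounded), and $G\mathbf{1}_{B(1)^c}\in L^\infty$ with the global part handled trivially. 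Hence $\mathcal{S}_\sigma f\in\mathcal{S}'(\He)$ and its vertical Fourier transform is well-defined.

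The core computation is the identity
\begin{equation}\label{form68plan} \int_\R e^{-2\pi i t\tau}\,(|z|^4+16t^2)^{-1/2}\,dt = \tfrac12 K_0\!\left(\tfrac{\pi}{2}|z|^2|\tau|\right), \qquad z\neq 0,\ \tau\neq 0, \end{equation}
valid as an honest Lebesgue integral (the integrand is in $L^1(\R)$ in $t$ for $z\neq 0$). I would derive this by the subordination/cosine-integral representation: substitute $16t^2=|z|^4 s^2$ to reduce to $\int_\R e^{-iat}(1+t^2)^{-1/2}\,dt$, which is a classical representation of $2K_0(|a|)$ (e.g. via $(1+t^2)^{-1/2}=\int_0^\infty e^{-(1+t^2)^{1/2}\cdot}\cdots$, or directly citing the integral formula for $K_0$). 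Then, for $z=(x,y)$ and a general base point $w=(x',y')\in\R^2$, the group law gives $(w,t')^{-1}\cdot(z,\tau\text{-slot})$ contributing a phase: $G((w,s)^{-1}\cdot(z,t))=G(z-w,\,t-s-\tfrac12\omega(w,z))$, so the $t$-Fourier transform of $s\mapsto G$ at fixed $s$ picks up $e^{-2\pi i\tau(s+\frac12\omega(w,z))}$; after integrating against $f$ and using $\sigma=\mu\times\mathcal{L}^1$ plus Fubini, the $s$-integral turns $f$ into $\hat f(w,\tau)$ and the remaining phase is $e^{\pi i\omega(z,w)\tau}$ (note $\omega(w,z)=-\omega(z,w)$, fixing the sign). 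This produces exactly \eqref{form25}.

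To make the Fubini interchange rigorous and to verify that \eqref{form25} represents the \emph{distributional} vertical Fourier transform (not merely a formal manipulation), I would pair $\mathcal{S}_\sigma f$ with $\widehat{\varphi}$ for $\varphi\in\mathcal{S}(\He)$, use the definition $\widehat{(\mathcal{S}_\sigma f)}(\varphi)=(\psi\,\mathcal{S}_\sigma f)(\widehat\varphi)$ with $\psi\in\mathcal{B}(\He)$ a cutoff with $\psi\widehat\varphi=\widehat\varphi$, and justify all exchanges of integration using: boundedness of $\mathcal{S}_\sigma f$, the Schwartz decay of $\varphi$ and $\hat f$, the bound $|K_0(s)|\lesssim 1+\ln^+(1/s)$ near $0$ together with $\mu$-integrability in $w$ of $\ln^+(1/(|\tau||z-w|^2))$ (which holds for $\nu>0$ by a dyadic estimate), and the exponential decay $K_0(s)\lesssim s^{-1/2}e^{-s}$ at infinity to get an $L^1_{\mathrm{loc}}$ bound uniform away from $\tau=0$. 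The absolute convergence of the integral in \eqref{form25} for each fixed $(z,\tau)$ with $\tau\neq0$, and continuity on $\{\tau\neq0\}$, then follow from these same two asymptotics of $K_0$, the continuity of $(z,w,\tau)\mapsto e^{\pi i\omega(z,w)\tau}K_0(\tfrac\pi2|\tau||z-w|^2)\hat f(w,\tau)$, dominated convergence, and the compact support of $w\mapsto\hat f(w,\tau)$ inherited from $f\in C^\infty_c(\He)$.

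The main obstacle I anticipate is the bookkeeping around the distributional definition: because $\mathcal{S}_\sigma f$ blows up (mildly) near $\{\tau=0\}$ in its Fourier representation, one must be careful that the formula \eqref{form25} genuinely computes $\widehat{(\mathcal{S}_\sigma f)}$ as an element of $\mathcal{S}'(\He)$, i.e. that $(z,\tau)\mapsto\widehat{(\mathcal{S}_\sigma f)}(z,\tau)$ is locally integrable in $\R^3$ and that its pairing with Schwartz functions agrees with the abstract definition. Controlling the logarithmic singularity of $K_0$ near $\tau=0$ against the measure $\sigma$, and verifying $\int_K|\widehat{(\mathcal S_\sigma f)}|\,dz\,d\tau<\infty$ on compacta, is where the Frostman exponent $\nu>0$ is really used, and this is the step I would write out most carefully; everything else is routine once \eqref{form68plan} is in hand.
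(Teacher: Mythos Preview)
Your overall strategy---pair with a vertical Schwartz function, reduce to the one-dimensional Fourier transform of $t\mapsto G(z,t)$, and track the phase coming from the group law---is correct and matches the paper. However, there is a real error at the heart of your ``core computation'': the integrand $(|z|^4+16t^2)^{-1/2}$ is \emph{not} in $L^1(\R)$ in $t$, since it behaves like $\tfrac{1}{4|t|}$ as $|t|\to\infty$. So your displayed identity cannot be ``an honest Lebesgue integral''; it is only valid as the distributional Fourier transform of the tempered but non-integrable function $t\mapsto G(z,t)$---this is exactly the paper's \eqref{form31}--\eqref{form68}.

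This is not a cosmetic slip: it is precisely the obstruction that forces the paper to do something more elaborate. Once you move the $t$-integral inward you cannot evaluate it pointwise against $e^{-2\pi i t\tau}$. The paper instead multiplies $\mathcal{S}_\sigma f(z,\cdot)$ by $\widehat{\varphi^\epsilon}(t)$ for a compactly supported approximate identity $\varphi^\epsilon$, which makes every $t$-integral absolutely convergent; after Fubini the resulting kernel is a convolution $\Phi^\epsilon=\widehat{G_{z-w}}\ast\widehat{\psi^\epsilon}$, and one lets $\epsilon\to0$ by dominated convergence using the uniform bound $|\Phi^\epsilon((w,z),(s,\tau))|\lesssim_\beta|\tau|^{-\beta}|z-w|^{-2\beta}$ for any $0<2\beta<\nu$. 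Your $K_0$-asymptotics and $\mu$-integrability estimates are correct and are exactly what justify this last step. Your route can be salvaged without the mollifier: after pairing with a genuine Schwartz function $\hat\rho(t)$ (not with $e^{-2\pi i t\tau}$ alone), the inner integral $\int G(z-w,u)\hat\rho(u+a)\,du$ is absolutely convergent (with only a logarithmic blow-up in $|z-w|$, hence $\mu$-integrable for any $\nu>0$), and then the distributional identity \eqref{form68} rewrites it as $\tfrac12\int K_0(\tfrac\pi2|\tau||z-w|^2)e^{-2\pi ia\tau}\rho(\tau)\,d\tau$ before you do the remaining $(s,w)$-integrals. But you must route through the pairing, not through the bare oscillatory integral.
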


\begin{remark} In this section, as in the lemma above, we define $G(p) := \|p\|^{-2}$ for $p \in \He \, \setminus \, \{0\}$. This "$G$" differs from the fundamental solution of $-\bigtriangleup^{\flat}$ by a positive multiplicative constant (and we prefer to avoid writing that constant in every formula). \end{remark}

\begin{remark} We justify that $\mathcal{S}_{\sigma}f \in L^{\infty}(\He)$. The reason is that $f \, d\sigma$ is a compactly supported measure with $(f \, d\sigma)(B(p,r)) \lesssim_{f} r^{2 + \nu}$ for all $p \in \He$ and $r > 0$. Now, just note that $G(q^{-1} \cdot p) = d(p,q)^{-2}$ and split the integral defining $\mathcal{S}_{\sigma}f$ into dyadic annuli.  \end{remark}

\begin{remark} To prove Lemma \ref{l:Fourier1}, and understand the integral in \eqref{form25}, we need to know a few basic facts about $K_{0}$. First, $K_{0}$ is the distributional Fourier transform of the function $g(t) \mapsto (t^{2} + 1)^{-1/2}$, or more precisely
\begin{equation}\label{form31} \hat{g}(\tau) = 2K_{0}(2\pi |\tau|), \qquad \tau \neq 0. \end{equation}
Based on \eqref{form31}, one may (formally) compute the distributional vertical Fourier transform of $G$:
\begin{align} \widehat{G}(z,\tau) = \int_{\R} \frac{e^{-2\pi i t\tau}}{\sqrt{|z|^{4} + 16t^{2}}} \, dt & = \frac{|z|^{2}}{4} \int_{\R} \frac{e^{-2\pi i (|z|^{2}u)\tau/4}}{\sqrt{|z|^{4} + (u|z|^{2})^{2}}} \, du \notag\\
&\label{form68} = \tfrac{1}{4} \hat{g}(|z|^{2}\tau/4) = \tfrac{1}{2} K_{0}(\tfrac{\pi}{2}|\tau||z|^{2}), \quad |\tau||z| \neq 0. \end{align} 
The asymptotic behaviour of the function $K_{0}$ is well-understood, both near the origin, and at infinity. Near the origin, $K_{0}$ has a logarithmic singularity, see \cite[(A.3)]{MR3229830}, but for us it suffices to know that
\begin{equation}\label{K0asymp1} |K_{0}(s)| \lesssim_{\beta} s^{-\beta}, \qquad 0 < s \leq 1, \, \beta > 0. \end{equation}
At infinity, $K_{0}$ decays exponentially, see \cite[(A.5)]{MR3229830}:
\begin{equation}\label{K0asymp2} |K_{0}(s)| \lesssim e^{-s}, \qquad s \gtrsim 1. \end{equation}
\end{remark}
We then prove Lemma \ref{l:Fourier1}. Deriving the formula \eqref{form25} formally is quite straightforward, but justifying the computation properly takes some technical work.

\begin{proof}[Proof of Lemma \ref{l:Fourier1}] We need to show that
\begin{displaymath} \int_{\He} \mathcal{S}_{\sigma}f(p) \cdot \widehat{\varphi}(p) \, dp = \int_{\He} H(p) \cdot \varphi(p) \, dp, \qquad \varphi \in C^{\infty}_{c}(\He), \end{displaymath}
where $H$ is the function appearing in \eqref{form25}, and $\widehat{\varphi}$ is the vertical Fourier transform of $\varphi$, recall \eqref{form52}. We will accomplish this by writing $\rho := \varphi_{z} \in C_{c}^{\infty}(\R)$ for $z \in \R^{2}$ fixed, and showing that \begin{equation}\label{form67} \int_{\R} \mathcal{S}_{\sigma}f(z,t) \cdot \hat{\rho}(t) \, dt = \int_{\R} H(z,\tau) \cdot \rho(\tau) \, d\tau. \end{equation}
To make sense of these integrals, let us remark that $\tau \mapsto H(z,\tau) \in L^{1}(\R)$ for every $z \in \R^{2}$ fixed. Indeed, applying the local growth bound \eqref{K0asymp1} with $2\beta < \nu$, we have
\begin{displaymath} \int_{\R} \int |K_{0}(\tfrac{\pi}{2}|\tau||z - w|^{2})|\hat{f}(w,\tau)| \, d\mu w \, d\tau \lesssim \int_{\{w : f_{w} \neq 0\}} |z - w|^{-2\beta} \int |\tau|^{-\beta}|\hat{f}(w,\tau)| \, d\tau \, d\mu w < \infty. \end{displaymath}
Now, to prove \eqref{form67}, let $\{\varphi^{\epsilon}\}_{\epsilon > 0}$ be a standard (even, compactly supported) approximate identity in $\R$. Then $\widehat{\varphi^{\epsilon}} \in \mathcal{S}(\R)$ for $\epsilon > 0$ fixed, and $\widehat{\varphi^{\epsilon}} \to 1$ uniformly on compact sets as $\epsilon \to 0$. Also $t \mapsto (\mathcal{S}_{\sigma}f)(z,t)\widehat{\varphi^{\epsilon}}(t) \in L^{1}(\R)$ for $\epsilon > 0$ fixed. Consequently,
\begin{align} \int_{\R} (\mathcal{S}_{\sigma}f)(z,t) \cdot \hat{\rho}(t) \, dt & = \lim_{\epsilon \to 0} \int_{\R} (\mathcal{S}_{\sigma}f)(z,t)\widehat{\varphi^{\epsilon}}(t) \cdot \hat{\rho}(t) \, dt \notag\\
&\label{form21a} = \lim_{\epsilon \to 0} \int_{\R} \rho(\tau) \left[ \int_{\R} e^{-2\pi i t\tau} (\mathcal{S}_{\sigma}f)(z,t)\widehat{\varphi^{\epsilon}}(t) \, dt \right] \, d\tau. \end{align}
We next expand the expression inside the square brackets:
\begin{align} \int_{\R} & e^{-2\pi i t\tau} (\mathcal{S}_{\sigma}f)(z,t)\widehat{\varphi^{\epsilon}}(t) \, dt \notag\\
& = \int_{\R} e^{-2\pi it\tau} \widehat{\varphi^{\epsilon}}(t) \iint G((w,s)^{-1} \cdot (z,t))f(w,s) \, ds \, d\mu w \, dt \notag\\
& = \int_{\R} e^{-2\pi i t\tau} \widehat{\varphi^{\epsilon}}(t) \iint G(z -w,t - s - \tfrac{1}{2}\omega(w,z))f(w,s) \, ds \, d\mu w \, dt \notag \\
& = \iint f(w,s) \int_{\R} e^{-2\pi i t\tau} G(z - w,t - s - \tfrac{1}{2}\omega(w,z))\widehat{\varphi^{\epsilon}}(t) \, dt \, ds \, d\mu w \notag \\
& = \iint f(w,s) \int_{\R} e^{-2\pi i(u + s + \tfrac{1}{2}\omega(w,z))\tau} G(z - w,u)\widehat{\varphi^{\epsilon}}(u + s + \tfrac{1}{2}\omega(w,z)) \, du \, ds \, d\mu w \notag\\
&= \int e^{\pi i \omega(z,w)\tau} \int_{\R} e^{-2\pi i s\tau} f(w,s) \int_{\R} e^{-2\pi i u\tau}G(z - w,u)\widehat{\varphi^{\epsilon}}(u + s + \tfrac{1}{2}\omega(w,z)) \, du \, ds \, d\mu w \notag\\
&\label{form17a}  =: \int e^{\pi i \omega(z,w)\tau} \int_{\R} e^{-2\pi i s\tau} f(w,s) \cdot \Phi^{\epsilon}((w,z),(s,\tau)) \, ds \, d\mu w, \end{align}
where 
\begin{align*} \Phi^{\epsilon}((w,z),(s,\tau)) & = \int_{\R} e^{-2\pi i u\tau}G(z - w,u)\widehat{\varphi^{\epsilon}}(u + s + \tfrac{1}{2}\omega(w,z)) \, du\\
& =: \int_{\R} e^{-2\pi i u\tau}G_{z - w}(u)\psi^{\epsilon}_{s,z,w}(u) \, du = \widehat{G_{z - w}} \ast \widehat{\psi^{\epsilon}_{s,z,w}}(\tau), \end{align*}
where
\begin{displaymath} G_{z - w}(u) := G(z - w,u) \quad \text{and} \quad \psi^{\epsilon}_{s,z,w}(u) := \widehat{\varphi^{\epsilon}}(u + s + \tfrac{1}{2}\omega(w,z)). \end{displaymath}
We know the Fourier transforms of both $G_{z - w}$ and $\psi_{s,z,w}$:
\begin{displaymath} \widehat{G_{z - w}}(\xi) \stackrel{\eqref{form68}}{=} \tfrac{1}{2}K_{0}(\tfrac{\pi}{2} |\xi||z - w|^{2}) \quad \text{and} \quad \widehat{\psi^{\epsilon}_{s,z,w}}(\xi) = e^{2\pi i (s + \tfrac{1}{2}\omega(w,z))\xi}\varphi^{\epsilon}(\xi). \end{displaymath} 
Consequently,
\begin{equation}\label{form19a} \Phi^{\epsilon}((w,z),(s,\tau)) = \tfrac{1}{2}\int_{\R} e^{2\pi i(s + \tfrac{1}{2}\omega(z,w))(\tau - \xi)} K_{0}(\tfrac{\pi}{2} |\xi||z - w|^{2})\varphi^{\epsilon}(\tau - \xi) \, d\xi. \end{equation}
For $z,w,s,\tau$ fixed with $|\tau||z - w| \neq 0$, recalling that $\{\varphi^{\epsilon}\}_{\epsilon > 0}$ is an approximate identity, it is clear that 
\begin{equation}\label{form69} \Phi^{\epsilon}((w,z),(s,\tau)) \to \tfrac{1}{2}K_{0}(\tfrac{\pi}{2} |\tau||z - w|^{2}) \quad \text{as} \quad \epsilon \to 0. \end{equation}
However, for the purposes of applying dominated convergence, we need some estimates for $\Phi^{\epsilon}((w,z),(s,\tau))$ which do not depend on $\epsilon$. To obtain them, we recall from \eqref{K0asymp1} that $|K_{0}(x)| \lesssim_{\beta} |x|^{-\beta}$ for any $\beta > 0$,
and we also note that $\varphi^{\epsilon} \lesssim \epsilon^{-1}\mathbf{1}_{[-\epsilon,\epsilon]}$. Then, we fix $w \neq z$, $\tau \neq 0$, and $0 < \beta < 1$, and consider the following two cases:
\begin{itemize}
\item First, assume that $0 < \epsilon \leq |\tau|/2$. Then, using the expression \eqref{form19a},
\begin{displaymath} |\Phi^{\epsilon}((z,w),(s,\tau))| \lesssim \frac{1}{\epsilon} \int_{\tau - \epsilon}^{\tau + \epsilon} (|\xi||z - w|^{2})^{-\beta} \, d\xi \lesssim |\tau|^{-\beta}|z - w|^{-2\beta}. \end{displaymath}
\item Second, assume that $\epsilon > |\tau|/2$. Then, by similar reasoning,
\begin{displaymath} |\Phi^{\epsilon}((z,w),(s,\tau))| \lesssim \frac{1}{\epsilon} \int_{-3\epsilon}^{3\epsilon} (|\xi||z - w|^{2})^{-\beta} \, d\xi \lesssim \epsilon^{-\beta} |z - w|^{-2\beta} \lesssim |\tau|^{-\beta}|z - w|^{-2\beta}. \end{displaymath}
\end{itemize}
The two estimates agree, so we have established that
\begin{displaymath} \sup_{\epsilon > 0} |\Phi^{\epsilon}((z,w),(s,\tau))| \lesssim_{\beta} |\tau|^{-\beta}|z - w|^{-2\beta}, \qquad \tau \neq 0, \, z \neq w, \end{displaymath}
for any $0 < \beta < 1$. We now plug \eqref{form17a} back into \eqref{form21a}, and put absolute values inside to find that
\begin{align*} \int_{\R} & |\rho(\tau)| \int \int_{\R} |f(w,s)| \cdot |\Phi^{\epsilon}((w,z),(s,\tau))| \, ds \, d\mu w \, d\tau\\
& \lesssim_{f} \int_{\R} |\tau|^{-\beta}|\rho(\tau)| \int |z - w|^{-2\beta} \, d\mu w \, d\tau \lesssim_{\beta,\rho} \infty,  \end{align*}  
as long as $0 < 2\beta < \min\{\nu,2\}$, where $\nu > 0$ was the growth exponent of $\mu$, recall \eqref{frostman}. This estimate justifies the use of the dominated convergence theorem, and \eqref{form69} yields
\begin{align*} \int_{\R} (\mathcal{S}_{\sigma}f)(z,t) \cdot \hat{\rho}(t) \, dt & = \lim_{\epsilon \to 0} \int_{\R} \rho(\tau) \left[ \int_{\R} e^{-2\pi i t\tau} (\mathcal{S}_{\sigma}f)(z,t)\widehat{\varphi^{\epsilon}}(t) \, dt \right] \, d\tau\\
& = \int_{\R} \rho(\tau) \left[ \tfrac{1}{2}\int e^{\pi i \omega(z,w)\tau}\int_{\R} e^{-2\pi i s\tau} f(w,s) K_{0}(\tfrac{\pi}{2} |\tau||z - w|^{2}) \, ds \, d\mu w \right] \, d\tau\\
& = \int_{\R} \rho(\tau) \left[ \tfrac{1}{2}\int e^{\pi i \omega(z,w)\tau} K_{0}(\tfrac{\pi}{2}|\tau||z - w|^{2})\hat{f}(w,\tau) \, d\mu w \right] \, d\tau. \end{align*} 
This concludes the proof of the lemma. \end{proof}

Our next goal will be to show that $\mathcal{S}_{\sigma}f \in \mathcal{G}(\Omega)$ (recall the "good" class of vertical distributions from Definition \ref{classG}), where $\Omega = (\spt \mu)^{c} \times \R$. To this end, we will need to understand the $\partial_{x}$ and $\partial_{y}$ derivatives of $\widehat{\mathcal{S}_{\sigma}f}$. Taking a look at the formula \eqref{form25}, this entails understanding the derivatives of $K_{0}$. We record some well-known facts:
\begin{lemma}\label{besselDerivative} We have $K_{0}'(s) = -K_{1}(s)$, and 
\begin{equation}\label{form29} K_{1}'(s) = -K_{0}(s) - \tfrac{1}{s}K_{1}(s), \qquad s > 0. \end{equation}
Here $K_{1}$ is the modified Bessel function of the second kind of index $1$, defined by
\begin{equation}\label{form26} K_{1}(s) = \int_{0}^{\infty} (\cosh r)e^{-s \cosh r} \, dr. \end{equation}
\end{lemma}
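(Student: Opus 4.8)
The statement to prove is Lemma~\ref{besselDerivative}, which records the derivative identities $K_0' = -K_1$ and $K_1' = -K_0 - \tfrac{1}{s}K_1$, where $K_0, K_1$ are given by the integral representations
\begin{displaymath} K_0(s) = \int_0^\infty e^{-s\cosh r} \, dr \quad \text{and} \quad K_1(s) = \int_0^\infty (\cosh r) e^{-s\cosh r} \, dr. \end{displaymath}

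The plan is to differentiate under the integral sign and then integrate by parts. For the first identity, observe that for $s > 0$ the integrand $e^{-s\cosh r}$ and its $s$-derivative $-(\cosh r)e^{-s\cosh r}$ are dominated, uniformly for $s$ in a compact subinterval of $(0,\infty)$, by an integrable function of $r$ (since $\cosh r \gtrsim e^{r}/2$ forces exponential decay). Hence differentiation under the integral is justified, and $K_0'(s) = -\int_0^\infty (\cosh r) e^{-s\cosh r}\, dr = -K_1(s)$ directly from the definition \eqref{form26}.

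For the second identity, differentiate \eqref{form26} under the integral sign (the same domination argument applies, now with an extra factor $\cosh r$, which does not affect integrability) to get
\begin{displaymath} K_1'(s) = -\int_0^\infty (\cosh^2 r) e^{-s\cosh r}\, dr. \end{displaymath}
The hard part — though it is only mildly hard — is to massage this into the claimed right-hand side. I would use the identity $\cosh^2 r = 1 + \sinh^2 r$ to split the integral as $-K_0(s) - \int_0^\infty (\sinh^2 r) e^{-s\cosh r}\, dr$, and then integrate by parts in the remaining integral, writing $\sinh^2 r \, e^{-s\cosh r} = (\sinh r) \cdot \tfrac{d}{dr}\big(-\tfrac{1}{s}e^{-s\cosh r}\big)$. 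The boundary terms vanish (at $r = 0$ because $\sinh 0 = 0$, and at $r = \infty$ by exponential decay), leaving
\begin{displaymath} \int_0^\infty (\sinh^2 r) e^{-s\cosh r}\, dr = \frac{1}{s}\int_0^\infty (\cosh r) e^{-s\cosh r}\, dr = \frac{1}{s}K_1(s), \end{displaymath}
so that $K_1'(s) = -K_0(s) - \tfrac{1}{s}K_1(s)$, which is \eqref{form29}. Alternatively, one could simply cite a standard reference for Bessel function identities (e.g. the same source \cite{MR3229830} already used for the asymptotics \eqref{K0asymp1}-\eqref{K0asymp2}), but the self-contained computation above is short enough to include. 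I do not anticipate any genuine obstacle; the only points requiring a word of care are the justification of differentiation under the integral sign and the vanishing of the boundary term at $r=0$, both of which are routine.
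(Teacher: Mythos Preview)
Your proof is correct. The paper's own proof handles $K_0' = -K_1$ exactly as you do (immediate from the integral definitions) and for \eqref{form29} simply cites \cite[(A.15)]{MR3229830}, whereas you supply the short self-contained computation via $\cosh^2 r = 1 + \sinh^2 r$ and integration by parts. Your argument is a genuine addition over the paper's reference-only treatment, and the points you flag as needing care (domination for differentiation under the integral, vanishing boundary terms) are handled correctly.
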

\begin{proof} The relation $K_{0}'(s) = -K_{1}(s)$ (for $s > 0$) is immediate from the definitions. For \eqref{form29}, see \cite[(A.15)]{MR3229830}. \end{proof}

\begin{remark} We record the asymptotic behaviour of $K_{1}$ near the origin, and at infinity: 
\begin{equation}\label{K1asymp1} |K_{1}(s)| \lesssim s^{-1}, \qquad s > 0, \end{equation}
and
\begin{equation}\label{K1asymp2} |K_{1}(s)| \lesssim e^{-s}, \qquad s \gtrsim 1. \end{equation}
For a reference, ee \cite[(A.3)-(A.5)]{MR3229830}.
\end{remark}
An iteration of Lemma \ref{besselDerivative} allows us to compute derivatives of $K_{0}$ of all orders in terms of the first two functions $K_{0}$ and $K_{1}$. In particular, we have the following bounds:
\begin{proposition} Let $m \geq 1$. Then there is a constant $C_{m} \geq 1$ such that
\begin{equation}\label{form27} |K_{0}^{(m)}(s)| \leq C_{m}s^{-m}, \qquad s > 0, \end{equation}
and
\begin{equation}\label{form28} |K_{0}^{(m)}(s)| \leq C_{m}e^{-s}, \qquad s \geq 1. \end{equation}
\end{proposition}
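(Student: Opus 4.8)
The plan is to prove the bounds \eqref{form27}--\eqref{form28} for $K_0^{(m)}$ by induction on $m$, using the recursion for Bessel function derivatives recorded in Lemma \ref{besselDerivative}. The key algebraic observation is that, iterating $K_0' = -K_1$ and \eqref{form29}, every derivative $K_0^{(m)}(s)$ can be written as a finite linear combination
\begin{displaymath} K_0^{(m)}(s) = \sum_{j} P_{m,j}(1/s) K_{\epsilon_j}(s), \qquad \epsilon_j \in \{0,1\}, \end{displaymath}
where each $P_{m,j}$ is a polynomial with the property that $P_{m,j}(1/s)$ is a (non-positive integer) power of $s$ of degree at most $m - 1$; that is, $|P_{m,j}(1/s)| \lesssim_m \max\{1, s^{-(m-1)}\}$. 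Indeed, differentiating such an expression once more produces, via $K_0' = -K_1$, $K_1' = -K_0 - s^{-1}K_1$, and $\tfrac{d}{ds} s^{-k} = -k s^{-k-1}$, only terms of the same shape, with the power of $1/s$ increased by at most one. The base cases $m = 1$ ($K_0' = -K_1$) and $m = 2$ ($K_0'' = K_0 + s^{-1}K_1$, using \eqref{form29}) are immediate.

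Granting this structural claim, the two asserted estimates follow by plugging in the known asymptotics of $K_0$ and $K_1$. For \eqref{form28}, i.e. $s \ge 1$: here $|P_{m,j}(1/s)| \lesssim_m 1$, and $|K_0(s)|, |K_1(s)| \lesssim e^{-s}$ by \eqref{K0asymp2} and \eqref{K1asymp2}, so $|K_0^{(m)}(s)| \lesssim_m e^{-s}$. For \eqref{form27}, i.e. $s > 0$: we split into $0 < s \le 1$ and $s \ge 1$. When $s \ge 1$ the already-established exponential bound gives $|K_0^{(m)}(s)| \lesssim_m e^{-s} \le e^{-1} \le s^{-m}$ trivially. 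When $0 < s \le 1$ we use $|P_{m,j}(1/s)| \lesssim_m s^{-(m-1)}$ together with the local bounds $|K_0(s)| \lesssim_\beta s^{-\beta}$ for any $\beta > 0$ (from \eqref{K0asymp1}) and $|K_1(s)| \lesssim s^{-1}$ (from \eqref{K1asymp1}); taking $\beta$ small, the worst term is $s^{-(m-1)} \cdot s^{-1} = s^{-m}$, so $|K_0^{(m)}(s)| \lesssim_m s^{-m}$ on $(0,1]$.

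The only mild subtlety — and the single point that needs to be written carefully rather than waved through — is the inductive bookkeeping of the polynomial factors $P_{m,j}$, specifically the claim that the degree of $1/s$ never exceeds $m-1$. This is a one-line check: differentiating a term $s^{-k} K_{\epsilon}(s)$ with $k \le m-1$ yields $-k s^{-k-1}K_\epsilon(s)$ plus $s^{-k} K_\epsilon'(s)$, and $K_\epsilon'$ is either $-K_1$ (no new power of $1/s$) or $-K_0 - s^{-1}K_1$ (one extra power of $1/s$, on the $K_1$ term only); in all cases the new power of $1/s$ is at most $k+1 \le m$, which is exactly the allowed degree at level $m+1$. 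I do not expect any genuine obstacle here; the statement is a routine consequence of the recursion in Lemma \ref{besselDerivative} and the asymptotics \eqref{K0asymp1}, \eqref{K0asymp2}, \eqref{K1asymp1}, \eqref{K1asymp2} already quoted in the text, and the proof in the paper presumably proceeds exactly along these lines.
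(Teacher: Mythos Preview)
Your approach is essentially the same as the paper's: both arguments prove inductively that $K_0^{(m)}(s)$ is a finite linear combination of $K_0(s)$ and $K_1(s)$ with coefficients that are polynomials in $1/s$ of degree at most $m-1$ (the paper phrases this via $K_1^{(m)}$ and then uses $K_0^{(m)} = -K_1^{(m-1)}$), and then plug in the asymptotics \eqref{K0asymp1}--\eqref{K1asymp2}. One small slip: in the $s \ge 1$ part of \eqref{form27} you write ``$e^{-s} \le e^{-1} \le s^{-m}$'', but $s^{-m} \le 1$ is not bounded below by $e^{-1}$; the correct (and equally trivial) justification is that $s^m e^{-s}$ is bounded on $[1,\infty)$, hence $e^{-s} \lesssim_m s^{-m}$.
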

\begin{proof} We first claim inductively that $K_{1}^{(m)}$ has the form 
\begin{equation}\label{form30} K_{1}^{(m)}(s) = \sum_{j = 0}^{m} \frac{a(j,m)}{s^{j}}K_{0}(s) + \sum_{j = 0}^{m} \frac{b(j,m)}{s^{j}} K_{1}(s), \end{equation} 
where $a(j,m),b(j,m) \in \Z$ for $0 \leq j \leq m$. The case $m = 0$ is clear. For $m \geq 1$, we use induction, the relation $K_{0}' = -K_{1}$, and also the equation \eqref{form29}:
\begin{align*} K_{1}^{(m)}(s) & \stackrel{\eqref{form29}}{=} -K_{0}^{(m - 1)}(s) - \partial_{s}^{(m - 1)} \tfrac{1}{s}K_{1}(s)\\
& = K_{1}^{(m - 2)}(s) - \sum_{j = 0}^{m - 1} \binom{m - 1}{j} (\partial_{s}^{j} \tfrac{1}{s}) K_{1}^{(m - 1 - j)}(s)\\
& = K_{1}^{(m - 2)}(s) - \sum_{j = 0}^{m - 1} \frac{c(j,m)}{s^{j + 1}} K_{1}^{(m - 1 - j)}(s),  \end{align*} 
where $c(j,m) = (-1)^{j}j!\binom{m - 1}{j}$. Applying the induction hypothesis, the expression on the last line can be re-written as
\begin{displaymath} \sum_{j = 0}^{m + 1} \frac{a(j,m + 1)}{s^{j}} K_{0}(s) + \sum_{j = 0}^{m + 1} \frac{b(j,m + 1)}{s^{j}}K_{1}(s). \end{displaymath}
This completes the proof of \eqref{form30}. Now, we infer from the relation $K_{0}'(s) = -K_{1}(s)$ that
\begin{displaymath} |K_{0}^{(m)}(s)| \lesssim_{m} \sum_{j = 0}^{m - 1} \frac{|K_{0}(s)|}{s^{j}} + \sum_{j = 0}^{m - 1} \frac{|K_{1}(s)|}{s^{j}}, \qquad m \geq 1. \end{displaymath}
Using the asymptotics \eqref{K0asymp1}-\eqref{K0asymp2} of $K_{0}$ (with $\beta = 1$) and \eqref{K1asymp1}-\eqref{K1asymp2} of $K_{1}$ completes the proof of the lemma.
\end{proof}

As corollaries, we obtain decay estimates for $\partial_{x}^{m}\partial_{y}^{n}\widehat{Sf}$:

\begin{lemma}\label{lemma3} Let $\mu$ be a Radon measure on $\R^{2}$ satisfying the growth estimate \eqref{frostman} for some $\nu > 0$, and let $f \in C^{\infty}_{c}(\He)$. As always, let $\sigma := \mu \times \mathcal{L}^{1}$ and $u := \mathcal{S}_{\sigma}f \in \mathcal{S}'(\He)$, and write 
\begin{displaymath} K_{f} := \{w \in \spt \mu : f_{w} \neq 0\} \subset \spt \mu, \end{displaymath}
where $f_{w}(t) = f(w,t)$. Then we have the following decay estimates for $\hat{u}(z,\tau)$ whenever $2\beta < \nu$:
\begin{equation}\label{uDecay1} |\hat{u}(z,\tau)| \lesssim_{f} \begin{cases} \|\hat{f}(\cdot,\tau)\|_{L^{\infty}(\R^{2})}|\tau|^{-\beta}, & \text{for } (z,\tau) \in \He, \, \tau \neq 0, \\ e^{-\tfrac{\pi}{2}|\tau|\dist(z,K_{f})^{2}}, & \text{for } |\tau| \geq \dist(z,K_{f})^{-2}. \end{cases}  \end{equation}
Moreover, if $\epsilon > 0$, we also have the following decay estimates for $\partial_{z}^{n}\hat{u}(z,\tau)$, where $\partial^{n}_{z} = \partial_{x}^{m}\partial_{y}^{n - m}$, and which are valid under the \emph{a priori} assumption $\dist(z,K_{f}) \geq \epsilon$:
\begin{equation}\label{uDecay2} |\partial_{z}^{n}\hat{u}(z,\tau)| \lesssim_{\epsilon,f,n} \begin{cases} 1, \\ (|\tau|\dist(z,K_{f}))^{n}e^{-\tfrac{\pi}{2}|\tau|\dist(z,K_{f})^{2}}, & \text{for } |\tau| \geq \dist(z,K_{f})^{-2}, \end{cases}  \end{equation}
Rough summary: $\hat{u}$ and all of its $z$-derivatives decay rapidly in the region $\{(z,\tau) : \dist(z,K_{f}) \geq \epsilon \text{ and } |\tau| \geq \dist(z,K_{f})^{-2}\}$.
\end{lemma}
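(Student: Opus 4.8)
The plan is to establish Lemma \ref{lemma3} directly from the integral formula \eqref{form25} for $\hat{u} = \widehat{(\mathcal{S}_{\sigma}f)}$, combined with the asymptotics of $K_{0}$ recorded in \eqref{K0asymp1}--\eqref{K0asymp2}, the derivative bounds \eqref{form27}--\eqref{form28}, and the growth hypothesis \eqref{frostman} on $\mu$. Throughout, we exploit that in the integrand $e^{\pi i \omega(z,w)\tau}K_{0}(\tfrac{\pi}{2}|\tau||z - w|^{2})\hat{f}(w,\tau)$, the variable $w$ ranges only over $K_{f} := \{w \in \spt\mu : f_{w} \neq 0\}$, since $\hat{f}(w,\cdot) \equiv 0$ for $w \notin K_{f}$ (as $f \in C^{\infty}_{c}(\He)$, $K_{f}$ is bounded, so $\mu(K_{f}) < \infty$ and $K_{f}$ has finite diameter).

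For the first line of \eqref{uDecay1}, I would put absolute values inside the integral \eqref{form25}, pull out $\|\hat f(\cdot,\tau)\|_{L^{\infty}(\R^{2})}$, and split $K_{f}$ according to whether $|z - w|^{2}|\tau| \leq 1$ or $> 1$. On the first piece, \eqref{K0asymp1} with exponent $\beta$ gives $|K_{0}(\tfrac{\pi}{2}|\tau||z-w|^{2})| \lesssim_{\beta} (|\tau||z-w|^{2})^{-\beta}$, and then $\int_{K_{f}} |z - w|^{-2\beta}\,d\mu(w) < \infty$ by \eqref{frostman} and a dyadic decomposition, provided $2\beta < \nu$; this contributes $\lesssim_{f}|\tau|^{-\beta}$. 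On the second piece, \eqref{K0asymp2} gives exponential decay $e^{-\tfrac{\pi}{2}|\tau||z-w|^{2}} \leq e^{-1} \lesssim (|\tau||z-w|^{2})^{-\beta}$ (since the argument exceeds $1$), and the same $\mu$-integral applies. For the second line of \eqref{uDecay1}, note that when $|\tau| \geq \dist(z,K_{f})^{-2}$ we have $|z - w|^{2}|\tau| \geq |\tau|\dist(z,K_{f})^{2} \geq 1$ for every $w \in K_{f}$, so \eqref{K0asymp2} applies uniformly, yielding $|K_{0}(\tfrac{\pi}{2}|\tau||z-w|^{2})| \lesssim e^{-\tfrac{\pi}{2}|\tau||z-w|^{2}} \leq e^{-\tfrac{\pi}{2}|\tau|\dist(z,K_{f})^{2}}$; bounding $\hat{f}$ by $\lesssim_{f}(1+|\tau|)^{-2}$ and integrating against the finite measure $\mu|_{K_{f}}$ absorbs the remaining factors into the implicit constant (the polynomial $(1+|\tau|)^{-2}$ loss is harmless against the exponential).

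For \eqref{uDecay2}, I would differentiate \eqref{form25} under the integral sign in the $z$-variables — legitimate because $\dist(z,K_{f}) \geq \epsilon$ keeps the integrand and all its $z$-derivatives bounded and dominated uniformly near $z$. The chain rule applied to $z \mapsto e^{\pi i\omega(z,w)\tau}K_{0}(\tfrac{\pi}{2}|\tau||z-w|^{2})$ produces a finite sum of terms, each of which is a product of: a polynomial in $\tau$ and in the components of $z - w$ of total "$\tau$-degree" at most $n$ and "$(z-w)$-degree" controlled by $n$ (coming from differentiating the exponent $\omega(z,w)\tau$, which is linear in $z$, and the argument $\tfrac{\pi}{2}|\tau||z-w|^{2}$), times a derivative $K_{0}^{(k)}(\tfrac{\pi}{2}|\tau||z-w|^{2})$ with $0 \leq k \leq n$. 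Using \eqref{form27}, each such term is $\lesssim_{n} (\text{poly in }\tau,z-w) \cdot (|\tau||z-w|^{2})^{-k}$, which together with $|z-w| \geq \epsilon$ and $\hat f(w,\tau) \lesssim_f (1+|\tau|)^{-2}$ and $\mu(K_f) < \infty$ gives the uniform bound (first line of \eqref{uDecay2}), after noting the net power of $|\tau|$ is nonpositive once the $(1+|\tau|)^{-2}$ is used. For the second line, in the regime $|\tau| \geq \dist(z,K_{f})^{-2}$ I would instead invoke \eqref{form28}: $|K_{0}^{(k)}(\tfrac{\pi}{2}|\tau||z-w|^{2})| \lesssim_{k} e^{-\tfrac{\pi}{2}|\tau||z-w|^{2}}$; the polynomial prefactors in $\tau$ and $z - w$ are then bounded by $(|\tau|\dist(z,K_f))^{n}$ times a harmless power that the exponential swallows, and $e^{-\tfrac{\pi}{2}|\tau||z-w|^{2}} \leq e^{-\tfrac{\pi}{2}|\tau|\dist(z,K_f)^{2}}$ finishes it after integrating over the finite measure $\mu|_{K_f}$.

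The only genuinely delicate point — and the step I would treat most carefully — is the bookkeeping for \eqref{uDecay2}: one must verify that differentiating the exponent $e^{\pi i \omega(z,w)\tau}$ and the Bessel argument does not generate a net positive power of $|\tau|$ that the available decay cannot absorb. Away from the exponential regime this is resolved by the $(1+|\tau|)^{-2}$ decay of $\hat f(w,\tau)$ (legitimate since $f \in C^{\infty}_{c}$ in all three variables, in particular Schwartz in $t$); in the regime $|\tau|\dist(z,K_f)^{2} \geq 1$, the stated prefactor $(|\tau|\dist(z,K_f))^{n}$ is exactly engineered to carry those powers, and the super-exponential decay of $K_{0}^{(k)}$ makes the $\mu$-integral over $K_f$ trivially finite. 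Everything else is a routine dyadic estimate plus the finiteness $\mu(K_f) < \infty$.
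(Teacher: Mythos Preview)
Your argument for \eqref{uDecay1} is essentially the paper's: put absolute values inside \eqref{form25}, use the asymptotics \eqref{K0asymp1}--\eqref{K0asymp2} of $K_{0}$, and integrate over $K_{f}$ using the growth bound \eqref{frostman}.

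For \eqref{uDecay2} your strategy is also close to the paper's, but there is a genuine slip in the bookkeeping you flag as ``delicate''. After Leibniz and Fa\`a di Bruno, a typical term has the form
\[
(\pi i w_{2}\tau)^{j}\, e^{\pi i\omega(z,w)\tau}\cdot K_{0}^{(m_{1}+m_{2})}(\tfrac{\pi}{2}|\tau||z-w|^{2})\cdot (\pi|\tau|(x-w_{1}))^{m_{1}}(\pi|\tau|)^{m_{2}},
\]
with $j+m_{1}+2m_{2}=n$. Applying \eqref{form27} with $k=m_{1}+m_{2}$ cancels exactly the $|\tau|^{m_{1}+m_{2}}$ coming from the chain rule on the Bessel argument, leaving a net factor $|\tau|^{j}$ from the $j$ derivatives that hit the exponential. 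Since $j$ ranges over $\{0,\dots,n\}$, your sentence ``the net power of $|\tau|$ is nonpositive once the $(1+|\tau|)^{-2}$ is used'' is false for every $n\geq 3$. The repair is immediate --- use $|\hat f(w,\tau)|\lesssim_{f,N}(1+|\tau|)^{-N}$ with $N\geq n$, which you essentially acknowledge in your ``Schwartz in $t$'' parenthetical --- but as written the claimed uniform bound does not follow.

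The paper sidesteps this issue and never invokes the $\tau$-decay of $\hat f$ for the first line of \eqref{uDecay2}. Instead it splits according to whether $|\tau||z-w|^{2}\leq 1$ or $\geq 1$. In the first regime $|\tau|^{j}\leq |z-w|^{-2j}\leq \epsilon^{-2j}$ is already bounded; in the second, the exponential decay \eqref{form28} of $K_{0}^{(k)}$ swallows any polynomial in $|\tau||z-w|^{2}$. This yields the uniform bound \eqref{form35} on the integrand itself, \emph{before} multiplying by $\hat f$, which is a cleaner intermediate statement. With your one-line fix both routes are correct.
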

\begin{proof} We start with the less technical estimate \eqref{uDecay1}. We simply need to use the asymptotics \eqref{K0asymp1}-\eqref{K0asymp2} to find that 
\begin{equation}\label{form34} |K_{0}(\tfrac{\pi}{2}|\tau||z - w|^{2})| \lesssim_{\beta} \begin{cases} (|\tau||z - w|^{2})^{-\beta}, & \text{for } |\tau||z - w| \neq 0,\\  e^{-\tfrac{\pi}{2}|\tau||z - w|^{2}}, & \text{for } |\tau||z - w|^{2} \geq 1. \end{cases}  \end{equation}
This bound is valid for any $\beta > 0$, and
\begin{displaymath} |\hat{u}(z,\tau)| \leq \int_{K_{f}} |K_{0}(\tfrac{\pi}{2}|\tau||z - w|^{2})||\hat{f}(w,\tau)| \, d\mu w \lesssim |\tau|^{-\beta}\|\hat{f}(\cdot,\tau)\|_{\infty}\int_{K_{f}} |z - w|^{-2\beta} \, d\mu w. \end{displaymath} 
Note that the integration may be restricted to $K_{f}$, since $\tau \mapsto \hat{f}(w,\tau) \equiv 0$ whenever $w \notin K_{f}$. If $2\beta < \nu$ (the growth exponent of $\mu$), the integral on the RHS converges, and hence we have the first part of \eqref{uDecay1}. To prove the second bound, fix $z \in \R^{2} \, \setminus \, K_{f}$ and $|\tau| \geq \dist(z,K_{f})^{-2}$. Then in particular $|\tau||z - w|^{2} \geq 1$ for all $w \in K_{f}$, and we may employ the second bound in \eqref{form34}:
\begin{displaymath} |\hat{u}(z,\tau)| \lesssim \int_{K_{f}} e^{-\tfrac{\pi}{2}|\tau||z - w|^{2}}|\hat{f}(w,\tau)| \, d\mu w \lesssim_{f} e^{-\tfrac{\pi}{2}|\tau|\dist(z,K_{f})^{2}},\end{displaymath}
as desired. This completes the proof of \eqref{uDecay1}.

We then move to the proof of \eqref{uDecay1}. We only treat the case $\partial_{z}^{n} = \partial_{x}^{n}$; the general case is very similar, but adds substantial notational inconvenience. Fix $w = (w_{1},w_{2}) \in K_{f}$ and $z = (x,y) \in \R^{2}$ with $\dist(z,K_{f}) \geq \epsilon$. From Fa\`a di Bruno's formula, and noting that $\partial_{x}^{j}(z \mapsto |z - w|^{2}) \equiv 0$ for $j \geq 3$, we deduce for $n \geq 1$ that
\begin{displaymath} \partial_{x}^{n} K_{0}(\tfrac{\pi}{2}|\tau||z - w|^{2}) = \sum_{m_{1} + 2m_{2} = n} \frac{\pi^{m_{1} + m_{2}}n!}{2m_{1}!m_{2}!}K_{0}^{(m_{1} + m_{2})}(\tfrac{\pi}{2}|\tau||z - w|^{2})|\tau|^{m_{1} + m_{2}}(x - w_{1})^{m_{1}} \end{displaymath}  
for $\tau \neq 0$. For $m_{1} + 2m_{2} = n$ fixed, we may derive from the asymptotics \eqref{form27}-\eqref{form28} the following bounds for the individual terms in the sum above:
\begin{align} |K_{0}^{(m_{1} + m_{2})} & (\tfrac{\pi}{2}|\tau||z - w|^{2})| |\tau|^{m_{1} + m_{2}} |x - w_{1}|^{m_{1}} &\notag \\
&\label{form24} \lesssim_{n} |z - w|^{-2(m_{1} + m_{2})}|x - w_{1}|^{m_{1}} \lesssim_{\epsilon,n} 1, \qquad \tau \neq 0, \end{align} 
and on the other hand
\begin{align} |K_{0}^{(m_{1} + m_{2})} & (\tfrac{\pi}{2}|\tau||z - w|^{2})| |\tau|^{m_{1} + m_{2}} |x - w_{1}|^{m_{1}} \notag\\
&\label{form32} \lesssim_{\epsilon,n} (|\tau||z - w|^{2})^{n} e^{-\tfrac{\pi}{2}|\tau||z - w|^{2}}, \qquad |\tau||z - w|^{2} \geq 1. \end{align}
Combining \eqref{form24}-\eqref{form32}, and summing over all indices $m_{1} + 2m_{2} = n$, we find that
\begin{equation}\label{form33} |\partial_{x}^{n}K_{0}(\tfrac{\pi}{2}|\tau||z - w|^{2})| \lesssim_{\epsilon,n} \begin{cases} 1, & \text{for } \tau \neq 0, \\  (|\tau||z - w|^{2})^{n} e^{-\tfrac{\pi}{2}|\tau||z - w|^{2}}, & \text{for } |\tau||z - w|^{2} \geq 1, \end{cases}  \end{equation}
whenever $n \geq 1$, $w \in K_{f}$, and and $\dist(z,K_{f}) \geq \epsilon$. A point worth mentioning is that $\tau \mapsto \partial_{x}^{n}K_{0}(\tfrac{\pi}{2}|\tau||z - w|^{2})$ has no singularity at the origin when $n \geq 1$ -- unlike $\tau \mapsto K_{0}(\tfrac{\pi}{2}|\tau||z - w|^{2})$ with a logarithmic singularity.

To estimate $\partial_{x}^{n}\hat{u}(z,\tau)$, we first differentiate (formally) under the integral sign:
\begin{equation}\label{form36} \partial_{x}^{n}\hat{u}(z,\tau) = \int_{K_{f}} \partial_{x}^{n}[z \mapsto e^{\pi i \omega(z,w)\tau}K_{0}(\tfrac{\pi}{2}|\tau||z - w|^{2})] \hat{f}(w,\tau) \, d\mu w, \quad \tau \neq 0. \end{equation}
Recalling that $\omega(z,w) = (xw_{2} - yw_{1})$, we record for $w \in K_{f}$ and $\dist(z,K_{f}) \geq \epsilon$ that
\begin{displaymath} |\partial_{x}^{m} e^{-\pi i \omega(z,w)\tau}| \lesssim (|\tau||w|)^{m} \lesssim_{f} |\tau|^{m} \lesssim_{\epsilon} (|\tau||z - w|^{2})^{m}, \qquad m \geq 0. \end{displaymath}
Therefore, using the Leibniz rule and \eqref{form33}, we have
\begin{align} |\partial_{x}^{n} & [z \mapsto e^{\pi i \omega(z,w)\tau}K_{0}(\tfrac{\pi}{2}|\tau||z - w|^{2})]| \notag\\
&\label{form35} \lesssim_{\epsilon,f,n} \begin{cases} 1, & \text{for } \tau \neq 0, \\  (|\tau||z - w|^{2})^{n} e^{-\tfrac{\pi}{2}|\tau||z - w|^{2}}, & \text{for } |\tau||z - w|^{2} \geq 1, \end{cases}  \end{align}
for $n \geq 1$, $w \in K_{f}$, and $\dist(z,K_{f}) \geq \epsilon$. The estimate \eqref{form35} clearly justifies the exchange of differentiation and integration at \eqref{form36}. With \eqref{form35} in hand, the proof of \eqref{uDecay2} follows just like the proof of \eqref{uDecay1} followed from \eqref{form34}, also using that $\sup\{|z - w| : w \in K_{f}\} \leq \diam(K_{f}) + \dist(z,K_{f}) \lesssim_{\epsilon} \dist(z,K_{f})$.
\end{proof}

We can finally prove (a shaper version of) Proposition \ref{prop1}. We also study the integrability properties of $T^{\alpha}(Sf)$ and $\nabla T^{\alpha}(Sf)$ w.r.t. measures supported well inside $\Omega$. 
\begin{cor}\label{cor3} Let $\sigma = \mu \times \mathcal{L}^{1}$, $f \in C^{\infty}_{c}(\He)$, and $K_{f} = \{w \in \spt \mu : f_{w} \neq 0\}$, as in the previous lemma. Write
\begin{displaymath} \Omega_{f} := U_{f} \times \R := K_{f}^{c} \times \R. \end{displaymath}
Then $u = Sf \in \mathcal{G}(\Omega_{f})$ (recall Definition \ref{classG}, or see the argument below). In particular, $u \in \mathcal{G}(\Omega)$ with $\Omega := (\spt \mu)^{c} \times \R$.

Moreover, let $\alpha,\epsilon > 0$, let 
\begin{displaymath} U_{\epsilon} := U_{f,\epsilon} := \{z \in \R^{2} : \dist(z,K_{f}) \geq \epsilon\}, \end{displaymath}
and let $\mathfrak{M}$ be a measure on $\Omega_{\epsilon} = U_{\epsilon} \times \R$ of the form $\mathfrak{M} := \eta \times \mathcal{L}^{1}$, where $\eta$ is a Radon measure on $\R^{2}$ with $\spt \eta \subset U_{\epsilon}$, and
\begin{equation}\label{eta} \eta(B(x,R)) \leq R^{\theta}, \qquad R \geq 1, \end{equation} 
for some $\theta \in [0,2]$. Then, $T^{\alpha}u \in L^{2}(\mathfrak{M})$ and $\nabla (T^{\alpha}u) \in L^{2}(\mathfrak{M})$.
\end{cor}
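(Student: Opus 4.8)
My plan is to prove the two assertions separately, both resting on the explicit formula \eqref{form25} for the vertical Fourier transform $\widehat{\mathcal{S}_{\sigma}f}$ supplied by Lemma \ref{l:Fourier1}, and on the pointwise decay estimates of Lemma \ref{lemma3}. For the first assertion I would first record that $u := \mathcal{S}_{\sigma}f \in L^{\infty}(\He)$ by Lemma \ref{l:Fourier1}, so $u \in \mathcal{S}'(\He) \subset \mathcal{S}'(\Omega_{f})$ and $\hat{u}$ is given by \eqref{form25}. To see that $u \in \mathcal{G}(\Omega_{f})$ I must check $\widehat{\partial u} = (2\pi i\tau)^{n}\partial_{x}^{k}\partial_{y}^{m}\hat{u} \in L^{1}(K \times \R)$ for every compact $K \subset U_{f}$ and all $n,k,m$. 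Such a $K$ has $\epsilon_{K} := \dist(K,K_{f}) > 0$, so Lemma \ref{lemma3} applies on $K$. I would split the $\tau$-integral: for $|\tau| \leq \epsilon_{K}^{-2}$ the bounds \eqref{uDecay1}--\eqref{uDecay2} show that $\partial_{x}^{k}\partial_{y}^{m}\hat{u}$ is bounded on $K \times \{|\tau| \leq \epsilon_{K}^{-2}\}$ (by a constant if $k+m \geq 1$, and by a multiple of $1 + |\log|\tau||$ if $k = m = 0$, using $|z-w|^{2} \geq \epsilon_{K}^{2}$ inside $K_{0}$), so after multiplying by the polynomial $|\tau|^{n}$ its $L^{1}$-contribution over this region is finite; for $|\tau| \geq \epsilon_{K}^{-2}$ one always has $|\tau| \geq \dist(z,K_{f})^{-2}$, whence \eqref{uDecay1}--\eqref{uDecay2} give decay $e^{-\tfrac{\pi}{2}|\tau|\epsilon_{K}^{2}}$ up to polynomial factors, which is integrable over $K \times \R$. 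Since $(\spt\mu)^{c} \subset U_{f}$, the inclusion $u \in \mathcal{G}(\Omega)$ follows for free.

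For the second assertion I would use that $\widehat{T^{\alpha}u} = |\tau|^{\alpha}\hat{u}$ (by \eqref{form55}) and $\nabla(T^{\alpha}u) = T^{\alpha}(\nabla u)$ (Proposition \ref{prop2}), together with $\widehat{Xu} = \partial_{x}\hat{u} - \pi iy\tau\hat{u}$ and $\widehat{Yu} = \partial_{y}\hat{u} + \pi ix\tau\hat{u}$. Writing $\mathfrak{M} = \eta \times \mathcal{L}^{1}$ and applying Plancherel fibrewise in the $t$-variable, the claim reduces to
\[ \int_{\R^{2}}\int_{\R} |\tau|^{2\alpha}\Big(|\hat{u}|^{2} + |\partial_{x}\hat{u}|^{2} + |\partial_{y}\hat{u}|^{2} + |z|^{2}|\tau|^{2}|\hat{u}|^{2}\Big)(z,\tau)\, d\tau\, d\eta(z) < \infty, \]
and finiteness of this quantity will simultaneously justify the use of Plancherel and Fubini. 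Since $\spt\eta \subset U_{\epsilon}$, for $\eta$-a.e.\ $z$ one has $\dist(z,K_{f}) \geq \epsilon$, and, $K_{f}$ being bounded, $|z| \lesssim_{f,\epsilon}\dist(z,K_{f})$. I would then fix a small $\beta > 0$ and bound the integrand region by region via Lemma \ref{lemma3}: on $\{|\tau| \geq \dist(z,K_{f})^{-2}\}$ (which contains all of $\{|\tau| \geq \epsilon^{-2}\}$) every term is at most $P(|\tau|,\dist(z,K_{f}))\,e^{-\pi|\tau|\dist(z,K_{f})^{2}}$ for a fixed polynomial $P$, while on $\{|\tau| \leq \epsilon^{-2},\ \dist(z,K_{f}) < |\tau|^{-1/2}\}$ I would use $|\hat{u}| \lesssim_{f}\|\hat{f}(\cdot,\tau)\|_{\infty}|\tau|^{-\beta}\dist(z,K_{f})^{-2\beta}$ and $|\partial_{z}\hat{u}| \lesssim_{f,\epsilon} 1$. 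In each region the $z$-integral is evaluated by summing over dyadic annuli $\{2^{j} \leq \dist(z,K_{f}) < 2^{j+1}\}$, of $\eta$-measure $\lesssim 2^{(j+1)\theta}$: the exponential factor makes the $j$-sum converge and decay rapidly in $|\tau|$, and in the polynomial region the $j$-sum up to $j \sim \log|\tau|^{-1/2}$ contributes at most $|\tau|^{-\theta/2}$ for the first three terms and $|\tau|^{-1-\theta/2+2\beta}$ for the term $|z|^{2}|\tau|^{2}|\hat{u}|^{2}$.

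Combining these with the outer factor $|\tau|^{2\alpha}$, the rapid decay $\|\hat{f}(\cdot,\tau)\|_{\infty}\lesssim_{N}(1+|\tau|)^{-N}$, and the factor $|\tau|^{2-2\beta}$ carried by the gradient term, the residual $\tau$-integral over $\{|\tau| \leq \epsilon^{-2}\}$ has integrand $\lesssim |\tau|^{2\alpha-\theta/2}$ (resp.\ $|\tau|^{2\alpha+1-\theta/2}$), which is integrable near $0$ for every $\alpha > 0$ precisely because $\theta \leq 2$ keeps these exponents above $-1$. The hard part here is exactly this bookkeeping: $\eta$ need not have finite mass, since $U_{\epsilon}$ is unbounded, so the $z$-integral cannot be dominated by a constant, and one is forced to trade the exponential decay of $\hat{u}$ in the far region $|\tau| \geq \dist(z,K_{f})^{-2}$ against the polynomial growth of $\eta$ over the $|\tau|^{-1/2}$-sized $z$-slab in the near region; the hypothesis $\theta \leq 2$ is what makes the gradient contribution, which carries the extra factor $|z| \sim \dist(z,K_{f})$, integrable at $\tau = 0$.
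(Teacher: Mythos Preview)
Your proof is correct and rests on exactly the same ingredients as the paper's---the explicit formula \eqref{form25}, the pointwise bounds of Lemma~\ref{lemma3}, and fibrewise Plancherel in the $t$-variable---so the first part (membership in $\mathcal{G}(\Omega_f)$) is essentially identical. For the $L^2(\mathfrak{M})$ estimates you swap the order of integration compared to the paper: you fix $\tau$ and perform the $z$-integral via dyadic annuli in $\dist(z,K_f)$, while the paper fixes $z$ and splits the $\tau$-integral at the shifted threshold $|\tau| = \dist(z,K_f)^{-2+\beta}$ (respectively $\dist(z,K_f)^{-3/2}$ for the $|z|^2|\tau|^2|\hat u|^2$ term). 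The paper's shifted threshold is a small trick that produces, after the $\tau$-integration, a decay in $z$ strictly better than $\dist(z,K_f)^{-2}$, which is then directly $\eta$-integrable; your route trades this trick for the annular decomposition and requires tracking the exponent $\theta$ explicitly through the $j$-sum. Both approaches land on the same integrability condition at $\tau = 0$.

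Two small slips in your bookkeeping are worth flagging, though neither is fatal. First, the bound \eqref{eta} is only stated for $R \geq 1$, so the annulus-measure estimate $\lesssim 2^{(j+1)\theta}$ needs a word for the finitely many annuli with $\epsilon \leq 2^j < 1$ (each sits in a fixed ball of radius $\sim 1 + \diam K_f$, so has bounded $\eta$-measure). Second, in your final integrand $|\tau|^{2\alpha-\theta/2}$ you have silently absorbed the factor $|\tau|^{-2\beta}$ from the bound $|\hat u| \lesssim |\tau|^{-\beta}$; the honest exponent is $2\alpha - 2\beta - \theta/2$, and you need to record that $\beta$ is chosen with $\beta < \alpha$ for integrability at $\tau = 0$ when $\theta = 2$. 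With those clarifications your argument goes through.
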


\begin{remark} The only applications of $\mathfrak{M}$ in this paper will be measures of the form $\mathcal{L}^{3}|_{\Omega_{\epsilon}} = \mathcal{L}^{2}|_{U_{\epsilon}} \times \mathcal{L}^{1}$ and $\calH^{1}|_{\Gamma} \times \mathcal{L}^{1}$, where $\Gamma \subset U_{\epsilon}$ is a Lipschitz graph. As another remark, the assumption $\alpha > 0$ is necessary, at least for the claim "$T^{\alpha}u \in L^{2}(\Omega_{\epsilon})$". Indeed $|u(p)| \sim_{\epsilon} \|p\|^{-2}$ for $p \in \Omega_{\epsilon}$, so $u \notin L^{2}(\Omega_{\epsilon})$. On the other hand, $\nabla (T^{\alpha}u) \in L^{2}(\mathfrak{M})$ remains true for $\alpha = 0$, under the assumptions of the corollary, since $|\nabla u(p)| \lesssim \|p\|^{-3}$ for $p \in \Omega_{\epsilon}$. \end{remark}

\begin{proof}[Proof of Corollary \ref{cor3}] The distributional vertical Fourier transform or $\partial_{x}^{k}\partial_{y}^{m}\partial_{t}^{n}u$ equals $(2\pi i \tau)^{n}\partial_{x}^{k}\partial_{y}^{m}\hat{u}$ by Lemma \ref{lemma2}. Here $\partial_{x}^{k}\partial_{y}^{m}\hat{u}$ \emph{a priori} refers to a distributional derivative of $\hat{u}$, but since $\hat{u}$ is smooth in the $x$ and $y$ variables in $\Omega_{f} \, \setminus \, \{\tau = 0\}$, and both $\hat{u}$ and $\partial_{x}^{k}\partial_{y}^{m}\hat{u}$ are locally integrable in $\Omega_{f}$ (all of this follows immediately from \eqref{uDecay1}-\eqref{uDecay2}), it is straightforward to check that the distribution coincides with the function $\partial_{x}^{k}\partial_{y}^{m}\hat{u} \in L^{1}_{\mathrm{loc}}(\Omega_{f}) \cap C^{\infty}(\Omega_{f} \, \setminus \, \{\tau = 0\})$. Therefore, the vertical Fourier transform of $\partial u$ is a locally integrable function in $\Omega_{f}$, it follows from another application of the decay estimates \eqref{uDecay1}-\eqref{uDecay2} that \emph{a fortiori} $\widehat{\partial u} \in L^{1}(K \times \R)$ for any compact set $K \subset U_{f}$ (since $\dist(K,K_{f}) > 0$). This completes the proof of $u \in \mathcal{G}(\Omega_{f})$.

Next, we fix $\alpha,\epsilon > 0$, and a measure of the form $\mathfrak{M} = \eta \times \mathcal{L}^{1}$ (as in the statement of the corollary), and claim that $T^{\alpha}u \in L^{2}(\mathfrak{M})$. Note that $T^{\alpha}u \in C^{\infty}(\Omega_{f})$ by Proposition \ref{prop2}, so there is no ambiguity in defining $T^{\alpha}u$ as a $\mathfrak{M}$ measurable function. By Plancherel, in the $t$-variable,
\begin{displaymath} \|T^{\alpha}u\|_{L^{2}(\mathfrak{M})}^{2} = \int \int_{\R} |\tau|^{2\alpha}|\hat{u}(z,\tau)|^{2} \, d\tau \, d\eta(z). \end{displaymath}
Recall that $\spt \eta \subset U_{\epsilon}$, and fix $z \in \spt \eta$. For fixed $\beta > 0$ with $2\beta < \nu$ (the growth exponent of $\mu$), we split the $\R$-integration and use the estimates in \eqref{uDecay1} as follows:
\begin{align}\label{form43} \int_{\R} |\tau|^{2\alpha}|\hat{u}(z,\tau)|^{2} \,  d\tau & \lesssim \int_{\{|\tau| < \dist(z,K_{f})^{-2 + \beta}\}} |\tau|^{2\alpha -2\beta} \, d\tau\\
&\label{form44} \qquad + \int_{\{|\tau| \geq \dist(z,K_{f})^{-2 + \beta}\}} e^{-\tfrac{\pi}{2}|\tau|\dist(z,K_{f})^{2}} \, d\tau. \end{align}
We now pick $\beta > 0$ so small that $1 + 2\alpha > (2 + \beta)/(2 - \beta) + 2\beta$, and note that
\begin{displaymath} \eqref{form43} \sim \dist(z,K_{f})^{-(2 - \beta)(1 + 2\alpha - 2\beta)} \lesssim_{\epsilon,\alpha,\beta} \dist(z,K_{f})^{-(2 + \beta)}. \end{displaymath}
This is good enough, since $\int \dist(z,K_{f})^{-(2 + \beta)} \, d\eta(z) < \infty$ by the growth bound \eqref{eta} for $\eta$. To treat the term on line \eqref{form44}, we make the change of variables $\tau \mapsto \xi/\dist(z,K_{f})^{2}$, so
\begin{displaymath} \eqref{form44}= \frac{1}{\dist(z,K_{f})^{2}} \int_{\{|\xi| \geq \dist(z,K_{f})^{\beta}\}} e^{-\tfrac{\pi}{2}|\xi|} \, d\xi \sim \frac{e^{-\tfrac{\pi}{2}\dist(z,K_{f})^{\beta}}}{\dist(z,K_{f})^{2}}. \end{displaymath} 
The RHS, as a function of $z$, is evidently $\eta$-integrable by \eqref{eta}. We have now shown that $T^{\alpha}u \in L^{2}(\mathfrak{M})$ for $\alpha > 0$, as claimed.

We then prove the final claim, namely that also $\nabla (T^{\alpha}u) \in L^{2}(\mathfrak{M})$. Fix $\alpha > 0$. Using the decay estimates \eqref{uDecay2} (with $n = 1$) in place of \eqref{uDecay1}, and $\widehat{\partial_{z}u} = \partial_{z}\hat{u}$ for $z \in \{x,y\}$, it is easy to modify the previous argument to show that $T^{\alpha}(\partial_{z}u) \in L^{2}(\mathfrak{M})$. By Proposition \ref{prop2}, we have $Z(T^{\alpha}u) = T^{\alpha}(Zu)$ for $Z \in \{X,Y\}$, so noting that $|\widehat{Zu}| \leq |\partial_{z}\hat{u}| + \pi |z||\tau||\hat{u}|$, Plancherel's theorem in the $t$-variable gives
\begin{align} \|Z(T^{\alpha}u)\|^{2}_{L^{2}(\mathfrak{M})} & \lesssim \|T^{\alpha}(\partial_{z}u)\|_{L^{2}(\mathfrak{M})}^{2} \notag\\
&\label{form46} \quad + \int |z|^{2}\int_{\R} |\tau|^{2 + 2\alpha}|\hat{u}(z,\tau)|^{2} \, d\tau \, d\eta(z). \end{align} 
The first term is finite by the discussion in the previous paragraph, so it remains to consider the term on line \eqref{form46}. Here we could even have $\alpha = 0$. We again use the decay estimate \eqref{uDecay1} for some $0 < 2\beta < \nu$, and split the inner integral roughly as in \eqref{form43}-\eqref{form44}. More precisely,
\begin{align}\label{form47} \eqref{form46} & \lesssim \int |z|^{2} \int_{\{|\tau| < \dist(z,K_{f})^{-3/2}\}} |\tau|^{2 + 2\alpha - 2\beta} \, d\tau \, d\eta(z)\\
&\label{form48} \qquad + \int |z|^{2} \int_{\{|\tau| \geq \dist(z,K_{f})^{-3/2}\}} |\tau|^{2 + 2\alpha}e^{-\tfrac{\pi}{2}|\tau|\dist(z,K_{f})^{2}} \, d\tau \, d\eta(z). \end{align}
Here, noting that $|z| \lesssim_{\epsilon} \dist(z,K_{f})$ for all $z \in \spt \eta \subset U_{\epsilon}$, we have 
\begin{displaymath} \eqref{form47} \lesssim_{\epsilon} \int \dist(z,K_{f})^{-5/2 - 3\alpha + 3\beta} \, d\eta(z) < \infty \end{displaymath}
for any $\beta > 0$ such that $5/2 - 3\beta > 2$ (again using \eqref{eta}). For \eqref{form48}, we make the change of variables $\tau \mapsto \xi/\dist(z,K_{f})^{2}$ in the inner integral to find that
\begin{displaymath} \eqref{form48} \lesssim \int \frac{1}{(\dist(z,K_{f}))^{2 + 2\alpha}} \int_{\{|\xi| \geq \dist(z,K_{f})^{1/2}\}} |\xi|^{2 + 2\alpha}e^{-\tfrac{\pi}{2}|\xi|} \, d\xi \, d\eta(z). \end{displaymath}
Noting that $|\xi|^{2 + 2\alpha} \lesssim_{\alpha} e^{\tfrac{\pi}{4}|\xi|}$, and then using again the familiar assumptions, namely $\spt \eta \subset U_{\epsilon}$ and \eqref{eta}, the RHS is evidently finite. This completes the proof.  \end{proof}

\bibliographystyle{plain}
\bibliography{references}

\end{document}